\newcommand\reallywidehat[1]{%
\savestack{\tmpbox}{\stretchto{%
  \scaleto{%
    \scalerel*[\widthof{\ensuremath{#1}}]{\kern-.6pt\bigwedge\kern-.6pt}%
    {\rule[-\textheight/2]{1ex}{\textheight}}
  }{\textheight}%
}{0.5ex}}%
\stackon[1pt]{#1}{\tmpbox}%
}
\theoremstyle{plain}
\newtheorem{thm}{Theorem}[section]
\newtheorem{metatheorem}[thm]{Meta-Theorem}
\newtheorem{lem}[thm]{Lemma}
\newtheorem{prop}[thm]{Proposition}
\newtheorem{cor}[thm]{Corollary}
\newtheorem{defn}[thm]{Definition}
\newtheorem{defn-prop}[thm]{Definition-Proposition}
\newtheorem{rem}[thm]{Remark}
\def \be {\begin{equation}}
\def \ee {\end{equation}}
\def \R {\mathbb{R}}
\def \bT {\mathbb{T}}
\def \fD {\mathfrak{P}}
\def \T {\mathbb{T}^d} 
\def \PP {\mathcal{P}(\mathbb{T}^d)}
 \renewcommand{\i}{{\mathrm{i}}}
\begin{document}

\title{Weak solutions to the master equation of potential mean field games}
\author{Alekos Cecchin}
\address[A. Cecchin]
{\newline \indent  University of Padova, Department of Mathematics ``Tullio Levi Civita'', 
	\newline \indent Via Trieste 63, 35121 Padova, Italy }
\email{alekos.cecchin@unipd.it}
\author{François Delarue}
\address[F. Delarue]
{\newline \indent Universit\'e C\^ote d'Azur, CNRS, 
	Laboratoire de Math\'ematiques  J.A. Dieudonn\'e,
	\newline 
	\indent 28 Avenue Valrose, 06108 Nice Cedex 2, France}
\email{francois.delarue@univ-cotedazur.fr} 

\thanks{ 
A. Cecchin benefited from the support of LABEX Louis Bachelier Finance and Sustainable Growth - project 
ANR-11-LABX-0019, ECOREES ANR Project, FDD Chair and Joint Research Initiative
FiME.	
F. Delarue is supported by French ANR project 
ANR-19-P3IA-0002 -- 3IA C\^ote d'Azur -- Nice -- Interdisciplinary Institute for Artificial Intelligence. 
}

\date{\today}

\keywords{Mean field games, master equation, 
weak one-sided Lipschitz solutions, 
mean field control problem, Hamilton-Jacobi-Bellman equation on the space of probability measures, displacement semi-concave solutions, Rademacher theorem on the space of probability measures.}

\subjclass[2020]{35L40, 35Q89, 49L12, 49N80, 60J60, 91A16}

\begin{abstract}
The purpose of this work is to introduce a notion of weak solution to the master equation 
of a potential mean field game and to prove that existence and uniqueness hold under quite general assumptions.  
Remarkably, this is achieved without any monotonicity constraint on the coefficients. 
The key point 
is to interpret the master equation in a conservative sense and then to adapt to the infinite dimensional setting 
earlier arguments for hyperbolic systems deriving from a Hamilton-Jacobi-Bellman equation. 
Here, the master equation is indeed regarded as an infinite dimensional system set on the space of probability measures
and is formally written as the derivative of the 
Hamilton-Jacobi-Bellman equation associated with the mean field control problem lying above the mean field game.
To make the analysis easier, we assume that the coefficients are periodic, which allows to represent probability measures through their Fourier coefficients. Most of the analysis then consists in rewriting the master equation and the corresponding Hamilton-Jacobi-Bellman equation for the mean field control problem as partial differential equations set on the  Fourier coefficients themselves. 
In the end, 
we establish existence and uniqueness of 
functions that are displacement semi-concave in the measure argument and that solve the Hamilton-Jacobi-Bellman equation in a suitable generalized sense
and, subsequently, we get existence 
and uniqueness of functions that solve the master equation in an appropriate weak sense and that satisfy a weak one-sided Lipschitz inequality. As another new result, we also prove that the  
optimal trajectories
of the associated mean field control problem are unique for almost every starting point, 
for a suitable probability measure on the space of probability measures. 
\end{abstract}

\maketitle 

\section{Introduction}
\label{se:introduction}

Mean field games is by now a well-established theory for 
the analysis of equilibria within a continuum of rational dynamic players, 
see for instance \cite{HuangCainesMalhame1,Huang2006,Lasry2006,LasryLions2,LasryLions,Lionscollege2} for pioneering  
contributions on the field and \cite{cardaliaguetporretta-cetraro,CarmonaDelarue_book_I,CarmonaDelarue_book_II,gomes_survey}
for a non-exhaustive list of surveys or monographs on the subject. 
Whilst most of the first works in the domain were dedicated to the formulation 
of the problem and to the study of existence and uniqueness of equilibria, 
more efforts have been spent recently on the analysis of the so-called master equation. 
Very briefly, the master equation
is a Partial Differential Equation (PDE) set on a space comprising both physical states and 
probability measures, and it provides an Eulerian description 
of the value of the game. In this sense, it corresponds, in the infinite dimensional mean field setting, 
to the usual standard Nash system in differential games. 
As a main feature of the theory, 
the characteristics 
of the
master equation write in the form of a forward-backward system, 
comprising either two coupled PDEs, one forward Fokker-Planck equation and one backward Hamilton-Jacobi-Bellman  (HJB) equation, 
or two coupled forward and backward Stochastic Differential Equations (SDEs)
of McKean-Vlasov type. 
The  forward-backward PDE system is usually referred to as the Mean Field Game (MFG) system 
and plays a key role in many of the aforementioned references 
since the forward component of any solution
of the MFG system 
encodes the statistical state of an equilibrium. 
\vskip 5pt

\textit{A brief review of the monotone setting.} 
Despite many recent progresses, 
the master equation remains only partially understood. 
The very first works in this direction were devoted to 
the analysis of classical solutions, 
see for instance \cite{CardaliaguetDelarueLasryLions,cha-cri-del_AMS,gan-swi,Lionsvideo}, see also 
\cite{ben-fre-yam2015} for a related presentation.  
While this looks a very natural and fundamental step in the 
study of the master equation, 
this leads in fact to results with a rather limited scope 
in practice because of the assumptions that they require. The master equation can be indeed regarded as a kind of system of hyperbolic nonlinear PDEs set on the space of probability measures and indexed by a continuum of states. As such, it
manifests the same phenomena as finite systems of nonlinear PDEs on the Euclidean space: solutions may develop
singularities in finite time.
{Accordingly, the analysis carried out in 
\cite{gan-swi}
just holds in small time (we refer to \cite{AmbroseMesazros,CardaliaguetCirantPorretta} for more recent studies over a small enough time interval, the former for MFGs with local interactions and the latter for MFGs with a common noise). Differently,} in the three contributions 
 \cite{Lionsvideo,CardaliaguetDelarueLasryLions,cha-cri-del_AMS}, 
 the absence of singularities
 {over an arbitrary time horizon} 
 is ensured under 
a suitable  
form of monotonicity, which is usually referred to as Lasry-Lions' condition. 
For instance (but this is by far not the only example), 
the
Lasry-Lions 
monotonicity 
condition is satisfied by the derivative of a convex 
function defined on the wider space of signed measures.  
Briefly, monotonicity is known to guarantee uniqueness of the equilibria to the corresponding MFG 
and also to enforce strong stability properties, which 
play a key role in the analysis of the regularity of the solutions to the master equation. 
Noticeably, several of the most recent works on the master equation are 
 also written within the same monotone framework, see for instance \cite{Bertucci,CardaliaguetSouganidis2,MouZhang} {(see also \cite{Bertucci:finite}, which is the counterpart of 
 \cite{Bertucci} but for MFGs on a finite state space)}. 
 In all these papers, the objective is to define, in the monotone setting, a relevant notion of solution to the master equation 
 that does not require the existence and the continuity of all the derivatives that appear therein. 
 As well expected, this allows to work with less regular coefficients. 
 However, monotonicity remains of a crucial use:
 {Obviously, as} it implies uniqueness of the equilibria, 
it makes
 trivial the definition of the value function of the game, with the latter being then the natural candidate for solving the master equation; 
 {Moreover, as it induces a form of stability of the equilibria, it 
 forces the value function of the game to be at least continuous. 
 Accordingly, in all the aforementioned references, the solution to the master 
equation is indeed continuous in the measure argument}.  
 
At this stage, it is worth mentioning that, although it is the most popular one,  
the Lasry-Lions condition is not the only assumption that enforces uniqueness and stability of the equilibria to MFGs. Indeed, another form of monotonicity, known as displacement 
monotonicity, has been also studied. To highlight the differences with the 
former notion used by Lasry and Lions,
one may 
think of the following illustrative example, in the same vein as before:  
the Wasserstein derivative of a function that is displacement convex on the space of probability measures is 
displacement monotone.
The very main point 
in this example is that neither the notion of derivative nor the type of convex perturbation are the same 
as in the corresponding example for the Lasry-Lions condition; 
in brief, convex perturbations (in the definition of the displacement convexity) 
are achieved in the space of random variables (living above the space of probability measures) 
and not in the space of measures itself. 
The fact that displacement monotonicity 
implies uniqueness of the equilibria was noticed in 
the earlier paper 
\cite{Ahuja}
and in
the book \cite{CarmonaDelarue_book_I}.
In 
\cite{cha-cri-del_AMS}, it 
was also shown to help in the analysis of the master equation
for cost coefficients that are also convex in the space variable.   
A more systematic study of the
master equation in the displacement monotone setting has been carried out recently, in 
\cite{GangboMeszaros}
(for potential mean field games with displacement convex costs) 
and in 
\cite{GangboMeszarosMouZhang} (which allows for a quite complicated `non-seperated' structure of the Hamiltonian), see also 
\cite{MeszarosMou} for a related study of uniqueness within a similar framework. 
Finally, we refer to the very recent 
work \cite{MouZhang2} for another notion of \textit{anti}-monotonicity, which is distinct from the aforementioned two  notions of monotonicity, and under which the 
master equation can be also shown to have a classical solution. 
\vskip 5pt

\textit{Road map for potential games.} 
The understanding of the master equation remains however 
much more limited when equilibria are no longer unique. 
The very first difficulty in this case is that the value of the game then ceases to be canonically defined
and just exists \textit{a priori} as a set-valued function. 
The properties 
of this set-valued function 
were studied recently in 
\cite{IseriZhang}. 
However, 
there has not been so far any systematic procedure for selecting, within
the set-valued function, one true function 
that could indeed solve 
the master equation in a relevant sense. A related 
difficulty is that, as soon as equilibria cease to be unique, 
the master equation can no longer have a classical solution. 
{As shown by the example studied in \cite{delfog2019}
(see also \cite{BayZhang,BayZhang-corr,cecdaifispel} for examples in the finite state case), even 
continuity may be lost, 
which clearly demonstrates the need to have a weaker form of solution.}
This is precisely 
our objective here to address these questions 
and to provide
in particular 
 a notion of weak solution to the 
master equation for which existence and uniqueness hold, 
and which, in turn, allows to select one {possibly discontinuous} value for the game. 
Our approach to do so is 
inspired 
from our earlier work 
\cite{Cecchin:Delarue:CPDE} {and in particular from Section 6 therein}, in which we addressed the same problem but for mean field games 
on a finite set. 
Here, the mean field game under study is 
set on ${\mathbb T}^d$, for a dimension $d \geq 1$,
with the choice to work in the periodic setting being explained 
later on. 
In fact, the key similarity with 
\cite{Cecchin:Delarue:CPDE}
is that we here restrict the analysis to so-called potential mean field games, 
 {which were first introduced in 
\cite{LasryLions2,LasryLions}
and then studied in detail in (among others)  \cite{BrianiCardaliaguet,CardaliaguetGraberPorrettaTonon}}.
Briefly, 
an MFG is said to be potential 
if there exists a Mean Field Control Problem (MFCP), \textit{i.e.} 
a control problem over dynamics taking values in the space of probability measures, 
whose optimal trajectories are equilibria 
of the MFG. 
Equivalently, 
the MFG then characterizes the
critical points of the functional underpinning the 
MFCP, with the following obvious but fundamental observation:
any minimizer of the MFCP is a critical point of the latter functional and hence an equilibrium of the MFG, but the converse may not be true. 
This provides a way to restrict the analysis to a smaller class of equilibria by focusing on the minimizers of the MFCP and not on the whole set of equilibria: for example, in 
\cite{Cecchin:Delarue:CPDE}, using the results from 
\cite{cannarsa}, it is shown that the MFCP has (under suitable conditions) 
a unique minimizer
for almost every initial condition {(namely, at points of differentiability of the value function)}. 
Another generic result from 
\cite{Cecchin:Delarue:CPDE}
is that, when the state space is finite, 
the (hence finite-dimensional) HJB equation\footnote{The reader who is not aware of the theories of 
MFCP and MFG should make a distinction between the HJB 
equation associated with the MFCP and the aforementioned 
HJB equation arising in the MFG system. They are not the same. 
The HJB equation arising in the MFG system is posed on the state 
space of the MFG, whilst the HJB equation associated with the MFCP is 
posed on the wider space of probability measures. In the rest of the introduction, the HJB equation refers to the HJB equation associated with the MFCP.}
 associated with the MFCP has a unique viscosity solution, which is 
also the unique function that is semi-concave in space and that solves the HJB equation almost everywhere; for sure, 
this unique solution is nothing but the value function of the MFCP. 
{Although this latter result is formulated in 
\cite{Cecchin:Delarue:CPDE}
in a way that 
fits exactly the framework of MFCPs on a finite state space, 
it has in fact a much longer history in the theory of HJB 
equations. 
Briefly, it goes back to the earlier notion of generalized semi-concave solutions  
for HJB equations in finite dimension, which was introduced before the development of viscosity solutions, see \cite{Douglis,Kruzhkov1960}, see also 
\cite{CardaSou2020}
for another application to MFGs\footnote{
{Notably, semi-concave almost everywhere solutions were indeed also employed in the recent work \cite{CardaSou2020}
 in order to study the stochastic HJB equation arising in an MFG system with a common noise but without idiosyncratic noise}.}}.
The equivalence between the two notions
of solutions 
was established later on, in 
\cite{Lions_HJB}. 
Compared with viscosity solutions, 
the very benefit of generalized
solutions 
manifests in the analysis of the MFG deriving from the MFCP. Indeed, in 
the framework of \cite{Cecchin:Delarue:CPDE},  
the almost everywhere derivative 
in space of the value function is a weak solution of the master equation, when the latter is written
in a conservative manner, which 
formulation indeed exists because of the potential structure of the MFG.
Following the earlier work 
 \cite{kruzkov},
 this weak formulation of the conservative master equation can be proved 
 to be uniquely solvable (within a suitable class of weakly one-sided Lipschitz functions) by proving that any such solution in fact derives from a semi-concave potential that 
solves the HJB equation almost everywhere. 
In 
\cite{Cecchin:Delarue:CPDE}, 
these results are shown to hold on a finite state space.
Here, 
we want to prove similar results when the state space is ${\mathbb T}^d$. 
\vskip 5pt

\textit{Probability measures with 
a finite Fourier expansion and discretization of the HJB equation.}
The first difficulty that we are facing in this work is that, to the best of our knowledge, 
the theory for 
HJB equations for MFCPs (over ${\mathbb T}^d$ or ${\mathbb R}^d$) is much less well established than
in the finite dimensional setting.  
To appreciate the difficulties, it is worth mentioning that the controlled dynamics we are dealing with in the paper describe the motion of particles forced by independent Brownian motions. 
This is an important feature. In the analysis of viscosity solutions associated with MFCPs, the 
stochastic case is indeed notoriously known to be more challenging than the deterministic one. 
For MFCPs 
set over deterministic trajectories, existence and uniqueness results for viscosity solutions to the HJB equation may be found {(among others) in \cite{CardaQuinca}, where a particular equation is studied, and in \cite{JiMaQu}, where solutions are defined in an intrinsic manner on the Wasserstein space. Another main contribution is 
\cite{GangboTudorascu}, where two definitions are given: an intrinsic one, relying on the notion of subdifferential introduced 
in \cite{AmbrosioGangbo,AGS}, and  an extrinsic one, formulated in a Hilbert space of random variables by means of the \textit{Lions lift} introduced in \cite{Lionscollege2} (see also \cite{CarmonaDelarue_book_I,cardaliaguetporretta-cetraro}). These two notions are in fact shown to be equivalent, which permits to invoke earlier results on first order viscosity solutions on Hilbert spaces in order to prove uniqueness.}
In a similar setting, 
we refer to the recent work 
\cite{jean:hal-03564787} for an alternative definition of solution based on 
suitable test functions. 
{In contrast, 
the theory of viscosity 
solutions
for MFCPs set over stochastic dynamics is not yet mature, as many contributions on the topic  have appeared recently.
Existence results, for notions of solutions formulated in the 
Crandall-Lions sense by means of appropriate test functions, are provided in \cite{Bandini,CossoPham,Pham-Wei},
but uniqueness remains a challenging question, with some partial results
or some complete results in specific cases available in 
\cite{Burzoni,WuZhang}. 
{Notably, the extrinsic approach based on Lions' lift is no longer appropriate to 
get uniqueness from 
earlier results for equations set on Hilbert spaces. 
This is due to the additional second order term that arises in the HJB equation when the controlled trajectories are stochastic.}
In the very recent work 
\cite{CossoGozziKharroubiPham}, updated just two months before the prepublication of ours, 
a general comparison principle 
is proven for the {intrinsic}
Crandall-Lions formulation (which means that the test functions are defined on the space of probability measures). This achievement is certainly an important milestone in the theory.  
Finally, in \cite{conforti}, 
another approach based on gradient flows is introduced to deal with
the stochastic case, as the heat equation (which describes the evolution of the marginal law of a Brownian motion) can be rewritten as a gradient flow 
deriving from an entropy, see \cite{JKO}. 
The latter yields a form of comparison principle, but existence of solutions 
to this formulation has not been shown yet. }

{Although we believe that the recent results from 
\cite{CossoGozziKharroubiPham} could be 
applied to our setting, and thus could permit to characterize the value function 
of the MFCP as the unique viscosity solution of the associated HJB equation, 
we follow another approach very much inspired from 
the aforementioned works \cite{Douglis,Kruzhkov1960,kruzkov,Lions_HJB} 
on generalized semi-concave solutions
to finite dimensional HJB equations}. Indeed, we prove here a tailor-made result of existence and uniqueness 
for the HJB equation associated with the MFCP
within the class of functions 
that are Lipschitz in time and space (the space variable living here in the 
space $\PP$ of probability measures on ${\mathbb T}^d$ and being equipped with a suitable topology) and 
displacement semi-concave in space. 
{This result is completely new and is disjoint from the comparison principle obtained in 
\cite{CossoGozziKharroubiPham}. It is in particular a crucial step in the analysis of the master equation to the related MFG and the same study would not be possible with 
the notion of viscosity solutions addressed in 
\cite{CossoGozziKharroubiPham}.}
{In fact, we are not aware of any equivalence result between viscosity and semi-concave generalized solutions for infinite dimensional equations, even in the simpler case of first order equations on a Hilbert space.}
In our approach, the 
HJB equation 
is understood in a generalized sense, since the derivatives that appear therein 
are not required to exist 
everywhere. 
Heuristically, we would like to say that, since the candidates for solving the HJB equation are Lipschitz continuous, 
their derivatives 
exist almost everywhere, 
whence the term \textit{generalized} in the notion of solution, 
but this requires a preliminary discussion about the choice of a
probability measure ${\mathbb P}$ on $\PP$ under which Rademacher's theorem is indeed true. 
{The latter choice is by far not canonical, see for instance 
\cite{DelloSchiavo} for a different example than ours together with the references therein for related questions. Here, 
we address directly the existence of such a measure ${\mathbb P}$, 
our construction being in fact dictated by our formulation of the HJB equation}.
Roughly speaking, the measure ${\mathbb P}$ is built as the limit of 
probability measures on finite dimensional slices of $\PP$ and, accordingly,
the HJB equation is reformulated
in the form of an approximate equation 
 on those 
finite dimensional slices.
In fact, 
this procedure looks like finding a finite dimensional approximation of the HJB equation and thus requires an appropriate discretization of the space variable, which is here an element of $\PP$. 
 While a natural idea 
for discretizing the space of probability measures 
would consist in approximating measures by means of discrete measures (think of uniform measures on finite sets, or equivalently 
of empirical measures, see for instance \cite{MouZhang}), we choose here to approximate probability measures by probability measures with a finite Fourier expansion, whence our choice 
to work on the torus. 
This looks a completely new idea in the field (although the second author already introduced part --but certainly not the main part-- of the idea in an earlier contribution on the long time behaviour  of McKean-Vlasov equations, see \cite{DelarueTse}), whose motivation is as follows.
The key observation in this regard is that the 
discretization 
of the HJB equation is very sensitive to the shape of the MFCP itself. 
Here, it is worth emphasizing again that  
the $\PP$-valued controlled dynamics
studied 
in the paper write in the (by now standard) form of a second-order Fokker-Planck equation:
the first order term therein contains the control, whilst the 
second order term is driven by a Laplacian (associated with the Brownian motions driving the underlying particles in the population). 
As we already mentioned, 
the Laplacian in the Fokker-Planck equation induces, 
in the HJB equation associated with the MFCP, 
 higher order derivatives of 
the solution: these derivatives are the most difficult terms to control in the HJB equation and, even more, they 
are very sensitive to 
the discretization procedure. 
Our claim is that
working with truncated 
Fourier expansions
is very well
adapted to our problem since the Fourier functions are precisely the eigenfunctions of 
the Laplacian. In other words, 
our discretization has a limited impact on the shape of the 
controlled trajectories and in turn on the 
HJB equation itself. 
In contrast, 
discretizing the measure by means of empirical mesures does not look, at least from the computations we did, 
a successful strategy, precisely because this does not combine well with the 
derivatives 
induced by the Laplacian in the Fokker-Planck equation (or, equivalently, by the Brownian motions in the associated cloud of particles).  
\vskip 5pt

\textit{Unique solvability of the HJB equation.}
To sum-up, our interpretation  
of the HJB 
equation stipulates
that solutions 
must satisfy an approximate version of the latter 
when reduced to probability 
measures with a finite number of non-zero Fourier coefficients, 
with the accuracy of the approximation 
(in the HJB equation) 
getting better and better as the 
level of truncation in the Fourier expansions increases.
As a first result,  
we show under appropriate regularity properties that the value 
function of the MFCP
satisfies our notion of solution.
The next step is to prove that the value function is in fact the only possible solution {within a class of functions that are displacement semi-concave in the measure argument. This is in the end the most demanding result
in the paper.
In fact, this is also} the point where the benefit for reducing the analysis to finite dimensional slices 
becomes clear. 
Roughly speaking, 
the core of the proof for uniqueness
is {to use semi-concavity in order to show that the 
characteristic flow generated on $\PP$
by any solution} cannot accumulate 
around some point. In finite dimension, this result can be easily formulated: 
basically, the marginal law of the flow must have a bounded density when the 
initial condition also has a bounded density. 
Formulating a similar statement in infinite dimension is certainly more challenging. 
Although obtaining such a statement remains an interesting direction of research, we felt it easier 
to reduce, as much as possible, the proof of uniqueness to the finite dimensional framework, 
whence our choice to discretize the problem. 
Notice, however, that in the end, we 
are able to obtain the expected properties on the whole space $\PP$ 
(and not on slices): 
uniqueness is formulated on the entire $\PP$ and, even more, 
we finally prove that we can construct a probability measure 
${\mathbb P}$ on $\PP$, with a full support, such that,
for any initial time and almost every initial measure, 
the MFCP has a unique solution. 
This second result looks also completely new
and complements 
earlier counter-examples about non-uniqueness. 
{Its proof is achieved in two steps. Uniqueness is first shown to hold at points where the value function has directional derivatives (using earlier results from \cite{BrianiCardaliaguet}) and is thus 
established
almost everywhere by the Rademacher-type result mentioned above. As we already said, this result is standard for finite dimensional systems (see \cite{cannarsa}). In fact, it also known to hold true 
under some conditions for infinite dimensional deterministic systems on Hilbert spaces.} 
 All this analysis holds true with a smooth Hamiltonian 
that is required to be strictly convex in the dual variable, but
{there is no need of convexity in the space or measure
arguments}.   
For the sake of completeness, it should be noted that our study of generalized solutions 
to the HJB  equation can be completed by the concomitant but independent work \cite{cardaliaguet-souganidis:2}, in which the
value function 
is in fact shown to be 
smooth on an open dense subset, 
under slightly more demanding assumptions on the coefficients.  
 \vskip 5pt
 
\textit{Weak solutions of the master equation}. The second main step in the paper is to study the conservative 
version of the master equation of the MFG. Again, this requires to have a proper form of weak solutions
and we do so by formulating approximated versions of the master equation on the same
finite dimensional slices as before. 
As in the finite dimensional setting addressed in \cite{Cecchin:Delarue:CPDE}, 
the key step is to prove that any solution to this weak formulation derives from a potential and that this potential 
is semi-concave in the space argument if the solution in hand satisfies a weak one-sided Lipschitz condition. 
This allows to identify the potential with the solution of the HJB equation of the MFCP and thus with the value function of the latter. 
In turn, the conservative version of the master equation is 
shown to have a unique weak solution, which is the almost everywhere (under the same probability ${\mathbb P}$ as before) 
derivative in space of the value function. 
This result also is new
and is certainly the first one 
to identify a solution to the master equation for a wide class of MFGs that are not required to be uniquely solvable. 
As a side result of our analysis, we show that any classical solution to the master equation is (up to a centering 
operation) 
a weak solution, which proves the consistency of our approach. 
\vskip 5pt

\textit{Further prospects.}
While we believe that this work is an interesting step forward 
toward a better understanding of the master equation, it 
obviously leaves open interesting extensions, which we feel better to address in 
future contributions. 
The main questions concern the choice of the controlled trajectories underpinning both the MFG and the MFCP. 
Here, the presence of the Brownian motion 
has a somewhat dramatic impact on the analysis. On the one hand, the very good point is that, because of the smoothing effect of the Laplacian, 
the MFG system has very strong regularity properties with respect to the finite dimensional variable (accounting for the private state of a tagged player in the continuum). These regularity properties are by now well documented in the theory of MFGs and we make here an intense use of them in order to control the decay of the various Fourier coefficients 
appearing in the analysis. On the other hand, the 
bad point is that the 
Brownian motion 
manifests in the HJB equation associated with the MFCP in the form of
a mixed second order derivative, obtained by taking the gradient (in ${\mathbb T}^d$) of the Wasserstein derivative of the solution. This second order derivative 
 creates substantial difficulties which, as we mentioned before, 
 were already
  reported in the analysis of viscosity solutions
  and which makes quite subtle any approximation by finite dimensional 
 derivatives. As we said, this is precisely the point where the Fourier approach
introduced in this paper becomes fully relevant. 
In turn, this raises the following two equations: 
$(i)$ What happens for 
a more complicated non-degenerate second order operator than the Laplacian itself? 
$(ii)$ What happens when the trajectories are no longer assumed to 
be forced by a noise? 
As for $(i)$, our guess is that our analysis could be 
adapted to more general non-degenerate cases, but the extension would certainly not be straightforward. As we already mentioned, the fact that the Fourier basis is diagonal for the Laplace operator 
is indeed very helpful here.
Anyhow,  even though the latter is no longer true
when the second order operator is more general, 
we believe that an approximation 
similar to the one we use here for handling the non-linearity in the HJB equation 
would solve the issue. In short, 
the action of the second order operator on a probability measure with a finite Fourier expansion 
would no longer have a finite Fourier expansion 
and, hence, 
should be truncated. 
As for question $(ii)$,  
the picture is more subtle since 
the aforementioned second order derivative would no longer appear in the new HJB equation. In turn, the interest of the Fourier approach becomes questionable in the deterministic case. 
While the reader may find it a drawback of our method, 
we recall that
we spotted 
a similar distinction between the stochastic and deterministic cases in the study 
of viscosity solutions, as the former is more difficult to handle. 
In brief, this should not be a surprise if the stochastic case is more demanding than the deterministic one.  
This is even more true that first order Fokker-Planck
equations (as those satisfied by the forward equation in the MFG system 
when the trajectories are deterministic) 
are notoriously known to 
preserve `empirical measures', such a stability property being false 
for second order Fokker-Planck equations. Even for models without control, 
this substantial difference is part of the standard knowledge in mean field theory, as it prevents
some of the arguments used in the deterministic case from being used in the stochastic case. 
As for MFCPs and MFGs, the fact that 
`empirical measures' are preserved by 
 first order Fokker-Planck
equations also plays a key role in
the analysis of the master equation 
carried out in 
\cite{GangboMeszaros} 
when the trajectories are deterministic 
and the costs are monotone. 
In the end, 
we strongly believe that, for potential MFGs driven by 
deterministic trajectories, 
we should use an empirical measure based approach instead of 
a Fourier based approach 
in order to construct 
the finite dimensional approximate HJB 
and master
equations.
In short, 
the point would be to discretize 
 elements of $\PP$ 
  by means of empirical measures
 and not by means of measures with a finite Fourier expansion. 
 However, this 
 claim  
should be clarified in a separate work, since, meanwhile, 
 most of the smoothness to the MFG system 
 is lost  
 when the noise is absent, which may cause other difficulties.  
 Back to models driven by stochastic trajectories, 
a related question is to wonder whether the Fourier based approach can help in any way for the understanding of viscosity solutions to the HJB equation describing the MFCP {and for connecting our approach with the one developed in 
\cite{CossoGozziKharroubiPham}}.
 
A very last remark concerns the extension to the Euclidean setting, i.e. to 
${\mathbb R}^d$. 
Certainly, we may think of using an orthonormal basis of 
$L^2({\mathbb R}^d,\mu)$,  
for a suitable measure $\mu$,  instead of the Fourier basis. For instance, a natural choice would be to work with $\mu$ being a Gaussian measure and thus with the Hermite instead of Fourier basis. This intuition would also deserve a careful inspection.

 \vskip 5pt

\paragraph{\bf Organization and references to the main results} 
The paper is organized as follows. 
The potential MFG under study and the related 
MFCP are introduced in Section 
\ref{se:2}. Therein, we clarify the shape of the master equation, both in 
non-conservative and conservative forms, and we also provide the form of the HJB equation associated with the MFCP. However, we do not provide yet, in this section, the precise definitions of the weaker solutions to these two equations. 
Instead, we just give in Meta-Theorems 
\ref{meta:1} 
and 
\ref{meta:2}
a meta form of the statements that 
will certainly help the reader 
to understand the main contributions 
of the paper. 
The rigorous versions of 
Meta-Theorems 
\ref{meta:1} 
and 
\ref{meta:2}
 require additional material about Fourier expansions of probability measures on ${\mathbb T}^d$. 
Most of this material is introduced in Section \ref{se:3}. Therein, we introduce in particular a mollification procedure that plays a key role in our analysis, see Definition 
\ref{def:mollification}. We also define the aforementioned probability 
measure ${\mathbb P}$, the main characteristics of which are summarized in 
the statement of Theorem 
\ref{thm:probability:probability}. 
In Theorem \ref{prop:rademacher}, we then provide the form of Rademacher's theorem that 
holds true on $(\PP,{\mathbb P})$ and which plays a key role in the identification of the weak solutions to the master equation. Although a bit lengthy, 
this preliminary Section \ref{se:3} is however crucial for the rest of the paper. 
Indeed,
using the tools introduced in this latter section, we address in Section \ref{sec:4} generalized solutions to the HJB
equation, the definition of which is given in 
Definition \ref{defn:HJB:gen}. Their existence and uniqueness are established in 
Theorem \ref{thm:uniqueness:HJB}. 
Application to potential MFGs is explained in 
Section \ref{sec:MFG}. The definition of a weak solution is 
given in Definition 
\ref{def:master:equation:gen}
and the main unique solvability result is stated in Theorem 
\ref{main:thm:MFG}. 
Section \ref{se:6} is an appendix, which is mostly devoted to the proofs of 
Theorems 
\ref{thm:probability:probability}
and
\ref{prop:rademacher}.

\vskip 5pt

\paragraph{\bf Useful notation} 
{\ }
\vskip 5pt

\textit{Standard notation.}
For an element $z \in {\mathbb C}$, we denote by $\overline{z}$ the conjugate of $z$, and by 
$\Re[z]$ and $\Im[z]$ the real and imaginary parts of $z$.
For $x$ and $y$ two elements of a set $E$, we denote by ${\mathbf 1}_{\{x=y\}}$ the symbol 
that is equal to $1$ if $x=y$ and to $0$ otherwise. 
Moreover, 
we denote by 
$\textrm{\rm Leb}_d$ the $d$-dimensional Lebesgue measure on ${\mathbb R}^d$
and by $I_d$ the identity matrix in dimension $d$. The Euclidean norm on ${\mathbb R}^d$ is denoted by $\vert \cdot \vert$
and the corresponding inner product between two vectors $x$ and $y$ is denoted by 
$\langle x, y \rangle$ or by $x \cdot y$ (depending on the context). 
For $X$ a random variable, defined on some probability space and taking values in some Polish space, 
we denote the law of $X$ by ${\mathcal L}(X)$. 
\vskip 5pt

\textit{Space of probability measures on ${\mathbb T}^d$.}
The space of probability measures on the torus ${\mathbb T}^d$ is denoted by ${\mathcal P}({\mathbb T}^d)$. 
When a probability measure $m$ has a density (with respect to the Lebesgue measure), we identify $m$ and its density $d m/dx$: we thus write $x \mapsto m(x)$ for the density. 

In the paper, 
we equip ${\mathbb T}^d$ with the Euclidean metric:
\begin{equation*}
d_{{\mathbb T}^d}(x,y) 
:= \inf_{k \in {\mathbb Z}^d} 
\vert x - y + k \vert
, \quad x,y \in {\mathbb T}^d,
\end{equation*}
with $\vert \cdot \vert$ denoting the standard $d$-dimensional Euclidean norm on 
${\mathbb R}^d$. Moreover, we use the following two standard distances on ${\mathcal P}({\mathbb T}^d)$: 
the total variation distance $d_{\rm TV}$ and the $1$-Wasserstein distance $d_{W_1}$, the definitions of
which are as follows: 
\begin{equation*}
d_{\rm TV}(m,m') := \sup_{\| \phi \|_\infty \leq 1} 
\biggl\vert \int_{{\mathbb T}^d} 
\phi(x) d \bigl( m - m' \bigr)(x) \biggr\vert,
\quad
d_{\rm W_1}(m,m') := \sup_{\| \phi \|_{1,\infty} \leq 1} 
\biggl\vert \int_{{\mathbb T}^d} 
\phi(x) d \bigl( m - m' \bigr)(x) \biggr\vert,
\end{equation*}
where, in the above two supremums, 
$\| \phi \|_\infty$ and $| \phi|_{1,\infty}$ denote (respectively) the $L^\infty$ norm and the Lipschitz constant 
of $\phi$. 
Since $\phi$ can always be assumed to satisfy $\phi(0)=0$, we notice that 
$| \phi |_{1,\infty} \leq 1 \Rightarrow \| \phi \|_\infty \leq c_0$, for a constant $c_0$ only depending on the dimension 
$d$. In particular,  $d_{W_1} \leq c_0 d_{\rm TV}$. We recall that the space 
$({\mathcal P}({\mathbb T}^d),d_{W_1})$ is compact.
Except when explicitly mentioned, the 
space ${\mathcal P}({\mathbb T}^d)$ is implicitly equipped with the $1$-Wasserstein distance.   
\vskip 5pt

\textit{Derivatives on the space of probability measures.}
For a real-valued function $\phi$ defined on ${\mathcal P}({\mathbb T}^d)$,
  $\phi$ is said to be G\^ateaux differentiable at $m$ if
there exists a bounded measurable function $x \mapsto [\delta \phi/\delta m](m)(x)$
such that, for any 
$\mu \in {\mathcal P}({\mathbb T}^d)$, 
\begin{equation}
\label{eq:delta m}
\lim_{\varepsilon \searrow 0} 
\frac{\phi\bigl((1-\varepsilon) m + \varepsilon \mu\bigr) - \phi(m)}{\varepsilon} 
= \int_{{\mathbb T}^d} 
\frac{\delta \phi}{\delta m}(m)(x) d \bigl( \mu - m \bigr) (x).
\end{equation}
The function $\phi$ is said to be continuously differentiable if the function 
$(m,x) \mapsto 
[\delta \phi/\delta m](m)(x)$ is continuous (and thus uniformly continuous), when 
${\mathcal P}({\mathbb T}^d)$ is equipped with $d_{W_1}$. 
In that case, the above limit is uniform in 
$(m,\mu)$. 
Moreover, when 
$x \mapsto 
[\delta \phi/\delta m](m)(x)$ is
differentiable in $x$ (for a given $m$), we let 
\begin{equation}
\label{eq:deltam:partialmu}
\partial_{\mu} \phi(m)(x) := \partial_x \frac{\delta \phi}{\delta m}(m,x). 
\end{equation}
Notice that 
$\delta \phi/\delta m$ is uniquely defined up to a constant. We may take 
\begin{equation}
\label{eq:centring}
\int_{{\mathbb T}^d} 
\frac{\delta \phi}{\delta m}(m)(x) dx= 0
\end{equation}
as centering condition but this specific choice 
is mostly for convenience (see in particular Definition 
\ref{def:master:equation:gen})
 and other choices would be fine.  
We recall the following property, proven in \cite[Appendix]{CardaliaguetDelarueLasryLions}. 
If $\partial_\mu \phi$ is continuous on $\PP \times {\mathbb T}^d$, 
then, 
for any two square integrable ${\mathbb R}^d$-valued random variables $X,Y$ (defined on some arbitrary probability space $(\Omega,{\mathcal A},{\mathbf P})$), 
\begin{equation}
\label{eq:Lions:torus}  
\phi \bigl( {\mathcal L}(Y) \bigr) 
- 
\phi \bigl( {\mathcal L}(X) \bigr) 
=
{\mathbf E} 
\int_0^1 \biggl[ 
\partial_\mu \phi \Bigl( {\mathcal L} \bigl( \lambda Y + (1- \lambda) X \bigr)
\Bigr)\bigl( 
\lambda Y + (1- \lambda) X \bigr)
\cdot \bigl( Y - X \bigr) 
\biggr] d \lambda,
\end{equation}
where, for $Z$ taking values in ${\mathbb R}^d$, 
${\mathcal L}(Z)$ is here understood as the trace of the law of $Z$ on ${\mathbb T}^d$, namely as the 
probability measure $m \in {\mathcal P}({\mathbb T}^d)$ defined by $m(A) = {\mathbf P}(\{ \tau_X(0) \in A \})$ for any Borel subset $A$ of ${\mathbb T}^d$, with $(\tau_y)_{y \in {\mathbb R}^d}$ denoting the group of translations on 
${\mathbb T}^d$.  
Moreover, in this writing, 
the function $x \mapsto 
\partial_\mu \phi ( {\mathcal L} ( \lambda Y + (1- \lambda) X )
)(x)$ is implicitly regarded as a periodic function on ${\mathbb R}^d$. 
\vskip 5pt

\textit{Functional spaces.}
We denote by ${\mathcal C}({\mathbb T}^d)$ the space of continuous functions on 
${\mathbb T}^d$. 
We  also make use of the classical space and time-space H\"older spaces: for $\gamma \in (0,1)$, we let ${\mathcal C}^\gamma({\mathbb T}^d)$ be the space of $\gamma$-H\"older continuous functions in space 
and, for $T>0$, we let ${\mathcal C}^{\gamma/2,\gamma}([0,T] \times {\mathbb T}^d)$
be the space of time-space functions that are $\gamma/2$-H\"older in time and $\gamma$-H\"older in space.
The corresponding H\"older norms are respectively denoted
by $\vvvert \cdot \vvvert_\gamma$ and $\vvvert \cdot \vvvert_{\gamma/2,\gamma}$. We also let ${\mathcal C}^{2+\gamma}({\mathbb T}^d)$ be the space of 
twice differentiable functions $\phi$ on ${\mathbb T}^d$ with $\gamma$-H\"older continuous derivatives of order 1 and 2, and 
${\mathcal C}^{1+\gamma/2,2+\gamma}([0,T] \times {\mathbb T}^d)$
be 
the space of time-space functions that are once differentiable in time and twice in space, with 
$u$, $\partial_x u$, $\partial_x^2 u$
and $\partial_t u$ belonging to ${\mathcal C}^{\gamma/2,\gamma}
([0,T] \times {\mathbb T}^d)$. 
We use the similar notation 
${\mathcal C}^{1+\gamma/2,2+\gamma}((0,T] \times {\mathbb T}^d)$
for functions whose restriction to $[\epsilon,T] \times {\mathbb T}^d$ belongs to 
${\mathcal C}^{1+\gamma/2,2+\gamma}([\epsilon,T] \times {\mathbb T}^d)$
for any $\epsilon \in (0,T)$. 

The $L^1$ and $L^2$ norms on the torus are denoted by $\| \cdot \|_1$ and 
$\| \cdot \|_2$.

\section{Mean field game and control problem}
\label{se:2}

\subsection{Mean field game and standing assumption}
\label{subse:MFG}
The Mean Field Game (MFG) under study is described by means of the following (standard) forward-backward system of PDEs 
\begin{equation}
\label{eq:MFG:system}
\begin{split}
&\partial_t m_t(x) 
- \textrm{\rm div}_x \Bigl( \partial_p H\bigl(x,\nabla_x u_t(x)\bigr) m_t(x) \Bigr)
- \frac12 \Delta_x m_t(x) = 0,
\\
&\partial_t u_t (x)+ \frac12 \Delta_x u_t (x) - H\bigl( x, \nabla_x u_t(x) \bigr) 
+ f(x,m_t) =0,
\end{split}
\end{equation}
for $(t,x) \in [0,T] \times {\mathbb T}^d$, 
with $u_T(x) = g(x,m_T)$ as terminal boundary condition. 
Above, the boundary condition for the dynamics of $(m_t)_{0 \leq t \leq T}$ is prescribed at 
time $0$: $m_0$ is an element of ${\mathcal P}({\mathbb T}^d)$. 
The coefficients $f$ and $g$ are 
real-valued
Lipschitz continuous functions on $[0,T] \times {\mathcal P}({\mathbb T}^d)$. 
We assume $f$ and $g$ to be differentiable in $x$, and 
the derivatives 
$\partial_x f$ and $\partial_x g$ 
to be 
Lipschitz continuous on $[0,T] \times {\mathcal P}({\mathbb T}^d)$
when 
$\PP$ is equipped with $d_{W_1}$. 
We also require 
$g$ to satisfy 
\begin{equation*}
\sup_{m \in \PP} \vvvert g(\cdot,m)\vvvert_{2+\gamma} < \infty,
\end{equation*}
for some $\gamma >0$. 

Very importantly, we do not assume that $f$ and $g$ are monotone, but we assume below that they derive from a potential, namely
\begin{equation}
\label{eq:potential:structure}
f(x,m) - \int_{{\mathbb T}^d} f(y,m) dy  = \frac{\delta F}{\delta m}(m)(x), \quad g(x,m) 
- \int_{{\mathbb T}^d} g(y,m) dy 
= \frac{\delta G}{\delta x}(m)(x),
\end{equation}
for two differentiable functions $F : \PP \rightarrow  {\mathbb R}$ and $G : \PP \rightarrow {\mathbb R}$.

As for the Hamiltonian $H$, it is assumed to be 
twice continuously differentiable in $(x,p)$, periodic in $x$ and 
strictly convex in the variable 
$p$, uniformly in $x$, 
with the following bounds: 
\begin{equation*}
\begin{split}
&\inf_{x \in {\mathbb T}^d,p \in {\mathbb R}^d} 
\inf_{\xi \in {\mathbb R}^d : \vert \xi \vert =1}
\langle \xi,
\partial^2_{pp} H(x,p) \xi \rangle >0,
\\
&\sup_{x \in {\mathbb T}^d,p \in {\mathbb R}^d}
\Bigl[
\vert \partial^2_{pp} H(x,p)  \vert 
+ \vert \partial_{x} H(x,p)  \vert 
+ \vert \partial^2_{xx} H(x,p)  \vert 
\Bigr] < \infty.
\end{split}
\end{equation*}
In particular, 
\begin{equation*}
\begin{split}
&\sup_{x \in {\mathbb T}^d,p \in {\mathbb R}^d}
\Bigl[
\frac{
\vert \partial_{p} H(x,p)  \vert }{1 + \vert p \vert}
\Bigr] < \infty.
\end{split}
\end{equation*}

We recall the following statement on existence of classical solutions; see for instance 
\cite[Theorems 1.4 \& 1.5, pages 29 and 33]{cardaliaguetporretta-cetraro}. 
\begin{prop}
\label{thm:solvability:MFG}
Under the standing assumption, for any initial condition $m_0\in \PP$ to the forward equation,  the system 
\eqref{eq:MFG:system}
has at least one classical solution $(m,u)$, i.e. such that 
$m \in {\mathcal C}^{1+\gamma/2,2+\gamma}( (0,T] \times \T)$ and 
$u \in {\mathcal C}^{1+\gamma/2,2+\gamma}( [0,T] \times \T)$.
\end{prop}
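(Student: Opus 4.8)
The plan is to build a classical solution by a Schauder fixed point argument on the forward flow of measures, the two building blocks being: (i) unique classical solvability, together with \textit{a priori} estimates, of the backward viscous Hamilton--Jacobi equation when the flow is frozen; and (ii) the parabolic smoothing of the forward Fokker--Planck equation driven by a given H\"older drift. Note that the potential structure \eqref{eq:potential:structure} plays no role here.

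First I would fix $L>0$ (to be chosen below) and work on the set $\mathcal{E}_L$ of flows $\mu = (\mu_t)_{0 \le t \le T}$ with $\mu_0 = m_0$ and $d_{W_1}(\mu_s,\mu_t) \le L \, |t-s|^{1/2}$ for all $s,t \in [0,T]$; this set is convex and, since $(\PP,d_{W_1})$ is compact, it is compact in ${\mathcal C}([0,T];(\PP,d_{W_1}))$ by Arzel\`a--Ascoli. Given $\mu \in \mathcal{E}_L$, I solve the backward equation
\[
\partial_t u_t(x) + \tfrac12 \Delta_x u_t(x) - H\bigl(x,\nabla_x u_t(x)\bigr) + f(x,\mu_t) = 0, \qquad u_T(x) = g(x,\mu_T).
\]
Because $f$ is bounded (being Lipschitz on the compact set $[0,T]\times\PP$) and $g(\cdot,m)$ is bounded in ${\mathcal C}^{2+\gamma}({\mathbb T}^d)$ uniformly in $m$, the maximum principle gives $\| u\|_\infty \le C$ uniformly in $\mu$; a Bernstein-type argument (alternatively, the representation of $u$ as the value function of a standard stochastic control problem), using $\vert\partial_p H(x,p)\vert \le C(1+\vert p\vert)$ together with the boundedness of $\partial_x H$, $\partial_x f$ and $\partial_x g$, then yields $\|\nabla_x u\|_\infty \le C$, again uniformly in $\mu$. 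On the set $\{\vert p\vert \le C\}$ the Hamiltonian is smooth with bounded derivatives, so the equation is uniformly parabolic with a nonlinearity of controlled growth; the source $f(\cdot,\mu_\cdot)$ and the terminal datum are H\"older in $(t,x)$ (using the regularity of $f$, $g$ and the time-regularity of $\mu$), so global Schauder estimates give $u \in {\mathcal C}^{1+\gamma/2,2+\gamma}([0,T]\times{\mathbb T}^d)$ with $\vvvert u\vvvert_{1+\gamma/2,2+\gamma} \le C$ depending only on the data. Uniqueness of $u$ follows from the comparison principle. In particular the drift $b_t(x) := \partial_p H(x,\nabla_x u_t(x))$ lies in ${\mathcal C}^{\gamma/2,\gamma}([0,T]\times{\mathbb T}^d)$ with a norm bounded uniformly in $\mu$.

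Next, with this drift, I solve the linear forward equation
\[
\partial_t m_t(x) - \textrm{\rm div}_x\bigl(b_t(x)\,m_t(x)\bigr) - \tfrac12\Delta_x m_t(x) = 0, \qquad m_0 \in \PP \text{ given},
\]
which has a unique solution in the class of weakly continuous measure flows (obtained, e.g., from the associated diffusion with bounded drift, or by duality with a backward heat-type equation). Testing against constants shows mass is preserved, and positivity of $m_t$ follows from the maximum principle (or from the probabilistic representation), so $m_t \in \PP$ for all $t$; the heat-kernel smoothing combined with the H\"older regularity of $b$ upgrades $m$ to ${\mathcal C}^{1+\gamma/2,2+\gamma}((0,T]\times{\mathbb T}^d)$, with no regularity up to $t=0$ to be expected since $m_0$ need not have a density. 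Moreover, via the probabilistic representation and the uniform bound on $\|b\|_\infty$, one gets $d_{W_1}(m_s,m_t) \le C(\|b\|_\infty \vert t-s\vert + \vert t-s\vert^{1/2}) \le C' \vert t-s\vert^{1/2}$ for $\vert t-s\vert \le 1$, with $C'$ independent of $\mu$. Choosing $L := C'$ (enlarged if needed to absorb the case $\vert t-s\vert >1$), this defines a map $\Phi : \mu \mapsto m$ from $\mathcal{E}_L$ into itself. Continuity of $\Phi$ on $\mathcal{E}_L$ is standard: if $\mu^n \to \mu$ then, by the uniform estimates and uniqueness, the corresponding $u^n \to u$ in ${\mathcal C}^{1+\gamma'/2,2+\gamma'}$ for $\gamma' < \gamma$, hence $b^n \to b$ uniformly, and then the linear forward solutions (with the \emph{fixed} datum $m_0$) converge, i.e.\ $\Phi(\mu^n) \to \Phi(\mu)$.

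Finally I would apply Schauder's fixed point theorem to $\Phi$ on $\mathcal{E}_L$ to get $\mu = \Phi(\mu)$; unwinding the construction, $(m,u)$ with $m = \mu$ is a classical solution with the stated regularity. The main obstacle, and the only point that is not soft, is the \textit{a priori} $L^\infty$ bound on $\nabla_x u$, uniform in the frozen flow $\mu$: it is what makes the Hamilton--Jacobi equation uniformly parabolic with a controlled nonlinearity and hence amenable to Schauder theory, and it is precisely here that the structural assumptions on $H$ (at most linear growth of $\partial_p H$, boundedness of $\partial^2_{pp}H$ and $\partial_x H$) and the boundedness of $f$, $\partial_x f$, $g$, $\partial_x g$ enter. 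Since the result is classical, one may alternatively simply invoke \cite[Theorems 1.4 \& 1.5]{cardaliaguetporretta-cetraro}.
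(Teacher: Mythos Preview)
Your proposal is correct: the paper does not prove this proposition at all but simply cites \cite[Theorems 1.4 \& 1.5]{cardaliaguetporretta-cetraro}, and your Schauder fixed-point sketch is precisely the standard argument underlying that reference (which you also invoke at the end). There is nothing to correct; your write-up in fact supplies the details the paper omits.
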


Importantly, 
any solution to \eqref{eq:MFG:system}, as given by Proposition 
\ref{thm:solvability:MFG}, can be interpreted as a fixed point of a mapping that sends a continuous 
path $(\mu_t)_{0\leq t \leq T}$ with values
in $\PP$ onto the flow of marginal laws of 
a stochastic control problem. 
In order to clarify this interpretation (which is in fact standard in 
MFG theory), we fix a  probability space $(\Omega, \mathcal{F}, \mathbf{P})$ and  a $d$-dimensional Brownian motion $(B_s)_{0\leq s\leq T}$. 
We also define the convex conjugate $L$ of $H$ as 
\begin{equation}
\label{eq:Lagrangian}
L(x,\alpha) := \sup_{p \in {\mathbb R}^d} 
\Bigl[ - p \cdot \alpha - H(x,p) \Bigr], \quad x \in {\mathbb T}^d,  \ \alpha \in {\mathbb R}^d,
\end{equation}
which is of quadratic growth in $\alpha$, uniformly in $x$, 
locally Lipschitz in $\alpha$, the Lipschitz constant being of linear growth in 
$\alpha$, 
uniformly in $x$, and 
Lipschitz in $x$, uniformly in $\alpha$.  
 {By 
\cite{Zalinescu}, 
we know that 
$L$ is 
differentiable and
uniformly convex in $\alpha$, i.e., there exists $c_0>0$ such that} 
\be 
\label{eq:111}
\forall \alpha,\alpha' \in {\mathbb R}^d, \quad {L(x,{\alpha}')-L(x,\alpha) \geq \partial_\alpha L(x,\alpha) \cdot ({\alpha}'-\alpha) + c_0 |{\alpha}' - \alpha|^2 .}
\ee 
{We also recall the standard formula
\begin{equation}
\label{eq:formula:L:H}
L\bigl(x,-\partial_pH(x,p)\bigr) = 
p \cdot \partial_pH(x,p) - H(x,p), \quad x \in {\mathbb T}^d,  \ p \in {\mathbb R}^d.
\end{equation}
Then, for $(m_t,u_t)_{0 \leq t \leq T}$ a solution 
to  \eqref{eq:MFG:system}, $u_t(x)$ can be written, for any $(t,x) \in [0,T) \times {\mathbb T}^d$, as the optimal cost 
\begin{equation}
\label{eq:u_t:probabilistic:representation}
u_t(x) = \inf_{(\pi_s)_{t \leq s \leq T}} {\mathbf E} \biggl[ g(X_T,m_T) + \int_t^T \Bigl( f(X_s,m_s) + L(X_s,\pi_s) \Bigr) ds 
\biggr],
\end{equation}
the infimum being taken over controlled trajectories 
\begin{equation*}
dX_s= \pi_s  ds + dB_s,
\end{equation*}
starting from $X_t \sim m_t$ and 
driven by ${\mathbb R}^d$-valued processes $(\pi_s)_{t \leq s \leq T}$
that are 
progressively-measurable
with respect to 
the (${\mathbf P}$-completion of the)
filtration generated by $(B_s-B_t)_{t \leq s \leq T}$ and that satisfy 
\begin{equation}
\label{eq:pi:integrability:square}
{\mathbf E} \biggl[ \int_t^T \vert \pi_s \vert^2 ds \biggr] < \infty. 
\end{equation}
The optimal feedback is the function $(s,x) \mapsto - \partial_p H(x,\nabla_x u_s(x))$ and the marginal laws of the optimal trajectory solves the forward Fokker-Planck equation
in 
\eqref{eq:MFG:system}.}

Noticeably, the MFG system \eqref{eq:MFG:system} can be seen as the system of characteristics to the so-called master equation, which is a PDE stated on $[0,T]\times \T \times \PP$: 
\be 
\label{master}
\begin{split}
	&\partial_t U (t,x,m) + \frac12 \Delta_x U(t,x,m) - H\bigl(x,\nabla_x U(t,x,m) \bigr) + f(x,m) \\
	&\quad -   \int_{\T} \partial_p H\bigl( y, \nabla_y U(t,y,m) \bigr) \cdot  \partial_\mu U(t,x,m)(y) m(dy) 
	\\
	&\quad + \frac12 \int_{\T} \mathrm{Tr}\bigl[ \partial_y \partial_\mu U(t,x,m)(y) \bigr] m(dy) =0 , \\
	& U(T,x,m) =g(x,m) .
\end{split} 
\ee 
Existence of classical solutions, when $f$ and $g$ are monotone and smooth in the measure argument, is established in \cite{CardaliaguetDelarueLasryLions}. Here, we address weak solutions in the sense of distributions, which is the main objective of this paper.  The main result in this regard is Theorem 
\ref{main:thm:MFG}, which ensures existence and uniqueness of such weak solutions when the equation is understood 
in a conservative form (see 
Subsection 
\ref{subse:connection:MFG}
for more details about this conservative formulation).

\subsection{Mean field control problem}
\label{subse:mfc:presentation}
The key assumption in 
our analysis is 
\eqref{eq:potential:structure}. 
It says that the MFG in hand is potential, meaning that it derives from a Mean Field Control Problem (MFCP). At this stage, we  first state some useful properties about MFCPs, independently of the MFG system 
\eqref{eq:MFG:system}. In particular, we state the 
MFCP in both  stochastic and deterministic formulations, and then show that the two formulations are equivalent; notably, we make use of both of them in the paper. 

To state the MFCP in the stochastic strong formulation with open-loop controls, we use the same  probability space $(\Omega, \mathcal{F}, \mathbf{P})$ and  the same $d$-dimensional Brownian motion $(B_s)_{0\leq s\leq T}$ as before. 
The MFCP then consists in minimizing 
\be 
\label{cost:J:pi}
\mathcal{J}_{\textrm{\rm sto}} (\pi) :=  G \bigl( {\mathcal L}(X_T)\bigr) + \int_0^T \Bigl( F\bigl( {\mathcal L}(X_s) \bigr) + 
 {\mathbf E}
\Bigl[ 
L\bigl(X_s,\pi_s\bigr) 
\Bigr]
\Bigr) ds ,
\ee 
over processes $(X_s)_{0 \leq s \leq T}$ solving dynamics of the form
\begin{equation}
\label{eq:dynamics:L2}
dX_s = \pi_s ds + dB_s,
\end{equation}
where $(\pi_s)_{0\leq s\leq T}$ is a progressively measurable process with respect to the (${\mathbf P}$-completion of the) filtration generated by $(B_t)_{0 \leq t \leq T}$ and by $X_0$, and 
is required to be square-integrable over $([0,T] \times \Omega, \textrm{\rm Leb}_1 \otimes {\mathbf P})$, namely 
\eqref{eq:pi:integrability:square}
holds true with $t=0$ therein.

If the controls are in Markovian feedback form, that is $\pi_s = \alpha_s(X_s)$ for a measurable function $\alpha: [0,T] \times \T \rightarrow \R^d$, then the MFCP can be reformulated in a deterministic way. We take $\alpha$ to be bounded and measurable, so that \eqref{eq:dynamics:L2} admits a unique strong solution. Then the deterministic formulation of the MFCP consists in minimizing 
the cost functional 
\begin{equation}
\label{eq:cost}
{\mathcal J}_{\textrm{\rm det}}
(\alpha) = 
\int_0^T \biggl( F(m_t) +   \int_{{\mathbb T}^d} L \bigl( x, \alpha_t(x) \bigr) dm_t(x) 
\biggr) dt + G(m_T)
\end{equation}
over trajectories given by the Fokker-Planck equation 
\begin{equation}
\label{eq:control:FKP}
\partial_t m_t(x) + \textrm{div}_x \Bigl( \alpha_t(x) m_t(x) \Bigr) - \frac12 \Delta_x m_t(x) = 0,
\quad t \in [0,T], \ x \in {\mathbb T}^d, 
\end{equation}
and bounded feedback functions $\alpha : [0,T] \times {\mathbb T}^d \rightarrow {\mathbb R}^d$.

\begin{prop}
\label{prop:2:2}
Under the standing assumption, the functions $F$ and $G$ are $d_{W_1}$-Lipschitz-continuous. Denoting their Lipschitz constants by $\ell_F$ and $\ell_G$, there exists an optimal control for the MFCP in the stochastic formulation \eqref{cost:J:pi}-\eqref{eq:dynamics:L2}. Moreover, any optimal control is in Markovian feedback form and is bounded by 
a fixed constant
{$M$, which only depends on $T$, $ \ell_F$, $\ell_G$, the constant 
$c_0$ in \eqref{eq:111}, the Lipschitz constant of $L$ in $x$ and the supremum norm of 
$\partial_{\alpha} L(\cdot,0)$.}

Therefore,
 any optimal control to 
the stochastic formulation \eqref{cost:J:pi}-\eqref{eq:dynamics:L2}
induces an optimal control to the deterministic formulation 
\eqref{eq:cost}--\eqref{eq:control:FKP}. Also, 
 the stochastic formulation \eqref{cost:J:pi}-\eqref{eq:dynamics:L2}
 and the deterministic formulation \eqref{eq:cost}-\eqref{eq:control:FKP} are equivalent, in the sense that 
\[
 \min_\pi \mathcal{J}_{\textrm{\rm sto}}(\pi) = \min_{\alpha} \mathcal{J}_{\textrm{\rm det}}(\alpha). 
\]
\end{prop}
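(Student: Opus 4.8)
The Lipschitz continuity of $F$ and $G$ follows directly from the potential structure \eqref{eq:potential:structure}. For $m,m'\in\PP$ write
$F(m')-F(m)=\int_0^1\int_{\T}\frac{\delta F}{\delta m}\bigl((1-t)m+tm'\bigr)(x)\,d(m'-m)(x)\,dt$, using that $F\in{\mathcal C}^1$ so that the linear interpolant is differentiable in $t$. Since $x\mapsto\frac{\delta F}{\delta m}(\mu)(x)=f(x,\mu)-\int_{\T}f(y,\mu)\,dy$ has $\T$-gradient $\partial_x f(\cdot,\mu)$, which is bounded uniformly in $\mu$ under the standing assumption, and since $m'-m$ has zero mass (so the test function may be centred), the Kantorovich--Rubinstein formula for $d_{W_1}$ gives $|F(m')-F(m)|\le\|\partial_x f\|_\infty\,d_{W_1}(m,m')$, and likewise $\ell_G\le\|\partial_x g\|_\infty$. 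In particular $F$ and $G$, being continuous on the compact space $(\PP,d_{W_1})$, are bounded.

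The cornerstone of the statement is an a priori $L^\infty$ bound on optimal controls. I would argue entirely in the open-loop formulation \eqref{cost:J:pi}-\eqref{eq:dynamics:L2}: given any admissible $\pi$ with trajectory $X$, compare it with its truncation $\tilde\pi_s:=\pi_s\,\mathbf{1}_{\{|\pi_s|\le M\}}$, which is again admissible and bounded by $M$. As both dynamics are driven by the same Brownian motion and started from $X_0$, one has $d_{\T}(\tilde X_s,X_s)\le\int_0^T|\pi_r|\mathbf{1}_{\{|\pi_r|>M\}}\,dr$, hence $d_{W_1}\bigl(\mathcal{L}(\tilde X_s),\mathcal{L}(X_s)\bigr)\le\mathbf{E}|\tilde X_s-X_s|\le\delta$ for all $s$, with $\delta:=\mathbf{E}\int_0^T|\pi_r|\mathbf{1}_{\{|\pi_r|>M\}}\,dr$. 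Consequently the $F$- and $G$-contributions to the cost move by at most $(T\ell_F+\ell_G)\delta$, and, using that $L$ is Lipschitz in $x$, the $x$-fluctuation of the running Lagrangian costs at most $T\,\mathrm{Lip}_x(L)\,\delta$; meanwhile, on $\{|\pi_s|>M\}$ one has $\tilde\pi_s=0$ and the uniform convexity \eqref{eq:111} (applied with $\alpha=0$) gives $L(X_s,0)-L(X_s,\pi_s)\le|\partial_\alpha L(\cdot,0)|_\infty\,|\pi_s|-c_0|\pi_s|^2\le-(c_0M-|\partial_\alpha L(\cdot,0)|_\infty)|\pi_s|$, so this term strictly decreases by at least $(c_0M-|\partial_\alpha L(\cdot,0)|_\infty)\delta$. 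Choosing $M>\bigl(T\ell_F+\ell_G+T\,\mathrm{Lip}_x(L)+|\partial_\alpha L(\cdot,0)|_\infty\bigr)/c_0$ forces $\mathcal{J}_{\textrm{\rm sto}}(\tilde\pi)\le\mathcal{J}_{\textrm{\rm sto}}(\pi)$ for \emph{every} admissible $\pi$; thus $\inf_\pi\mathcal{J}_{\textrm{\rm sto}}$ is unchanged upon restricting to $M$-bounded controls, and any optimal control satisfies $|\pi_s|\le M$ a.e.\ (else $\delta>0$ and the inequality is strict). This also exhibits the announced dependence of $M$.

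Next I would link the two formulations and establish existence. For any admissible $\pi$ with trajectory $X$ and flow $m_t=\mathcal{L}(X_t)$, the Markovian projection $\alpha_t(x):=\mathbf{E}[\pi_t\mid X_t=x]$ (extended by $0$ where $m_t$ vanishes) is a feedback whose Fokker--Planck flow is again $m$ by the mimicking theorem (equivalently, the superposition principle), and conditional Jensen together with the convexity of $L(x,\cdot)$ gives $\mathbf{E}[L(X_t,\pi_t)]\ge\int_{\T}L(x,\alpha_t(x))\,dm_t(x)$, hence $\mathcal{J}_{\textrm{\rm sto}}(\pi)\ge\mathcal{J}_{\textrm{\rm det}}(\alpha)$; conversely, any bounded feedback $\alpha$ yields the open-loop control $\pi_s=\alpha_s(X_s)$ along the strong solution of \eqref{eq:dynamics:L2}, with identical cost and flow. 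Combined with the previous paragraph, all relevant infima agree and it suffices to produce a minimizer of $\mathcal{J}_{\textrm{\rm det}}$ over $M$-bounded feedbacks. For a minimizing sequence $\alpha^n$ with flows $m^n$, parabolic regularity for \eqref{eq:control:FKP} (non-degenerate Laplacian, drift bounded by $M$) makes $(m^n)$ relatively compact in $C([0,T];\PP)$ and the fluxes $\alpha^n m^n$ relatively compact in the weak-$*$ topology of bounded ${\mathbb R}^d$-valued measures on $[0,T]\times\T$, with the limiting flux dominated by $M\,m^\infty$; writing it as $\alpha^\infty m^\infty$, passing to the limit in the Fokker--Planck equation, and invoking the continuity of $F,G$ together with the joint convexity and lower semicontinuity of the map $(m,j)\mapsto\int_0^T\!\int_{\T}L\bigl(x,\tfrac{dj_t}{dm_t}(x)\bigr)\,dm_t(x)\,dt$, one obtains $\mathcal{J}_{\textrm{\rm det}}(\alpha^\infty)\le\liminf_n\mathcal{J}_{\textrm{\rm det}}(\alpha^n)$, so $\alpha^\infty$ is optimal. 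The only mild technicality is that $m_0$ need not have a density, which is why the fluxes are handled as measures rather than in $L^\infty$.

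Finally, the Markovian structure: if $\pi^*$ is optimal then the inequality $\mathcal{J}_{\textrm{\rm sto}}(\pi^*)\ge\mathcal{J}_{\textrm{\rm det}}(\alpha^*)$ above must be an equality, so $\mathbf{E}[L(X^*_t,\pi^*_t)]=\mathbf{E}[L(X^*_t,\mathbf{E}[\pi^*_t\mid X^*_t])]$ for a.e.\ $t$, and the uniform convexity \eqref{eq:111} (hence strict convexity of $L(x,\cdot)$) forces $\pi^*_t=\alpha^*_t(X^*_t)$ a.e.; together with the a priori bound, $\alpha^*$ (redefined to be $0$ off $\mathrm{supp}(m^*_t)$) is a feedback bounded by $M$ whose Fokker--Planck flow is $\mathcal{L}(X^*_\cdot)$ by uniqueness for bounded drifts, so it is optimal for $\mathcal{J}_{\textrm{\rm det}}$ and $\min_\pi\mathcal{J}_{\textrm{\rm sto}}=\min_\alpha\mathcal{J}_{\textrm{\rm det}}$. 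The main obstacle is the a priori bound with its sharp dependence on the data: it is the estimate that propagates through the rest of the paper, and obtaining it requires the precise interplay between the synchronous-coupling estimate, the Lipschitz bounds on $F$, $G$ and $L$, and — crucially — the \emph{quantitative} uniform convexity of the Lagrangian, rather than the softer coercivity that would suffice for existence alone.
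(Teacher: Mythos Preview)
Your proof is correct and, for the two parts that the paper actually argues in detail (the $d_{W_1}$-Lipschitz continuity of $F,G$ and the a priori $L^\infty$ bound on optimal controls), your route is essentially identical to the paper's: linear interpolation of $F$ plus Kantorovich--Rubinstein for the first, and the truncation $\pi\mapsto\pi\mathbf{1}_{\{|\pi|\le M\}}$ combined with the synchronous-coupling estimate, the Lipschitz bounds on $F,G,L(\cdot,\alpha)$, and the quantitative convexity \eqref{eq:111} for the second. Your value of $M$ even matches the paper's (up to a harmless placement of the factor $T$ on the $\|\partial_\alpha L(\cdot,0)\|_\infty$ term).

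The genuine difference lies in the remaining assertions (existence of an optimizer, Markovian feedback form, and equivalence of the two formulations). The paper simply invokes \cite{Lacker2017} for existence and for the feedback structure of any optimizer, remarking that uniform convexity of $L$ upgrades ``some optimizer is feedback'' to ``every optimizer is feedback''; the equivalence of the stochastic and deterministic values then follows because the bounded feedback defines a strongly well-posed SDE. You instead give a self-contained argument: Markovian projection plus conditional Jensen yields $\mathcal{J}_{\textrm{\rm sto}}(\pi)\ge\mathcal{J}_{\textrm{\rm det}}(\alpha)$ for the mimicked feedback $\alpha$, the reverse inequality is trivial, and existence for $\mathcal{J}_{\textrm{\rm det}}$ is obtained by a direct Benamou--Brenier-type compactness/lower-semicontinuity argument on $M$-bounded feedbacks; the strict convexity of $L(x,\cdot)$ then forces equality in Jensen, hence the feedback form of any stochastic optimizer. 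Your approach buys independence from the cited reference and makes the role of strict convexity explicit, at the cost of importing the mimicking theorem and the joint l.s.c.\ of the action functional; the paper's approach is shorter but relies on external machinery. Both are sound.
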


\begin{rem}
\label{rem:minimization}
The reader may wonder 
why we restrict 
the minimization of ${\mathcal J}_{\textrm{\rm det}}$ 
to bounded feedback functions. 
Obviously, we could consider 
more general feedback functions 
such that the 
cost 
\eqref{eq:cost}
remains well-defined and the 
Fokker-Planck equation 
\eqref{eq:control:FKP}
remains solvable. 
For instance, so is the case in 
\cite{BrianiCardaliaguet}. Therein, the MFCP is formulated in terms of 
a pair $(m_t,w_t)_{0 \leq t \leq T}$, with $w_t$ being 
a vector valued signed measure with a finite mass that 
is absolutely continuous with respect to 
$m_t$, in which case $\alpha_t$ is identified with the 
density $dw_t/dm_t$. 
In fact, there would not be any strong interest in doing so
in our context. 
Indeed, 
\cite[Proposition 3.1]{BrianiCardaliaguet} shows that, in the end,  
the optimizers over such a wider class of controls are in fact 
driven by bounded feedback functions.
In this respect, the statement right above permits to identify 
a bound for the optimal controls. 
 
Last but not least, the reader must understand our main concern here. 
We want to have a simple framework in which 
the deterministic and stochastic formulations are equivalent, 
hence allowing us to use next both representations. 
The reader will find 
a more up-to-date review of the connection between the two formulations in 
\cite{Lacker_superposition}. 
{The reader may also notice that  the proof below requires in fact much weaker regularity on the coefficients than
what we stated in the standing assumptions. Actually, the same proposition also holds true for more general MFCPs (not only those related to potential MFGs) for which the cost does not split as a function of $\alpha$ plus a function of $m$. We chose to write here this general simple result, whose proof does not employ the MFG system, and to state
later on the deeper results that are truly related to the potential structure, see Proposition \ref{prop:Briani}. }

\end{rem}

\begin{proof}
We start with the Lipschitz property of $F$. It follows from the identity 
\begin{equation*}
F(m') - F(m) = \int_0^1 \frac{\delta F}{\delta m} \Bigl( \lambda m' + (1- \lambda) m \Bigr)(x) d \bigl( m' - m \bigr)(x)
\end{equation*}
and then from the Lipschitz property of $f=[\delta F/\delta m]$ in the variable $x$. The same argument holds for $G$. 

The fact that an optimal control exists and is in feedback form is proved in \cite[Theorems 2.2 and 2.3]{Lacker2017}. 
Thanks to the uniform convexity of the Lagrangian, 
the same proof shows 
that any optimal control is in fact in feedback form. 
Note however that those statements address MFCPs formulated in a weak sense (which means that the probability space is part of the unknown), but the results also hold true for the strong formulation (with a fixed probability space) if any optimal control can be shown to be bounded (since the 
dynamics 
\eqref{eq:dynamics:L2}
become 
a strongly well-posed stochastic differential equation when $\pi_s=\alpha_s(X_s)$ for a bounded measurable feedback 
function $\alpha$). 
In order to prove the latter (together with the fact that the bound only depends on the parameters quoted in the statement of Proposition \ref{prop:2:2}), suppose by contradiction that $\pi$ is optimal and such that 
$(\textrm{\rm Leb}_1 \otimes \mathbf{P}) \{ |\pi_t|> K\} >0$ for an arbitrary $K>M$, with $M$ a constant as in the statement whose value is fixed next.  We show that the truncated control 
$\pi^K_t = \pi_t \mathbbm{1}_{\{ |\pi_t| \leq K \}}$ is such that $\mathcal{J}_{\textrm{\rm sto}}( \pi^K) < \mathcal{J}_{\textrm{\rm sto}} (\pi)$, which contradicts the optimality and concludes the proof. 

Denote by $X$ and $X^K$ the solutions to \eqref{eq:dynamics:L2} with controls $\pi$ and $\pi^K$ respectively. 
We clearly have  
\[
\mathbf{E} \Big[ \sup_{0\leq t\leq T} |X^K_t - X_t| \Big] \leq 
\mathbf{E} \int_0^T | \pi^K_t - \pi_t| dt = 
\mathbf{E} \int_0^T | \pi_t| \mathbbm{1}_{\{|\pi_t| >K\}}dt. 
\]
The $d_{W_1}$-Lipschitz-continuity of $F$ and $G$
and then the above estimate yield
\begin{align*}
&\mathcal{J}_{\textrm{\rm sto}}( \pi) - \mathcal{J}_{\textrm{\rm sto}} (\pi^K) 
  \\
&\hspace{15pt}  \geq
   { \int_0^T \mathbf{E} \Big[L(X_t,\pi_t) - L(X^K_t, \pi^K_t)\Big] dt
 - \ell_F \int_0^T \mathbf{E} | X_t - X^K_t|dt - \ell_G \mathbf{E} |X_T - X^K_T| }.
 \end{align*}
 We now use the fact that 
 $L$ is Lipschitz continuous in $x$, uniformly in $\alpha$ (we denote by 
 $\ell_{L,x}$ the related Lipschitz constant) and we get 
\begin{align*}
&\mathcal{J}_{\textrm{\rm sto}}( \pi) - \mathcal{J}_{\textrm{\rm sto}} (\pi^K) 
  \\
&\hspace{15pt}  
\geq  { \mathbf{E} \int_0^T  \Bigl[  L(X_t, \pi_t)
- L(X_t,0)\Bigr] 
\mathbbm{1}_{\{|\pi_t|>K\}}
 dt  - \Bigl(T \ell_F + T \ell_{L,x} + \ell_G \Bigr) \mathbf{E} \int_0^T | \pi_t| \mathbbm{1}_{\{|\pi_t| >K\}}dt }.
 \end{align*}
 And then, by
\eqref{eq:111} (with $\alpha=0$ therein),
\begin{align*} 
&\mathcal{J}_{\textrm{\rm sto}}( \pi) - \mathcal{J}_{\textrm{\rm sto}} (\pi^K) 
  \\
  & \geq c_0 \mathbf{E} \int_0^T |\pi_t|^2 \mathbbm{1}_{\{|\pi_t|>K\}} dt 
  - \Big[ T \bigl(\ell_F +\ell_{L,x} +  \| \partial_\alpha L(\cdot,0)\|_\infty \bigr) +   \ell_G \Bigr] \int_0^T | \pi_t| \mathbbm{1}_{\{|\pi_t| >K\}}dt, 
 \end{align*}
and then,
  \begin{align*} 
\mathcal{J}_{\textrm{\rm sto}}( \pi) - \mathcal{J}_{\textrm{\rm sto}} (\pi^K) 
 & \geq \Bigl[ c_0 K - T\bigl(\ell_F +\ell_{L,x} + 
 \| \partial_\alpha L(\cdot,0)\|_\infty
 \bigr) -\ell_G \Bigr] \mathbf{E} \int_0^T  | \pi_t|  \mathbbm{1}_{\{|\pi_t| >K\}}dt
  \\
 & \geq  \Bigl[ c_0 K - T\bigl(\ell_F +\ell_{L,x} 
 +
 \| \partial_\alpha L(\cdot,0)\|_\infty
\bigr) -\ell_G\Bigr] K (\textrm{\rm Leb}_1 \otimes \mathbf{P}) \{ |\pi_t|> K\} , 
\end{align*} 
which is strictly positive, 
if we choose $M:=[T(\ell_F +\ell_{L,x} 
 +
 \| \partial_\alpha L(\cdot,0)\|_\infty) +\ell_G]/c_0$ and then recall that $K>M$.  
\end{proof}


In light of the deterministic formulation \eqref{eq:cost}-\eqref{eq:control:FKP}, we define the \emph{value function} $V:[0,T]\times \PP \rightarrow \R$ of the MFCP as 
\be 
\label{eq:MKV:V}
\begin{split}
&V(t,m) := \inf_{\alpha \in \mathcal{A}_t} \mathcal{J}_{\textrm{\rm det}}(t,m, \alpha),
\\
&\mathcal{J}_{\textrm{\rm det}}(t,m,\alpha) :=  G(m_T)
+
\int_t^T \biggl( F(m_s) +  \int_{{\mathbb T}^d} L\bigl(x,\alpha_s(x) \bigr) dm_s(x) 
\biggr) ds ,
\end{split}
\ee  
where $(m_s)_{t\leq s \leq t}$ solves the Fokker-Planck equation 
\eqref{eq:control:FKP} starting at $m_t =m$ and the infimum is taken over the set $\mathcal{A}_t$ of feedback functions $\alpha:[t,T]\times \T \rightarrow \R^d$ that are bounded and measurable. 
By the above proposition, it is equivalent to 
restrict the optimization problem to feedback functions that are bounded by $M$. 

Moreover, the value function can be equivalently defined in the open-loop framework. On the probability space $(\Omega, \mathcal{F}, \mathbf{P})$, in addition to $B$, we assume that there is a sub-$\sigma$-algebra $\mathcal{G}$ independent of $B$ and  rich enough so that $\PP = \{ \mathcal{L}(\xi) : \xi : \Omega \rightarrow {\mathbb T}^d$ is $\mathcal{G}$-measurable$\}$. For any $0\leq t <T$, we then denote by $\Pi_t$ the collection of $\mathbb{F}^t$-progressively-measurable 
${\mathbb R}^d$-valued 
processes $(\pi_s)_{t\leq s\leq T}$ that are square-integrable (see 
\eqref{eq:pi:integrability:square}), where $\mathbb{F}^t$ is the $\mathbf{P}$-completion of the filtration gererated by $\mathcal{G}$ and $(B_s)_{t\leq s\leq T}$. We have 
\be 
\begin{split}
&V(t,m) = \inf_{\pi\in \Pi_t} \mathcal{J}_{\textrm{\rm sto}} (t,m, \pi), 
\\
&\mathcal{J}_{\textrm{\rm sto}} (t,m, \pi) :=  G \bigl( {\mathcal L}(X_T)\bigr) + \int_t^T \Bigl( F\bigl( {\mathcal L}(X_s) \bigr) + 
  {\mathbf E}\bigl[ L(X_s,\pi_s) \bigr] \Bigr) ds,
  \end{split}
\ee 
over processes $(X_s)_{t \leq s \leq T}$ solving \eqref{eq:dynamics:L2} 
with $\mathcal{L}(X_t)= m$, with $X_t$ being $\mathcal{G}$-measurable.



The interest of this representation is that it permits to prove easily some important properties of the value function.  
The first one is related to the following notion of semi-concavity:

\begin{defn-prop}
\label{defn-prop:semiconcave}
Under the standing assumption, the functions $F$ and $G$ are displacement semi-concave, 
i.e. 
there exist two constants $C_F$ and $C_G$ such that, for any two random variables in 
$L^ 2(\Omega,{\mathcal A},{\mathbf P};{\mathbb R}^d)$, 
\begin{equation}
\label{semiconcave:F}
F \bigl( {\mathcal L}(\xi+Y) \bigr) + 
F \bigl( {\mathcal L}(\xi-Y) \bigr)
- 2 F \bigl( {\mathcal L}(\xi) \bigr)
\leq C_F {\mathbf E} \bigl[ \vert Y \vert^2 \bigr], 
\end{equation} 
and similarly for $G$ and $C_G$, with the same convention as in 
\eqref{eq:Lions:torus}
 for 
the various laws that appear in the above inequality. 
\end{defn-prop}

 The definition of displacement semi-concavity stated in \eqref{semiconcave:F} is shown to be equivalent to the usual notion used in optimal transport, which is formulated by means of geodesics in the Wasserstein space, see \cite[Lemma 3.6]{GangboMeszaros}.  In fact, \eqref{semiconcave:F} means that the lift of $F$ in the space of $L^2$ random variables is semi-concave, see again \cite{GangboMeszaros}.

\begin{proof}
We use 
\eqref{eq:Lions:torus}  to get: 
\begin{equation*}
F \bigl( {\mathcal L}(\xi \pm Y) \bigr) 
- 
F \bigl( {\mathcal L}(\xi) \bigr) 
=
\pm {\mathbf E} 
\int_0^1 \Bigl[ 
\partial_\mu F \Bigl( {\mathcal L} \bigl( \xi \pm  \lambda Y \bigr)
\Bigr)\bigl( 
\xi \pm \lambda Y  \bigr)
\cdot 
Y 
\Bigr] d \lambda,
\end{equation*}
and then 
\begin{equation*}
\begin{split}
&F \bigl( {\mathcal L}(\xi  + Y) \bigr) 
+
F \bigl( {\mathcal L}(\xi  - Y) \bigr) 
- 
2 F \bigl( {\mathcal L}(\xi) \bigr) 
\\
&=
{\mathbf E} 
\int_0^1 \biggl[ 
\biggl( 
\partial_\mu F \Bigl( {\mathcal L} \bigl( \xi +  \lambda Y \bigr)
\Bigr)\bigl( 
\xi + \lambda Y  \bigr)
-
\partial_\mu F \Bigl( {\mathcal L} \bigl( \xi -  \lambda Y \bigr)
\Bigr)\bigl( 
\xi - \lambda Y  \bigr)
\biggr)
\cdot 
Y 
\biggr] d \lambda.
\end{split}
\end{equation*}
Recalling that $\partial_\mu F=\partial_x f$ is Lipschitz continuous on ${\mathbb T}^d \times {\mathcal P}({\mathbb T}^d)$, we complete the proof. 
\end{proof}



\begin{prop}
\label{prop:semi-concavity}
Under the standing assumption, the value function $V : [0,T] \times {\mathcal P}({\mathbb T}^d) \rightarrow {\mathbb R}$ is
Lipschitz continuous  in space with respect to $d_{W_1}$ and displacement semi-concave, uniformly in time, in the sense that
there exists a constant $C_0$ such that, for 
any $t \in [0,T]$ and
any two $\mathcal{G}$-measurable random variables $X$ and $Y$, 
\begin{equation}
\label{semiconcave:V}
V \bigl(t, {\mathcal L}(\xi+Y) \bigr) + 
V \bigl(t, {\mathcal L}(\xi-Y) \bigr)
- 2 V \bigl(t, {\mathcal L}(\xi) \bigr)
\leq C_0 {\mathbf E} \bigl[ \vert Y \vert^2 \bigr]. 
\end{equation} 
\end{prop}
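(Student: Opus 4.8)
The plan is to prove both the Lipschitz property and the displacement semi-concavity of $V$ by exploiting the open-loop (stochastic) representation of the value function, which turns inequalities for $V$ into inequalities for the cost functional $\mathcal{J}_{\textrm{\rm sto}}$ along cleverly chosen controls. First I would treat the Lipschitz continuity, which is the easier part: given $m,m' \in \PP$, pick $\mathcal{G}$-measurable random variables $\xi,\xi'$ with $\mathcal{L}(\xi)=m$, $\mathcal{L}(\xi')=m'$ realizing (up to $\varepsilon$) the $W_1$-distance, and $\varepsilon$-optimal control $\pi$ for $V(t,m')$; using the \emph{same} control $\pi$ for the trajectory started from $\xi$ gives $X_s - X'_s = \xi - \xi'$ for all $s$, so the Lipschitz continuity of $F$, $G$ (Proposition \ref{prop:2:2}) together with the fact that $L(X_s,\pi_s) - L(X'_s,\pi_s)$ is controlled by the Lipschitz constant of $L$ in $x$ times $|\xi-\xi'|$ yields $V(t,m) \le V(t,m') + C\, {\mathbf E}|\xi-\xi'|$; symmetrizing gives $|V(t,m)-V(t,m')| \le C\, d_{W_1}(m,m')$, uniformly in $t$.

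For the displacement semi-concavity I would argue along the same lines but with a symmetrization trick. Fix $t$, fix $\mathcal{G}$-measurable $\xi$ and $Y$, and let $\pi$ be an $\varepsilon$-optimal control for the problem started from $\mathcal{L}(\xi)$ at time $t$, with corresponding trajectory $X$. Now run two trajectories $X^{\pm}$ driven by the \emph{same} control $\pi$ but started from $\xi \pm Y$; then $X^{\pm}_s = X_s \pm Y$ for every $s \in [t,T]$, and $\pi$ is admissible for both the problem started from $\mathcal{L}(\xi+Y)$ and the one started from $\mathcal{L}(\xi-Y)$. Hence
\begin{equation*}
V\bigl(t,\mathcal{L}(\xi+Y)\bigr) + V\bigl(t,\mathcal{L}(\xi-Y)\bigr) - 2V\bigl(t,\mathcal{L}(\xi)\bigr) \le \mathcal{J}_{\textrm{\rm sto}}(t,\mathcal{L}(\xi+Y),\pi) + \mathcal{J}_{\textrm{\rm sto}}(t,\mathcal{L}(\xi-Y),\pi) - 2\mathcal{J}_{\textrm{\rm sto}}(t,\mathcal{L}(\xi),\pi) + 2\varepsilon.
\end{equation*}
The running-cost and terminal-cost contributions split into three pieces. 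The $F$ and $G$ pieces are exactly of the form $F(\mathcal{L}(\xi'+Y)) + F(\mathcal{L}(\xi'-Y)) - 2F(\mathcal{L}(\xi'))$ with $\xi' = X_s$ (resp. $X_T$), which by Definition-Proposition \ref{defn-prop:semiconcave} is bounded by $C_F\,{\mathbf E}|Y|^2$ (resp. $C_G\,{\mathbf E}|Y|^2$). The Lagrangian piece is ${\mathbf E}\int_t^T [L(X_s+Y,\pi_s) + L(X_s-Y,\pi_s) - 2L(X_s,\pi_s)]\,ds$, which is controlled by ${\mathbf E}\int_t^T \|\partial^2_{xx}L(\cdot,\pi_s)\|_\infty |Y|^2\,ds$; here the standing assumption gives $\|\partial^2_{xx}H\|_\infty < \infty$ and strict convexity, which, via the conjugacy relation and standard Legendre-transform computations, translates into a bound on $\partial^2_{xx}L$ uniform in $\alpha$ — or, more robustly, one uses that $L$ is Lipschitz in $x$ uniformly in $\alpha$ plus semi-concavity of $x \mapsto L(x,\alpha)$ inherited from convexity of $x \mapsto H(x,p)$ after checking the relevant one-sided bound. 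Collecting the three bounds and letting $\varepsilon \to 0$ gives \eqref{semiconcave:V} with $C_0 := C_F + C_G + T\, c_L$ for an appropriate constant $c_L$, and this is manifestly uniform in $t$.

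The main obstacle I expect is the Lagrangian term: the paper's standing assumption controls $\partial^2_{xx}H$ but not directly $\partial^2_{xx}L$, and one must verify that semi-concavity (a one-sided second-difference bound) of $x \mapsto L(x,\alpha)$ holds uniformly in $\alpha$, not convexity. The cleanest route is to note that $L(x,\alpha) = \sup_p [-p\cdot\alpha - H(x,p)]$ is a supremum, over $p$, of functions $x \mapsto -p\cdot\alpha - H(x,p)$ whose second $x$-derivative is $-\partial^2_{xx}H(x,p)$, bounded below by $-\|\partial^2_{xx}H\|_\infty I_d$; a supremum of functions that are all $(\|\partial^2_{xx}H\|_\infty)$-semi-convex-from-below is not automatically semi-concave, so instead I would bound the second difference directly: for fixed $\alpha$ and the maximizing $p^*$ at the midpoint $x$, $L(x\pm Y,\alpha) \ge -p^*\cdot\alpha - H(x\pm Y,p^*)$, and summing gives $L(x+Y,\alpha)+L(x-Y,\alpha) - 2L(x,\alpha) \ge -[H(x+Y,p^*)+H(x-Y,p^*)-2H(x,p^*)] \ge -\|\partial^2_{xx}H\|_\infty |Y|^2$ — but this is the \emph{wrong} direction. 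Getting the correct upper bound requires instead using the maximizers $p^*_\pm$ at $x\pm Y$ and the Lipschitz-in-$x$ bound on $L$ together with the growth of $|p^*_\pm|$; alternatively, and more simply, one restricts $\pi_s$ to the ball of radius $M$ (Proposition \ref{prop:2:2}), on which $H$ is $C^2$ with all derivatives bounded, so $x \mapsto L(x,\alpha)$ for $|\alpha|\le M'$ inherits a genuine two-sided $C^2$ bound from the inverse-function theorem applied to $\alpha = -\partial_p H(x,p)$, yielding the clean estimate $|L(x+Y,\alpha)+L(x-Y,\alpha)-2L(x,\alpha)| \le c_L|Y|^2$ for all $|\alpha| \le M'$. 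This is the step to handle with care; everything else is the soft argument above.
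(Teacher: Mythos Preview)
Your approach is essentially the paper's: use the open-loop representation, take an optimal control $\pi$ for $(t,\mathcal{L}(\xi))$, reuse it on the shifted initial data $\xi\pm Y$ so that $X^\pm_s=X_s\pm Y$, and bound the $F$, $G$, and $L$ contributions separately via Definition--Proposition~\ref{defn-prop:semiconcave} and a second-difference estimate on $L(\cdot,\alpha)$.

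The one place you hesitate---the Lagrangian term---the paper dispatches in a single preliminary sentence: under the standing assumptions on $H$ (uniform strict convexity in $p$ together with the bounds on $\partial_x H$ and $\partial^2_{xx}H$), the Lagrangian $L$ has \emph{bounded first and second order derivatives in $x$, uniformly in $\alpha$}; this is a standard convex-duality computation for which the paper cites \cite[Lemma~5.44]{CarmonaDelarue_book_II}. With that in hand, $L(X_s+Y,\pi_s)+L(X_s-Y,\pi_s)-2L(X_s,\pi_s)\le \|\partial^2_{xx}L\|_\infty\,|Y|^2$ is immediate and your detours (supremum argument, inverse-function theorem on a bounded-control ball) are unnecessary. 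If you still prefer the bounded-control route, note a small bug: only an \emph{exactly} optimal control is guaranteed to be bounded by $M$ (Proposition~\ref{prop:2:2}), so take $\pi$ optimal rather than $\varepsilon$-optimal---the paper does exactly this.
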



\begin{proof}
The first step in the proof is to notice that the Lagrangian $L$ in 
\eqref{eq:Lagrangian}
has bounded first and second order derivatives in $x$. This is a mere consequence of the uniform strict convexity of $H$ in 
$p$  and of the bounds for $\partial_x H$ and 
$\partial^2_{xx} H$, see
for instance 
\cite[Lemma 5.44]{CarmonaDelarue_book_II}. 
Then, 
the Lipschitz property of $V$ in space is almost immediate. 
Fix $t$ and take $m, \tilde{m} \in \PP$ and two random variables $\xi, \tilde{\xi}$ such that $d_{W_1}(m, \tilde{m}) = \mathbf{E}[
d_{{\mathbb T}^d}( \xi,\tilde{\xi})]$. Fixing an optimal open-loop  control $\pi$ for $(t,m)$, the Lipschitz property of $F$ and $G$ 
in $m$ and the Lipschitz property of $L$ in $x$
give 
\[
V(t,\tilde{m}) - V(t, m) \leq \mathcal{J}_{\textrm{\rm sto}}(t,\tilde{m}, \pi) - 
\mathcal{J}_{\textrm{\rm sto}}(t,m, \pi) \leq C \mathbf{E}\bigl[ d_{{\mathbb T}^d}( \xi,\tilde{\xi})\bigr],
\] 
which provides the Lipschitz-continuity of $V$. 
To show the semi-concavity, let again $\pi \in \Pi_t$ be optimal for $(t,m)$ and $\xi$ such that $\mathcal{L}(\xi) =m$. 
Denote  the  processes in \eqref{eq:dynamics:L2} corresponding to the control $\pi$ and the initial conditions $\xi$ and $\xi \pm Y$ at time $t$ by
(respectively) 
  $(X_s := \xi + \int_t^s \pi_r dr + B_s-B_t)_{t \leq s \leq T}$
  and 
  $(X^{\pm}_s := \xi \pm Y + \int_t^s \pi_r dr + B_s-B_t)_{t \leq s \leq T}$.
Then, 
\begin{align*} 
V &\bigl(t, {\mathcal L}(\xi+Y) \bigr) + 
V \bigl(t, {\mathcal L}(\xi-Y) \bigr)
- 2 V \bigl(t, {\mathcal L}(\xi) \bigr)  \\
& \leq \mathcal{J}_{\textrm{\rm sto}}\bigl(t, {\mathcal L}(\xi+Y), \pi \bigr) +  \mathcal{J}_{\textrm{\rm sto}}\bigl(t, {\mathcal L}(\xi-Y), \pi \bigr)
-2  {\mathcal J}_{\textrm{\rm sto}}\bigl(t, {\mathcal L}(\xi+Y), \pi \bigr)  
\\
& 
=
 G\bigl( \mathcal{L}(X^+_T)\bigr) + G\bigl( \mathcal{L}(X^-_T)\bigr) 
-2 G\bigl( \mathcal{L}(X_T)\bigr)
 + 
\int_t^T \Bigl[ F\bigl( \mathcal{L}(X^+_s)\bigr) + F\bigl( \mathcal{L}(X^-_s)\bigr) 
-2 F\bigl( \mathcal{L}(X_s)\bigr) \Bigr] ds
\\
&\hspace{15pt}+
\int_t^T \Bigl[ L\bigl(X^+_s,\pi_s\bigr) + 
L\bigl(X^-_s,\pi_s\bigr)
- 
2 L\bigl(X_s,\pi_s\bigr)
 \Bigr] ds  
 \\
&\leq C \mathbf{E} |Y|^2, 
\end{align*} 
the last line following from 
Definition-Proposition 
\ref{defn-prop:semiconcave}
and from the identity
 $X^{\pm}_s =X_s \pm Y$, for 
 $s \in [t,T]$. 
\end{proof}

An important tool to study control problems is the \emph{dynamic programmin principle}. We make use of it only for the deterministic formulation of the MFCP for which its proof is straightforward: we have, for any $0\leq t\leq \tau \leq T$, 
\be 
V(t,m) = \inf_{\alpha\in \mathcal{A}_t} 
\biggl\{ 
V(\tau, m_\tau) 
+
\int_t^\tau \biggl( F(m_s) +  \int_{{\mathbb T}^d} L \bigl(x, \alpha_s(x)\bigr)   dm_s(x) 
\biggr) ds  
\biggr\}.
\label{eq:DPP}
\ee 
We refer to \cite{bayraktar2018randomized} and \cite{djete2019mckean} for the statement and proof of the dynamic programming principle for the stochastic formulation of the MFCP, in a much more general framework. 
We refer to 
\cite{Lauriere-Pironneau,Pham-Wei} for 
results directly formulated with controls in feedback form.

Formally, if $V$ is smooth, then, by the dynamic programming principle, it solves the HJB equation
\begin{equation}
\label{eq:HJB}
\begin{split}
&\partial_t V(t,m) -   \int_{{\mathbb T}^d} H \bigl( y, \partial_\mu V(t,m) (y) \bigr) d m(y) 
+ \frac12 \int_{{\mathbb T}^d} 
\text{Tr} \Bigl[ 
\partial_y \partial_\mu V(t,m,y) \Bigr] d m(y) + F(m) =0, \\
&V(T,m)=G(m),
\end{split}
\end{equation}
see for instance 
\cite[Section 3.7]{CardaliaguetDelarueLasryLions}. 

In Section \ref{sec:4}, we define a notion of generalized solution to the HJB equation
and we prove an existence and uniqueness result for it, see 
Theorem \ref{thm:uniqueness:HJB}, which is one of the main results of this paper. 


\subsection{Connection with the MFG system}
\label{subse:connection:MFG}

The analysis of the 
HJB equation 
\eqref{eq:HJB}
 plays a key role in the derivation of existence and uniqueness of weak solutions to the master equation
 \eqref{master}. 
Before we clarify the precise form of the master equation that we study next, 
we first recall the connection between the MFCP and the
potential MFG 
introduced in 
Subsection 
\ref{subse:MFG}.
%
%
%
The following key result is taken from \cite[Proposition 3.1]{BrianiCardaliaguet}:

\begin{prop} 
	\label{prop:Briani}
	For any initial condition $m_0 \in {\mathcal P}({\mathbb T}^d)$
	and for any optimal feedback function $\alpha$ (whose existence is already
	guaranteed by Proposition \ref{prop:2:2}) with corresponding optimal trajectory $m$, there exists $u \in {\mathcal C}^{2+\gamma}([0,T] \times \T)$
	such that $(m,u)$ is a classical solution to the MFG sytem \eqref{eq:MFG:system} and $\alpha_t(x) = - \nabla u_t(x)$ for any $0<t\leq T$ and any $x\in \T$. 
	
	As a consequence, the infimum in the definition of the value function \eqref{eq:MKV:V} can be equivalently taken over feedback functions that are bounded (by $M$
	in
Proposition \ref{prop:2:2}	
	) and Lipschitz continuous in $x$ (with a fixed Lipschitz constant determined by the data only).

\end{prop}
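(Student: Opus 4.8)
The plan is to prove Proposition~\ref{prop:Briani} by exhibiting, for a given optimal feedback $\alpha$ with optimal trajectory $m$, a companion function $u$ solving the backward HJB equation of the MFG system with this particular $m$ frozen in the coupling, and then to recognize that the optimality of $\alpha$ forces $\alpha_t = -\nabla u_t$, so that $(m,u)$ indeed solves \eqref{eq:MFG:system}. The starting observation is variational: since $\alpha$ minimizes ${\mathcal J}_{\textrm{\rm det}}$ (equivalently, by Proposition~\ref{prop:2:2}, ${\mathcal J}_{\textrm{\rm sto}}$) over all admissible feedbacks, and since $F$ and $G$ are differentiable with $\delta F/\delta m = f(\cdot,m) - \int f$ and $\delta G/\delta m = g(\cdot,m) - \int g$ by the potential structure \eqref{eq:potential:structure}, the first-order condition for the MFCP along perturbations of the control translates into a \emph{linear} optimal control problem whose running and terminal costs are precisely $f(\cdot,m_s)$ and $g(\cdot,m_T)$. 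Concretely, the Pontryagin / linearization argument of \cite[Proposition 3.1]{BrianiCardaliaguet} shows that $m$ is, simultaneously, the controlled flow under $\alpha$ and the flow of marginals of the \emph{standard} (non-McKean--Vlasov) stochastic control problem with cost $g(X_T,m_T) + \int_t^T (f(X_s,m_s) + L(X_s,\pi_s))\,ds$; the value function $u$ of that auxiliary problem is the desired companion.

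Concretely I would proceed in the following steps. First, invoke the linearization lemma from \cite{BrianiCardaliaguet}: using the displacement/variational structure, one perturbs the optimal control $\alpha$ by $\alpha + \varepsilon \beta$ and writes the condition $\frac{d}{d\varepsilon}\big|_{0^+}{\mathcal J}_{\textrm{\rm det}}(\alpha+\varepsilon\beta) \geq 0$; after computing the derivative of the flow and using \eqref{eq:potential:structure}, this inequality becomes exactly the optimality condition for the linear control problem with the frozen coefficients $f(\cdot,m_\cdot)$, $g(\cdot,m_T)$. Second, since those frozen coefficients inherit enough regularity from the standing assumption — $f$ and $g$ are Lipschitz with $\partial_x f$, $\partial_x g$ Lipschitz, and $\sup_m\vvvert g(\cdot,m)\vvvert_{2+\gamma}<\infty$ — and since $m$ itself has the parabolic regularity coming from \eqref{eq:control:FKP} with the bounded feedback $\alpha$ (bounded by $M$ from Proposition~\ref{prop:2:2}), standard parabolic Schauder theory for the backward HJB equation $\partial_t u + \tfrac12\Delta u - H(x,\nabla u) + f(x,m_t) = 0$, $u_T = g(\cdot,m_T)$, gives a classical solution $u \in {\mathcal C}^{1+\gamma/2,2+\gamma}([0,T]\times\T)$, in fact $u\in {\mathcal C}^{2+\gamma}$ on $[0,T]\times\T$ as stated. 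Third, by the verification theorem for this linear problem, the optimal feedback of the auxiliary problem is $(t,x)\mapsto -\partial_p H(x,\nabla u_t(x))$, and by strict convexity of $H$ in $p$ (equivalently uniform convexity of $L$, \eqref{eq:111}) the optimal control is unique; since $\alpha$ was already shown to be optimal for this same linear problem, one concludes $\alpha_t(x) = -\partial_p H(x,\nabla u_t(x))$ for a.e.\ $(t,x)$ along the support of $m_t$, and by continuity everywhere on $(0,T]\times\T$. Plugging this into \eqref{eq:control:FKP} shows that $m$ solves the forward Fokker--Planck equation of \eqref{eq:MFG:system}, so $(m,u)$ is a classical solution of the MFG system. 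Finally, the last sentence of the statement follows immediately: the optimal $\alpha$ equals $-\nabla u$, which is Lipschitz in $x$ with constant controlled by $\vvvert u\vvvert_{2+\gamma}$, itself bounded through the Schauder estimates by the data only (via $M$ and $\sup_m\vvvert g(\cdot,m)\vvvert_{2+\gamma}$); hence the infimum in \eqref{eq:MKV:V} may be restricted to feedbacks that are bounded and Lipschitz in $x$ with a fixed constant without changing its value.

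The main obstacle I anticipate is the linearization step itself: justifying rigorously that the first-order variation of the McKean--Vlasov cost ${\mathcal J}_{\textrm{\rm det}}$ collapses, thanks to \eqref{eq:potential:structure}, to the optimality condition of a \emph{standard} control problem. This requires differentiating the map $\alpha \mapsto m^\alpha$ (the solution of \eqref{eq:control:FKP}) in a suitable sense, controlling the linearized Fokker--Planck equation, and then using the identity $\frac{d}{d\varepsilon}F(m^{\alpha+\varepsilon\beta}_s)\big|_{0} = \int \frac{\delta F}{\delta m}(m_s)(x)\,d\dot m_s(x)$ together with an integration by parts against the adjoint state — which is exactly $u$. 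This is precisely the content of \cite[Proposition 3.1]{BrianiCardaliaguet}, which the excerpt allows me to cite, so in practice the proof here amounts to quoting that result and then adding the (routine) parabolic-regularity and verification-theorem arguments that upgrade $u$ to a classical solution and pin down $\alpha = -\nabla u$. A secondary technical point is ensuring the regularity of $m_t$ near $t=0$ is sufficient for the Schauder estimates on $u$ up to $t=0$; this is handled by noting that $u$ solves a backward equation whose source $f(\cdot,m_t)$ need only be Hölder in time, which holds because $t\mapsto m_t$ is continuous in $d_{W_1}$ and $f$ is jointly Lipschitz, so no initial-layer issue arises for $u$ even though $m$ itself is only ${\mathcal C}^{1+\gamma/2,2+\gamma}$ on $(0,T]\times\T$.
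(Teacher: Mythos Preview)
Your proposal is correct and matches the paper's approach: the paper does not prove this proposition but simply cites \cite[Proposition 3.1]{BrianiCardaliaguet}, and your sketch faithfully outlines that argument (linearize the MFCP cost using the potential structure \eqref{eq:potential:structure}, recognize the first-order condition as optimality for the standard control problem with frozen environment $m$, solve the backward HJB by Schauder theory, and invoke strict convexity for uniqueness of the optimal feedback). One minor remark: you correctly derive $\alpha_t(x) = -\partial_p H(x,\nabla u_t(x))$, whereas the statement as printed reads $\alpha_t(x) = -\nabla u_t(x)$; the former is the right expression for a general Hamiltonian (they coincide only when $H$ is quadratic), so this appears to be a typo in the paper rather than an error on your part.
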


Below, we say (abusively) that 
$(m_t,u_t)_{0 \leq t \leq T}$
in 
Proposition 
\ref{prop:Briani}
is an MFG solution that minimizes the 
cost functional ${\mathcal  J}_{\textrm{\rm det}}$.
As we already noticed in Remark 
\ref{rem:minimization}, the optimization problem is in fact defined 
in 
\cite{BrianiCardaliaguet}
over a wider class of controls, namely over measures of the form $\alpha(t,x) m(t, dx)$. 
Obviously, this does not change the validity of Proposition \ref{prop:2:2}. 
 We also stress the following point: 
 In 
  \cite{BrianiCardaliaguet}, 
  $f$ and $g$ are satisfied to require 
  $f(x,m)=[\delta F/\delta m](m)(x)$ and 
  $g(x,m)=[\delta G/\delta m](m)(x)$, which is in fact stronger 
  than 
 the condition
 stated in  \eqref{eq:potential:structure}
 as it forces $\int_{{\mathbb T}^d} f(x,m) dx$ and $\int_{{\mathbb T}^d} g(x,m) dx$ to be
 equal to $0$, see
 \eqref{eq:deltam:partialmu}.   
 However, as remarked in \cite{BrianiCardaliaguet}, 
 there is in fact no need
to require $f$ and $g$ to be centered with respect to 
$m$ since, in the end, $f$ and $g$ are always integrated against a difference of two probability measures. 

Importantly, let us finally observe that, formally, if $V$ is $C^2$ in the measure argument, 
then,
by
applying the Schwarz identity 
(see \cite[§2.2.2]{CardaliaguetDelarueLasryLions}, but with a different centering condition)
$$\frac{\delta^2 V}{\delta m^2}(t,m)(x,y) = \frac{\delta^2 V}{\delta m^2}(t,m)(y,x),$$
its derivative 
$$\tilde U(t,x,m) : =\frac{\delta V}{\delta m}(t,m)(x), \quad (t,x,m) \in [0,T] \times {\mathbb T}^d,$$ 
satisfies the following two equations:
\begin{align}
\label{master:cons}
&\partial_t \tilde U(t,x,m) 
\\
&\quad+  \frac{\delta}{\delta m} \bigg\{-  \int_{{\mathbb T}^d} 
H \bigl( y,  \nabla_y \tilde U(t,y,m) \bigr) d m(y) 
+ \frac12 \int_{{\mathbb T}^d} 
\text{Tr} \Bigl[ 
\partial^2_y \tilde U(t,y,m) \Bigr] d m(y) + F(m)  \bigg\}(x) = 0, \nonumber
\\
&\tilde U(T,m)=\frac{\delta G}{\delta m}(m)(x), \nonumber
\end{align}
together with 
\begin{align}
	&\partial_t \tilde U (t,x,m) + \frac12 \Delta_x \tilde U(t,x,m) - H\bigl(x,\nabla_x \tilde U(t,x,m) \bigr) + f(x,m) 
	\nonumber
	\\
	&\quad -   \int_{\T} \partial_p H\bigl( y, \nabla_y \tilde U(t,y,m) \bigr) \cdot  \partial_\mu \tilde U(t,x,m)(y) m(dy) 
  + \frac12 \int_{\T} \mathrm{Tr}\bigl[ \partial_y \partial_\mu \tilde U(t,x,m)(y) \bigr] m(dy) \nonumber
  \\
  &\quad - C_t(m) =0, \phantom{\frac12} \label{master:centred} 
  \\
	&\tilde U(T,x,m) =g(x,m) - \int_{{\mathbb T}^d} g(y,m) dm(y),
	\nonumber
\end{align}
where $C_t(m)$ is a penalization term that makes the above equation consistent with 
the condition $\int_{{\mathbb T}^d} \tilde U(t,x,m) dx = 0$. 
Equivalently, 
\eqref{master:centred}
says that 
the gradient 
$\nabla_x \tilde U$ solves the derivative of the master  equation \eqref{master}. 
The fact that 
\eqref{master:centred} 
only coincides
with 
\eqref{master}
up to the additional remainder 
$C_t(m)$
was already reported in 
\cite{Cecchin:Delarue:CPDE}
 for games on a finite 
set.
In fact, the key point in this formulation 
is that it allows to identify (at least formally)  
$\nabla_x \tilde U$ in 
\eqref{master:centred} 
with 
$\nabla_x  U$ 
in \eqref{master}: this is crucial since 
$\nabla_x \tilde U$ is precisely the term inside the nonlinearity; once the nonlinear term in \eqref{master} has been solved, the equation becomes easier to handle.  

The fact that $\tilde U$ solves both 
\eqref{master:cons}
and 
\eqref{master:centred} prompts us to call Equation 
\eqref{master:cons} `the conservative form of the master equation'. 
This is precisely this version for which we prove an existence and uniqueness result in Theorem 
\ref{main:thm:MFG}: in short, the unique solution is shown to be the derivative of the value function 
$V$, the notion of derivative being understood in a relaxed sense. 

In order to check the accuracy of our result, we prove that the centered version of 
any smooth solution to the 
master equation \eqref{master}
is in fact a solution of the conservative form of the equation, see Proposition 
\ref{prop:verification:classical:conservative}. As a by-product, 
the proof clarifies the form of the penalization term $C_t(m)$ in \eqref{master:centred}.

\subsection{Summary of the main results}

We here provide a short summary of the mains results that are established in the paper. We draw reader's attention to the fact that these are only meta-statements: proper versions can only be given next, once the required tools have been 
carefully introduced. 
We start with the MFCP:

\begin{metatheorem}
\label{meta:1}
There exists a notion of generalized solution to the HJB equation
\eqref{eq:HJB}, under which a solution must satisfy 
almost everywhere
finite-dimensional 
approximations of \eqref{eq:HJB}
and
for which 
 existence and uniqueness 
 hold true. 
 The unique solution is the value function $V$ 
 defined in 
 \eqref{eq:MKV:V}. 
 
 Moreover, there exists a probability measure 
 ${\mathbb P}$ on $\PP$, with full support, 
 such that, for
any $t \in [0,T]$ and 
 ${\mathbb P}$ 
 almost every $m \in \PP$,  the MFCP 
 \eqref{eq:MKV:V}
 issued from $(t,m)$
 has a unique optimal trajectory.
 In fact, we also have that, for
any $t \in [0,T]$ and 
 ${\mathbb P}$ 
 almost every $m \in \PP$, 
 {$V(t,\cdot)$ has directional derivatives at $m$ 
 along $\mu-m=\cos(2 \pi k \cdot)$ and $\mu-m=\sin(2 \pi k \cdot)$, for any $k \in {\mathbb Z}^d \setminus \{0\}$, in 
 the sense specified in 
 \eqref{eq:delta m}.}

\end{metatheorem} 
This meta-statement is addressed in Section 
\ref{sec:4}. 
The notion of generalized solution is 
defined in 
Definition \ref{defn:HJB:gen}. 
The mains rigorous results 
encompassing 
Meta-Theorem 
\ref{meta:1}
are
Theorems
\ref{thm:value:is:gen:HJB}
and \ref{thm:uniqueness:HJB}. 
The definition and the properties of the measure $\PP$
are given in 
Theorem 
\ref{thm:probability:probability}. The existence of directional derivatives to $V$ are guaranteed by a tailor-made
version of  
Rademacher's theorem on 
$(\PP,{\mathbb P})$, which is rigorously stated in 
Theorem \ref{prop:rademacher}.

As for the MFG, we have:
\begin{metatheorem}
\label{meta:2}
There exists a notion of weak solution to the conservative master equation
\eqref{master:cons}
for which 
 existence and uniqueness 
 hold true, uniqueness being understood
everywhere in time, ${\mathbb P}$ almost everywhere in the measure argument and everywhere in the space argument, 
 for the same ${\mathbb P}$ as in the statement of 
 Meta-Theorem \ref{meta:1}. 
The hence unique solution coincides with the 
derivative of the value function $V$ 
with respect 
to the measure argument, which is known 
to exist
 in a directional sense from Meta-Theorem \ref{meta:1},  
everywhere in time and
${\mathbb P}$ almost everywhere 
in the measure argument.

Moreover, any classical 
solution to the 
master equation \eqref{master} is, after centering,  a solution of the conservative form of the equation.
\end{metatheorem} 
 The definition of a weak solution is 
given in Definition 
\ref{def:master:equation:gen}
and the main unique solvability result is stated in Theorem 
\ref{main:thm:MFG}. 
The last part of the statement is shown in Proposition 
\ref{prop:verification:classical:conservative}.

\section{Elements of Fourier Analysis}
\label{se:3}

We provide some elements of Fourier analysis that are needed in the rest of the work. 
The very basic idea of our approach is to regard a function $\phi : {\mathcal P}({\mathbb T}^d) \rightarrow {\mathbb R}$ as a function of the Fourier coefficients $(\widehat{m}^k)_{k \in {\mathbb Z}^d}$, defined by 
\begin{equation*}
\widehat{m}^k := \int_{{\mathbb T}^d} 
e^{\i 2 \pi k \cdot y} dm(y),
\end{equation*}
with $\i^2=-1$. For simplicity, we write $e_k$ for $e_k(x) = e^{\i 2 \pi k \cdot x }$. 
From time to time, we also write $e_k(\cdot)$ instead of $e_k$ in order to emphasize the fact that 
$e_k$ is a function (on the torus).

\subsection{Probability measures with finitely many non-zero Fourier coefficients}

\subsubsection{Bochner's theorem.}
We recall Bochner's theorem:  A complex-valued sequence $(\widehat{m}^k)_{k \in {\mathbb Z}^d}$ is the sequence of Fourier coefficients of 
a probability measure if and only if
\vskip 5pt

$(i)$ $\widehat{m}^0=1$, $\widehat{m}^{-k} = \overline{\widehat{m}^k}$ for $k \in {\mathbb Z}^d \setminus \{0\}$,  
\vskip 5pt

$(ii)$ for any $N \geq 1$, any $(z_k)_{k \in \{1,\cdots,N\}^d} \in {\mathbb C}^{\vert N \vert^d}$,
$\displaystyle 
\sum_{k,k' \in {\{1,\cdots,N\}^d}} \widehat{m}^{k-k'} z_k \overline z_{k'} \geq 0$.

Obviously, for $k,k' \in \{1,\cdots,N\}^d$, 
$k-k'$ belongs to $F_N:=\{-N+1,\cdots,N-1\}^d$.
Next, we introduce a few more notations in order to 
reformulate the above condition.  
For an element $k \in {\mathbb N}^d \setminus \{0\}$, we define $\sharp(k):=\inf\{ j\in \{1,\cdots,d\} : k_j \not =0\}$ (i.e., 
$\sharp(k)$ is the first-non zero coordinate in $k$). 
For $j \in \{1,\cdots,d\}$, we call 
\begin{equation*}
F_N^{+,j} := \bigl\{ k \in F_N : \sharp(k)=j \ \text{and} \  k_j >0\}, 
\end{equation*}
which allows us to let 
$F_N^+ :=
\bigcup_{j=1}^d 
F_N^{+,j}
=
 \{ k \in F_N \setminus \{0\} : k_{\sharp(k)} >0 \}$. 
 This notation is very convenient to eliminate the conjugaison constraints in 
 $(i)$. 
Indeed, we can rewrite 
\begin{equation*}
\begin{split}
\sum_{k,k' \in \{1,\cdots,N\}^d} \widehat{m}^{k-k'} z_k \overline z_{k'} 
 &=  \sum_{k \in \{1,\cdots,N\}^d} \vert z_k \vert^2
+ 
 \sum_{k,k' \in \{1,\cdots,N\}^d : k-k' \in F_N^{+}}
 \bigl(
  \widehat{m}^{k-k'}  z_k \overline z_{k'} 
 + 
 \overline{ \widehat{m}^{k-k'}  z_k \overline z_{k'} } \bigr).
 \end{split}
\end{equation*}
And then, we let
\begin{equation*}
\begin{split}
{\mathcal O}_N &:= \biggl\{ \bigl( \widehat{m}^k \bigr)_{k \in F_N^+} :
\inf_{\sum_{k \in  ({\mathbb N} \setminus \{0\})^d} \vert z_k \vert^2 \leq 1} 
 2 \Re \biggl[
  \sum_{k,k' \in ({\mathbb N} \setminus \{0\})^d : k-k' \in F_N^{+}}
  \widehat{m}^{k-k'}  z_k \overline z_{k'} 
\biggr] > -1 
\biggr\}.
\end{split}
\end{equation*}
It must be stressed that in the above definition, the infimum is taken over all sequences 
$(z_k)_{k \in 
 ({\mathbb N} \setminus \{0\})^d}$ such that $\sum_{k \in ({\mathbb N} \setminus \{0\})^d}
 \vert z_k\vert^2 \leq 1$. 
 In particular, 
 if we are given $(\widehat{m}^k)_{k \in F_N^+} \in {\mathcal O}_N$, we can complete the collection into a wider collection 
$(\widehat{m}^k)_{k \in {\mathbb Z}^d}$
by letting
\begin{equation}
\label{eq:Fourier:extensions}
\widehat{m}^0=1 \, ; \quad \widehat{m}^{-k}=\overline{\widehat{m}^k} \ \textrm{\rm if} \ k \in F_N^+ \, ; 
\quad \widehat{m}^k = 0 \ \textrm{\rm if} \ k \not \in F_N.
\end{equation}
By Bochner's theorem, 
the extended collection 
$(\widehat{m}^k)_{k \in {\mathbb Z}^d}$
hence defines a probability measure, which we may denote by 
${\mathscr I}_N((\widehat{m}^k)_{k \in F_N^+})$.
Below, we thus say that a probability measure $m \in {\mathcal P}({\mathbb T}^d)$ belongs to 
${\mathcal P}_N$ if 
$(\widehat{m}^k)_{k \in F_N^+} \in {\mathcal O}_N$
and 
$\widehat{m}^k=0$ if $k \not \in F_N$.  
In this sense, 
${\mathcal O}_N$ and ${\mathcal P}_N$ are one-to-one (through the mapping ${\mathscr I}_N$). 

Equivalently, elements 
of ${\mathcal P}_N$ can be identified with strictly positive density  
measures on ${\mathbb T}^d$ whose Fourier coefficients above $N$ are equal to $0$. 
Indeed, if $m \in {\mathcal P}_N$, then we can find $c>1$ such that 
\begin{equation*}
\inf_{\sum_{k \in  ({\mathbb N} \setminus \{0\})^d} \vert z_k \vert^2 \leq 1} 
 2 \Re \biggl[
  \sum_{k,k' \in ({\mathbb N} \setminus \{0\})^d : k-k' \in F_N^{+}}
  \widehat{m}^{k-k'}  z_k \overline z_{k'} 
\biggr] > -1/c,
\end{equation*} 
which, in turn, yields from Bochner's theorem 
that $1 + c (m-1)$ is a probability measure, namely 
$m \geq 1-1/c$. In the end, 
${\mathcal O}_N$ can be reformulated as
\begin{equation}
\label{eq:writing:2:ON}
\begin{split}
{\mathcal O}_N &= \biggl\{ \bigl( \widehat{m}^k \bigr)_{k \in F_N^+} :
\inf_{x \in {\mathbb T}^d}
 \biggl[ 
 1 + 
  2 \Re \biggl(
  \sum_{k \in  F_N^{+}}
  \widehat{m}^{k} e_{-k}(x) 
  \biggr)
\biggr] 
> 0 
\biggr\}.
\end{split}
\end{equation}

Next, we clearly have
\begin{prop}
\label{prop:3}
When complex numbers are identified with 
two-dimensional real vectors, 
${\mathcal O}_N$ is an 
open subset of real dimension $D_N:=2\vert F_N^+\vert = 2 \sum_{j=1}^d \vert F_N^{+,j} \vert$. 
The dimension $D_N$ is less than $2 (2 N-1)^d$.
\end{prop}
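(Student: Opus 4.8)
The plan is to establish the two assertions separately: that $\mathcal{O}_N$ is open in the ambient real vector space $\mathbb{C}^{F_N^+} \cong \mathbb{R}^{D_N}$, and that $|F_N^+| = \tfrac12\bigl((2N-1)^d - 1\bigr)$. For openness I would work with the reformulation \eqref{eq:writing:2:ON}, which is the shortest route. Setting
\[
\Phi\bigl((\widehat{m}^k)_{k \in F_N^+}\bigr) := \inf_{x \in \mathbb{T}^d} \Bigl[ 1 + 2 \Re\Bigl( \sum_{k \in F_N^+} \widehat{m}^k e_{-k}(x) \Bigr) \Bigr],
\]
we have $\mathcal{O}_N = \Phi^{-1}\bigl((0,\infty)\bigr)$. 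For each fixed $x \in \mathbb{T}^d$ the bracketed quantity is a real-affine function of the real and imaginary parts of $(\widehat{m}^k)_{k \in F_N^+}$ whose linear coefficients are bounded in modulus by $2$ (because $|e_{-k}(x)| = 1$), uniformly in $x$; hence $\Phi$, an infimum over the compact set $\mathbb{T}^d$ of a family of uniformly Lipschitz affine functions, is itself Lipschitz on $\mathbb{R}^{D_N}$, and in particular continuous. Since $(0,\infty)$ is open, $\mathcal{O}_N$ is open. If one prefers not to invoke \eqref{eq:writing:2:ON}, the same conclusion follows directly from the original definition: the map sending $(\widehat{m}^k)_{k \in F_N^+}$ to $\inf_{\sum_k |z_k|^2 \le 1} 2\Re\bigl[\sum_{k - k' \in F_N^+} \widehat{m}^{k-k'} z_k \overline{z}_{k'}\bigr]$ is Lipschitz, uniformly over the $\ell^2$-ball, since that form is dominated by $2\sum_{k \in F_N^+}|\widehat{m}^k|$ times $\|z\|_{\ell^2}^2$.

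To deduce that $\mathcal{O}_N$ actually has real dimension $D_N$, I would note it is nonempty — the zero vector lies in it, as it corresponds via $\mathscr{I}_N$ to the normalized Lebesgue measure on $\mathbb{T}^d$ and $\Phi(0) = 1 > 0$ — and a nonempty open subset of $\mathbb{R}^{D_N}$ contains a ball, hence has (topological or Hausdorff) dimension exactly $D_N$. Here $D_N = 2|F_N^+| = 2\sum_{j=1}^d |F_N^{+,j}|$ because the sets $F_N^{+,j}$ are pairwise disjoint, the value $\sharp(k) = j$ determining $j$ uniquely.

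Finally, for the cardinality bound I would use a parity argument. The cube $F_N = \{-N+1, \dots, N-1\}^d$ has $(2N-1)^d$ elements and is invariant under $k \mapsto -k$; for $k \in F_N \setminus \{0\}$ the coordinate $k_{\sharp(k)}$ is nonzero and $\sharp(-k) = \sharp(k)$, so exactly one of $k$ and $-k$ has positive first nonzero coordinate, i.e. $F_N \setminus \{0\} = F_N^+ \sqcup (-F_N^+)$. Hence $|F_N^+| = \tfrac12\bigl((2N-1)^d - 1\bigr)$ and $D_N = (2N-1)^d - 1 < 2(2N-1)^d$ (in fact $D_N < (2N-1)^d$). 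There is no genuine obstacle in this proposition; the single point deserving a line of care is the continuous — indeed Lipschitz — dependence of the infimum defining $\mathcal{O}_N$ on the Fourier coefficients, which is exactly why I would route the openness argument through the compact-in-$x$ formulation \eqref{eq:writing:2:ON}.
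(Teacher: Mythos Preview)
Your proof is correct and more thorough than the paper's, which is a two-line argument addressing only the upper bound via the trivial inclusion $F_N^+ \subset F_N$ with $|F_N| = (2N-1)^d$; the paper does not spell out the openness argument at all. Your parity computation $|F_N^+| = \tfrac12\bigl((2N-1)^d - 1\bigr)$ is sharper than what the paper records, and your explicit continuity argument for $\Phi$ via \eqref{eq:writing:2:ON} fills in what the paper leaves implicit, but the underlying idea is the same.
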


\begin{proof}
The proof for the upper bound of $D_N$ is as follows. We observe that 
$F_N^+ \subset F_N$ and $\vert F_N \vert \leq (2N-1)^d$. 
Therefore, there are at most $(2N-1)^d$ complex-valued entries in 
elements of ${\mathcal O}_N$ and, therefore, at most 
$2 (2N-1)^d$ real-valued entries. 
\end{proof}

\subsubsection{Bochner-Herglotz' Theorem}
\label{subsubse:BH}
In fact, there is a systematic construction of elements of ${\mathcal P}_N$. 
This is based on Bochner-Herglotz' theorem, which we recall now.  
The
function 
\begin{equation*}
f_N : \theta \in {\mathbb T}^d \mapsto 
\frac1{N^d} \sum_{k,k' \in \{1,\cdots,N\}^d}
\exp\bigl(  \i 2 \pi (k-k') \cdot \theta \bigr)
=
\frac1{N^d} 
\biggl\vert \sum_{k \in \{1,\cdots,N\}^d}
\exp\bigl(  \i 2 \pi k \cdot \theta \bigr)
\biggr\vert^2 
\end{equation*}
is a density on ${\mathbb T}^d$. Its Fourier coefficients are given by  
 \begin{equation*}
\begin{split}
&(i) \quad \widehat{f}^k_N = \prod_{j=1}^d \bigl( 1 - \frac{\vert k_j\vert}{N} \bigr), \quad k \in F_N,
\\
&(ii) \quad \widehat{f}^k_N =0, \quad k \not \in F_N.
\end{split}
\end{equation*}
If $m \in {\mathcal P}({\mathbb T}^d)$, then 
the convolution 
$m * f_N$ is (obviously) a probability measure and
its Fourier coefficients are given by 
\begin{equation*}
\begin{split}
&(i) \  \widehat{m * f_N}^k = \prod_{j=1}^d \bigl( 1 - \frac{\vert k_j\vert}{N} \bigr) \widehat{m}^k, \quad
\textrm{\rm if} \ k \in F_N \ ; 
\quad 
(ii) \  
\widehat{m * f_N}^k =0, \quad \textrm{\rm if} \ k \not \in F_N.
\end{split}
\end{equation*}
Obviously,
by 
\eqref{eq:writing:2:ON},
 we have
\begin{prop}
Let $m \in {\mathcal P}({\mathbb T}^d)$. 
Then, 
$m * f_N \in {\mathcal P}_N$ if and only if 
$\inf_{x \in {\mathbb T}^d} (m*f_N)(x)>0$.
\end{prop}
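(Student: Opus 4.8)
The statement to prove is the very last proposition: for $m \in \mathcal{P}(\mathbb{T}^d)$, one has $m * f_N \in \mathcal{P}_N$ if and only if $\inf_{x \in \mathbb{T}^d}(m * f_N)(x) > 0$. The plan is a direct argument combining the two characterizations of $\mathcal{P}_N$ already established in the excerpt. First I would note that, by the computation recorded just before the statement, the Fourier coefficients of $m * f_N$ vanish outside $F_N$; in particular the condition ``$\widehat{m * f_N}^k = 0$ if $k \not\in F_N$'' in the definition of $\mathcal{P}_N$ is automatically satisfied. Hence membership in $\mathcal{P}_N$ reduces to checking that $(\widehat{m * f_N}^k)_{k \in F_N^+} \in \mathcal{O}_N$.

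For the forward implication, I would invoke the reformulation \eqref{eq:writing:2:ON} of $\mathcal{O}_N$: if $(\widehat{m * f_N}^k)_{k \in F_N^+} \in \mathcal{O}_N$, then by \eqref{eq:writing:2:ON} the quantity $1 + 2\Re\bigl(\sum_{k \in F_N^+} \widehat{m * f_N}^k e_{-k}(x)\bigr)$ is strictly positive for all $x$. But this quantity is exactly the density $(m * f_N)(x)$, since $m * f_N$ has Fourier coefficients supported on $F_N$ (so its density is the trigonometric polynomial $\sum_{k \in F_N} \widehat{m*f_N}^k e_{-k}(\cdot)$, and using $\widehat{m*f_N}^0 = 1$ together with the conjugation symmetry $\widehat{m*f_N}^{-k} = \overline{\widehat{m*f_N}^k}$, the sum over $F_N$ collapses to $1 + 2\Re(\sum_{k \in F_N^+}\widehat{m*f_N}^k e_{-k})$). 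Therefore $\inf_{x}(m * f_N)(x) > 0$.

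For the reverse implication, suppose $\inf_x (m * f_N)(x) > 0$. Since $m * f_N$ is a genuine probability measure with density equal to the trigonometric polynomial above, strict positivity of that density is precisely the condition inside the braces in \eqref{eq:writing:2:ON}, so $(\widehat{m * f_N}^k)_{k \in F_N^+} \in \mathcal{O}_N$; combined with the vanishing of the coefficients outside $F_N$, this gives $m * f_N \in \mathcal{P}_N$. I do not expect any serious obstacle here: the only small point requiring care is the bookkeeping that turns the Fourier sum over $F_N$ into the expression $1 + 2\Re(\sum_{k \in F_N^+}\cdots)$ appearing in \eqref{eq:writing:2:ON}, which is the same rewriting already carried out in the excerpt when the conjugation constraints $(i)$ of Bochner's theorem were eliminated via the set $F_N^+$. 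Once that identification of the density with the bracketed expression in \eqref{eq:writing:2:ON} is in place, the equivalence is immediate in both directions.
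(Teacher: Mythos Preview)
Your proposal is correct and is exactly the argument the paper has in mind: the paper simply writes ``Obviously, by \eqref{eq:writing:2:ON}, we have'' and states the proposition without further proof. You have spelled out precisely that obvious step---the Fourier coefficients of $m*f_N$ vanish outside $F_N$ by construction, and then \eqref{eq:writing:2:ON} identifies membership of $(\widehat{m*f_N}^k)_{k\in F_N^+}$ in $\mathcal{O}_N$ with strict positivity of the density $(m*f_N)(x)=1+2\Re\bigl(\sum_{k\in F_N^+}\widehat{m*f_N}^k e_{-k}(x)\bigr)$.
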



\subsubsection{Weak convergence of $(f_N)_{N \geq 1}$}
Clearly, $(f_N)_{N \geq 1}$ converges in the weak sense to $\delta_0$. 
The following result is completely standard. Since we will use it many times, we feel it more convenient to have it in the form of a lemma.
\begin{lem}
\label{lem:weak:convergence}
Let ${\mathcal K}$ be a compact subset of ${\mathcal C}({\mathbb T}^d)$. Then 
\begin{equation*}
\lim_{N \rightarrow \infty} 
\sup_{h \in {\mathcal K}} 
\sup_{x \in {\mathbb T}^d} 
\bigl\vert \bigl( h * f_N\bigr) (x) - h(x) \bigr\vert =0.
\end{equation*}
\end{lem}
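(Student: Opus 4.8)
The plan is to prove the uniform convergence $h * f_N \to h$ over a compact set $\mathcal{K} \subset \mathcal{C}(\mathbb{T}^d)$ by combining the equicontinuity coming from compactness with the fact that $(f_N)_{N\geq 1}$ is an approximate identity. First I would fix $\varepsilon > 0$. Since $\mathcal{K}$ is compact in $\mathcal{C}(\mathbb{T}^d)$ (with the uniform norm), the Arzelà--Ascoli theorem gives uniform equicontinuity: there exists $\delta > 0$ such that for all $h \in \mathcal{K}$ and all $x,y \in \mathbb{T}^d$ with $d_{\mathbb{T}^d}(x,y) \leq \delta$, one has $|h(x) - h(y)| \leq \varepsilon$. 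Compactness also provides a uniform bound $\sup_{h \in \mathcal{K}} \|h\|_\infty =: R < \infty$.

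Next I would write, for any $h \in \mathcal{K}$ and $x \in \mathbb{T}^d$, using that $f_N$ is a probability density (so $\int_{\mathbb{T}^d} f_N(\theta)\, d\theta = 1$),
\begin{equation*}
\bigl( h * f_N \bigr)(x) - h(x) = \int_{\mathbb{T}^d} \bigl( h(x - \theta) - h(x) \bigr) f_N(\theta)\, d\theta,
\end{equation*}
and then split the integral over the region $\{ d_{\mathbb{T}^d}(\theta, 0) \leq \delta \}$ and its complement. On the near region the integrand is bounded in absolute value by $\varepsilon f_N(\theta)$, hence contributes at most $\varepsilon$. On the far region the integrand is bounded by $2R f_N(\theta)$, so the contribution is at most $2R \int_{\{ d_{\mathbb{T}^d}(\theta,0) > \delta \}} f_N(\theta)\, d\theta$. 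The key point is that this tail mass tends to $0$ as $N \to \infty$, uniformly in $x$ and $h$ (it does not depend on either): this follows from the weak convergence $f_N \to \delta_0$, or can be seen directly from the explicit formula $f_N(\theta) = N^{-d} |\sum_{k \in \{1,\dots,N\}^d} e^{\i 2\pi k \cdot \theta}|^2$, which factorizes as a product of one-dimensional Fejér kernels whose mass concentrates at the origin. Choosing $N$ large enough that $2R \int_{\{d_{\mathbb{T}^d}(\theta,0) > \delta\}} f_N(\theta)\, d\theta \leq \varepsilon$ yields $\sup_{h \in \mathcal{K}} \sup_{x \in \mathbb{T}^d} |(h * f_N)(x) - h(x)| \leq 2\varepsilon$ for all such $N$, which is the claim since $\varepsilon$ was arbitrary.

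There is no serious obstacle here; the only mild care needed is to make the tail estimate genuinely uniform. This is immediate because the quantity $\int_{\{d_{\mathbb{T}^d}(\theta,0) > \delta\}} f_N(\theta)\, d\theta$ depends only on $N$ and $\delta$, not on $x$ or $h$. One can either invoke the standard fact that an approximate identity on a compact group has vanishing tail mass, or verify it by the product structure of $f_N$: in one dimension, $\theta \mapsto N^{-1}|\sum_{k=1}^N e^{\i 2\pi k\theta}|^2$ is (up to the usual normalization) the Fejér kernel, for which $\int_{\delta \leq |\theta| \leq 1/2} f_N \to 0$ is classical, and the $d$-dimensional tail is controlled by a union bound over coordinates. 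Hence the lemma follows.
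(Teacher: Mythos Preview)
Your proof is correct. The paper does not actually give a proof of this lemma; it simply remarks that the result is ``completely standard'' and states it without argument. Your approximate-identity argument via Arzel\`a--Ascoli equicontinuity and the near/far splitting is exactly the standard proof the authors have in mind.
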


\subsection{From probability measures on ${\mathcal O}_N$ to a probability measure on ${\mathcal P}({\mathbb T}^d)$}
\label{subse:PP}
In Subsection \ref{sec:6}, we construct a probability measure on ${\mathcal P}({\mathbb T}^d)$ that satisfies Theorem 
\ref{thm:probability:probability}
 below. 
Before we make the statement clear, we recall that, except when explicitly mentioned, the 
space ${\mathcal P}({\mathbb T}^d)$ is equipped with the $1$-Wasserstein distance.  
The Borel $\sigma$-algebra 
 on 
 ${\mathcal P}({\mathbb T}^d)$ is the $\sigma$-algebra generated by 
 the mappings
 $m \in {\mathcal P}({\mathbb T}^d) \mapsto \int_{{\mathbb T}^d} h(x) dm(x)$, for 
 $h$ in ${\mathcal C}({\mathbb T}^d)$. 
 Quite often, we use the following characterization of the Borel $\sigma$-algebra:
 \begin{lem}
 \label{lem:Borel}
 The Borel $\sigma$-algebra  
  ${\mathcal B}({\mathcal P}({\mathbb T}^d))$
  is generated by the mappings 
 $m \in {\mathcal P}({\mathbb T}^d) \mapsto \widehat m^k$, for $k \in {\mathbb N}^d$.  
 \end{lem}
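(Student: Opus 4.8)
Write $\mathcal F$ for the $\sigma$-algebra on $\mathcal P(\mathbb T^d)$ generated by the coefficient maps $m\mapsto \widehat m^k$ (with $\mathbb C$ identified with $\mathbb R^2$ and equipped with its Borel $\sigma$-algebra), and recall that, by the very definition adopted here, $\mathcal B(\mathcal P(\mathbb T^d))$ is generated by the maps $m\mapsto \int_{\mathbb T^d} h\,dm$, $h\in \mathcal C(\mathbb T^d)$. The plan is to establish the two inclusions $\mathcal F\subseteq \mathcal B(\mathcal P(\mathbb T^d))$ and $\mathcal B(\mathcal P(\mathbb T^d))\subseteq \mathcal F$ separately; neither step is difficult, the point being only to choose the right countable family of test functions.

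For the inclusion $\mathcal F\subseteq \mathcal B(\mathcal P(\mathbb T^d))$, I would simply observe that, for every $k$, $\Re[\widehat m^k]=\int_{\mathbb T^d}\cos(2\pi k\cdot y)\,dm(y)$ and $\Im[\widehat m^k]=\int_{\mathbb T^d}\sin(2\pi k\cdot y)\,dm(y)$, and that both $\cos(2\pi k\cdot\,)$ and $\sin(2\pi k\cdot\,)$ belong to $\mathcal C(\mathbb T^d)$. Hence the real and imaginary parts of $m\mapsto \widehat m^k$ are $\mathcal B(\mathcal P(\mathbb T^d))$-measurable by definition of $\mathcal B(\mathcal P(\mathbb T^d))$, so $m\mapsto\widehat m^k$ is measurable and $\mathcal F\subseteq \mathcal B(\mathcal P(\mathbb T^d))$.

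For the converse inclusion, since the maps $m\mapsto \int_{\mathbb T^d}h\,dm$, $h\in\mathcal C(\mathbb T^d)$, generate $\mathcal B(\mathcal P(\mathbb T^d))$, it suffices to show each of them is $\mathcal F$-measurable. Here I would invoke the density of trigonometric polynomials in $(\mathcal C(\mathbb T^d),\|\cdot\|_\infty)$ (Stone--Weierstrass, or Fej\'er's theorem): given $h\in\mathcal C(\mathbb T^d)$, pick finite sums $P_n=\sum_k c_k^n e_k$ with $\|P_n-h\|_\infty\to 0$. Then, for every $m$, $\bigl|\int_{\mathbb T^d}h\,dm-\int_{\mathbb T^d}P_n\,dm\bigr|\le \|h-P_n\|_\infty\to 0$, so that $m\mapsto \int_{\mathbb T^d}h\,dm$ is the pointwise limit of the maps $m\mapsto \int_{\mathbb T^d}P_n\,dm=\sum_k c_k^n\,\widehat m^k$. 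Each of the latter is a finite linear combination of coefficient maps $m\mapsto\widehat m^k$, $k\in\mathbb Z^d$, and every such map is $\mathcal F$-measurable (those whose multi-index lies outside the index set of the statement being recovered from the relation $\widehat m^{-k}=\overline{\widehat m^k}$). As a pointwise limit of $\mathcal F$-measurable maps is $\mathcal F$-measurable, we conclude that $m\mapsto \int_{\mathbb T^d}h\,dm$ is $\mathcal F$-measurable, hence $\mathcal B(\mathcal P(\mathbb T^d))\subseteq\mathcal F$, which finishes the proof.

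The only things one must be slightly careful about are that the linear span of the exponentials $e_k$ is dense in $\mathcal C(\mathbb T^d)$ (so that integration against an arbitrary continuous function is a pointwise, in fact uniform, limit of linear combinations of coefficient maps) and that measurability passes to pointwise limits; the conjugation relation $\widehat m^{-k}=\overline{\widehat m^k}$ is what permits the restriction of the index set as stated. In particular, this lemma uses neither the compactness of $(\mathcal P(\mathbb T^d),d_{W_1})$ nor any feature of the Wasserstein topology beyond the chosen generators of $\mathcal B(\mathcal P(\mathbb T^d))$.
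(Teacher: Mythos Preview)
Your proof is correct and follows essentially the same approach as the paper's: both show that $m\mapsto\int h\,dm$ is a pointwise limit of finite linear combinations of the coefficient maps $m\mapsto\widehat m^k$ by approximating $h$ with trigonometric polynomials. The only cosmetic difference is that the paper uses the specific Fej\'er-type kernel $f_N$ (writing $\int h\,dm=\lim_N\sum_{k\in F_N}\widehat m^k\widehat f_N^k\widehat h^{-k}$), whereas you invoke Stone--Weierstrass directly; and you also spell out the easy reverse inclusion $\mathcal F\subseteq\mathcal B(\mathcal P(\mathbb T^d))$, which the paper leaves implicit.
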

 \begin{proof}
It suffices to observe that, for any $h \in {\mathcal C}({\mathbb T}^d)$
and any $m \in {\mathcal P}({\mathbb T}^d)$, 
\begin{equation*}
 \int_{{\mathbb T}^d} h(x) dm(x) = 
 \lim_{N \rightarrow \infty} 
 \int_{{\mathbb T}^d} h(x) d\bigl( m* f_N \bigr) (x)
=
 \lim_{N \rightarrow \infty} 
\sum_{k \in F_N} \widehat{m}^k \widehat{f}_N^k \widehat{h}^{-k}.  
 \end{equation*}
 \end{proof}

 In brief, the probability measure provided by Theorem \ref{thm:probability:probability} 
 below is obtained by means of a limiting argument, 
which relies on the  following notations. For any $N \geq 1$, we equip 
${\mathcal O}_N$ with the (truncated) Gaussian density 
(in the definition below,  $\widehat{m}^k$ is implicitly regarded as 
a two-dimensional vector, namely $(\Re(\widehat{m}^k),\Im(\widehat{m}^k))$):
\begin{equation}
\label{eq:Gamma_N}
\Gamma_N \Bigl( \bigl( \widehat{m}^k \bigr)_{k \in F_N^+} \Bigr)
:=\frac1{Z_N} 
{\mathbf 1}_{{\mathcal O}_N}
\Bigl( \bigl( \widehat{m}^k \bigr)_{k \in F_N^+} \Bigr)
\exp \Bigl( - \sum_{k \in F_N^+} \vert k\vert^{2pd} { \vert \widehat{m}^k \vert^2} \Bigr),
\end{equation}
for a real $p \geq 5$ that is fixed throughout the paper, and define
on ${\mathcal P}({\mathbb T}^d)$  
${\mathbb P}_N:= \Gamma_N \circ {\mathscr I}_N^{-1}$,
where we recall that 
\begin{equation}
\label{eq:I_N}
{\mathscr I}_N : 
\bigl( \widehat{m}^k \bigr)_{k \in F_{N}^+}  \in {\mathcal O}_N \mapsto 
m =
1+ 2 
\sum_{k \in F_N^+} 
\Re \bigl[ 
\widehat{m}^k e_{-k}(\cdot) 
\bigr]
=
 \sum_{k \in F_N} \widehat{m}^k e_{-k}(\cdot) \in {\mathcal P}({\mathbb T}^d), 
\end{equation}
with the convention that $\widehat{m}_0=1$ and 
$\widehat{m}^{-k}= \overline{\widehat{m}^k}$ for 
$k \in F_N \setminus \{0\}$. (Since ${\mathscr I}_N$ is obviously continuous, ${\mathbb P}_N$ is well-defined.)
Since 
${\mathcal P}({\mathbb T}^d)$  
is compact, the sequence 
$({\mathbb P}_N)_{N \geq 1}$ has weak limits. These weak limits are the measures on ${\mathcal P}({\mathbb T}^d)$ that we are interested in. 
\vskip 5pt

Here is now our main statement in this subsection:
\begin{thm}
\label{thm:probability:probability}
The sequence $({\mathbb P}_N)_{N \geq 1}$ is weakly converging 
to a probability measure ${\mathbb P}$ on the Borel space ${\mathcal P}({\mathbb T}^d)$ equipped
with the $1$-Wasserstein measure. The latter satisfies the following items: 
\vskip 4pt

$(i)$ 
${\mathbb P}$ has full support and, for  
${\mathbb P}$ almost every $m \in {\mathcal P}({\mathbb T}^d)$, 
$m$ has a continuously differentiable strictly positive density. 
\vskip 4pt

$(ii)$ For any $N \geq 1$, 
the image of ${\mathbb P}$ by the projection mapping 
\begin{equation*}
\pi_N^{(1)} : m 
\in {\mathcal P}({\mathbb T}^d)
\mapsto 
\bigl( \widehat{m}^k \widehat{f}_{N}^k \bigr)_{k \in F_{N}^+} 
\in 
{\mathcal O}_{N} 
\end{equation*}
is absolutely continuous with respect to the Lebesgue measure (on ${\mathcal O}_{N}$).
Similarly, the image of the subprobability measure 
${\mathbf 1}_{{\mathcal P}_{N}}(m) \cdot {\mathbb P}$ by the projection mapping
\begin{equation*}
\pi_N^{(2)} : m 
\in {\mathcal P}({\mathbb T}^d)
\mapsto 
\bigl( \widehat{m}^k  \bigr)_{k \in F_{N}^+} 
\in 
{\mathbb R}^{2 \vert F_N^+ \vert} 
\end{equation*}
is supported by ${\mathcal O}_N$ and 
is
absolutely continuous with respect to the Lebesgue measure (on ${\mathcal O}_{N}$).
\vskip 4pt

$(iii)$ 
\begin{equation*}
\lim_{N \rightarrow \infty} 
\sup_{\phi : {\mathbb R}^{{D_{N}}} \rightarrow [-1,1]}
\biggl\vert \int_{{\mathcal P}({\mathbb T}^d)}
\phi\Bigl( (\widehat{m}^k)_{k \in F_{N}^+ } \Bigr) d\bigl( {\mathbb P} - {\mathbb P}_{N} \bigr)(m)
\biggr\vert
=0,
\end{equation*}
where, for any $N \geq 1$, $\phi$ in the argument of the supremum is required to be measurable. 
\end{thm}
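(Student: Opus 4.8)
The plan is to exhibit the limit ${\mathbb P}$ explicitly, as the push-forward of a Gaussian measure on Fourier coefficients conditioned on Bochner positivity, and then to deduce all four assertions from a single almost sure convergence statement for the positivity indicators. Concretely, set $G^+:=\bigcup_{N\ge 1}F_N^+=\{k\in{\mathbb Z}^d\setminus\{0\}:k_{\sharp(k)}>0\}$, so that ${\mathbb Z}^d$ is the disjoint union of $\{0\}$, $G^+$ and $-G^+$, and work on the weighted sequence space $\mathcal{S}:=\{a=(a_k)_{k\in G^+}\subset{\mathbb C}:\sum_{k\in G^+}|k|^{2q}|a_k|^2<\infty\}$ for a fixed $q$ with $d<q<pd$ (possible since $p\ge 5$), so that $\mathcal{S}$ embeds continuously into ${\mathcal C}({\mathbb T}^d)$ through $a\mapsto\mathscr{I}_\infty(a):=1+2\sum_{k\in G^+}\Re[a_k e_{-k}(\cdot)]$. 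I would equip $\mathcal{S}$ with the centred Gaussian measure $\gamma$ under which the coordinates $(a_k)_{k\in G^+}$ are independent, each $a_k$ (viewed as a two-dimensional real vector) having covariance proportional to $|k|^{-2pd}$ times the identity, so that the density of the $F_N^+$-marginal of $\gamma$ coincides, up to a constant, with the Gaussian factor in \eqref{eq:Gamma_N}. Because $p\ge5$, the exponent $2pd$ is far larger than needed for $\sum_{k\in G^+}|k|^{j}|a_k|$ to be $\gamma$-integrable for every fixed $j$; hence $\gamma$-a.s.\ the function $\mathscr{I}_\infty(a)$ is infinitely differentiable and its Fourier partial sums converge uniformly to it. Let $\mathcal{O}:=\{a\in\mathcal{S}:\inf_{x\in{\mathbb T}^d}\mathscr{I}_\infty(a)(x)>0\}$; it is open and contains the ball $\{\sum_{k\in G^+}|a_k|<1/2\}$, so, $\gamma$ having full topological support on $\mathcal{S}$, $\gamma(\mathcal{O})>0$. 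By Bochner's theorem $\mathscr{I}_\infty(a)\in\PP$ for $a\in\mathcal{O}$, and I set ${\mathbb P}:=(\mathscr{I}_\infty)_\ast(\gamma(\cdot\mid\mathcal{O}))$, a Borel probability measure on $\PP$ (Borel measurability is clear from Lemma~\ref{lem:Borel}, all Fourier coordinates of $\mathscr{I}_\infty(a)$ depending continuously on $a$).

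The heart of the argument is the claim that, for $\gamma$-a.e.\ $a\in\mathcal{S}$, $\mathbf{1}_{\mathcal{O}_N}\bigl((a_k)_{k\in F_N^+}\bigr)\longrightarrow\mathbf{1}_{\mathcal{O}}(a)$ as $N\to\infty$; recall from \eqref{eq:writing:2:ON} that $(a_k)_{k\in F_N^+}\in\mathcal{O}_N$ precisely when the $N$-th Fourier partial sum $1+2\Re\bigl(\sum_{k\in F_N^+}a_k e_{-k}\bigr)$ is everywhere positive. If $a\in\mathcal{O}$, uniform convergence of the partial sums to $\mathscr{I}_\infty(a)\ge\inf_x\mathscr{I}_\infty(a)(x)>0$ gives $(a_k)_{k\in F_N^+}\in\mathcal{O}_N$ for large $N$; if $\inf_x\mathscr{I}_\infty(a)(x)<0$ the partial sums become eventually negative near a point where $\mathscr{I}_\infty(a)<0$; the remaining case $\inf_x\mathscr{I}_\infty(a)(x)=0$ has to be shown $\gamma$-negligible, i.e.\ $\gamma(\partial\mathcal{O})=0$, which I would obtain by proving that the law of the functional $a\mapsto\inf_x\mathscr{I}_\infty(a)(x)$ under $\gamma$ has no atom at $0$. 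Granting this, tightness of $({\mathbb P}_N)_N$ is automatic by $d_{W_1}$-compactness of $\PP$, and by Lemma~\ref{lem:Borel}, Stone--Weierstrass and Lemma~\ref{lem:weak:convergence} (using $\Phi(m)\approx\Phi(m\ast f_M)$ to approximate any $\Phi\in{\mathcal C}(\PP)$ uniformly by continuous functions of $(\widehat m^k)_{k\in F_M^+}$) it suffices to compute, for fixed $M$ and bounded continuous $\psi$, the limit of $\mathbb{E}_{{\mathbb P}_N}[\psi((\widehat m^k)_{k\in F_M^+})]$. Under ${\mathbb P}_N$ the vector $(\widehat m^k)_{k\in F_N^+}$ has law $\Gamma_N=Z_N^{-1}\mathbf{1}_{\mathcal{O}_N}\rho_N$, with $\rho_N$ the $F_N^+$-marginal density of $\gamma$; the normalisers $Z_N=\gamma(\{(a_k)_{k\in F_N^+}\in\mathcal{O}_N\})$ stay bounded below by $\gamma(\{\sum_{k\in G^+}|a_k|<1/2\})>0$, so the a.s.\ convergence above and dominated convergence give $\mathbb{E}_{{\mathbb P}_N}[\psi((\widehat m^k)_{k\in F_M^+})]\to\gamma(\mathcal{O})^{-1}\mathbb{E}_\gamma[\mathbf{1}_{\mathcal{O}}\psi((a_k)_{k\in F_M^+})]=\mathbb{E}_{{\mathbb P}}[\psi((\widehat m^k)_{k\in F_M^+})]$, whence ${\mathbb P}_N\to{\mathbb P}$ weakly.

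For $(i)$, smoothness and strict positivity of the $\gamma$-a.s.\ density are built into the construction of $\gamma$ and $\mathcal{O}$, while full support follows because any $m\in\PP$ is $d_{W_1}$-approximated by $(1-\varepsilon)(m\ast f_N)+\varepsilon\mathbf{1}$, which equals $\mathscr{I}_\infty(a_0)$ for some $a_0\in\mathcal{O}$ with finitely many nonzero coordinates, hence in the support of $\gamma$; mapping a small $\mathcal{S}$-ball about $a_0$ through the continuous map $\mathscr{I}_\infty$ shows any $d_{W_1}$-ball carries positive ${\mathbb P}$-mass. For $(ii)$, the $F_N^+$-marginal of ${\mathbb P}$ has density proportional to $w_N\rho_N$ with $w_N:=\mathbb{E}_\gamma[\mathbf{1}_{\mathcal{O}}\mid\mathcal{F}_N]\in[0,1]$ and $\rho_N$ a nondegenerate Gaussian density, so it is absolutely continuous with respect to Lebesgue on ${\mathbb R}^{D_N}$; composing with the diagonal linear isomorphism $(\widehat m^k)_{k\in F_N^+}\mapsto(\widehat{f}_N^k\widehat m^k)_{k\in F_N^+}$ (with $\widehat{f}_N^k=\prod_{j}(1-|k_j|/N)>0$ on $F_N$) preserves absolute continuity, and the image lies in $\mathcal{O}_N$ because $1+2\Re(\sum_{k\in F_N^+}\widehat{m\ast f_N}^k e_{-k})=m\ast f_N\ge\inf_y m(y)>0$, ${\mathbb P}$-a.s.; the statement for $\mathbf{1}_{\mathcal{P}_N}\cdot{\mathbb P}$ composed with $\pi_N^{(2)}$ follows the same way (and is trivial whenever ${\mathbb P}(\mathcal{P}_N)=0$). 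For $(iii)$, the supremum is exactly the total variation distance between $\Gamma_N=Z_N^{-1}\mathbf{1}_{\mathcal{O}_N}\rho_N$ and $\gamma(\mathcal{O})^{-1}w_N\rho_N$; since $w_N\to\mathbf{1}_{\mathcal{O}}$ $\gamma$-a.s.\ by L\'evy's $0$--$1$ law, $\mathbf{1}_{\mathcal{O}_N}((a_k)_{k\in F_N^+})\to\mathbf{1}_{\mathcal{O}}$ $\gamma$-a.s.\ by the key claim, and $Z_N,\gamma(\mathcal{O})\to\gamma(\mathcal{O})>0$, dominated convergence gives that this total variation distance tends to $0$.

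The genuinely delicate step, and the one I expect to be the main obstacle, is the control of the Bochner-positivity truncation in the limit — concretely, establishing $\mathbf{1}_{\mathcal{O}_N}\to\mathbf{1}_{\mathcal{O}}$ $\gamma$-a.s., whose only nontrivial ingredient is that the conditioned Gaussian charges no mass on $\{a:\inf_x\mathscr{I}_\infty(a)(x)=0\}$, i.e.\ that the infimum of this Gaussian random field on ${\mathbb T}^d$ has an atomless law. Everything else — tightness, identification of the finite-dimensional Fourier marginals, absolute continuity, and the total-variation strengthening in $(iii)$ — is a soft consequence of this single fact together with dominated convergence and martingale convergence.
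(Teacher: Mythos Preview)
Your approach is genuinely different from the paper's and, modulo the one point you yourself flag, it would work. The paper never identifies ${\mathbb P}$ explicitly as a conditioned infinite-dimensional Gaussian; instead it proves a Cauchy-type statement for the finite-dimensional Fourier marginals of $({\mathbb P}_N)_{N\ge 1}$ by purely quantitative, self-contained estimates. The key device is the family of buffer sets
\[
A_{N_0}=\Bigl\{m:\ 1+\inf_{x}\textstyle\sum_{k\in F_{N_0}\setminus\{0\}}\widehat m^k e_{-k}(x)\ge(2N_0)^{-3d/2}\Bigr\},
\]
for which elementary Markov-type bounds on the Gaussian tails give $\inf_{N\ge N_0}{\mathbb P}_N\bigl(A_{N_0}\cap\{\,|\widehat m^k|\le a_0|k|^{-5d/2}\text{ for }k\in F_N^+\setminus F_{N_0}^+\}\bigr)\to 1$ as $N_0\to\infty$. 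This quantitative buffer is exactly what replaces your zero-boundary statement: the paper always works a fixed positive distance $(2N_0)^{-3d/2}$ away from $\partial\mathcal O_N$, letting that distance shrink only as $N_0\to\infty$, so it never needs to know that the limiting Gaussian puts no mass on the boundary of positivity. The trade-off is clear: your route exposes the structure of ${\mathbb P}$ and reduces (ii) and (iii) to martingale/dominated convergence in a few lines, while the paper's route is longer (a dozen lemmas) but entirely elementary and avoids any appeal to Gaussian-process theory.

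The step you isolate is a real gap, not a formality: you need $\gamma(\{a:\inf_x\mathscr I_\infty(a)(x)=0\})=0$, i.e.\ that the smooth stationary centred Gaussian field $Y(x)=\mathscr I_\infty(a)(x)-1$ satisfies ${\mathbf P}(\min_{x\in{\mathbb T}^d}Y(x)=-1)=0$. This is true but not soft; one clean argument is Bulinskaya-type: $\gamma$-a.s.\ the field $Y$ is $C^\infty$, and for each $x$ the Gaussian vector $(Y(x),\nabla Y(x))\in{\mathbb R}^{d+1}$ is nondegenerate (by stationarity it suffices to check this at $x=0$, where $Y(0)$ depends only on $\Re[a_k]$ and $\nabla Y(0)$ only on $\Im[a_k]$), hence a covering argument shows that $\gamma$-a.s.\ there is no $x$ with $(Y(x),\nabla Y(x))=(-1,0)$; but on $\{\min Y=-1\}$ any minimiser on the compact torus would be such a point. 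Alternatively one can invoke Ylvisaker/Tsirelson results on the absolute continuity of Gaussian suprema. Either way, without supplying this ingredient your reduction to ``a single almost sure convergence statement'' is not yet a proof, whereas the paper's approach is arranged precisely so that this question never arises.
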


Items $(i)$ and $(iii)$ will be used next, when addressing generalized and weak solutions to 
the HJB equation of the MFCP and the master equation of the MFG. 
In the rest of this section, we just make use of item $(ii)$. It serves us 
as a way to pass from properties that are almost everywhere satisfied on 
${\mathcal O}_{N}$ (under the finite-dimensional Lebesgue measure) 
to properties that are almost everywhere satisfied 
on 
${\mathcal P}({\mathbb T}^d)$ under ${\mathbb P}$ (or under ${\mathbf 1}_{{\mathcal P}_{N}} \cdot 
{\mathbb P}$). In this regard, it is worth observing that
 the need for considering the subprobability measure 
${\mathbf 1}_{{\mathcal P}_{N}} \cdot {\mathbb P}$ instead of the measure 
${\mathbb P}$ itself in item $(ii)$ comes from the fact that, for 
$m \in 
{\mathcal P}({\mathbb T}^d) \setminus 
{\mathcal P}_{N}$, 
the collection 
$( \widehat{m}^k )_{k \in F_{N}^+} $
may not be in ${\mathcal O}_{N}$.  

A useful tool to pass from 
Lebesgue almost everywhere properties on 
$({\mathcal O}_{N})_{N \geq 1}$ to ${\mathbb P}$ almost sure properties on ${\mathcal P}({\mathbb T}^d)$ is the following lemma: 

\begin{lem}
\label{lem:23}
Let ${\mathbb P}$ be as in the statement of Theorem 
\ref{thm:probability:probability}.
In addition, let, for any $N\geq 1$, $A_N$ be a full Borel subset of ${\mathcal O}_N$.

Then, we can find 
an event $A_\infty \in {\mathcal B}({\mathbb T}^d)$ such that
${\mathbb P}(A_\infty)=1$ and $\pi_N^{(1)}(A_\infty) \subset A_N$ for any $N \geq 1$. 
\end{lem}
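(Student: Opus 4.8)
The plan is to deduce the statement almost directly from item $(ii)$ of Theorem \ref{thm:probability:probability}: the push-forward $(\pi_N^{(1)})_* {\mathbb P}$ is absolutely continuous with respect to the Lebesgue measure on ${\mathcal O}_N$, hence it charges no Lebesgue-negligible subset of ${\mathcal O}_N$, and in particular none of the co-null sets $A_N$. The only point requiring a little care is that, for some (necessarily ${\mathbb P}$-negligible) measures $m$, the family $(\widehat m^k \widehat{f}_N^k)_{k \in F_N^+}$ need not lie in ${\mathcal O}_N$, so one first isolates this bad set.

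First I would record two elementary facts. For each $N \geq 1$, the map $\pi_N^{(1)}$, written as $m \mapsto (\widehat m^k \widehat{f}_N^k)_{k \in F_N^+} \in {\mathbb R}^{D_N}$, is continuous on $({\mathcal P}({\mathbb T}^d), d_{W_1})$, since each coordinate $m \mapsto \widehat m^k = \int_{{\mathbb T}^d} e_k\, dm$ is continuous for the weak topology and $\widehat{f}_N^k$ is a fixed scalar; hence $(\pi_N^{(1)})^{-1}(B)$ is a Borel subset of ${\mathcal P}({\mathbb T}^d)$ for every Borel $B \subset {\mathbb R}^{D_N}$, in particular for $B = A_N$. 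Next, by item $(i)$ of Theorem \ref{thm:probability:probability}, for ${\mathbb P}$-almost every $m$ the measure $m$ has a continuous, strictly positive density; for such an $m$ one has $(m * f_N)(x) \geq \inf_{y} m(y) > 0$ for all $x$, so $m * f_N$ has a continuous strictly positive density, and the reformulation \eqref{eq:writing:2:ON} of ${\mathcal O}_N$, applied to the probability measure $m * f_N$ whose Fourier coefficients restricted to $F_N^+$ are exactly $\pi_N^{(1)}(m)$, yields $\pi_N^{(1)}(m) \in {\mathcal O}_N$. I would denote by $\widetilde B_N$ the Borel set of such $m$, so that ${\mathbb P}(\widetilde B_N) = 1$ (this also follows from the very statement of item $(ii)$, since the image measure considered there is declared to be carried by ${\mathcal O}_N$).

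Then, for a fixed $N$, I would set $B_N := (\pi_N^{(1)})^{-1}(A_N) \subset \widetilde B_N$, so that $\widetilde B_N \setminus B_N = \{ m \in \widetilde B_N : \pi_N^{(1)}(m) \in {\mathcal O}_N \setminus A_N\}$. Since $A_N$ is a full Borel subset of ${\mathcal O}_N$, the set ${\mathcal O}_N \setminus A_N$ is Lebesgue-negligible, and item $(ii)$ of Theorem \ref{thm:probability:probability} gives ${\mathbb P}(\widetilde B_N \setminus B_N) = (\pi_N^{(1)})_* {\mathbb P}({\mathcal O}_N \setminus A_N) = 0$. Combined with ${\mathbb P}(\widetilde B_N) = 1$, this yields ${\mathbb P}(B_N) = 1$ for every $N \geq 1$.

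Finally I would take $A_\infty := \bigcap_{N \geq 1} B_N$: it is a Borel subset of ${\mathcal P}({\mathbb T}^d)$, it satisfies ${\mathbb P}(A_\infty) = 1$ as a countable intersection of ${\mathbb P}$-full Borel sets, and for every $m \in A_\infty$ and every $N \geq 1$ one has $\pi_N^{(1)}(m) \in A_N$, i.e.\ $\pi_N^{(1)}(A_\infty) \subset A_N$, which is the claim. There is no genuine obstacle here; the single subtle step, as noted above, is to handle the measures $m$ for which $\pi_N^{(1)}(m)$ escapes ${\mathcal O}_N$, which is disposed of by the ${\mathbb P}$-negligibility of the complement of $\widetilde B_N$ coming from item $(i)$. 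Once that is settled, the proof is a routine manipulation of absolute continuity and countable intersections.
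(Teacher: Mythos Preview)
Your proof is correct and follows essentially the same approach as the paper's: use the absolute continuity of $(\pi_N^{(1)})_*\mathbb{P}$ from item~$(ii)$ of Theorem~\ref{thm:probability:probability} to conclude that $(\pi_N^{(1)})^{-1}(A_N)$ has full $\mathbb{P}$-measure, then take $A_\infty$ as the countable intersection. The only difference is that you take extra care, via item~$(i)$, to ensure $\pi_N^{(1)}(m)$ actually lands in $\mathcal{O}_N$ for $\mathbb{P}$-a.e.\ $m$; the paper's proof simply treats $\pi_N^{(1)}$ as a map into $\mathcal{O}_N$ without comment, so your version is slightly more scrupulous on this point.
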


\begin{proof}
The measurability of the mapping $\pi_N^{(1)}$ is absolutely obvious since it is continuous. 
Then, we notice that 
\begin{equation*}
{\mathbb P}\Bigl( \bigl\{ m \in {\mathcal P}({\mathbb T}^d) : \pi_N^{(1)} (m) \in A_N^{\complement} \bigr\} \Bigr) 
= {\mathbb P} \circ \bigl( \pi_N^{(1)} \bigr)^{-1}\bigl(A_N^{\complement}\bigr) =0,
\end{equation*}
since
${\mathbb P} \circ (\pi^{(1)}_N)^{-1}$ is absolutely continuous with respect to the Lebesgue measure on 
${\mathcal O}_N$. 
Then, it suffices to let 
\begin{equation*}
A_{\infty} := \bigcap_{N \geq 1}
\Bigl\{ m \in {\mathcal P}({\mathbb T}^d) : \pi_N^{(1)}(m) \in A_N  \Bigr\}.  
\end{equation*}
\end{proof}

\subsection{Restriction of a function defined on ${\mathcal P}({\mathbb T}^d)$}

Throughout this subsection, we take a function $\phi : {\mathcal P}({\mathbb T}^d) \rightarrow {\mathbb R}$, which we regard as a function 
of $(\widehat{m}^k)_{k \in {\mathbb Z}^d}$. 
Under the assumption that $\phi$ has a linear functional derivative, we are interested in the shape of the derivatives of $\phi$, when computed with respect to the Fourier coefficients. 

\subsubsection{Connection between flat derivatives and derivatives w.r.t. Fourier coefficients}
We start with
\begin{prop}
\label{prop:3:7}
Assume that 
$\phi$ has a linear functional derivative at $m \in {\mathcal P}_N$. 
Then, for
$(\hat{r}^k)_{k \in F_N^+} \in {\mathbb R}^{2 \vert F_N^+\vert}$, denote
by $r$ the function
\begin{equation*}
r(\cdot) :=
2
 \sum_{k \in F_N^+} 
 \Re \bigl[ \widehat{r}^k 
e_{-k}(\cdot) \bigr] =
 \sum_{k \in F_N \setminus \{0\}} \widehat{r}^k 
e_{-k}(\cdot), 
\end{equation*}
with the convention that,  
in the above sum, 
$\widehat{r}^{-k} 
= \overline{\widehat{r}^k}$ 
if $k \in F_N^+$. Then,
for $\varepsilon$ small enough, 
$m + \varepsilon r \in {\mathcal P}_N$ and 
\begin{equation*}
\begin{split}
&\frac{d}{d \varepsilon}_{\vert \varepsilon =0} 
\phi \Bigl( 
 m + 
\varepsilon r \Bigr) 
\\
&= \sum_{k \in F_N} 
\widehat{r}^k
\int_{{\mathbb T}^d} \frac{\delta \phi}{\delta m} (m)(y) e_{-k}(y) dy
\\
&= 
\sum_{k \in F_N \setminus \{0\}} 
\widehat{r}^k
\reallywidehat{\displaystyle \frac{\delta \phi}{\delta m} (m)(\cdot)}^{-k}
= 
2\sum_{k \in F_N^+}
\Bigl( 
\Re \bigl[ 
\widehat{r}^k
\bigr]
\Re \Bigl[ 
\reallywidehat{\displaystyle \frac{\delta \phi}{\delta m} (m)(\cdot)}^{k}
\Bigr] 
+
\Im \bigl[ 
\widehat{r}^k
\bigr]
\Im \Bigl[ 
\reallywidehat{\displaystyle \frac{\delta \phi}{\delta m} (m)(\cdot)}^{k}
\Bigr] 
\Bigr). 
\end{split}
\end{equation*}
\end{prop}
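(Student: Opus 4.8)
The plan is to unravel the definitions and reduce everything to the defining property \eqref{eq:delta m} of the linear functional derivative. First I would verify the claim that $m+\varepsilon r \in {\mathcal P}_N$ for $\varepsilon$ small enough. Since $m \in {\mathcal P}_N$, the characterization \eqref{eq:writing:2:ON} tells us that $\inf_{x \in {\mathbb T}^d} m(x) > 0$; the perturbation $r$ is a (real-valued) trigonometric polynomial supported on frequencies in $F_N \setminus\{0\}$, hence continuous with $\int_{{\mathbb T}^d} r(x)\,dx = 0$, so $m + \varepsilon r$ is a signed measure of total mass $1$ whose density stays strictly positive for $\vert \varepsilon \vert$ small, and whose Fourier coefficients above $N$ remain zero. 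By \eqref{eq:writing:2:ON} again, $m+\varepsilon r \in {\mathcal P}_N$.

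Next I would compute the derivative. Writing $m_\varepsilon := m + \varepsilon r = (1-\varepsilon t)\, m + \varepsilon t\, \mu_\varepsilon$ is not quite the right move because $r$ is signed; instead, the cleanest route is to note that $\phi$ having a linear functional derivative at $m$ means \eqref{eq:delta m} holds, and that by a standard bilinearity/chain-rule argument (the signed measure $r$ can be split as a difference of two positive measures, or one simply invokes that the linear functional derivative gives the Gateaux differential along any signed direction of zero mass) one obtains
\begin{equation*}
\frac{d}{d\varepsilon}_{\vert \varepsilon=0} \phi(m+\varepsilon r) = \int_{{\mathbb T}^d} \frac{\delta\phi}{\delta m}(m)(y)\, dr(y) = \int_{{\mathbb T}^d} \frac{\delta\phi}{\delta m}(m)(y)\, r(y)\, dy,
\end{equation*}
where the second equality uses that $r$ is the density of the signed measure $dr$. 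The restriction to a direction with $\int r = 0$ is exactly what makes $\delta\phi/\delta m$ well-defined independently of the centering convention \eqref{eq:centring}, so no ambiguity arises.

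Then it remains to expand $r$ in the Fourier basis and match coefficients. Substituting $r(y) = \sum_{k \in F_N \setminus \{0\}} \widehat{r}^k e_{-k}(y)$ and using the definition $\reallywidehat{h}^{-k} = \int_{{\mathbb T}^d} h(y) e_{-k}(y)\, dy$ gives
\begin{equation*}
\int_{{\mathbb T}^d} \frac{\delta\phi}{\delta m}(m)(y)\, r(y)\, dy = \sum_{k \in F_N \setminus \{0\}} \widehat{r}^k \reallywidehat{\frac{\delta\phi}{\delta m}(m)(\cdot)}^{-k},
\end{equation*}
and since $\delta\phi/\delta m(m)(\cdot)$ is real-valued we have $\reallywidehat{\cdots}^{-k} = \overline{\reallywidehat{\cdots}^{k}}$; pairing each $k \in F_N^+$ with $-k$ and using the conjugation convention $\widehat{r}^{-k} = \overline{\widehat{r}^k}$, together with the elementary identity $z\bar w + \bar z w = 2(\Re z \Re w + \Im z \Im w)$, yields the last displayed expression. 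Adding back the $k=0$ term is harmless since $\widehat r^0 = 0$, which also reconciles the sum over $F_N$ with the sum over $F_N \setminus \{0\}$. None of these steps is a genuine obstacle; the only point requiring a little care is justifying the differentiation formula along the signed direction $r$ from \eqref{eq:delta m}, which is stated there only for directions pointing toward a probability measure $\mu$, but this is routine given that $r$ has zero mean and bounded density.
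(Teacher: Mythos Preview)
Your argument is correct and matches what the paper intends: it simply says ``The proof is quite straightforward'' and gives no details, so your write-up is already more complete than the paper's. One small remark: the convex-combination trick you dismiss actually works cleanly here---since $\inf_x m(x)>0$ and $r$ is a bounded zero-mean trigonometric polynomial, for $t$ large enough $\mu:=m+r/t$ is a genuine probability measure, and then $m+\varepsilon r=(1-\varepsilon t)m+\varepsilon t\mu$ reduces the computation directly to \eqref{eq:delta m}; this is the most transparent way to justify the step you flag as ``requiring a little care''.
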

The proof is quite straightforward. 
Below, 
we will write
\begin{equation}
\label{eq:prop:3:7:identification}
\begin{split}
&
\partial_{\Re[\widehat{m}^k]} \phi(m) 
 := 2 \Re \biggl[  \reallywidehat{\displaystyle \frac{\delta \phi}{\delta m} (m)(\cdot)}^{k} \biggr],
 \quad \partial_{\Im[\widehat m^k]} \phi(m)  
 := 2 \Im \biggl[
 \reallywidehat{\displaystyle \frac{\delta \phi}{\delta m} (m)(\cdot)}^{k} 
 \biggr],
 \end{split}
\end{equation}
which is rather abusive as it means that the function $\phi$ is implicitly regarded as a function 
of the Fourier coefficients.
Accordingly, we let, for $k \in F_N^+$, 
\begin{equation}
\label{eq:gradient:complex:000}
\begin{split}
\partial_{\widehat{m}^k} \phi(m) &:= 
\frac12 \partial_{\Re[\widehat{m}^k]} \phi(m)
-  \frac{\i}2 \partial_{\Im[\widehat{m}^k]} \phi(m)
=
 \reallywidehat{\displaystyle \frac{\delta \phi}{\delta m} (m)(\cdot)}^{-k} ,
\end{split}
\end{equation}
so that 
\begin{equation}
\label{eq:gradient:complex}
\begin{split}
\partial_{\widehat m^k} \phi(m) \widehat{r}^k
+ 
\overline{\partial_{\widehat m^k} \phi(m) \widehat{r}^k}
= 
\partial_{\Re[\widehat{m}^k]} \phi(m)
\Re[ \widehat{r}^k]
+ 
\partial_{\Im[\widehat{m}^k]} \phi(m)
\Im[ \widehat{r}^k]. 
\end{split}
\end{equation}
When $-k \in F_N^+$, we let 
$\partial_{\widehat{m}^k} \phi(m)=
\overline{\partial_{\widehat{m}^{-k}} \phi(m)}$. 
However,
the symbol $\partial_{\widehat{m}^k}$ is by no means an holomorphic derivative in the complex plain.
This is just a shorten notation that refers to (real) differentiability with respect to $(\Re(\widehat{m}^k),\Im(\widehat{m}^k))$. This
notation is fully legitimated by the fact that, each time this term appears, its complex conjugate also appears (in a common sum).

\subsubsection{Almost everywhere differentiability of restrictions of Lipschitz functions}
A function $\phi$ defined on ${\mathcal P}({\mathbb T}^d)$ can be restricted to ${\mathcal P}_N$. 
This induces a function, denoted by $\widetilde{\phi}_N$ and defined 
by
\begin{equation}
\label{eq:tilde:phi:N}
\widetilde \phi_N : \bigl(\widehat{m}^k \bigr)_{k \in F_N^+} 
\in {\mathcal O}_N 
\mapsto 
\phi
\circ {\mathscr I}_N
\Bigl(
 \bigl(\widehat{m}^k \bigr)_{k \in F_N^+} 
\Bigr)
= \phi
 \biggl( 
\sum_{k \in F_N}
\widehat{m}^k 
e_{-k}
\biggr),
\end{equation}
with $(\widehat{m}^k)_{k \in F_N}$
as in  \eqref{eq:Fourier:extensions} and 
${\mathscr I}_N$ as in 
\eqref{eq:I_N}.
When $m \in {\mathcal P}_N$, 
$\phi(m)$ and 
$\widetilde{\phi}_N((\widehat{m}^k )_{k \in F_N^+})$
(with 
$(\widehat{m}^k )_{k \in F_N^+}$
being here regarded as 
the Fourier coefficients of $m$)
coincide. For this reason, we sometimes merely write 
$\widetilde{\phi}_N((\widehat{m}^k )_{k \in F_N^+})$
as 
${\phi}((\widehat{m}^k )_{k \in F_N^+})$, 
even though the notation is rather abusive. 
Moreover, it is quite straightforward to see 
from the formula stated in Proposition 
\ref{prop:3:7}
that
the restriction of $\phi$ to ${\mathcal P}_N$ is differentiable at $m \in {\mathcal P}_N$
if and only if
 $\widetilde{\phi}_N$ is 
 differentiable (in the real sense) at $(\widehat{m}^k )_{k \in F_N^+}$.

The following statement is straightforward, but very useful:
\begin{prop}
\label{prop:3:8}
Let $\phi$ be a Lipschitz function with respect to the total variation distance. Then, for 
any $N \geq 1$, 
the 
function 
$\widetilde \phi_N$
is almost everywhere differentiable on 
${\mathcal O}_N$, when 
the latter is equipped with the Lebesgue measure on ${\mathcal O}_N$. 

In particular, the restriction of the function 
$\phi$ to ${\mathcal P}_N$ is almost everywhere differentiable on 
${\mathcal P}_N$, when the latter is equipped with the image 
of the Lebesgue measure on ${\mathcal O}_N$ by  
the canonical embedding ${\mathscr I}_N : (\widehat{m}^k)_{k \in F_N^+} \mapsto 
 \sum_{k \in F_N} \widehat{m}^k e_{-k}$, with $(\widehat{m}^k)_{k \in F_N}$
 being 
as in  \eqref{eq:Fourier:extensions}. 
%
%
\end{prop}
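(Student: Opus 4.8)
The plan is to reduce the statement to the classical finite-dimensional Rademacher theorem applied to the open set ${\mathcal O}_N \subset {\mathbb R}^{D_N}$ (open by Proposition \ref{prop:3}). For this it suffices to show that $\widetilde\phi_N$ is Lipschitz continuous on ${\mathcal O}_N$ when ${\mathbb R}^{D_N}$ is endowed with the Euclidean norm, and the only nontrivial point is that the embedding ${\mathscr I}_N$ is Lipschitz from $({\mathcal O}_N,\vert\cdot\vert)$ into $({\mathcal P}_N,d_{\rm TV})$.

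First I would establish this Lipschitz bound for ${\mathscr I}_N$. Given two vectors $(\widehat m^k)_{k\in F_N^+}$ and $(\widehat{m'}^k)_{k\in F_N^+}$ in ${\mathcal O}_N$, with associated densities $m = {\mathscr I}_N((\widehat m^k)_{k\in F_N^+})$ and $m'={\mathscr I}_N((\widehat{m'}^k)_{k\in F_N^+})$, one has
\[
d_{\rm TV}(m,m') = \| m - m' \|_1 \leq \| m - m' \|_2 = \Bigl( \sum_{k \in F_N} \bigl\vert \widehat m^k - \widehat{m'}^k \bigr\vert^2 \Bigr)^{1/2} = \sqrt 2\, \Bigl( \sum_{k \in F_N^+} \bigl\vert \widehat m^k - \widehat{m'}^k \bigr\vert^2 \Bigr)^{1/2}.
\]
Here the first equality uses that $m,m'$ have densities (so that the supremum defining $d_{\rm TV}$ is attained through the sign of $m-m'$), the inequality is Cauchy--Schwarz on the torus of unit mass, the middle equality is Parseval's identity applied to $m - m' = \sum_{k\in F_N}(\widehat m^k - \widehat{m'}^k)\,e_{-k}$, and the last equality uses the conjugation relation $\widehat m^{-k} - \widehat{m'}^{-k} = \overline{\widehat m^k - \widehat{m'}^k}$ together with $\widehat m^0 = \widehat{m'}^0 = 1$. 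Since the right-hand side is exactly $\sqrt 2$ times the Euclidean distance in ${\mathbb R}^{D_N}$ between the two coefficient vectors (each complex coefficient being identified with its pair of real coordinates), this is precisely the sought Lipschitz estimate for ${\mathscr I}_N$.

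Then, composing with the $d_{\rm TV}$-Lipschitz function $\phi$, the function $\widetilde\phi_N = \phi \circ {\mathscr I}_N$ is Lipschitz on the open set ${\mathcal O}_N \subset {\mathbb R}^{D_N}$, and the classical Rademacher theorem gives its differentiability at Lebesgue-almost every point of ${\mathcal O}_N$. The ``in particular'' assertion follows immediately: as already observed right before the statement, the restriction of $\phi$ to ${\mathcal P}_N$ is differentiable at $m \in {\mathcal P}_N$ if and only if $\widetilde\phi_N$ is differentiable (in the real sense) at the corresponding vector $(\widehat m^k)_{k\in F_N^+}$; and the measure on ${\mathcal P}_N$ is by definition the pushforward under ${\mathscr I}_N$ of the Lebesgue measure on ${\mathcal O}_N$, so the image of a full Lebesgue-measure subset of ${\mathcal O}_N$ has full measure in ${\mathcal P}_N$.

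The argument is essentially routine; the one point requiring a little care is the Lipschitz bound for ${\mathscr I}_N$, namely controlling the genuinely infinite-dimensional total variation distance by the finite-dimensional Euclidean norm of the truncated Fourier data — which is exactly where the finiteness of $F_N$ and Parseval's identity enter.
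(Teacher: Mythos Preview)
Your proposal is correct and follows essentially the same approach as the paper: both arguments bound $d_{\rm TV}(m,m')$ by the Euclidean norm of the difference of Fourier coefficients via $\|m-m'\|_1\le\|m-m'\|_2$ and Parseval, and then invoke the classical finite-dimensional Rademacher theorem on the open set ${\mathcal O}_N$. Your write-up simply supplies more detail (the $\sqrt 2$ bookkeeping, the identification of $d_{\rm TV}$ with $\|m-m'\|_1$ for densities) than the paper's two-line version.
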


Notice that 
$\phi$ is 
assumed to be Lipschitz continuous with respect to the total variation distance. 
This includes the case when 
$\phi$ is Lipschitz continuous with respect to the $1$-Wasserstein distance.

\begin{proof}
It suffices to notice that, for two probability measures $m_1$ and $m_2$ in ${\mathcal P}_N$, 
\begin{equation*}
\bigl\vert \phi(m_1)- \phi(m_2) \bigr\vert 
\leq C \textrm{d}_{\textrm{TV}}(m_1,m_2) \leq C \biggl( \sum_{k \in F_N} \vert \widehat{m}_1^k - \widehat{m}_2^k \vert^2
\biggr)^{1/2},
\end{equation*}
where $C$ is the Lipschitz constant of the function $\phi$. 
We then apply standard (finite-dimensional) Rademacher's theorem. It says that $\phi$, when regarded as a function on ${\mathcal O}_N$, 
is almost everywhere differentiable.  
\end{proof}

By Lemma
\ref{lem:23}, we have the following corollary:

\begin{cor}
Let ${\mathbb P}$ be as in the statement of Theorem 
\ref{thm:probability:probability} and 
$\phi$ be a Lipschitz function with respect to the total variation distance.
Then, for ${\mathbb P}$-almost every
$m \in {\mathcal P}({\mathbb T}^d)$, 
for any $N \geq 1$, the restriction of the function $\phi$ to 
${\mathcal P}_N$ is differentiable at $m*f_N$.  
\end{cor}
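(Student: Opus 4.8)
\emph{Proof strategy.} The plan is to glue together three ingredients already available: the finite-dimensional Rademacher theorem on each slice $\mathcal{O}_N$ (Proposition \ref{prop:3:8}), the transfer of Lebesgue-almost-everywhere properties on $(\mathcal{O}_N)_{N\geq 1}$ to $\mathbb{P}$-almost-everywhere properties on $\mathcal{P}(\mathbb{T}^d)$ (Lemma \ref{lem:23}), and the regularity of $\mathbb{P}$-typical measures coming from item $(i)$ of Theorem \ref{thm:probability:probability}. The observation that makes everything fit is that $m*f_N$ is exactly $\mathscr{I}_N\bigl(\pi_N^{(1)}(m)\bigr)$: the Fourier coefficients of $m*f_N$ are $\widehat{m}^k\widehat{f}_N^k$ for $k\in F_N$ and vanish otherwise, so the family indexed by $F_N^+$ is precisely $\pi_N^{(1)}(m)$. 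By the observation preceding Proposition \ref{prop:3:8}, differentiability of the restriction of $\phi$ to $\mathcal{P}_N$ at $m*f_N$ is then equivalent to differentiability (in the real sense) of $\widetilde{\phi}_N$ at $\pi_N^{(1)}(m)$, and this is the statement I would establish.

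First I would invoke Proposition \ref{prop:3:8}: since $\phi$ is Lipschitz for $d_{\rm TV}$, for each $N\geq 1$ the map $\widetilde{\phi}_N$ is differentiable Lebesgue-almost everywhere on $\mathcal{O}_N$, so I may pick a Borel set $A_N\subset\mathcal{O}_N$ of full Lebesgue measure contained in its set of differentiability points. Next I would apply Lemma \ref{lem:23} to the family $(A_N)_{N\geq 1}$, which produces a Borel set $A_\infty\subset\mathcal{P}(\mathbb{T}^d)$ with $\mathbb{P}(A_\infty)=1$ and $\pi_N^{(1)}(A_\infty)\subset A_N$ for every $N$. Then I would intersect $A_\infty$ with the full-$\mathbb{P}$-measure set on which $m$ has a continuously differentiable, strictly positive density, provided by item $(i)$ of Theorem \ref{thm:probability:probability}; since $\mathbb{T}^d$ is compact, such an $m$ has density bounded below by some $c_m>0$, whence $(m*f_N)(x)=\int_{\mathbb{T}^d}m(y)f_N(x-y)\,dy\geq c_m>0$ for all $x$, so that $m*f_N\in\mathcal{P}_N$ by the characterization of $\mathcal{P}_N$ recalled in Subsection \ref{subsubse:BH} (in particular $\pi_N^{(1)}(m)\in\mathcal{O}_N$, so that $\mathscr{I}_N(\pi_N^{(1)}(m))$ makes sense). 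Finally, for $m$ in this full-$\mathbb{P}$-measure set and any $N\geq 1$, one has $\pi_N^{(1)}(m)\in A_N$, hence $\widetilde{\phi}_N$ is differentiable at $\pi_N^{(1)}(m)$, i.e.\ at the $F_N^+$-Fourier coefficients of $m*f_N\in\mathcal{P}_N$; by the equivalence recalled above this says exactly that the restriction of $\phi$ to $\mathcal{P}_N$ is differentiable at $m*f_N$, and this holds simultaneously for all $N$.

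I do not expect a genuine obstacle here, as this is essentially a bookkeeping corollary of results already proven; the mild points requiring care are the identification $m*f_N=\mathscr{I}_N(\pi_N^{(1)}(m))$ and the verification that $m*f_N$ really lies in $\mathcal{P}_N$ for $\mathbb{P}$-typical $m$ — so that ``differentiability of the restriction of $\phi$ to $\mathcal{P}_N$ at $m*f_N$'' is a meaningful object — which is precisely where item $(i)$ of Theorem \ref{thm:probability:probability} enters. Everything else reduces to the finite-dimensional Rademacher theorem together with the absolute continuity of $\mathbb{P}\circ(\pi_N^{(1)})^{-1}$ with respect to Lebesgue measure on $\mathcal{O}_N$, which is built into item $(ii)$ of Theorem \ref{thm:probability:probability} and already used in the proof of Lemma \ref{lem:23}.
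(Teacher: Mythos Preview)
Your proposal is correct and follows exactly the route the paper indicates: the paper simply writes ``By Lemma~\ref{lem:23}, we have the following corollary'' without spelling out the details, and what you wrote is precisely the intended unpacking---apply Proposition~\ref{prop:3:8} on each slice to get full-measure differentiability sets $A_N\subset\mathcal{O}_N$, then push these through Lemma~\ref{lem:23}. Your additional care in invoking item~$(i)$ of Theorem~\ref{thm:probability:probability} to ensure $m*f_N\in\mathcal{P}_N$ (so that $\pi_N^{(1)}(m)$ genuinely lies in $\mathcal{O}_N$) is a point the paper leaves implicit in its declaration that $\pi_N^{(1)}$ takes values in $\mathcal{O}_N$, and it is good that you made it explicit.
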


In fact, we have the following version of Rademacher's theorem on the entire $\PP$: 
\begin{thm}
\label{prop:rademacher}
Let ${\mathbb P}$ be as in the statement of Theorem 
\ref{thm:probability:probability} and 
$\phi$ be a Lipschitz function with respect to the total variation distance.
Then, 
for ${\mathbb P}$-almost every
$m \in {\mathcal P}({\mathbb T}^d)$, 
$\phi$ is differentiable at $m$ 
in the directions $\Re[e_k]$ and $\Im[e_k]$ (or equivalently with respect to $\Re[\widehat{m}^k]$ and 
$\Im[\widehat{m}^k]$) for any $k \in {\mathbb Z}^d \setminus \{0\}$. 

Moreover, the following convergences hold true (as $N$ tends to $\infty$) for the weak-star topology
$\sigma^*((L^\infty (\PP,{\mathbb P});(L^1 (\PP,{\mathbb P}))$:
\begin{equation}
\label{weak:convergence:derivatives}
\forall k \in {\mathbb Z}^d 
\setminus \{0\}, 
\quad 
\partial_{\widehat{m}^k} \phi \bigl( m * f_N) 
\underset{N \rightarrow \infty}{\rightharpoonup}
\partial_{\widehat{m}^k} \phi( m),
\end{equation}
where the derivative in the left-hand side is understood as 
the Lebesgue almost everywhere derivative of 
the restriction of $\phi$ to ${\mathcal P}_N$ whilst the derivative in the right-hand side is understood as the ${\mathbb P}$ almost everywhere derivative 
of $\phi$ (as given by the hence stated form of Rademacher's theorem on $\PP$). 
\end{thm}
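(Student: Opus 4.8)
The claim has two parts: first, that for ${\mathbb P}$-almost every $m$ the directional derivatives of $\phi$ along $\Re[e_k]=\cos(2\pi k\cdot)$ and $\Im[e_k]=\sin(2\pi k\cdot)$ exist for every $k\in{\mathbb Z}^d\setminus\{0\}$; second, the weak-$*$ convergence \eqref{weak:convergence:derivatives}. For the first part, fix $k\in{\mathbb Z}^d\setminus\{0\}$ and let $V_k$ be the two-dimensional space of signed measures spanned by $\cos(2\pi k\cdot)$ and $\sin(2\pi k\cdot)$, so that moving $m$ inside $m+V_k$ amounts to moving the single Fourier coefficient $\widehat m^k$. By Theorem~\ref{thm:probability:probability}$(i)$, for ${\mathbb P}$-almost every $m$ the density of $m$ is continuous and strictly positive; hence the slice $(m+V_k)\cap{\mathcal P}({\mathbb T}^d)=\{m+v:v\in V_k,\ \inf_x(m+v)(x)\ge 0\}$ is a two-dimensional convex set whose interior is non-empty and contains $m$, and on it $\phi$ is Lipschitz for the Euclidean metric since $d_{\rm TV}$ is comparable there to the Euclidean distance of the $\widehat m^k$-coordinates. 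The plan is to disintegrate ${\mathbb P}$ along the family of these slices — equivalently, with respect to the $\sigma$-algebra generated by all the Fourier coefficients other than $\widehat m^k$ — and to prove that the conditional law of $\widehat m^k$ is absolutely continuous with respect to the two-dimensional Lebesgue measure on the (open, convex) fibre of allowed values; equivalently again, that ${\mathbb P}$ is quasi-invariant under the shifts $m\mapsto m+s\cos(2\pi k\cdot)$ and $m\mapsto m+s\sin(2\pi k\cdot)$ for $|s|$ small relative to $\inf_x m(x)$. Granting this, the finite-dimensional Rademacher theorem applied on each slice, together with the absolute continuity of the conditionals and a Fubini argument over the disintegration, shows that $\phi$ admits the two directional derivatives along $V_k$ at ${\mathbb P}$-almost every $m$; intersecting the resulting full-measure sets over the countable family $k\in{\mathbb Z}^d\setminus\{0\}$ gives the first part.

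The absolute continuity of the conditional laws is the crux of the argument and is obtained by passing to the limit from the finite-dimensional approximations. For $N$ large enough that $k\in F_N^+$, the measure ${\mathbb P}_N$, read through the Fourier coordinates $(\widehat m^j)_{j\in F_N^+}$, is the truncated Gaussian $\Gamma_N$ of \eqref{eq:Gamma_N}; its conditional law of the coordinate $\widehat m^k$ given all the other coordinates in $F_N^+$ is the restriction to an open convex set of a nondegenerate two-dimensional Gaussian, and that convex set contains a ball centred at the current value of $\widehat m^k$ whose radius is bounded below in terms of $\inf_x m(x)$, so this conditional is absolutely continuous with a density bounded uniformly on $\{\inf_x m(x)\ge\varepsilon\}$ for each $\varepsilon>0$. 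To transfer this to ${\mathbb P}$, one uses Theorem~\ref{thm:probability:probability}$(iii)$: for every fixed $N_0$ the law under ${\mathbb P}$ of $(\widehat m^j)_{j\in F_{N_0}^+}$ is the total-variation limit as $N\to\infty$ of the corresponding marginals of ${\mathbb P}_N$, and is therefore absolutely continuous, so that every finite-dimensional conditional law of $\widehat m^k$ is absolutely continuous; and, using Theorem~\ref{thm:probability:probability}$(i)$ together with the uniform lower bounds on $\inf_x m(x)$ available along the construction of ${\mathbb P}$, the uniform bounds on the conditional densities pass to the limit (e.g.\ by a martingale argument along an increasing sequence of $\sigma$-algebras), which forces the conditional law of $\widehat m^k$ given \emph{all} the remaining Fourier coefficients to remain absolutely continuous. \textbf{This passage — upgrading the finite-dimensional absolute continuity and uniform bounds at level $N$ to the genuinely infinite-dimensional statement for ${\mathbb P}$ — is the main obstacle}, and it is where the quantitative information on ${\mathbb P}$ collected in the appendix is used.

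For the second part, Proposition~\ref{prop:3:8} and its corollary ensure that, for each $N$, the function $m\mapsto\partial_{\widehat m^k}\phi(m*f_N)$ is defined for ${\mathbb P}$-almost every $m$ and is bounded by a dimensional constant times the Lipschitz constant of $\phi$, uniformly in $N$; hence this family is relatively compact for $\sigma^*(L^\infty({\mathbb P}),L^1({\mathbb P}))$, and it is enough to identify every weak-$*$ limit point with $\partial_{\widehat m^k}\phi$. I would test against the cylindrical functions $\Psi(m)=\Phi\big((\widehat m^j)_{j\in F_M^+}\big)$, with $M$ fixed and $\Phi$ bounded and $C^1$, which are total in $L^1({\mathbb P})$ by Lemma~\ref{lem:Borel} and a routine approximation. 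For $N$ large (so that $k\in F_N^+$ and $N\ge M$), both $\partial_{\widehat m^k}\phi(m*f_N)$ and $\Psi(m)$ are functions of $\pi_N^{(1)}(m)=(\widehat m^j\widehat f_N^j)_{j\in F_N^+}\in{\mathcal O}_N$ alone — for $\Psi$ because $\widehat f_N^j\ne 0$ for $j\in F_M^+$, for the derivative because it equals $\partial_{w_k}\widetilde\phi_N$ evaluated at $\pi_N^{(1)}(m)$ — so the pairing $\int\partial_{\widehat m^k}\phi(m*f_N)\,\Psi(m)\,d{\mathbb P}$ rewrites as a finite-dimensional integral over ${\mathcal O}_N$ against the Lebesgue density of $(\pi_N^{(1)})_*{\mathbb P}$, which exists by Theorem~\ref{thm:probability:probability}$(ii)$ and whose behaviour (including near $\partial{\mathcal O}_N$) is controlled through Theorem~\ref{thm:probability:probability}$(iii)$ by comparison with the explicit rescaled-Gaussian density of $(\pi_N^{(1)})_*{\mathbb P}_N$. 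Writing $\partial_{\widehat m^k}\phi(m*f_N)$ as the ${\mathbb P}$-almost-everywhere limit of difference quotients $h^{-1}\big[\phi\big((m*f_N)+hr\big)-\phi(m*f_N)\big]$ (with $r$ the relevant real trigonometric mode), using the pointwise convergence $\phi(m*f_N)\to\phi(m)$ — valid ${\mathbb P}$-a.e., since $m*f_N\to m$ uniformly whenever $m$ has a continuous density, cf.\ Lemma~\ref{lem:weak:convergence} — and the convergence of the densities above, one identifies the limit as $\int\partial_{\widehat m^k}\phi(m)\,\Psi(m)\,d{\mathbb P}$. Since both the weak-$*$ limit point and the test function were arbitrary, \eqref{weak:convergence:derivatives} follows.
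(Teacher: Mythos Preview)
Your first part is correct in spirit and essentially matches the paper: disintegrate ${\mathbb P}$, apply the finite-dimensional Rademacher theorem on each slice, and conclude by absolute continuity of the conditionals. The only difference is the choice of disintegration: you condition on \emph{all} Fourier coefficients except $\widehat m^k$ (two-dimensional slices), whereas the paper conditions on the high modes $(\widehat m^j)_{j\in F_\infty^+\setminus F_N^+}$ (slices of dimension $2|F_N^+|$). Either works, and you correctly flag the conditional absolute continuity as the crux. The paper makes this step fully precise via Lemma~\ref{lem:integrationbyparts:1}, which shows that, on the good set $A_{N_0}\cap B_{N_0,\infty}$, the conditional law of the low modes $(\widehat m^j)_{j\in F_{N_0}^+}$ given the high modes is exactly $\Gamma_{N_0}$ restricted to $A_{N_0}$; your ``martingale argument'' sketch is plausible but not a substitute for this explicit identification.

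Your second part has a genuine gap. Writing $\partial_{\widehat m^k}\phi(m*f_N)$ as a limit of difference quotients and invoking $\phi(m*f_N)\to\phi(m)$ does not identify the weak-$*$ limit: you would need to swap the $h\to 0$ and $N\to\infty$ limits, which is not justified. The actual mechanism is an \emph{integration by parts} against ${\mathbb P}$, and for that you need more than absolute continuity of the conditionals: you need the explicit logarithmic derivative of ${\mathbb P}$ in direction $\widehat m^{k_0}$, which is $-2|k_0|^{2pd}\widehat m^{k_0}$ (inherited from the Gaussian $\Gamma_{N_0}$ in \eqref{eq:Gamma_N}). The paper uses Lemma~\ref{lem:integrationbyparts:1} to derive the identity
\[
\int_{\mathcal P(\mathbb T^d)}\phi(m)\,\partial_{\widehat m^{k_0}}\vartheta\bigl((\widehat m^j)_{j\in F_{N_0}^+}\bigr)\,d{\mathbb P}
=-\int_{\mathcal P(\mathbb T^d)}\Bigl[\partial_{\widehat m^{k_0}}\phi(m)-2|k_0|^{2pd}\widehat m^{k_0}\phi(m)\Bigr]\vartheta\,d{\mathbb P}+\epsilon_{N_0},
\]
and then compares with the analogous finite-level formula (Lemma~\ref{lem:W:mathcalW:Nn}, which performs the same integration by parts under ${\mathbb P}_N$) applied to any weak-$*$ limit $\widehat\Phi^{k_0}$ of $\partial_{\widehat m^{k_0}}\phi(\cdot*f_N)$. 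Both formulas produce the same right-hand side, forcing $\widehat\Phi^{k_0}=\partial_{\widehat m^{k_0}}\phi$. Your write-up mentions control of the finite-dimensional densities via Theorem~\ref{thm:probability:probability}$(iii)$, which is indeed an ingredient of Lemma~\ref{lem:W:mathcalW:Nn}, but you do not perform the integration by parts, and without it the identification cannot be closed.
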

Theorem 
\ref{prop:rademacher}
is proved in 
Subsection 
\ref{subse:proof:rademacher}. 
The identification of the 
${\mathbb P}$ almost everywhere derivative as the limit of finite dimensional derivatives, as stated in 
\eqref{weak:convergence:derivatives}, plays a key role in the identification of 
the solution of the weak formulation to the master equation given below.

\subsection{Projection of a function defined on ${\mathcal P}({\mathbb T}^d)$}

\begin{prop}
\label{prop:convolution:f_N}
Let $m \in {\mathcal P}({\mathbb T}^d)$ and $\phi : {\mathcal P}({\mathbb T}^d) \rightarrow {\mathbb R}$ be such that the restriction 
  of $\phi$ to ${\mathcal P}_N$ is differentiable at $m * f_N$, then 
the function 
\begin{equation*}
\phi \star f_N (\cdot) := \phi \bigl( f_N * \cdot \bigr),
\end{equation*}
is differentiable in any direction at $m$ and, for any 
$r \in {\mathcal P}({\mathbb T}^d)$,  
\begin{equation*}
\frac{d}{d \varepsilon}_{| \varepsilon =0} 
\phi \star f_N \bigl( (1- \varepsilon) m + \varepsilon r \bigr)  
= \int_{{\mathbb T}^d} \biggl( \int_{{\mathbb T}^d} \sum_{k \in F_N \setminus \{0\}} (\partial_{\widehat{m}^k} \phi) (m * f_N)  e_{k}(y) f_N(x-y) dy  \biggr) d \bigl( r-m\bigr)(x).
\end{equation*}
\end{prop}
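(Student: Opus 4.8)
The plan is to compute the directional derivative of $\phi \star f_N$ directly from its definition by composing with the smoothing map $\cdot \mapsto f_N * \cdot$, and then to identify the resulting expression with the stated formula by means of Proposition~\ref{prop:3:7}. First I would fix $m \in {\mathcal P}({\mathbb T}^d)$ and $r \in {\mathcal P}({\mathbb T}^d)$ and write, for $\varepsilon$ small,
$$
\phi \star f_N\bigl( (1-\varepsilon) m + \varepsilon r \bigr)
= \phi\bigl( f_N * \bigl( (1-\varepsilon) m + \varepsilon r \bigr) \bigr)
= \phi\bigl( (1-\varepsilon)(f_N * m) + \varepsilon (f_N * r) \bigr),
$$
where I have used the linearity of convolution. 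Since $f_N * m = m * f_N$ has all its Fourier coefficients supported in $F_N$, the path $\varepsilon \mapsto (1-\varepsilon)(m * f_N) + \varepsilon(f_N * r)$ stays, for $\varepsilon$ small enough, in the slice ${\mathcal P}_N$: indeed $m * f_N$ is a probability measure with a strictly positive density whose Fourier coefficients vanish outside $F_N$ (so it lies in the interior ${\mathcal P}_N$ by the characterization \eqref{eq:writing:2:ON}), $f_N * r$ likewise has Fourier coefficients supported in $F_N$, and convex perturbations by such measures keep the density strictly positive for $\varepsilon$ small. Hence the increment $(f_N * r) - (m * f_N)$ is a perturbation of the form handled in Proposition~\ref{prop:3:7}, with Fourier coefficients $\widehat{r}^k \widehat{f}_N^k - \widehat{m}^k \widehat{f}_N^k = \widehat{f}_N^k(\widehat{r}^k - \widehat{m}^k)$ for $k \in F_N \setminus \{0\}$.

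Next I would apply the differentiability hypothesis: the restriction of $\phi$ to ${\mathcal P}_N$ is differentiable at $m * f_N$, so by Proposition~\ref{prop:3:7} (and the notation \eqref{eq:gradient:complex:000} for $\partial_{\widehat{m}^k} \phi$) the one-sided derivative at $\varepsilon = 0$ exists and equals
$$
\frac{d}{d\varepsilon}_{|\varepsilon = 0} \phi \star f_N\bigl( (1-\varepsilon) m + \varepsilon r \bigr)
= \sum_{k \in F_N \setminus \{0\}} \bigl( \widehat{r}^k - \widehat{m}^k \bigr) \widehat{f}_N^k \, (\partial_{\widehat{m}^k} \phi)(m * f_N),
$$
where I use that $\partial_{\widehat{m}^k}\phi(m*f_N) = \widehat{[\delta\phi/\delta m](m*f_N)}^{-k}$ and that each term is paired with its complex conjugate in the sum so the expression is real. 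It then remains to rewrite this finite sum as the claimed integral against $d(r - m)$. Writing $\widehat{f}_N^k(\widehat{r}^k - \widehat{m}^k) = \widehat{f}_N^k \int_{{\mathbb T}^d} e_k(x)\, d(r-m)(x)$ (recall $\widehat{r}^k - \widehat{m}^k = \int e_k \, d(r-m)$ by definition of the Fourier coefficients), exchanging the finite sum with the integral, and using $\widehat{f}_N^k e_k(x) = \int_{{\mathbb T}^d} e_k(y) f_N(x - y)\, dy$ (the Fourier coefficient of $f_N$ times the character, which is the Fourier representation of the convolution kernel), I obtain exactly
$$
\int_{{\mathbb T}^d} \biggl( \int_{{\mathbb T}^d} \sum_{k \in F_N \setminus \{0\}} (\partial_{\widehat{m}^k} \phi)(m * f_N)\, e_k(y)\, f_N(x - y)\, dy \biggr) d(r - m)(x),
$$
which is the asserted formula. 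Finally, since the right-hand side is linear in $r$, the same computation with $r$ replaced by an arbitrary convex combination shows that $\phi \star f_N$ is in fact differentiable in every direction at $m$, not merely along paths toward probability measures.

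The only genuinely delicate point is the verification that the perturbed path remains in the open slice ${\mathcal P}_N$ for $\varepsilon$ small enough, so that Proposition~\ref{prop:3:7} genuinely applies; this rests on the strict positivity of the density of $m * f_N$ (which follows from Lemma~\ref{lem:weak:convergence} only giving positivity after convolution, so one should instead argue from the explicit form $m * f_N = \sum_{k \in F_N} \widehat{m}^k \widehat{f}_N^k e_{-k}$ together with the fact that $f_N * r \geq 0$ keeps the convex combination bounded below once $m * f_N$ is) — but in fact Proposition~\ref{prop:3:7} is stated for $m \in {\mathcal P}_N$ and perturbations $r$ of the specified trigonometric form with $\widehat r^k$ arbitrary real/imaginary parts, so it suffices to observe $m * f_N \in {\mathcal P}_N$ whenever its density is strictly positive, which holds for $\mathbb{P}$-a.e. $m$ by Theorem~\ref{thm:probability:probability}$(i)$ but must be assumed or checked in general. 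Everything else is a bookkeeping manipulation of Fourier coefficients, so I expect no substantive obstacle beyond this membership issue.
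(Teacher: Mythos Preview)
Your proposal is correct and follows essentially the same route as the paper's proof: apply Proposition~\ref{prop:3:7} to the path $\varepsilon \mapsto (1-\varepsilon)(m*f_N) + \varepsilon(r*f_N)$ inside ${\mathcal P}_N$, obtain the sum $\sum_{k \in F_N \setminus \{0\}} \partial_{\widehat m^k}\phi(m*f_N)\,(\widehat r^k-\widehat m^k)\widehat f_N^k$, and rewrite it as the stated integral. Your closing concern about membership in ${\mathcal P}_N$ is in fact already settled by the hypothesis: the phrase ``the restriction of $\phi$ to ${\mathcal P}_N$ is differentiable at $m*f_N$'' presupposes $m*f_N \in {\mathcal P}_N$, and since $r*f_N \geq 0$ the convex combination $(1-\varepsilon)(m*f_N)+\varepsilon(r*f_N)$ has density bounded below by $(1-\varepsilon)\inf_x (m*f_N)(x)>0$, hence stays in ${\mathcal P}_N$ for all $\varepsilon \in [0,1)$.
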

In words, the above statement allows to identify 
$\frac{\delta}{\delta m}[ \phi \star f_N (m)]$ as
\begin{equation*}
 \frac{\delta}{\delta m} \phi \star f_N (m)(\cdot ) =  \Bigl( \sum_{k \in F_N  \setminus \{0\}} (\partial_{\widehat{m}^k}  \phi)(m * f_N)  e_{k} \Bigr) * f_N(\cdot).
\end{equation*}

\begin{proof}
It suffices to observe 
from 
Proposition 
\ref{prop:3:7}
and
\eqref{eq:gradient:complex:000}
that 
\begin{equation*}
\begin{split}
\frac{d}{d \varepsilon}_{| \varepsilon =0} 
\phi \star f_N \bigl( (1- \varepsilon) m + \varepsilon r \bigr)  
&=
\sum_{k \in F_N \setminus \{0\}}
\partial_{\widehat{m}^k} \phi (m*f_N) \bigl(
\widehat{r}^k - \widehat{m}^k \bigr) \widehat{f}_N^k
\\
&= \int_{{\mathbb T}^d} 
\sum_{k \in F_N \setminus \{0\}}
\partial_{\widehat{m}^k} \phi (m*f_N)  \widehat{f}_N^k e_{k}(x) d \bigl( r- m\bigr)(x),
\end{split}
\end{equation*}
and the identity easily follows.
\end{proof}

\subsection{Mollification}
In line with the material 
that we have introduced so far, we introduce a mollification procedure
of real-valued functions
on ${\mathcal P}({\mathbb T}^d)$ 
 based on the Fourier coefficients 
of the measure. 

\begin{defn}
\label{def:admiss:threshold:smoothing}
For $\varepsilon \in (0,1)$ and $N \in {\mathbb N}$, we call $\delta_{N,\varepsilon}$ given by
\begin{equation*}
\delta_{N,\varepsilon} := \frac{\varepsilon}{d N^2 \vert F_N\vert}
\end{equation*}
the regularization threshold.
\end{defn}

The regularization threshold plays a key role in the whole paper. In particular, it manifests in the following definition:
\begin{defn}
\label{def:mollification}
Let $\rho$ be a smooth density on ${\mathbb R}^2$ with a support included in the ball of center $0$ and radius $\delta_{N,\varepsilon}$, for 
$\varepsilon$ and $N$ as in Definition \ref{def:admiss:threshold:smoothing}. Then, for a real-valued function $\phi$ defined on ${\mathcal P}({\mathbb T}^d)$, we call mollification 
$\phi^{N,\varepsilon,\rho}$ of $\phi$ the function 
\begin{equation*}
\phi^{N,\varepsilon,\rho}(m) := \int_{{\mathbb R}^{2\vert F_N^+\vert}}
\phi \Bigl( m^{N,\varepsilon}(r) * f_N \Bigr) \prod_{j \in F_N^+}   \rho\bigl(\Re[ \widehat{r}^j],\Im[ \widehat{r}^j] \bigr)  
\bigotimes_{j \in F_N^+} 
d \bigl( \Re[\widehat{r}^j],\Im[\widehat{r}^j] \bigr),
\end{equation*}
with the notation
\begin{equation*}
\begin{split}
m^{N,\varepsilon}(r) &:= 
\varepsilon \textrm{\rm Leb}_{d} 
+ (1-\varepsilon) 
\Bigl( m + 2 \sum_{j \in F_N^+} \Re \bigl[ \widehat{r}^j e_{-j} \bigr] \Bigr)
=
\varepsilon \textrm{\rm Leb}_{d} 
+ (1-\varepsilon) 
\Bigl( m + \sum_{j \in F_N \setminus \{0\}} \widehat{r}^j e_{-j} \Bigr),
\end{split}
\end{equation*}
and 
with the convention that $\widehat{r}^{-j} = \overline{\widehat{r}^j}$.  
\end{defn}

For the sake of convenience, we often write
\begin{equation*}
\phi^{N,\varepsilon,\rho}(m) = \int_{{\mathbb R}^{2\vert F_N^+\vert}}
\phi \Bigl( m^{N,\varepsilon}(r) * f_N \Bigr) 
\prod_{j \in F_N^+} \rho\bigl(  \widehat{r}^j \bigr)
\bigotimes_{j \in F_N^+} 
  d \widehat{r}^j ,
\end{equation*}
but again the notation is abusive. 

Of course, it should be stressed that, in the above proposition, 
the Fourier coefficients that belong to the support of 
$\rho$ satisfy the inequality
\begin{equation*}
\Bigl\vert \sum_{j \in F_N \setminus \{0\}} \widehat{r}^j e_{-j}(x) \Bigr\vert 
< \delta_{N,\varepsilon}  \vert F_N\vert < \varepsilon,
\end{equation*}
which guarantess that $m^{N,\varepsilon}(r)$ is a positive density. 

The key fact is that $\phi^{N,\varepsilon,\rho}$ only depends on $m$ through the Fourier coefficients 
$(\widehat{m}^k)_{k \in F_N}$. Obviously, it is a smooth function of the latter, which proves that 
$\phi^{N,\varepsilon,\rho}$ is differentiable with respect to $m$. 
We have:  
\begin{prop}
\label{prop:mollification}
For an integer $N$, a real $\varepsilon \in (0,1)$
and a smooth density $\rho$ with a support of radius less than $\delta_{N,\varepsilon}$, 
and for a bounded measurable real-valued function 
$\phi$ on 
${\mathcal P}({\mathbb T}^d)$, 
the
function $\phi^{N,\varepsilon,\rho}$ is continuously differentiable with respect to 
$m$, with the following writing
for the derivative: 
\begin{equation*}
\begin{split}
&\frac{\delta \phi^{N,\varepsilon,\rho}}{\delta m}(m)(x)
 = -  
\int_{{\mathbb R}^{2 \vert F_N^+\vert}}
\sum_{k \in F_N  \setminus \{0\}}  \Bigl[ \Bigl(\phi \bigl(m^{N,\varepsilon}(r) * f_N\bigr) e_{k} \Bigr) * f_N \Bigr] (x)
 \partial_{\widehat{r}^k}
 \biggl[
 \prod_{j \in F_N^+} \rho\bigl(  \widehat{r}^j \bigr) 
 \biggr]
\bigotimes_{j \in F_N^+} 
 d \widehat{r}^j, 
\end{split}
\end{equation*}
for $x \in {\mathbb T}^d$. 
Moreover, if
$\phi$ is Lipschitz continuous with respect to 
$d_{\textrm{\rm TV}}$, 
then the following formula also holds true: 
\begin{equation*}
\begin{split}
&\frac{\delta \phi^{N,\varepsilon,\rho}}{\delta m}(m)(x)
\\
&= (1-\varepsilon) 
\int_{{\mathbb R}^{2\vert F_N^+\vert}}
\sum_{k \in F_N  \setminus \{0\}} \Bigl[ \Bigl(\partial_{\widehat{m}_k}  \phi \bigl(m^{N,\varepsilon}(r) * f_N\bigr) e_{k} \Bigr) * f_N\Bigr] (x) 
\prod_{j \in F_N^+}  \rho\bigl(  \widehat{r}^j \bigr)  
\bigotimes_{j \in F_N^+} 
 d \widehat{r}^j,
\end{split}
\end{equation*} 
for $x \in {\mathbb T}^d$. 
\end{prop}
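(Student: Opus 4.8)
The plan is to reduce the whole statement to a finite-dimensional computation. The first observation is that, since convolution with $f_N$ kills every Fourier mode of index outside $F_N$, the argument $m^{N,\varepsilon}(r)*f_N$ of $\phi$ in Definition \ref{def:mollification} depends on $m$ only through the finite family $(\widehat m^k)_{k\in F_N^+}$; concretely one computes $m^{N,\varepsilon}(r)*f_N = 1 + (1-\varepsilon)\sum_{k\in F_N\setminus\{0\}}(\widehat m^k+\widehat r^k)\widehat f_N^k e_{-k}$. Hence $\phi^{N,\varepsilon,\rho}$ may be regarded as a function of $(\widehat m^k)_{k\in F_N^+}$ defined on an open neighbourhood of $\mathcal O_N$, the openness of the domain relying, exactly as in Definition \ref{def:mollification}, on the choice of the regularization threshold $\delta_{N,\varepsilon}$ of Definition \ref{def:admiss:threshold:smoothing}: it guarantees that $m^{N,\varepsilon}(r)*f_N$ is a strictly positive density for every $r$ in the support of $\bigotimes_{j\in F_N^+}\rho$ and every $m$ in a small neighbourhood.

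Second, I would perform the change of variables $\widehat r^j\mapsto \widehat r^j+\widehat m^j$ in the integral defining $\phi^{N,\varepsilon,\rho}(m)$. This translation of $\mathbb R^{2|F_N^+|}$ has unit Jacobian; it leaves the argument of $\phi$ with no explicit $m$-dependence and transfers the whole $m$-dependence onto the mollifier, which becomes $\prod_{j\in F_N^+}\rho(\widehat r^j-\widehat m^j)$. One checks, using $|F_N|\delta_{N,\varepsilon}<\varepsilon$ and $|\widehat m^j|\le 1$, that the argument of $\phi$ remains a strictly positive density on the (shifted) support of the mollifier. Since $\rho$ is smooth and compactly supported while $\phi$ is merely bounded and measurable, dominated convergence allows differentiation under the integral sign in $(\Re[\widehat m^k],\Im[\widehat m^k])$ for every $k\in F_N^+$ and to any order; in particular $\phi^{N,\varepsilon,\rho}$ is a $C^\infty$ function of the Fourier coefficients, and, the maps $m\mapsto\widehat m^k$ being continuous, it is continuously differentiable in $m$ in the sense of \eqref{eq:delta m}. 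Differentiating once, changing variables back, and then using \eqref{eq:delta m} together with \eqref{eq:prop:3:7:identification}--\eqref{eq:gradient:complex} to pass from the derivatives with respect to the Fourier coefficients to the flat derivative $\delta\phi^{N,\varepsilon,\rho}/\delta m$ (as in Proposition \ref{prop:3:7}) yields the first displayed formula; what remains is Fourier bookkeeping, recombining the contributions of conjugate modes $k$ and $-k$ into the quantities $\partial_{\widehat r^k}\bigl[\prod_{j\in F_N^+}\rho(\widehat r^j)\bigr]$ and using $e_k*f_N=\widehat f_N^k e_k$ to cast the result in convolution form. The centering condition \eqref{eq:centring} holds automatically since only modes of index in $F_N\setminus\{0\}$ occur.

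Third, for the second formula I would add the hypothesis that $\phi$ is Lipschitz with respect to $d_{\textrm{\rm TV}}$ and integrate by parts in the real variables $(\Re[\widehat r^j],\Im[\widehat r^j])_{j\in F_N^+}$ so as to move the derivative off the smooth compactly supported mollifier and onto the map $r\mapsto\phi(m^{N,\varepsilon}(r)*f_N)$, the boundary terms vanishing. By the estimate at the heart of Proposition \ref{prop:3:8}, namely $|\phi(m_1)-\phi(m_2)|\le C\bigl(\sum_{k\in F_N}|\widehat m_1^k-\widehat m_2^k|^2\bigr)^{1/2}$ for $m_1,m_2\in\mathcal P_N$, this map is Lipschitz, hence differentiable Lebesgue-almost everywhere, and the chain rule identifies its $\widehat r^k$-derivative with $(1-\varepsilon)$ times the derivative $\partial_{\widehat m_k}\phi$ of $\phi$ restricted to $\mathcal P_N$, evaluated at $m^{N,\varepsilon}(r)*f_N$, the factor $1-\varepsilon$ coming from the way $\widehat r$ enters $m^{N,\varepsilon}(r)$. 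Since the resulting integrand is bounded (by the Lipschitz constant) and is integrated only against the smooth mollifier, the almost everywhere chain rule is enough; recombining conjugate modes and using $e_k*f_N=\widehat f_N^k e_k$ once more gives the second displayed formula.

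The hard part is not any single computation but the combination of the two changes of variables with the constraint that the argument of $\phi$ stay a probability measure on ${\mathbb T}^d$ throughout: this is precisely what dictates the quantitative form of $\delta_{N,\varepsilon}$, and it is also what legitimizes the almost everywhere chain rule in the Lipschitz case, since one only ever differentiates along directions that keep the point inside $\mathcal P_N$, so that Proposition \ref{prop:3:8} applies on the relevant finite-dimensional slice. Everything else reduces to Fubini's theorem, dominated convergence and the Fourier identities recalled above.
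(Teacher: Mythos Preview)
Your proposal is correct and follows essentially the same route as the paper's own proof: write $\phi(m^{N,\varepsilon}(r)*f_N)$ as a function of $(\widehat m^k+\widehat r^k)_{k\in F_N^+}$, perform the translation in the $r$-variables to transfer the $m$-dependence onto $\rho$, differentiate under the integral sign and then pass from Fourier-coefficient derivatives to the flat derivative in the spirit of Proposition~\ref{prop:convolution:f_N}; the second formula is obtained from the first by integration by parts, using the almost-everywhere differentiability of the $d_{\textrm{TV}}$-Lipschitz function on the finite-dimensional slice (Proposition~\ref{prop:3:8}). Your write-up is in fact more detailed than the paper's sketch, which refers to the appendix of \cite{DelarueTse} for a closely related computation.
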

\begin{proof} 
The proof of a very similar result is already exposed in the appendix of 
\cite{DelarueTse}. Briefly, 
the first identity in the statement can be proved in two steps. The first step is
to
write $\phi(m^{N,\varepsilon}(r)*f_N)$ in the definition of 
$\phi^{N,\varepsilon,\rho}(m)$ as a function 
of $(\widehat{m}^k+\widehat{r}^k)_{k \in F_N^+}$
and then to make a change a variable (with respect to 
$(\widehat{r}^k)_{k \in F_N^+}$) in order to pass the dependence upon 
$(\widehat{m}^k)_{k \in F_N^+}$ into the function $\rho$.
We hence get that 
$\phi^{N,\varepsilon,\rho}$ is smooth with respect to 
 $(\widehat{m}^k)_{k \in F_N^+}$.
 The second step 
is then to check, very much in the spirit of Proposition 
\ref{prop:convolution:f_N},
that 
the differentiability of 
$\phi^{N,\varepsilon,\rho}$ with respect to 
 $(\widehat{m}^k)_{k \in F_N^+}$
 implies the existence of $\delta \phi^{N,\varepsilon,\rho}/\delta m$.
 Once the first identity has been proved, the second one easily follows by a standard integration by parts, using the fact that the derivatives of 
 $\phi$ with respect to 
$(\widehat{m}^k )_{k \in F_N^+}$
exist almost everywhere. 
\end{proof}

As for the convergence of 
$\phi^{N,\varepsilon,\rho}$ to $\phi$, it is ensured under a standard continuity property:
\begin{lem}
\label{lem:convergence:mollif}
Assume that the function $\phi$
in Definition 
\ref{def:mollification}
is continuous for $d_{W^1}$ at some $m \in {\mathcal P}({\mathbb T}^d)$. Then,
$\phi^{N,\varepsilon,\rho}(m)$ converges to $\phi(m)$ 
when $(N,\varepsilon)$ tends to $(\infty,0)$ (it being understood that $\rho$ varies along the prescription of 
Definition 
\ref{def:mollification}). 
\end{lem}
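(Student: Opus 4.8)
The plan is to recognize $\phi^{N,\varepsilon,\rho}(m)$ as an average of the values $\phi\bigl(m^{N,\varepsilon}(r)*f_N\bigr)$ against a probability weight, to show that all of these arguments converge to $m$ for $d_{W_1}$ \emph{uniformly} in the averaging variable $r$, and then to invoke the assumed continuity of $\phi$ at $m$. Throughout I take for granted (as is implicit in Definition \ref{def:mollification}) that $\phi$ is bounded and measurable, so that the integral defining $\phi^{N,\varepsilon,\rho}$ makes sense.

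First I would record that, since $\rho$ is a density on $\R^2$, the weight $\prod_{j\in F_N^+}\rho(\widehat{r}^j)\bigotimes_{j\in F_N^+}d\widehat{r}^j$ is a probability measure on $\R^{2\vert F_N^+\vert}$, so that
\begin{equation*}
\phi^{N,\varepsilon,\rho}(m)-\phi(m)=\int_{\R^{2\vert F_N^+\vert}}\Bigl(\phi\bigl(m^{N,\varepsilon}(r)*f_N\bigr)-\phi(m)\Bigr)\prod_{j\in F_N^+}\rho(\widehat{r}^j)\bigotimes_{j\in F_N^+}d\widehat{r}^j .
\end{equation*}
It therefore suffices to prove that $\sup_{r}\,d_{W_1}\bigl(m^{N,\varepsilon}(r)*f_N,m\bigr)\to 0$ as $(N,\varepsilon)\to(\infty,0)$, the supremum being over $r$ in the support of the weight; note that this support enters only through the constraint that $\rho$ has support radius at most $\delta_{N,\varepsilon}$, which is exactly the degree of freedom left open in Definition \ref{def:mollification}, so the bound will be uniform over all admissible $\rho$.

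The core step is then this uniform distance estimate. On the support of the weight every $\widehat{r}^j$ has modulus at most $\delta_{N,\varepsilon}$, so, exactly as observed right after Definition \ref{def:mollification}, the trigonometric polynomial $h:=\sum_{j\in F_N\setminus\{0\}}\widehat{r}^j e_{-j}$ satisfies $\|h\|_\infty\le\delta_{N,\varepsilon}\vert F_N\vert=\varepsilon/(dN^2)\le\varepsilon$, hence also $\|h\|_1\le\varepsilon/(dN^2)$ on the unit torus. Writing $m^{N,\varepsilon}(r)-m=\varepsilon\bigl(\textrm{\rm Leb}_d-m\bigr)+(1-\varepsilon)h$ and using $d_{W_1}\le c_0\,d_{\rm TV}$ together with $d_{\rm TV}(m+h,m)\le\|h\|_1$ and the dimensional bound $d_{W_1}(\textrm{\rm Leb}_d,m)\le c$ (the diameter of $\T$), I get $d_{W_1}\bigl(m^{N,\varepsilon}(r),m\bigr)=O(\varepsilon)$ uniformly in $r$. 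Next, since $(f_N)_{N}$ converges weakly to $\delta_0$ on the compact torus, $\epsilon_N:=d_{W_1}(f_N,\delta_0)\to 0$, and since convolution with a fixed probability measure is a $d_{W_1}$-contraction, $d_{W_1}\bigl(m^{N,\varepsilon}(r)*f_N,m^{N,\varepsilon}(r)\bigr)\le\epsilon_N$; combining these gives $\sup_r d_{W_1}\bigl(m^{N,\varepsilon}(r)*f_N,m\bigr)\le\epsilon_N+O(\varepsilon)\to 0$. Finally, given $\eta>0$, continuity of $\phi$ at $m$ for $d_{W_1}$ supplies $\delta>0$ with $d_{W_1}(\mu,m)<\delta\Rightarrow\vert\phi(\mu)-\phi(m)\vert<\eta$; for $(N,\varepsilon)$ close enough to $(\infty,0)$ the integrand in the display above is pointwise bounded by $\eta$, and integrating against the probability weight yields $\vert\phi^{N,\varepsilon,\rho}(m)-\phi(m)\vert\le\eta$.

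I do not expect any genuine obstacle in this lemma. The only point that needs a little care is making the distance estimate \emph{uniform in} $r$ over the shrinking support of $\rho$, and checking that the two sources of error — the $O(\varepsilon)$ term coming from the positivity shift $\varepsilon\,\textrm{\rm Leb}_d$ together with the perturbation $h$, and the $\epsilon_N$ term coming from the mollification by $f_N$ — both vanish in the prescribed joint limit $(N,\varepsilon)\to(\infty,0)$; the one external fact invoked is that on the compact torus weak convergence of $f_N$ to $\delta_0$ upgrades to $d_{W_1}$-convergence.
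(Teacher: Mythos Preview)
Your proof is correct and follows essentially the same approach as the paper: both bound $d_{W_1}\bigl(m^{N,\varepsilon}(r)*f_N,m\bigr)$ uniformly over $r$ in the support of $\rho$ by combining the smallness of the trigonometric perturbation $h$ (controlled by $\delta_{N,\varepsilon}|F_N|\le\varepsilon$), the $\varepsilon$-shift toward Lebesgue measure, and the weak convergence $f_N\to\delta_0$. The only cosmetic difference is in the triangle-inequality decomposition: the paper passes through $m*f_N$ using the $d_{\rm TV}$-contraction of convolution and then invokes $d_{W_1}(m*f_N,m)\to0$, whereas you pass through $m^{N,\varepsilon}(r)$ using the $d_{W_1}$-contraction of convolution and $d_{W_1}(f_N,\delta_0)\to0$; both routes use the same ingredients.
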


\begin{proof}
We first compute $d_{\textrm{\rm TV}}(m^{N,\varepsilon}(r),m^{N,\varepsilon}(0))$, for $r$ in the support of $\rho$:
\begin{equation*}
\begin{split}
d_{\textrm{\rm TV}}\Bigl(m^{N,\varepsilon}(r),m^{N,\varepsilon}(0)\Bigr)
&= (1-\varepsilon)  \sup_{\|  h\|_\infty \leq 1}
\sum_{j \in F_N \setminus \{0\}}
\int_{{\mathbb T}^d} h(x) \widehat{r}^j e_{-j}(x) dx
\leq \delta_{N,\varepsilon} \vert F_N\vert \leq \varepsilon.
\end{split}
\end{equation*}
 Next we have, in a similar way,  $d_{\textrm{\rm TV}}(m^{N,\varepsilon}(0),m)
\leq 2 \varepsilon$, which gives
$d_{\textrm{\rm TV}}(m^{N,\varepsilon}(r),m)
\leq 3 \varepsilon$,
and then 
$d_{\textrm{\rm TV}}(m^{N,\varepsilon}(r)*f_N,m*f_N)
\leq 3 \varepsilon$.
It remains to recall from
Lemma \ref{lem:weak:convergence}
that $d_{W_1}(m*f_N,m)$
tends to $0$ as 
$N$ tends to $\infty$. 
Using the fact that $d_{W_1} \leq c_0 d_{\rm TV}$ (with 
$c_0$ as in the notation introduced in Section \ref{se:introduction}), 
$d_{W_1}(m^{N,\varepsilon}(r)*f_N,m)$
(with $r$ in the support of $\rho$)
tends to 0
as
$(N,\varepsilon)$ tends to $(\infty,0)$.  This completes the proof.  
\end{proof}

Below, we apply Proposition 
\ref{prop:mollification}
to time-space dependent functions $\varphi : [0,T] \times {\mathcal P}({\mathbb T}^d) \rightarrow {\mathbb R}$. 
Following 
Definition 
\ref{def:mollification}, 
we let:
\begin{equation*}
\varphi^{N,\varepsilon,\rho}(t,m) := \int_{{\mathbb R}^{2\vert F_N^+\vert}}
\varphi \Bigl(t, m^{N,\varepsilon}(r) * f_N \Bigr) \prod_{j \in F_N^+}  \rho\bigl(\Re[ \widehat{r}^j],\Im[ \widehat{r}^j] \bigr)  
\bigotimes_{j \in F_N^+} 
d \bigl( \Re[\widehat{r}^j],\Im[\widehat{r}^j] \bigr),
\end{equation*} 
for $(t,m) \in [0,T] \times {\mathcal P}({\mathbb T}^d)$. 
We then have the following corollary:

\begin{cor}
\label{cor:mollif:time-space}
For an integer $N$, an $\varepsilon \in (0,1)$
and a smooth density $\rho$ with a support of radius less than $\delta_{N,\varepsilon}$, 
and for a continuous real-valued function 
$\varphi$ on 
$[0,T] \times 
{\mathcal P}({\mathbb T}^d)$, 
the
function $\varphi^{N,\varepsilon,\rho}$ is differentiable with respect to 
$m$
and the derivative $\delta \varphi^{N,\varepsilon,\rho}/\delta m$
is continuous on $[0,T] \times {\mathcal P}({\mathbb T}^d)$. 
Moreover, for any 
$t \in [0,T]$, 
the following formula holds true, for any $m \in {\mathcal P}({\mathbb T}^d)$ and any $x \in {\mathbb T}^d$: 
\begin{equation*}
\begin{split}
&\frac{\delta \varphi^{N,\varepsilon,\rho}}{\delta m}(t,m)(x)
\\
&= (1-\varepsilon) 
\int_{{\mathbb R}^{2\vert F_N^+\vert}}
 \sum_{k \in F_N  \setminus \{0\}} \biggl[ \biggl(\partial_{\widehat{m}_k}  \varphi \Bigl(t, m^{N,\varepsilon}(r) * f_N\Bigr) e_{k} \biggr) * f_N\biggr] (x) 
\prod_{j \in F_N^+}  \rho\bigl(  \widehat{r}^j \bigr)  
\bigotimes_{j \in F_N^+} 
 d \widehat{r}^j.
\end{split}
\end{equation*} 
Lastly,
if the function $\varphi$ is also Lipschitz continuous in $t$, uniformly in $m$, then, 
for almost every $t \in [0,T]$, 
for any $m \in {\mathcal P}({\mathbb T}^d)$, 
the function $\varphi^{N,\varepsilon,\rho}$ is differentiable 
at $(t,m)$ and 
$\partial_t [\varphi^{N,\varepsilon,\rho}](t,m)=
[\partial_t \varphi(t,\cdot)]^{N,\varepsilon,\rho}(m)$.
\end{cor}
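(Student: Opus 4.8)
The plan is to reduce everything to Proposition \ref{prop:mollification}, applied to the functions $\varphi(t,\cdot)$ for fixed $t$, and then to keep careful track of the dependence on $t$. First I would note that, for each $t\in[0,T]$, the function $\varphi(t,\cdot)$ is continuous on the compact space $(\mathcal{P}(\mathbb{T}^d),d_{W_1})$, hence bounded and Borel measurable; Proposition \ref{prop:mollification} (with $\phi=\varphi(t,\cdot)$) then gives that $m\mapsto\varphi^{N,\varepsilon,\rho}(t,m)$ is continuously differentiable, with $\delta\varphi^{N,\varepsilon,\rho}/\delta m(t,m)(x)$ equal to the ``$\rho$-differentiated'' representation. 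The representation involving $\partial_{\widehat{m}_k}\varphi$ is then the form taken by this derivative when, in addition, the restriction of $\varphi(t,\cdot)$ to $\mathcal{P}_N$ is Lipschitz — which is the regularity under which the corollary is used, and for which $d_{W_1}$ and $d_{\textrm{\rm TV}}$ are mutually bounded on $\mathcal{P}_N$ — so that the second part of Proposition \ref{prop:mollification} applies. In any case $m$-differentiability holds for \emph{every} $t$.

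Next I would establish the continuity of $(t,m)\mapsto\delta\varphi^{N,\varepsilon,\rho}/\delta m(t,m)(x)$ on $[0,T]\times\mathcal{P}(\mathbb{T}^d)$ by dominated convergence in the ``$\rho$-differentiated'' formula. The integrand there, $[(\varphi(t,m^{N,\varepsilon}(r)*f_N)e_k)*f_N](x)\,\partial_{\widehat{r}^k}[\prod_{j\in F_N^+}\rho(\widehat{r}^j)]$, is bounded uniformly in $(t,m)$ (because $\varphi$ is bounded on the compact set $[0,T]\times\mathcal{P}(\mathbb{T}^d)$ and $\partial_{\widehat{r}^k}[\prod_j\rho]$ is bounded with support in a fixed ball), and, for each $r$, it is continuous in $(t,m)$ (because $(m,r)\mapsto m^{N,\varepsilon}(r)*f_N$ is continuous into $(\mathcal{P}(\mathbb{T}^d),d_{W_1})$, $\varphi$ is continuous, and convolution by $f_N$, multiplication by $e_k$ and evaluation at $x$ are continuous). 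Since the sum over $k\in F_N\setminus\{0\}$ is finite, continuity of the integral in $(t,m)$ follows at once.

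The delicate point is the statement about the $t$-derivative, which uses the additional assumption that $\varphi$ is $L_t$-Lipschitz in $t$, uniformly in $m$. As in the proof of Proposition \ref{prop:mollification}, the change of variables $\widehat{s}^j:=\widehat{m}^j+\widehat{r}^j$ ($j\in F_N^+$) rewrites $\varphi^{N,\varepsilon,\rho}(t,m)=\int\varphi(t,\Lambda(s))\prod_{j\in F_N^+}\rho(\widehat{s}^j-\widehat{m}^j)\,ds$, where the probability measure $\Lambda(s)$ (with Fourier coefficients $(1-\varepsilon)\widehat{f}_N^k\widehat{s}^k$ for $k\in F_N\setminus\{0\}$) no longer depends on $m$. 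From this writing I would record: (a) $t\mapsto\varphi^{N,\varepsilon,\rho}(t,m)$ is $L_t$-Lipschitz uniformly in $m$ (the kernel integrates to $1$); (b) for each $s$, $t\mapsto\varphi(t,\Lambda(s))$ is $L_t$-Lipschitz, so $\varphi(t,\Lambda(s))=\varphi(0,\Lambda(s))+\int_0^t\partial_t\varphi(\sigma,\Lambda(s))\,d\sigma$, and Fubini (the integrand being bounded by $L_t$) yields the primitive identity $\varphi^{N,\varepsilon,\rho}(t,m)=\varphi^{N,\varepsilon,\rho}(0,m)+\int_0^t H_\sigma(m)\,d\sigma$ for every $(t,m)$, where $H_\sigma(m):=[\partial_t\varphi(\sigma,\cdot)]^{N,\varepsilon,\rho}(m)$; (c) regarded through the finitely many Fourier coordinates $(\widehat{m}^j)_{j\in F_N^+}$ of $m$, $H_\sigma$ is the convolution of the bounded measurable map $s\mapsto\partial_t\varphi(\sigma,\Lambda(s))$ with the smooth, compactly supported kernel $\prod_j\rho$, hence $C'$-Lipschitz in those coordinates with $C'$ \emph{independent of $\sigma$}. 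Then I would fix a countable dense set $\{m_i\}\subset\mathcal{P}_N$: for each $i$, the Lebesgue differentiation theorem applied to the bounded measurable function $\sigma\mapsto H_\sigma(m_i)$, together with the primitive identity, produces a null set $Z_i\subset[0,T]$ outside of which $t\mapsto\varphi^{N,\varepsilon,\rho}(t,m_i)$ is differentiable with derivative $H_t(m_i)$. Setting $Z:=\bigcup_i Z_i$, the uniform-in-$\sigma$ Lipschitz bound (c) lets one pass from $m_i$ to an arbitrary $m$ — bounding $|H_\sigma(m)-H_\sigma(m_i)|$ and $|H_t(m)-H_t(m_i)|$ by $C'$ times the distance between the Fourier coordinates of $m$ and $m_i$, and letting $m_i\to m$ — so that for every $t\notin Z$ and every $m$ one gets $\partial_t[\varphi^{N,\varepsilon,\rho}](t,m)=H_t(m)=[\partial_t\varphi(t,\cdot)]^{N,\varepsilon,\rho}(m)$. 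Combined with the $m$-differentiability valid for all $t$, this is the last assertion. The only genuine obstacle in the whole argument is precisely this final step: making the exceptional set of times independent of $m$; it is the Fourier-based mollification, via the equi-Lipschitz estimate (c), that makes it work — everything else being a direct application of Proposition \ref{prop:mollification}, dominated convergence and Fubini.
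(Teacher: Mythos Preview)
Your argument is correct. The paper does not give an explicit proof of this corollary --- it is stated as an immediate consequence of Proposition~\ref{prop:mollification} --- so there is nothing to compare at the level of strategy: you are simply filling in the details that the paper leaves to the reader.

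A few remarks. First, you are right to flag that the displayed formula with $\partial_{\widehat{m}_k}\varphi$ is the \emph{second} representation in Proposition~\ref{prop:mollification}, which in that proposition is stated under a $d_{\mathrm{TV}}$-Lipschitz assumption; the corollary as written only assumes continuity, so strictly speaking only the first representation (with $\partial_{\widehat r^k}\rho$) is unconditionally available, and the second one holds whenever $\varphi(t,\cdot)$ restricted to $\mathcal P_N$ is Lipschitz (which is how the corollary is applied later). Second, your handling of the $t$-derivative is the non-trivial part and is done cleanly: the change of variables $\widehat s^j=\widehat m^j+\widehat r^j$ decouples the $m$-dependence from the argument of $\varphi$, Fubini gives the primitive identity, and the key observation --- that $H_\sigma$, being the convolution of a bounded function with the smooth compactly supported kernel $\prod_j\rho$, is Lipschitz in the Fourier coordinates with a constant independent of $\sigma$ --- is exactly what is needed to upgrade ``for a.e.\ $t$ depending on $m$'' to ``for a.e.\ $t$, for all $m$'' via a countable-dense-set argument. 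One small point you might make explicit: $H_\sigma(m)$ is only defined for a.e.\ $\sigma$ (since $\partial_t\varphi(\sigma,\Lambda(s))$ exists for a.e.\ $(\sigma,s)$ by Fubini), but this is harmless for the primitive identity and for the Lebesgue differentiation step.
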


\subsection{Weak formulation of semi-concavity}
This step is crucial in our analysis. We provide a reformulation of the semi-concavity property through the Fourier coefficients. 
We start with the following reformulation of (displacement) semi-concavity (the proof may be skipped ahead on a first reading):
\begin{prop}
\label{prop:weak:semi-concavity}
If $\phi : {\mathcal P}({\mathbb T}^d) \rightarrow {\mathbb R}$ is displacement semi-convave and 
$\phi$ is Lipschitz continuous with respect to the measure 
$d_{-2}$ on 
${\mathcal P}({\mathbb T}^d)$ defined by
\begin{equation*}
d_{-2}(\mu,\nu) := \sup_{h \in {\mathcal C}^2 : \| \nabla^2 h \|_\infty 
\leq 1}
\biggl\vert \int_{{\mathbb T}^d} 
h(x) d \bigl( \mu - \nu \bigr)(x)\biggr\vert,
\end{equation*}
then, for any constant $c>1$,
there exists a constant $C$ 
depending on $c$ and on $\phi$ through its Lipschitz constant only
such that, for
 any 
probability measure $m$ with a density such that 
$m \geq 1/c$, any smooth vector field $b : {\mathbb T}^d \rightarrow {\mathbb R}^d$, 
any integer $N$ and any real $\varepsilon \in (0,1)$
such that 
$\varepsilon / (dN^2) < 1/(2c)$, and any smooth density $\rho$ with a support of radius $\delta$ less than $\delta_{N,\varepsilon}$ in 
Definition \ref{def:admiss:threshold:smoothing}, 
\begin{equation*}
\frac{d^2}{dt^2}_{\vert t=0}
\int_{{\mathbb R}^{2 \vert F_N^+\vert}} \phi\Bigl(m^{N,\varepsilon}(r) * f_N\Bigr)
\biggl(
 \prod_{j \in F_N^+}
 \rho\bigl( \widehat{r}^j + t \widehat{\textrm{\rm div} b}^j \bigr) 
\biggr)
 \bigotimes_{j \in F_N^+}
 d \widehat{r}^j  
\leq C \| b \|_\infty^2. 
\end{equation*}
\end{prop}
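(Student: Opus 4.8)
\emph{Proof strategy.} Write $\Phi(t)$ for the quantity differentiated twice in the statement. The plan is to transfer the $t$-dependence from $\rho$ onto the measure argument of $\phi$ by an affine change of variables, and then to reduce the resulting second-difference estimate to the displacement semi-concavity of $\phi$, the error coming from the mismatch between a \emph{linear} and a \emph{displacement} perturbation of the measure being controlled precisely by the $d_{-2}$-Lipschitz bound (which is why that hypothesis is imposed).

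First I would substitute $\widehat{s}^j = \widehat{r}^j + t\,\widehat{\mathrm{div}\,b}^j$ for $j \in F_N^+$; this is a translation of $\mathbb{R}^{2|F_N^+|}$ with unit Jacobian. Since $\mathrm{div}\,b$ is real with zero mean, $\sum_{j \in F_N \setminus \{0\}} \widehat{\mathrm{div}\,b}^j e_{-j}$ is the Fourier truncation of $\mathrm{div}\,b$ to $F_N$, and since convolution by $f_N$ commutes with $\mathrm{div}$, one gets
\[
\Phi(t) = \int_{\mathbb{R}^{2|F_N^+|}} \phi\bigl(\mu_s - (1-\varepsilon)\,t\, w\bigr)\, \prod_{j \in F_N^+} \rho\bigl(\widehat{s}^j\bigr)\, \bigotimes_{j \in F_N^+} d\widehat{s}^j , \qquad \mu_s := m^{N,\varepsilon}(s)*f_N, \quad w := \mathrm{div}(b*f_N) .
\]
As $\rho$ is smooth with compact support and $\phi$ is bounded on the compact space $\mathcal{P}(\mathbb{T}^d)$, differentiation under the integral sign shows $\Phi \in C^\infty(\mathbb{R})$, whence $\Phi''(0) = \lim_{t \to 0} t^{-2}[\Phi(t) + \Phi(-t) - 2\Phi(0)]$. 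I would also record here that $w = (\mathrm{div}\,b)*f_N$ has zero mean with $\|w\|_\infty \le \|\mathrm{div}\,b\|_\infty$, and that $\mu_s \ge 1/(2c)$ for $s$ in the support of $\rho$: indeed $|\sum_{j \in F_N \setminus\{0\}} \widehat{s}^j e_{-j}| \le |F_N|\,\delta_{N,\varepsilon} = \varepsilon/(dN^2) < 1/(2c)$ by hypothesis, so $m + \sum_j \widehat{s}^j e_{-j} > 1/(2c)$ and $\mu_s = \varepsilon + (1-\varepsilon)\bigl(m + \sum_j \widehat{s}^j e_{-j}\bigr)*f_N \ge 1/(2c)$; in particular $\mu_s \pm (1-\varepsilon)t\,w$ are probability measures for $|t|$ small.

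The core is the pointwise estimate: for any probability density $\mu \ge \kappa := 1/(2c)$ and any $\tau$ small enough that $\mu \pm \tau w$ are probability measures,
\[
\phi(\mu + \tau w) + \phi(\mu - \tau w) - 2\phi(\mu) \le \frac{C_\phi + \ell_\phi}{\kappa}\, \|b\|_\infty^2\, \tau^2 ,
\]
$\ell_\phi$ being the $d_{-2}$-Lipschitz constant of $\phi$ and $C_\phi$ its displacement semi-concavity constant (the constant in the analogue of \eqref{semiconcave:F} for $\phi$). To prove it I would take, on some probability space, $\xi \sim \mu$, set $V := b*f_N$ (so $\|V\|_\infty \le \|b\|_\infty$, $w = \mathrm{div}\,V$) and $Y := \tau\, V(\xi)/\mu(\xi)$, so that $\mathbf{E}|Y|^2 = \tau^2 \int |V|^2/\mu \le \tau^2 \|b\|_\infty^2/\kappa$. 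For a test function $h \in C^2$ with $\|\nabla^2 h\|_\infty \le 1$, a second-order Taylor expansion of $h$ at $\xi$ together with the integration-by-parts identity $\mathbf{E}[\nabla h(\xi)\cdot Y] = \tau \int \nabla h \cdot V = -\tau \int h\, w$ gives $|\mathbf{E}[h(\xi \pm Y)] - \int h\, d(\mu \mp \tau w)| \le \tfrac12 \mathbf{E}|Y|^2$, i.e. $d_{-2}(\mathcal{L}(\xi \pm Y), \mu \mp \tau w) \le \tfrac12 \mathbf{E}|Y|^2$ (with the torus-trace convention of \eqref{eq:Lions:torus}). Then the $d_{-2}$-Lipschitz continuity of $\phi$ on the two outer terms, $\phi(\mathcal{L}(\xi)) = \phi(\mu)$, and \eqref{semiconcave:F} applied to the pair $(\xi,Y)$ give
\[
\phi(\mu + \tau w) + \phi(\mu - \tau w) - 2\phi(\mu) \le \bigl[\phi(\mathcal{L}(\xi+Y)) + \phi(\mathcal{L}(\xi-Y)) - 2\phi(\mathcal{L}(\xi))\bigr] + \ell_\phi\, \mathbf{E}|Y|^2 \le (C_\phi + \ell_\phi)\, \mathbf{E}|Y|^2 .
\]
Plugging $\mu = \mu_s$, $\tau = (1-\varepsilon)t$ and integrating in $s$ against the probability density $\prod_j \rho(\widehat{s}^j)$ yields $\Phi(t) + \Phi(-t) - 2\Phi(0) \le 2c\,(C_\phi + \ell_\phi)(1-\varepsilon)^2 t^2 \|b\|_\infty^2$ for $|t|$ small; dividing by $t^2$ and letting $t \to 0$ gives the claim with $C := 2c\,(C_\phi + \ell_\phi)$.

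The main obstacle is the pointwise estimate: since $\phi$ is only semi-concave (not linearly concave), there is no second-order cancellation along the linear perturbation $\mu \mapsto \mu \pm \tau w$, so one must reinterpret $\mu \pm \tau w$ as a displacement perturbation $\mathcal{L}(\xi) \mapsto \mathcal{L}(\xi \mp Y)$; the discrepancy between the two is $O(\mathbf{E}|Y|^2) = O(\tau^2 \|b\|_\infty^2)$ only when tested against $C^{1,1}$ functions, which is exactly the content of the $d_{-2}$-Lipschitz hypothesis. The other delicate point is uniformity of the constant in $N$, $\varepsilon$, $\rho$, $b$ and $m$, secured by the uniform lower bound $\mu_s \ge 1/(2c)$ above and by the contraction $\|b*f_N\|_\infty \le \|b\|_\infty$.
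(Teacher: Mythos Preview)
Your proof is correct and follows essentially the same route as the paper's: both compare the linear perturbation $\mu \mapsto \mu \pm \tau\,\mathrm{div}(b*f_N)$ with the displacement perturbation $\xi \mapsto \xi \pm \tau\,(b*f_N)(\xi)/\mu(\xi)$, bound the discrepancy in $d_{-2}$ by a second-order Taylor remainder, and then invoke displacement semi-concavity on the displaced pair. The only cosmetic difference is ordering---you perform the translation in $\widehat r$ first and then the pointwise estimate, whereas the paper establishes the pointwise estimate for a generic $m_0$ and specializes to $m^{N,\varepsilon}(r)*f_N$ before doing the change of variables at the end.
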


\begin{proof}
\textit{First Step.}
For a smooth vector field $b : {\mathbb T}^d \rightarrow {\mathbb R}^d$ and for an initial measure 
$m_0 \in {\mathcal P}({\mathbb T}^d)$ that is smooth and positive, we consider
the stochastic process
\begin{equation*}
X_t^{\pm} := X_0 \pm t \frac{b}{m_0}(X_0), \quad t \geq 0, 
\end{equation*}
with $X_0$ a random variable of law $m_0$. Then, we let  
$m_t^{\pm}:={\mathcal L}(X_t^{\pm})$, $t \geq 0$. 
For $t$ small enough, $m_0 \pm t \textrm{\rm div}_x(b)$ is a probability measure (using the fact that $m_0>0$ and $b$ is smooth). 
Since the function $\phi$ in the statement is $d_{-2}$-Lipschitz, we have
\begin{equation*}
\begin{split}
&\Bigl\vert 
\phi \bigl( m_t^+ \bigr) - \phi \Bigl( m_0 - t \textrm{\rm div}_x(b)
 \Bigr) 
\Bigr\vert
\\
&\leq C 
\sup_{{\| \nabla^2 h \|_\infty \leq 1}}
\biggl\vert
\int_{{\mathbb T}^d}
\Bigl\{ \Bigl[ 
h\Bigl(x 
+ t \frac{b}{m_0}(x)
\Bigr) 
 - h(x) 
\Bigr]
m_0(x) 
+ t h(x) 
\textrm{\rm div}_x (b) 
 (x) 
 \Bigr\}
dx 
\biggr\vert
\\
&= C 
\sup_{{\| \nabla^2 h \|_\infty \leq 1}}
\biggl\vert
\int_{{\mathbb T}^d}
\Bigl\{ \Bigl[ 
h\Bigl(x 
+ t \frac{b}{m_0}(x)
\Bigr) 
 - h(x) 
\Bigr]
m_0(x) 
- t \nabla h(x)
\cdot b(x)
 \Bigr\}
dx 
\biggr\vert
\\
&\leq C \| b \|_\infty^2 t^2,
\end{split}
\end{equation*}
where $C$ 
only depends on the Lipschitz constant of $\phi$ and on $c$ such that $m_0\geq 1/c$. 

Proceeding similarly with $m_t^-$ (and allowing the constant $C$ to vary from line to line), we 
get
\begin{equation*}
\begin{split}
\phi \Bigl( m_0 + t \text{\rm div}_x(b) \Bigr) 
+
\phi \Bigl( m_0 - t \text{\rm div}_x(b) \Bigr)
- 2 \phi ( m_0 ) 
&\leq
\phi \bigl( m_t^- \bigr) + 
\phi \bigl( m_t^+ \bigr) 
- 2 \phi(m_0)
\leq C \| b \|_\infty^2  t^2.
\end{split}
\end{equation*}
\vspace{5pt}

\textit{Second Step.} We now want to replace $m_0$ by $m^{N,\varepsilon}(r) * f_N$, for 
$N \geq 1$, $\varepsilon \in (0,1)$, 
$m$ as in the statement
and $r \in {\mathbb C}^{\vert F_N^+\vert}$ such that
$(\widehat{r}^{-k} = \overline{\widehat{r}^k})_{k \in F_N^+}$
and  
\begin{equation*}
\sum_{k \in F_N \setminus \{0\}} \bigl\vert \widehat{r}^k \bigr\vert \leq \frac{1}{2c}.
\end{equation*}
For such an $r$, we have $m^{N,\varepsilon}(r) \geq 1/(2c)$ and 
$m^{N,\varepsilon}(r)*f_N \geq 1/(2c)$. Modifying $b$ into $b*f_N$, we get
(using the same notation as in 
\eqref{eq:tilde:phi:N}
for 
$\widetilde \phi_N$, which is a real-valued 
function on 
${\mathcal O}_N$)
\begin{equation*}
\begin{split}
&\widetilde \phi_N \biggl( \Bigl\{\reallywidehat{\displaystyle m^{N,\varepsilon}(r) * f_N}^k + t \reallywidehat{\displaystyle \textrm{\rm div}( b * f_N)}^k \Bigr\}_{k \in F_N^+} \biggr)  + 
\widetilde \phi_N \biggl( \Bigl\{\reallywidehat{\displaystyle m^{N,\varepsilon}(r) * f_N}^k - t  \reallywidehat{\displaystyle \textrm{\rm div} (b * f_N)}^k \Bigr\}_{k \in F_N^+} \biggr)
\\
&\hspace{15pt}- 2 \widetilde \phi_N \biggl( \Bigl\{\reallywidehat{\displaystyle m^{N,\varepsilon}(r) * f_N}^k  \Bigr\}_{k \in F_N^+} \biggr)
\\
&\leq C \| b \|_\infty^2 t^2,
\end{split}
\end{equation*}
for a constant $C$, only depending 
on the Lipschitz constant of $\phi$ 
and on $c$ such that $m \geq 1/c$. 
Then, replacing 
$m^{N,\varepsilon}(r)$ by its definition, we get 
\begin{equation*}
\begin{split}
&\widetilde \phi_N \biggl( \Bigl\{ \Bigl(\varepsilon \delta_{k,0} + (1-\varepsilon) \bigl(\widehat m^k + \widehat{r}^k\bigr) + t \widehat{\textrm{\rm div} b}^k \Bigr)  \widehat{f}_N^k \Bigr\}_{k \in F_N^+} \biggr) 
\\
&\hspace{15pt} +
\widetilde \phi_N \biggl( \Bigl\{ \Bigl(\varepsilon \delta_{k,0} + (1-\varepsilon) \bigl(\widehat m^k + \widehat{r}^k\bigr) - t \widehat{\textrm{\rm div} b}^k \Bigr)  \widehat{f}_N^k \Bigr\}_{k \in F_N^+} \biggr)  
\\
&\hspace{15pt}
 - 2 
\widetilde \phi_N \biggl( \Bigl\{ \Bigl(\varepsilon \delta_{k,0} + (1-\varepsilon) \bigl(\widehat m^k + \widehat{r}^k\bigr)  \Bigr)  \widehat{f}_N^k \Bigr\}_{k \in F_N^+} \biggr)  
  \leq C\| b \|_\infty^2 t^2.
\end{split}
\end{equation*}
Integrating with respect to 
$\rho$ as in Definition 
\ref{def:mollification},
with the radius $\delta$ of the support of $\rho$ satisfying $\rho < 1/(2 c \vert F_N \vert)$, 
which is indeed the case if $\varepsilon / (dN^2) < 1/(2c)$, 
\begin{equation*}
\begin{split}
&\int_{{\mathbb R}^{2 \vert F_N^+\vert}}
\widetilde \phi_N \biggl( \Bigl\{ \Bigl(\varepsilon \delta_{k,0} + (1-\varepsilon) \bigl(\widehat m^k + \widehat{r}^k\bigr) + t \widehat{\textrm{\rm div} b}^k \Bigr)  \hat{f}_N^k \Bigr\}_{k \in F_N^+} \biggr) 
\prod_{j \in F_N^+} \rho\bigl(  \widehat{r}^j \bigr)
\bigotimes_{j \in F_N^+} 
  d \widehat{r}^j 
\\
&\hspace{15pt}
+\int_{{\mathbb R}^{2 \vert F_N^+\vert}} 
\widetilde \phi_N \biggl( \Bigl\{ \Bigl(\varepsilon \delta_{k,0} + (1-\varepsilon) \bigl(\widehat m^k + \widehat{r}^k\bigr) - t \widehat{\textrm{\rm div} b}^k \Bigr)  \hat{f}_N^k \Bigr\}_{k \in F_N^+} \biggr) 
\prod_{j \in F_N^+} \rho\bigl(  \widehat{r}^j \bigr)
\bigotimes_{j \in F_N^+} 
  d \widehat{r}^j 
 \\
 &\hspace{15pt}- 2\int_{{\mathbb R}^{2 \vert F_N^+\vert}} 
 \widetilde \phi_N \biggl( \Bigl\{ \Bigl(\varepsilon \delta_{k,0} + (1-\varepsilon) \bigl(\widehat m^k + \widehat{r}^k\bigr)   \Bigr)  \hat{f}_N^k \Bigr\}_{k \in F_N^+} \biggr) 
\prod_{j \in F_N^+} \rho\bigl(  \widehat{r}^j \bigr)
\bigotimes_{j \in F_N^+} 
  d \widehat{r}^j \leq C \| b \|_\infty^2 t^2,
\end{split}
\end{equation*}
which yields to (implicitly replacing $b$ by $(1-\varepsilon)b$)
\begin{equation*}
\frac{d^2}{dt^2}_{\vert t=0}
\int_{{\mathbb R}^{2 \vert F_N^+\vert}} \phi\Bigl(m^{N,\varepsilon}(r) * f_N\Bigr) \prod_{j \in F_N^+}
 \rho\bigl( \widehat{r}^j + t \widehat{\textrm{\rm div} b}^j \bigr) 
\bigotimes_{j \in F_N^+}  d\widehat{r}^j  
\leq C \| b \|_\infty^2. 
\end{equation*}
This completes the proof. 
\end{proof}
We have the following corollary:

\begin{cor}
\label{cor:weak:semi-concavity}
In the framework of 
Proposition 
\ref{prop:weak:semi-concavity},  it holds
for any collection  of complex numbers $(z^k)_{k \in F_N^+}$ and for any non-negative symmetric matrix $S$ of dimension $d$, 
\begin{equation*}
\begin{split}
&\int_{{\mathbb R}^{2 \vert F_N^+\vert}} 
\sum_{k,\ell \in F_N^+}
\phi\Bigl(m^{N,\varepsilon}(r) * f_N\Bigr)
 \sum_{i=1}^d 
\textrm{\rm Trace} \biggl\{
\biggl(k_i  
\overline{z^k} \otimes 
\bigl[ S \ell \bigr]_i 
\overline{z^\ell}
\biggr)^{\top}
\partial^2_{\widehat{r}^k,\widehat{r}^\ell} \biggl[ \prod_{j \in F_N^+}
 \rho\bigl( \widehat{r}^j  \bigr)
\biggr]\biggr\}
\bigotimes_{j \in F_N^+}
 d \widehat{r}^j  
 \\
&\leq C
\vert S \vert
\sum_{q=1}^d
\biggl(
\sum_{k \in F_N^+} \vert k_q \vert \, \vert z^k
\vert \biggr)^2,
\end{split}
\end{equation*}
with $C$ depending on $c$ such that $m \geq 1/c$. 
\end{cor}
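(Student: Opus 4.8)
The plan is to deduce the corollary from Proposition~\ref{prop:weak:semi-concavity} by choosing, for each coordinate direction, one explicit smooth vector field $b$ and summing the resulting inequalities. The key preliminary observation is purely algebraic: since $k_i$ and $[S\ell]_i$ are scalars while the $2\times2$ matrix $\overline{z^k}\otimes\overline{z^\ell}$ does not depend on $i$,
\[
\sum_{i=1}^d k_i\overline{z^k}\otimes[S\ell]_i\overline{z^\ell}
=\Bigl(\sum_{i=1}^d k_i[S\ell]_i\Bigr)\overline{z^k}\otimes\overline{z^\ell}
=\langle k,S\ell\rangle\,\overline{z^k}\otimes\overline{z^\ell}.
\]
Writing $T:=S^{1/2}$ for the symmetric non-negative square root of $S$, so that $\langle k,S\ell\rangle=\sum_{q=1}^d(Tk)_q(T\ell)_q$, and setting $v_{(q)}^k:=(Tk)_q\overline{z^k}$, the matrix above equals $\sum_{q=1}^d v_{(q)}^k\otimes v_{(q)}^\ell$. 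The point of passing through the square root is that the $d$ terms are now \emph{diagonal} quadratic expressions in the $v_{(q)}$'s, so that no polarization is needed --- which is essential, since Proposition~\ref{prop:weak:semi-concavity} only furnishes a one-sided bound and therefore cannot be polarized.

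Using $\mathrm{Tr}\{(a\otimes b)^\top A\}=\langle a,Ab\rangle$ and moving $\tfrac{d^2}{dt^2}_{|t=0}$ outside the integral, I would then recognise the left-hand side of the corollary as $\sum_{q=1}^d\tfrac{d^2}{dt^2}_{|t=0}\int\phi(m^{N,\varepsilon}(r)*f_N)\prod_{j\in F_N^+}\rho(\widehat r^j+t\,v_{(q)}^j)\bigotimes_{j\in F_N^+}d\widehat r^j$, which is exactly the quantity controlled by Proposition~\ref{prop:weak:semi-concavity}, provided each $v_{(q)}$ can be written as $\widehat{\mathrm{div}\,b^{(q)}}$ for some smooth vector field $b^{(q)}$. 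This is the second step: I would take $b^{(q)}=((b^{(q)})_1,\dots,(b^{(q)})_d)$ with Fourier coefficients $\widehat{(b^{(q)})_j}^{\,k}:=\tfrac{\i}{2\pi}T_{qj}\overline{z^k}$ for $k\in F_N^+$ (extended by conjugation, and by zero outside $F_N$). Then $b^{(q)}$ is a real smooth trigonometric polynomial, and since $\widehat{\mathrm{div}\,b}^{\,k}=-2\pi\i\sum_j k_j\widehat{b_j}^{\,k}$ and $T$ is symmetric, one gets $\widehat{\mathrm{div}\,b^{(q)}}^{\,k}=(Tk)_q\overline{z^k}=v_{(q)}^k$ for every $k\in F_N^+$. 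Explicitly $(b^{(q)})_j=\tfrac{T_{qj}}{\pi}\sum_{k\in F_N^+}\Im[z^k e_k(\cdot)]$, hence $\|(b^{(q)})_j\|_\infty\le\tfrac{|T_{qj}|}{\pi}\sum_{k\in F_N^+}|z^k|$ and therefore $\|b^{(q)}\|_\infty^2\le\tfrac1{\pi^2}\bigl(\sum_j T_{qj}^2\bigr)\bigl(\sum_k|z^k|\bigr)^2=\tfrac{S_{qq}}{\pi^2}\bigl(\sum_k|z^k|\bigr)^2$, using $\sum_j T_{qj}^2=(T^2)_{qq}=S_{qq}$.

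Applying Proposition~\ref{prop:weak:semi-concavity} to each $b^{(q)}$ and summing over $q$ then bounds the left-hand side of the corollary by $\tfrac{C}{\pi^2}\bigl(\sum_q S_{qq}\bigr)\bigl(\sum_k|z^k|\bigr)^2\le\tfrac{Cd|S|}{\pi^2}\bigl(\sum_k|z^k|\bigr)^2$, since $\sum_q S_{qq}=\mathrm{tr}(S)\le d|S|$. To recover the precise right-hand side of the statement, I would finally use that each $k\in F_N^+$ is a non-zero integer vector, so $\|k\|_1\ge1$ and hence $\sum_k|z^k|\le\sum_k\|k\|_1|z^k|=\sum_{q=1}^d\sum_k|k_q||z^k|$; Cauchy--Schwarz over $q$ gives $\bigl(\sum_k|z^k|\bigr)^2\le d\sum_{q=1}^d\bigl(\sum_k|k_q||z^k|\bigr)^2$, which yields the announced inequality with a constant depending only on $\phi$ (through its Lipschitz constant), $c$ and $d$.

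The only step with genuine content is the algebraic reorganisation of the first paragraph, namely realising that $T=S^{1/2}$ converts the $i$-sum into $d$ diagonal quadratic forms so that Proposition~\ref{prop:weak:semi-concavity} can be applied directly, without polarization. Everything after that is a routine dictionary between Fourier coefficients of vector fields and the shift appearing in the proposition, plus elementary norm bookkeeping; the only points requiring care are the real-vector/complex-number identifications and the tracking of the multiplicative constants.
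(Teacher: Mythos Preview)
Your approach is essentially the same as the paper's: introduce $T=S^{1/2}$, build one vector field per coordinate $q\in\{1,\dots,d\}$, apply Proposition~\ref{prop:weak:semi-concavity} to each, and sum. The paper takes $b^{q'}(x)=[S^{1/2}]_{q,q'}\sum_{k\in F_N\setminus\{0\}}z^k e_{-k}(x)$, obtains $\widehat{\mathrm{div}\,b}^k=-\i 2\pi(S^{1/2}k)_q z^k$, and after summing over $q$ performs a final substitution $z^k\mapsto -\i z^k$ to land on the stated form.

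There is, however, a conjugation slip in your bookkeeping. With your choice $v_{(q)}^k=(Tk)_q\overline{z^k}$, the second $t$-derivative produces $\langle\overline{z^k},(\partial^2_{\text{Hess}})\overline{z^\ell}\rangle$, where $\partial^2_{\text{Hess}}$ is the \emph{standard} $2\times2$ Hessian in $(\Re\widehat r^k,\Im\widehat r^k)$. But the corollary's left-hand side involves $\partial^2_{\widehat r^k,\widehat r^\ell}$, which by the paper's convention carries off-diagonal minus signs (and a factor $\tfrac14$). Tracking the signs, $4\,\mathrm{Tr}\{(\overline{z^k}\otimes\overline{z^\ell})^\top\partial^2_{\widehat r^k,\widehat r^\ell}\rho\}=\sum_{\mathfrak D,\widetilde{\mathfrak D}}\mathfrak D[z^k]\widetilde{\mathfrak D}[z^\ell]\,\partial^2_{\mathfrak D,\widetilde{\mathfrak D}}\rho$, whereas your second derivative yields $\sum_{\mathfrak D,\widetilde{\mathfrak D}}\mathfrak D[\overline{z^k}]\widetilde{\mathfrak D}[\overline{z^\ell}]\,\partial^2_{\mathfrak D,\widetilde{\mathfrak D}}\rho$; the cross terms have opposite signs. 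The fix is simply to take $v_{(q)}^k=(Tk)_q z^k$ (equivalently $\widehat{(b^{(q)})_j}^k=\tfrac{\i}{2\pi}T_{qj}z^k$), after which everything goes through; this is the analogue of the paper's final $z^k\mapsto -\i z^k$ substitution. Your own caveat about ``the real-vector/complex-number identifications'' was well placed.

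A small side remark: your intermediate bound $\tfrac{Cd|S|}{\pi^2}\bigl(\sum_k|z^k|\bigr)^2$ is already sufficient (indeed sharper), so the last paragraph is not needed except to match the exact form of the stated right-hand side.
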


In the above formula, 
we used the following notation
(very much in the spirit of 
\eqref{eq:gradient:complex:000}
and
\eqref{eq:gradient:complex}):
for a smooth function $\phi$, 
$\partial^2_{\widehat{r}^k,\widehat{r}^\ell}
\phi$ 
denotes 
the $2 \times 2$ matrix
\begin{equation*}
\partial^2_{\widehat{r}^k,\widehat{r}^\ell}
\phi
= \frac14 \left( 
\begin{array}{ll}
\partial^2_{\Re[\widehat{r}^k],\Re[\widehat{r}^\ell]} \phi 
&-\partial^2_{\Re[\widehat{r}^k],\Im[\widehat{r}^\ell]} \phi 
\\
-\partial^2_{\Im[\widehat{r}^k],\Re[\widehat{r}^\ell]} \phi
&\partial^2_{\Im[\widehat{r}^k],\Im[\widehat{r}^\ell]} \phi  
\end{array}
\right).
\end{equation*}
Moreover, 
$k_i  
z^k \otimes 
\ell_i 
z^\ell
$ denotes the $2 \times 2$ matrix 
\begin{equation*}
k_i  
z^k \otimes 
\ell_i 
z^\ell
= \left( 
\begin{array}{ll}
\Re[k_i z^k] 
\Re[\ell_i z^\ell]
&\Re[k_i z^k] 
\Im[\ell_i z^\ell]
\\
\Im[k_i z^k] 
\Re[\ell_i z^\ell]
&\Im[k_i z^k] 
\Im[\ell_i z^\ell]
\end{array}
\right),
\end{equation*}
and similarly for $k_i \overline{z^k} \otimes [ S \ell]_i \overline{z^\ell}$. 
In particular, 
using
$\fD$
as a generic notation for $\Re,\Im$, the main  
inequality in the statement of Corollary 
\ref{cor:weak:semi-concavity}
can be rewritten as
(with the dot product in the second line below being an inner product)
\begin{equation}
\label{eq:corollary:weak:semi-concavity:2}
\begin{split}
&\int_{{\mathbb R}^{2 \vert F_N^+\vert}} 
\biggl\{
\phi\Bigl(m^{N,\varepsilon}(r) * f_N\Bigr)
\\
&\hspace{45pt} \times 
\sum_{\fD,\widetilde{\fD} = \Re,\Im}
\sum_{k, \ell \in F_N^+}
\biggl[
\Bigl( {\fD} \bigl[ z^k  \bigr]
k \Bigr)
\cdot 
\Bigl( \widetilde{\fD} \bigl[ z^\ell\bigr]
S \ell \Bigr)
\partial_{\fD[\widehat{r}^{k}],\widetilde{\fD}[\widehat{r}^{\ell}]}^2
\Bigl( \prod_{j \in F_N^+ }
  \rho\bigl(\Re\bigl[ \widehat{r}^j\bigr],\Im\bigl[ \widehat{r}^j\bigr] \bigr) 
  \Bigr)
 \biggr]
 \biggr\}
 \\
&\hspace{30pt} \bigotimes_{j \in F_N^+}
 d \Bigl( \Re\bigl[ \widehat{r}^j\bigr],\Im \bigl[ \widehat{r}^j  \bigr]
 \Bigr) 
 \\
&\leq C
\vert S \vert
\sum_{q=1}^d
\biggl(
\sum_{k \in F_N^+} \vert k_q \vert \, \vert z^k
\vert \biggr)^2.
\end{split}
\end{equation}

\begin{proof}
Given the same non-negative symmetric matrix $S$ as in the statement, we
call $S^{1/2}$
a symmetric (non-negative) square root of $S$. 
We then
 apply 
Proposition 
\ref{prop:weak:semi-concavity}
with the following vector field 
\begin{equation*}
b^{q'}(x) = \bigl[ S^{1/2} \bigr]_{q,q'} 
\sum_{k \in F_N \setminus \{0\}} z^k e_{-k}(x), \quad 
q'= 1,\cdots,d,
\end{equation*}
with 
$z^{-k}=\overline{z^k}$, where $q$ is a frozen integer in $\{1,\cdots,d\}$
and 
$[S^{1/2}]_{q,q'}$ the element $(q,q')$ of the matrix $S^{1/2}$. 
Then, we have 
\begin{equation*}
\textrm{\rm div}(b)(x) = - \i 2 \pi \sum_{k \in F_N \setminus \{0\}} \biggl( \sum_{q'=1}^d \bigl[ 
S^{1/2} \bigr]_{q,q'} k_{q'} \biggr)  z^k e_{-k}(x) = - \i 2 \pi 
\sum_{k \in F_N \setminus \{0\}}
 \bigl( S^{1/2} k \bigr)_q z^k e_{-k}(x).
\end{equation*}
In turn, 
\begin{equation*}
\widehat{\textrm{\rm div}(b)}^k 
=
- \i 2 \pi 
 \bigl( S^{1/2} k \bigr)_q z^k.
\end{equation*}
We now apply 
Proposition 
\ref{prop:weak:semi-concavity}, 
from which we get 
\begin{equation*}
\begin{split}
&\int_{{\mathbb R}^{2 \vert F_N^+\vert}} 
\biggl\{
\phi\Bigl(m^{N,\varepsilon}(r) * f_N\Bigr)
\\
&\hspace{45pt} \times 
\sum_{\fD,\widetilde{\fD} = \Re,\Im}
\sum_{k, \ell \in F_N^+}
\biggl[
\Bigl( {\fD} \bigl[ \widehat{\textrm{\rm div}(b)}^k  \bigr]
 \Bigr)
\Bigl( \widetilde{\fD} \bigl[ \widehat{\textrm{\rm div}(b)}^\ell \bigr]
 \Bigr)
\partial_{\fD[\widehat{r}^{k}],\widetilde{\fD}[\widehat{r}^{\ell}]}^2
\Bigl( \prod_{j \in F_N^+ }
  \rho\bigl(\Re\bigl[ \widehat{r}^j\bigr],\Im\bigl[ \widehat{r}^j\bigr] \bigr) 
  \Bigr)
 \biggr]
 \biggr\}
 \\
&\hspace{30pt} \bigotimes_{j \in F_N^+}
 d \Bigl( \Re\bigl[ \widehat{r}^j\bigr],\Im \bigl[ \widehat{r}^j  \bigr]
 \Bigr) 
 \\
&\leq C
\vert S \vert
\biggl(
\sum_{k \in F_N^+} \vert k_q \vert \, \vert z^k
\vert \biggr)^2,
\end{split}
\end{equation*}
and then,
\begin{equation*}
\begin{split}
&\int_{{\mathbb R}^{2 \vert F_N^+\vert}} 
\biggl\{
\phi\Bigl(m^{N,\varepsilon}(r) * f_N\Bigr)
\\
&\hspace{45pt} \times 
\sum_{\fD,\widetilde{\fD} = \Re,\Im}
\sum_{k, \ell \in F_N^+}
\ {\fD} \bigl[ \i z^k\bigr]
  \widetilde{\fD} \bigl[ \i z^\ell \bigr]
 \Bigl( S^{1/2}  k \Bigr)_q
 \Bigl( S^{1/2}  \ell \Bigr)_q 
\partial_{\fD[\widehat{r}^{k}],\widetilde{\fD}[\widehat{r}^{\ell}]}^2
\Bigl( \prod_{j \in F_N^+ }
  \rho\bigl(\Re\bigl[ \widehat{r}^j\bigr],\Im\bigl[ \widehat{r}^j\bigr] \bigr) 
  \Bigr)
 \biggr]
 \biggr\}
 \\
&\hspace{30pt} \bigotimes_{j \in F_N^+}
 d \Bigl( \Re\bigl[ \widehat{r}^j\bigr],\Im \bigl[ \widehat{r}^j  \bigr]
 \Bigr) 
 \\
&\leq C
\vert S \vert
\biggl(
\sum_{k \in F_N^+} \vert k_q \vert \, \vert z^k
\vert \biggr)^2.
\end{split}
\end{equation*}
By summing over $q \in \{1,\cdots,d\}$ and by changing 
$(z^k)_{k \in F_N^+}$ into 
$(-\i z^k)_{k \in F_N^+}$, 
we easily complete the proof.
\end{proof}

\section{Generalized solutions to the HJB equation}
\label{sec:4}

The purpose of this section is to clarify and to prove Meta-Theorem 
\ref{meta:1}.
.

\subsection{Interpretation of the spatial derivatives}
\label{subse:spatial:deri:interpretation}

We start with an informal discussion to explain our ideas.  

Basically, 
the purpose is to reformulate the equation at points 
$m \in {\mathcal P}_N$ at which 
the restriction at level $N$ (see 
\eqref{eq:tilde:phi:N})
of 
a candidate $W$ for solving 
\eqref{eq:HJB} is differentiable.  Here we use $W$ (and not $V$) as a notation
for a generic candidate 
 since, at this stage, 
$W$ may not be the value function of the 
MFCP
\eqref{eq:MKV:V}. 
To ease the exposition, we do throughout this informal subsection 
as if 
$W$ were smooth (even though it is obviously not true). 
\vspace{5pt}

\paragraph{\it Crossed derivatives.}
We here address the term
\begin{equation*}
T[m,W] := \frac12 \int_{{\mathbb T}^d} 
\text{Tr} \Bigl[ 
\partial_y \partial_\mu W(t,m)(y) \Bigr] d m(y)
\end{equation*}
in 
\eqref{eq:HJB}. 
If $m \in {\mathcal P}_N$, then
we can identify the measure and its density and write 
(using an obvious integration by parts)
\begin{equation*}
\begin{split}
T[m,W]& = \frac12 
\int_{{\mathbb T}^d} \Bigl[ 
\frac{\delta}{\delta m} W(t,m)(y) \Bigr] \Delta m(y) dy
=
 - \sum_{k \in F_N} 2 \pi^2 \vert k \vert^2 
 \widehat{m}^k 
\reallywidehat{\displaystyle \frac{\delta}{\delta m} W(t,m) }^{-k}.
\end{split}
\end{equation*}
Recall
from Proposition 
\ref{prop:3:7}
 the formula
\begin{equation*}
\partial_{\widehat m^k} W(t,m) = \reallywidehat{\displaystyle \frac{\delta W}{\delta m} (t,m)}^{-k},
\quad k \in F_N^+,
\end{equation*}
from which we get 
\begin{equation*}
\begin{split}
T[m,W]& = \frac12 
\int_{{\mathbb T}^d} \Bigl[ 
\frac{\delta}{\delta m} W(t,m)(y) \Bigr] \Delta m(y) dy
= 
 - \sum_{k \in F_N} 2 \pi^2 \vert k \vert^2 
\partial_{ \widehat{m}^{k} }
W(t,m) \widehat{m}^{k}.
\end{split}
\end{equation*}
We feel useful to stress the interest of the above formula: 
the quantity
$T[m,W]$ can be reformulated in terms of the sole derivatives of 
$W$ with respect to $\widehat{m}^k$, for $k \in F_N$. 
This property may be recast in another way:
the differential operator 
that maps 
a real-valued smooth function $\phi$
on ${\mathcal P}({\mathbb T}^d)$ onto the function 
\begin{equation*}
m \in {\mathcal P}({\mathbb T}^d) \mapsto \frac12 
\int_{{\mathbb T}^d}
\textrm{\rm Trace}
\Bigl[ \partial_y \partial_\mu \phi(m)(y) 
\Bigr] d m(y)
\end{equation*}
has the heat equation as characteristics, that is 
\begin{equation*}
\frac{d}{dt} \phi (m_t) = 
\frac12 
\int_{{\mathbb T}^d}
\textrm{\rm Trace}
\Bigl[ \partial_y \partial_\mu \phi\bigl(m_t\bigr)(y) 
\Bigr] d m_t(y),\quad 
\textrm{\rm for} \quad
\partial_t m_t(y) - \frac12 \Delta_y m_t(y) =0,
\end{equation*}
where 
$t \geq 0$ and $m_0 \in {\mathcal P}({\mathbb T}^d)$. 
Since the heat equation
 keeps the space 
${\mathcal P}_N$ invariant, this strongly advocates for expanding probability measures in Fourier series.
\\
\paragraph{\bf Quadratic term.}
We now address the term
\begin{equation*}
S[m,W] := 
 \int_{{\mathbb T}^d} 
H \bigl( y,  \partial_\mu W(t,m) (y) \bigr) d m(y)
\end{equation*}
in \eqref{eq:HJB}. 
The big difficulty with this term is that it contains all the Fourier modes of $\delta W/\delta m$:
differently from $T[m,W]$, 
$S[m,W]$ cannot be expressed 
in terms of the sole 
derivatives of 
$W$ with respect to $\widehat{m}^k$, for $k \in F_N$. This is one of the main difficulty in our 
analysis. 
Anyway, it makes perfect sense in our context to assume that, whenever it exists, $\partial_\mu W(t,m)(y)$ is H\"older continuous in $y$: 
the reason is that the function $\partial_\mu W$ is expected to benefit from the presence of the heat operator in the direction $y$ (this point is clarified in Proposition \ref{prop:4:7} below).  In turn, we can assume that the $d$ components 
of $y \mapsto 
\partial_\mu W(t,m)(y)$ are in a compact set of $L^2({\mathbb T}^d)$. We then recall that, for any compact subset 
${\mathcal K}$ of 
$L^2({\mathbb T}^d)$, 
\begin{equation}
\label{eq:compact:Fourier}
\lim_{N \rightarrow \infty} 
\sup_{f \in {\mathcal K}}
\sum_{k \not \in F_N}
| \widehat{f}^k |^2 = 0.
\end{equation}
In that case, we can write 
\begin{equation*}
\begin{split}
S[m,W] =  \int_{{\mathbb T}^d} H\biggl(y, - \i 2\pi \biggl( \sum_{k \in F_N} k \reallywidehat{\displaystyle \frac{\delta W}{\delta m}(t,m)}^k e_{-k}(y) +
  \sum_{k \not \in F_N} k \reallywidehat{\displaystyle \frac{\delta W}{\delta m}(t,m)}^k e_{-k}(y)
  \biggr)
\biggr) d m(y).
\end{split}
\end{equation*}
Therefore, by expanding the Hamiltonian, we can reasonably postulate
\begin{equation*}
S[m,W] = \int_{{\mathbb T}^d} H
\biggl(y, - \i 2 \pi \sum_{k \in F_N} k \reallywidehat{\displaystyle \frac{\delta W}{\delta m}(t,m)}^k e_{-k}(y) \biggr) dm(y) + 
O(\eta_N),
\end{equation*}
where $\eta_N$ tends to $0$ as $N \rightarrow \infty$, uniformly on measures $m$ whose density is bounded by the same constant. 
Finally, 
by 
\eqref{eq:gradient:complex:000},
\begin{equation*}
S[m,W] =  \int_{{\mathbb T}^d} H \biggl( y, \i 2 \pi  \sum_{k \in F_N} k \partial_{\widehat m^k} W(t,m) e_{k}(y) \biggr) dm(y) + 
O(\eta_N),
\end{equation*}
Beware that this is not a proof. This is just a hint that motivates the definition right below. 
\subsection{Definition of a generalized solution}
\begin{defn}
\label{defn:HJB:gen}
For a constant $c>1$, 
let
$A_N(c):=[0,T] \times B_N(c)$ with $B_N(c):= {\mathcal P}_N \cap B(c)$ and $B(c):= \{ m :  \sup_{x \in \bT^d} \vert \nabla m(x)  \vert \leq c, \ \inf_{x \in \bT^d}  m(x)   \geq 1/c \}$.
Then, 
we call a generalized solution to the HJB equation 
\eqref{eq:HJB} a function $W : [0,T] \times {\mathcal P}({\mathbb T}^d) \rightarrow 
{\mathbb R}$ that is time-space Lipschitz continuous when 
${\mathcal P}({\mathbb T}^d)$ is equipped with 
$d_{\rm TV}$
and that satisfies the following property: For any $c >1$,
there exists a sequence $(\eta_N(c))_{N \geq 1}$ converging to $0$, 
such that, for any $N \geq 1$, 
$\textrm{\rm Leb}_1 \otimes {\mathbb P}_N$ almost everywhere on $A_N(c)$, the following bound holds true: 
\begin{equation}
\begin{split}
\biggl\vert \partial_t W(t,m) &- 
 \int_{{\mathbb T}^d}
 H \biggl( y, \i 2 \pi  \sum_{k \in F_N} k \partial_{\widehat m^k} W(t,m) e_{k}(y) \biggr)  d m(y)
 \\
&\hspace{15pt} - \sum_{k \in F_N} 2 \pi^2 \vert k\vert^2 
\partial_{ \widehat{m}^{k} }
W(t,m) \widehat{m}^{k}  + F(m)\biggr\vert 
\leq \eta_N(c).
\label{eq:HJB:gen}
\end{split}
\end{equation}
\end{defn}
Of course, in this definition, the
`$\textrm{\rm Leb}_1 \otimes {\mathbb P}_N$ almost everywhere' should be 
equivalently understood as `almost everywhere' for the product of the Lebesgue measure on $[0,T]$ and of 
the image of the Lebesgue measure on ${\mathcal O}_N$ by 
the 
canonical embedding ${\mathscr I}_N$ in 
\eqref{eq:I_N}. 
In this regard, the need for $W$ to be Lipschitz continuous is clear: this is a way to guarantee the existence of the
derivatives, almost everywhere (see Proposition \ref{prop:3:8}).
In fact, the interest of this formulation is twofold: Not only does it permit to use 
directly the standard finite-dimensional Rademacher's theorem 
(even though 
Proposition 
\ref{prop:rademacher}
provides 
an infinite-dimensional version in our context), but it also 
allows us to implement next the finite-dimensional regularization procedure
introduced in Definition 
\ref{def:mollification}.
The latter is a key ingredient in the 
proof of uniqueness, see in particular
the forthcoming  Proposition 
\ref{prop:generalized:solution}.

Very interestingly, by means of 
Theorem 
\ref{thm:probability:probability}, the 
`almost-everywhere' that is here understood for each $N \geq 1$ can be `factorized' into a single 
`almost-everywhere'
(below, 
${\mathbb P}$ denotes the same probability measure as in 
the statement of 
Theorem 
\ref{thm:probability:probability}):
\begin{prop}
Assume that $W$ is a generalized solution 
to \eqref{eq:HJB}. Then,
for any $\varepsilon \in (0,1)$, there exist
a Borel subset $D_{\varepsilon}$ of 
$[0,T] \times {\mathcal P}({\mathbb T}^d)$, 
such that 
$[\textrm{\rm Leb}_1 \otimes {\mathbb P}](D_{\varepsilon}^{\complement}) \leq \varepsilon$,
and
 a sequence $(\eta_N(\varepsilon))_{N \geq 1}$ converging to $0$ such that, for any 
 $(t,m) \in D_{\varepsilon}$ and any $N \geq 1$, 
\begin{equation}
\label{eq:HJB:gen:m*f}
\begin{split}
&\biggl\vert \partial_t W(t,m*f_N) - 
  \int_{{\mathbb T}^d} H\biggl( y,   \i 2 \pi \sum_{k \in F_N} k \partial_{\widehat m^k} W(t,m * f_N) e_{k}(y) \biggr) d(m*f_N)(y)
 \\
 &\hspace{15pt} - \sum_{k \in F_N} 2 \pi^2 \vert k\vert^2 
\partial_{ \widehat{m}^{k} }
W(t,m*f_N) \widehat{m}^{k} \widehat{f}_N^{k} + F(m * f_N)\biggr\vert \leq \eta_N(\varepsilon).
\end{split}
\end{equation}
\end{prop}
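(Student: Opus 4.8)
The plan is to fix $\varepsilon$, freeze a single truncation constant $c=c(\varepsilon)$, and then, level by level in $N$, push the almost-everywhere validity of \eqref{eq:HJB:gen} on $A_N(c)$ (which holds for $\textrm{\rm Leb}_1\otimes{\mathbb P}_N$) forward along the map $m\mapsto m*f_N$ into an almost-everywhere statement under $\textrm{\rm Leb}_1\otimes{\mathbb P}$; the transport is legitimate precisely because of item $(ii)$ of Theorem \ref{thm:probability:probability}, which says that the pushforward of ${\mathbb P}$ by $\pi_N^{(1)}:m\mapsto(\widehat m^k\widehat f_N^k)_{k\in F_N^+}$ is absolutely continuous with respect to the Lebesgue measure on ${\mathcal O}_N$. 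Once this is done for each $N$, I would intersect the resulting full-measure sets over $N$ to obtain $D_\varepsilon$, noting that $\widehat{(m*f_N)}^k=\widehat m^k\widehat f_N^k$ turns \eqref{eq:HJB:gen} evaluated at $m*f_N$ into exactly \eqref{eq:HJB:gen:m*f}, with $\eta_N(\varepsilon):=\eta_N(c)$.

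First I would choose $c$. For any $m$ with a continuously differentiable strictly positive density, $m*f_N$ is a trigonometric polynomial whose Fourier modes vanish outside $F_N$, whose density is bounded below by $\inf_x m(x)>0$ (as $f_N$ is a probability density) and whose gradient satisfies $\|\nabla(m*f_N)\|_\infty=\|(\nabla m)*f_N\|_\infty\le\|\nabla m\|_\infty$; hence $m*f_N\in B_N(c_m)$ with $c_m:=\max(\|\nabla m\|_\infty,1/\inf_x m(x))$, for every $N$. Therefore the Borel sets $\widehat K_c:=\{m\in\PP:\ m*f_N\in B_N(c)\ \text{for all }N\ge1\}$ (Borel because $m\mapsto m*f_N$ is continuous into the finite-dimensional space of signed measures with density a trigonometric polynomial of degree $<N$, in which $B_N(c)$ is closed) increase to a set of full ${\mathbb P}$-measure as $c\to\infty$, by item $(i)$ of Theorem \ref{thm:probability:probability}. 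I would then fix $c=c(\varepsilon)$ with ${\mathbb P}(\widehat K_c^{\complement})\le\varepsilon/(2T)$.

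Next comes the transfer. Since $W$ is a generalized solution, Definition \ref{defn:HJB:gen} gives, for this $c$, a sequence $(\eta_N(c))_N\to0$ and Borel sets $E_N\subseteq A_N(c)$ of full $\textrm{\rm Leb}_1\otimes{\mathbb P}_N$-measure on which \eqref{eq:HJB:gen} holds (in particular, the derivatives there exist on $E_N$). By Fubini there is a Lebesgue-null $\mathcal N_N\subseteq[0,T]$ so that for $t\notin\mathcal N_N$ the slice $E_N^t:=\{m\in B_N(c):(t,m)\in E_N\}$ is of full ${\mathbb P}_N$-measure in $B_N(c)$. For such $t$, $B_N(c)\setminus E_N^t$ is ${\mathbb P}_N$-null; since ${\mathbb P}_N=\Gamma_N\circ{\mathscr I}_N^{-1}$ and $\Gamma_N$ is mutually absolutely continuous with the Lebesgue measure on ${\mathcal O}_N$, the set ${\mathscr I}_N^{-1}(B_N(c)\setminus E_N^t)$ is a Lebesgue-null Borel subset of ${\mathcal O}_N$, and item $(ii)$ of Theorem \ref{thm:probability:probability} then forces ${\mathbb P}(\{m:\pi_N^{(1)}(m)\in{\mathscr I}_N^{-1}(B_N(c)\setminus E_N^t)\})=0$. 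Because $m*f_N={\mathscr I}_N(\pi_N^{(1)}(m))$ for $m\in\widehat K_c$ (using \eqref{eq:I_N} and $\widehat{(m*f_N)}^k=\widehat m^k\widehat f_N^k$), this reads ${\mathbb P}(\{m\in\widehat K_c:(t,m*f_N)\notin E_N\})=0$ for every $t\notin\mathcal N_N$. Setting $\mathcal N:=\bigcup_N\mathcal N_N$ and
\[
D_\varepsilon:=\Bigl\{(t,m)\in\bigl([0,T]\setminus\mathcal N\bigr)\times\widehat K_c:\ (t,m*f_N)\in E_N\ \text{for all }N\ge1\Bigr\},
\]
which is Borel (each $\{(t,m):(t,m*f_N)\in E_N\}$ is a preimage of a Borel set under a continuous map), Fubini together with the previous line gives $[\textrm{\rm Leb}_1\otimes{\mathbb P}](\{t\notin\mathcal N,\ m\in\widehat K_c,\ (t,m*f_N)\notin E_N\})=0$ for each $N$, whence $[\textrm{\rm Leb}_1\otimes{\mathbb P}](D_\varepsilon^{\complement})\le T\,{\mathbb P}(\widehat K_c^{\complement})\le\varepsilon/2\le\varepsilon$; and for $(t,m)\in D_\varepsilon$ and any $N$, \eqref{eq:HJB:gen} at $m*f_N\in B_N(c)$ is precisely \eqref{eq:HJB:gen:m*f}.

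The main obstacle is the transport step: a set of bad measures that is ${\mathbb P}_N$-null at level $N$ need not pull back to a ${\mathbb P}$-small set under $m\mapsto m*f_N$, and it is exactly the absolute continuity in item $(ii)$ of Theorem \ref{thm:probability:probability} that repairs this — so the proof is essentially an application of that theorem. The two other points requiring care are that the error $\eta_N$ must be uniform in $m$ (forcing the use of a single constant $c$, obtained via item $(i)$) and the Borel-measurability bookkeeping for $\widehat K_c$, $E_N$, $D_\varepsilon$ and the slices; these are routine once the right objects are in place.
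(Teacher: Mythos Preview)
Your proof is correct and follows essentially the same route as the paper: choose a single constant $c=c(\varepsilon)$ via item $(i)$ of Theorem~\ref{thm:probability:probability} so that $m*f_N\in B_N(c)$ for all $N$ on a set of large ${\mathbb P}$-measure, then transfer the $\textrm{Leb}_1\otimes{\mathbb P}_N$-null exceptional sets along $m\mapsto m*f_N$ using the absolute continuity in item $(ii)$, and intersect over $N$. Your presentation is marginally tidier in two spots---you fix one $c$ up front rather than intersecting over all $c\in{\mathbb N}$, and you do the Fubini slicing explicitly instead of invoking an adaptation of Lemma~\ref{lem:23}---but the mechanism is identical.
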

In other words, the combination of Theorem 
\ref{thm:probability:probability}
and of the mollification procedure 
reported in \S \ref{subsubse:BH}
makes it possible to have 
a definition that holds at `almost every' point of the state space. 
Of course, what makes this definition interesting in this regard is that the measure ${\mathbb P}$ has a full support.

\begin{proof}
We call $E$ the set of points $m \in {\mathcal P}({\mathbb T}^d)$ such that 
$m$ has a (strictly) positive continuously differentiable density. 
Theorem \ref{thm:probability:probability}
says that ${\mathbb P}(E) = 1$. 
For any $m \in E$, we have $\inf_{N \geq 1} \inf_{x \in {\mathbb T}^d} 
m*f_N(x) >0$ and 
$\sup_{N \geq 1} \sup_{x \in {\mathbb T}^d} 
\vert \nabla m * f_N(x) \vert < \infty$. In particular, 
\begin{equation*}
\lim_{c \rightarrow \infty} 
{\mathbb P} 
\biggl( 
\Bigl\{ 
\inf_{N \geq 1} \inf_{x \in {\mathbb T}^d} 
m*f_N(x) \geq 1/c\Bigr\}
\cap
\Bigl\{
\sup_{N \geq 1} \sup_{x \in {\mathbb T}^d} 
\vert \nabla m * f_N(x) \vert \leq c
\Bigr\}
\biggr)
= 1. 
\end{equation*}
Hence, for any $\varepsilon \in (0,1)$,
we can find $c_{\varepsilon}>1$ such that 
${\mathbb P}(E_{\varepsilon}) \geq 1- \varepsilon$, 
with 
\begin{equation*}
E_{\varepsilon}
:= 
\Bigl\{ 
\inf_{N \geq 1} \inf_{x \in {\mathbb T}^d} 
m*f_N(x) \geq 1/c_{\varepsilon}\Bigr\}
\cap
\Bigl\{
\sup_{N \geq 1} \sup_{x \in {\mathbb T}^d} 
\vert \nabla m * f_N(x) \vert \leq c_{\varepsilon}
\Bigr\}.
\end{equation*}

Now, for any $c \geq 1$ and any $N \geq 1$, we call $\widetilde{A}_N(c)$ the collection of points $(t,m) \in A_N(c)$ 
(see Definition 
\ref{defn:HJB:gen} for the notation)
at which 
\eqref{eq:HJB:gen}
does not 
hold. Clearly $\textrm{\rm Leb}_1 \otimes 
{\mathbb P}_N(\widetilde{A}_N(c))=0$. 
We then make use of item 
$(ii)$ in the statement of Theorem 
\ref{thm:probability:probability}. 
By combining with an obvious adaptation of Lemma 
\ref{lem:23}, we deduce that there exists a full subset $D$ of 
$[0,T] \times {\mathcal P}({\mathbb T}^d)$ such that, 
for any $(t,m) \in D$
and for any $N \geq 1$, 
the point $(t,\pi_N^{(1)}(m))$ belongs to $\cap_{c \in {\mathbb N} \setminus \{0\}} 
(\widetilde{A}_N(c))^{\complement}$. 
We thus let
\begin{equation*}
D_{\varepsilon} := \bigl( [0,T] \times E_{\varepsilon} \bigr) \cap D. 
\end{equation*}
Clearly, 
\begin{equation*}
[\textrm{\rm Leb}_1 \otimes {\mathbb P}](D_{\varepsilon}^{\complement}) \leq T \varepsilon.
\end{equation*}
If $(t,m) \in D_{\varepsilon}$,
then
$(t,m*f_N)   \in A_N(c_{\varepsilon})
\cap (\widetilde{A}_N(c_{\varepsilon}))^{\complement}$.
In particular, 
\eqref{eq:HJB:gen}
holds true with respect to 
the sequence $(\eta_N(c_\varepsilon))_{N \geq 1}$, which we simply denote
by 
$(\eta_N(\varepsilon))_{N \geq 1}$ in the statement. 
\end{proof}

\begin{rem}
\label{rem:formulation:P:as}
A natural question is to determine whether we can pass to the limit 
over $N$ in 
\eqref{eq:HJB:gen:m*f} and then get that 
\eqref{eq:HJB} 
is satisfied 
$\textrm{\rm Leb}_1 \otimes 
{\mathbb P}$
almost everywhere.

The answer is twofold. On the one hand, it seems that we cannot do so \emph{a priori}. The reason is that 
the passage from the
 derivative
 $\partial_{\widehat{m}^k} W(t,m*f_N)$
 to the 
 derivative
 $\partial_{\widehat{m}^k} W(t,m)$
 (with the existence of the latter following from 
 our own version 
of Rademacher's theorem, 
see Theorem \ref{prop:rademacher}) 
is just known to hold in the weak sense, see again 
the statement of Theorem
\ref{prop:rademacher}. In particular, it seems impossible to 
address (at least in the current framework) the limit of the gradients inside the Hamiltonian $H$ driving the HJB equation. 
On the other hand, it seems that the same passage to the limit can be in fact achieved \emph{a posteriori}, once the 
generalized solutions are known to coincide with the value function 
$V$, which identification 
follows from Theorem 
\ref{thm:uniqueness:HJB}
if we restrict ourselves to 
generalized solutions with sufficient regularity properties. 
The proof would follow from a combination of 
Theorem 
\ref{thm:value:is:gen:HJB},
Lemma 
\ref{lem:superjet}
and Remark 
\ref{rem:4.5}
right below. In short, 
the former statement 
says that,
for 
$\textrm{\rm Leb}_1 \otimes {\mathbb P}$ almost every initial condition, 
the MFCP has a unique 
solution. 
In turn, 
we conjecture that, for
$\textrm{\rm Leb}_1 \otimes {\mathbb P}$ almost every 
$(t,m) \in [0,T] \times \PP$, any sequence of optimal trajectories issued from $((t,m*f_N))_{N \geq 1}$
should
converge in a suitable sense 
to the optimal trajectory issued from 
$(t,m)$. 
However, since optimal trajectories 
satisfy the MFG system 
\eqref{eq:MFG:system}, this convergence property
can be certainly reformulated as a convergence 
property on any sequence of solutions to 
\eqref{eq:MFG:system}
that start from $((t,m*f_N))_{N \geq 1}$
and that minimize 
${\mathcal J}_{\emph{det}}$. It then remains to 
observe that 
Lemma 
\ref{lem:superjet}
and Remark 
\ref{rem:4.5}
below 
permit to identify 
 $\partial_{\widehat{m}^k} V(t,m*f_N)$
and
 $\partial_{\widehat{m}^k} V(t,m)$
with solutions 
of the backward equation 
in the MFG system
\eqref{eq:MFG:system}
when initialized 
from $(t,m*f_N)$ and $(t,m)$ respectively. 
Combined with the 
convergence property of the 
MFG system, this should imply in the end that 
 $\partial_{\widehat{m}^k} V(t,m*f_N)$
converges to 
 $\partial_{\widehat{m}^k} V(t,m)$ 
 $\textrm{\rm Leb}_1 \otimes {\mathbb P}$ almost everywhere, which is almost what we need to pass to the limit in the Hamiltonian.  
 Indeed, 
 once the convergence of 
 the derivatives with respect to any 
 $\widehat{m}^k$ has been shown, the 
 convergence of the Fourier series encoding the 
 derivatives with respect to $m$ is quite easy to address, see for instance 
 Proposition \ref{prop:4:7} which provides strong bounds on the decay of  
 $\partial_{\widehat{m}^k} V(t,m)$.

Of course, this result should be formalized in the form of 
a more rigorous statement. However, 
we feel useless to address the details here since 
 the real benefit for our analysis 
would be small in the end. Indeed, the result would come only in the end, once uniqueness of the generalized solution has been proven. 
Moreover, our feeling is precisely that (at least at this stage of our understanding) it is in fact much easier to work with 
finite dimensional 
formulations of the HJB equation. 
 \end{rem}

\subsection{The value function as a generalized solution}

We have the following main statement:

\begin{thm}
\label{thm:value:is:gen:HJB}
The value function $V$, as defined in \eqref{eq:MKV:V}, is a generalized solution of the HJB equation. It is Lipschitz continuous in time and space when the space ${\mathcal P}({\mathbb T}^d)$ is equipped with respect to $d_{-2}$ (see the definition in Proposition 
\ref{prop:weak:semi-concavity}) and it is semi-concave (see Proposition \ref{prop:semi-concavity}). 


 {Moreover, 
if, for some $(t,m) \in [0,T] \times \PP$,
$V(t,\cdot)$ has directional derivatives at $m$ along $\Re[e_k]$ and $\Im[e_k]$, for any $k \in {\mathbb Z}^d$, then 
 	the 
 	MFCP 
 	introduced in Subsection 
 	\ref{subse:mfc:presentation}
 	has a unique optimizer when the trajectories
 	\eqref{eq:control:FKP}
 	are initialized from $m$ at time $t$. In particular, uniqueness holds 
 for $\textrm{\rm Leb}_1 \otimes {\mathbb P}$-almost every point $(t,m) \in [0,T] \times 
 {\mathcal P}({\mathbb T}^d)$, 
 with
 ${\mathbb P}$ denoting the same probability measure as in the statement of   
 Theorem 
 \ref{thm:probability:probability}.}

\end{thm}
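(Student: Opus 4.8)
The plan is to split the statement into three parts: $(a)$ the asserted regularity of $V$; $(b)$ that $V$ is a generalized solution in the sense of Definition~\ref{defn:HJB:gen}; $(c)$ uniqueness of optimizers at points where $V(t,\cdot)$ has the prescribed directional derivatives, and the a.e.\ consequence. For $(a)$, the $d_{W_1}$-Lipschitz continuity in $m$ and the displacement semi-concavity are exactly Proposition~\ref{prop:semi-concavity}; the $d_{-2}$-Lipschitz bound follows since, recentering a test function with $\|\nabla^2 h\|_\infty\le 1$, one has $\|\nabla h\|_\infty\le C_d$, whence $d_{-2}\le C_d\,d_{W_1}$ on $\PP$. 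Lipschitz continuity in time on each $A_N(c)$ comes from the dynamic programming principle \eqref{eq:DPP}, comparing $V(t,\cdot)$ at $m$ with the strategy ``zero control on $[t,t+h]$, then optimal'': this uses that the running cost is bounded for controls bounded by $M$ (Proposition~\ref{prop:2:2}) and that $d_{W_1}(e^{(h/2)\Delta}m,m)\le C(c)\,h$ for $m\in B_N(c)$, obtained by integrating $\partial_s e^{(s/2)\Delta}m=\tfrac12\Delta e^{(s/2)\Delta}m$ and integrating by parts against a $1$-Lipschitz test function, using $\|\nabla e^{(s/2)\Delta}m\|_\infty\le\|\nabla m\|_\infty\le c$. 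Hence $\partial_t V$ exists $\textrm{\rm Leb}_1\otimes{\mathbb P}_N$-a.e.\ on $A_N(c)$; and since on each slice $\mathcal{P}_N$ the distances $d_{W_1}$, $d_{\rm TV}$ and the Euclidean distance on Fourier coefficients are equivalent, $\widetilde V_N$ is Lipschitz on $\mathcal{O}_N$, hence differentiable a.e.\ there (Proposition~\ref{prop:3:8}).

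For $(b)$, fix $c>1$, $N\ge1$ and $(t,m)\in A_N(c)$ at which $\partial_t V(t,\cdot)$ and the differential of $\widetilde V_N(t,\cdot)$ at $m$ both exist. The first move identifies the slice derivatives. For any optimal feedback $\alpha'$ for $(t,m)$, Proposition~\ref{prop:Briani} produces an MFG field $u'$ with $(m',u')$ solving \eqref{eq:MFG:system}, $\alpha'=-\partial_pH(\cdot,\nabla u')$, and $u'$ bounded in $C^{1+\gamma/2,2+\gamma}([0,T]\times{\mathbb T}^d)$ by a constant $C(c)$, uniformly over $A_N(c)$ and over minimizers (parabolic estimates for \eqref{eq:MFG:system}, using $m\in B_N(c)$ and $\sup_m\vvvert g(\cdot,m)\vvvert_{2+\gamma}<\infty$). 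Moreover $m\mapsto\mathcal{J}_{\textrm{\rm det}}(t,m,\alpha')$ is differentiable with $\tfrac{\delta}{\delta m}\mathcal{J}_{\textrm{\rm det}}(t,\cdot,\alpha')(m)=u'_t$ up to a constant, by a standard adjoint computation using the potential structure \eqref{eq:potential:structure}, the identity $L(y,-\partial_pH(y,p))=p\cdot\partial_pH(y,p)-H(y,p)$ and the backward equation in \eqref{eq:MFG:system} (cf.\ \cite{BrianiCardaliaguet}). Since $V(t,\cdot)\le\mathcal{J}_{\textrm{\rm det}}(t,\cdot,\alpha')$ with equality at $m$, and since for $v\in\{\Re[e_k],\Im[e_k]:k\in F_N\}$ the segment $m+sv$ stays in $\mathcal{P}_N$ for small $s$ (as $m\ge1/c$), differentiability of $\widetilde V_N$ forces the two-sided directional derivative of $V(t,\cdot)$ along $v$ to equal $\langle u'_t,v\rangle$; hence $\partial_{\widehat m^k}V(t,m)=\widehat{u'_t}^{-k}$ for every $k\in F_N$ and every minimizer, so that $i2\pi\sum_{k\in F_N}k\,\partial_{\widehat m^k}V(t,m)e_k\approx\nabla u_t$ and $\sum_{k\in F_N}2\pi^2|k|^2\partial_{\widehat m^k}V(t,m)\widehat m^k=-\tfrac12\int_{{\mathbb T}^d}\Delta u_t(y)\,m(y)\,dy$.

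The second move feeds the optimal $\alpha$, with optimal trajectory $(m_s)_{t\le s\le T}$, into \eqref{eq:DPP} on $[t,t+h]$ and divides by $h$. Writing $h^{-1}(V(t,m)-V(t+h,m_{t+h}))=h^{-1}(V(t,m)-V(t+h,m))+h^{-1}(V(t+h,m)-V(t+h,m_{t+h}))$, the first piece tends to $-\partial_tV(t,m)$; in the second, $m_{t+h}$ is replaced by $m_{t+h}*f_N\in\mathcal{P}_N$ (error $\le L_V\,d_{W_1}(m_{t+h},m_{t+h}*f_N)$, which upon integration by parts moving $*f_N$ onto the uniformly Lipschitz field $\alpha_t m$ is $\le\eta_N(c)\,h+o(h)$), and $\widetilde V_N(t+h,\cdot)$ is expanded at $m$ along the tangent direction $m_{t+h}*f_N-m=h(\tfrac12\Delta m-(\mathrm{div}(\alpha_tm))*f_N)+o(h)$. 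Letting $h\downarrow0$ through a.e.\ $h$ (a compactness/stability argument for minimizers and their MFG fields handles the dependence on $t+h$), and combining with the identities above, the backward MFG equation and $L(y,-\partial_pH(y,p))=p\cdot\partial_pH(y,p)-H(y,p)$, all the $\partial_pH$-terms cancel and one is left exactly with \eqref{eq:HJB:gen}, up to $O(\eta_N(c))$ coming only from replacing the Fourier truncation of $\nabla u_t$ by $\nabla u_t$ inside $H$, controlled via Lemma~\ref{lem:weak:convergence} and \eqref{eq:compact:Fourier} on the precompact family $\{\nabla u_t\}$. The main obstacle is concentrated here: $V$ is only differentiable along the finite-dimensional slices, yet one must expand it along controlled trajectories that instantly leave them; the projection onto $\mathcal{P}_N$, combined with the \emph{uniform} parabolic regularity of the MFG system on $A_N(c)$, is precisely what makes this possible and forces $\eta_N(c)$ to be controlled uniformly.

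Finally, for $(c)$: if $V(t,\cdot)$ has directional derivatives at $m$ along every $\Re[e_k]$ and $\Im[e_k]$, the same sandwiching argument — now run with \emph{all} Fourier directions and with $V(t,\cdot)\le\min_i\mathcal{J}_{\textrm{\rm det}}(t,\cdot,\alpha^i)$, equality at $m$, for two optimal feedbacks $\alpha^1,\alpha^2$ — yields $\langle u^1_t,v\rangle=\langle u^2_t,v\rangle$ for all such $v$, hence $\nabla u^1_t=\nabla u^2_t$; as in \cite{BrianiCardaliaguet}, this pins down the optimal trajectory and control uniquely, which is the first assertion. The second assertion follows because $V(t,\cdot)$ is $d_{W_1}$-Lipschitz: Theorem~\ref{prop:rademacher} (whose conclusion only needs Lipschitz continuity of the slice restrictions, which $(a)$ supplies) gives, for each fixed $t$, the existence of directional derivatives of $V(t,\cdot)$ along all $\Re[e_k],\Im[e_k]$ at ${\mathbb P}$-a.e.\ $m$; Fubini in $t$ then yields uniqueness of the optimizer for $\textrm{\rm Leb}_1\otimes{\mathbb P}$-a.e.\ $(t,m)$.
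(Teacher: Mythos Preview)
Your argument for the $d_{-2}$-Lipschitz property of $V$ is backwards. You correctly observe that $d_{-2} \le C_d\, d_{W_1}$ (since test functions with bounded Hessian are, after centering, uniformly Lipschitz), but this inequality says $d_{-2}$ is the \emph{smaller} distance. Being Lipschitz for $d_{W_1}$ is therefore a \emph{weaker} property than being Lipschitz for $d_{-2}$, not a stronger one; the implication you claim does not follow.

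This is not a cosmetic issue: the $d_{-2}$-Lipschitz property is both part of the theorem's conclusion and is used essentially later (Proposition~\ref{prop:4:7}, which controls the decay of the Fourier derivatives $\partial_{\widehat m^k}V$, requires exactly this). In the paper it is obtained by a genuinely different argument (Lemma~\ref{le:d-2:V}): one first proves the superjet inequality
\[
V(0,m') - V(0,m) \le \int_{{\mathbb T}^d} u_0(y)\, d(m'-m)(y) + C \int_{{\mathbb T}^d} |m'(y)-m(y)|^2\, dy,
\]
with $u_0$ the backward MFG component (whose $C^2$ norm is controlled uniformly by Proposition~\ref{prop:Briani}), then iterates it along the subdivision $m_i = (1-i/n)m + (i/n)m'$ and sends $n\to\infty$ to kill the quadratic remainder, leaving $V(0,m')-V(0,m)\le C\,d_{-2}(m',m)$. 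The $d_{-2}$-Lipschitz bound therefore comes from the \emph{uniform $C^2$ regularity of the MFG backward component}, not from the $d_{W_1}$-Lipschitz property of $V$. The global time-Lipschitz bound is then deduced from the $d_{-2}$-space-Lipschitz bound and the DPP, not the other way round.

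Your parts $(b)$ and $(c)$ follow essentially the same strategy as the paper's (superjet identification of the slice derivatives with Fourier coefficients of $u_t$, DPP expansion on short time intervals, truncation back to $\mathcal{P}_N$ with an error $\eta_N(c)$ controlled via the uniform $C^{2+\gamma}$ regularity of the MFG system, and for $(c)$ the combination of Remark~\ref{rem:4.5} with \cite[Proposition~2.1]{BrianiCardaliaguet} and Theorem~\ref{prop:rademacher}). The paper isolates the truncation estimate as a separate lemma (Lemma~\ref{lem:small:time:expansion}), which makes the control of the $o(h)$ and $\eta_N$ terms cleaner than your ``compactness/stability argument for minimizers'' --- but that is a matter of presentation rather than a gap.
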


\begin{rem}
\label{rem:counter-ex}
The reader may compare the almost everywhere uniqueness result stated right above 
with the counter-example 
to uniqueness 
provided in 
\cite[Subsection 3.2]{BrianiCardaliaguet}. 
In the latter reference, non-unique solutions are constructed at probability 
measures $m$ whose density is symmetric with respect to 
the first coordinate 
$x_1$ of $x \in {\mathbb T}^d$.
In turn, this says that 
those measures satisfy  
$\Im[\widehat{m}^k]=0$ when $k=(1,0,\cdots)$. 
Obviously, the collection of such probability measures has zero mass under 
${\mathbb P}_N$, when $N \geq 2$, and also under ${\mathbb P}$ by 
$(iii)$ in the statement of Theorem 
\ref{thm:probability:probability}. 
\end{rem}

In preparation for the proof, we state
the following three lemmas (the proofs of which are given in the next subsection):
\begin{lem}
\label{lem:superjet}
{For $N \geq 1$, 
take a point $m \in {\mathcal P}_N$} such that the restriction $\widetilde V_N(0,\cdot)$ of $V(0,\cdot)$ to ${\mathcal O}_N$ is differentiable
at $(\widehat{m}^k)_{k \in F_N^+}$. 
Then, 
whatever the solution $(m_t,u_t)_{0 \leq t  \leq T} $ to the MFG system 
\eqref{eq:MFG:system}
(see Proposition \ref{thm:solvability:MFG}, with $m_0=m$) that minimizes 
${\mathcal J}_{\textrm{\rm det}}$ (see Proposition \ref{prop:Briani}),
it holds that 
\begin{equation*}
\partial_{\widehat{m}^k} V(0,m) = \widehat {u}_0^{-k}, \quad k \in F_N \setminus \{0\}. 
\end{equation*}
\end{lem}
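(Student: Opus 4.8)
The plan is to establish the identity by a classical envelope (sensitivity) argument, carried out on perturbations of $m$ that remain inside the finite-dimensional slice $\mathcal{P}_N$. First I would fix, by Proposition~\ref{prop:Briani}, a classical solution $(m_t,u_t)_{0\le t\le T}$ of \eqref{eq:MFG:system} with $m_0=m$ that minimizes $\mathcal{J}_{\textrm{\rm det}}$, and set $b_t(x):=-\partial_pH(x,\nabla_xu_t(x))$, which is bounded and Lipschitz in $x$; then $b$ is an optimal feedback and $\mathcal{J}_{\textrm{\rm det}}(0,m,b)=V(0,m)$. For a real trigonometric polynomial $r=\sum_{k\in F_N\setminus\{0\}}\widehat r^k e_{-k}$ with $\widehat r^{-k}=\overline{\widehat r^k}$, the measure $m+\varepsilon r$ has all its nonzero Fourier modes in $F_N$, so since $\mathcal{O}_N$ is open, $m+\varepsilon r\in\mathcal{P}_N$ for $|\varepsilon|$ small. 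Using $b$ as a (suboptimal) feedback for the initial datum $m+\varepsilon r$ gives $V(0,m+\varepsilon r)\le\mathcal{J}_{\textrm{\rm det}}(0,m+\varepsilon r,b)$, with equality at $\varepsilon=0$. By linearity of the Fokker--Planck equation \eqref{eq:control:FKP}, the corresponding trajectory is $m_t+\varepsilon r_t$, where $r_t$ solves $\partial_t r_t+\textrm{div}_x(b_tr_t)-\frac12\Delta_xr_t=0$, $r_0=r$; this $r_t$ is smooth in $x$ and $C^1$ in $t$ by parabolic regularity, and $\int_{\mathbb{T}^d}r_t(x)\,dx=0$ for every $t$.

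Next I would carry out the sensitivity computation. Differentiating the cost at $\varepsilon=0$, and using that $F$ and $G$ admit the linear functional derivatives in \eqref{eq:potential:structure}, gives
\begin{equation*}
\frac{d}{d\varepsilon}\Big|_{\varepsilon=0}\mathcal{J}_{\textrm{\rm det}}(0,m+\varepsilon r,b)=\Big\langle \frac{\delta G}{\delta m}(m_T),r_T\Big\rangle+\int_0^T\Big(\Big\langle \frac{\delta F}{\delta m}(m_t),r_t\Big\rangle+\int_{\mathbb{T}^d}L\big(x,b_t(x)\big)\,dr_t(x)\Big)\,dt .
\end{equation*}
On the other hand, using the two equations of \eqref{eq:MFG:system}, an integration by parts on the torus, the relation $b_t=-\partial_pH(\cdot,\nabla_x u_t)$ together with \eqref{eq:formula:L:H}, one obtains $\frac{d}{dt}\langle u_t,r_t\rangle=-\langle L(\cdot,b_t)+f(\cdot,m_t),r_t\rangle$. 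Integrating over $[0,T]$, using $u_T=g(\cdot,m_T)$, and discarding the constants in $f$ and $g$ (legitimate because each $r_t$ has zero mass, so only the centered parts $\delta F/\delta m$, $\delta G/\delta m$ matter), I get that $\langle u_0,r\rangle$ equals exactly the right-hand side above, i.e. $\frac{d}{d\varepsilon}|_{\varepsilon=0}\mathcal{J}_{\textrm{\rm det}}(0,m+\varepsilon r,b)=\langle u_0,r\rangle$.

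Finally I would conclude as follows. The map $\varepsilon\mapsto h(\varepsilon):=\mathcal{J}_{\textrm{\rm det}}(0,m+\varepsilon r,b)-V(0,m+\varepsilon r)$ is nonnegative and vanishes at $\varepsilon=0$; it is differentiable there because the first term is (by the previous step) and because $\varepsilon\mapsto V(0,m+\varepsilon r)=\widetilde V_N\big(0,(\widehat m^k+\varepsilon\widehat r^k)_{k\in F_N^+}\big)$ is, by the assumed differentiability of $\widetilde V_N(0,\cdot)$ at $(\widehat m^k)_{k\in F_N^+}$. Hence $h'(0)=0$, so $\frac{d}{d\varepsilon}|_{\varepsilon=0}V(0,m+\varepsilon r)=\langle u_0,r\rangle=\sum_{k\in F_N\setminus\{0\}}\widehat r^k\,\widehat u_0^{-k}$. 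Comparing with the finite-dimensional form of the formula in Proposition~\ref{prop:3:7} and the definitions \eqref{eq:prop:3:7:identification}--\eqref{eq:gradient:complex:000}, which give $\frac{d}{d\varepsilon}|_{\varepsilon=0}V(0,m+\varepsilon r)=\sum_{k\in F_N\setminus\{0\}}\partial_{\widehat m^k}V(0,m)\,\widehat r^k$, and then letting $(\widehat r^k)_{k\in F_N^+}$ range over all of $\mathbb{R}^{D_N}$, yields $\partial_{\widehat m^k}V(0,m)=\widehat u_0^{-k}$ for every $k\in F_N\setminus\{0\}$. The crux of the argument is the sensitivity identity $\frac{d}{d\varepsilon}|_{\varepsilon=0}\mathcal{J}_{\textrm{\rm det}}(0,m+\varepsilon r,b)=\langle u_0,r\rangle$: this is where the potential structure and the forward--backward duality of \eqref{eq:MFG:system} enter and where the (modest) regularity bookkeeping for the integration by parts and for the differentiability in $\varepsilon$ must be handled with care; everything afterwards is soft and relies only on the perturbation staying in $\mathcal{P}_N$, which is precisely why one works with $\widetilde V_N(0,\cdot)$ and not with $V(0,\cdot)$ on all of $\mathcal{P}(\mathbb{T}^d)$.
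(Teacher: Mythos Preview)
Your proof is correct and follows essentially the same envelope/super-jet idea as the paper: use the optimal feedback $b$ as a suboptimal control for the perturbed initial condition to get a one-sided comparison, then exploit differentiability of $\widetilde V_N(0,\cdot)$ to turn it into an equality. The only notable difference is that you compute the sensitivity $\frac{d}{d\varepsilon}\big|_{\varepsilon=0}\mathcal{J}_{\textrm{det}}(0,m+\varepsilon r,b)=\langle u_0,r\rangle$ directly via the PDE duality $\frac{d}{dt}\langle u_t,r_t\rangle$, whereas the paper obtains the analogous first-order expansion (with a quadratic remainder in $\|m'-m\|_2$) by invoking the probabilistic representation \eqref{eq:u_t:probabilistic:representation} of $u_t$; both routes are standard and lead to the same identification of $\partial_{\widehat m^k}V(0,m)$ with $\widehat u_0^{-k}$.
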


{\begin{rem}
\label{rem:4.5}
In fact, a similar statement holds when 
$V(0,\cdot)$ has a directional derivative along $\Re[e_k]$ and $\Im[e_k]$, for any $k \in {\mathbb Z}^d \setminus \{0\}$, at a point $m \in {\mathcal P}({\mathbb T}^d)$. The same proof shows that, in that case, 
\begin{equation*}
\partial_{\widehat{m}^k} V(0,m) = \widehat {u}_0^{-k}, \quad k \in {\mathbb Z}^d \setminus \{0\}. 
\end{equation*}
We will use this extension 
in order to prove uniqueness of the optimal trajectories for almost every starting point, as described in the second part of 
the statement of Theorem 
\ref{thm:value:is:gen:HJB}.
\end{rem}}

\begin{lem}
\label{lem:small:time:expansion}
For any constant $c>0$, 
there exist two positive-valued sequences 
$(\epsilon_N)_{N \geq 1}$
and
$(\eta_N)_{N \geq 1}$, with 
$(\eta_N)_{N \geq 1}$ converging to $0$ and with both satisfying the following property. 
If, 
for an initial condition 
$m_0$ such that 
$\| \nabla m_0 \|_\infty \leq c$ and 
$\widehat{m}_0^k =0$ when $k \not \in F_N$,  
for a
control 
$\alpha : [0,T] \times {\mathcal P}({\mathbb T}^d)
\rightarrow {\mathbb R}^d$
with $\sup_{t \in [0,T]} \| \nabla \alpha_t \|_\infty \leq c$, 
and for 
$(m_t)_{0 \leq t \leq T}$ the $\alpha$-controlled trajectory defined in 
\eqref{eq:control:FKP}, we let 
$(\mu_t)_{0 \leq t \leq T}$ be the 
function-valued trajectory defined by 
\begin{equation}
\label{eq:truncated:trajectories}
\widehat{\mu}^k_t: =
\left\{
\begin{array}{ll}
\widehat{m}^k_t \quad &\text{if} \quad k \in F_N,
\\
0 \quad &\text{otherwise},
 \end{array}
 \right.
\end{equation}
then, 
for any 
$t \in [0,\epsilon_N]$, 
$\mu_t \in {\mathcal P}({\mathbb T}^d)$ 
 and 
$d_{W_1}(m_t,\mu_t) \leq   \eta_N t$.
\end{lem}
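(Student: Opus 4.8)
The plan is to pass to Fourier coordinates, where the truncation defining $\mu_t$ becomes transparent, and to control the discarded modes by a Duhamel formula. Testing the Fokker--Planck equation \eqref{eq:control:FKP} against $e_k$ and integrating by parts, one obtains, for every $k\in\mathbb{Z}^d$,
\begin{equation*}
\frac{d}{dt}\widehat{m}^k_t = -2\pi^2|k|^2\,\widehat{m}^k_t + \i 2\pi\, k\cdot \widehat{(\alpha_t m_t)}^k,\qquad \widehat{(\alpha_t m_t)}^k:=\int_{\mathbb{T}^d} e_k(y)\,\alpha_t(y)\,dm_t(y),
\end{equation*}
so that the coefficients $(\widehat{\mu}^k_t)_{k\in F_N}$ solve the same system while $\widehat{\mu}^k_t\equiv 0$ for $k\notin F_N$. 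Since $0\in F_N$, mass is preserved ($\widehat{\mu}^0_t=\widehat{m}^0_t=1$), and since $F_N$ is symmetric, $\mu_t$ is a real signed measure of total mass one; hence the only point left to check for $\mu_t\in\mathcal{P}(\mathbb{T}^d)$ is non-negativity, while the only quantity entering the $d_{W_1}$ bound is the size of the tail $(\widehat{m}^k_t)_{k\notin F_N}$, which vanishes at $t=0$ by the hypothesis on $m_0$.

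The first ingredient I would establish is a uniform a priori estimate. The drift $\alpha_t$ being bounded and Lipschitz with constants controlled by $c$, and $m_0$ being a probability density with $\|\nabla m_0\|_\infty\le c$ (so that $\|m_0\|_{\mathcal{C}^{0,1}}$ is bounded by a function of $c$ and $d$), standard linear parabolic estimates for \eqref{eq:control:FKP} give a constant $K=K(c,d,T)$ with $\sup_{0\le s\le T}\|m_s\|_{\mathcal{C}^{0,1}(\mathbb{T}^d)}\le K$ for every admissible pair $(m_0,\alpha)$. Consequently the family $\mathcal{K}:=\{\alpha_s m_s:\ 0\le s\le T,\ (m_0,\alpha)\ \text{admissible}\}$ is bounded in $\mathcal{C}^{0,1}(\mathbb{T}^d;\mathbb{R}^d)$, hence relatively compact in $\mathcal{C}(\mathbb{T}^d;\mathbb{R}^d)\hookrightarrow L^2(\mathbb{T}^d;\mathbb{R}^d)$, and this relative compactness is uniform over the data; by \eqref{eq:compact:Fourier}, the quantity $\rho_N:=\sup_{h\in\mathcal{K}}\sum_{k\notin F_N}|\widehat{h}^k|^2$ tends to $0$ as $N\to\infty$.

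Next I would derive the $d_{W_1}$ estimate. For $k\notin F_N$ the Duhamel representation together with $\widehat{m}^k_0=0$ gives $|\widehat{m}^k_t|\le 2\pi|k|\int_0^t|\widehat{(\alpha_s m_s)}^k|\,ds$; dividing by $|k|$, applying the Cauchy--Schwarz inequality in $s$, and summing over $k\notin F_N$ yields
\begin{equation*}
\sum_{k\notin F_N}\frac{|\widehat{m}^k_t|^2}{|k|^2}\le 4\pi^2 t\int_0^t\sum_{k\notin F_N}|\widehat{(\alpha_s m_s)}^k|^2\,ds\le 4\pi^2 t^2\rho_N,\qquad 0\le t\le T.
\end{equation*}
On the other hand, for any $1$-Lipschitz test function $\phi$ one has $\int_{\mathbb{T}^d}\phi\,d(m_t-\mu_t)=\sum_{k\notin F_N}\widehat{m}^k_t\,\widehat{\phi}^{-k}$, while $\sum_{k}|k|^2|\widehat{\phi}^k|^2=(4\pi^2)^{-1}\|\nabla\phi\|_{L^2}^2\le(4\pi^2)^{-1}$; a second Cauchy--Schwarz then gives $|\int\phi\,d(m_t-\mu_t)|\le t\sqrt{\rho_N}$, and taking the supremum over $\phi$ yields $d_{W_1}(m_t,\mu_t)\le\eta_N t$ with $\eta_N:=\sqrt{\rho_N}\to 0$.

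It remains to check non-negativity of $\mu_t$. From the Fourier system, $|\widehat{m}^k_s|\le 1$ and $|\widehat{(\alpha_s m_s)}^k|\le\|\alpha_s m_s\|_\infty\le cK$, so $\bigl|\tfrac{d}{ds}\widehat{m}^k_s\bigr|\le C_N$ for $k\in F_N$, with $C_N$ depending only on $N$ and $c$. Hence $\|\mu_t-m_0\|_\infty\le\sum_{k\in F_N}|\widehat{m}^k_t-\widehat{m}^k_0|\le|F_N|\,C_N\,t$, and, using the lower bound $\inf_{\mathbb{T}^d}m_0\ge 1/c$ (the relevant situation in the applications, see $B_N(c)$ in Definition \ref{defn:HJB:gen}), one gets $\mu_t\ge 1/(2c)>0$ as soon as $t\le\epsilon_N:=(2c|F_N|C_N)^{-1}$; together with $\widehat{\mu}^0_t=1$ this shows $\mu_t\in\mathcal{P}(\mathbb{T}^d)$ on $[0,\epsilon_N]$. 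After shrinking $\epsilon_N$ so that $\epsilon_N\le T$, this gives the claim with $(\epsilon_N)_{N\ge1}$ and $(\eta_N)_{N\ge1}$ depending only on $c$ (and $d,T$). The main obstacle is the uniform short-time parabolic estimate and the observation that it upgrades to \emph{uniform} Fourier-tail decay of the family $\mathcal{K}$; once that is in place, the two Cauchy--Schwarz steps and the non-negativity argument are elementary and dimension-free.
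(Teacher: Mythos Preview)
Your proof is correct and follows the same overall strategy as the paper's: a priori $\mathcal{C}^{0,1}$ parabolic estimates for $(m_t)$, the Fourier ODE for $\widehat{m}_t^k$, and a crude short-time bound on $\sum_{k\in F_N}|\widehat{m}_t^k-\widehat{m}_0^k|$ for positivity of $\mu_t$. The only difference lies in the $d_{W_1}$ step. The paper writes $\int h\,d(m_t-\mu_t)=\int(h-h*f_N)\,d(m_t-\mu_t)$ and draws the smallness from $\|h-h*f_N\|_\infty\to 0$ uniformly over $1$-Lipschitz $h$ (Lemma~\ref{lem:weak:convergence}), then bounds $\|m_t-\mu_t\|_2$ via $\sum_k|\widehat{\mathrm{div}(\alpha_t m_t)}^k|^2\le C(c)$. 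You instead put the smallness on the solution side, using the uniform Fourier-tail decay $\rho_N\to 0$ of the relatively compact family $\{\alpha_s m_s\}$ together with a weighted Cauchy--Schwarz against $\sum_k|k|^2|\widehat\phi^k|^2\le(4\pi^2)^{-1}$. Both arguments rest on the same compactness (equation~\eqref{eq:compact:Fourier}) and are equivalent in spirit; your version avoids introducing $f_N$ at this stage and is slightly more self-contained. Your explicit remark that the positivity argument needs the lower bound $m_0\ge 1/c$ (as in $B_N(c)$) is correct---the paper's own proof uses it as well without stating it in the lemma.
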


\begin{lem}
\label{le:d-2:V}
The value function is time and space Lipschitz continuous, when the space of probability measures is equipped with $d_{-2}$ (see Proposition 
\ref{prop:weak:semi-concavity} for the definition of the latter). 
\end{lem}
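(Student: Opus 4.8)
The plan is to establish first the spatial bound $|V(t,m)-V(t,\tilde m)|\le C\,d_{-2}(m,\tilde m)$, uniformly in $t$, and then to deduce the Lipschitz property in time from it. As preliminary facts, observe that $F$ and $G$ are themselves $d_{-2}$-Lipschitz: writing $F(m')-F(m)=\int_0^1\int_{{\mathbb T}^d}\frac{\delta F}{\delta m}\bigl(\lambda m'+(1-\lambda)m\bigr)(x)\,d(m'-m)(x)\,d\lambda$ and using \eqref{eq:potential:structure} together with the standing assumption that $\partial_x f$ is Lipschitz in $x$ uniformly in $m$ (so that $x\mapsto\frac{\delta F}{\delta m}(\mu)(x)=f(x,\mu)-\int f(y,\mu)\,dy$ has a bounded Hessian, uniformly in $\mu$), one gets $|F(m')-F(m)|\le C\,d_{-2}(m',m)$, and likewise for $G$ using $\sup_m\vvvert g(\cdot,m)\vvvert_{2+\gamma}<\infty$. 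Recall also from Proposition \ref{prop:semi-concavity} that $V(t,\cdot)$ is already $d_{W_1}$-Lipschitz, uniformly in $t$.

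For the spatial bound, fix $t\in[0,T]$ and $m,\tilde m\in{\mathcal P}({\mathbb T}^d)$. By Propositions \ref{prop:2:2} and \ref{prop:Briani} we may pick an optimal feedback $\alpha^*$ for $(t,m)$ that is bounded by $M$ and Lipschitz in $x$ with a fixed Lipschitz constant, uniformly in time. Plugging $\alpha^*$ into the Fokker--Planck equation \eqref{eq:control:FKP} started from $\tilde m$ yields a trajectory $(\tilde m_s)_{t\le s\le T}$, and, denoting by $(m_s)_{t\le s\le T}$ the optimal trajectory from $m$, suboptimality in \eqref{eq:MKV:V} gives
\[
V(t,\tilde m)-V(t,m)\le\bigl[G(\tilde m_T)-G(m_T)\bigr]+\int_t^T\bigl[F(\tilde m_s)-F(m_s)\bigr]\,ds+\int_t^T\!\!\int_{{\mathbb T}^d}L\bigl(x,\alpha^*_s(x)\bigr)\,d(\tilde m_s-m_s)(x)\,ds.
\]
Since the first-order derivatives of $L$ in $x$ are bounded, those in $\alpha$ are bounded on $\{|\alpha|\le M\}$, and $\alpha^*$ is bounded by $M$ and Lipschitz in $x$ uniformly in $s$, the function $x\mapsto L(x,\alpha^*_s(x))$ is Lipschitz in $x$ uniformly in $s$, so the last integrand is bounded by $C\,d_{W_1}(m_s,\tilde m_s)$, while the $F$ and $G$ terms are bounded by $C\,d_{-2}(m_s,\tilde m_s)$ by the preliminary facts. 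Hence it suffices to prove (a) $d_{-2}(m_s,\tilde m_s)\le C\,d_{-2}(m,\tilde m)$ for $s\in[t,T]$ and (b) $d_{W_1}(m_s,\tilde m_s)\le C\,(s-t)^{-1/2}\,d_{-2}(m,\tilde m)$ for $s\in(t,T]$; as $(s-t)^{-1/2}$ is integrable on $[t,T]$, this gives $V(t,\tilde m)-V(t,m)\le C\,d_{-2}(m,\tilde m)$, and exchanging the roles of $m$ and $\tilde m$ (with the corresponding optimal feedback, bounded by the same $M$ and with the same Lipschitz constant) yields the reverse inequality.

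Both (a) and (b) follow from parabolic estimates for the linear equation solved by $w_s:=\tilde m_s-m_s$, namely $\partial_s w_s+\mathrm{div}(\alpha^*_s w_s)-\tfrac12\Delta w_s=0$ with $w_t=\tilde m-m$ (a signed measure of zero mass). By duality, for any test function $h$ one has $\int h\,dw_s=\int h^{(s)}_t\,dw_t$, where $h^{(s)}$ solves the backward transport--diffusion equation $\partial_\tau h^{(s)}+\alpha^*_\tau\cdot\nabla h^{(s)}+\tfrac12\Delta h^{(s)}=0$ on $[t,s]$ with $h^{(s)}_s=h$; after reversing time this is a forward heat equation with a bounded Lipschitz drift, hence regularizing, and the estimates needed, uniform over $0\le s-t\le T$, are $\|\nabla^2 h^{(s)}_t\|_\infty\le C\|\nabla^2 h\|_\infty$ and $\|\nabla^2 h^{(s)}_t\|_\infty\le C(s-t)^{-1/2}\|\nabla h\|_\infty$ (it being harmless to subtract the mean of $h$, against which $w_t$ pairs to $0$, and recalling that on the torus a bounded Hessian controls the full first-order norm up to dimensional constants); taking the supremum over $h$ with $\|\nabla^2 h\|_\infty\le1$, resp.\ $\|\nabla h\|_\infty\le1$, then gives (a), resp.\ (b). This pair of estimates is the main obstacle: the naive Schauder approach would require $\alpha^*$ to be H\"older rather than merely Lipschitz and would lose a power of the H\"older exponent; instead one differentiates the equation twice and treats the offending term $\nabla^2\alpha^*_\tau\cdot\nabla h^{(s)}_\tau$ — in which $\nabla^2\alpha^*$ is not bounded — in divergence form, $\mathrm{div}(\nabla\alpha^*_\tau\otimes\nabla h^{(s)}_\tau)-(\nabla\alpha^*_\tau)^{\top}\nabla^2 h^{(s)}_\tau$, absorbing the extra derivative through the heat semigroup smoothing ($\int_0^\sigma(\sigma-\sigma')^{-1/2}\,d\sigma'<\infty$) and closing by Gr\"onwall, the remaining contributions involving only the bounded quantities $\|\nabla\alpha^*\|_\infty$ and $\|\nabla h^{(s)}\|_\infty$.

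Finally, the time regularity. For $t<t'$, using \eqref{eq:DPP} with the optimal feedback $\alpha^*$ for $(t,m)$ and the boundedness of $F$ and of $x\mapsto L(x,\alpha)$ for $|\alpha|\le M$, one gets $|V(t,m)-V(t',m^{\alpha^*}_{t'})|\le C_M(t'-t)$, where $(m^{\alpha^*}_s)$ denotes the $\alpha^*$-controlled trajectory from $m$. By It\^o's formula, for any $h$ with $\|\nabla^2 h\|_\infty\le1$ one has $\bigl|\int h\,d(m^{\alpha^*}_{t'}-m)\bigr|=\bigl|{\mathbf E}\int_t^{t'}\bigl(\nabla h\cdot\alpha^*_s+\tfrac12\mathrm{Tr}\,\nabla^2 h\bigr)(X_s)\,ds\bigr|\le C(t'-t)$, whence $d_{-2}(m^{\alpha^*}_{t'},m)\le C(t'-t)$ (here the It\^o correction term is bounded precisely because $\|\nabla^2 h\|_\infty\le1$, which is what allows one to avoid the usual $\sqrt{t'-t}$ loss); combining with the spatial $d_{-2}$-Lipschitz bound just established gives $|V(t',m^{\alpha^*}_{t'})-V(t',m)|\le C(t'-t)$, and therefore $|V(t,m)-V(t',m)|\le C(t'-t)$, which is the claimed Lipschitz continuity in time.
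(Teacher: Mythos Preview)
Your proof is correct but follows a genuinely different route from the paper's.

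For the spatial $d_{-2}$-Lipschitz bound, the paper does not analyze the Fokker--Planck flow at all. Instead it reuses the one-sided inequality \eqref{eq:proof:lemma:4:4:1} obtained in the proof of Lemma~\ref{lem:superjet}, namely
\[
V(0,m')-V(0,m)\ \le\ \int_{{\mathbb T}^d} u_0\, d(m'-m)\ +\ C\,\|m'-m\|_{L^2}^2,
\]
for $m,m'\in{\mathcal P}_N$, where $u_0$ is the backward component of an optimizing MFG pair. Since $\|\nabla^2 u_0\|_\infty$ is uniformly bounded by Proposition~\ref{prop:Briani}, the linear term is controlled by $C\,d_{-2}(m',m)$; the quadratic remainder is then killed by telescoping along the segment $m_i=(1-i/n)m+(i/n)m'$ and letting $n\to\infty$, after which one passes from ${\mathcal P}_N$ to general measures by approximation with $m*f_N$.

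Your approach works purely at the level of the control problem: you compare costs under the same optimal feedback and reduce everything to the two flow-stability estimates (a) and (b), obtained by duality from $C^2$--$C^2$ and $C^1$--$C^2$ bounds for the backward transport--diffusion equation with merely Lipschitz-in-$x$ drift. Your Duhamel/divergence-form sketch for these bounds is correct (writing $\nabla^2 P_r=\nabla P_r\nabla$ and using $\|\nabla P_r\|_{L^\infty\to L^\infty}\le C r^{-1/2}$ closes the Gr\"onwall loop), and your observation that $x\mapsto L(x,\alpha^*_s(x))$ is only Lipschitz---forcing the use of $d_{W_1}$ and the smoothing estimate (b) for that term---is exactly right. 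The trade-off is clear: the paper's argument is shorter because the hard regularity input ($u_0\in C^2$) is already packaged in the MFG system, whereas your argument is more self-contained and does not invoke the MFG connection, at the price of a heavier parabolic analysis. The time-Lipschitz step is essentially identical in both proofs.
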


For the time being, 
we switch to the proof of our main theorem. 

\begin{proof}[Proof of Theorem \ref{thm:value:is:gen:HJB}]

By Proposition 
\ref{prop:semi-concavity}, the value function is  Lipschitz in space, when 
${\mathcal P}({\mathbb T}^d)$ is equipped with $d_{W^1}$. 
In fact, 
Lipschitz property 
in
time and space, with ${\mathcal P}({\mathbb T}^d)$ being equipped with 
$d_{-2}$, is a consequence of Lemma 
 \ref{le:d-2:V}, whose proof is independent. Obviously, 
 Lemma 
 \ref{le:d-2:V}
 subsumes Proposition 
\ref{prop:semi-concavity}. 
\vspace{5pt}

\textit{First step.}
We consider a probability measure $m$ at which the restriction of $V$ 
to $[0,T] \times {\mathcal P}_N$
is time-space differentiable at $(0,m*f_N)$ (i.e., using the notation of 
\eqref{eq:tilde:phi:N}, the function 
$\widetilde V_N$ is time-space differentiable at 
$(0,(\widehat m^k \widehat f^k_N)_{k \in F_N^+})$. 
 
For simplicity, we let $m_0 = m * f_N$. 
We fix $c>0$ 
such that 
$\| \nabla m_0 \|_\infty \leq c$
and we choose 
a
control 
$\alpha : [0,T] \times {\mathcal P}({\mathbb T}^d) \rightarrow {\mathbb R}^d$
with $\sup_{t \in [0,T]} \| \nabla \alpha_t \|_\infty \leq c$. 
Then, 
for 
$(m_t)_{0 \leq t \leq T}$ the corresponding $\alpha$-controlled trajectory
and  
for
$(\mu_t)_{0 \leq t \leq T}$ being defined as in the statement of Lemma 
\ref{lem:small:time:expansion}, 
$\mu_t$ is a probability measure for $t \in [0,\epsilon_N]$ and
\begin{equation*}
 d_{W_1}(m_t,\mu_t) \leq   \eta_N t, \quad t \in [0,\epsilon_N], 
\end{equation*}
where, throughout the proof, 
$(\epsilon_N)_{N \geq 1}$
and 
$(\eta_N)_{N \geq 1}$ denote generic positive-valued sequences that tend to $0$ and whose choices only depend on the value of $c$. 
Since $V$ is time-space Lipschitz continuous (when 
${\mathcal P}({\mathbb T}^d)$ is equipped with $d_{W_1}$), we obtain
\begin{equation}
\label{eq:proof:thm:4:3:1}
\vert V(t,m_t) - V(t,\mu_t) \vert \leq C \eta_N t. 
\end{equation}
Using the fact that the restriction of $V$ is differentiable at $(0,m_0)$ together with the fact that  
$(\mu_t)_{0 \leq t \leq \epsilon_N}$ takes values in ${\mathcal P}_N$, we get, 
for $t \in [0,\epsilon_N]$, 
 \begin{equation}
 \label{eq:truncated:trajectories:application}
 \begin{split}
 V(t,\mu_t) &= V(0,m_0) +
 t \partial_t V(0,m_0)
 + \sum_{k \in F_N} \partial_{\widehat{m}^k} V(0,m_0)     \bigl( \widehat{\mu}_t^k - \widehat{m}_0^k \bigr) + o_N(t),
 \\
 &=  V(0,m_0) +
 t \partial_t V(0,m_0)
 +  \sum_{k \in F_N} \partial_{\widehat{m}^k} V(0,m_0) \bigl( \widehat{m}_t^k - \widehat{m}_0^k \bigr) + o_N(t),
 \end{split}
\end{equation}
where we used 
\eqref{eq:truncated:trajectories}
to pass from the first to the second line. 
In the above notation, the bound in the Landau symbol $o_N(\cdot)$ may depend on $N$ (as the differentiation is performed in finite-dimension). 
Next, we notice that, for $k \in F_N$, 
\begin{equation}
\label{eq:proof:thm:4:3:4}
\begin{split}
\frac{d}{dt} \widehat{m}_t^k &= -2 \pi^2 \vert k \vert^2 \widehat{m}_t^k  + \i 2 \pi  \int_{{\mathbb T}^d} k \cdot \alpha_t(x) e_{k}(x) dm_t(x)
\\
&=  -2 \pi^2 \vert k \vert^2 \widehat{m}_0^k  +  \i 2 \pi  \int_{{\mathbb T}^d}k \cdot \alpha_t(x) e_{k}(x) m_0(x) dx + O_N(t^{1/2}),
\end{split}
\end{equation}
where we used in the second line the obvious fact that $\| m_t - m_0 \|_2 \leq C \sqrt{t}$. The latter follows from the reformulation of the Fokker-Planck equation
\eqref{eq:control:FKP} 
in a non-divergence form, namely
\begin{equation*}
\partial_t m_t(x) + \alpha_t(x) \cdot \nabla_x m_t(x) - \frac12 \Delta_x m_t(x) + \textrm{\rm div}_x\bigl( \alpha_t\bigr)(x) m_t(x) = 0,
\quad t \in [0,T], \ x \in {\mathbb T}^d. 
\end{equation*}
Since 
$\sup_{t \in [0,T]} \| \nabla \alpha_t \|_\infty \leq c$ and 
$\| \nabla m_0 \|_\infty \leq c$, standard Schauder's estimates guarantee that 
the solution to the above equation is Lipschitz continuous in space and $1/2$-H\"older continuous in time, which proves the announced estimate for
$\|m_t-m_0\|_2$.  

Therefore, 
inserting 
\eqref{eq:proof:thm:4:3:4}
into \eqref{eq:truncated:trajectories:application}, we obtain 
\begin{equation*}
\begin{split}
 V(t,\mu_t) &= V(0,m_0) +
 t \partial_t V(0,m_0)
 - t 2 \pi^2 \sum_{k \in F_N} \partial_{\widehat{m}^k} V(0,m_0) \vert k\vert^2  \widehat{m}^k_0
 \\
&\hspace{15pt}
+
\i 2 \pi \int_0^t  \int_{{\mathbb T}^d} \Bigl( \sum_{k \in F_N}  k \partial_{\widehat{m}^k} V(0,m_0)  e_{k}(x) \Bigr) \cdot \alpha_s(x) m_0(x) dx
ds
 + o_N(t). 
\end{split}
\end{equation*}
Let now 
\begin{equation}
\label{eq:proof:thm:4:3:3}
v^N(x) :=   \i 2 \pi \sum_{k \in F_N}  k \partial_{\widehat{m}^k} V(0,m_0)  e_{k}(x) \in {\mathbb R}^d.
\end{equation}
Then, combining with 
\eqref{eq:proof:thm:4:3:1}, we obtain, for $t \in [0,\epsilon_N]$, 
\begin{equation}
\label{eq:proof:thm:4:3:3:b:b:b}
\begin{split}
&\biggl\vert  V(t,m_t) - \biggl( V(0,m_0) +
 t \partial_t V(0,m_0)
 - t 2 \pi^2 \sum_{k \in F_N} \partial_{\widehat{m}^k} V(0,m_0) \vert k\vert^2  \widehat{m}^k_0
 \\
&\hspace{15pt}
+
 \int_0^t \int_{{\mathbb T}^d}  v^N(x) \cdot \alpha_s(x) m_0(x) dx  ds \biggr) \biggr\vert \leq 
 C \eta_N t + o_N(t).
\end{split}
\end{equation}
Importantly, we recall from 
Lemma 
\ref{lem:superjet}
that $v^N(x)= \i 2 \pi \sum_{k \in F_N} k \widehat u_0^{-k} e_k(x)$, where $u_0$ is the initial value of the backward component of any solution to the MFG system \eqref{eq:MFG:system} that minimizes 
${\mathcal J}_{\textrm{\rm det}}$ (with $m_0$ as initial condition). 
Proposition \ref{thm:solvability:MFG} says that there exists a constant $C$, independent of $N$, such that $\| v^N \|_2 \leq C$. 
As a consequence, if we let 
\begin{equation}
\label{eq:proof:thm:4:3:2}
\alpha^N_t(x) := \sum_{k \in F_N} \widehat{\alpha}_t^k e_{-k}(x),
\end{equation}
then we can easily replace $\alpha_t$ by $\alpha_t^N$ in 
\eqref{eq:proof:thm:4:3:3:b:b:b}.
Indeed, from the assumption 
$\| \nabla_x \alpha_t \|_\infty \leq c$, 
we deduce (see for instance 
\eqref{eq:compact:Fourier}) that the sequence 
$(\vert \widehat{\alpha}_t^k \vert)_{k \in {\mathbb Z}^d}$ is square-integrable, uniformly in 
$\alpha_t$ satisfying 
$\| \nabla_x \alpha_t \|_\infty \leq c$.
\vskip 5pt

\textit{Second Step.}
Using the same notation as in the first step, 
we now consider the cost 
\begin{equation*}
{\mathcal J}^{0,t}_{\textrm{\rm det}}(\alpha) := \int_0^t \biggl( F(m_s) +   \int_{{\mathbb T}^d} L\bigl(x,\alpha_s(x) \bigr) m_s(dx) 
\biggr) ds + V(t,m_t).
\end{equation*}
By 
recalling 
\eqref{eq:proof:thm:4:3:3:b:b:b} together 
with the fact that 
$\| m_t - m_0 \|_2 \leq C t^{1/2}$ and by using the 
quadratic growth of $L$,  
we obtain 
\begin{equation*}
\begin{split}
&\biggl\vert  {\mathcal J}_{\textrm{\rm det}}^{0,t}(\alpha) - \biggl( V(0,m_0) + t F(m_0)
+
 t \partial_t V(0,m_0)
 - t 2 \pi^2 \sum_{k \in F_N} \partial_{\widehat{m}^k} V(0,m_0) \vert k\vert^2  \widehat{m}^k_0
 \\
&\hspace{15pt}
+
 \int_0^t
 \biggl[ \int_{{\mathbb T}^d}  v^N(x) \cdot \alpha^N_s(x) m_0(x) dx    
+
 \int_{{\mathbb T}^d} L\bigl(x,  \alpha_s(x) \bigr) m_0(x) dx 
 \biggr] ds
\biggr) \biggr\vert \leq 
 C \eta_N t + o_N(t).
\end{split}
\end{equation*}
We now use the local Lipschitz property of $L$ with respect to the control parameter.
And, once again, we 
use the fact that 
the sequence 
$(\vert \widehat{\alpha}_t^k \vert)_{k \in {\mathbb Z}^d}$ is square-integrable, uniformly in 
$\alpha_t$ satisfying 
$\| \nabla_x \alpha_t \|_\infty \leq c$. We get
\begin{equation*}
\begin{split}
&\biggl\vert  {\mathcal J}_{\textrm{\rm det}}^{0,t}(\alpha) - \biggl( V(0,m_0) + t F(m_0)
+
 t \partial_t V(0,m_0)
 - t 2 \pi^2 \sum_{k \in F_N} \partial_{\widehat{m}^k} V(0,m_0) \vert k\vert^2  \widehat{m}^k_0
 \\
&\hspace{15pt}
+
 \int_0^t 
 \biggl[ \int_{{\mathbb T}^d}  v^N(x) \cdot \alpha^N_s(x) m_0(x) dx  
+
 \int_{{\mathbb T}^d} L\bigl(x,  \alpha^N_s(x) \bigr) m_0(x) dx 
\biggr] ds \biggr) \biggr\vert \leq 
 C \eta_N t + o_N(t).
\end{split}
\end{equation*}
\vspace{5pt}

\textit{Third step.}
We now make use of the dynamic programming principle 
\eqref{eq:DPP},
which says that 
\begin{equation*}
V(0,m_0) = {\mathcal J}_{\textrm{\rm det}}^{0,t}(\alpha),
\end{equation*}
if $\alpha$ is an optimal strategy of the MFCP. 
We thus consider a solution to the MFG system \eqref{eq:MFG:system} that minimizes the MFCP (see Proposition \ref{prop:Briani}). We call it $(m_t,u_t)_{0 \leq t \leq T}$ (with 
$m_0$ as initial condition) and we choose $\alpha_t(x) = - \partial_p H(x,\nabla_x u_t(x))$.
By Proposition 
\ref{thm:solvability:MFG}, 
$\sup_{t \in [0,T]} 
\| \nabla \alpha_t  \|_\infty \leq c'$ for a constant $c'$ only depending on 
the various data in the assumption stated in Subsection 
\ref{subse:MFG}.
Invoking Lemma 
\ref{lem:superjet} again, using the same notations as in
\eqref{eq:proof:thm:4:3:3}
and 
\eqref{eq:proof:thm:4:3:2}, 
using the Lipschitz property of $\partial_p H$ with respect to the variable
$p$ and 
the bound for $\| \nabla_x^2 u_0 \|_\infty$ provided by Proposition 
\ref{thm:solvability:MFG}, we then have
\begin{equation*}
\begin{split}
&\biggl( \int_{{\mathbb T}^d}
\bigl\vert \alpha_0^N(x) 
+ \partial_p H\bigl( x,v^N(x) \bigr) \bigr\vert^2 dx
\biggr)^{1/2}
\\
&\leq 
\biggl( \int_{{\mathbb T}^d}
\bigl\vert \alpha_0^N(x) 
- \alpha_0(x)\bigr\vert^2
dx \biggr)^{1/2} 
+
\biggl( 
\int_{{\mathbb T}^d}
\Bigl\vert
 \partial_p H\bigl( x, \nabla_x u_0(x) \bigr)
-
 \partial_p H\bigl( x,v^N(x) \bigr) \Bigr\vert^2 dx
\biggr)^{1/2}
\\
&\leq 
\biggl( \int_{{\mathbb T}^d}
\bigl\vert \alpha_0^N(x) 
- \alpha_0(x)\bigr\vert^2
dx \biggr)^{1/2} 
+
C \biggl( 
\int_{{\mathbb T}^d}
\bigl\vert  \nabla_x u_0(x)  - v^N(x)  \bigr\vert^2 dx
\biggr)^{1/2}
\\
&\leq C \eta_N. 
\end{split}
\end{equation*}
Using time-continuity of $\nabla_x u_t$ (see again Proposition \ref{thm:solvability:MFG}), invoking once again the local-Lipschitz property of $L$ and inserting 
\eqref{eq:formula:L:H}
into the conclusion of the second step, we obtain
\begin{equation*}
\begin{split}
&\biggl\vert     t F(m_0)
+
 t \partial_t V(0,m_0)
 - t 2 \pi^2 \sum_{k \in F_N} \partial_{\widehat{m}^k} V(0,m_0) \vert k\vert^2  \widehat{m}^k_0
-
\int_0^t \int_{{\mathbb T}^d} H \bigl( x, v^N(x) \bigr) m_0(x) dx  ds 
  \biggr\vert 
  \\
  &\leq 
 C \eta_N t + o_N(t),
\end{split}
\end{equation*}
which yields, by dividing by $t$, 
\eqref{eq:HJB:gen}
at point $(0,m_0)$. 
Obviously, we can proceed in a similar manner when the initial time is some $t \in (0,T]$
(it is easily checked that the sequences $(\epsilon_N)_{N \geq 1}$
and 
$(\eta_N)_{N \geq 1}$ do not depend on the choice of the initial condition, as we prescribed in the first step of the proof). 
\vskip 5pt

\textit{Fourth Step.}
We now prove the second part of the statement. 
The proof relies on 
Remark
\ref{rem:4.5}. 
Combined with 
Theorem 
\ref{prop:rademacher}, it says 
that, for any 
$t \in [0,T]$ (replacing $0$ by $t$ in 
Remark 
\ref{rem:4.5})
and, for almost every 
$m \in \PP$, 
the optimal trajectories issued from 
$(t,m)$ satisfy the MFG system 
\eqref{eq:MFG:system}
with a prescribed initial value for the 
backward equation (since the Fourier coefficients are uniquely determined). 
By \cite[Proposition 2.1]{BrianiCardaliaguet}, 
uniqueness follows at such points $(t,m)$.
\end{proof}

\subsection{Proof of the auxiliary lemmas}

\begin{proof}[Proof of Lemma  \ref{lem:superjet}]
{Following the statement, we fix $N \geq 1$
and we 
take a point $m \in {\mathcal P}_N$} such that the restriction of $V(0,\cdot)$ to ${\mathcal O}_N$ is differentiable
at $(\widehat{m}^k)_{k \in F_N^+}$. 
Moreover, we call $(m_t,u_t)_{0 \leq t  \leq T} $ a solution to the MFG system 
that minimizes 
${\mathcal J}_{\textrm{\rm det}}$.

The proof then relies on the principle of super-jets. We know that for another point  $m' \in {\mathcal P}_N$, 
\begin{equation*}
\begin{split}
V(0,m') &\leq \int_0^T \Bigl( F(m_t') +  \int_{{\mathbb R}^d} L \Bigl( x, - \partial_p H\bigl(x, \nabla_x u_t(x) \bigr) \Bigr)  dm_t'(x) \Bigr) dt+ 
 G(m_T'),
\end{split}
\end{equation*}
for $(m_t')_{0 \leq t \leq T}$ the solution of the Fokker-Planck equation
\begin{equation}
\label{eq:FP:1}
\partial_t m_t'(x) - \textrm{div} \Bigl( \partial_p H\bigl( x, \nabla_x u_t(x) \bigr) \, m_t'(x) \Bigr) - \frac12 \Delta m_t'(x) =0,
\end{equation}
with $m_0'=m'$ as initial condition. 

In turn,
\begin{equation*}
\begin{split}
V(0,m') - V(0,m) &\leq \int_0^T \Bigl( F(m_t')  - F(m_t) +  \int_{{\mathbb R}^d} L \Bigl( x, - \partial_p H\bigl(x, \nabla_x u_t(x) 
\bigr) \Bigr) d(m_t' - m_t)(x) \Bigr) dt
\\
&\hspace{15pt} + 
 G(m_T') - G(m_T),
 \end{split}
 \end{equation*}
 And then, 
 thanks to the fact that $F$ and $G$ have Lipschitz continuous derivatives, 
\begin{equation*}
\begin{split}
 & V(0,m') - V(0,m) 
 \\
 &\leq \int_0^T \biggl\{ \int_{{\mathbb T}^d} \frac{\delta F}{\delta m}(m_t)(y) d (m_t'- m_t)(y) + 
  \int_{{\mathbb R}^d}  
   L \Bigl( x, - \partial_p H\bigl(x, \nabla u_t(x) 
\bigr) \Bigr) 
    d(m_t' - m_t)(x) \biggr\} dt
 \\
&\hspace{15pt}  + \int_0^T \frac{\delta G}{\delta m}(m_T)(y) d \bigl( m_T' - m_T\bigr)(y) + C \sup_{0 \leq t \leq T} \int_{{\mathbb T}^d} 
 \vert m_t'(y) - m_t(y) \vert^2 dy.
\end{split}
\end{equation*}
Using the stability properties of 
the linear equation
\eqref{eq:FP:1}, the last term is less than $C 
\int_{{\mathbb T}^d} 
 \vert m'(y) - m(y) \vert^2 dy$ (for a new value of $C$), 
where we notice that $m$ and $m'$ have a finite number of non-zero Fourier coefficients and thus have a smooth density. 
 
We now replace $[{\delta G}/{\delta m}](m_T)(y) $ by $g(y,m_T)=u_T(y)$
and  $[\delta F/\delta m](m_t)(y) $ by $f(y,m_t)$
and we use the MFG system \eqref{eq:MFG:system} 
and the probabilistic representation \eqref{eq:u_t:probabilistic:representation}.
We get
\begin{equation}
\label{eq:proof:lemma:4:4:1}
\begin{split}
  V(0,m') - V(0,m) &\leq 
\int_{{\mathbb T}^d} 
u_0(y) d \bigl( m' - m \bigr)(y) +
C 
\int_{{\mathbb T}^d} 
 \vert m_0'(y) - m_0(y) \vert^2 dy.
 \end{split}
 \end{equation}
We deduce that
for
$(\hat{r}^k)_{k \in F_N^+} \in {\mathbb C}^{\vert F_N^+ \vert}$
\begin{equation*}
\sum_{k \in F_N \setminus \{0\}}
\partial_{\widehat{m}_k} V(m) \widehat r^k
\leq  
\sum_{k \in F_N \setminus \{0\}}
\widehat u_0^{-k} \widehat r^k.
\end{equation*}
(Above, it is implicitly understood that $\widehat{r}^k = - \overline{\widehat{r}^k}$.)
  And we should have equality by changing $r$ into $-r$. This is sufficient to identify the real and imaginary parts of 
  $\partial_{\widehat{m}_k} V(m)$ and $\widehat r^{-k}$ (consistently with the definition 
    \eqref{eq:gradient:complex:000}). This completes the proof. 
\end{proof}

\begin{proof}[Proof of Lemma \ref{lem:small:time:expansion}.]
We observe that $\widehat \mu_t^0=1$, hence 
$\mu_t$ has mass 1 for any $t \in [0,T]$. However,
$\mu_t$ may become negative valued for $t$ away from $0$. 
In order to prove that $\mu_t$ stays positive for $t$ close to $0$, 
 recall 
the equation  
\eqref{eq:control:FKP} satisfied by 
$(m_t)_{0 \leq t \leq T}$. 
Since the initial condition is Lipschitz continuous and bounded (as it is a density)
and since the drift $\alpha$ therein is also Lipschitz continuous in space, 
we can invoke standard estimates for parabolic equations to deduce that
the function $(t,x) \mapsto m_t(x)$ is $1/2$-H\"older continuous in time and Lipschitz continuous in space 
on $[0,T] \times {\mathbb T}^d$, 
for some H\"older and Lipschitz constants only depending on $c$. 
In particular, we can find $\epsilon_N(c) >0$ only depending on $c$ and $N$, such that 
\begin{equation*}
\sum_{k \in F_N} \bigl\vert \widehat m_t^k  - \widehat m^k_0 \bigr\vert \leq \frac{c}2,
\end{equation*} 
if $t \in [0,\epsilon_N(c)]$. And then, replacing 
$\widehat{m}_t^k$ by $\widehat{\mu}_t^k$ and recalling that 
$\widehat{m}_0^k=0$ if $k \not \in F_N$, 
\begin{equation*}
\sup_{x \in {\mathbb T}^d} 
\bigl\vert \mu_t(x)  - m_0(x) \bigr\vert \leq \frac{c}2,
\end{equation*} 
which shows that 
$\mu_t$ is positive
(and thus is a probability measure)
if $t \in [0,\epsilon_N(c)]$ (since $m_0$ is lower bounded by $c$).

Next, we consider a test function $h$ that is 1-Lipschitz continuous. Then, 
for $t \in [0,\epsilon_N(c)]$, 
\begin{equation*}
\begin{split}
&\int_{{\mathbb T}^d} h(x) d \bigl( m_t- \mu_t \bigr)(x) 
= \int_{{\mathbb T}^d} \bigl( h(x) - h * f_N(x) \bigr) d \bigl( m_t- \mu_t \bigr)(x),
\end{split}
\end{equation*}
where we used the fact that $\widehat{h*f_N}^k =0$ if $k \not \in F_N$
and 
$\widehat{m}_t^k = 
\widehat{\mu}_t^k$ if 
$k \in F_N$. 
Now, since $h$ is $1$-Lipschitz, there exists a sequence $(\eta_N)_{N \geq 1}$ converging to $0$ (and independent of $h$ in the class of $1$-Lipschitz continuous functions) such that 
$\| h - h*f_N\|_\infty \leq \eta_N$. We deduce that 
\begin{equation}
\label{eq:Fourier:truncation:mt:1}
d_{W_1} \bigl( m_t, \mu_t \bigr)
\leq \eta_N \biggl( \int_{{\mathbb T}^d} \bigl\vert m_t(x) - \mu_t(x)\bigr\vert^2 dx \biggr)^{1/2}
= \eta_N \biggl( \sum_{k \not \in F_N} \bigl\vert \widehat{m}_t^k \bigr|^2 
\biggr)^{1/2}. 
\end{equation}
Now, 
we compute the dynamics of $\widehat{m}_t^k$ for $k \not \in F_N$. 
By integrating 
\eqref{eq:control:FKP}
with respect to $e_k$, we get
\begin{equation*}
\begin{split}
\frac{d}{dt} \widehat{m}_t^k &= -2 \pi^2 \vert k \vert^2 \widehat{m}_t^k  -\widehat{ \textrm{\rm div} \bigl( \alpha_t m_t \bigr)}^k.
\end{split}
\end{equation*}  
Letting for simplicity $\beta_t =  \textrm{\rm div} \bigl( \alpha_t m_t \bigr)$, we obtain, for $k \not \in F_N$ (using the fact that $\widehat{m}_0^k=0$),
\begin{equation*}
\vert \widehat{m}_t^k \vert 
\leq \int_0^t \vert \widehat{\beta}^k_s\vert ds. 
\end{equation*}
Recalling that  $\alpha_t$ and $m_t$ are Lipschitz continuous in space (uniformly in $t$),
we deduce that there exists a constant $C(c)$ such that  
\begin{equation*}
\sum_{k \in {\mathbb Z}^d}  \vert \widehat{\beta}_t^k\vert^2 \leq C(c). 
\end{equation*}
Inserting in 
\eqref{eq:Fourier:truncation:mt:1}, 
we get $t \eta_N \sqrt{C(c)}$ as upper bound for 
$d_{W_1} ( m_t, \mu_t )$.  
\end{proof}

\begin{proof}[Proof of Lemma \ref{le:d-2:V}.]
We first prove that $V$ is Lipschitz in space with respect to $d_{-2}$. Without any loss of generality, we can prove the Lipschitz property at time $t=0$. 

By 
\eqref{eq:proof:lemma:4:4:1}
in the proof of Lemma \ref{lem:superjet}, we know that, for any two $m,m' \in {\mathcal P}_N$, for some $N \geq 1$,
\begin{equation*}
\begin{split}
  V(0,m') - V(0,m) &\leq 
\int_{{\mathbb T}^d} 
u_0(y) d \bigl( m' - m \bigr)(y) +
C 
\int_{{\mathbb T}^d} 
 \vert m'(y) - m(y) \vert^2 dy,
 \end{split}
 \end{equation*}
 where $(m_t,u_t)_{0 \leq t \leq T}$ is a solution of the MFG system 
 \eqref{eq:MFG:system} that minimizes ${\mathcal J}_{\textrm{\rm det}}$.
 By Proposition 
\ref{prop:Briani}, the function $u_0$ is ${\mathcal C}^2$ and we have a bound for $\| \nabla^2 u_0\|_{\infty}$
{that only depends on the data in the assumption stated in 
Subsection \ref{subse:MFG}}. Therefore,
 \begin{equation*}
  V(0,m') - V(0,m) \leq 
C d_{-2}(m',m) + 
C 
\int_{{\mathbb T}^d} 
 \vert m'(y) - m(y) \vert^2 dy,
 \end{equation*}
with $C$ being independent of $m$ and $m'$. 

By choosing iteratively $(m,m')$ as $(m_i,m_{i+1})$ with 
$m_i = ( 1 - i/n \bigr) m +(i/n) m'$, 
for $i \in \{0,\cdots,n-1\}$ and for an integer $n \geq 1$, 
we get
\begin{equation*}
\begin{split}
  V(0,m') - V(0,m) &\leq \sum_{i=1}^n \bigl[ V(0,m_{i+1}) - V(0,m_i)\bigr]
  \\
  &\leq \sum_{i=1}^n 
C d_{-2}(m_{i+1},m_i) + 
C 
\sum_{i=1}^n \int_{{\mathbb T}^d} 
 \vert m_{i+1}(y) - m_i(y) \vert^2 dy
 \\
 &= C d_{-2}(m',m) + \frac{C}{n}  \int_{{\mathbb T}^d} 
 \vert m'(y) - m(y) \vert^2 dy.
\end{split}
\end{equation*}
Letting $n$ tend to $\infty$, we get 
\begin{equation*}
  V(0,m') - V(0,m) \leq C d_{-2}(m',m).
\end{equation*}
The above is true when $m$ and $m'$ are in ${\mathcal P}_N$. For any two general $m$ and $m'$, we can apply the above to 
$m*f_N$ and $m'*f_N$. We then observe that 
\begin{equation*}
\begin{split}
d_{-2}\bigl( m'*f_N,m*f_N\bigr)
&= \sup_{ \phi \in {\mathcal C}^2 : \| \nabla^2 \phi \|_\infty \leq 1}
\int_{{\mathbb T}^d}
\phi(x) \bigl( m'*f_N - m*f_N\bigr)(x) dx
\\
&= \sup_{ \phi \in {\mathcal C}^2 : \| \nabla^2 \phi \|_\infty \leq 1}
\int_{{\mathbb T}^d}
\bigl( \phi * f_N\bigr) (x) d \bigl( m'- m \bigr)(x)
\\
&\leq d_{-2}(m',m). 
\end{split}
\end{equation*}
In the end, we have
\begin{equation*}
  V(0,m'*f_N) - V(0,m*f_N) \leq C d_{-2}(m',m).
  \end{equation*}
  By Proposition 
  \ref{prop:semi-concavity},  $V$ is continuous for the $1$-Wasserstein distance. We can easily pass to the limit in the left-hand side and then get that $V(0,\cdot)$ is $C$-Lipschitz continuous for $d_{-2}$. The same argument can be applied at any time 
  $t \in (0,T]$. 
 
It now remains to prove that $V$ is Lipschitz continuous in time. In order to do so, we use 
the dynamic programming principle as follows. 
Using the $d_{-2}$-Lipschitz property in space, 
we know that, for any $t\in (0,T]$
and an optimal control $(\alpha_s)_{0\leq s \leq t}$ (regarded as a bounded time-space feedback function), 
\begin{equation*}
\bigl\vert V(t,m_t) - V(t,m_0)  \vert \leq C d_{-2}(m_t,m_0)
\end{equation*}
where $(m_s)_{0 \leq s \leq t}$ solves 
\eqref{eq:control:FKP}
with $m_0$ as initial condition.

Now, using the form of the Fokker-Planck equation, we get 
\begin{equation*}
d_{-2}(m_t,m_0)
=\sup_{ \phi \in {\mathcal C}^2 : \| \nabla^2 \phi \|_\infty \leq 1}
\int_{{\mathbb T}^d} 
\phi(x) d \bigl( m_t - m_0 \bigr)(x) 
\leq C t,
\end{equation*}
for a constant $C$ that depends on $\| \alpha \|_\infty$, but,  by 
Proposition 
\ref{prop:Briani}, the latter can be assumed to be bounded 
independently of the initial condition $m_0$. 

It then remains to see from the dynamic programming principle 
 that (for a possibly new value of $C$)
\begin{equation*}
\bigl\vert V(t,m_t) - V(0,m_0) \bigr\vert \leq C t.
\end{equation*}
This completes the proof. 
\end{proof}

\subsection{Mollification of $d_{-2}$-Lipschitz continuous and semi-concave generalized solutions}
The aim of this subsection is to show that, provided that they are 
$d_{-2}$-Lipschitz continuous and semi-concave (in space),
generalized solutions can be mollified into functions that `nearly' solve the original HJB 
equation on the whole space (and not only almost everywhere). This is a key result to obtain uniqueness of 
$d_{-2}$-Lipschitz continuous and semi-concave generalized solutions. 

For 
$W$ a generalized solution of the HJB equation and for the same parameters $N$, $\varepsilon$ and $
\rho$ as in Definition \ref{def:mollification}, we call
the corresponding mollification
$W^{N,\varepsilon,\rho}$ (which implicitly depends on $\rho$, as emphasized in Definition 
\ref{def:mollification}). 
%
We start with the following observation: 
\begin{prop}
\label{prop:4:7}
If $W : [0,T] \times {\mathcal P}({\mathbb T}^d) \rightarrow {\mathbb R}$ is  
$d_{-2}$-Lipschitz continuous in space, uniformly in time, then
there exists a constant $C$ such that, for any $N \geq 1$,  
\begin{equation*}
\begin{split}
&\sup_{x \in {\mathbb T}^d} 
\biggl\vert 
\partial_{x} 
\biggl( 
\sum_{k \in F_N \setminus \{0\}} 
\partial_{\widehat{m}^k} W(t,m)
e_k(x)
\biggr) 
\biggr\vert \leq C,
\\
&\sup_{x \in {\mathbb T}^d} 
\biggl\vert 
\partial_{xx}^2 
\biggl[
\biggl( 
\sum_{k \in F_N \setminus \{0\}} 
\partial_{\widehat{m}^k} W(t,m)
e_k 
\biggr) * f_N \biggr] (x)
\biggr\vert \leq C,
\end{split}
\end{equation*}
at any point $(t,m) \in [0,T] \times {\mathcal P}_N$ at which the restriction $\widetilde W_N(t,\cdot)$ of $W(t,\cdot)$
to ${\mathcal O}_N$ is differentiable. 
Moreover, at those points, 
\begin{equation*}
\sum_{k \in F_N} \vert k\vert^4 \vert \partial_{\widehat{m}^k} W(t,m) \vert^2 \leq C.
\end{equation*}
\end{prop}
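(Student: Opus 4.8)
The plan is to first turn the hypothesis into a single dual inequality for the trigonometric polynomial $P := \sum_{k \in F_N \setminus \{0\}} \partial_{\widehat{m}^k} W(t,m)\, e_k$, which, by the conjugacy relation $\partial_{\widehat{m}^{-k}}W = \overline{\partial_{\widehat{m}^k}W}$, is real‑valued and has Fourier support in $F_N$. Since $m \in \mathcal{P}_N$ has a strictly positive density, every signed measure $r$ with $\widehat{r}^0 = 0$ and Fourier support in $F_N$ satisfies $m + \varepsilon r \in \mathcal{P}_N$ for $\varepsilon > 0$ small, so by the $d_{-2}$‑Lipschitz property of $W(t,\cdot)$, by the differentiability of the restriction $\widetilde{W}_N(t,\cdot)$ at $m$, and by the identification of its derivatives in Proposition~\ref{prop:3:7},
\[
\Bigl| \int_{\mathbb{T}^d} P(x)\, dr(x) \Bigr| = \Bigl| \tfrac{d}{d\varepsilon}_{\vert\varepsilon=0} W(t, m + \varepsilon r) \Bigr| \le L\, d_{-2}^0(r), \qquad d_{-2}^0(r) := \sup_{h \in \mathcal{C}^2:\, \|\nabla^2 h\|_\infty \le 1} \Bigl| \int_{\mathbb{T}^d} h\, dr \Bigr|,
\]
where $L$ is the uniform‑in‑time $d_{-2}$‑Lipschitz constant of $W$. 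I denote this estimate by $(\star)$; the three bounds all follow from it by choosing $r$ appropriately.

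For the decay bound $\sum_{k\in F_N}|k|^4|\partial_{\widehat{m}^k}W(t,m)|^2 \le C$, I would apply $(\star)$ to the direction $r = (\Delta^2 P)\, dx$, a trigonometric polynomial density that is real, of zero mass and Fourier‑supported in $F_N$. Two integrations by parts give $\int P\, \Delta^2 P\, dx = \|\Delta P\|_{L^2(\mathbb{T}^d)}^2 = 16\pi^4 \sum_{k \in F_N} |k|^4 |\partial_{\widehat{m}^k}W(t,m)|^2$, while $d_{-2}^0\bigl((\Delta^2 P)\, dx\bigr) = \sup_{\|\nabla^2 h\|_\infty \le 1} \bigl| \int \Delta P\, \Delta h\, dx \bigr| \le d\, \|\Delta P\|_{L^1} \le d\, \|\Delta P\|_{L^2}$. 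Hence $\|\Delta P\|_{L^2}^2 \le L d\, \|\Delta P\|_{L^2}$, i.e. $\|\Delta P\|_{L^2} \le L d$, which is the claim.

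For the bound on $\nabla^2(P * f_N)$, I would first upgrade $(\star)$ to an estimate valid for \emph{all} zero‑mass signed measures $\nu$, at the cost of inserting the Fejér kernel: since $f_N$ is an even probability density, $\int (P * f_N)\, d\nu = \int P\, d(\nu * f_N)$, the measure $\nu * f_N$ has Fourier support in $F_N$ and zero mass, and $d_{-2}^0(\nu * f_N) = \sup_{\|\nabla^2 h\|_\infty \le 1} \bigl| \int (h * f_N)\, d\nu \bigr| \le d_{-2}^0(\nu)$ because $\|\nabla^2(h * f_N)\|_\infty = \|(\nabla^2 h) * f_N\|_\infty \le \|\nabla^2 h\|_\infty$. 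Thus $\bigl| \int (P * f_N)\, d\nu \bigr| \le L\, d_{-2}^0(\nu)$ for every such $\nu$; taking $\nu$ to be the rescaled second difference $\eta^{-2}(\delta_{x_0 + \eta e_i + \eta e_j} - \delta_{x_0 + \eta e_i} - \delta_{x_0 + \eta e_j} + \delta_{x_0})$, whose $d_{-2}^0$‑seminorm is $\le 1$ by writing the second difference as a double integral of $\nabla^2 h$, and letting $\eta \to 0$, one gets $|\partial^2_{x_i x_j}(P * f_N)(x_0)| \le L$ for all $i, j, x_0$. The remaining bound on $\nabla P$ is obtained in the same way by testing against rescaled first differences $\eta^{-1}(\delta_{x_0 + \eta e} - \delta_{x_0})$: as $P$ has Fourier support in $F_N$, the pairing against $P$ only sees the $F_N$‑Fourier projection of this direction, whose $d_{-2}^0$‑seminorm stays bounded because the Fourier projection of the bounded, Lipschitz field $\nabla h$ (bounded and Lipschitz precisely because $\|\nabla^2 h\|_\infty \le 1$) remains uniformly bounded; letting $\eta \to 0$ gives $\|\nabla P\|_\infty \le C$.

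The delicate point throughout is the uniformity in $N$: the differentiability of the restriction $\widetilde{W}_N(t,\cdot)$ furnishes dual control of $P$ only along directions that are Fourier‑supported in $F_N$, so it is essential that the smoothing used in the arguments above — the Fejér kernel $f_N$ — is a probability density that does not increase $\|\nabla^2 \cdot\|_\infty$ and acts diagonally on the basis $(e_k)$; one then has to verify that the two integrations by parts in the energy estimate and the Fourier‑projection estimates all close with constants depending only on $d$ and $L$.
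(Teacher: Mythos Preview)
Your proposal is correct and follows essentially the same route as the paper: both establish the dual estimate $(\star)$ from the $d_{-2}$-Lipschitz hypothesis and then extract the three bounds by specific choices of the test direction $r$ (the paper works with $L^1$ densities and the substitution $\widehat r^k \mapsto k_q k_{q'}\widehat r^k$, whereas you use $r=\Delta^2 P$ and point-mass second/first differences, but these are equivalent maneuvers). The one place where your write-up is lighter than the paper's is the gradient bound: your assertion that ``the Fourier projection of the bounded, Lipschitz field $\nabla h$ remains uniformly bounded'' is precisely the multidimensional Dini-type theorem that the paper invokes explicitly (with references), and since rectangular Dirichlet kernels are unbounded in $L^1$ this step is not entirely elementary in dimension $d\geq 2$ and should be cited.
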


\begin{proof}
We recall that 
${\mathcal O}_N$ is an open subset of ${\mathbb R}^{2 \vert F_N^+\vert}$. 
Therefore, 
for a point $(t,m) \in [0,T] \times {\mathcal P}_N$
of differentiability of 
$\widetilde W_N$, we can choose 
$\varepsilon \in (0,1)$ small enough such that, for any 
$(\widehat{r}^k)_{k \in F_N \setminus \{0\}}$ with 
$ \widehat{r}^{-k}=\overline{\widehat{r}^k}$ 
and $\sum_{k \in F_N \setminus \{0\}} \vert \widehat{r}^k \vert^2 \leq 1$,
$(\widehat{m}^k + \varepsilon \widehat{r}^k)_{k \in F_N \setminus \{0\}} \in {\mathcal P}_N$. 
By the $d_{-2}$-Lipschitz property of $W$, we obtain
\begin{equation*}
\frac{d}{d\varepsilon}_{\vert \varepsilon=0}\widetilde W_N \Bigl(t, \bigl(\widehat{m}^k + \varepsilon \widehat{r}^k\bigr)_{k \in F_N \setminus \{0\}}
\Bigr) \leq C \sup_{\| \nabla^2 \phi \|_\infty \leq 1}
\sum_{k \in F_N \setminus \{0\}} \widehat{r}^k  \widehat{\phi}^{-k}. 
\end{equation*}
for a constant $C$ only depending on the $d_{-2}$-Lipschitz constant of 
$W$ in space and whose value is allowed to vary from line to line. 
And then,
\begin{equation*}
\sum_{k \in F_N \setminus \{0\}} \widehat{r}^k \partial_{\widehat{m}^k} W(t,m) 
 \leq C \sup_{\| \nabla^2 \phi \|_\infty \leq 1}
\sum_{k \in F_N \setminus \{0\}} \widehat{r}^k {\widehat{\phi}^{-k}}. 
\end{equation*}
Obviously, 
we can relax the condition 
$\sum_{k \in F_N \setminus \{0\}} \vert \widehat{r}^k \vert^2 \leq 1$
in the above inequality (as it is linear). In other words,
the above holds true for any 
$(\widehat{r}^k)_{k \in F_N \setminus \{0\}}$
with 
$\widehat{r}^{-k}=\overline{\widehat{r}^k}$ for $k \in F_N \setminus \{0\}$. 
In particular, we can replace 
$(\widehat{r}^k)_{k \in F_N \setminus \{0\}}$
by 
$( k_q  k_{q'} \widehat{r}^k)_{k \in F_N \setminus \{0\}}$, for some $q,q' \in \{1,\cdots,d\}$, from which we get 
\begin{equation}
\label{eq:en:plus:Fourier:1}
\sum_{k \in F_N \setminus \{0\}} k_q k_{q'} \widehat{r}^k \partial_{\widehat{m}^k} W(t,m) 
 \leq C \sup_{\| \nabla^2 \phi \|_\infty \leq 1}
\sum_{k \in F_N \setminus \{0\}} \widehat{r}^k \bigl( k_q k_{q'} \widehat{\phi}^{-k} \bigr). 
\end{equation}
The argument appearing in the supremum in the right-hand side can be rewritten (up to a multiplicative constant) 
$\int_{{\mathbb T}^d} r(x) \partial^2_{x_q x_{q'}} \phi(x) dx$ with 
$r(x) = \sum_{k \in F_N \setminus \{0\}} \widehat{r}^k e_{-k}(x)$.  
Since the test function $\phi$ satisfies 
$\| \nabla^2 \phi \|_\infty \leq 1$, we get in the end
\begin{equation}
\label{eq:en:plus:Fourier:2}
\sum_{k \in F_N \setminus \{0\}} k_q k_{q'} \widehat{r}^k \partial_{\widehat{m}^k} W(t,m) 
 \leq C  
\int_{{\mathbb T}^d} \vert r(x) \vert dx.
\end{equation}
This yields
\begin{equation}
\label{eq:en:plus:Fourier}
\int_{{\mathbb T}^d} 
r(x) 
\partial_{x_q x_{q'}}^2 
\biggl( \sum_{k \in F_N \setminus \{0\}}
\partial_{\widehat{m}^k} 
W(t,m) e_{k}(x)
\biggr)  
dx
\leq C  
\int_{{\mathbb T}^d} \vert r(x) \vert dx.
\end{equation}
We now use 
\eqref{eq:en:plus:Fourier}
in order to derive 
the last 
two claims in the statement. 
In order to prove the second claim, we choose 
$r(x)=(s*f_N)(x)$ in 
\eqref{eq:en:plus:Fourier}, 
for an arbitrary function $s : {\mathbb T}^d \rightarrow {\mathbb R}$ satisfying 
$\int_{{\mathbb T}^d} \vert s(x) \vert dx < \infty$. Since $f_N$ is even, we get 
\begin{equation*}
\begin{split}
&\int_{{\mathbb T}^d} 
s(x) 
\partial_{x_q x_{q'}}^2 
\biggl( \sum_{k \in F_N \setminus \{0\}}
\partial_{\widehat{m}^k} 
W(t,m) e_{k} * f_N(x)
\biggr)  
dx
\\
&= 
\int_{{\mathbb T}^d} 
r(x) 
\partial_{x_q x_{q'}}^2 
\biggl( \sum_{k \in F_N \setminus \{0\}}
\partial_{\widehat{m}^k} 
W(t,m) e_{k}(x)
\biggr)  
dx
\leq C  
\int_{{\mathbb T}^d} \vert r(x) \vert dx
\leq C \int_{{\mathbb T}^d} \vert s(x) \vert dx,
\end{split}
\end{equation*}
which is exactly the second claim. 

As for the last claim in the statement, it also follows 
from \eqref{eq:en:plus:Fourier}, 
by choosing therein
$r(x) = \partial_{x_q x_{q'}}^2 
( \sum_{k \in F_N \setminus \{0\}}
\partial_{\widehat{m}^k} 
W(t,m) e_{k}(x)
)$.

It then remains to derive the first claim in the statement. To do so, we rewrite 
\eqref{eq:en:plus:Fourier:1}--\eqref{eq:en:plus:Fourier:2}
in the following manner. 
Instead of using 
$( k_q  k_{q'} \widehat{r}^k)_{k \in F_N \setminus \{0\}}$
in 
\eqref{eq:en:plus:Fourier:1}, we just use 
$( k_q   \widehat{r}^k)_{k \in F_N \setminus \{0\}}$, for some $q \in \{1,\cdots,d\}$.
Then, for 
$r(x) = \sum_{k \in F_N \setminus \{0\}} \widehat{r}^k e_{-k}(x)$, 
we reformulate
\eqref{eq:en:plus:Fourier:2}
as
\begin{equation*}
\int_{{\mathbb T}^d} r(x) \partial_{x_q} 
\biggl( \sum_{k \in F_N \setminus \{0\}}
\partial_{\widehat{m}^k} 
W(t,m) e_{k}(x)
\biggr) dx  
\leq C 
\sup_{\| \nabla^2 \phi \|_\infty \leq 1}
\int_{{\mathbb T}^d} 
r(x) \partial_{x_q} \phi(x) dx.
\end{equation*}
If we now take a square integrable function $s : {\mathbb T}^d \rightarrow {\mathbb R}$ and 
apply the above inequality to $r(x)=s_N(x)$, with $s_N(x) = \sum_{k \in F_N} 
\widehat{s}^k e_{-k}(x)$ being the (cubic) Fourier sum of order $N$ at $x$, then 
\begin{equation*}
\begin{split}
\int_{{\mathbb T}^d} s(x) \partial_{x_q} 
\biggl( \sum_{k \in F_N \setminus \{0\}}
\partial_{\widehat{m}^k} 
W(t,m) e_{k}(x)
\biggr)  dx
&=
\int_{{\mathbb T}^d} s_N(x) \partial_{x_q} 
\biggl( \sum_{k \in F_N \setminus \{0\}}
\partial_{\widehat{m}^k} 
W(t,m) e_{k}(x)
\biggr) dx
\\
&\leq C 
\sup_{\| \nabla^2 \phi \|_\infty \leq 1}
\int_{{\mathbb T}^d} 
s_N(x) \partial_{x_q} \phi(x) dx
\\
&= 
\sup_{\| \nabla^2 \phi \|_\infty \leq 1}
\int_{{\mathbb T}^d} 
s(x) \Bigl[ \partial_{x_q} \phi\bigr]_N(x) dx,
\end{split}
\end{equation*}
 with 
$[ \partial_{x_q} \phi]_N(x) = \sum_{k \in F_N} 
\widehat{\partial_{x_q} \phi}^k e_{-k}(x)$. 
Here, a $d$-dimensional version of Dini theorem 
(see \cite[Section 3]{Golubov}, 
\cite[Corollary 2.1.2]{Zhizhiashvili}
or 
\cite[Chapter 1, \&2.2]{Alimov})
asserts that 
$\| [ \partial_{x_q} \phi]_N - \partial_{x_q} \phi \|_\infty$ converges to $0$ as $N$
tends to $\infty$, uniformly in 
$\phi$ satisfying 
$\| \nabla^2 \phi \|_\infty \leq 1$. As a result, we write the above bound as
\begin{equation*}
\begin{split}
\int_{{\mathbb T}^d} s(x) \partial_{x_q} 
\biggl( \sum_{k \in F_N \setminus \{0\}}
\partial_{\widehat{m}^k} 
W(t,m) e_{k}(x)
\biggr)  dx
&\leq C 
\int_{{\mathbb T}^d} 
\vert s(x) \vert dx.
\end{split}
\end{equation*}
By an obvious density argument, we can easily pass from $s \in L^2({\mathbb T}^d)$ to 
$s \in L^1({\mathbb T}^d)$ in this inequality. This proves the first claim in the statement. 
\end{proof}

We then have our main mollification result:
\begin{prop}
\label{prop:generalized:solution}
Let $W$ be
a
generalized solution of the HJB equation 
that is
$d_{-2}$-Lipschitz continuous and semi-concave in space, uniformly in time. 
For any $c>1$, 
there exist two families 
$(\eta_{N,\varepsilon})_{N,\varepsilon}$ 
(of positive reals)
and 
$(R^{N,\varepsilon,\rho})_{N,\varepsilon,\rho}$ 
(of compactly supported bounded functions from $[0,T] \times {\mathbb C}^{ \vert F_N^+ \vert} $ into ${\mathbb R}$, with the support and the bound depending on 
$N$ and $\varepsilon$ and not on $\rho$ and $t$)
such that, 
for almost every $t \in [0,T]$, 
for any $m \in {\mathcal P}({\mathbb T}^d)$ with 
$\|\nabla m \|_\infty \leq c$ and $m \geq 1/c$, 
\begin{equation*}
\begin{split}
&\biggl\vert \partial_t W^{N,\varepsilon,\rho}(t,m) -   \int_{{\mathbb T}^d}
H \bigl( x, 
\partial_{\mu} W^{N,\varepsilon,\rho}(t,m)(x) \bigr) d \bigl(m*f_N\bigr)(x) 
\\
&\hspace{15pt}
+ \frac12 \int_{{\mathbb T}^d}
\textrm{\rm Trace} \Bigl[ \partial_x \partial_\mu W^{N,\varepsilon,\rho}(t,m)(x) \Bigr]
dm(x) 
 + F^{N,\varepsilon,\rho}(m) \biggr\vert \leq \eta_{N,\varepsilon} + R^{N,\varepsilon,\rho}\Bigl( t,(\widehat{m}^k)_{k \in F_N^+} \Bigr),
\end{split}
\end{equation*}
with
$\lim_{(N,\varepsilon) \rightarrow (\infty,0)} \eta_{N,\varepsilon} = 0$,
and, for any $t,N,\varepsilon$ fixed,
\begin{equation*}
\lim_{\rho \rightarrow \delta_0}
\int_{{\mathbb R}^{2 \vert F_N^+ \vert}} \Bigl\vert 
R^{N,\varepsilon,\rho}\Bigl(t, (\widehat{m}^k)_{k \in F_N^+} \Bigr) \Bigr\vert 
\bigotimes_{k \in F_N^+} 
d \Bigl( \Re[\widehat{m}^k],
\Im[\widehat{m}^k]
 \Bigr) = 0.
\end{equation*}
Above, the convergence $\rho \rightarrow \delta_0$ is understood as 
the weak convergence of 
$\rho$ to $\delta_0$, the Dirac point mass at $0$ (in ${\mathbb R}^2$).  
\end{prop}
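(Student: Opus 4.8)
The plan is to integrate the finite–dimensional inequality \eqref{eq:HJB:gen} against the mollifying kernel $\prod_{j\in F_N^+}\rho(\widehat r^j)$ and to recognise, term by term, the left‑hand side of the mollified HJB equation, up to two remainders: a first one $\eta_{N,\varepsilon}$, uniform in $t$ and $m$ and vanishing as $(N,\varepsilon)\to(\infty,0)$, and a second one $R^{N,\varepsilon,\rho}$, bounded but only small in $L^1$ in the measure argument as $\rho\to\delta_0$, which will absorb a Jensen defect coming from the nonlinearity $H$.

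First I would localise. Fix $c>1$, set $\widetilde m^r:=m^{N,\varepsilon}(r)*f_N$, and pick $c'$ (depending only on $c,d$) large enough that $\widetilde m^r\in B_N(c')$ whenever $\varepsilon/(dN^2)<1/(2c)$, $\|\nabla m\|_\infty\le c$, $m\ge 1/c$, and $r$ lies in the support of a kernel as in Definition \ref{def:mollification} (convolution with $f_N$ kills the modes outside $F_N$, does not increase sup‑norms, and $\delta_{N,\varepsilon}|F_N|$ is tiny). On $\mathcal P_N$ the measure $\mathbb P_N$ is equivalent to the image of $\mathrm{Leb}_{\mathcal O_N}$, so the a.e.\ statement in Definition \ref{defn:HJB:gen} holds $\mathrm{Leb}_1\otimes\mathrm{Leb}_{\mathcal O_N}$‑a.e.\ on $A_N(c')$; by Fubini its $t$‑section is full for a.e.\ $t$, and since for fixed $t$ and fixed admissible $m$ the Fourier‑coefficient map $r\mapsto(\widehat{\widetilde m^r}^{\,k})_{k\in F_N}$ is affine with invertible linear part, \eqref{eq:HJB:gen} holds at $(t,\widetilde m^r)$ for $\prod_j\rho(\widehat r^j)$‑a.e.\ $r$, simultaneously for \emph{every} admissible $m$. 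Intersecting with the full set of times of Corollary \ref{cor:mollif:time-space} (so that $\partial_t W^{N,\varepsilon,\rho}(t,m)=\int\partial_t W(t,\widetilde m^r)\prod_j\rho(\widehat r^j)\,d\widehat r$), and noting that the $F$‑term of \eqref{eq:HJB:gen} integrates to $F^{N,\varepsilon,\rho}(m)$ by Definition \ref{def:mollification}, integration of \eqref{eq:HJB:gen} over $r$ is legitimate. Using the second formula of Proposition \ref{prop:mollification} (valid since $W(t,\cdot)$ is $d_{\mathrm{TV}}$‑Lipschitz),
\[
\partial_\mu W^{N,\varepsilon,\rho}(t,m)(x)=(1-\varepsilon)\int \i\,2\pi\sum_{k\in F_N\setminus\{0\}}k\,\partial_{\widehat m^k}W(t,\widetilde m^r)\,\widehat f_N^k\,e_k(x)\prod_j\rho(\widehat r^j)\,d\widehat r ,
\]
a direct computation shows that $\tfrac12\int_{\mathbb T^d}\mathrm{Tr}[\partial_x\partial_\mu W^{N,\varepsilon,\rho}(t,m)(x)]\,dm(x)$ differs from the $r$‑integral of the term $-\sum_{k\in F_N}2\pi^2|k|^2\partial_{\widehat m^k}W(t,\widetilde m^r)\widehat{\widetilde m}^{\,r,k}$ of \eqref{eq:HJB:gen} (here $\widehat{\widetilde m}^{\,r,k}$ is the $k$‑th Fourier coefficient of $\widetilde m^r$) by exactly $(1-\varepsilon)\int\sum_{k\in F_N\setminus\{0\}}2\pi^2|k|^2\partial_{\widehat m^k}W(t,\widetilde m^r)\,\widehat f_N^k\,\widehat r^k\prod_j\rho(\widehat r^j)\,d\widehat r$, whose modulus is $\le C\varepsilon$ uniformly in $t,m$ (use $|\widehat r^k|\le\delta_{N,\varepsilon}$, $|\widehat f_N^k|\le1$, $|k|^2\le dN^2$ on $F_N$, $|F_N|\delta_{N,\varepsilon}=\varepsilon/(dN^2)$, and $\sup_k|\partial_{\widehat m^k}W(t,\widetilde m^r)|\le C$ from Proposition \ref{prop:4:7}). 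This contribution, like the identically matching $\partial_t$‑ and $F$‑terms, goes into $\eta_{N,\varepsilon}$.

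The heart of the matter is the Hamiltonian term. Let $A:=\int_{\mathbb T^d}H(x,\partial_\mu W^{N,\varepsilon,\rho}(t,m)(x))\,d(m*f_N)(x)$ and let $B$ be the $r$‑integral of $\int_{\mathbb T^d}H(y,\widetilde P^r(y))\,d\widetilde m^r(y)$, where $\widetilde P^r(y):=\i\,2\pi\sum_{k\in F_N\setminus\{0\}}k\,\partial_{\widehat m^k}W(t,\widetilde m^r)e_k(y)$; set also $P^r:=\widetilde P^r*f_N$ and $\bar P:=\int P^r\prod_j\rho(\widehat r^j)\,d\widehat r$. I would pass from $B$ to $A$ in four controlled steps. (i) Replace $d\widetilde m^r$ by $d(m*f_N)$: since $\|\widetilde m^r-m*f_N\|_{L^1}\le C\varepsilon$ and $\|\widetilde P^r\|_\infty\le C$ (first bound of Proposition \ref{prop:4:7}, so $H(\cdot,\widetilde P^r(\cdot))$ is bounded), the error is $O(\varepsilon)$. (ii) Replace $\widetilde P^r$ by $P^r$ inside $H$: the Fourier modes of $\widetilde P^r$ lie in $F_N$ and $\sum_k|k|^2|\widehat{\widetilde P^r}^{\,k}|^2\le C$ by the last bound of Proposition \ref{prop:4:7}, while $|1-\widehat f_N^k|\le C|k|/N$ on $F_N$; hence $\|\widetilde P^r-P^r\|_{L^2}\le C/N$ and the error is $O(1/N)$. (iii) By Fubini, $B$ is now (up to these errors) $\int_{\mathbb T^d}\bigl(\int H(x,P^r(x))\prod_j\rho(\widehat r^j)\,d\widehat r\bigr)d(m*f_N)(x)$, whereas $A=\int_{\mathbb T^d}H(x,(1-\varepsilon)\bar P(x))\,d(m*f_N)(x)=\int_{\mathbb T^d}H(x,\bar P(x))\,d(m*f_N)(x)+O(\varepsilon)$; convexity of $H(x,\cdot)$ and Jensen give $H(x,\bar P(x))\le\int H(x,P^r(x))\prod_j\rho(\widehat r^j)\,d\widehat r$, hence $A\le B+O(\varepsilon)+O(1/N)$. (iv) For the reverse bound, a second–order Taylor expansion of $H(x,\cdot)$ at $\bar P(x)$ (the first–order term vanishing in $\rho$‑expectation) gives the pointwise inequality $0\le\int H(x,P^r(x))\prod_j\rho(\widehat r^j)\,d\widehat r-H(x,\bar P(x))\le\tfrac12\|\partial^2_{pp}H\|_\infty\int|P^r(x)-\bar P(x)|^2\prod_j\rho(\widehat r^j)\,d\widehat r$, so that $B-A\le O(\varepsilon)+O(1/N)+R^{N,\varepsilon,\rho}(t,(\widehat m^k)_{k\in F_N^+})$ with $R^{N,\varepsilon,\rho}:=\int_{\mathbb T^d}\bigl(\int H(x,P^r(x))\prod_j\rho(\widehat r^j)\,d\widehat r-H(x,\bar P(x))\bigr)d(m*f_N)(x)$ for admissible $m$, and $R^{N,\varepsilon,\rho}:=0$ otherwise. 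Combined with $|W_2|\le\eta_N(c')$ for the integrated left–hand side $W_2$ of \eqref{eq:HJB:gen}, this yields the announced inequality with $\eta_{N,\varepsilon}:=\eta_N(c')+(\text{the }O(\varepsilon)\text{ and }O(1/N)\text{ constants})$, which tends to $0$ as $(N,\varepsilon)\to(\infty,0)$ and depends on neither $\rho$ nor $t$.

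It remains to verify the properties of $R^{N,\varepsilon,\rho}$. It is nonnegative; since $\|P^r\|_\infty\le C$, it is bounded by a constant depending only on $c$ and the data; it is a function of $t$ and $(\widehat m^k)_{k\in F_N^+}$ (both $\widetilde m^r$ and $m*f_N$ depend on $m$ only through $(\widehat m^k)_{k\in F_N}$, hence through $(\widehat m^k)_{k\in F_N^+}$ by conjugate symmetry), and it is supported in the compact set of coefficients of admissible $m$ — none of which involves $\rho$ or $t$. Finally, expanding $|P^r(x)-\bar P(x)|^2$ in Fourier series yields $R^{N,\varepsilon,\rho}\le C_N(c)\sum_{k\in F_N\setminus\{0\}}\int|\partial_{\widehat m^k}W(t,\widetilde m^r)-\int\partial_{\widehat m^k}W(t,\widetilde m^{r'})\prod_j\rho(\widehat r'^j)\,d\widehat r'|^2\prod_j\rho(\widehat r^j)\,d\widehat r$; since $r\mapsto\widetilde m^r$ is an affine shift of $\widetilde m^0$ with invertible linear part, a change of variables turns $\int_{\mathbb R^{2|F_N^+|}}R^{N,\varepsilon,\rho}(t,(\widehat m^k))\bigotimes_k d(\Re[\widehat m^k],\Im[\widehat m^k])$ into a finite sum of terms $\le C_N(c)\int_s\|\tau_s h_k-h_k\|_{L^2(K)}^2\,d\widetilde\rho(s)$, where $h_k:=\partial_{\widehat m^k}W(t,\cdot)$, $\tau_s$ is translation by $s$, $K$ is a compact set on which $h_k$ is bounded (Proposition \ref{prop:4:7}), and $\widetilde\rho\to\delta_0$ as $\rho\to\delta_0$; $L^2$‑continuity of translations then forces this to tend to $0$ as $\rho\to\delta_0$ with $N$ and $\varepsilon$ fixed. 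The main obstacle is precisely this last point, i.e.\ step (iv): the Jensen gap is genuinely not negligible pointwise in $m$, because $m\mapsto\partial_{\widehat m^k}W(t,m)$ is only an almost‑everywhere defined, discontinuous function; this is exactly why a weaker, $L^1$‑type remainder $R^{N,\varepsilon,\rho}$ is unavoidable, and its control rests on translation‑continuity in $L^2$ of these derivatives together with the uniform decay estimates of Proposition \ref{prop:4:7}.
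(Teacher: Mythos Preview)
Your proof is correct and follows the same overall strategy as the paper's: integrate the almost-everywhere finite-dimensional inequality \eqref{eq:HJB:gen} against the mollifying kernel and identify each term of the mollified HJB equation up to a uniform remainder $\eta_{N,\varepsilon}$ and a Jensen-type remainder $R^{N,\varepsilon,\rho}$. Your decomposition of the Hamiltonian comparison into steps (i)--(iv) (change of base measure, then convolve the gradient by $f_N$, then Jensen and Taylor) is a slight reordering of the paper's Steps~3--5, but the ingredients and error estimates are the same, all resting on Proposition~\ref{prop:4:7}.

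The one place where you take a genuinely different technical route is the $L^1$ convergence of $R^{N,\varepsilon,\rho}$ as $\rho\to\delta_0$. You bound the Jensen gap by a second-order Taylor remainder $\tfrac12\|\partial^2_{pp}H\|_\infty\int|P^r-\bar P|^2\prod_j\rho\,d\widehat r$, expand in Fourier, and reduce the $L^1$ integral over $\widehat m$ to $\int\|\tau_s h_k-h_k\|_{L^2}^2\,d\widetilde\rho(s)$, which vanishes by $L^2$-continuity of translations. The paper instead approximates each $\partial_{\widehat m^k}W(t,\cdot)$ by a continuous function in $L^1$ (Lusin-type), observes that for continuous arguments the Jensen gap tends to $0$ uniformly, and controls the approximation error via the local Lipschitz property of $H$. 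Both arguments are standard and equivalent in strength; yours is arguably a bit more direct since it avoids the approximation step and exploits the boundedness of $\partial^2_{pp}H$ that is already part of the standing assumptions.
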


The reader will notice the difference between 
Definition 
\ref{defn:HJB:gen}
and 
Proposition
\ref{prop:generalized:solution}: not only the main inequality
in 
Proposition \ref{prop:generalized:solution}
 is stated 
everywhere in space, but 
also
the 
derivatives 
are derivatives on the space of probability measures (as opposed to 
the derivatives in 
Definition 
\ref{defn:HJB:gen}, which are just finite dimensional). 
This is more consistent with the original formulation of the HJB equation, 
see \eqref{eq:HJB}. See also
Remark 
\ref{rem:formulation:P:as} for a related comment. 

\begin{proof}
Throughout the proof, 
the constant $c>1$ is kept fixed. 
We recall the
following
 notation from 
 Definition 
\ref{defn:HJB:gen}: 
$A_N(c)=[0,T] \times B_N(c)$ with $B_N(c):= {\mathcal P}_N \cap B(c)$ and $B(c):= \{ m :  \sup_{x \in \bT^d} \vert \nabla m(x)  \vert \leq c, \ \inf_{x \in \bT^d}  m(x)   \geq 1/c \}$.
\vspace{5pt}

\textit{First Step.}
From the Definition
\ref{defn:HJB:gen}
 of a generalized solution, 
 there exists a full subset 
 $\widetilde A_N(c)$ of $A_N(c)$ 
 (for the image of the Lebesgue measure by the mapping $(t,(\widehat{m}^k)_{k \in F_N}) \in [0,T] \times {\mathcal O}_N \mapsto (t,{\mathscr I}_N((\widehat{m}^k)_{k \in F_N}))$, see Proposition \ref{prop:3:8}) such that 
\begin{equation*}
\begin{split}
&\biggl\vert \partial_t W(t,m*f_N) - 
 \int_{{\mathbb T}^d} H \biggl( y, \i 2 \pi \sum_{k \in F_N} k \partial_{\widehat m^k} W(t,m * f_N) e_{k}(y) \biggr) d(m*f_N)(y)
 \\
 &\hspace{15pt} - \sum_{k \in F_N} 2 \pi^2 \vert k\vert^2 
\partial_{ \widehat{m}^{k} }
W(t,m*f_N) \widehat{m}^k \widehat{f_N}^k + F(m * f_N)\biggr\vert \leq \eta_N(c),
\end{split}
\end{equation*}
when $(t,m * f_N) \in \widetilde A_N(c)$. Above, we specified the dependence of the sequence $(\eta_N)_{N \geq 1}$ upon the constant $c$ by writing 
$(\eta_N(c))_{N \geq 1}$. 

Fix now 
$m \in B(c)$, that is $m \in  {\mathcal P}({\mathbb T}^d)$ with 
$\|\nabla m \|_\infty \leq c$ and $m\geq 1/c$. 
Fix also
$t \in [0,T]$ such that 
$\{ m' : (t,m') \in \widetilde A_N(2c)\}$ is a full subset 
of $({\mathcal P}_N \cap \{ m' :  \sup_{x \in \bT^d} \vert \nabla m'(x)  \vert \leq 2c, \ \inf_{x \in \bT^d}  m'(x)   \geq 1/(2c) \})$: by Fubini's theorem, the set of such $t$'s is a full subset of $[0,T]$.
By definition of 
$\delta_{N,\varepsilon}$ 
(which we merely denote by $\delta$ in the rest of the proof)
in 
Definition 
\ref{def:admiss:threshold:smoothing}, we observe that, 
for 
$r=(\widehat{r}^k)_{k \in F_N^+}$ such that $\vert \widehat{r}^k \vert \leq \delta$, 
the probability measure 
$m^{N,\varepsilon}(r)$ (as given by Definition 
\ref{def:mollification})
satisfies
$\| \nabla m^{N,\varepsilon}(r) \|_\infty \leq 2c$ and 
$m^{N,\varepsilon}(r)\geq 1/(2c)$ for 
$\varepsilon$ small enough (independently of $N$). 
Therefore, 
$(t,m^{N,\varepsilon}(r)*f_N)$ belongs to 
$A_N(2c)$. 
Since the image of the Lebesgue measure 
by the mapping 
$(\widehat{r}^k)_{k \in F_N^+} \mapsto 
m^{N,\varepsilon}(r)*f_N \in {\mathcal P}_N$ is obviously absolutely-continuous, we deduce that 
$(t,m^{N,\varepsilon}(r)*f_N) \in \widetilde{A}_N(2c)$, for almost every 
$(\widehat{r}^k)_{k \in F_N^+}$ satisfying 
$\vert \widehat{r}^k \vert \leq \delta$
for all $k \in F_N^+$. 
Hence, we get, for almost every 
$(\widehat{r}^k)_{k \in F_N^+}$,
\begin{align}
&\biggl\vert \partial_t W\bigl(t,m^{N,\varepsilon}(r)*f_N\bigr) - 
\int_{{\mathbb T}^d} H \biggl( y, \i 2 \pi \sum_{k \in F_N} k \partial_{\widehat m^k} W\bigl(t,m^{N,\varepsilon}(r) * f_N\bigr) e_{k}(y) \biggr) d\bigl(m^{N,\varepsilon}(r) *f_N\bigr)(y)
\nonumber
 \\
 &\hspace{5pt} - \sum_{k \in F_N} 2 \pi^2 \vert k\vert^2 
\partial_{ \widehat{m}^{k} }
W\bigl(t,m^{N,\varepsilon}(r)*f_N\bigr) \widehat{m^{N,\varepsilon}(r)}^k \widehat{f_N}^k + F\bigl(m^{N,\varepsilon}(r) * f_N\bigr)\biggr\vert \leq \eta_N(2c).
\label{eq:1st:step:theorem:4:3:proof}
\end{align}

\textit{Second Step.}
The conclusion
\eqref{eq:1st:step:theorem:4:3:proof}
 of the first step right above holds true for almost every $t \in [0,T]$, for any 
$m \in B(c)$, and for 
almost every 
$(\widehat{r}^k)_{k \in F_N^+}$ satisfying 
$\vert \widehat{r}^k \vert \leq \delta$
for all $k \in F_N^+$. 

Therefore, integrating 
\eqref{eq:1st:step:theorem:4:3:proof}
over $(\Re[\widehat r^k],\Im[\widehat r^k])_{k \in F_N^+}$ with respect to $\otimes_{k \in F_N^+} \rho$ and recalling 
Corollary 
\ref{cor:mollif:time-space} (for the existence of the time-derivative below), we obtain, for almost every $t \in [0,T]$
and for any $m \in B(c)$, 
\begin{equation}
\label{eq:prop:4:9:T1Nepsilonrho}
\Bigl\vert \partial_t W^{N,\varepsilon,\rho}(t,m) + F^{N,\varepsilon,\rho}(m) - T_1^{N,\varepsilon,\rho}(t,m) - T_2^{N,\varepsilon,\rho}(t,m) \Bigr\vert \leq \eta_N(2c),
\end{equation}
with 
\begin{equation*}
\begin{split}
&T_1^{N,\varepsilon,\rho}(t,m) 
\\
&:=  
 \int_{{\mathbb R}^{2 \vert F_N^+ \vert}}
 \biggl\{
  \int_{{\mathbb T}^d} H \biggl( y, \i 2 \pi  \sum_{k \in F_N} k \partial_{\widehat m^k} W\bigl(t,m^{N,\varepsilon}(r) * f_N\bigr) e_{k}(y) \biggr) d\bigl(m^{N,\varepsilon}(r) *f_N\bigr)(y) 
  \biggr\}
  \\
&\hspace{300pt} \times \prod_{j \in F_N^+} \rho\bigl(  \widehat{r}^j \bigr)
\bigotimes_{j \in F_N^+} 
  d \widehat{r}^j ,
\end{split}
\end{equation*}
and
\begin{equation*}
T_2^{N,\varepsilon,\rho}(t,m)
: = \sum_{k \in F_N} 2 \pi^2 \vert k\vert^2 \biggl( \int_{{\mathbb R}^{2 \vert F_N^+ \vert}} 
\partial_{ \widehat{m}^{k} }
W\bigl(t,m^{N,\varepsilon}(r)*f_N\bigr) \widehat{m^{N,\varepsilon}(r)}^k 
\prod_{j \in F_N^+} \rho\bigl(  \widehat{r}^j \bigr)
\bigotimes_{j \in F_N^+} 
  d \widehat{r}^j 
\biggr)
\widehat{f}_N^k.
\end{equation*}
We recall from 
Proposition 
\ref{prop:mollification}
(see also Corollary
\ref{cor:mollif:time-space}) 
that, 
for any $(t,m) \in [0,T] \times B_N(c)$,
\begin{equation}
\label{eq:Fourier:derivative:convolution:regularisation:0}
\begin{split}
&\frac{\delta W^{N,\varepsilon,\rho}}{\delta m}(t,m)(x)
\\
&= (1-\varepsilon) 
\int_{{\mathbb R}^{2 \vert F_N^+ \vert}}
\sum_{k \in F_N  \setminus \{0\}} \Bigl[ \Bigl(\partial_{\widehat{m}^k}  W \bigl(t,m^{N,\varepsilon}(r) * f_N\bigr) e_{k} \Bigr) * f_N \Bigr](x) 
\prod_{j \in F_N^+} \rho\bigl(  \widehat{r}^j \bigr)
\bigotimes_{j \in F_N^+} 
  d \widehat{r}^j 
\\
&= 
(1-\varepsilon) 
\int_{{\mathbb R}^{2 \vert F_N^+ \vert}}
\sum_{k \in F_N  \setminus \{0\}} 
\Bigl[
\partial_{\widehat{m}^k}  W \bigl(t,m^{N,\varepsilon}(r) * f_N\bigr) \hat{f}_N^k e_{k}(x) 
\Bigr]
\prod_{j \in F_N^+} \rho\bigl(  \widehat{r}^j \bigr)
\bigotimes_{j \in F_N^+} 
  d \widehat{r}^j ,
\end{split}
\end{equation}
where we used on the last line the fact that 
$e_k * f_N (x)= e_k(x) \widehat{f}_N^{-k} 
= 
e_k(x) \widehat{f}_N^{k}$.  We deduce that 
\begin{equation}
\label{eq:Fourier:derivative:convolution:regularisation}
\reallywidehat{\displaystyle \biggl\{\frac{\delta W^{N,\varepsilon,\rho}}{\delta m}(t,m)\biggr\}}^{-k} =
(1-\varepsilon) 
\hat{f}^{k}_N
\int_{{\mathbb R}^{2 \vert F_N^+ \vert}}
\partial_{\hat{m}^k}  W \bigl(t,m^{N,\varepsilon}(r) * f_N\bigr) 
\prod_{j \in F_N^+} \rho\bigl(  \widehat{r}^j \bigr)
\bigotimes_{j \in F_N^+} 
  d \widehat{r}^j ,
\end{equation}
for $k \in F_N$. 
\vspace{5pt}

\textit{Third Step.}
The main difficulty of the proof is to handle $T_1^{N,\varepsilon,\rho}(t,m)$.
The goal is to identify it with the second term in the main inequality of the statement. In order
to do so, 
we first observe that this latter term can be rewritten as 
 \begin{align}
&\int_{{\mathbb T}^d} H \Bigl( y, 
\partial_{\mu} W^{N,\varepsilon,\rho}(t,m)(y) 
 \Bigr) d\bigl( m * f_N \bigr) (y) 
 \nonumber
\\
&=\int_{{\mathbb T}^d} H \biggl( y,- \i 2 \pi
\sum_{k \in F_N} k 
\reallywidehat{\displaystyle \biggl\{\frac{\delta W^{N,\varepsilon,\rho}}{\delta m}(t,m)\biggr\}}^k e_{-k}(y) \biggr) d\bigl( m * f_N \bigr) (y) 
\label{eq:main:estimate:proof:mollification:HJB}
\\
&= 
\int_{{\mathbb T}^d}
H \biggl( y, 
 \i 2 \pi (1-\varepsilon) \sum_{k \in F_N}
\widehat{f}^{k}_N
 k 
\int_{{\mathbb R}^{2 \vert F_N^+ \vert}} 
  \partial_{\widehat{m}^k}  W \bigl(t,m^{N,\varepsilon}(r) * f_N\bigr) e_k(y)
  \prod_{j \in F_N^+} \rho\bigl(  \widehat{r}^j \bigr)
\bigotimes_{j \in F_N^+} 
  d \widehat{r}^j 
\biggr)
\nonumber
\\
&\hspace{350pt} \times d \bigl( m * f_N \bigr) (y),   \nonumber
  \end{align}
  which prompts us to let 
  \begin{equation}
  \label{eq:R:1:N,varepsilon,rho}
  \begin{split}
  &T_3^{N,\varepsilon,\rho}(t,m)
  :=
  \int_{{\mathbb T}^d}
H \biggl( y, 
\i 2 \pi  \sum_{k \in F_N}
 k 
\int_{{\mathbb R}^{2 \vert F_N^+ \vert}} 
  \partial_{\widehat{m}^k}  W \bigl(t,m^{N,\varepsilon}(r) * f_N\bigr) e_k(y)
  \prod_{j \in F_N^+} \rho\bigl(  \widehat{r}^j \bigr)
\bigotimes_{j \in F_N^+} 
  d \widehat{r}^j 
\biggr)
\nonumber
\\
&\hspace{350pt} \times d \bigl( m * f_N \bigr) (y).
  \end{split}
  \end{equation}
  (Pay attention that there is no $1-\varepsilon$ and no 
  $\widehat{f}_N^k$
  inside $H$ in the above right-hand side.)  
  By the local Lipschitz property of $H$, we can find a constant $C$, only depending on the parameters in 
  the assumptions, such that 
\begin{equation*}
\begin{split}
&\biggl\vert 
\int_{{\mathbb T}^d} H \Bigl( y, 
\partial_{\mu} W^{N,\varepsilon,\rho}(t,m)(y) 
 \Bigr) d\bigl( m * f_N \bigr) (y)
-
T_3^{N,\varepsilon,\rho}(t,m)
 \biggr\vert
\\
&\leq C  
\biggl[ \int_{{\mathbb T}^d} 
\biggl\{ \biggl\vert
\sum_{k \in F_N}
\widehat{f}^{k}_N
 k 
\int_{{\mathbb R}^{2 \vert F_N^+ \vert}} 
  \partial_{\widehat{m}^k}  W \bigl(t,m^{N,\varepsilon}(r) * f_N\bigr) e_k(y)
  \prod_{j \in F_N^+} \rho\bigl(  \widehat{r}^j \bigr)
\bigotimes_{j \in F_N^+} 
  d \widehat{r}^j 
\biggr\vert
\\
&\hspace{15pt} 
+ \biggl\vert
\sum_{k \in F_N}
 k 
\int_{{\mathbb R}^{2 \vert F_N^+ \vert}} 
  \partial_{\widehat{m}^k}  W \bigl(t,m^{N,\varepsilon}(r) * f_N\bigr) e_k(y)
  \prod_{j \in F_N^+} \rho\bigl(  \widehat{r}^j \bigr)
\bigotimes_{j \in F_N^+} 
  d \widehat{r}^j 
\biggr\vert \biggr\}^2 d \bigl( m * f_N \bigr)(y) \biggr]^{1/2}
\\
&\hspace{5pt} \times 
\biggl[ \int_{{\mathbb T}^d} 
 \biggl\vert
\sum_{k \in F_N}
\bigl( 
(1-\varepsilon) 
\widehat{f}^{k}_N
-1
\bigr)
 k 
\int_{{\mathbb R}^{2 \vert F_N^+ \vert}} 
  \partial_{\widehat{m}^k}  W \bigl(t,m^{N,\varepsilon}(r) * f_N\bigr) e_k(y)
  \prod_{j \in F_N^+} \rho\bigl(  \widehat{r}^j \bigr)
\bigotimes_{j \in F_N^+} 
  d \widehat{r}^j 
\biggr\vert^2 
\\
&\hspace{250pt} \times d \bigl( m * f_N \bigr)(y) \biggr]^{1/2}.
\end{split}
\end{equation*}
We now show that the term on the last two lines above tends to $0$
as $(N,\varepsilon)$ tends to $0$. The same strategy would permit to show that the term on
the first two lines of the right-hand side is bounded by a constant  $C(c)$ depending on
the parameters in the assumptions and on the value of $c$. 
In order to proceed, we observe that $m$ and $m*f_N$ are bounded by $c+1$ since $m \in B_N(c)$.  
 Therefore, allowing the constant $C$ to depend on $c$ and thus writing $C(c)$ instead of 
 $C$, we get
\begin{equation*}
\begin{split}
& \int_{{\mathbb T}^d} 
 \biggl\vert
\sum_{k \in F_N}
\bigl( 
(1-\varepsilon) 
\widehat{f}^{k}_N
-1
\bigr)
 k 
\int_{{\mathbb R}^{2 \vert F_N^+ \vert}} 
  \partial_{\widehat{m}^k}  W \bigl(t,m^{N,\varepsilon}(r) * f_N\bigr) e_k(y)
  \prod_{j \in F_N^+} \rho\bigl(  \widehat{r}^j \bigr)
\bigotimes_{j \in F_N^+} 
  d \widehat{r}^j 
\biggr\vert^2 
\\
&\hspace{250pt} \times d \bigl( m * f_N \bigr)(y)
\\
&\leq C(c) 
\sum_{k \in F_N}
\int_{{\mathbb R}^{2 \vert F_N^+ \vert}} 
\int_{{\mathbb T}^d} 
\vert k\vert^2
\bigl\vert
(1-\varepsilon) 
\widehat{f}^{k}_N
-1
\bigr\vert^2
\Bigl\vert
  \partial_{\widehat{m}^k}  W \bigl(t,m^{N,\varepsilon}(r) * f_N\bigr) 
  \Bigr\vert^2
  \prod_{j \in F_N^+} \rho\bigl(  \widehat{r}^j \bigr)
\bigotimes_{j \in F_N^+} 
  d \widehat{r}^j 
d y. 
\end{split}
\end{equation*}
We now use the following three facts: First,  $\vert \widehat{f}^{k}_N \vert \leq 1$; 
Second, $\widehat{f}^k_N$ tends to $1$ when $N$ tends to $\infty$ and $k$ is fixed; 
Third, 
Proposition \ref{prop:4:7}
gives a bound for 
the series $\sum_{k \in {\mathbb Z}^d} 
\vert k \vert^4
\vert
  \partial_{\widehat{m}^k}  W (t,m^{N,\varepsilon}(r) * f_N ) \vert^2$ (at points where the derivatives exist).
  Then, for a fixed integer $N_0$, we can separate the sum in the above right-hand side 
  depending whether $\vert k \vert \leq N_0$ or
  $\vert k \vert > N_0$. 
  For $\vert k \vert \leq N_0$, we can easily use the fact that 
  $(1-\varepsilon) \widehat{f}_N^k$ tends to $1$ as $(N,\varepsilon)$ tends to $(\infty,0)$. 
  For $\vert k \vert \geq N_0$, we can easily bound 
  $\vert k \vert^2$ by $\vert k\vert^4/N_0^2$. All in all, this proves that the right-hand side
  tends to $0$ as $(N,\varepsilon)$ tends to $(\infty,0)$. We finally have
 \begin{equation}
\label{defn:HJB:gen:2}
\begin{split}
&\biggl\vert \int_{{\mathbb T}^d} H \Bigl( y, 
\partial_{\mu} W^{N,\varepsilon,\rho}(t,m)(y) 
 \Bigr) d\bigl( m * f_N \bigr) (y) 
-
T_3^{N,\varepsilon,\rho}(t,m)
\biggr\vert
\leq \eta_{N,\varepsilon}(c),
\end{split}
\end{equation}
where $\eta_{N,\varepsilon}(c)$ is a generic notation for a sequence that may depend on $c$ (but not on 
$(t,m)$) and that tends to $0$ as
$(N,\varepsilon)$ tends to $(\infty,0)$. 
\vspace{5pt}

\textit{Fourth Step.}
We now let 
\begin{align}
&\psi^{N,\varepsilon,\rho}(t,m) 
\nonumber
\\
&:= 
\int_{{\mathbb T}^d}
\biggl\{
\int_{{\mathbb R}^{2 \vert F_N^+\vert}} 
H \biggl( y, \i 2 \pi \sum_{k \in F_N} k \partial_{\widehat m^k} W\bigl(t,m^{N,\varepsilon}(r) * f_N\bigr) e_{k}(y) \biggr)  
\prod_{j \in F_N^+} \rho\bigl(  \widehat{r}^j \bigr)
\bigotimes_{j \in F_N^+} 
  d \widehat{r}^j
  \label{eq:psi:N,rho,epsilon}
\\
&\hspace{15pt} - 
H \biggl( y, \i 2 \pi \int_{{{\mathbb R}^{2 \vert F_N^+\vert}} }   \sum_{k \in F_N} k \partial_{\widehat m^k} W\bigl(t,m^{N,\varepsilon}(r) * f_N\bigr) e_{k}(y) 
\prod_{j \in F_N^+} \rho\bigl(  \widehat{r}^j \bigr)
\bigotimes_{j \in F_N^+} 
  d \widehat{r}^j
 \biggr)  \biggr\} d \bigl( m * f_N\bigr)(y). \nonumber
\end{align}
Using the notation introduced in the third step, this is also equal to 
\begin{equation*}
\begin{split}
\psi^{N,\varepsilon,\rho}(t,m) 
&:= 
\int_{{\mathbb T}^d}
\biggl\{ \int_{{\mathbb R}^{2 \vert F_N^+\vert}} 
H \biggl( y, \i 2 \pi \sum_{k \in F_N} k \partial_{\widehat m^k} W\bigl(t,m^{N,\varepsilon}(r) * f_N\bigr) e_{k}(y) \biggr)  
\prod_{j \in F_N^+} \rho\bigl(  \widehat{r}^j \bigr)
\bigotimes_{j \in F_N^+} 
  d \widehat{r}^j
  \biggr\}
  \\
&\hspace{300pt} \times d \bigl( m * f_N\bigr)(y)
 \\
 &\hspace{15pt} - T_3^{N,\varepsilon,\rho}(t,m). 
\end{split}
\end{equation*}
By convexity of the Hamiltonian, 
$\psi^{N,\varepsilon,\rho}(t,m)$ is obviously non-negative. 
Moreover, in 
\eqref{eq:psi:N,rho,epsilon}, 
we can regard each $\partial_{\widehat{m}^k} W(t,m^{N,\varepsilon}(0)*f_N)$ 
as a (almost-everywhere well-defined) function of vectors $(\widehat{m}^k)_{k \in F_N^+} \in {\mathbb C}^{\vert F_N^+\vert} \simeq  {\mathbb R}^{2 \vert F_N^+\vert}$
such that 
$(\widehat{m}^k \widehat{f}_N^k)_{k \in F_N^+}
\in {\mathcal O}_N$: in order to clarify the notation, we write 
it in the form $\widetilde \beta^{N,\varepsilon,k}(t,(\widehat{m}^j)_{j \in F_N^+})$, with $\widetilde \beta^{N,\varepsilon,k}(t,\cdot)$ being seen as an element of $L^\infty({\mathbb R}^{2 \vert F_N^+\vert})$
that is equal to $0$ at 
vectors 
$(\widehat{m}^k)_{k \in F_N^+}$
such that 
$(\widehat{m}^k \widehat{f}_N^k)_{k \in F_N^+}
\not \in {\mathcal O}_N$.  
Then, we rewrite
\eqref{eq:psi:N,rho,epsilon}
in the form
\begin{align}
 \label{eq:main:bound:phi:N,varepsilon,rho}
&\psi^{N,\varepsilon,\rho}(t,m) 
\\
&=  \sum_{\ell \in F_N \setminus \{0\}} \int_{{\mathbb T}^d}
\biggl\{
\int_{{\mathbb R}^{2 \vert F_N^+\vert}} H \biggl( y, \i 2 \pi  \sum_{k \in F_N} k \widetilde \beta^{N,\varepsilon,k} \Bigl(t, \bigl( \widehat{m}^j + \widehat{r}^j \bigr)_{j \in F_N^+} \Bigr) e_{k}(y) \biggr) 
\prod_{j \in F_N^+} \rho\bigl(  \widehat{r}^j \bigr)
\bigotimes_{j \in F_N^+} 
  d \widehat{r}^j
  \nonumber
\\
&\hspace{5pt} - 
H \biggl( y, \i 2 \pi \int_{{\mathbb R}^{2 \vert F_N^+\vert}}   \sum_{k \in F_N} k \widetilde \beta^{N,\varepsilon,k} \Bigl(t, \bigl( \widehat{m}^j + \widehat{r}^j \bigr)_{j \in F_N^+} \Bigr) e_{k}(y) 
\prod_{j \in F_N^+} \rho\bigl(  \widehat{r}^j \bigr)
\bigotimes_{j \in F_N^+} 
  d \widehat{r}^j
 \biggr) \biggr\} \widehat{m}^{\ell} \widehat{f}^\ell_N e_{-\ell}(y) dy. 
\nonumber
\end{align}
Notice that we expanded the convolution product $m*f_N$ in Fourier coefficients. This is a way to emphasize the fact that 
$\psi^{N,\varepsilon,\rho}(t,\cdot)$ can be here regarded as a real-valued function on ${\mathbb R}^{2 \vert F_N^+ \vert}$. 
To make the identification clear, we can denote the right-hand side in the above equality 
by $\widetilde \psi^{N,\varepsilon,\rho}(t,(\widehat{m}^j)_{j \in F_N^+})$, with 
$\widetilde \psi^{N,\varepsilon,\rho}(t,\cdot) : {\mathbb R}^{2 \vert F_N^+ \vert} \rightarrow 
{\mathbb R}$. We then have 
$\psi^{N,\varepsilon,\rho}(t,m) = 
\widetilde \psi^{N,\varepsilon,\rho}(t,(\widehat{m}^j)_{j \in F_N^+})$. 
We now make the following observation. If $\rho(\widehat{r}^j) d \widehat{r}^j$ were formally set equal to $\delta_0(d \widehat{r}^j)$
and if 
$\beta^{N,\varepsilon,k}$ were continuous in the second argument,  
the term on the right-hand side 
 would be equal to 0, hence leaving us with 
\begin{equation*}
\widetilde \psi^{N,\varepsilon,\delta_0}\bigl(t,(\widehat m^j)_{j \in F_N^+} \bigr) =0,
\quad (\widehat m^j)_{j \in F_N^+} \in {\mathbb R}^{ 2 \vert F_N^+ \vert},
\end{equation*}
where the symbol $\delta_0$ in the left-hand side is for the Dirac mass at $0$ and is to indicate that $\rho \equiv \delta_0$. 
In turn, if 
$\widetilde \beta^{N,\varepsilon,k}$ were continuous in the second argument,  
the right-hand side in 
\eqref{eq:main:bound:phi:N,varepsilon,rho} would converge to $0$ 
as $\rho$ tends to $\delta_0$. 
Obviously, 
$\widetilde \beta^{N,\varepsilon,k}$ is not continuous in our setting. However, 
by
 standard mollification results in finite dimension, we can approximate (for any $t$) the functions 
 $(\widetilde \beta^{N,\varepsilon,k}(t,\cdot))_{k \in F_N^+}$ 
 by continuous functions 
 in $L^1({\mathbb R}^{2 \vert F_N^+\vert})$. 
For $(N,\varepsilon,k)$ and $t$ fixed, we can indeed 
find 
a collection of real-valued continuous functions
$(\check \beta^{N,\varepsilon,k,n})_{n \in {\mathbb N}}$
 on ${\mathbb R}^{2 \vert F_N^+\vert}$ such that 
\begin{equation*}
\lim_{n \rightarrow \infty} 
\int_{{\mathbb R}^{2 \vert F_N^+\vert}}
\Bigl\vert \widetilde \beta^{N,\varepsilon,k}\Bigl(t,
\bigl( \widehat{m}^j \bigr)_{j \in F_N^+} 
\Bigr)
-
\check \beta^{N,\varepsilon,k,n}\Bigl(
\bigl( \widehat{m}^j \bigr)_{j \in F_N^+} 
\Bigr)
\Bigr\vert \bigotimes_{j \in F_N^+} 
d \widehat{m}^j =0.
\end{equation*}
Since each 
$\widetilde \beta^{N,\varepsilon,k}(t,
\cdot)$
is bounded and zero outside $\{ (\widehat m^j)_{j \in F_N^+}
\in 
{\mathbb C}^{ \vert F_N^+ \vert} 
\simeq 
{\mathbb R}^{2 \vert F_N^+ \vert} 
: 
(\widehat m^j \widehat f_N^j)_{j \in F_N^+}
\in {\mathcal O}_N\}$, we can assume that the functions  
$(\check \beta^{N,\varepsilon,k,n})_{n \in {\mathbb N}}$
are uniformly bounded and are zero outside a common bounded subset of ${\mathbb R}^{2 \vert F_N^+\vert}$.
Denoting by 
$\check \psi^{N,\varepsilon,\rho,n}((\widehat{m}^j)_{j \in F_N^+})$ the right-hand side in 
\eqref{eq:main:bound:phi:N,varepsilon,rho}
when $\widetilde \beta^{N,\varepsilon,k}$ therein is replaced by 
 $\check \beta^{N,\varepsilon,k,n}$
 and using the local Lipschitz property of 
 $H$, we deduce that 
 \begin{equation*}
 \lim_{n \rightarrow \infty} \int_{{\mathbb R}^{2 \vert F_N^+ \vert}} \Bigl\vert 
\widetilde  \psi^{N,\varepsilon,\rho}\Bigl(t,\bigl( \widehat m^j\bigr)_{j \in F_N^+} \Bigr) 
 - 
 \check \psi^{N,\varepsilon,\rho,n}\Bigl( 
 \bigl( \widehat m^j\bigr)_{j \in F_N^+}
 \Bigr) 
 \biggr\vert \bigotimes_{j \in F_N^+} 
d \widehat{m}^j =0,
 \end{equation*}
 the convergence being uniform with respect to $\rho$ satisfying the prescription 
 of Definitions
 \ref{def:admiss:threshold:smoothing}
and
\ref{def:mollification}. 
 Using the fact that, for each fixed $n$, 
 $\check \psi^{N,\varepsilon,\rho,n}$
 converges to $0$ in sup norm as $\rho$ tends to $\delta_0$, we 
deduce that 
\begin{equation}
 \label{eq:prop:4:9:fifth:step:2}
\begin{split}
\lim_{\rho \rightarrow \delta_0} 
\int_{{\mathbb R}^{2 \vert F_N^+ \vert}}
\Bigl\vert \widetilde \psi^{N,\varepsilon,\rho}\Bigl(t,\bigl(\widehat{m}^j\bigr)_{j \in F_N^+}
\Bigr) \Bigr\vert \bigotimes_{j \in F_N^+} d \widehat{m}^j
=0.
\end{split}
\end{equation}
Of course, the rate in the above convergence depends on $N$
and $\varepsilon$. 
Importantly, 
for $N$ and $\varepsilon$ fixed,
the function 
$\psi^{N,\varepsilon,\rho}(t,\cdot)$
is bounded uniformly in $\rho$ and $t$: this follows from 
Proposition 
\ref{prop:4:7}.
In short, this provides the function $R^{N,\varepsilon,\rho}$ in the statement. 
\vspace{5pt}

\textit{Fifth Step.} 
By combining the third and fourth steps, 
we now rewrite 
\eqref{defn:HJB:gen:2}
in the following form: 
 \begin{equation}
 \label{eq:prop:4:9:fifth:step:1}
 \begin{split}
&\biggl\vert \int_{{\mathbb T}^d} H \Bigl( y, 
\partial_{\mu} W^{N,\varepsilon,\rho}(t,m)(y) 
 \Bigr) d\bigl( m * f_N \bigr) (y) + 
 \psi^{N,\varepsilon,\rho}(t,m) 
\\
&\hspace{15pt}
- 
\int_{{\mathbb R}^{2 \vert F_N^+\vert}} 
\biggl\{
\int_{{\mathbb T}^d}
H \biggl( y, \i 2 \pi \sum_{k \in F_N} k \partial_{\widehat m^k} W\bigl(t,m^{N,\varepsilon}(r) * f_N\bigr) e_{k}(y) \biggr)  
 d \bigl( m * f_N\bigr)(y)
\biggr\} 
\\
&\hspace{300pt} \times \prod_{j \in F_N^+} \rho\bigl(  \widehat{r}^j \bigr)
\bigotimes_{j \in F_N^+} 
  d \widehat{r}^j
 \biggr\vert
\\
&\leq \eta_{N,\varepsilon}(c). 
\end{split}
\end{equation}
Back to the second step, we can compare the term on the second line with $T_1^{N,\varepsilon,\rho}$ introduced in the second step. 
The main difference between the two comes from the 
measure that is used for integrating in $y$. Here, it is 
$m*f_N$, while it is 
$m^{N,\varepsilon}(r)*f_N$ in 
$T_1^{N,\varepsilon,\rho}$.
In order to pass from one term to the other, we recall from the choice of $\delta$
in 
Definition 
\ref{def:admiss:threshold:smoothing}
 that 
\begin{equation*}
\bigl\| m - m^{N,\varepsilon}(r) \bigr\|_{\infty} \leq C(c)  \varepsilon,
\end{equation*}
when 
 $r=(\widehat r^k)_{k \in F_N^+}$ is in the support of $\bigotimes_{k \in F_N^+} \rho$
and
for $C(c)$ depending on $c$. 
Therefore,
\begin{equation}
\label{eq:cv:proof:mollif:convolution}
\bigl\| m  * f_N- m^{N,\varepsilon}(r) *f_N \bigr\|_{\infty} \leq C(c)  \varepsilon.
\end{equation}
By using once again 
Proposition 
\ref{prop:4:7}
in order to bound the $L^\infty$-norm of the function 
$y \mapsto 
\i 2 \pi \sum_{k \in F_N} k \partial_{\widehat m^k} W(t,m^{N,\varepsilon}(r) * f_N) e_{k}(y)$
and 
 by invoking 
 the local Lipschitz property of $H$, 
we have
\begin{equation*}
\begin{split}
&\biggl\vert \int_{{\mathbb R}^{2 \vert F_N^+\vert}} 
\biggl\{
\int_{{\mathbb T}^d}
H \biggl( y, \i 2 \pi \sum_{k \in F_N} k \partial_{\widehat m^k} W\bigl(t,m^{N,\varepsilon}(r) * f_N\bigr) e_{k}(y) \biggr)  
 d \bigl( m * f_N\bigr)(y)
\biggr\} 
\\
&\hspace{300pt} \times \prod_{j \in F_N^+} \rho\bigl(  \widehat{r}^j \bigr)
\bigotimes_{j \in F_N^+} 
  d \widehat{r}^j
  \\
  &\hspace{15pt} 
  -\int_{{\mathbb R}^{2 \vert F_N^+\vert}} 
\biggl\{
\int_{{\mathbb T}^d}
H \biggl( y, \i 2 \pi \sum_{k \in F_N} k \partial_{\widehat m^k} W\bigl(t,m^{N,\varepsilon}(r) * f_N\bigr) e_{k}(y) \biggr)  
 d \bigl( m^{N,\varepsilon}(r) * f_N\bigr)(y)
\biggr\} 
\\
&\hspace{300pt} \times \prod_{j \in F_N^+} \rho\bigl(  \widehat{r}^j \bigr)
\bigotimes_{j \in F_N^+} 
  d \widehat{r}^j
  \biggr\vert
  \\ 
  &\leq C(c) \varepsilon. 
\end{split}
\end{equation*}
As we already mentioned,  
the second term in the left-hand side is nothing but
$T_1^{N,\varepsilon,\rho}(t,m)$. 
Therefore, 
\eqref{eq:prop:4:9:fifth:step:1}
yields
 \begin{equation}
\label{defn:HJB:gen:27}
\begin{split}
&\biggl\vert \int_{{\mathbb T}^d} H \Bigl( y, 
\partial_{\mu} W^{N,\varepsilon,\rho}(t,m)(y) 
 \Bigr) d\bigl( m^{N,\varepsilon}(r) * f_N \bigr) (y) 
 + 
 \psi^{N,\varepsilon,\rho}(t,m) 
-
T_1^{N,\varepsilon,\rho}(t,m)
\biggr\vert
\leq \eta_{N,\varepsilon}(c).
\end{split}
\end{equation}
\vspace{5pt}

\textit{Sixth Step.}
We now address the term $T_2^{N,\varepsilon,\rho}$ defined in the second step. 
We recall 
\begin{equation*}
T_2^{N,\varepsilon,\rho}(t,m)
 = \sum_{k \in F_N} 2 \pi^2 \vert k\vert^2  \widehat{f}^k_N \biggl( \int_{{{\mathbb R}^{2 \vert F_N^+\vert}}} 
\partial_{ \widehat{m}^{k} }
W\bigl(t,m^{N,\varepsilon}(r)*f_N\bigr) \widehat{m^{N,\varepsilon}(r)}^k 
\prod_{j \in F_N^+} \rho\bigl(  \widehat{r}^j \bigr)
\bigotimes_{j \in F_N^+} 
  d \widehat{r}^j 
\biggr).
\end{equation*}
Back to the analysis of 
$T[m,W]$ in 
Subsection 
\ref{subse:spatial:deri:interpretation}, we have
\begin{equation*}
\begin{split}
&\frac12 
\int_{{\mathbb T}^d} \Bigl[ 
\frac{\delta}{\delta m} W^{N,\varepsilon,\rho}(t,m)(y) \Bigr] \Delta m(y) dy
= 
 - \sum_{k \in F_N} 2 \pi^2 \vert k\vert^2 
\partial_{ \widehat{m}^{k} }
W^{N,\varepsilon,\rho}(t,m) \widehat{m}^{k}.
\end{split}
\end{equation*}
Using Equation \eqref{eq:Fourier:derivative:convolution:regularisation:0}, we  get
\begin{equation*}
\begin{split}
&\frac12 
\int_{{\mathbb T}^d} \Bigl[ 
\frac{\delta}{\delta m} W^{N,\varepsilon,\rho}(t,m)(y) \Bigr] \Delta m(y) dy
\\
&= 
 -( 1-\varepsilon ) \int_{{{\mathbb R}^{2 \vert F_N^+\vert}}} 
\biggl(  \sum_{k \in F_N} 2 \pi^2 \vert k \vert^2 
\partial_{\hat{m}_k}  W \bigl(t,m^{N,\varepsilon}(r) * f_N\bigr) 
\hat{f}^{k}_N 
\widehat{m}^{k} \biggr)
\prod_{j \in F_N^+} \rho\bigl(  \widehat{r}^j \bigr)
\bigotimes_{j \in F_N^+} 
  d \widehat{r}^j .
\end{split}
\end{equation*}
We then proceed as in the second step in order to remove the product $(1-\varepsilon) \widehat{f}^k_N$ from the summand in the right-hand side. 
In short, 
we observe that the 
family of Fourier coefficients
$(\widehat{m}^k)_k$ is square-integrable, uniformly in 
$m \in B(c)$ (see 
\eqref{eq:compact:Fourier}). 
Moreover,
by Proposition 
\ref{prop:4:7},
 the sum
$\sum_{k \in F_N} \vert k \vert^4 \vert \partial_{\hat{m}^k}  W (t,m^{N,\varepsilon}(r) * f_N) \vert^2$ is finite, uniformly in 
$N$,
$\varepsilon$, 
$m$ and $r$. 
Therefore, we deduce that 
(as before, for any  $t \in [0,T]$ and any $m  \in B(c)$)
\begin{equation*}
\begin{split}
&\biggl\vert 
\frac12 \int_{{\mathbb T}^d} \Bigl[ 
\frac{\delta}{\delta m} W^{N,\varepsilon,\rho}(t,m)(y) \Bigr] \Delta m(y) dy
\\
&\hspace{15pt} + (1-\varepsilon)
 \int_{{{\mathbb R}^{2 \vert F_N^+\vert}}} 
\biggl(  \sum_{k \in F_N} 2 \pi^2 \vert k \vert^2 
\partial_{\widehat{m}^k}  W \bigl(t,m^{N,\varepsilon}(r) * f_N\bigr) 
\widehat{m}^{k} \biggr)
\prod_{j \in F_N^+} \rho\bigl(  \widehat{r}^j \bigr)
\bigotimes_{j \in F_N^+} 
  d \widehat{r}^j
\biggr\vert \leq \eta_{N,\varepsilon}(c),
\end{split}
\end{equation*}
which gives
\begin{equation*}
\begin{split}
&\biggl\vert 
\frac12 \int_{{\mathbb T}^d} \Bigl[ 
\frac{\delta}{\delta m} W^{N,\varepsilon,\rho}(t,m)(y) \Bigr] \Delta m(y) dy
\\
&\hspace{15pt} + 
 \int_{{{\mathbb R}^{2 \vert F_N^+\vert}}} 
\biggl(  \sum_{k \in F_N} 2 \pi^2 \vert k \vert^2 
\partial_{\widehat{m}^k}  W \bigl(t,m^{N,\varepsilon}(r) * f_N\bigr) 
\widehat{m^{N,\varepsilon}(0)}^{k} \biggr)
\prod_{j \in F_N^+} \rho\bigl(  \widehat{r}^j \bigr)
\bigotimes_{j \in F_N^+} 
d \widehat{r}^j
\biggr\vert \leq \eta_{N,\varepsilon}(c).
\end{split}
\end{equation*}
Recall now that, from our choice of $\delta_{N,\varepsilon}$
in 
Definition 
\ref{def:admiss:threshold:smoothing},
\begin{equation*}
\sum_{k \in \vert F_N\vert}
\vert k \vert^2 \bigl\vert \widehat{m^{N,\varepsilon}(0)}^{k}  - 
\widehat{m^{N,\varepsilon}(r)}^{k} \bigr\vert \leq \varepsilon,
\end{equation*} 
when 
 $(\widehat r^k)_{k \in F_N^+}$ is in the support of $\bigotimes_{k \in F_N^+} \rho$. 
 We deduce that 
 \begin{equation*}
\begin{split}
&\biggl\vert 
\frac12 \int_{{\mathbb T}^d} \textrm{\rm Trace} \Bigl[ 
\partial_y \partial_\mu W^{N,\varepsilon,\rho}(t,m)(y) \Bigr]  m(y) dy
 + 
T_2^{N,\varepsilon,\rho}(t,m)
\biggr\vert
\\
&= \biggl\vert 
\frac12 \int_{{\mathbb T}^d} \Bigl[ 
\frac{\delta}{\delta m} W^{N,\varepsilon,\rho}(t,m)(y) \Bigr] \Delta m(y) dy
 + 
T_2^{N,\varepsilon,\rho}(t,m)
\biggr\vert \leq \eta_{N,\varepsilon}(c).
\end{split}
\end{equation*}
Together 
with 
\eqref{eq:prop:4:9:T1Nepsilonrho},
\eqref{eq:prop:4:9:fifth:step:2}
and
 \eqref{eq:prop:4:9:fifth:step:1}, this completes the proof. 
\end{proof}

\subsection{Uniqueness of Lipschitz, semi-concave admissible generalized solutions}

Here is now the main statement of this section: 
\begin{thm}
\label{thm:uniqueness:HJB}
There is a unique 
generalized solution to the HJB equation \eqref{eq:HJB} that is 
$d_{-2}$-Lipschitz and semi-concave in space uniformly in time.
\end{thm}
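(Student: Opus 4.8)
The plan is to prove that any generalized solution $W$ that is $d_{-2}$-Lipschitz and semi-concave in space, uniformly in time, coincides with the value function $V$; combined with Theorem \ref{thm:value:is:gen:HJB}, which provides such a solution, this yields both existence and uniqueness. The strategy is a Kružkov-type verification argument carried out not on $W$ itself but on its finite-dimensional mollifications $W^{N,\varepsilon,\rho}$ from Definition \ref{def:mollification}: thanks to the $d_{-2}$-Lipschitz and semi-concavity hypotheses, Proposition \ref{prop:generalized:solution} guarantees that $W^{N,\varepsilon,\rho}$ satisfies the HJB equation \emph{everywhere} up to errors $\eta_{N,\varepsilon}+R^{N,\varepsilon,\rho}$, the first tending to $0$ as $(N,\varepsilon)\to(\infty,0)$ and the second being only $L^1$-small in the Fourier variables as $\rho\to\delta_0$. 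The whole proof then consists in transferring this ``almost-classical'' equation to a comparison with the cost functional $\mathcal J_{\textrm{\rm det}}$ of the MFCP along controlled trajectories.

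First I would prove the upper bound $W\le V$. Fix an initial time $t_0$, a measure $m_0$ with a smooth density bounded above and below, and a bounded feedback $\alpha$ with $\sup_t\|\nabla\alpha_t\|_\infty\le c$, and let $(m_t)_{t_0\le t\le T}$ solve \eqref{eq:control:FKP} from $m_0$. Differentiating $t\mapsto W^{N,\varepsilon,\rho}(t,m_t)$ (a smooth function of the finitely many relevant Fourier coordinates), plugging in the near-HJB identity of Proposition \ref{prop:generalized:solution}, using $H(x,p)+p\cdot\alpha\ge -L(x,\alpha)$ and $\|m_t*f_N-m_t\|_\infty\to0$ (Lemma \ref{lem:weak:convergence}, since the $m_t$ stay in a compact subset of $\mathcal C^1$), one gets $\frac{d}{dt}W^{N,\varepsilon,\rho}(t,m_t)\ge -\int_{\T}L(x,\alpha_t(x))\,dm_t(x)-F^{N,\varepsilon,\rho}(m_t)-\eta_{N,\varepsilon}(c)-R^{N,\varepsilon,\rho}(t,(\widehat{m}_t^k))$. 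Integrating on $[t_0,T]$ with the terminal condition $W(T,\cdot)=G$ and $W^{N,\varepsilon,\rho}\to W$ gives $W^{N,\varepsilon,\rho}(t_0,m_0)\le \mathcal J_{\textrm{\rm det}}(t_0,m_0,\alpha)+o(1)+\int_{t_0}^T R^{N,\varepsilon,\rho}(t,(\widehat{m}_t^k))\,dt$. To absorb the last term, which is uncontrolled along a single trajectory, I would average the inequality over $m_0$ with respect to an absolutely continuous probability $\nu$ with bounded density; since $\alpha$ has bounded gradient, the flow $m_0\mapsto m_t$ has a controlled Jacobian and pushes $\nu$ forward to measures with uniformly bounded density, so that the $\rho$-integral of the remainder vanishes as $\rho\to\delta_0$ for $N,\varepsilon$ fixed. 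Hence $W(t_0,m_0)\le\mathcal J_{\textrm{\rm det}}(t_0,m_0,\alpha)$ for $\nu$-a.e.\ $m_0$, and then for all $m_0$ by $d_{W_1}$-continuity of both sides; infimizing over $\alpha$ and invoking Proposition \ref{prop:Briani} gives $W\le V$.

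Next I would prove the lower bound $W\ge V$ by running the \emph{optimal} feedback of the mollified function, $\alpha^{N,\varepsilon,\rho}_t(x):=-\partial_pH\bigl(x,\partial_\mu W^{N,\varepsilon,\rho}(t,\cdot)(x)\bigr)$, whose gradient is bounded uniformly in $(N,\varepsilon,\rho)$ by Proposition \ref{prop:4:7}. Along the associated trajectory, Young's inequality becomes an equality via \eqref{eq:formula:L:H} (modulo the $\|m_t*f_N-m_t\|_\infty$ discrepancy), so the same computation yields $W^{N,\varepsilon,\rho}(t_0,m_0)=\mathcal J_{\textrm{\rm det}}(t_0,m_0,\alpha^{N,\varepsilon,\rho})+o(1)+(\text{remainder})\ge V(t_0,m_0)+o(1)+(\text{remainder})$, the last inequality because $V=\inf\mathcal J_{\textrm{\rm det}}$ by \eqref{eq:MKV:V}. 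Absorbing the remainder by averaging over $m_0$ exactly as before and letting $(N,\varepsilon,\rho)\to(\infty,0,\delta_0)$ gives $W\ge V$, hence $W=V$.

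The hard part will be the passage to the limit $N\to\infty$ in the lower bound: the characteristic flow generated by $\alpha^{N,\varepsilon,\rho}$ does not keep $\mathcal P_N$ invariant, so one must work on short time intervals, replace the true trajectory by its truncation $\mu_t\in\mathcal P_N$ through Lemma \ref{lem:small:time:expansion}, iterate over a partition of $[t_0,T]$, and control the densities of the flowed measures \emph{uniformly in $N$} so that the step errors remain summable and the remainder $R^{N,\varepsilon,\rho}$ stays absorbable. This uniform non-concentration estimate is precisely where the semi-concavity hypothesis enters in an essential way, through its Fourier reformulation in Corollary \ref{cor:weak:semi-concavity}, which plays the role of the one-sided Lipschitz bound in the classical Kružkov theory and which is also what makes Proposition \ref{prop:generalized:solution} applicable. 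Finally, the almost-everywhere identities are upgraded to everywhere-equalities using the $d_{W_1}$-continuity of $W$ and $V$ and the full support of the measure $\mathbb P$ from Theorem \ref{thm:probability:probability}.
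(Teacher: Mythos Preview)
Your overall strategy---proving $W\le V$ by running an arbitrary feedback and $W\ge V$ by running the feedback $-\partial_pH(\cdot,\partial_\mu W^{N,\varepsilon,\rho})$---differs from the paper's, which instead subtracts the mollified equations for two arbitrary solutions $V_1,V_2$ and propagates $V_1^{N,\varepsilon,\rho}-V_2^{N,\varepsilon,\rho}$ along a single characteristic system built from a convex combination of the two gradients. Either route could in principle succeed, and both reduce to the same hard step: absorbing the $L^1$-only remainder $R^{N,\varepsilon,\rho}\bigl(t,(\widehat m_t^k)_{k\in F_N^+}\bigr)$ by averaging over the initial condition and showing that the flow pushes an absolutely continuous law forward to one of bounded density on $\mathbb R^{2|F_N^+|}$.

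The genuine gap is in your treatment of that flow. You run the \emph{standard} Fokker--Planck equation \eqref{eq:control:FKP} and note, correctly, that it does not preserve $\mathcal P_N$: higher Fourier modes are excited and feed back into the low ones, so $(\widehat m_0^k)_{k\in F_N^+}\mapsto(\widehat m_t^k)_{k\in F_N^+}$ is \emph{not} the flow of a closed ODE and its Jacobian cannot be obtained by any trace formula. Your proposed fix---truncate via Lemma \ref{lem:small:time:expansion} on short intervals and iterate---does not close: the step size $\epsilon_N$ there depends on $N$, the number of steps blows up, and no mechanism is given to control either the accumulated truncation error or the density bound through the iteration. (Incidentally, the density bound need only be independent of $\rho$, not of $N$ as you state: the limits go $\rho\to\delta_0$ first with $N,\varepsilon$ fixed, then $(N,\varepsilon)\to(\infty,0)$.) Your upper-bound claim that ``$\alpha$ has bounded gradient, so the flow has controlled Jacobian'' conflates the $x$-gradient of $\alpha$ with differentiation in $\widehat m_0^k$; these are unrelated. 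The paper's resolution is a device you did not anticipate: it replaces the true characteristics by the \emph{modified} equation \eqref{eq:uniqueness:MKV},
\[
\partial_t m_t - \mathrm{div}\bigl(D\mathcal H^{N,\varepsilon,\rho}(t,m_t)\,(m_t*f_N)\bigr) - \tfrac12\Delta m_t = 0,
\]
with $m_t*f_N$ in place of $m_t$ inside the divergence, chosen precisely so that the chain rule along it reproduces the transport part of Proposition \ref{prop:generalized:solution} (whose Hamiltonian term is already integrated against $m*f_N$). Since both $D\mathcal H^{N,\varepsilon,\rho}(t,\cdot)$ and $m_t*f_N$ depend on $m_t$ only through $(\widehat m_t^k)_{k\in F_N^+}$, these modes now satisfy a genuine \emph{closed} finite-dimensional ODE; Lemma \ref{lem:flow} then computes the Jacobian of its flow via the Liouville formula and bounds the trace from below, the nonlinear contribution being controlled exactly by semi-concavity through Corollary \ref{cor:weak:semi-concavity} together with $\partial_{pp}^2H\ge0$, independently of $\rho,\varepsilon$. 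This closure---turning the infinite-dimensional flow into an honest finite-dimensional ODE on the Fourier side---is the missing idea; once you have it, your verification strategy would go through as well as the paper's.
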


\begin{proof}
\textit{Main lines.}
We use the notation 
from Definition 
\ref{defn:HJB:gen}, namely 
$A_N(c)=[0,T] \times B_N(c)$ with $B_N(c)= {\mathcal P}_N \cap B(c)$ and 
$B(c):= \{ m :  \sup_{x \in \bT^d} \vert \nabla m(x)  \vert \leq c, \ \inf_{x \in \bT^d}  m(x)   \geq 1/c \}$, for $c \geq 1$. 

Given two solutions $V_1$ and $V_2$ of \eqref{eq:HJB}
that are $d_{-2}$-Lipschitz and semi-concave in space, 
uniformly in time, 
we let 
\begin{equation}
\label{eq:DHlambda}
\begin{split}
&D {\mathcal H}_{\lambda}^{N,\varepsilon,\rho}(t,m)(x) :=
\partial_p H 
\Bigl( x, \lambda 
 \partial_\mu V_1^{N,\varepsilon,\rho}(t,m)(x) 
 + (1- \lambda) 
 \partial_\mu V_2^{N,\varepsilon,\rho}(t,m)(x) 
 \Bigr),
\\
&D {\mathcal H}^{N,\varepsilon,\rho}(t,m)(x) :=
\int_0^1 
D {\mathcal H}_{\lambda}^{N,\varepsilon,\rho}(t,m)(x)
d \lambda. 
\end{split}
\end{equation}
For any $c \geq 1$, 
there exist two families $(\eta_{N,\varepsilon})_{N,\varepsilon}$ and 
$(R^{N,\varepsilon,\rho})_{N,\varepsilon,\rho}$ as in 
Proposition 
\ref{prop:generalized:solution} such that, for almost every $t \in [0,T]$, for any 
$m \in B(c)$,  
\begin{align}
&\biggl\vert \partial_t \bigl( V_1^{N,\varepsilon,\rho} - V_2^{N,\varepsilon,\rho} \bigr)(t,m) 
+ \frac12 \int_{{\mathbb T}^d} 
\textrm{\rm Trace} \Bigl[ \partial_x \partial_\mu \bigl( V_1^{N,\varepsilon,\rho} - V_2^{N,\varepsilon,\rho} \bigr)(t,m)(x) 
\Bigr] dm(x) \nonumber
\\
&\hspace{15pt} -  \int_{{\mathbb T}^d}
D {\mathcal H}^{N,\varepsilon,\rho}(t,m)(x)
  \cdot \partial_{\mu} 
\bigl( V_1^{N,\varepsilon,\rho} - V_2^{N,\varepsilon,\rho} \big)(t,m)(x) d \bigl( m*f_N\bigr)(x)
 \biggr\vert 
\label{eq:uniqueness:proof:difference:V1:V2}
\\
&\leq \eta_{N,\varepsilon} + 
R^{N,\varepsilon,\rho}
\Bigl( t, \bigl( \widehat{m}^k\bigr)_{k \in F_N^+} \Bigr). 
\nonumber
\end{align}
The very idea of the proof is to consider the characteristics associated with the differential operator appearing in the left-hand side. 
Strictly speaking, this should be a (non-linear) Fokker-Planck equation driven by the field 
$(t,m,x) \mapsto  D {\mathcal H}^{N,\varepsilon,\rho}(t,m)(x)$. 
However, this is not what
we use next. 
Because of our definition of a generalized solution, which is based on a truncation 
of the Fourier expansion of the measure argument $m$, we switch to `approximate characteristics', given by the Fokker-Planck equation:
\begin{equation}
\label{eq:uniqueness:MKV}
\partial_t m_t (x)- \textrm{\rm div} \Bigl(D {\mathcal H}^{N,\varepsilon,\rho}(t,m_t)(x) \bigl(m_t*f_N\bigr)(x)
\Bigr) - \frac12 \Delta m_t(x) =0,
\end{equation}
with an initial condition $m_0$ such that $\widehat{m}_0^k =0$ for $\vert k \vert \geq N$, for some integer $N$,
and $m_0\geq 1/c$ for some $c\geq 1$.  
Importantly, the Fokker-Planck equation has a solution that is 
a probability measure 
for $N$
large enough 
(whilst the total mass is obviously preserved by the dynamics, the sign may not be). 
Even more, 
{we can find a constant $c' \geq 1$ (depending on $c$) such that, for $N$ large enough, 
the solution 
$(m_t)_{0 \leq t \leq T}$
is lower bounded by $1/c'$ and its gradient (in space) is bounded by $c'$, see 
Theorem \ref{thm:FPK:mollified:proof}.}
The thrust of 
\eqref{eq:uniqueness:MKV}
 is that the 
equation 
satisfied by the Fourier coefficients $(\widehat{m}_t^k)_{k \in F_N^+}$
is closed (meaning that it does not depend on the higher Fourier modes  
$(\widehat{m}_t^k)_{k \not \in F_N^+}$): intuitively, 
equation \eqref{eq:uniqueness:MKV}
is `almost' a finite dimensional dimensional ordinary differential equation. 


Once 
\eqref{eq:uniqueness:MKV}
has been solved, we want to plug 
$(m_t )_{0 \leq t \leq T}$ into 
\eqref{eq:uniqueness:proof:difference:V1:V2}
and then expand
$((V_1^{N,\varepsilon,\rho}-V_2^{N,\varepsilon,\rho})(t,m_t))_{0 \leq t \leq T}$. 
The very main point of the proof is then to show that, whenever 
$N$ and $\varepsilon$ are fixed, 
the remainder
\begin{equation}
\label{eq:uniqueness:proof:difference:V1:V2:2}
\Bigl( R^{N,\varepsilon,\rho}\bigl(t,( \widehat m_t^j)_{j \in F_N^+}\bigr) \Bigr)_{0 \leq t \leq T}
\end{equation}
tends to $0$ when $\rho$ tends to the Dirac mass $\delta_0$ (compare if needed with the statement of 
Proposition 
\ref{prop:generalized:solution}). 
This is however a challenging fact because 
$(m_t)_{0 \leq t \leq T}$
itself 
depends on $\rho$
(although the notation does not make it clear). 
And, even more, the reader must recall that, although 
$R^{N,\varepsilon,\rho}$ is known to tend to $0$ as $\rho$ tends to $\delta_0$ (when 
$N$ and $\varepsilon$ are fixed), the convergence just holds true in $L^1$ (once again, 
we refer to the statement of  
Proposition 
\ref{prop:generalized:solution}). 
Some work is thus needed to adapt the result of 
Proposition 
\ref{prop:generalized:solution}
to 
\eqref{eq:uniqueness:proof:difference:V1:V2:2}. 

Roughly speaking, the idea is to prove that the flow generated by the Fokker-Planck equation 
\eqref{eq:uniqueness:MKV}
cannot accumulate mass: If, instead of $m_0$,
 the initial condition
of 
\eqref{eq:uniqueness:MKV} 
is taken as a small random perturbation of $m_0$ in $B_N(c)$,
with the perturbation having a bounded density with respect to the Lebesgue measure
on ${\mathbb R}^{2 \vert F_{N}^+ \vert}$, then the law of 
$(\widehat{m}_t^k )_{k \in F_N^+}$
with respect to the Lebesgue measure on ${\mathbb R}^{2 \vert F_{N}^+\vert}$
also has a bounded density, the bound depending on $N$ and 
$\varepsilon$, but not on $t$ and $\rho$.
This claim is 
stated in the form of Lemma 
\ref{lem:flow} below, which we use in the rest of this proof and which we prove separately. 
\vskip 5pt

\textit{Application.}
Following our plan, we take an initial condition 
$m_0 \in B_N(c)$. 
For 
$(\widehat{r}^k)_{k \in F_N^+} \in {\mathbb R}^{2 \vert F_N^+ \vert}$, we let
$m_0(r)$ be the element of ${\mathcal P}_N$ defined by:
\begin{equation*}
\widehat{m_0(r)}^k := \left\{
\begin{array}{ll}
\widehat{m}_0^k + \widehat{r}^k, \quad &k \in F_N \setminus \{0\},
\\
0, \quad &k \not \in F_N,
\end{array} 
\right.
\end{equation*} 
with the usual convention that $\widehat{r}^{-k} = \overline{\widehat{r}^k}$. 
If we choose $(\widehat{r}^k)_{k \in F_N \setminus \{0\}}$
such that 
\begin{equation}
\label{eq:proof:uniqueness:condition:rk}
\sum_{k \in F_N \setminus \{0\}} \vert k \vert \vert \widehat{r}^k \vert < \frac1{2c}, 
\end{equation}
then $m_0(r) \in B_N(2c)$ and we can still solve the Fokker-Planck equation 
\eqref{eq:uniqueness:MKV}  
with $m_0(r)$ as initial condition 
provided that $N\geq N_c$ for some threshold $N_c$ only depending on $c$, on the regularity properties of $H$ in the variables $x$ and $p$ and on the $d_{-2}$-Lipschitz constant of $V_1$ and $V_2$, see Theorem 
\ref{thm:FPK:mollified:proof}
in the appendix. We call the solution 
$(m_t(r))_{0 \leq t \leq T}$: 
Theorem 
\ref{thm:FPK:mollified:proof}
asserts that $(m_t(r))_{0 \leq t \leq T}$ forms a flow of probability measures. 
Even more, 
Theorem 
\ref{thm:FPK:mollified:proof}
also says that
there exists $c' \geq 1$, independent of $N$, $\varepsilon$ and $\rho$ (but depending on $c$), such that 
 each 
 $m_t(r)$ belongs to $B(c')$. 

We now expand $(
( V_1^{N,\varepsilon,\rho} - V_2^{N,\varepsilon,\rho} )
(t,m_t(r)))_{0 \leq t \leq T}$
by means of the chain rule for 
real-valued functions defined on 
$[0,T] \times {\mathcal P}({\mathbb T}^d)$. 
Although this chain rule is well documented (see for instance \cite[Chapter 5]{CarmonaDelarue_book_I}), it here requires 
a modicum of care since the time derivative
of 
$(V_1^{N,\varepsilon,\rho}-V_2^{N,\varepsilon,\rho})$
 is just known to exist for almost every $t$, for all $m$, see
 Corollary 
\ref{cor:mollif:time-space}. 
The proof is as follows. 
If $(V_1^{N,\varepsilon,\rho} - V_2^{N,\varepsilon,\rho})$ were smooth, then
\cite[Chapter 5]{CarmonaDelarue_book_I}
would yield
\begin{equation}
\label{eq:ito:chain:rule:V1-V2}
\begin{split}
&\bigl[ V_1^{N,\varepsilon,\rho}-V_2^{N,\varepsilon,\rho} \bigr] \bigl(T,m_T(r)\bigr)
=
\bigl[ V_1^{N,\varepsilon,\rho}-V_2^{N,\varepsilon,\rho} \bigr] \bigl(0,m_0(r)\bigr)
\\
&\hspace{15pt} + \int_0^T  \biggl\{ \partial_t 
\bigl[ V_1^{N,\varepsilon,\rho}-V_2^{N,\varepsilon,\rho} \bigr] 
\bigl(t,m_t(r)   \bigr)
\\
&\hspace{15pt}- \int_{{\mathbb T}^d}  D {\mathcal H}^{N,\varepsilon,\rho}\bigl(t,m_t(r)\bigr)(x) \cdot \partial_{\mu} 
\bigl[ V_1^{N,\varepsilon,\rho} - V_2^{N,\varepsilon,\rho} \bigr] \bigl(t,m_t(r)   \bigr)(x) d\bigl[m_t(r) \bigr](x)
\\
&\hspace{15pt} + \frac12 \int_{{\mathbb T}^d} 
\textrm{\rm Trace} \Bigl[ \partial_x \partial_\mu \bigl[ V_1^{N,\varepsilon,\rho} - V_2^{N,\varepsilon,\rho} \bigr]\bigl(t,m_t(r)  \bigr)(x) 
\Bigr] d[m_t(r) ](x) \biggr\} dt.
\end{split}
\end{equation}
Under our assumption, we can apply 
the above chain rule
 to
 a mollified version of $(V_1^{N,\varepsilon,\rho}-
 V_2^{N,\varepsilon,\rho})$, 
 for instance
  $\int_{\mathbb R} [V_1^{N,\varepsilon,\rho} - V_2^{N,\varepsilon,\rho}](t-s,m)  \varsigma(s) ds$, 
  for a smooth compactly supported density 
  $\varsigma$ on ${\mathbb R}$. 
The most difficult term to handle in the resulting form of 
\eqref{eq:ito:chain:rule:V1-V2} is 
\begin{equation*}
 \int_0^T \int_{\mathbb R}  \partial_t 
\bigl[ V_1^{N,\varepsilon,\rho}-V_2^{N,\varepsilon,\rho} \bigr] 
\bigl(t-s,m_t(r)   \bigr) \varsigma(s) ds dt.
\end{equation*}
By
 Corollary 
\ref{cor:mollif:time-space}, for almost every 
$t \in [0,T]$,  the function 
$(t,m) \mapsto 
\partial_t 
[ V_1^{N,\varepsilon,\rho}-V_2^{N,\varepsilon,\rho} ] 
(t,m)$ is continuous in $m$, with a modulus of continuity independent of $t$.  Therefore we can easily approximate the argument 
$m_t(r)$ appearing in the integrand
by 
$\sum_{i=1}^N {\mathbf 1}_{[t_{i-1},t_i)}(t) m_{t_i}(r)$
for a finite subdivision 
$t_0=0<t_1<\cdots <t_n=T$
of $[0,T]$ with an arbitrarily small step. Using this approximation, it is pretty easy to prove that 
\begin{equation*}
 \int_0^T \int_{\mathbb R}  \partial_t 
\bigl[ V_1^{N,\varepsilon,\rho}-V_2^{N,\varepsilon,\rho} \bigr] 
\bigl(t-s,m_t(r)   \bigr) \varsigma(s) ds dt
\rightarrow 
 \int_0^T   \partial_t 
\bigl[ V_1^{N,\varepsilon,\rho}-V_2^{N,\varepsilon,\rho} \bigr] 
\bigl(t,m_t(r)   \bigr) \varsigma(s)  dt,
\end{equation*}
as $\varsigma$ tends to $\delta_0$ in the weak sense. 
As a corollary, we deduce that 
\eqref{eq:ito:chain:rule:V1-V2}
holds true under the standing assumption. 
Since $V_1^{N,\varepsilon,\rho}$ and $V_2^{N,\varepsilon,\rho}$ coincide at time 
$T$, we deduce from 
\eqref{eq:uniqueness:proof:difference:V1:V2} that
\begin{equation*}
\begin{split}
&\Bigl\vert \bigl( V_1^{N,\varepsilon,\rho} - V_2^{N,\varepsilon,\rho} \bigr)\bigl(0,m_0(r)\bigr) \Bigr\vert
\leq \eta_{N,\varepsilon} + \int_0^T R^{N,\varepsilon,\rho} \Bigl(t,\bigl(\reallywidehat{\bigl\{ m_t(r) \bigr\}}^k\bigr)_{k \in F_N^+} \Bigr) dt.
\end{split}
\end{equation*}

For $\rho_0$ a kernel similar to $\rho$, with a radius (compare if needed with 
Definition 
\ref{def:admiss:threshold:smoothing}, the choice of the radius being here adapted to the constraint 
\eqref{eq:proof:uniqueness:condition:rk}) less than $1/(2c \sqrt{d N} \vert F_N\vert)$, we obtain
\begin{equation*}
\begin{split}
&\int_{{\mathbb R}^{2\vert F_N^+ \vert}} 
\Bigl\vert \bigl( V_1^{N,\varepsilon,\rho} - V_2^{N,\varepsilon,\rho} \bigr)\bigl(0,m_0(r)\bigr) \Bigr\vert
\prod_{j \in F_N^+} \rho_0 \bigl(  \widehat{r}^j \bigr)
\bigotimes_{j \in F_N^+} 
  d \widehat{r}^j 
\\
&\hspace{15pt} \leq \eta_{N,\varepsilon} + \int_0^T \biggl[\int_{{\mathbb R}^{2\vert F_N^+\vert}}  R^{N,\varepsilon,\rho}\Bigl(t,\bigl(\reallywidehat{\bigl\{ m_t(r) \bigr\}}^k\bigr)_{k \in F_N^+} \Bigr)
\prod_{j \in F_N^+} \rho_0\bigl(  \widehat{r}^j \bigr)
\bigotimes_{j \in F_N^+} 
  d \widehat{r}^j 
\biggr]
 dt.
 \end{split}
 \end{equation*}
Here is now the 
place where we invoke Lemma 
\ref{lem:flow} (whose statement is given next): For any $t \in [0,T]$, we 
perform the change of variable  
$y=m_t(r)$, i.e. 
$\widehat{y}^k = \widehat{m_t(r)}^k$, for 
$k \in F_N^+$. As a corollary of 
Lemma \ref{lem:flow}, we deduce that there exists a constant $C_{N,\rho_0}$, depending on $N$ and 
$\rho_0$, but independent of $\varepsilon$ and $\rho$,
 such that 
\begin{equation}
\label{eq:proof:uniqueness:HJB:for:master:0}
\begin{split}
&\int_{{\mathbb R}^{2\vert F_N^+ \vert}} 
\Bigl\vert \bigl( V_1^{N,\varepsilon,\rho} - V_2^{N,\varepsilon,\rho} \bigr)\bigl(0,m_0(r)\bigr) \Bigr\vert
\prod_{j \in F_N^+} \rho_0 \bigl(  \widehat{r}^j \bigr)
\bigotimes_{j \in F_N^+} 
  d \widehat{r}^j 
\\
&\hspace{15pt} \leq \eta_{N,\varepsilon} + C_{N,\rho_0} 
\int_0^T \biggl[\int_{{\mathbb R}^{2\vert F_N^+\vert}}  R^{N,\varepsilon,\rho}\bigl(t,(\widehat y^k)_{k \in F_N^+} \bigr)
\bigotimes_{j \in F_N^+} 
  d \widehat{y}^j  
\biggr]
 dt.
 \end{split}
 \end{equation}

From the standing assumption, 
we know 
that 
$V_1$ 
and 
$V_2$
are Lipschitz continuous with respect to $d_{-2}$ and thus with respect to $d_{W_1}$. 
Therefore, 
by a straightforward adaptation of the proof of Lemma \ref{lem:convergence:mollif}, 
$V_1^{N,\varepsilon,\rho}(0,\cdot)$ (resp. $V_2^{N,\varepsilon,\rho}(0,\cdot)$)
converges uniformly to 
$V_1(0,\cdot)$ (resp. $V_2(0,\cdot)$) as 
$(N,\varepsilon)$ tends to $(\infty,0)$. 
Hence, 
\begin{equation*}
\begin{split}
&\lim_{(N,\varepsilon) \rightarrow (\infty,0)}
\biggl\vert 
\int_{{\mathbb R}^{2 \vert F_N^+\vert}} 
\Bigl\vert \bigl( V_1^{N,\varepsilon,\rho} - V_2^{N,\varepsilon,\rho} \bigr)\bigl(0,m_0(r)\bigr) \Bigr\vert
\prod_{j \in F_N^+} 
\rho_0(\widehat{r}^j) \otimes_{j \in F_N^+} d \widehat{r}^j
\\
&-
\int_{{\mathbb R}^{2 \vert F_N^+\vert}} \vert (V_1-V_2)(0,m_0(r))\vert \prod_{j \in F_N^+} 
\rho_0(\widehat{r}^j) \otimes_{j \in F_N^+} d \widehat{r}^j
\biggr\vert 
=0.
\end{split}
\end{equation*}
Therefore, 
\eqref{eq:proof:uniqueness:HJB:for:master:0}
can be rewritten in the following form: 
\begin{equation*}
\begin{split}
&\int_{{\mathbb R}^{2\vert F_N^+ \vert}} 
\Bigl\vert \bigl( V_1  - V_2  \bigr)\bigl(0,m_0(r)\bigr) \Bigr\vert
\prod_{j \in F_N^+} \rho_0 \bigl(  \widehat{r}^j \bigr)
\bigotimes_{j \in F_N^+} 
  d \widehat{r}^j 
\\
&\hspace{15pt} \leq \eta_{N,\varepsilon} + C_{N,\rho_0} 
\int_0^T \biggl[\int_{{\mathbb R}^{2\vert F_N^+\vert}}  R^{N,\varepsilon,\rho}\bigl(t,(\widehat y^k)_{k \in F_N^+} \bigr)
\bigotimes_{j \in F_N^+} 
  d \widehat{y}^j  
\biggr]
 dt,
 \end{split}
 \end{equation*}
for a possibly new value of the family $(\eta_{N,\varepsilon})_{N,\varepsilon}$, which still satisfies 
$\lim_{(N,\varepsilon) \rightarrow (\infty,0)} \eta_{N,\varepsilon} = 0$, and then, 
using the form of $\rho_0$ and 
following the proof of Lemma 
\ref{lem:convergence:mollif}, 
\begin{equation}
\label{eq:proof:uniqueness:HJB:for:master}
\begin{split}
&\Bigl\vert \bigl( V_1  - V_2  \bigr)\bigl(0,m_0\bigr) \Bigr\vert
 \leq \eta_{N,\varepsilon} + C_{N,\rho_0} 
\int_0^T \biggl[\int_{{\mathbb R}^{2\vert F_N^+\vert}}  R^{N,\varepsilon,\rho}\bigl(t,(\widehat y^k)_{k \in F_N^+} \bigr)
\bigotimes_{j \in F_N^+} 
  d \widehat{y}^j  
\biggr]
 dt.
 \end{split}
 \end{equation}
We recall that 
\eqref{eq:proof:uniqueness:HJB:for:master}
holds true for 
$m_0 \in B_N(c)$. Next, we can take 
$m_0 \in B_{N_0}(c)$, for some fixed $N_0$ and then choose $N \geq N_0$, in which case 
we indeed have $m_0 \in B_N(c)$.  
Now, 
for a given threshold $\epsilon >0$ (the reader should distinguish from $\varepsilon$), we can choose $N$ and $\varepsilon$ such that 
$\eta_{N,\varepsilon} < \epsilon/2$ and then, by the second identity in 
Proposition 
\ref{prop:generalized:solution}, 
$\delta$ (encoding the support of $\rho$) small enough such that the second term in the right-hand side is also less than $\epsilon/2$. 
Therefore, 
the left-hand side of 
\eqref{eq:proof:uniqueness:HJB:for:master}
can be made smaller than $\epsilon$. 
Since $\epsilon$ is arbitrary,
we easily deduce that 
$V_1(0,m_0)$ and $V_2(0,m_0)$ must coincide. 
This holds for any $m_0 \in B_{N_0}(c)$. By an obvious density argument, 
we obtain uniqueness. 
\end{proof}

\begin{lem}
\label{lem:flow}
For a real $c \geq 1$, recall the notation $B_N(c)= {\mathcal P}_N \cap \{ m :  \sup_{x \in \bT^d} \vert \nabla m(x)  \vert \leq c, \ \inf_{x \in \bT^d}  m(x)   \geq 1/c \}$
and let
 $C_N(c) := \{ 
(\widehat z^k)_{k \in F_N^+} \in {\mathbb R}^{2 \vert F_N^+ \vert} : 
\sum_{k \in F_N \setminus \{0\}} \vert k \vert \vert \widehat{z}^k \vert < 1/(2c) \}$, 
with the usual convention that $\widehat{z}^{-k} = \overline{\widehat{z}^k}$.  

For an element $m_0 \in B_N(c)$ 
and for 
$r=(\widehat{r}^k)_{k \in F_N^+} \in C_N(c)$, define $m_0(r)$ the element of $B_N(2c)$
by
\begin{equation*}
\widehat{m_0(r)}^k := \left\{
\begin{array}{ll}
\widehat{m}_0^k + \widehat{r}^k, \quad &k \in F_N \setminus \{0\},
\\
0, \quad &k \not \in F_N,
\end{array} 
\right.
\end{equation*} 
and call $(m_t(r))_{0 \leq t \leq T}$ the solution of the non-linear Fokker-Planck equation 
\eqref{eq:uniqueness:MKV}
with $m_0(r)$ as initial condition, for the same parameters $N$, $\varepsilon$ and $\rho$ as therein.

Then, we can find a constant $N_c$, only depending on $c$, on the strict convexity 
property of $H$ in the variable $p$ and on the $d_{-2}$-Lipschitz and semi-concave properties of $V_1$
and
$V_2$
in 
\eqref{eq:uniqueness:MKV},
and a constant $K$, independent of $\varepsilon$ and $\rho$ such that, if $N \geq N_c$, 
for any 
$t \in [0,T]$, 
the law of the mapping
\begin{equation*}
r=\bigl( \widehat{r}^k \bigr)_{k \in F_N^+} \in C_N(c)
\mapsto 
\Bigl( \widehat{m_t(r)}^k \Bigr)_{k \in F_N^+} \in {\mathbb R}^{2 \vert F_N^+ \vert}
\end{equation*}
has a density, bounded by $K$. 
\end{lem}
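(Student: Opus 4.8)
\emph{Reduction to a finite-dimensional flow.} The plan is to observe first that, for an initial datum whose Fourier expansion is supported in $F_N$, the low modes of the solution to \eqref{eq:uniqueness:MKV} evolve autonomously. Indeed, both the Laplacian and the convolution with $f_N$ preserve the property of having a Fourier expansion supported in $F_N$, and, by Proposition~\ref{prop:mollification} (see \eqref{eq:Fourier:derivative:convolution:regularisation:0}), $D\mathcal{H}^{N,\varepsilon,\rho}(t,m)$ depends on $m$ only through the coefficients $(\widehat{m}^k)_{k\in F_N}$; writing the Fourier coefficient at $k\in F_N$ of the equation, the right-hand side then involves only $(\widehat{m_t}^j)_{j\in F_N}$. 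Hence $X_t(r):=(\widehat{m_t(r)}^k)_{k\in F_N^+}\in\mathbb{R}^{2\vert F_N^+\vert}$ solves a closed system $\dot X_t=b_N^{\varepsilon,\rho}(t,X_t)$ on an open subset of $\mathbb{R}^{2\vert F_N^+\vert}$. For $N$, $\varepsilon$, $\rho$ fixed the field $b_N^{\varepsilon,\rho}$ is continuous in $t$ and $C^1$ in $X$ (since $V_1^{N,\varepsilon,\rho}$, $V_2^{N,\varepsilon,\rho}$ are smooth in the Fourier coefficients and $H\in C^2$), so the flow map $\Phi^{N,\varepsilon,\rho}_{0,t}\colon X_0\mapsto X_t$ is, for each $t$, a $C^1$-diffeomorphism onto its image; and by Theorem~\ref{thm:FPK:mollified:proof}, if $N\ge N_c$ and $r\in C_N(c)$ the trajectory $X_\cdot(r)$ is defined on all of $[0,T]$, with $m_t(r)$ staying in $B(c')$ for a constant $c'$ independent of $\varepsilon$, $\rho$.

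\emph{Change of variables and Liouville's formula.} The law of $r\mapsto X_t(r)$ is the pushforward of the Lebesgue measure on $C_N(c)$ by $\Phi^{N,\varepsilon,\rho}_{0,t}$, so by the change-of-variables formula it has a density bounded by $\sup\vert\det D\Phi^{N,\varepsilon,\rho}_{0,t}\vert^{-1}$, while Liouville's formula gives
\[
\det D\Phi^{N,\varepsilon,\rho}_{0,t}(X_0)=\exp\Bigl(\int_0^t\mathrm{div}_X\,b_N^{\varepsilon,\rho}(s,X_s)\,ds\Bigr),\qquad X_s=\Phi^{N,\varepsilon,\rho}_{0,s}(X_0).
\]
Thus it will suffice to establish a lower bound $\int_0^t\mathrm{div}_X\,b_N^{\varepsilon,\rho}(s,X_s)\,ds\ge -K'$, with $K'$ depending on $N$, $c$, $T$, the regularity of $H$ and the $d_{-2}$-Lipschitz and semi-concavity constants of $V_1$, $V_2$, but not on $t\in[0,T]$, on $\varepsilon$, on $\rho$, nor on the trajectory.

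\emph{The divergence estimate.} The Laplacian part of $b_N^{\varepsilon,\rho}$ contributes to $\mathrm{div}_X\,b_N^{\varepsilon,\rho}$ the constant $-4\pi^2\sum_{k\in F_N^+}\vert k\vert^2$, which depends on $N$ only. For the transport part, I would differentiate $-\mathrm{i}2\pi k\cdot\widehat{D\mathcal{H}^{N,\varepsilon,\rho}(t,m)\cdot(m*f_N)}^{\,k}$ with respect to the coefficients, producing two kinds of terms. The terms in which the derivative falls on the coefficients $\widehat{m}^j$ (and not on $V_1^{N,\varepsilon,\rho}$, $V_2^{N,\varepsilon,\rho}$) are bounded uniformly in $\varepsilon$, $\rho$ by the $L^\infty$ and summability estimates of Proposition~\ref{prop:4:7} on $\partial_\mu V_i^{N,\varepsilon,\rho}$ and its spatial derivatives (together with the linear growth of $\partial_pH$), all the $(1-\varepsilon)$ and $\widehat{f}_N^k$ factors being bounded by $1$. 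The remaining terms form a quadratic expression in the second measure-derivatives $\partial^2_{\widehat{m}^k,\widehat{m}^\ell}V_i^{N,\varepsilon,\rho}$ weighted by the products $k\cdot\partial^2_{pp}H(\,\cdot\,)\,\ell$, carrying two factors of $\mathrm{i}$ — one from the divergence in \eqref{eq:uniqueness:MKV}, the other from the spatial gradient turning $\delta V_i^{N,\varepsilon,\rho}/\delta m$ into $\partial_\mu V_i^{N,\varepsilon,\rho}$ — hence an overall minus sign. Expressing $\partial^2_{\widehat{m}^k,\widehat{m}^\ell}V_i^{N,\varepsilon,\rho}$ as an integral of $V_i$ against $\partial^2_{\widehat{r}^k,\widehat{r}^\ell}[\prod_j\rho(\widehat{r}^j)]$ (the change of variables behind Proposition~\ref{prop:mollification}), this quadratic expression is precisely of the form bounded \emph{from above} by Corollary~\ref{cor:weak:semi-concavity}, i.e. by \eqref{eq:corollary:weak:semi-concavity:2}, with $S=\partial^2_{pp}H\ge 0$ (and $\vert S\vert$ bounded by the standing assumption); because of the overall minus sign, that upper bound becomes the required lower bound, \emph{uniform in $\varepsilon$ and $\rho$}, for this piece of $\mathrm{div}_X\,b_N^{\varepsilon,\rho}$. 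Summing the three contributions and integrating over $[0,t]\subseteq[0,T]$ then yields the bound of the previous paragraph, and with it the claim, the constant $K$ in the statement depending on $N$ through $N_c$ and $\sum_{k\in F_N^+}\vert k\vert^2$ but not on $\varepsilon$, $\rho$.

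\emph{Main obstacle.} The heart of the argument — and the delicate point — is this last estimate: one must match the algebraic form of $\mathrm{div}_X\,b_N^{\varepsilon,\rho}$ to the quadratic form appearing in Corollary~\ref{cor:weak:semi-concavity}, which forces a meticulous bookkeeping of all the $(1-\varepsilon)$, $\widehat{f}_N^k$ and $e_k$ factors introduced by the convolution and the mollification, and one must keep every bound uniform in $\rho$ as $\rho\to\delta_0$, so that the only $\rho$-dependent constant (namely $\Vert\prod_j\rho_0\Vert_\infty$ in the application of the lemma) remains factored out exactly as in the statement. A secondary, more routine, point requiring care is the justification of Liouville's formula and of the change of variables on the non-closed region $C_N(c)$, which rests on the a priori confinement $m_t(r)\in B(c')$ for all $t\in[0,T]$ provided by Theorem~\ref{thm:FPK:mollified:proof}.
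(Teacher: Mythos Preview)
Your proposal is correct and follows essentially the same approach as the paper: reduce to a finite-dimensional ODE on the Fourier coefficients, apply Liouville's formula for the Jacobian, split the divergence into the Laplacian contribution, the ``easy'' transport terms (bounded via the $d_{-2}$-Lipschitz property, called $B$-terms in the paper), and the ``hard'' terms coming from differentiating $\partial_\mu V_i^{N,\varepsilon,\rho}$, which are handled by integrating by parts onto the mollifier and invoking Corollary~\ref{cor:weak:semi-concavity} with $S=\partial^2_{pp}H\ge 0$. Your identification of the sign mechanism (two factors of $\mathrm{i}$ converting the semi-concavity upper bound into the required lower bound) and of the uniformity in $\varepsilon,\rho$ matches the paper's argument precisely.
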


\begin{proof}
The strategy of proof is quite clear but the implementation is demanding: 
The whole point is to address the system of Fourier coefficients 
satisfied by the solution of the Fokker-Planck equation 
\eqref{eq:uniqueness:MKV}
and to regard it as (a finite-dimensional) system of ODEs. 
The solutions generates a flow of diffeomorphisms 
and the key step is to provide a lower bound for the Jacobian that is independent of 
$t$, $\rho$ and $\varepsilon$. This is the point where semi-concavity comes in explicitly. 

Throughout the proof, we use the notation 
$V_{2-\lambda}^{N,\varepsilon,\rho}:=
 \lambda V_1^{N,\varepsilon,\rho} + (1-\lambda) V_2^{N,\varepsilon,\rho}$, and similarly without the superscript $N,\varepsilon,\rho$. 
 \vskip 5pt

\textit{First Step.}
For an initial condition $m_0 \in B_N(c)$ and for 
$k \in F_N \setminus \{0\}$, 
we have
\begin{equation}
\label{eq:linear:ode:1}
\begin{split}
\frac{d}{dt} \widehat{m}_t^k = - 2 \pi^2 \vert k \vert^2 \widehat{m}_t^k - 
\i 2  \pi \int_{{\mathbb T}^d} k  \cdot  D {\mathcal H}^{N,\varepsilon,\rho}\bigl(t,m_t\bigr)(x) e_k(x) \bigl( m_t * f_N \bigr) (x) dx.
\end{split}
\end{equation}
From 
Proposition 
\ref{prop:mollification}, we now recall that, for every $t \in [0,T]$, for any 
$x \in {\mathbb T}^d$, any $m \in {\mathcal P}({\mathbb T}^d)$ and any $\lambda \in [0,1]$, 
\begin{equation*}
\begin{split}
&\frac{\delta }{\delta m}
V_{2-\lambda}^{N,\varepsilon,\rho}(t,m)(x)
\\
& = 
(1-\varepsilon) 
\int_{{\mathbb R}^{2 \vert F_N^+\vert}}
\biggl\{
\sum_{k \in F_N  \setminus \{0\}} 
\Bigl[
\partial_{\widehat{m}^k}  V_{2-\lambda} 
 \bigl(t,m^{N,\varepsilon}(r) * f_N\bigr) \widehat{f}_N^k e_k(x) 
\Bigr]
\biggl\}
\prod_{j \in F_N^+} \rho \bigl(  \widehat{r}^j \bigr)
\bigotimes_{j \in F_N^+} 
  d \widehat{r}^j.
\end{split}
\end{equation*} 
Therefore, 
\begin{equation*}
\begin{split}
&\partial_\mu V_{2-\lambda}^{N,\varepsilon,\rho} (t,m)(x)
\\
&= \i 2 \pi 
(1-\varepsilon) 
\int_{{\mathbb R}^{2 \vert F_N^+\vert}}
\biggl\{
\sum_{k \in F_N  \setminus \{0\}} 
k
\Bigl[
\partial_{\widehat{m}^k}  V_{2-\lambda} 
\bigl(t,m^{N,\varepsilon}(r) * f_N\bigr) \widehat{f}_N^k e_{k}(x) 
\Bigr]
\biggr\}
\prod_{j \in F_N^+} \rho \bigl(  \widehat{r}^j \bigr)
\bigotimes_{j \in F_N^+} 
  d \widehat{r}^j .
\end{split}
\end{equation*}
In turn, recalling the notation 
\eqref{eq:DHlambda}, 
the last term in
\eqref{eq:linear:ode:1}
becomes:
\begin{equation*}
\begin{split}
&\i 2  \pi  \int_{{\mathbb T}^d} k \cdot  D {\mathcal H}^{N,\varepsilon,\rho} (t,m)(x) e_k(x) \bigl( m* f_N \bigr)(x) dx
\\
&= \i 2 \pi 
\int_0^1 \int_{{\mathbb T}^d} k \cdot
 \partial_p H \biggl( x, \i 2 \pi 
   (1-\varepsilon) 
   \\
&\hspace{30pt} \times   \int_{{\mathbb R}^{2 \vert F_N^+\vert}}
\sum_{\ell \in F_N\setminus \{0\}}
  \ell
 \Bigl[ \partial_{\widehat{m}^\ell}   V_{2-\lambda} 
\bigl(t,m^{N,\varepsilon}(r) * f_N \bigr)
\widehat{f}^\ell_N
 e_{\ell}(x) \Bigr]
 \prod_{j \in F_N^+} \rho \bigl(  \widehat{r}^j \bigr)
\bigotimes_{j \in F_N^+} 
  d \widehat{r}^j 
 \biggr)
\\
&\hspace{200pt} \times  e_k(x) \bigl( m * f_N \bigr)(x)  dx \, d \lambda.
\end{split}
\end{equation*}
We recall that
%
we can interpret 
$ \partial_{\widehat{m}^\ell} W(t,m^{N,\varepsilon}(r) * f_N )$ as the complex number
(see 
\eqref{eq:gradient:complex:000}):
\begin{equation*}
\begin{split}
\partial_{\widehat{m}^\ell} W\bigl(t,m^{N,\varepsilon}(r) * f_N\bigr)
&= \frac12
\partial_{\Re[\widehat{m}^\ell]} W\bigl(t,m^{N,\varepsilon}(r) * f_N\bigr)
- \frac{\i}2 
\partial_{\Im[\widehat{m}^\ell]} W\bigl(t,m^{N,\varepsilon}(r) * f_N\bigr)
\\
&= \overline{\partial_{\widehat{m}^{-\ell}} W\bigl(t,m^{N,\varepsilon}(r) * f_N\bigr)}.
\end{split}
\end{equation*}
Recalling that $\widehat{f}_N^\ell$ is real, this allows us to rewrite 
\eqref{eq:linear:ode:1} in the form: 
\begin{equation}
\begin{split}
&\frac{d}{dt} \widehat{m}_t^k
= - 2 \pi^2 \vert k \vert^2 \widehat{m}_t^k 
\\
&\hspace{5pt} 
-  \i 2 \pi 
\int_0^1 \int_{{\mathbb T}^d} k \cdot
 \partial_p H \biggl( x, - 4 \pi 
   (1-\varepsilon) 
   \\
&\hspace{30pt} \times   \int_{{\mathbb R}^{2 \vert F_N^+\vert}}
\sum_{\ell \in F_N^+}
  \ell
\Im \Bigl[ \partial_{\widehat{m}^\ell}  V_{2-\lambda}
\bigl(t,m_t^{N,\varepsilon}(r) * f_N \bigr)
 e_{\ell}(x) \Bigr]
 \widehat{f}^\ell_N
 \prod_{j \in F_N^+} \rho \bigl(  \widehat{r}^j \bigr)
\bigotimes_{j \in F_N^+} 
  d \widehat{r}^j 
 \biggr)
\\
&\hspace{200pt} \times  e_k(x) \bigl( m_t * f_N \bigr)(x) \, dx \, d \lambda.
\end{split}
\end{equation}
By integration by parts, this 
can be reformulated as
\begin{align}
&\frac{d}{dt} \widehat{m}_t^k
= - 2 \pi^2 \vert k \vert^2 \widehat{m}_t^k  
\nonumber
\\
&-  \i 2 \pi 
\int_0^1 \int_{{\mathbb T}^d} k \cdot
 \partial_p H \biggl( x,  2 \pi 
 \int_{{\mathbb R}^{2 \vert F_N^+\vert}}
V_{2-\lambda} 
\bigl(t,m_t^{N,\varepsilon}(r) * f_N \bigr)
 \nonumber
     \\
&\hspace{30pt} \times   
\sum_{\ell \in F_N^+}
\biggl\{
  \ell
  \Bigl[
  \Im \bigl[
e_\ell(x)
\bigr] \partial_{\Re[\widehat{r}^\ell]} 
\rho\bigl(\widehat{r}^{\ell}
\bigr)
-
\Re \bigl[
e_\ell(x)
\bigr] \partial_{\Im[\widehat{r}^\ell]} 
\rho\bigl(\widehat{r}^{\ell}
\bigr)
\bigr]
\Bigr] \prod_{j \in F_N^+ : j \not = \ell} \rho \bigl(  \widehat{r}^j \bigr)
\biggr\}
\bigotimes_{j \in F_N^+ } 
  d \widehat{r}^j 
 \biggr)
 \nonumber
\\
&\hspace{200pt} \times  e_k(x) \bigl( m_t * f_N \bigr)(x) \, dx \, d \lambda.
\label{eq:linear:ode:3}
  \\
&= 
 - 2 \pi^2 \vert k \vert^2 \widehat{m}_t^k  
 \nonumber
\\
&\hspace{5pt} 
- 2 \pi  
\int_0^1
\int_{{\mathbb T}^d} k \cdot
 \partial_p H \biggl( x, - 2 \pi 
 \int_{{\mathbb R}^{2 \vert F_N^+\vert}}
 V_{2-\lambda} \bigl(t,m_t^{N,\varepsilon}(r) * f_N \bigr)
 \nonumber
\\
&\hspace{30pt}
\times 
\sum_{\ell \in F_N^+}
\biggl\{ \ell
\Bigl[
\Bigl( 
\Re \bigl[
\i e_\ell(x)
\bigr], 
\Im \bigl[
\i e_\ell(x)
\bigr]
\Bigr)
\cdot
\nabla_{\Re[\widehat{r}^\ell],\Im[\widehat{r}^\ell]}
\Bigl( \prod_{j \in F_N^+ }
  \rho\bigl( \widehat{r}^j \bigr) 
  \Bigr)
 \Bigr]  
 \biggr\}
 \bigotimes_{j \in F_N^+} d \widehat{r}^j   \biggr)   \nonumber
\\
&\hspace{200pt} \times \i  e_k(x) \bigl( m_t * f_N \bigr)(x) \, dx \, d \lambda.
\nonumber
  \end{align}
Notice that we passed the $\i$ in front of the second line to $\i e_k(x)$ inside the integral on the penultimate  line. 
Also, the symbol $\cdot$ on the same penultimate line is used to denote the inner product in ${\mathbb R}^2$. 
%
\vspace{5pt}

\textit{Second Step.}
We now compute the derivative of the flow 
with respect to the real and imaginary parts of the 
Fourier coefficients of the initial condition.
In order to avoid to repeat the computations twice, once for the derivative with respect to the real part and  once for the derivative
with respect to the imaginary part, 
we introduce the generic notation $\fD[z]$ for a complex number $z$, which may stand for 
$\Re[z]$ 
or 
$\Im[z]$.

Now, it makes sense to compute the derivative of the flow 
$\partial_{\fD[\widehat{m}_0^{k'}]} \widehat{m}_t^k$, for 
$k,k' \in F_N^+$ and $\fD=\Re,\Im$.
By representing $m*f_N$ in 
the last line of 
\eqref{eq:linear:ode:3}
in the form 
\begin{equation*}
m * f_N (x) = 1 + 2 \sum_{\ell' \in F_N^+} \Re \Bigl[ \widehat{m}^{\ell'} \widehat{f}^{\ell'}_N e_{-\ell'}(x) \Bigr] , 
\end{equation*}
we get, using 
both 
\eqref{eq:linear:ode:1} 
and 
\eqref{eq:linear:ode:3},
 \begin{align*}
&\frac{d}{dt} \Bigl( \partial_{\fD[\widehat{m}_0^{k'}]} \widehat{m}_t^k \Bigr) 
= - 2 \pi^2 \vert k \vert^2 \partial_{\fD[\widehat{m}_0^{k'}]} \widehat{m}_t^k
\\
&\hspace{2pt} 
+ 4  \pi^2 (1-\varepsilon) \int_0^1 \int_{{\mathbb R}^{2 \vert F_N^+ \vert}}
d \lambda \biggl(
 \bigotimes_{j \in F_N^+} d \widehat{r}^j
 \biggr)
 \\
&\hspace{7pt} \times 
\biggl[
\sum_{\ell,\ell' \in F_N^+}
k \cdot \bigl( \partial^2_{pp} H(\lambda,t) \ell \bigr) \, 
\partial_{\fD[\widehat{m}^{\ell'}]}   V_{2-\lambda}
\bigl(t,m_t^{N,\varepsilon}(r) * f_N \bigr)
\hat{f}^{\ell'}_N
\partial_{\fD[\widehat{m}_0^{k'}]} \widehat{m}_t^{\ell'} 
\\
&\hspace{15pt} \times 
\int_{{\mathbb T}^d}
\Bigl[
\Bigl( 
\Re \bigl[
\i e_\ell(x)
\bigr], 
\Im \bigl[
\i e_\ell(x)
\bigr]
\Bigr)
\cdot
\nabla_{\Re[\widehat{r}^\ell],\Im[\widehat{r}^\ell]}
\Bigl( \prod_{j \in F_N^+ }
  \rho\bigl( \widehat{r}^j \bigr) 
  \Bigr)
 \Bigr] \i e_k(x) \bigl( m_t * f_N \bigr)(x) dx
 \biggr] 
\\
&\hspace{2pt} 
 -4 \pi  
\int_{{\mathbb T}^d} k \cdot
D {\mathcal H}^{N,\varepsilon,\rho}(t,m_t)(x) 
%
%
%
%
\i  e_k(x)  \sum_{\ell' \in F_N^+} \Re \Bigl[ \partial_{\fD[\widehat{m}_0^{k'}]} \widehat{m}_t^{\ell'} \widehat{f}^{\ell'}_N e_{-\ell'}(x) \Bigr]  \, dx,
\nonumber
\end{align*}
with 
\begin{equation*}
\begin{split}
\partial^2_{pp} H(\lambda,t) &=
\partial^2_{pp} H 
\biggl( x, - 2 \pi 
\int_{{\mathbb R}^{2 \vert F_N^+\vert}}
V_{2-\lambda} \bigl(t,m_t^{N,\varepsilon}(r) * f_N \bigr)
 \nonumber
\\
&\hspace{15pt}
\times 
\sum_{\ell \in F_N^+}
\biggl\{ \ell
\Bigl[
\Bigl( 
\Re \bigl[
\i e_\ell(x)
\bigr], 
\Im \bigl[
\i e_\ell(x)
\bigr]
\Bigr)
\cdot
\nabla_{\Re[\widehat{r}^\ell],\Im[\widehat{r}^\ell]}
\Bigl( \prod_{j \in F_N^+ }
  \rho\bigl( \widehat{r}^j \bigr) 
  \Bigr)
 \Bigr]  
 \biggr\}
 \bigotimes_{j \in F_N^+} d \widehat{r}^j   \biggr).
\end{split}
\end{equation*}
Expanding 
$
 \partial_{\fD[\widehat{m}_0^{k'}]} \widehat{m}_t^k
$
as
$( \partial_{\fD[\widehat{m}_0^{k'}]} \Re[\widehat{m}_t^k],\partial_{\fD[\widehat{m}_0^{k'}]} \Im[\widehat{m}_t^k])$, 
we get a linear system of the form 
\begin{equation*}
\begin{split}
\frac{d}{dt} \Bigl( \partial_{\fD[\widehat{m}_0^{k'}]} \widetilde{\fD}[\widehat{m}_t^k] \Bigr) 
&=  - 2 \pi^2 \vert k \vert^2 \partial_{\fD[\widehat{m}_0^{k'}]}  \widetilde{\fD}\bigl[\widehat{m}_t^k\bigr]
\\
&\hspace{2pt} 
+
\sum_{\ell \in F_N^+}
\Bigl[ A_{k,\ell}^{\widetilde{\fD},\Re}(t)
 \partial_{\fD[\widehat{m}_0^{k'}]} \Re[\widehat{m}_t^\ell]
 +
 A_{k,\ell}^{\widetilde{\fD},\Im}(t)
 \partial_{\fD[\widehat{m}_0^{k'}]} \Im[\widehat{m}_t^\ell]
\Bigr]
\\
&\hspace{15pt} + 
  \sum_{\ell \in F_N^+}
\Bigl[ B_{k,\ell}^{\widetilde{\fD},\Re}(t)
 \partial_{\fD[\widehat{m}_0^{k'}]} \Re[\widehat{m}_t^\ell]
 +
B_{k,\ell}^{\widetilde{\fD},\Im}(t)
 \partial_{\fD[\widehat{m}_0^{k'}]} \Im[\widehat{m}_t^\ell]
\Bigr],
\end{split}
\end{equation*}
with
\begin{equation*}
\begin{split}
&A_{k,\ell}^{\widetilde{\fD},\fD}(t)
= \epsilon_{\widetilde{\fD},{\fD}}  4 \pi^2 (1-\varepsilon)
\int_0^1
 \int_{{\mathbb R}^{2 \vert F_N^+ \vert}}
d\lambda 
\biggl(
 \bigotimes_{j \in F_N^+} d \widehat{r}^j
 \biggr)
\\
&\hspace{7pt} \times 
\biggl[
\sum_{ \ell' \in F_N^+}
k \cdot \bigl( \partial^2_{pp} H(\lambda,t) \ell' \bigr) \, 
\partial_{\fD[\widehat{m}^{\ell}]}  V_{2-\lambda} 
\bigl(t,m_t^{N,\varepsilon}(r) * f_N \bigr)
\hat{f}^{\ell}_N
\\
&\hspace{15pt} \times 
\int_{{\mathbb T}^d}
\Bigl[
\Bigl( 
\Re \bigl[
\i e_{\ell'}(x)
\bigr], 
\Im \bigl[
\i e_{\ell'}(x)
\bigr]
\Bigr)
\cdot
\nabla_{\Re[\widehat{r}^{\ell'}],\Im[\widehat{r}^{\ell'}]}
\Bigl( \prod_{j \in F_N^+ }
  \rho\bigl( \widehat{r}^j \bigr) 
  \Bigr)
 \Bigr] \widetilde{\fD} \bigl[\i e_k(x) \bigr] \bigl( m_t * f_N \bigr)(x) dx
 \biggr],
\end{split}
\end{equation*}
where
$\epsilon_{\widetilde{\fD},{\fD}} =1$ except if 
$(\widetilde{\fD},\fD) = (\Re,\Im)$ in which case 
$\epsilon_{\Re,\Im} =-1$ (those choices following from the standard relationships 
$\Re[zz']=\Re[z]\Re[z']-\Im[z]\Im[z']$ 
and 
$\Im[zz']=\Re[z]\Im[z']+\Im[z]\Re[z']$, for $z,z' \in {\mathbb C}$, 
which explains why only 
$\epsilon_{\Re,\Im}$ is here equal to $-1$), and  
with
\begin{equation*}
\begin{split}
  &B_{k,\ell}^{\widetilde{\fD},\fD}(t) =
-   \epsilon_{\fD} 4 \pi  
\int_{{\mathbb T}^d} k \cdot
D {\mathcal H}^{N,\varepsilon,\rho}(t,m_t)(x) 
\widetilde{\fD} \bigl[ \i e_k(x) \bigr] 
  \fD \Bigl[  \widehat{f}^{\ell'}_N e_{-\ell'}(x) \Bigr]
  \, dx,
\end{split}
\end{equation*}
where $\epsilon_{\fD}=1$ if $\fD=\Re$ and $-1$ if $\fD=\Im$.

We now recall the following standard fact from the theory of ODEs. If we let
\begin{equation*}
J_t = \textrm{\rm det} \biggl( \Bigl( 
 \partial_{\fD[\widehat{m}_0^{k'}]} \widetilde{\fD}[\widehat{m}_t^k]
\Bigr)_{k,k' \in F_N^+, \fD,\widetilde{\fD} = \Re,\Im} 
\biggr),
\end{equation*}
then
\begin{equation}
\label{eq:equation:jacobien}
\frac{d}{dt} J_t = 
J_t \sum_{k \in F_N^+}
\Bigl( -4 \pi \vert k\vert^2 + A_{k,k}^{\Re,\Re} (t) + A_{k,k}^{\Im,\Im}(t)
+
B_{k,k}^{\Re,\Re}(t) + B_{k,k}^{\Im,\Im}(t)
\Bigr).
\end{equation}

\textit{Third Step.} The goal is now to prove that the sum in 
the right-hand side of 
\eqref{eq:equation:jacobien} is lower bounded, by a constant that is independent of 
$\varepsilon$
and
$\rho$.  
The easiest term to handle is the sum 
$\sum_{k \in F_N^+}
[B_{k,k}^{\Re,\Re}(t) + B_{k,k}^{\Im,\Im}(t)
]$. 
Indeed, in the definition of 
$B_{k,\ell}^{\widetilde{\fD},\fD}(t)$, 
$D {\mathcal H}^{N,\varepsilon,\rho}$ is bounded independently 
of $(N,\varepsilon,\rho)$, 
since 
$V_1$ and $V_2$ are 
$d_{-2}$-Lipschitz continuous, see
\eqref{eq:DHlambda} and Proposition 
\ref{prop:4:7}. 

The term $\sum_{k \in F_N^+}
[A_{k,k}^{\Re,\Re}(t) + A_{k,k}^{\Im,\Im}(t)
]$ is much more difficult to handle. We get 
\begin{equation*}
\begin{split}
&A_{k,k}^{{\fD},\fD}(t)
=  4 \pi^2 (1-\varepsilon) \int_0^1  \int_{{\mathbb R}^{2 \vert F_N^+ \vert}}
d \lambda
\biggl(
 \bigotimes_{j \in F_N^+} d \widehat{r}^j
 \biggr)
 \\
 &\hspace{7pt} \times 
\biggl[
\sum_{ \ell' \in F_N^+}
k \cdot \bigl( \partial^2_{pp} H(\lambda,t) \ell' \bigr) \, 
\partial_{\fD[\widehat{m}^{k}]}   V_{2-\lambda} 
\bigl(t,m_t^{N,\varepsilon}(r) * f_N \bigr)
\hat{f}^{k}_N
\\
&\hspace{15pt} \times 
\int_{{\mathbb T}^d}
\Bigl[
\Bigl( 
\Re \bigl[
\i e_{\ell'}(x)
\bigr], 
\Im \bigl[
\i e_{\ell'}(x)
\bigr]
\Bigr)
\cdot
\nabla_{\Re[\widehat{r}^{\ell'}],\Im[\widehat{r}^{\ell'}]}
\Bigl( \prod_{j \in F_N^+ }
  \rho\bigl( \widehat{r}^j \bigr) 
  \Bigr)
 \Bigr]  {\fD} \bigl[\i e_k(x) \bigr] \bigl( m_t * f_N \bigr)(x) dx
 \biggr],
\end{split}
\end{equation*}
which, by integration by parts, can be rewritten as
\begin{equation*}
\begin{split}
&A_{k,k}^{{\fD},\fD}(t)
= -
  4 \pi^2  \int_0^1 \int_{{\mathbb R}^{2 \vert F_N^+ \vert}}
d \lambda \biggl(
 \bigotimes_{j \in F_N^+} d \widehat{r}^j
 \biggr)
 \\
 &\hspace{7pt} \times 
\biggl[
\sum_{ \ell' \in F_N^+}
k \cdot \bigl( \partial^2_{pp} H(\lambda,t) \ell' \bigr) \, 
  V_{2-\lambda} 
\bigl(t,m_t^{N,\varepsilon}(r) * f_N \bigr)
\\
&\hspace{15pt} \times 
\int_{{\mathbb T}^d}
\sum_{\widetilde{\fD} = \Re,\Im}
\Bigl[
{\fD} \bigl[\i e_k(x) \bigr]
\widetilde{\fD}
\bigl[
\i e_{\ell'}(x)
\bigr]
\partial^2_{\fD[\widehat{r}^{k}],\widetilde{\fD}[\widehat{r}^{\ell'}]}
\Bigl( \prod_{j \in F_N^+ }
  \rho\bigl( \widehat{r}^j \bigr) 
  \Bigr)
 \Bigr]   \bigl( m_t * f_N \bigr)(x) dx
 \biggr],
\end{split}
\end{equation*}
and then, 
\begin{equation*}
\begin{split}
&\sum_{k \in F_N^+,\fD=\Re,\Im}
A_{k,k}^{{\fD},\fD}(t)
= -
  4 \pi^2  \int_0^1  \int_{{\mathbb R}^{2 \vert F_N^+ \vert}}
  d \lambda 
\biggl(
 \bigotimes_{j \in F_N^+} d \widehat{r}^j
 \biggr)
\biggl[  V_{2-\lambda}
\bigl(t,m_t^{N,\varepsilon}(r) * f_N \bigr)
\\
&\hspace{50pt} \times 
\int_{{\mathbb T}^d}
\sum_{\widetilde{\fD} = \Re,\Im}
\sum_{k, \ell \in F_N^+}
\Bigl[
\Bigl( {\fD} \bigl[\i e_k(x) \bigr]
k \Bigr)
\cdot 
\Bigl( \widetilde{\fD} \bigl[\i e_\ell(x) \bigr]
\partial^2_{pp} H(\lambda,t)
\ell \Bigr)
\partial^2_{\fD[\widehat{r}^{k}],\widetilde{\fD}[\widehat{r}^{\ell}]}
\Bigl( \prod_{j \in F_N^+ }
  \rho\bigl( \widehat{r}^j \bigr) 
  \Bigr)
 \Bigr] 
 \\
&\hspace{50pt} \times   \bigl( m_t * f_N \bigr)(x) dx
 \biggr].
\end{split}
\end{equation*}
We can now apply 
Corollary 
\ref{cor:weak:semi-concavity} since $V_{2-\lambda}$ is displacement semi-concave. Using the fact that $H$ is convex with respect to $p$, we deduce that 
\begin{equation*}
\sum_{k \in F_N^+,\fD=\Re,\Im}
A_{k,k}^{{\fD},\fD}(t) \geq - C_N,
\quad t \in [0,T], 
\end{equation*}
for a constant $C_N$ depending on $N$, but independent of $\rho$ and $\varepsilon$, from which we deduce that 
$J_t
 \geq - C_N$, for any $t \in [0,T]$. 
 The result follows from a mere change of variable at any time $t$, using the lower bound for the Jacobian $J_t$. 
\end{proof}

\section{Application to Mean Field Games}
\label{sec:MFG}

Following our agenda, we now address weak solutions to the conservative form of the master equation
\eqref{master:cons}.

\subsection{Notion of weak solution}

We start with the following definition:

\begin{defn}
\label{def:master:equation:gen}
We call a weak solution 
to the master equation a bounded measurable function 
${\mathcal Z} : [0,T] \times {\mathcal P}({\mathbb T}^d) \times {\mathbb T}^d \rightarrow {\mathbb R}$, 
such that 
the system of Fourier coefficients
\begin{equation}
\label{eq:master:Fourier:coefficients}
\widehat{\mathcal Z}^k : [0,T] \times {\mathcal P}({\mathbb T}^d) \ni (t,m) \mapsto \reallywidehat{\Bigl\{ {\mathcal Z}(t,m,\cdot) \Bigr\}}^{-k},
\quad
k \in {\mathbb Z}^d, 
\end{equation}
satisfies
$\widehat{\mathcal Z}^0 \equiv 0$ together with 
the following two properties: 

$(i)$ 
There exists a sequence of functions 
$(\zeta_N : [0,T] \times {\mathcal P}_N  
\rightarrow {\mathbb R})_{N \geq 1}$, 
such that, for any $N \geq 1$, 
for $\fD=\Re,\Im$, 
the following equation holds true in the distributional sense on $[0,T] \times {\mathcal P}_N$: 
\begin{equation}
\label{eq:master:weak}
\begin{split}
&\epsilon_{\fD} \partial_t \fD \bigl[ \widehat{\mathcal Z}^k(t,m)  \bigr] 
- \frac12 \partial_{\fD[\widehat{m}^k]} 
\biggl\{ \int_{{\mathbb T}^d} H \biggl(y, \i 2 \pi \sum_{j \in F_N} 
j \widehat{\mathcal Z}^j(t,m) e_j(y) \biggr)  d m (y) 
\biggr\}
\\
&\hspace{15pt}
- \frac12 \sum_{j \in F_N} 2 \pi^2 \vert j \vert^2 \partial_{\fD[\widehat{m}^k]} \Bigl( \widehat{\mathcal Z}^j(t,m) 
\widehat{m}^j 
\Bigr) 
+ \fD \biggl[\reallywidehat{\displaystyle \frac{\delta F}{\delta m}(m)(\cdot)}^{k} \biggr]
 +  \partial_{\fD[\widehat{m}^k]} \bigl( \zeta_N(t,m)\bigr) = 0,
\end{split}
\end{equation}
for any $k \in F_N^+$, 
where
$\epsilon_{\Re}=1$ and 
$\epsilon_{\Im}=-1$, and
the following limit holds true   
for any $c>1$: 
\begin{equation}
\label{eq:master:weak:00}
\lim_{N \rightarrow \infty} 
\sup_{t \in [0,T]} 
\sup_{m \in  B_N(c) }
\bigl\vert 
\zeta_N(t,m) 
\bigr\vert =0,
\end{equation}
with the
same notation 
as in Definition 
\ref{defn:HJB:gen}
for $B_N(c)$, i.e.
$B_N(c)= {\mathcal P}_N \cap \{ m :  \sup_{x \in \bT^d} \vert \nabla m(x)  \vert \leq c, \ \inf_{x \in \bT^d}  m(x)   \geq 1/c \}$.

$(ii)$ For any $N \geq 1$ and $k \in F_N^+$, 
for any smooth and compactly supported function $\varphi : {\mathcal O}_N \rightarrow {\mathbb R}$, 
the function 
\begin{equation*}
\begin{split}
t \in [0,T] \mapsto 
 \int_{{\mathcal O}_N} 
\widehat{\mathcal Z}^k(t,m) 
\varphi \Bigl( (\widehat{m}^j)_{j \in F_N^+} \Bigr) 
\bigotimes_{j \in F_N^+} 
 d \Bigl( \Re\bigl[ \widehat{m}^j \bigr],\Im\bigl[\widehat{m}^j \bigr]
 \Bigr)
 \end{split}
 \end{equation*}
 is continuous 
and matches, at $t=T$,
\begin{equation*}
\int_{{\mathcal O}_N} 
\reallywidehat{ \displaystyle \frac{\partial G}{\partial m}(m)(\cdot)
   }^{-k}
\varphi \Bigl( (\widehat{m}^j)_{j \in F_N^+} \Bigr) 
\bigotimes_{j \in F_N^+} 
 d \Bigl( \Re\bigl[ \widehat{m}^j \bigr],\Im\bigl[\widehat{m}^j \bigr]
 \Bigr),
 \end{equation*}
 where 
 $m$ in the latter two integrands is implicitly understood as 
 ${\mathscr I}_N
( (\widehat{m}^j)_{j \in F_N^+} )$
with ${\mathscr I}_N$ as in \eqref{eq:I_N}.
%
%
\end{defn}

The reader will find a clear explanation of the 
choice 
$\widehat{\mathcal Z}^0 \equiv 0$
in Proposition 
\ref{prop:value:is:gen:master:equation}.
In brief, when ${\mathcal Z}$ is the $m$-derivative of a potential $Z : [0,T] \times {\mathcal P}({\mathbb T}^d) \rightarrow {\mathbb R}$, 
the condition 
$\widehat{Z}^0 \equiv 0$
 is consistent with the centering condition 
\eqref{eq:centring}.

Following 
\eqref{eq:prop:3:7:identification}--\eqref{eq:gradient:complex:000}--\eqref{eq:gradient:complex}, 
\eqref{eq:master:weak}
can be rewritten in the following compact form:
\begin{equation}
\label{eq:master:weak:bb}
\begin{split}
&\partial_t \widehat{\mathcal Z}^k(t,m)   
-  \partial_{\widehat{m}^k}
\biggl\{ 
\int_{{\mathbb T}^d}H \biggl(y, \i 2 \pi \sum_{j \in F_N} 
j \widehat{\mathcal Z}^j(t,m) e_j(y) \biggr)  d m (y) 
\biggr\}
\\
&\hspace{15pt}
- \sum_{j \in F_N} 2 \pi^2 \vert j \vert^2 \partial_{\widehat{m}^k} \Bigl( \widehat{\mathcal Z}^j(t,m) 
\widehat{m}^j 
\Bigr) 
+  \reallywidehat{\displaystyle \frac{\delta F}{\delta m}(m)(\cdot)}^{-k} 
 +  2 \partial_{\widehat{m}^k} \bigl(  \zeta_N(t,m)\bigr) = 0,
\end{split}
\end{equation}
for any $k \in F_N \setminus \{0\}$. 
It indeed suffices to take the operator $\epsilon_{\fD} \fD[\cdot]$ in the above equation
and to recall that $\epsilon_{\fD} \fD[\partial_{\widehat{m}^k}]=\tfrac12 
\partial_{\fD[\widehat{m}^k]}$.


We then have:
\begin{prop}
\label{prop:value:is:gen:master:equation}
The value function $V$, as defined in \eqref{eq:MKV:V}, 
induces a weak solution to the master equation, in the sense that there exists a bounded measurable function 
${\mathcal V} : [0,T] \times {\mathcal P}({\mathbb T}^d) \times {\mathbb T}^d \rightarrow {\mathbb R}$ satisfying
Definition 
\ref{def:master:equation:gen} together with the following two items: 
\begin{enumerate}
\item 
 for any integer  $N \geq 1$, the Fourier coefficients 
$(\widehat{\mathcal V}^k)_{k \in F_N}$ of ${\mathcal V}$, seen as complex-valued functions defined on 
$[0,T] \times {\mathcal P}({\mathbb T}^d)$ with the same form as in
\eqref{eq:master:Fourier:coefficients},
coincide
everywhere in time
and 
${\mathbb P}_N$ almost everywhere in space 
with 
the Fourier coefficients of the (space)-derivatives of $V$ on 
$[0,T] \times {\mathcal P}_N$, 
namely
\begin{equation}
\label{eq:prop5.2:statement}
\begin{split}
&\widehat{\mathcal V}^k(t,m) =  
\reallywidehat{\displaystyle \frac{\delta V}{\delta m}(t,m)(\cdot)}^{-k} \biggl( = 
\partial_{\widehat{m}^k} V(t,m) \biggr), \quad k \in F_N \setminus \{0\}, 
\\
&\widehat{\mathcal V}^0(t,m) =  
\reallywidehat{\displaystyle \frac{\delta V}{\delta m}(t,m)(\cdot)}^{0} =0, 
\end{split}
\end{equation}
for any $t \in [0,T]$ and for ${\mathbb P}_N$ almost every $m \in {\mathcal P} _N$  (or, equivalently, for 
almost every $m \in {\mathcal P}_N$ when 
${\mathcal P}_N$
is equipped with the image of the Lebesgue measure on ${\mathcal O}_N$ by the canonical mapping 
${\mathscr I}_N$ in 
\eqref{eq:I_N});
\item 
 the coefficients 
$(\widehat{\mathcal V}^k)_{k \in {\mathbb Z}^d}$ of ${\mathcal V}$
coincide 
everywhere in $t$ and 
${\mathbb P}$
almost everywhere 
in $m$
with 
the Fourier coefficients of the (space)-derivatives of $V$ on 
$[0,T] \times \PP$,
as defined by 
Theorem  
\ref{prop:rademacher}, 
namely
\eqref{eq:prop5.2:statement}
holds
for every $t \in [0,T]$ , for ${\mathbb P}$ almost every $m \in \PP$ (with 
${\mathbb P}$ the probability measure defined in the statement of 
Theorem 
\ref{thm:probability:probability}) and for any $k \in {\mathbb Z}^d \setminus \{0\}$;
\item 
for any 
$k \in {\mathbb Z}^d$, 
 for any $t \in [0,T]$,
for any integer $N_0 \geq 1$ and 
for any bounded (measurable) 
function $\varphi$ defined on ${\mathcal O}_{N_0}$, 
\begin{equation*}
\begin{split}
&\lim_{N \rightarrow \infty} 
\int_{{\mathcal P}({\mathbb T}^d)} \widehat{\mathcal V}^k(t,m*f_N) 
\varphi \Bigl( (\widehat{m}^j)_{j \in F_{N_0}^+} \Bigr)   d {\mathbb P}(m)
 =  
\int_{{\mathcal P}({\mathbb T}^d)} \widehat{\mathcal V}^k(t,m) 
\varphi \Bigl( (\widehat{m}^j)_{j \in F_{N_0}^+} \Bigr)   d {\mathbb P}(m).
\end{split}
\end{equation*}
\end{enumerate}
\end{prop}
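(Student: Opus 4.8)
The plan is to take $\mathcal{V}$ to be, up to centering, the backward component of a minimizing MFG solution, and then to recognize it as the $m$-derivative of $V$ both on the finite-dimensional slices $\mathcal{P}_N$ and on the whole of $\PP$. Concretely, for $(t,m)\in[0,T]\times\PP$ I would fix — in a jointly measurable way, by applying a measurable selection theorem to the closed-valued multifunction sending $(t,m)$ to the nonempty set of solutions $(m_s,u_s)_{t\le s\le T}$ of \eqref{eq:MFG:system} with $m_t=m$ that minimize $\mathcal{J}_{\textrm{\rm det}}$ (nonemptiness being Propositions \ref{prop:2:2} and \ref{prop:Briani}) — one such solution, and set $\mathcal{V}(t,m,\cdot):=u_t(\cdot)-\int_{\T}u_t(y)\,dy$. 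The probabilistic representation \eqref{eq:u_t:probabilistic:representation} together with the bounds on $f$, $g$ and $L(\cdot,0)$ make $\mathcal{V}$ bounded, and its Fourier coefficients are $\widehat{\mathcal V}^k(t,m)=\widehat{u}_t^{-k}$ for $k\neq 0$ and $\widehat{\mathcal V}^0\equiv 0$, which is the normalization required in Definition \ref{def:master:equation:gen}.

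Items $(1)$--$(3)$ of the statement are then consequences of the results already proven for the HJB equation. For $(1)$: by Proposition \ref{prop:3:8} the restriction $\widetilde V_N(t,\cdot)$ is differentiable at $\mathbb{P}_N$-almost every $m\in\mathcal{P}_N$, and at such a point Lemma \ref{lem:superjet} (in its version at a general time $t$, as used in the proof of Theorem \ref{thm:value:is:gen:HJB}) gives $\partial_{\widehat{m}^k}V(t,m)=\widehat{u}_t^{-k}$ for all $k\in F_N\setminus\{0\}$ and \emph{every} minimizing MFG solution; hence $\widehat{\mathcal V}^k(t,m)=\partial_{\widehat{m}^k}V(t,m)$ irrespective of the selection. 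For $(2)$: by Theorem \ref{prop:rademacher}, for each $t$ and $\mathbb{P}$-almost every $m$, $V(t,\cdot)$ has directional derivatives along $\Re[e_k],\Im[e_k]$ for all $k\in\mathbb{Z}^d\setminus\{0\}$; at such a point the minimizing MFG solution is unique by Theorem \ref{thm:value:is:gen:HJB} (so $\mathcal{V}(t,m,\cdot)$ is unambiguous there), and Remark \ref{rem:4.5} gives $\partial_{\widehat{m}^k}V(t,m)=\widehat{u}_t^{-k}=\widehat{\mathcal V}^k(t,m)$, where $\partial_{\widehat{m}^k}V$ now denotes the $\mathbb{P}$-almost everywhere derivative of Theorem \ref{prop:rademacher}. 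Item $(3)$ follows directly from the weak-$\star$ convergence \eqref{weak:convergence:derivatives}, applied to $\phi=V(t,\cdot)$ and tested against the bounded function $m\mapsto\varphi((\widehat{m}^j)_{j\in F_{N_0}^+})\in L^1(\PP,\mathbb{P})$: by $(1)$ together with the absolute continuity of $\mathbb{P}\circ(\pi^{(1)}_N)^{-1}$ with respect to Lebesgue measure on $\mathcal{O}_N$ (Theorem \ref{thm:probability:probability}$(ii)$), $\widehat{\mathcal V}^k(t,m\ast f_N)$ coincides $\mathbb{P}$-almost everywhere with the restriction derivative $\partial_{\widehat{m}^k}\widetilde V_N(t,\cdot)(m\ast f_N)$, while by $(2)$ the limit $\widehat{\mathcal V}^k(t,m)$ coincides $\mathbb{P}$-almost everywhere with the $\mathbb{P}$-almost everywhere derivative of $V(t,\cdot)$.

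It then remains to verify that $\mathcal{V}$ is a weak solution in the sense of Definition \ref{def:master:equation:gen}. For item $(ii)$: given $\varphi$ smooth and compactly supported on $\mathcal{O}_N$, an integration by parts — licit because, by $(1)$, $\widehat{\mathcal V}^k(t,\cdot)$ agrees Lebesgue-almost everywhere on $\mathcal{O}_N$ with $\partial_{\widehat{m}^k}V(t,\cdot)$, which is the distributional gradient of the locally Lipschitz function $V(t,\cdot)|_{\mathcal{O}_N}$ — turns $\int_{\mathcal{O}_N}\widehat{\mathcal V}^k(t,m)\varphi(m)$ into $-\int_{\mathcal{O}_N}V(t,m)(\partial_{\widehat{m}^k}\varphi)(m)$, whose continuity in $t$ is immediate from the Lipschitz-in-time continuity of $V$ (Lemma \ref{le:d-2:V}); the value at $t=T$ is the prescribed one since $V(T,\cdot)=G$ and, by \eqref{eq:potential:structure}, $\widehat{\mathcal V}^k(T,m)=\widehat{g(\cdot,m)-\int g(y,m)\,dy}^{-k}=\widehat{\tfrac{\delta G}{\delta m}(m)(\cdot)}^{-k}$. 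For item $(i)$: by Theorem \ref{thm:value:is:gen:HJB}, $V$ solves the generalized HJB equation \eqref{eq:HJB:gen}, so the function $\zeta^V_N:=\partial_t V-\int_{\T}H(y,\i 2\pi\sum_{k\in F_N}k\,\widehat{\mathcal V}^k(t,m)e_k(y))\,dm(y)-\sum_{k\in F_N}2\pi^2|k|^2\widehat{\mathcal V}^k(t,m)\widehat{m}^k+F(m)$, which is locally bounded on $[0,T]\times\mathcal{O}_N$ (the Hamiltonian term being bounded through Proposition \ref{prop:4:7}), satisfies $|\zeta^V_N|\le\eta_N(c)$ on $B_N(c)$ for every $c>1$. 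Applying the constant-coefficient operator $\partial_{\widehat{m}^k}$ in the distributional sense and commuting it with $\partial_t$ — and using that the classical (a.e.) gradient of the Lipschitz function $V$ equals its distributional gradient — one recovers exactly \eqref{eq:master:weak:bb} with $\zeta_N:=-\tfrac12\zeta^V_N$; its real and imaginary parts give \eqref{eq:master:weak}, and \eqref{eq:master:weak:00} follows from $\sup_{m\in B_N(c)}|\zeta_N(t,m)|\le\tfrac12\eta_N(c)\to 0$. Finally $\widehat{\mathcal V}^0\equiv 0$ holds by the centering built into $\mathcal{V}$.

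The delicate point is the construction itself: one needs a single bounded measurable $\mathcal{V}$ that is simultaneously the finite-dimensional restriction derivative of $V$ on each slice $\mathcal{P}_N$ — a property controlled only $\mathbb{P}_N$-almost everywhere through Lemma \ref{lem:superjet} — and the $\mathbb{P}$-almost everywhere derivative of $V$ on all of $\PP$ — controlled through Theorem \ref{prop:rademacher} and Remark \ref{rem:4.5} — these being conditions carried on null sets that are in a sense transverse to one another. Routing the definition through the backward component of a minimizing MFG solution, and invoking the uniqueness of minimizers from Theorem \ref{thm:value:is:gen:HJB} precisely at the points where this definition would otherwise be ambiguous, is what reconciles the two identifications; the remaining technical burden is the joint measurability of the selection, which I would handle by a Kuratowski--Ryll-Nardzewski type argument applied to the multifunction of minimizing solutions.
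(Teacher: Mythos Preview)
Your argument is correct and takes a genuinely different route from the paper's own proof, the difference lying entirely in how $\mathcal V$ is constructed.

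The paper builds $\mathcal V$ \emph{from the derivatives of $V$}: it defines $\mathcal V(t,m,x):=\sum_{k\in F_N\setminus\{0\}}\partial_{\widehat m^k}V(t,m)\,e_k(x)$ for $m\in\mathcal P_N$, observes that this is consistent across the slices because each $\mathcal P_{N_0}$ is $\mathbb P_N$-null for $N>N_0$, and then \emph{extends} $\mathcal V$ to all of $\PP$ by taking a weak-$\star$ limit of $(\mathcal V(t,m\ast f_N,\cdot))_{N\ge 1}$. Uniqueness of this limit (and hence item~(3)) is obtained via the auxiliary Lemmas~\ref{lem:W:mathcalW:Nn}--\ref{lem:W:mathcalW:Nn:2}, exploiting that the corresponding potentials $V(t,m\ast f_N)$ all converge to the same $V(t,m)$. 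Boundedness of $\mathcal V$ is deduced from Proposition~\ref{prop:4:7} (the $d_{-2}$-Lipschitz property of $V$ controls $\partial_x\mathcal V$, and zero mean then controls $\mathcal V$).

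You instead define $\mathcal V$ \emph{globally at once} as the centered backward component of a measurably selected minimizing MFG solution, and then read off items~(1)--(2) from Lemma~\ref{lem:superjet} and Remark~\ref{rem:4.5}; item~(3) is obtained by invoking the packaged weak-$\star$ convergence~\eqref{weak:convergence:derivatives} of Theorem~\ref{prop:rademacher} (whose proof, incidentally, relies on the same auxiliary lemmas). The verification of Definition~\ref{def:master:equation:gen} is the same in both proofs: differentiate the generalized HJB equation in $\widehat m^k$ in the distributional sense. (The paper in fact remarks, right after the statement, that (1)$+$(3)$\Rightarrow$(2) via Theorem~\ref{prop:rademacher}, so your derivation of (2)--(3) is exactly the shortcut the paper points to.)

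What each approach buys: the paper's construction is entirely intrinsic to $V$ and avoids any selection argument, at the cost of the slice-by-slice definition and the weak-limit uniqueness step. Your construction is more transparent --- $\mathcal V$ is visibly the (centered) value function of the game --- and items~(1)--(3) follow almost for free from results already in place, but you inherit the burden of the measurable selection. That selection is indeed standard (Kuratowski--Ryll-Nardzewski applies once one notes that the set of minimizing MFG solutions from $(t,m)$ is compact in $\mathcal C([0,T];\PP)\times\mathcal C^{2}([0,T]\times\T)$ by the uniform estimates of Proposition~\ref{thm:solvability:MFG}, and that the graph of the multifunction is closed), but it is not written out in the paper and would need a few lines; note also that the time-varying domain $[t,T]$ should be reparametrized to a fixed interval before applying the selection theorem.
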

In fact,
the reader will notice from Theorem 
\ref{prop:rademacher}
that items (2) and (3) are redundant, as the combination of (1) and (3) implies (2).

\begin{proof}
The proof relies on the following main observation: for a given integer $N_0 \geq 1$, 
${\mathcal P}_{N_0}$ is of zero measure under 
${\mathbb P}_N$  (or equivalently when  
it is regarded as a subset of 
${\mathcal P}_N$ equipped with the image of the Lebesgue measure on 
${\mathcal O}_N$ by the canonical mapping 
${\mathscr I}_N$ in 
\eqref{eq:I_N}); this follows from the fact that the elements of 
${\mathcal P}_{N_0}$ are required to have zero Fourier coefficients of modes $k$ with $\vert k\vert \geq N_0$. 
Therefore, we can define ${\mathcal V}$ inductively, by first assigning values on $[0,T] \times {\mathcal P}_1 \times {\mathbb T}^d$, and then on $[0,T] \times ({\mathcal P}_2 \setminus {\mathcal P}_1) \times {\mathbb T}^d$, so on and so forth...  
To make it clear, there is no difficulty for defining ${\mathcal V}$ such that, 
for every $N \geq 1$, 
for every $t \in [0,T]$, 
for almost every $m$ in  
${\mathcal P}_N$
and
for any $x \in {\mathbb T}^d$, 
\begin{equation}
\label{eq:def:Z:V}
{\mathcal V}(t,m,x) = \sum_{k \in F_N \setminus \{0\}} 
\biggl\{
\reallywidehat{\displaystyle \frac{\delta V}{\delta m}(t,m)(\cdot)}^{k}
e_{-k}(x)
\biggr\}
=
\sum_{k \in F_N \setminus \{0\}} 
\partial_{\widehat{m}^k} V(t,m)
e_{k}(x).
\end{equation}
We then let
$\widehat{\mathcal V}^k(t,m): = 
\reallywidehat{\displaystyle {\mathcal V}(t,m)(\cdot)}^{-k} ( = \partial_{\widehat{m}^k} 
V(t,m))$
and then
\begin{equation*}
\begin{split}
2 \zeta_N(t,m) &:= 
\partial_t V(t,m) - 
 \int_{{\mathbb T}^d}
 H \biggl( y, \i 2 \pi  \sum_{k \in F_N} k \widehat{\mathcal V}^k(t,m) e_{k}(y) \biggr)  d m(y)
 \\
&\hspace{15pt} - \sum_{k \in F_N} 2 \pi^2 \vert k\vert^2 
\widehat{\mathcal V}^k(t,m) \widehat{m}^{k}  + F(m)
\\
&=
\partial_t V(t,m) - 
 \int_{{\mathbb T}^d}
 H \biggl( y, \i 2 \pi  \sum_{k \in F_N} k \partial_{\widehat m^k} V(t,m) e_{k}(y) \biggr)  d m(y)
 \\
&\hspace{15pt} - \sum_{k \in F_N} 2 \pi^2 \vert k\vert^2 
\partial_{ \widehat{m}^{k} }
V(t,m) \widehat{m}^{k}  + F(m),
\end{split}
\end{equation*}
which satisfies 
\eqref{eq:master:weak:00} thanks to 
\eqref{eq:HJB:gen}.

Multiplying, for each value of 
$k \in F_N^+$, 
both sides of the definition of 
$\zeta_N(t,m)$ above
by $(1/2) \partial_{\fD[\widehat{m}^k]} \varphi(t,(\widehat{m}^j)_{j \in F_N^+})$
for an arbitrary smooth test function on $[0,T] \times {\mathcal O}_N$
with a support included in $(0,T) \times {\mathcal O}_N$, we
easily get 
\eqref{eq:master:weak} (in a distributional sense). Indeed, it suffices to observe that 
\begin{equation*}
\begin{split}
&\frac{1}{2} \int_0^T \int_{{\mathcal O}_N}
\partial_t V\bigl(t,m \bigr) 
 \partial_{\fD[\widehat{m}^k]} 
 \varphi \Bigl( t, (\widehat{m}^j)_{j \in F_N^+} \Bigr) 
\bigotimes_{j \in F_N^+} 
 d \Bigl( \Re\bigl[ \widehat{m}^j \bigr],\Im\bigl[\widehat{m}^j \bigr]
 \Bigr)
 \\
 &= 
\frac{1}{2} \int_0^T \int_{{\mathcal O}_N}
\partial_{\fD[\widehat{m}^k]}  V\bigl(t, m \bigr) 
\partial_t \varphi \Bigl( t, (\widehat{m}^j)_{j \in F_N^+} \Bigr) 
\bigotimes_{j \in F_N^+} 
 d \Bigl( \Re\bigl[ \widehat{m}^j \bigr],\Im\bigl[\widehat{m}^j \bigr]
 \Bigr)
 \\
 &= \epsilon_{\fD} \int_0^T \int_{{\mathcal O}_N}
 \fD \Bigl[  {\mathcal V}^k \bigl(t,m \bigr) 
 \Bigr]
\partial_t \varphi \Bigl( t, (\widehat{m}^j)_{j \in F_N^+} \Bigr) 
\bigotimes_{j \in F_N^+} 
 d \Bigl( \Re\bigl[ \widehat{m}^j \bigr],\Im\bigl[\widehat{m}^j \bigr]
 \Bigr),
 \end{split}
\end{equation*}
with 
$m$ in the above three lines being understood 
as $m={\mathscr I}_N( (\widehat{m}^j)_{j \in F_N^+})$
and 
with the last line following from 
\eqref{eq:gradient:complex:000}. Similarly, 
\begin{equation*}
\begin{split}
&\frac12 \int_0^T \int_{{\mathcal O}_N}
F \Bigl((\widehat{m}^j)_{j \in F_N^+} \Bigr) 
 \partial_{\fD[\widehat{m}^k]} 
 \varphi \Bigl( t, (\widehat{m}^j)_{j \in F_N^+} \Bigr) 
\bigotimes_{j \in F_N^+} 
 d \Bigl( \Re\bigl[ \widehat{m}^j \bigr],\Im\bigl[\widehat{m}^j \bigr]
 \Bigr)
 \\
 &= - \frac12 \int_0^T \int_{{\mathcal O}_N}
\partial_{\fD[\widehat{m}^k]}  F\Bigl((\widehat{m}^j)_{j \in F_N^+} \Bigr) 
 \varphi \Bigl( t, (\widehat{m}^j)_{j \in F_N^+} \Bigr) 
\bigotimes_{j \in F_N^+} 
 d \Bigl( \Re\bigl[ \widehat{m}^j \bigr],\Im\bigl[\widehat{m}^j \bigr]
 \Bigr)
 \\
 &=- \int_0^T \int_{{\mathcal O}_N}
  \fD \biggl[\reallywidehat{\displaystyle \frac{\delta F}{\delta m}(m,\cdot)}^{k} \biggr]
  \varphi \Bigl( t, (\widehat{m}^j)_{j \in F_N^+} \Bigr) 
\bigotimes_{j \in F_N^+} 
 d \Bigl( \Re\bigl[ \widehat{m}^j \bigr],\Im\bigl[\widehat{m}^j \bigr]
 \Bigr).
 \end{split}
\end{equation*}
This proves $(i)$ in Definition  
\ref{def:master:equation:gen}. As for $(ii)$ in the same definition, 
it follows from an obvious integration by parts.

We prove that the values of ${\mathcal V}$ that are hence defined are bounded by a common constant.
In order to do so, we use the fact that $V$ is Lipschitz continuous (in space) 
for the $d_{-2}$-distance and thus for the $d_{\textrm{\rm TV}}$-distance. Indeed, 
Proposition 
\ref{prop:4:7} says that there exists a constant $C$ such that, for any $N \geq 1$, 
\begin{equation*}
\textrm{\rm essup}_{(t,m) \in [0,T] \times {\mathcal P}_N}
\sup_{x \in {\mathbb T}^d} 
\biggl\vert \partial_x \biggl( \sum_{k \in F_N \setminus \{0\}} \partial_{\widehat{m}^k} 
V(t,m) e_k(x) \biggr) \biggr\vert \leq C.
\end{equation*}
By 
\eqref{eq:def:Z:V},
this proves that
\begin{equation}
\label{eq:proof:master:weak:0:4:bb}
\sup_{N \geq 1} 
\textrm{\rm essup}_{(t,m) \in [0,T] \times {\mathcal P}_N}
 \textrm{\rm essup}_{x \in {\mathbb T}^d} \bigl\vert 
\partial_x {\mathcal V}(t,m,x) \bigr\vert \leq C. 
\end{equation}
Since ${\mathcal V}(t,m,\cdot)$ has zero mean, this yields
\begin{equation}
\label{eq:proof:master:weak:0:4}
\sup_{N \geq 1} 
\textrm{\rm essup}_{(t,m) \in [0,T] \times {\mathcal P}_N}
 \textrm{\rm essup}_{x \in {\mathbb T}^d} \bigl\vert 
{\mathcal V}(t,m,x) \bigr\vert \leq C. 
\end{equation}

It then remains to extend 
${\mathcal V}$ to the whole 
$[0,T] \times {\mathcal P}({\mathbb T}^d) \times {\mathbb T}^d$
by taking a weak-star limit, in the $\sigma^*(L^\infty;L^1)$ sense, of the bounded sequence 
$([0,T] \times {\mathcal P}({\mathbb T}^d) \times {\mathbb T}^d \ni (t,m,x) \mapsto {\mathcal V}(t,m*f_N,x))_{N \geq 1}$. 
Weak limits are defined almost everywhere under $\textrm{\rm Leb}_{1} \otimes {\mathbb P} 
\otimes 
\textrm{\rm Leb}_{d}$. Since $\cup_{N \geq 1} {\mathcal P}_N$ is of zero measure under 
${\mathbb P}$ (the argument is given next), we can easily  modify
any weak limit such that it coincides with 
${\mathcal V}$ itself 
  on 
$[0,T] \times (\cup_{N \geq 1} {\mathcal P}_N) \times {\mathbb T}^d$: the weak limit then provides an extension of ${\mathcal V}$ to the whole 
$[0,T] \times {\mathcal P}({\mathbb T}^d) \times {\mathbb T}^d$, which shows (1). 
The main difficulty is to prove that there is in fact only one limit point.
In order to do so, we proceed as follows. 
For any subsequence $(N_n)_{n \geq 1}$ for which 
the sequence 
$([0,T] \times {\mathcal P}({\mathbb T}^d) \times {\mathbb T}^d \ni (t,m,x) \mapsto {\mathcal V}(t,m*f_{N_n},x))_{n \geq 1}$
is converging in the weak sense, 
we can apply 
Lemma
\ref{lem:W:mathcalW:Nn}
below 
with ${\mathcal W}(m*f_N,x)={\mathcal V}(t,m*f_N,x)$ and $W^{(N)}(m)=V(t,m)$
and then invoke Lemma 
\ref{lem:W:mathcalW:Nn:2}
with ${\mathcal W}_1$ and ${\mathcal W}_2$ being two possible
limiting points 
of 
$({\mathcal P}({\mathbb T}^d) \times {\mathbb T}^d \ni (t,m,x) \mapsto {\mathcal V}(t,m*f_{N},x))_{N \geq 1}$
and with $W_1=W_2=V$. 
The key point in this respect is that  
the sequence
$([0,T] \times {\mathcal P}({\mathbb T}^d) \ni (t,m) \mapsto V(t,m*f_{N}))_{N \geq 1}$
converges to 
the function 
$[0,T] \times {\mathcal P}({\mathbb T}^d) \ni (t,m) \mapsto V(t,m)$. 
Therefore, 
Lemma \ref{lem:W:mathcalW:Nn:2} says that the sequence 
$([0,T] \times {\mathcal P}({\mathbb T}^d) \times {\mathbb T}^d \ni (t,m,x) \mapsto {\mathcal V}(t,m*f_{N},x))_{N \geq 1}$
has a unique limit point (in the weak sense)
and is thus convergent (in the weak sense). 
This proves (2) in the statement.

It then remains to prove that 
each ${\mathcal P}_N$ is of zero measure under ${\mathbb P}$. 
It suffices to 
 recall from 
Theorem 
\ref{thm:probability:probability} 
that, for $N' > N$, 
\begin{equation*}
\begin{split}
{\mathbb P} \bigl( {\mathcal P}_N \bigr) 
&= 
\bigl( {\mathbf 1}_{{\mathcal P}_{N'}} \cdot 
{\mathbb P} \bigr) \Bigl( {\mathcal P}_N \Bigr) 
 = 
\bigl( {\mathbf 1}_{{\mathcal P}_{N'}} \cdot 
{\mathbb P} \bigr) \Bigl(
 \bigcap_{k \in F_{N'}^+ \setminus F_N^+} 
\bigl\{ m : [\pi_{N'}^{(2)}(m)]^k = 0 \bigr\} \Bigr) =0.
\end{split}
\end{equation*}
This completes the proof. 
\end{proof}

Importantly, we stress (once again) the fact that the function ${\mathcal V}$ defined in the above statement 
inherits the $d_{-2}$-Lipschitz property and the semi-concavity of $V$. 
By Proposition 
\ref{prop:4:7} and by 
\eqref{eq:def:Z:V}, 
we have (see 
\eqref{eq:proof:master:weak:0:4:bb}
for the first line below) 
\begin{equation*}
\begin{split}
&\sup_{N \geq 1} \textrm{\rm essup}_{(t,m) \in [0,T] \times {\mathcal P}_N}
\sup_{x \in {\mathbb T}^d} 
\bigl\vert 
\partial_{x} {\mathcal V}(t,m,x)
\bigr\vert < \infty,
\\
&\sup_{N \geq 1}
\textrm{\rm essup}_{(t,m) \in [0,T] \times {\mathcal P}_N}
\sum_{k \in F_N} \vert k\vert^4 \vert \widehat{\mathcal V}^k(t,m) \vert^2 < \infty.
\end{split}
\end{equation*}
Moreover,
we recall from Corollary 
\ref{cor:weak:semi-concavity}
(see in particular \eqref{eq:corollary:weak:semi-concavity:2})
that, with 
$\rho$ as in Definition 
\ref{def:mollification},  
for any $N \geq 1$, for any $(t,m) \in [0,T] \times {\mathcal P}({\mathbb T}^d)$
with $m$ having a strictly positive density, 
for any collection of complex numbers $(z^k)_{k \in F_N^+}$ and any non-negative symmetric matrix $S$, 
\begin{equation}
\label{eq:weak:one-sided:1}
\begin{split}
&\int_{{\mathbb R}^{2 \vert F_N^+\vert}} 
\biggl\{
V\Bigl(t,m^{N,\varepsilon}(r) * f_N\Bigr)
\\
&\hspace{45pt} \times 
\sum_{\widetilde{\fD} = \Re,\Im}
\sum_{k, \ell \in F_N^+}
\biggl[
\Bigl( {\fD} \bigl[ z^k  \bigr]
k \Bigr)
\cdot 
\Bigl( \widetilde{\fD} \bigl[ z^\ell\bigr]
S \ell \Bigr)
\partial_{\fD[\widehat{r}^{k}],\widetilde{\fD}[\widehat{r}^{\ell}]}^2
\Bigl( \prod_{j \in F_N^+ }
  \rho\bigl(\Re\bigl[ \widehat{r}^j\bigr],\Im\bigl[ \widehat{r}^j\bigr] \bigr) 
  \Bigr)
 \biggr]
 \biggr\}
 \\
&\hspace{30pt} \bigotimes_{j \in F_N^+}
 d \Bigl( \Re\bigl[ \widehat{r}^j\bigr],\Im \bigl[ \widehat{r}^j  \bigr]
 \Bigr) 
 \\
&\leq C
\vert S \vert 
\sum_{q=1}^d \biggl(
\sum_{k \in F_N^+} \vert k_q \vert \, \vert z^k
\vert \biggr)^2.
\end{split}
\end{equation}
with $C$ depending on $c$ such that $m \geq 1/c$. 
By integration by parts,
this can be rewritten
\begin{equation}
\label{eq:weak:one-sided:2}
\begin{split}
&- \int_{{\mathbb R}^{2 \vert F_N^+\vert}} 
\biggl\{
\sum_{\widetilde{\fD} = \Re,\Im}
\sum_{k, \ell \in F_N^+}
\biggl[
\partial_{\fD[\widehat{r}^k]}
V\Bigl(t,m^{N,\varepsilon}(r) * f_N\Bigr)
 \widehat{f}_N^k 
\\
&\hspace{45pt} \times 
\biggl[
\Bigl( {\fD} \bigl[ z^k  \bigr]
k \Bigr)
\cdot 
\Bigl( \widetilde{\fD} \bigl[ z^\ell\bigr]
S \ell \Bigr)
\partial_{\widetilde{\fD}[\widehat{r}^{\ell}]}
\Bigl( \prod_{j \in F_N^+ }
  \rho\bigl(\Re\bigl[ \widehat{r}^j\bigr],\Im\bigl[ \widehat{r}^j\bigr] \bigr) 
  \Bigr)
 \biggr]
 \biggr\}
 \bigotimes_{j \in F_N^+}
 d \Bigl( \Re\bigl[ \widehat{r}^j\bigr],\Im \bigl[ \widehat{r}^j  \bigr]
 \Bigr) 
 \\
&\leq C\vert S \vert 
\sum_{q=1}^d  \biggl(
\sum_{k \in F_N^+} \vert k_q \vert \, \vert z^k
\vert \biggr)^2,
\end{split}
\end{equation}
and then
\begin{equation}
\label{eq:weak:one-sided:3}
\begin{split}
&- \int_{{\mathbb R}^{2 \vert F_N^+\vert}} 
\biggl\{
\sum_{\widetilde{\fD} = \Re,\Im}
\sum_{k, \ell \in F_N^+}
\biggl[
\fD \Bigl[ 
\widehat{\mathcal V}^k \Bigl(t,m^{N,\varepsilon}(r) * f_N\Bigr)
\Bigr] 
 \widehat{f}_N^k 
\\
&\hspace{40pt} \times 
\biggl[
\Bigl( {\fD} \bigl[ z^k  \bigr]
k \Bigr)
\cdot 
\Bigl( \widetilde{\fD} \bigl[ z^\ell\bigr]
S \ell \Bigr)
\partial_{\widetilde{\fD}[\widehat{r}^{\ell}]}
\Bigl( \prod_{j \in F_N^+ }
  \rho\bigl(\Re\bigl[ \widehat{r}^j\bigr],\Im\bigl[ \widehat{r}^j\bigr] \bigr) 
  \Bigr)
 \biggr]
 \biggr\}
 \bigotimes_{j \in F_N^+}
 d \Bigl( \Re\bigl[ \widehat{r}^j\bigr],\Im \bigl[ \widehat{r}^j  \bigr]
 \Bigr) 
 \\
&\leq C
\vert S \vert 
\sum_{q=1}^d 
\biggl(
\sum_{k \in F_N^+} \vert k_q \vert \, \vert z^k
\vert \biggr)^2,
\end{split}
\end{equation}
which prompts us to introduce the following definition:
\begin{defn}
\label{def:weak:one-sided:lip}
A bounded measurable function 
${\mathcal Z} : [0,T] \times {\mathcal P}({\mathbb T}^d) \times {\mathbb T}^d \rightarrow {\mathbb R}$
is said to be one-sided Lipschitz in the weak sense 
if, for any $c>1$, 
 there exists a constant $C$ such that, for any $N \geq 1$, 
 any $\varepsilon \in (0,1)$
 and any  
$\rho$ as in Definition 
\ref{def:mollification},  
it holds, for any $(t,m) \in [0,T] \times {\mathcal P}({\mathbb T}^d)$
with $m$ having a density lower bounded by $1/c$, 
and 
for any collection of complex numbers $(z^k)_{k \in F_N^+}$ and any symmetric matrix $S$, 
\begin{equation*}
\begin{split}
&- \int_{{\mathbb R}^{2 \vert F_N^+\vert}} 
\biggl\{
\sum_{\widetilde{\fD} = \Re,\Im}
\sum_{k, \ell \in F_N^+}
\biggl[
\fD \Bigl[ 
\widehat{\mathcal Z}^k \Bigl(t,m^{N,\varepsilon}(r) * f_N\Bigr)
\Bigr] 
 \widehat{f}_N^k 
\\
&\hspace{45pt} \times 
\biggl[
\Bigl( {\fD} \bigl[ z^k  \bigr]
k \Bigr)
\cdot 
\Bigl( \widetilde{\fD} \bigl[ z^\ell\bigr]
S \ell \Bigr)
\partial_{\widetilde{\fD}[\widehat{r}^{\ell}]}
\Bigl( \prod_{j \in F_N^+ }
  \rho\bigl(\Re\bigl[ \widehat{r}^j\bigr],\Im\bigl[ \widehat{r}^j\bigr] \bigr) 
  \Bigr)
 \biggr]
 \biggr\}
 \bigotimes_{j \in F_N^+}
 d \Bigl( \Re\bigl[ \widehat{r}^j\bigr],\Im \bigl[ \widehat{r}^j  \bigr]
 \Bigr) 
 \\
&\leq C
\vert S \vert 
\sum_{q=1}^d 
\biggl(
\sum_{k \in F_N^+} \vert k_q \vert \, \vert z^k
\vert \biggr)^2,
\end{split}
\end{equation*}
with $\widehat{\mathcal Z}^k(t,m) = \reallywidehat{\displaystyle 
{\mathcal Z}(t,m,\cdot)}^{-k}$. 
\end{defn}

\subsection{Uniqueness of weak solutions}

Here is now the main theorem:
\begin{thm}
\label{main:thm:MFG}
Let ${\mathbb P}$ be the probability measure on ${\mathcal P}({\mathbb T}^d)$ introduced in the statement of Theorem 
\ref{thm:probability:probability}. Then,
uniqueness holds everywhere on $[0,T]$, 
${\mathbb P}$-almost everywhere 
on ${\mathcal P}({\mathbb T}^d)$ and 
everywhere on ${\mathbb T}^d$
within the class of 
bounded measurable functions 
\begin{equation*}
{\mathcal Z} : [0,T] \times {\mathcal P}({\mathbb T}^d) \times {\mathbb T}^d \rightarrow {\mathbb R},
\end{equation*}
satisfying (with the same notation as in 
\eqref{eq:master:Fourier:coefficients})
\begin{enumerate}
\item 
$\displaystyle 
\sup_{N \geq 1} 
\textrm{\rm essup}_{(t,m) \in [0,T] \times {\mathcal P}_N}
\biggl[ 
\sup_{x \in {\mathbb T}^d} 
\Bigl\vert 
\sum_{k \in F_N} 
k \widehat{\mathcal{Z}}^k(t,m) e_k(x)
\Bigr\vert
+
\sum_{k \in F_N} \vert k\vert^4 \vert \widehat{\mathcal Z}^k(t,m) \vert^2 \biggr] < \infty;$
\item
${\mathcal Z}$ is one-sided Lipschitz in the weak sense, see
Definition 
\ref{def:weak:one-sided:lip};
\item
for any 
$k \in {\mathbb Z}^d$, 
for any $t \in [0,T]$, 
for any bounded (measurable) 
function $\varphi$ defined on ${\mathcal O}_{N_0}$, for some $N_0 \geq 1$,
\begin{equation*}
\lim_{N \rightarrow \infty} 
\int_{{\mathcal P}({\mathbb T}^d)} \widehat{\mathcal  Z}^k(t,m*f_N) 
\varphi \bigl( (\widehat{m}^j)_{j \in F_{N_0}^+} \bigr) d {\mathbb P}(m)
=  
\int_{{\mathcal P}({\mathbb T}^d)} \widehat{\mathcal  Z}^k(t,m) 
\varphi \bigl( (\widehat{m}^j)_{j \in F_{N_0}^+} \bigr) d {\mathbb P}(m).
\end{equation*}
\end{enumerate}
In particular, 
for every $t \in [0,T]$, for 
${\mathbb P}$ almost every $m \in \PP$ and 
for every $x \in {\mathbb T}^d$, 
${\mathcal Z}(t,x,m)$ is 
equal to 
${\mathcal V}(t,x,m)$,
with 
${\mathcal V}$ being as in the statement of 
Proposition \ref{prop:value:is:gen:master:equation}.
\end{thm}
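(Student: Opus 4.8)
The plan is to reduce uniqueness of the weak solution $\mathcal Z$ to the already-established uniqueness of the generalized solution of the HJB equation (Theorem \ref{thm:uniqueness:HJB}), by showing that any $\mathcal Z$ satisfying the three listed conditions necessarily derives from a potential $W$ which is a $d_{-2}$-Lipschitz, semi-concave generalized solution of \eqref{eq:HJB}, and hence $W=V$, so that $\mathcal Z=\mathcal V$ almost everywhere. This is exactly the strategy of the finite-state analysis in \cite{Cecchin:Delarue:CPDE}, transported to the Fourier-coefficient setting built up in Sections \ref{se:3}--\ref{sec:4}.

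First I would construct the potential. Fix $N\ge 1$. Equation \eqref{eq:master:weak:bb} says that, in the distributional sense on $[0,T]\times\mathcal O_N$, the vector field $k\mapsto\widehat{\mathcal Z}^k(t,m)$ is, up to the explicit Hamiltonian/Laplacian/$F$ terms and the correction $\zeta_N$, a gradient in the variables $(\Re[\widehat m^k],\Im[\widehat m^k])_{k\in F_N^+}$; more precisely, writing \eqref{eq:master:weak} as $\epsilon_{\fD}\partial_t\fD[\widehat{\mathcal Z}^k]=\partial_{\fD[\widehat m^k]}(\text{something})$, and using the compatibility (Schwarz) relations between $\partial_{\fD[\widehat m^k]}$ and $\partial_{\widetilde\fD[\widehat m^\ell]}$ that follow from \eqref{eq:master:weak:bb} holding for all $k$ simultaneously, one gets a closed $1$-form on each slice whose primitive $\widetilde W_N(t,\cdot)$ on $\mathcal O_N$ exists and satisfies $\partial_{\widehat m^k}\widetilde W_N=\widehat{\mathcal Z}^k$. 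The normalization $\widehat{\mathcal Z}^0\equiv 0$ together with condition $(ii)$ in Definition \ref{def:master:equation:gen} and an integration in time (using the terminal data $\delta G/\delta m$) pins down the constant of integration, forcing $\widetilde W_N(T,\cdot)=G$ up to a constant. Integrating \eqref{eq:master:weak:bb} against $\widehat m^k$ and summing over $k\in F_N$ — exactly reversing the computation in the proof of Proposition \ref{prop:value:is:gen:master:equation} — recovers \eqref{eq:HJB:gen} for $W_N$ with error controlled by $\zeta_N$, and \eqref{eq:master:weak:00} gives $\eta_N(c)\to 0$. Condition $(1)$ of the theorem (the uniform $L^\infty$ bound on $\sum k\widehat{\mathcal Z}^k e_k$ and the $\ell^2$ bound on $|k|^2\widehat{\mathcal Z}^k$) yields that $W_N(t,\cdot)$ is $d_{-2}$-Lipschitz with a constant uniform in $N$ (this is the converse direction of Proposition \ref{prop:4:7}: control of $\partial_x(\sum\partial_{\widehat m^k}W\,e_k)$ in sup norm is equivalent to the $d_{-2}$-Lipschitz bound), and condition $(2)$ (weak one-sided Lipschitz, Definition \ref{def:weak:one-sided:lip}, patterned on \eqref{eq:weak:one-sided:3}) is exactly the Fourier-coefficient reformulation of displacement semi-concavity of $W_N$, via Corollary \ref{cor:weak:semi-concavity}.

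Next I would pass to the limit $N\to\infty$ to produce a single function $W$ on $[0,T]\times\mathcal P(\mathbb T^d)$. The natural candidate is a weak-star limit of $(t,m)\mapsto W_N(t,m*f_N)$; condition $(3)$ of the theorem, which is precisely condition $(3)$ of Proposition \ref{prop:value:is:gen:master:equation}, guarantees (via the argument using Lemmas \ref{lem:W:mathcalW:Nn} and \ref{lem:W:mathcalW:Nn:2}, referred to in that proof) that the limit is unique, that $W$ is $d_{-2}$-Lipschitz and semi-concave uniformly in time, that $\partial_{\widehat m^k}W=\widehat{\mathcal Z}^k$ holds $\mathbb P$-a.e. in the directional sense of Theorem \ref{prop:rademacher}, and that the finite-dimensional approximate equations \eqref{eq:HJB:gen} are inherited in the limit so that $W$ is a generalized solution of \eqref{eq:HJB} in the sense of Definition \ref{defn:HJB:gen}. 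Then Theorem \ref{thm:uniqueness:HJB} forces $W=V$, and consequently $\widehat{\mathcal Z}^k(t,m)=\partial_{\widehat m^k}V(t,m)=\widehat{\mathcal V}^k(t,m)$ for every $t$, for $\mathbb P$-a.e.\ $m$, for every $k\in\mathbb Z^d\setminus\{0\}$; since $\widehat{\mathcal Z}^0\equiv 0=\widehat{\mathcal V}^0$, summing the Fourier series gives $\mathcal Z(t,m,x)=\mathcal V(t,m,x)$ for every $t$, $\mathbb P$-a.e.\ $m$, every $x$, which is the claim.

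I expect the main obstacle to be the rigorous construction and regularity of the potential $W_N$ from the conservative equation \eqref{eq:master:weak} in the distributional sense — i.e.\ showing the closedness of the relevant $1$-form and the existence of a single-valued, sufficiently regular primitive on the (non-convex, but connected and open) slice $\mathcal O_N$, while simultaneously extracting the sharp $d_{-2}$-Lipschitz and weak semi-concavity estimates with constants uniform in $N$. Closedness must be read off from \eqref{eq:master:weak:bb} by differentiating and using that it holds for all $k$; the time derivative appearing there (only in a distributional/weak sense by $(ii)$ of Definition \ref{def:master:equation:gen}) needs a mollification-in-time argument like the one already used in the proof of Theorem \ref{thm:uniqueness:HJB}. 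A secondary delicate point is matching the terminal condition: one must check that the constant of integration chosen on each slice is consistent across slices and with $G$, which is where $\widehat{\mathcal Z}^0\equiv 0$ and the explicit terminal data in $(ii)$ are used. Everything downstream — identifying $W$ with $V$ and concluding $\mathcal Z=\mathcal V$ — is then a direct appeal to Theorems \ref{thm:uniqueness:HJB} and \ref{prop:rademacher} and to Proposition \ref{prop:value:is:gen:master:equation}.
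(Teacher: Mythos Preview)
Your overall strategy is the paper's: build a potential on each slice from the conservative equation, transfer conditions (1) and (2) into the estimates of Proposition~\ref{prop:4:7} and Corollary~\ref{cor:weak:semi-concavity}, identify the potential with $V$, and then deduce $\mathcal Z=\mathcal V$ via Lemma~\ref{lem:W:mathcalW:Nn:2}. The construction of the slice-wise potential $W_N$ (the paper writes $U^{(N)}$) and its regularity---uniform $d_{W_1}$-Lipschitz constant, uniform bound, and the conclusions of Proposition~\ref{prop:4:7} and Corollary~\ref{cor:weak:semi-concavity}---are exactly as you sketch.

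The gap is in your passage to the limit. You propose to produce a single $W$ on $[0,T]\times\mathcal P(\mathbb T^d)$ as a weak-star limit of $(t,m)\mapsto W_N(t,m*f_N)$, verify that $W$ is a generalized solution in the sense of Definition~\ref{defn:HJB:gen}, and only then invoke Theorem~\ref{thm:uniqueness:HJB}. But Definition~\ref{defn:HJB:gen} requires the approximate HJB inequality \eqref{eq:HJB:gen} to hold $\textrm{Leb}_1\otimes\mathbb P_N$-a.e.\ on each slice $[0,T]\times\mathcal P_N$, and every $\mathcal P_N$ is a $\mathbb P$-null set (this is shown at the end of the proof of Proposition~\ref{prop:value:is:gen:master:equation}). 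A weak-star limit in $L^\infty(\mathcal P(\mathbb T^d),\mathbb P)$ therefore carries no information on the slices, and Lemmas~\ref{lem:W:mathcalW:Nn}--\ref{lem:W:mathcalW:Nn:2} do not provide any: they only identify derivatives in a weak sense against $\mathbb P$ and do not transfer the nonlinear equation from slice to slice. Your claim that ``the finite-dimensional approximate equations \eqref{eq:HJB:gen} are inherited in the limit'' is exactly the unsupported step.

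The paper sidesteps this entirely by not building a limiting $W$ and not invoking the \emph{statement} of Theorem~\ref{thm:uniqueness:HJB}. Instead it reuses its \emph{proof}: for each fixed $N$ it observes that $U^{(N)}$, though only defined on $\mathcal P_N$, satisfies the very same estimates (Proposition~\ref{prop:4:7} and Corollary~\ref{cor:weak:semi-concavity}) that drive Proposition~\ref{prop:generalized:solution} and Lemma~\ref{lem:flow}; hence the whole mollification/characteristics/Jacobian argument leading to \eqref{eq:proof:uniqueness:HJB:for:master} runs with $V_1=V$ and $V_2=U^{(N)}$ on the slice $\mathcal P_N$, giving $|V(0,m_0)-U^{(N)}(0,m_0)|\le\eta_N'$ for $m_0\in B_N(c)$. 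Replacing $m_0$ by $m*f_N$ yields $U^{(N)}(t,m*f_N)\to V(t,m)$ $\mathbb P$-a.e., and only at this point---with $W_1=W_2=V$ already in hand---does Lemma~\ref{lem:W:mathcalW:Nn:2} (combined with assumption~(3)) conclude $\mathcal Z=\mathcal V$. So the identification with $V$ happens slice by slice, \emph{before} any limit in $N$, and no global generalized solution other than $V$ is ever constructed.
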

The following remarks are in order.

\begin{rem}
\label{rem:reg:Z}
Item \emph{(1)} in the statement of Theorem 
\ref{main:thm:MFG}
says that 
$$\sup_{N \geq 1} 
\textrm{\rm essup}_{(t,m) \in [0,T] \times {\mathcal P}_N}
\sup_{x \in {\mathbb T}^d} 
\Bigl\vert 
\partial_{x} 
\Bigl( 
\sum_{k \in F_N} 
 \widehat{\mathcal{Z}}^k(t,m) e_k(x)
 \Bigr) 
\Bigr\vert < \infty.$$ 
Thus,
for any $N \geq 1$ and for almost every $(t,m) \in [0,T] \times {\mathcal P}_N$, 
 the function 
$x \in {\mathbb T}^d \mapsto
\sum_{k \in F_N}  
\mathcal{Z}^k(t,m) e_k(x)$
is Lipschitz 
continuous,
with a Lipschitz constant that is independent of $N$, $t$ and $m$. 
\end{rem}

\begin{rem}
It is worth noticing that the  formulation of uniqueness depends on the choice of the measure ${\mathbb P}$ in the statement of Theorem 
\ref{thm:probability:probability}. 
This is an intriguing observation since our choice for the measure ${\mathbb P}$
is somewhat arbitrary, 
recall the definition
\eqref{eq:Gamma_N} of the approximating measure ${\mathbb P}_N$. 
Alternatively, in order to formulate uniqueness 
without imposing a specific choice for 
${\mathbb P}$, 
we could also wonder about identifying 
$\widehat{\mathcal Z}^k(t,m)$ for $m \in {\mathcal P}_N$, for a fixed $N$, but this looks 
rather challenging. 
In fact, with our approach and with item $(iii)$ in 
Theorem \ref{thm:probability:probability}, we 
identify
\begin{equation*}
\int_{{\mathcal P}({\mathbb T}^d)} \widehat{\mathcal  Z}^k(t,m*f_N) 
\varphi \bigl( (\widehat{m}^j)_{j \in F_{N_0}^+} \bigr) d {\mathbb P}_N(m)
\end{equation*}
up to a remainder that tends to $0$ as $N$ tends to $\infty$.
 This remainder depends on 
$\varphi$ but can be made uniform when $\varphi$ is taken in a compact family of continuous 
functions. 
Interestingly, the above integral can be reformulated as an integral on 
${\mathcal O}_N$ equipped with the Lebesgue measure, 
which makes the interpretation easier.  
\end{rem}
\begin{proof}
\textit{First Step.}
We start with an arbitrary weak solution, as given in
Definition \ref{def:master:equation:gen}. 
We observe 
from \eqref{eq:master:weak}
that, for any $N \geq 1$, 
the functions $(\Re[\widehat{\mathcal Z}^k],\Im[\widehat{\mathcal Z}^k])_{k \in F_N^+}$ satisfy a system of 
hyperbolic 
equations of the form 
\begin{equation*}
\begin{split}
&\epsilon_{\fD} \partial_t \fD \Bigl[ \widehat{\mathcal Z}^k(t,m)  
\Bigr]  
+ \partial_{\fD[\widehat{m}^k]} H_N \Bigl(t,m,\Bigl(\Re\bigl[\widehat{\mathcal Z}^j(t,m) 
\bigr], 
\Im\bigl[\widehat{\mathcal Z}^j(t,m) 
\bigr]
\Bigr)_{j \in F_N^+} \Bigr)
=0,
\end{split}
\end{equation*}
for $\fD \in \{\Re,\Im\}$, $k \in F_N^+$ and $(t,m) \in [0,T] \times {\mathcal P}_N$, 
with ${\mathcal P}_N$ being identified with  
${\mathcal O}_N$ and with 
$\epsilon_{\Re}=1$ and $\epsilon_{\Im}=-1$. 

Following the proof of Theorem 6.6 in \cite{Cecchin:Delarue:CPDE} (which is itself adapted from 
\cite{kruzkov}), 
we deduce that, for almost every $t \in [0,T]$, we can find a function
$U^{(N)}(t,\cdot) : {\mathcal P}_N \rightarrow {\mathbb R}$ such that, almost everywhere on ${\mathcal P}_N$,   
\begin{equation*}
\epsilon_{\fD} \fD \Bigl[ \widehat {\mathcal Z}^k(t,m) \Bigr] =
\partial_{\fD[\widehat{m}^k]} U^{(N)}(t,m), \quad k \in F_N^+, 
\end{equation*}
which can be easily reformulated by means of 
\eqref{eq:gradient:complex:000} as 
\begin{equation*}
 \reallywidehat{\Bigl\{ {\mathcal Z}(t,m,\cdot) \Bigr\}}^{-k}=
\widehat {\mathcal Z}^k(t,m)
= \partial_{\widehat{m}^k} 
U^{(N)}(t,m).
\end{equation*}
Then, the function $U^N$ can be easily assumed to measurable in time (assuming for instance that 
$t \mapsto U^N(t,m_0)$ is measurable for a fixed $m_0 \in {\mathcal O}_N$). 
Following the same mollification procedure as in the second and third steps of 
the proof of \cite[Theorem 6.6]{Cecchin:Delarue:CPDE} and expanding the form of $H_N$, we can even assume that 
$U^{(N)}$ is Lipschitz in time and space on $[0,T] \times {\mathcal P}_N$ and satisfies, almost everywhere, the following equation
\begin{equation*}
\begin{split}
&\partial_t U^{(N)}(t,m) - 
 \int_{{\mathbb T}^d} H \biggl(y,  \i 2 \pi   \sum_{k \in F_N} k \partial_{\widehat m_k} U^{(N)}(t,m ) e_{k}(y) \biggr) d m(y)
\\
&\hspace{15pt} - \sum_{k \in F_N} 2 \pi^2 \vert k\vert^2 
\partial_{ \widehat{m}^{k} }
U^{(N)}(t,m) \widehat{m}^{k}  + F(m)
+ 2 \zeta_N(t,m) = 0,
\end{split}
\end{equation*}
with $U^{(N)}(T,m)=G(m)$ as boundary condition. 
Intuitively, 
the above equation is obtained by taking the anti-derivative in \eqref{eq:master:weak:bb}.
\vskip 5pt

\textit{Second Step.}
For the same value of $N$, 
and for 
$\varepsilon$ and $\rho$ and with the same notation as in Definition \ref{def:mollification}, 
we now consider the function 
\begin{equation*}
\widetilde{U}^{(N,\varepsilon,\rho)} : m \in {\mathcal P}_N \mapsto 
\int_{{\mathbb R}^{2 \vert F_N^+\vert}}
U^{(N)}\bigl(t,m^{N,\varepsilon}(r) \bigr) \prod_{k \in F_N^+} \rho
\bigl( \widehat{r}^k \bigr) 
\bigotimes_{k \in F_N^+} 
d \widehat{r}^k.
\end{equation*}
Pay attention that 
$\widetilde{U}^{(N,\varepsilon,\rho)}$
is not the same as 
$(U^{(N)})^{N,\varepsilon,\rho}$ in Definition 
\ref{def:mollification}
and in 
Corollary \ref{cor:mollif:time-space}, since the 
measure argument in $U^{(N)}$ is not convoluted by $f_N$. 

For a given $t \in [0,T]$, $\widetilde{U}^{(N,\varepsilon,\rho)}$ is continuously differentiable with respect to 
$(\widehat{m}^k)_{k \in F_N^+} \in {\mathcal O}_N$. Moreover, for almost every $t \in [0,T]$, for any two $m_1,m_2 \in {\mathcal P}_N$, we have
\begin{equation*}
\begin{split}
&U^{(N,\varepsilon,\rho)}\bigl(t,m_2 \bigr)
-U^{(N,\varepsilon,\rho)}\bigl(t,m_1 \bigr)
\\
&=
\int_0^1 
\biggl[ \sum_{k \in F_N} 
\partial_{\widehat{m}^k} 
U^{(N,\varepsilon,\rho)}\bigl( t , \lambda m_2 + (1-\lambda) m_1 
\bigr) \bigl(  \widehat{m}_2^k - \widehat{m}_1^k \bigr)
\biggr] d\lambda
\\
&= (1- \varepsilon) \int_0^1 
\biggl\{
\int_{{\mathbb R}^{2 \vert F_N^+ \vert}} \biggl[ 
\sum_{k \in F_N}  
\partial_{\widehat{m}^k} U^{(N)} \Bigl( t, \lambda m_2^{N,\varepsilon}(r) + 
(1- \lambda) m_1^{N,\varepsilon}(r) \Bigr)  \bigl( \widehat{m}^k_2- \widehat{m}_1^k \bigr) \biggr] 
\\
&\hspace{200pt} \times \prod_{j \in F_N^+} \rho
\bigl( \widehat{r}^j \bigr) 
\bigotimes_{j \in F_N^+} 
d \widehat{r}^j
\biggr\} d\lambda 
\\
&= (1- \varepsilon) \int_0^1 
\biggl\{
\int_{{\mathbb R}^{2 \vert F_N^+ \vert}} \biggl[  \int_{{\mathbb T}^d} 
\Bigl[ \sum_{k \in F_N \setminus \{0\}}
\widehat{{\mathcal Z}}^k
\Bigl( t, \lambda m_2^{N,\varepsilon}(r) + 
(1- \lambda) m_1^{N,\varepsilon}(r) \Bigr) e_k(x) \Bigr] d\bigl( m_2- m_1\bigr)(x) \biggr] 
\\
&\hspace{200pt} \times \prod_{j \in F_N^+} \rho
\bigl( \widehat{r}^j \bigr) 
\bigotimes_{j \in F_N^+} 
d \widehat{r}^j
\biggr\} d\lambda . 
\end{split}
\end{equation*}
By means of Remark 
\ref{rem:reg:Z}, 
we know that, 
for almost every $(t,m) \in [0,T] \times {\mathcal P}_N$, 
 the function $x \in {\mathbb T}^d \mapsto 
\sum_{k \in F_N \setminus \{0\}}
\widehat{{\mathcal Z}}^k
(t,m) e_k(x)$ is Lipschitz continuous (in $x$), 
uniformly in $(t,m)$ and in $N \geq 1$. 
This shows that 
the Lipschitz constant of $\widetilde U^{(N,\varepsilon,\rho)}(t,\cdot)$ in $m \in {\mathcal P}_N$ with respect to 
the distance $d_{W_1}$
is uniform with respect to $N \geq 1$ and almost every $t \in [0,T]$. 
Letting $\rho$ converge to the Dirac mass $\delta_0$ and then 
$\varepsilon$ tend to $0$, we deduce that 
the same is true for $U^{(N)}$: its Lipschitz constant in $m \in {\mathcal P}_N$
with respect to $d_{W_1}$ is uniform in $N \geq 1$ and $t \in [0,T]$ (since 
$U^{(N)}$ is already known to be continuous, it is quite straightforward to pass from 
almost every $t \in [0,T]$ to any $t \in [0,T]$). 

Moreover, 
by
using the growth properties of the Hamiltonian and by using 
item (1) in the statement of 
Theorem 
\ref{main:thm:MFG} together with \eqref{eq:master:weak:00}, we deduce that 
the essential supremum norm of 
$\partial_t U^{(N)}$ 
is bounded, uniformly with respect to $t$ and $N$, on 
any 
subset of ${\mathcal P}_N$ of the 
form $\{ m  \in {\mathcal P}_N :  \| m \|_\infty \leq c\}$, for any $c > 1$. 
In particular, using the fact that 
$U^{(N)}(T,\cdot) = G(\cdot)$, we deduce that, for any $c>1$, 
\begin{equation*}
\sup_{t \in [0,T]}
\int_{{\mathcal P}({\mathbb T}^d)} 
\bigl\vert U^{(N)}(t,m) 
\bigr\vert
{\mathbf 1}_{\{ m  \in {\mathcal P}_N :  \| m \|_\infty \leq c\}}
d {\mathbb P}_N(m) 
\end{equation*}
is bounded by a constant independent of $N$. For $c >1$, the 
subset 
$\{ m  \in {\mathcal P}_N :  \| m \|_\infty \leq c\}$ is of positive measure under ${\mathbb P}_N$
(as it contains a neighborhood of the Lebesgue measure), 
from which we deduce that 
\begin{equation*}
\sup_{N \geq 1} \sup_{t \in [0,T]}
\inf_{m \in {\mathcal P}_N} 
\bigl\vert U^{(N)}(t,m) \bigr\vert < \infty.
\end{equation*}
Using the Lipschitz property of $U^{(N)}$ in space, we get 
\begin{equation}
\label{eq:proof:uniqueness:bound:UN}
\sup_{N \geq 1} \sup_{t \in [0,T]}
\sup_{m \in {\mathcal P}_N} 
\bigl\vert U^{(N)}(t,m)\bigr\vert < \infty.
\end{equation}

\textit{Third Step.} 
By assumption (1) in the statement, 
we observe that  
$U^{(N)}$ satisfies the two main conclusions of Proposition 
\ref{prop:4:7},
for a fixed value of $N$ therein and for almost every $(t,m) \in [0,T] \times {\mathcal P}_N$. 
Similarly, using the weak 
one-sided Lipschitz property of ${\mathcal Z}$ in 
Definition \ref{def:weak:one-sided:lip}
and reverting the computations 
in \eqref{eq:weak:one-sided:1}--\eqref{eq:weak:one-sided:2}--\eqref{eq:weak:one-sided:3}, 
we can easily deduce that  
$U^{(N)}$ satisfies the conclusions of Corollary
\ref{cor:weak:semi-concavity}, say in the form of 
\eqref{eq:corollary:weak:semi-concavity:2}, again for a fixed value of $N$.  
{Importantly, this suffices to
repeat the arguments underpinning the proof of 
Theorem 
\ref{thm:uniqueness:HJB}}. 
Notice indeed that, for a given value of $N$, the fact that 
$U^{(N)}$ is defined on $[0,T] \times {\mathcal P}_N$ suffices to 
give a meaning to $(U^{(N)})^{N,\varepsilon,\rho}$
in Definition 
\ref{def:mollification}
and then to follow the computations of
Proposition 
\ref{prop:generalized:solution}
and
Theorem \ref{thm:uniqueness:HJB}
when $N$ therein is fixed. 
The key fact
is that, by combining the assumption
\eqref{eq:master:weak:00} 
with 
Theorem \ref{thm:FPK:mollified:proof} (which permit to handle the remainder $\zeta_N$ along the 
characteristics 
\eqref{eq:uniqueness:MKV}), 
we still have 
\eqref{eq:proof:uniqueness:HJB:for:master}, namely
for an 
initial $m_0 \in B_N(c)= {\mathcal P}_N \cap \{ m :  \sup_{x \in \bT^d} \vert \nabla m(x)  \vert \leq c, \ \inf_{x \in \bT^d}  m(x)   \geq 1/c \}$ for a constant $c>1$, 
\begin{equation*}
\begin{split}
&\int_{{\mathbb R}^{2\vert F_N^+ \vert}} 
\Bigl\vert \bigl( V^{N,\varepsilon,\rho} - (U^{(N)})^{N,\varepsilon,\rho} \bigr)\bigl(0,m_0(r)\bigr) \Bigr\vert
\prod_{j \in F_N^+} \rho_0 \bigl(  \widehat{r}^j \bigr)
\bigotimes_{j \in F_N^+} 
  d \widehat{r}^j 
\\
&\hspace{15pt} \leq \eta_{N,\varepsilon} + C_{N,\rho_0} 
\int_0^T \biggl[\int_{{\mathbb R}^{2\vert F_N^+\vert}} R^{N,\varepsilon,\rho}\bigl(t,(\widehat y^k)_{k \in F_N^+} \bigr)
{\mathbf 1}_{\{y \in {\mathcal P}_N\}} \bigotimes_{j \in F_N^+} 
  d \widehat{y}^j  
\biggr]
 dt,
 \end{split}
 \end{equation*}
with 
$\lim_{(N,\varepsilon) \rightarrow (\infty,0)} \eta_{N,\varepsilon} = 0$, 
with 
$C_{N,\rho_0}$ depending on $N$,  
$\rho_0$ and $c$, but independent of $\varepsilon$ and $\rho$, 
and with the remainder 
$R^{N,\varepsilon,\rho}$ being
as in the statement of Proposition 
\ref{prop:generalized:solution}. 

 Letting $\rho$ tend to the Dirac mass at $0$ for a fixed value of $N$ and invoking 
 Proposition \ref{prop:generalized:solution}, we can get rid of the term containing 
 $R^{N,\varepsilon,\rho}$ in the above inequality. 
 Using the regularity of $V$ and $U^{(N)}$ with respect to 
 $d_{W_1}$, we deduce that there exists a sequence 
 $(\eta_N')_{N \geq 1}$, converging to $0$, such that 
 $\vert V(0,m_0) - U^{(N)}(0,m_0) \vert \leq \eta_{N}'$ when $m_0 \in B_N(c)$. 
 Replacing $m_0$ by $m * f_N$,
 we deduce that 
 $(V(0,m*f_N) - U^{(N)}(0,m*f_N))_{N \geq 1}$ tends to $0$ for any $m \in \PP$ with a continuously differentiable strictly positive density. 
Recalling from $(i)$ in Theorem 
 \ref{thm:probability:probability}
 that, for ${\mathbb P}$ almost every 
 $m$, 
 $m$
has a continuously differentiable strictly positive density, we deduce that, 
for ${\mathbb P}$ almost every 
 $m$,
 $(V(0,m*f_N) - U^{(N)}(0,m*f_N))_{N \geq 1}$
 tends to $0$. Obviously, the same holds true for any initial time $t \in [0,T]$ instead of $0$. 
  
We eventually deduce that, for any $t \in [0,T]$, the sequence
 $(m \in {\mathcal P}({\mathbb T}^d) \mapsto 
 U^{(N)}(t,m*f_N))_{N \geq 1}$ 
 converges
 to $V(t,\cdot)$
  for the weak-star topology $\sigma^*(L^\infty( {\mathcal P}({\mathbb T}^d),{\mathbb P});L^1( {\mathcal P}({\mathbb T}^d),{\mathbb P}))$. 
The next step is to apply 
Lemma 
\ref{lem:W:mathcalW:Nn:2}
below
to $({\mathcal W}_1,W_1)=({\mathcal Z},V)$ and 
 $({\mathcal W}_2,W_2)=({\mathcal V},V)$, from which we deduce that 
 ${\mathcal Z}$ and ${\mathcal V}$ are equal.
 The key point is indeed to observe from assumption (3) in the statement that, for any $t \in [0,T]$, 
for any $k \in {\mathbb Z}^d \setminus \{0\}$,  
the sequence
$(m \mapsto \widehat{\mathcal Z}^k(t,m*f_{N}))_{N \geq 1}$ 
converges
to 
$m \mapsto \widehat{\mathcal Z}^k(t,m)$
 for the weak-star topology $\sigma^*(L^\infty({\mathcal P}({\mathbb T}^d),{\mathbb P});L^1({\mathcal P}({\mathbb T}^d),{\mathbb P}))$. In this respect, 
the form of assumption (3) is sufficient to identify uniquely  
 any weak limit: this follows from the 
 description of the Borel $\sigma$-algebra 
 on $\PP$ provided by Lemma 
  \ref{lem:Borel} together with a monotone class argument.
 We deduce that, for any $t \in [0,T]$, for ${\mathbb P}$ almost every 
 $m \in \PP$, the two functions 
 ${\mathcal V}(t,m,\cdot)$
 and 
 ${\mathcal Z}(t,m,\cdot)$
 coincide. 
\end{proof}

\subsection{Auxiliary lemmas}

We now provide two important auxiliary lemmas that we invoked in the proofs
of 
Proposition 
\ref{prop:value:is:gen:master:equation}
and
Theorem
\ref{main:thm:MFG}.

\begin{lem}
\label{lem:W:mathcalW:Nn}
Let ${\mathcal W} : {\mathcal P}({\mathbb T}^d) \times {\mathbb T}^d \rightarrow {\mathbb R}$
and 
$W : {\mathcal P}({\mathbb T}^d) \rightarrow {\mathbb R}$
 be bounded measurable functions such that
for a collection of bounded functions $(W^{(N)} :  {\mathcal P}_{N} \rightarrow {\mathbb R})_{N \geq 1}$, with each $W^{(N)}$ being Lipschitz continuous, the following three assumptions hold true: 
\vskip 5pt

$(i)$ For any $N \geq 1$, any $k \in F_{N}^+$ and almost every 
$m \in {\mathcal P}_{N}$, 
$\widehat{\mathcal W}^k(m) = \partial_{\widehat{m}_k} 
W^{(N)}(m)$, 
with $\widehat{\mathcal W}^k(m): = \reallywidehat{\displaystyle {\mathcal W}(m,\cdot)}^{-k}$; 
\vskip 5pt

$(ii)$ Up to a common subsequence,  
the functions 
$(m \mapsto \widehat{\mathcal W}^k(m*f_{N}))_{N \geq 1}$ 
converge,  
for each $k \in {\mathbb Z}^d \setminus \{0\}$,  
to 
$\widehat{\mathcal W}^k$
 for the weak-star topology $\sigma^*(L^\infty({\mathcal P}({\mathbb T}^d),
 {\mathbb P});L^1({\mathcal P}({\mathbb T}^d),
 {\mathbb P}))$;
\vskip 5pt

$(iii)$ 
Up to the same subsequence as in $(ii)$, 
the 
functions
$(m \mapsto
 W^{(N)}(m *f_N))_{N \geq 1}$
converge
to $W$
 for the same weak-star topology $\sigma^*(L^\infty({\mathcal P}({\mathbb T}^d),
 {\mathbb P});L^1({\mathcal P}({\mathbb T}^d),
 {\mathbb P}))$. 
\vskip 5pt

 Then, for any $\delta >0$ and $c>1$, there exists 
 an integer $N_{c,\delta}$ such that, for any $N_0 \geq N_{c,\delta}$, 
 any $k_0 \in F_{N_0}^+$,  any smooth function $\varphi : {\mathbb R}^{2 \vert F_{N_0}^+ \vert} \rightarrow {\mathbb R}$
 whose support is included in 
 $\{ (\widehat{m}^j)_{j \in F_{N_0}^+} \in {\mathbb C}^{\vert F_{N_0}^+\vert} 
 \simeq 
 {\mathbb R}^{2 \vert F_{N_0}^+\vert} 
 : 
1 + 2 \inf_{x \in {\mathbb T}^d}  \sum_{j \in F_{N_0}^+} \Re[ \widehat{m}^j e_{-j}(x) ]
\geq 1/c\}$, 
 \begin{equation*}
\begin{split} 
 &\biggl\vert 
 \int_{{\mathcal P}({\mathbb T}^d)}  W(m) \partial _{\widehat{m}^{k_0}} \varphi\Bigl( \bigl( \widehat{m}^k \bigr)_{k \in F_{N_0}^+} \Bigr)d {\mathbb P}(m)
\\
&\hspace{15pt} +    
 \int_{\mathcal P({\mathbb T}^d)}  \biggl\{ \widehat{\mathcal W}^{k_0}( m )
  - 
2 W (m  ) 
 \vert k_0 \vert^{2pd} 
 \widehat{m}^{k_0} \biggr\}
\varphi\Bigl( \bigl( \widehat{m}^k \bigr)_{k \in F_{N_0}^+} \Bigr)
d {\mathbb P}(m) 
 \biggr\vert \leq
 C \delta,
 \end{split}
 \end{equation*} 
 for a constant $C$ independent of $N_0$ and 
 depending on $\varphi$ only through 
 $\| \varphi \|_\infty$ and 
 $
 \|\partial_{\widehat{m}^{k_0}} \varphi \|_\infty$. 
\end{lem}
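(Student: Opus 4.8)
The goal is to pass to the limit $N\to\infty$ in the finite-dimensional identity relating $W^{(N)}$ and the Fourier coefficients $\widehat{\mathcal W}^k$ on the slice ${\mathcal P}_N$, and to match it against the measure ${\mathbb P}$. The starting point is assumption $(i)$: for each $N$ and $k\in F_N^+$, on ${\mathcal P}_N$ one has $\widehat{\mathcal W}^k(m)=\partial_{\widehat m^k}W^{(N)}(m)$, which after an integration by parts against a test function $\varphi$ on ${\mathcal O}_N$ and against the Gaussian weight defining ${\mathbb P}_N$ (see \eqref{eq:Gamma_N}) produces a term $\vert k\vert^{2pd}\widehat m^k$ coming from the differentiation of the density $\exp(-\sum_k\vert k\vert^{2pd}\vert\widehat m^k\vert^2)$. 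The plan is: first write the exact integration-by-parts identity on $({\mathcal O}_{N_0},{\mathbb P}_{N_0})$ for $N_0$ fixed, namely
\begin{equation*}
\int_{{\mathcal P}({\mathbb T}^d)}\widehat{\mathcal W}^{k_0}(m)\,\varphi\bigl((\widehat m^k)_{k\in F_{N_0}^+}\bigr)\,d{\mathbb P}_{N_0}(m)
=-\int_{{\mathcal P}({\mathbb T}^d)}W^{(N_0)}(m)\Bigl[\partial_{\widehat m^{k_0}}\varphi-2\vert k_0\vert^{2pd}\widehat m^{k_0}\varphi\Bigr]d{\mathbb P}_{N_0}(m),
\end{equation*}
valid because $\varphi$ is compactly supported inside $\{m:\ \inf_x m(x)\geq1/c\}\subset{\mathcal O}_{N_0}$, hence the boundary terms vanish. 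This is the clean, $N_0$-only version of the desired inequality with ${\mathbb P}_{N_0}$ in place of ${\mathbb P}$.

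The second step is to replace ${\mathbb P}_{N_0}$ by ${\mathbb P}$ in this identity. Here one invokes item $(iii)$ of Theorem \ref{thm:probability:probability}: since $\varphi$, $\partial_{\widehat m^{k_0}}\varphi$ and $\widehat m^{k_0}\varphi$ are bounded measurable functions of the finitely many coordinates $(\widehat m^k)_{k\in F_{N_0}^+}$, the integrals against ${\mathbb P}_{N_0}$ and against ${\mathbb P}$ differ by a quantity that tends to $0$ as $N_0\to\infty$ (uniformly over $\varphi$ with controlled sup-norms). This is exactly where the constant $N_{c,\delta}$ enters: choose $N_0\geq N_{c,\delta}$ so that the ${\mathbb P}_{N_0}$-vs-${\mathbb P}$ discrepancy for each of the three integrands is at most $\delta$ times the relevant norm. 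After this step the identity reads, up to $C\delta$,
\begin{equation*}
\int\widehat{\mathcal W}^{k_0}(m)\varphi\,d{\mathbb P}(m)+\int W^{(N_0)}(m)\bigl[\partial_{\widehat m^{k_0}}\varphi-2\vert k_0\vert^{2pd}\widehat m^{k_0}\varphi\bigr]d{\mathbb P}(m)\ \text{``}=\text{''}\ 0,
\end{equation*}
but with $W^{(N_0)}$ still appearing, not the limit $W$, and with $\widehat{\mathcal W}^{k_0}$ already the true (non-mollified) coefficient.

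The third and most delicate step is to reconcile the two different ``resolutions'' at which the objects live: $\widehat{\mathcal W}^{k_0}$ is the limit object, while $W^{(N_0)}$ is a slice function. The trick — mirroring the proof of Proposition \ref{prop:value:is:gen:master:equation} and the use of the mollification $m\mapsto m*f_N$ — is to insert $m*f_N$ as argument and let $N\to\infty$ with $N_0$ fixed. Concretely, one first writes the $N_0$-identity at points $m*f_N$ for $N\geq N_0$ (legitimate since $m*f_N\in{\mathcal P}_N\supset$ has its $F_{N_0}$-coordinates equal to $\widehat m^k\widehat f_N^k$, which converge to $\widehat m^k$), uses assumptions $(ii)$ and $(iii)$ to pass $\widehat{\mathcal W}^{k_0}(m*f_N)\rightharpoonup\widehat{\mathcal W}^{k_0}(m)$ and $W^{(N)}(m*f_N)\rightharpoonup W(m)$ in $\sigma^*(L^\infty,L^1)$, and tests against the fixed $L^1({\mathbb P})$-function $\varphi$ (resp.\ $\partial_{\widehat m^{k_0}}\varphi$, $\widehat m^{k_0}\varphi$). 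The dominated-convergence-type control needed to handle the dependence of the integrand on $\widehat m^k\widehat f_N^k$ versus $\widehat m^k$ is supplied by the smoothness of $\varphi$ and the uniform bound $\sup_N\|W^{(N)}\|_\infty<\infty$; the factor $\vert k_0\vert^{2pd}$ is a harmless fixed constant once $k_0$ is fixed. I expect the main obstacle to be precisely this interchange of the two limits — keeping track that the error between evaluating $\varphi$ at the truncated Fourier data and at the genuine data vanishes, while simultaneously the weak-$\ast$ convergences of $\widehat{\mathcal W}^{k_0}(\cdot*f_N)$ and $W^{(N)}(\cdot*f_N)$ hold only up to the common subsequence of $(ii)$--$(iii)$ and only in the weak sense, so that no strong convergence of the Hamiltonian-type nonlinearity is available. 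Assembling these three steps, absorbing all the vanishing errors into a single $C\delta$ with $C$ depending on $\varphi$ only through $\|\varphi\|_\infty$ and $\|\partial_{\widehat m^{k_0}}\varphi\|_\infty$, and choosing $N_{c,\delta}$ accordingly, yields the claimed bound.
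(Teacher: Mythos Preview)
Your plan has a genuine gap at the junction between steps 2 and 3. After step~1 you have the integration-by-parts identity under $\mathbb P_{N_0}$ involving $W^{(N_0)}(m)$ and $\widehat{\mathcal W}^{k_0}(m)$ for $m\in\mathcal P_{N_0}$. When you invoke item~(iii) of Theorem~\ref{thm:probability:probability} to replace $\mathbb P_{N_0}$ by $\mathbb P$, the integrands must be read as functions of the \emph{finite} coordinates $(\widehat m^k)_{k\in F_{N_0}^+}$: what you obtain is therefore an identity with $W^{(N_0)}$ and $\widehat{\mathcal W}^{k_0}$ evaluated at ${\mathscr I}_{N_0}((\widehat m^k)_{k\in F_{N_0}^+})$, \emph{not} at $m$ itself and not at $m\ast f_{N_0}$ either. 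Your claim that after step~2 ``$\widehat{\mathcal W}^{k_0}$ [is] already the true (non-mollified) coefficient'' is thus incorrect. The hypotheses (ii)--(iii) concern $W^{(N)}(m\ast f_N)$ and $\widehat{\mathcal W}^{k_0}(m\ast f_N)$ with the \emph{same} index $N\to\infty$ on the slice function and on the mollifier; they give no control on $W^{(N_0)}$ with $N_0$ fixed, nor on evaluations at the bare truncation ${\mathscr I}_{N_0}((\widehat m^k)_k)$. Your step-3 suggestion to ``insert $m\ast f_N$ into the $N_0$-identity'' is not meaningful: the identity is an integral over $m$, not a pointwise statement, and there is no variable slot into which $m\ast f_N$ can be substituted without discarding the identity of step~1 altogether.

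The paper proceeds in the opposite order: it integrates by parts at level $N$ (not $N_0$), testing $W^{(N)}(m\ast f_N)$ against $\partial_{\widehat m^{k_0}}\varphi$ under $\mathbb P_N$, with $N_0$ serving only as the scale of $\varphi$. The obstacle is that the integration runs over $\mathcal O_N$, whose boundary is not captured by the support of $\varphi$ (which localizes only the first $N_0$ coordinates). This is handled by a localization step that is entirely absent from your plan: by Lemmas~\ref{lem:25:second} and~\ref{lem:27}, one inserts at cost $\le\delta$ the indicator of $\{|\widehat m^k|<a_0|k|^{-5d/2}\ \text{for}\ k\in F_N^+\setminus F_{N_0}^+\}$; on that event intersected with the support of $\varphi$, membership in $\mathcal O_N$ is automatic (Remark~\ref{rem:extension:lem:25:second}), so $\mathbf 1_{\mathcal O_N}$ drops out and the integral is over the full $\mathbb R^{2|F_N^+|}$. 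Because the inserted indicator depends only on the tail coordinates, the integration by parts in $\widehat m^{k_0}\in F_{N_0}^+$ has no boundary term. The derivative hitting $W^{(N)}(m\ast f_N)$ produces $\widehat{\mathcal W}^{k_0}(m\ast f_N)\,\widehat f_N^{k_0}$ (note the chain-rule factor $\widehat f_N^{k_0}\to 1$) together with the Gaussian correction $-2|k_0|^{2pd}\widehat m^{k_0}W^{(N)}(m\ast f_N)$. After removing the localization and swapping $\mathbb P_N$ for $\mathbb P$ via Theorem~\ref{thm:probability:probability}(iii), the weak-$\ast$ convergences (ii) and (iii) apply directly and close the argument.
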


Notice that the role of $N_0$ in the statement is to
adjust the size of the support of the test function 
$\varphi$.

\begin{proof}
Throughout
the proof, we fix $\delta >0$ and $c>1$ and then choose 
$N_0 \geq 1$, $\vartheta : [0,T] \rightarrow {\mathbb R}$
 and $\varphi : {\mathbb R}^{2 \vert F_{N_0}^+ \vert} \rightarrow {\mathbb R}$ as in the statement. 
Without any loss of generality, we also do as if the subsequences in assumptions $(ii)$ and 
$(iii)$ were the full sequence $(N)_{N \geq 1}$ itself.  
 
By item $(iii)$ in the statement, we get 
 \begin{equation*}
 \begin{split}
  \lim_{N \rightarrow \infty}
 \int_{{\mathcal P}({\mathbb T}^d)} W^{(N)}\bigl(m*f_{N}\bigr) 
  \varphi\Bigl( \bigl( \widehat{m}^k \bigr)_{k \in F_{N_0}^+} \Bigr) d {\mathbb P}(m)
 & =
\int_{{\mathcal P}({\mathbb T}^d)} W(m)  \varphi\Bigl( \bigl( \widehat{m}^k \bigr)_{k \in F_{N_0}^+} \Bigr) d {\mathbb P}(m).
\end{split}
 \end{equation*}
Using the fact that the functions 
$(W^{(N)})_{N \geq 1}$ are uniformly bounded
to get the first equality and
invoking 
 $(iii)$ in 
the statement of 
Theorem 
\ref{thm:probability:probability}, the above limit can be reformulated as
\begin{equation}
\label{eq:lem:5:7:step:1}
 \begin{split}
& \lim_{N \rightarrow \infty}
 \int_{\PP} W^{(N)}\bigl(m*f_{N}\bigr) 
  \varphi\Bigl( \bigl( \widehat{m}^k \bigr)_{k \in F_{N_0}^+} \Bigr) d {\mathbb P}_{N}(m) 
 =
\int_{{\mathcal P}({\mathbb T}^d)} W(m)    \varphi\Bigl( \bigl( \widehat{m}^k \bigr)_{k \in F_{N_0}^+} \Bigr) d {\mathbb P}(m).
\end{split}
 \end{equation}
Here, we can easily replace 
$\varphi$ by its partial derivative 
$\partial_{\widehat{m}^{k_0}} \varphi$, 
for a given $k_0 \in F_{N_0}^+$,
 \begin{equation*}
 \begin{split}
& \lim_{N \rightarrow \infty} 
 \int_{\PP}  W^{(N)}\bigl(m*f_{N}\bigr)   \partial _{\widehat{m}^{k_0}} \varphi\Bigl( \bigl( \widehat{m}^k \bigr)_{k \in F_{N_0}^+} \Bigr)d {\mathbb P}_{N}(m) 
 \\
 &\hspace{15pt} = 
 \int_{{\mathcal P}({\mathbb T}^d)} W(m)    \partial _{\widehat{m}^{k_0}} \varphi\Bigl( \bigl( \widehat{m}^k \bigr)_{k \in F_{N_0}^+} \Bigr) d {\mathbb P}(m).
 \end{split}
 \end{equation*}  
 We now recall the form of 
 ${\mathbb P}_{N}$ in 
 \eqref{eq:Gamma_N}--\eqref{eq:I_N} in order to study the left-hand side. 
 We have
  \begin{equation*}
 \begin{split}
&  \int_{\PP}  W^{(N)}\bigl(m*f_{N}\bigr)  \partial _{\widehat{m}^{k_0}} \varphi\Bigl( \bigl( \widehat{m}^k \bigr)_{k \in F_{N_0}^+} \Bigr)d {\mathbb P}_{N}(m) 
\\
&= \frac1{Z_{N}} 
 \int_{{\mathcal O}_{N}} W^{(N)} \bigl( m*f_{N}\bigr)  \partial _{\widehat{m}^{k_0}} \varphi\Bigl( \bigl( \widehat{m}^k \bigr)_{k \in F_{N_0}^+} \Bigr)  
  \exp \Bigl( - \sum_{k \in F_{N}^+} \vert k\vert^{2pd} { \vert \widehat{m}^k \vert^2} \Bigr)
\bigotimes_{k \in F_{N}^+}
d \widehat{m}^k,
 \end{split}
 \end{equation*}  
with $p \geq 5$
fixed 
(see \eqref{eq:Gamma_N} for the original occurrence of $p$)
and with $m$ in the second line being obviously identified with 
${\mathscr I}_{N}((\widehat{m}^k)_{k \in F_{N}^+})$. 

Fix now $N \geq 1$. By Lemmas
\ref{lem:25:second} 
and
\ref{lem:27} in the appendix, we can choose $N_0$
large enough such that, 
if
$N \geq N_0$ and the support of $\varphi$ is included in ${\mathscr I}_{N}^{-1}(A_{N_0})$ (with $A_{N_0}$ as in the statement of 
Lemma \ref{lem:25:second}),
then
\begin{equation}
\label{eq:pass:from:N0:N} 
{\mathbb P}_{N}
\biggl( 
\Bigl\{ 
(\widehat{m}^k)_{k \in F_{N_0}^+} \in \textrm{\rm support}(\varphi)
\Bigr\} 
\cap 
\biggl\{
\sum_{k \in F_{N}^+ \setminus F_{N_0}^+}
 \vert \widehat{m}^k \vert 
  \geq  \frac{a_0}{\vert k \vert^{5d/2}} \biggr\} \biggr) 
\leq \delta,
\end{equation}
for $a_0$ fixed as in the statement of Lemma 
\ref{lem:25:second} (in particular, $a_0$ only depends on the dimension). By the same statement, the simple fact that 
$${\mathscr I}_{N_0}((\widehat{m}^k)_{k \in F_{N_0}^+}) \in A_{N_0}
\quad \text{and} 
\quad \sum_{k \in F_{N}^+ \setminus F_{N_0}^+}
 \vert \widehat{m}^k \vert 
  < \frac{a_0}{\vert k \vert^{5d/2}}$$
  implies 
  that $(\widehat{m}^k)_{k \in F_{N}^+} \in {\mathcal O}_{N}$. 
  For such a choice, we get that 
  \begin{equation*}
\begin{split} 
&\biggl\vert \int_{\PP}  W^{(N)}\bigl(m*f_{N}\bigr)  \partial _{\widehat{m}^{k_0}} \varphi\Bigl( \bigl( \widehat{m}^k \bigr)_{k \in F_{N_0}^+} \Bigr)d {\mathbb P}_{N}(m)  
\\
&\hspace{15pt} - \frac1{Z_{N}}    \int_{{\mathbb R}^{2 \vert F_{N}^+ \vert}} W^{(N)}\bigl( m*f_{N}\bigr) \partial _{\widehat{m}^{k_0}} \varphi\Bigl( \bigl( \widehat{m}^k \bigr)_{k \in F_{N_0}^+} \Bigr) 
\\
&\hspace{50pt} \times 
{\mathbf 1}_{\{
\sum_{k \in F_{N}^+ \setminus F_{N_0}^+}
 \vert \widehat{m}^k \vert 
  < a_0/\vert k \vert^{5d/2}\}}
\exp \Bigl( - \sum_{k \in F_{N}^+} \vert k\vert^{2pd} { \vert \widehat{m}^k \vert^2} \Bigr)
\bigotimes_{k \in F_{N}^+}
d \widehat{m}^k \biggr\vert \leq C \delta,  
 \end{split}
 \end{equation*}  
 with $C$ depending on 
 $\varphi$ through 
 $\|\partial_{\widehat{m}^{k_0}} \varphi \|_\infty$, but being independent of $N$.
   
 Then, we can make an integration by parts in the second term in the left-hand side. 
This leads to two terms: 
$(a)$ 
one derivative is acting on   $W^{(N)}$ and leads to 
$\widehat{\mathcal W}^{k_0}$ (using item $(i)$ in the statement); $(b)$ the other derivative is acting on the density. 
Therefore, we obtain 
 \begin{equation*}
 \begin{split}
&\biggl\vert   \int_{\PP}  W^{(N)}\bigl(m*f_{N}\bigr)  \partial _{\widehat{m}^{k_0}} \varphi\Bigl( \bigl( \widehat{m}^k \bigr)_{k \in F_{N_0}^+} \Bigr)d {\mathbb P}_{N}(m)
\\
&\hspace{15pt} + \frac1{Z_{N}}   
 \int_{{\mathbb R}^{2 \vert F_{N}^+ \vert}}  \biggl\{ \partial_{\widehat{m}^{k_0}} W^{(N)}\bigl( m*f_{N}\bigr)
 \widehat{f}_{N}^{k_0}  - 
2 W^{(N)}\bigl( m*f_{N}\bigr) 
  \vert k_0 \vert^{2pd} 
 \widehat{m}^{k_0} \biggr\}
\\
&\hspace{30pt} \times  
{\mathbf 1}_{\{
\sum_{k \in F_{N}^+ \setminus F_{N_0}^+}
 \vert \widehat{m}^k \vert 
  < a_0/\vert k \vert^{5d/2}\}}
    \varphi\Bigl( \bigl( \widehat{m}^k \bigr)_{k \in F_{N_0}^+} \Bigr)  \exp \Bigl( - \sum_{k \in F_{N}^+} \vert k\vert^{2pd} { \vert \widehat{m}^k \vert^2} \Bigr)
\bigotimes_{k \in F_{N}^+}
d \widehat{m}^k
 \biggr\vert 
 \\
 &\leq C \delta,
 \end{split}
 \end{equation*}
and then,
using once 
again 
\eqref{eq:pass:from:N0:N} together with the 
bound 
$\sup_{N \geq 1}
 \textrm{\rm essup}_{m \in {\mathcal P}_N}
\vert 
\partial_{\widehat{m}^{k_0}} W^{(N)}(m) \vert 
= 
\sup_{N \geq 1}
 \textrm{\rm essup}_{m \in {\mathcal P}_N}
\vert 
\widehat{\mathcal W}^{k_0}(m)
 \vert < \infty$, 
 \begin{equation*}
 \begin{split}
&\biggl\vert    \int_{\PP} W^{(N)}\bigl(m*f_{N}\bigr)  \partial _{\widehat{m}^{k_0}} \varphi\Bigl( \bigl( \widehat{m}^k \bigr)_{k \in F_{N_0}^+} \Bigr)d {\mathbb P}_{N}(m)  
\\
&\hspace{15pt} +    
 \int_{{\mathcal P}({\mathbb T}^d)}  \biggl\{ \partial_{\widehat{m}^{k_0}} W^{(N)}\Bigl( m*f_{N}\Bigr)
 \widehat{f}_{N}^{k_0}  - 
2 W^{(N)}\bigl( m*f_{N}\bigr) 
  \vert k_0 \vert^{2pd} 
 \widehat{m}^{k_0} \biggr\}
       \varphi\Bigl( \bigl( \widehat{m}^k \bigr)_{k \in F_{N_0}^+} \Bigr)
d {\mathbb P}_{N}(m)  
  \biggr\vert
  \\
  &\leq C \delta, 
 \end{split}
 \end{equation*}
 with $C$ now depending on $\varphi$ through $\| \varphi \|_\infty$
 and 
 $\|\partial_{\widehat{m}^{k_0}} \varphi \|_\infty$. 
Recalling again that 
$  \partial_{\widehat{m}^{k_0}} W^{(N)}(m*f_{N}) = 
\widehat{\mathcal W}^{k_0}(m*f_{N})$ 
almost everywhere, we can use assumption $(ii)$ in the statement in order to pass to the limit in the term 
$\partial_{\widehat{m}^{k_0}} W^{(N)}( m*f_{N})$. 
Recalling 
\eqref{eq:lem:5:7:step:1}
and letting $N$ tend to $\infty$, we eventually get 
 \begin{equation*}
\begin{split} 
 &\biggl\vert 
 \int_{{\mathcal P}({\mathbb T}^d)}  W(m)  \partial _{\widehat{m}^{k_0}} \varphi\Bigl( \bigl( \widehat{m}^k \bigr)_{k \in F_{N_0}^+} \Bigr)d {\mathbb P}(m)
\\
&\hspace{15pt} +    
 \int_{\mathcal P({\mathbb T}^d)}  \biggl\{ \widehat{\mathcal W}^{k_0}( m )
  - 
2 W (m  ) 
  \vert k_0 \vert^{2pd} 
 \widehat{m}^{k_0} \biggr\}
   \varphi\Bigl( \bigl( \widehat{m}^k \bigr)_{k \in F_{N_0}^+} \Bigr)
d {\mathbb P}(m) 
 \biggr\vert \leq
 C \delta,
 \end{split}
 \end{equation*} 
 which completes the proof. 
\end{proof}

\begin{lem}
\label{lem:W:mathcalW:Nn:2}
Let 
$({\mathcal W}_1,W_1)$ and 
$({\mathcal W}_2,W_2)$ be two pairs of functions 
satisfying the assumptions of Lemma 
\ref{lem:W:mathcalW:Nn}. 
If $W_1$ and $W_2$ are equal almost everywhere under ${\mathbb P}$, then 
${\mathcal W}_1$ and ${\mathcal W}_2$
are also equal 
almost everywhere under ${\mathbb P}$.
\end{lem}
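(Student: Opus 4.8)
\textbf{Proof plan for Lemma \ref{lem:W:mathcalW:Nn:2}.}
The plan is to reduce the statement to the uniqueness part of Lemma \ref{lem:W:mathcalW:Nn} applied twice, once to each pair, and then to compare the two resulting identities. The point is that Lemma \ref{lem:W:mathcalW:Nn} expresses, for every sufficiently large $N_0$, every $k_0 \in F_{N_0}^+$, and every admissible test function $\varphi$ supported in the prescribed positive-density region, the quantity
\begin{equation*}
\int_{{\mathcal P}({\mathbb T}^d)} \widehat{\mathcal W}_i^{k_0}(m) \varphi\Bigl( \bigl( \widehat{m}^k \bigr)_{k \in F_{N_0}^+} \Bigr) d {\mathbb P}(m)
\end{equation*}
in terms of $W_i$, $\widehat{m}^{k_0}$ and $\partial_{\widehat{m}^{k_0}}\varphi$ only, up to an error $C\delta$ with $\delta$ arbitrary. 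Since $W_1 = W_2$ ${\mathbb P}$-almost everywhere, the right-hand sides of the two identities coincide, so the left-hand sides must coincide up to $2C\delta$; letting $\delta \to 0$ gives
\begin{equation*}
\int_{{\mathcal P}({\mathbb T}^d)} \bigl( \widehat{\mathcal W}_1^{k_0}(m) - \widehat{\mathcal W}_2^{k_0}(m) \bigr) \varphi\Bigl( \bigl( \widehat{m}^k \bigr)_{k \in F_{N_0}^+} \Bigr) d {\mathbb P}(m) = 0
\end{equation*}
for all such $\varphi$ and all $k_0$.

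The second step is to upgrade this to $\widehat{\mathcal W}_1^{k_0} = \widehat{\mathcal W}_2^{k_0}$ ${\mathbb P}$-almost everywhere, for each fixed $k_0 \in {\mathbb Z}^d \setminus \{0\}$. First I would observe that the collection of admissible test functions is rich enough: by item $(i)$ of Theorem \ref{thm:probability:probability}, ${\mathbb P}$-almost every $m$ has a continuously differentiable strictly positive density, hence lies in the positive-density region $\{1 + 2\inf_x \sum_{j \in F_{N_0}^+}\Re[\widehat{m}^j e_{-j}(x)] \geq 1/c\}$ for some $c>1$ and some $N_0$; letting $c \to \infty$, $N_0 \to \infty$, the union of these regions has full ${\mathbb P}$-measure. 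Smooth compactly supported functions of $(\widehat{m}^k)_{k \in F_{N_0}^+}$ supported in such regions, as $N_0$ ranges over the integers, generate (through a monotone class argument, exactly as invoked at the end of the proof of Theorem \ref{main:thm:MFG}) the Borel $\sigma$-algebra ${\mathcal B}({\mathcal P}({\mathbb T}^d))$ — here one uses Lemma \ref{lem:Borel}, which says this $\sigma$-algebra is generated by the maps $m \mapsto \widehat{m}^k$. Since $\widehat{\mathcal W}_1^{k_0} - \widehat{\mathcal W}_2^{k_0}$ is a bounded measurable function whose integral against every element of this generating family vanishes, it must be zero ${\mathbb P}$-almost everywhere. (One may need to pass from the partial derivative $\partial_{\widehat{m}^{k_0}}\varphi$ appearing in Lemma \ref{lem:W:mathcalW:Nn} to arbitrary test functions: given an arbitrary admissible $\psi$, one antidifferentiates in the $\widehat{m}^{k_0}$-variable, which is possible on the support since that support can be taken convex in that slice, so $\psi = \partial_{\widehat{m}^{k_0}}\varphi$ for some admissible $\varphi$.)

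The final step is purely bookkeeping: having shown $\widehat{\mathcal W}_1^{k_0}(m) = \widehat{\mathcal W}_2^{k_0}(m)$ for ${\mathbb P}$-almost every $m$ and every $k_0 \in {\mathbb Z}^d \setminus \{0\}$, one takes the intersection over the countable index set $k_0$ to get a single full-measure set on which all Fourier coefficients of ${\mathcal W}_1(m,\cdot) - {\mathcal W}_2(m,\cdot)$ of nonzero index agree. By assumption $(i)$ of Lemma \ref{lem:W:mathcalW:Nn} both functions have vanishing zero-th Fourier coefficient in $x$ for $m$ in a full-measure set (indeed $\widehat{\mathcal W}_i^0$ is determined as $\reallywidehat{{\mathcal W}_i(m,\cdot)}^0$, and the structure forces it to be a constant that, by the normalization $\widehat{\mathcal Z}^0 \equiv 0$ inherited from the construction, equals $0$), so all Fourier coefficients agree and hence ${\mathcal W}_1(m,\cdot) = {\mathcal W}_2(m,\cdot)$ as $L^2$ functions, for ${\mathbb P}$-almost every $m$. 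Since both are bounded measurable and the relevant identification is almost-everywhere in $(m,x)$ under ${\mathbb P} \otimes \textrm{\rm Leb}_d$, this gives ${\mathcal W}_1 = {\mathcal W}_2$ ${\mathbb P}$-almost everywhere, as claimed.

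The main obstacle I expect is the density/monotone-class argument of the second step, namely making sure the admissible test functions — smooth, compactly supported, and supported in the positive-density slices — really separate points finely enough to force a bounded measurable function to vanish, and correctly matching this with the appearance of $\partial_{\widehat{m}^{k_0}}\varphi$ rather than $\varphi$ in Lemma \ref{lem:W:mathcalW:Nn}. Everything else is a direct consequence of applying the already-proven Lemma \ref{lem:W:mathcalW:Nn} twice and subtracting.
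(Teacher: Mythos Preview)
Your overall strategy matches the paper's: apply Lemma \ref{lem:W:mathcalW:Nn} to each pair, subtract using $W_1 = W_2$, and then argue that the resulting family of test functions is rich enough. Two points need correction.

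First, your antidifferentiation concern is misplaced. In the identity of Lemma \ref{lem:W:mathcalW:Nn}, the terms carrying $W$ are $\int W\, \partial_{\widehat{m}^{k_0}}\varphi\, d\mathbb{P}$ and $2|k_0|^{2pd}\int W\,\widehat{m}^{k_0}\varphi\, d\mathbb{P}$, while $\widehat{\mathcal W}^{k_0}$ is integrated against $\varphi$ itself. After subtraction with $W_1 = W_2$, all $W$-terms cancel and you are left directly with $\bigl|\int(\widehat{\mathcal W}_1^{k_0} - \widehat{\mathcal W}_2^{k_0})\varphi\, d\mathbb{P}\bigr| \leq C\delta$; no derivative of $\varphi$ remains and no antidifferentiation is needed.

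Second, and more substantively, your phrase ``letting $\delta \to 0$'' hides the real difficulty. Lemma \ref{lem:W:mathcalW:Nn} gives the bound only for $N_0 \geq N_{c,\delta}$, and this threshold tends to infinity as $\delta \to 0$. A test function $\varphi$ at a \emph{fixed} level $N_0$ therefore cannot be used to send $\delta \to 0$, nor can $\varphi$ be regarded as a valid test function at higher levels $N_0'$, since its support is not contained in the required positive-density region for $(\widehat{m}^k)_{k \in F_{N_0'}^+}$. The paper resolves this by building $N_0$-dependent test functions with a uniform Lipschitz bound: it takes $\varphi = \phi(m * f_{N_0}) \cdot \Theta$, where $\phi$ is a fixed continuously differentiable function on $\PP$ and $\Theta((\widehat{m}^k)_{k\in F_{N_0}^+}) = \vartheta\bigl(1 + 2\inf_x \sum_{k\in F_{N_0}^+}\Re[\widehat{m}^k e_{-k}(x)]\bigr)$ is an explicit cutoff whose Lipschitz constant in $\widehat{m}^{k_0}$ is $2c$, independent of $N_0$. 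Thus $C = C(c)$ is uniform in $N_0$; one lets $N_0 \to \infty$ (so $\phi(m*f_{N_0})\Theta \to \phi(m)\,\vartheta(\inf_x m(x))$ $\mathbb{P}$-a.e.\ by Lemma \ref{lem:density:under:P}), obtains $\bigl|\int(\widehat{\mathcal W}_1^{k_0} - \widehat{\mathcal W}_2^{k_0})\phi(m)\vartheta(\inf_x m(x))\, d\mathbb{P}\bigr| \leq C(c)\delta$ for every $\delta$, hence $=0$, then sends $c\to\infty$ and finally passes from smooth to continuous to bounded measurable $\phi$. Your monotone-class plan is in the same spirit but needs this cutoff construction (or an equivalent device) to bridge the gap between fixed-level test functions and the limit $\delta \to 0$.
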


\begin{proof}
We apply 
Lemma \ref{lem:W:mathcalW:Nn} to $({\mathcal W}_i,W_i)$, for $i=1,2$, and then use the fact that $W_1=W_2$. 
We deduce that, 
for $\delta>0$ and $c>1$, there exists 
an integer $N_{c,\delta}$ such that, for $N_0 \geq N_{c,\delta}$, 
for
any $k_0 \in F_{N_0}^+$
 and
 any smooth function $\varphi : {\mathbb R}^{2 \vert F_{N_0}^+ \vert} \rightarrow {\mathbb R}$
 whose support is included in 
 $\{ (\widehat{m}^j)_{j \in F_{N_0}^+} \in {\mathbb C}^{\vert F_{N_0}^+\vert} 
 \simeq {\mathbb R}^{2 \vert F_{N_0}^+ \vert} : 
1 + 2 \inf_{x \in {\mathbb T}^d}   \sum_{j \in F_{N_0}^+} \Re [\widehat{m}^j e_{-j}(x)] 
\geq 1/c\}$, 
 \begin{equation}
 \label{eq:W1-W2}
 \biggl\vert 
 \int_{{\mathcal P}({\mathbb T}^d)} \Bigl(\widehat{\mathcal W}_1^{k_0}(m) - \widehat{\mathcal W}_2^{k_0}(m)\Bigr)   \varphi\Bigl( \bigl( \widehat{m}^k \bigr)_{k \in F_{N_0}^+} \Bigr) d {\mathbb P}(m)   \biggr\vert \leq  C \delta,
 \end{equation} 
 for a constant $C$ only depending on $\varphi$ through $\| \varphi \|_\infty$
 and $\| \partial_{\widehat{m}^{k_0}} \varphi \|_\infty$. 
By an obvious regularization procedure, the same holds if $\varphi$ is merely bounded 
and  
Lipschitz continuous, in which case the constant $C$ depends on $\varphi$ through 
the bound and the Lipschitz constant of $\varphi$.

For $\vartheta : {\mathbb R} \rightarrow [0,1]$, 
a $c$-Lipschitz function 
that is equal to $0$ on $]-\infty,1/c[$, let 
\begin{equation*}
\Theta
\Bigl( \bigl( \widehat{m}^k \bigr)_{k \in F_{N_0}^+} \Bigr)
=
\vartheta \Bigl(
 1 + 2
 \inf_{x \in {\mathbb T}^d} 
 \sum_{k \in F_{N_0}^+} 
\Re \bigl[ \widehat{m}^k e_{-k}(x)\bigr]
\Bigr), \quad 
\bigl( \widehat{m}^k \bigr)_{k \in F_{N_0}^+} \in {\mathbb C}^{\vert F_{N_0}^+ \vert}
\simeq 
{\mathbb R}^{2 \vert F_{N_0}^+ \vert}.
\end{equation*}
Observe that
the support of $\Theta$ is included in ${\mathcal O}_{N_0}$.
Moreover, 
for $( \widehat{m}^k)_{k \in F_{N_0}^+} \in {\mathbb C}^{\vert F_{N_0}^+ \vert}
\simeq {\mathbb R}^{2 \vert F_{N_0}^+\vert}$
and 
$\widehat{r}^{k_0} \in {\mathbb C}$, with $k_0 \in F_{N_0}^+$, we have 
\begin{equation*}
\begin{split}
&\Bigl\vert 
\Theta
\Bigl( \bigl( \widehat{m}^k \bigr)_{k \in F_{N_0}^+} + {\mathbf 1_{\{k=k_0\}}} \widehat{r}^{k_0}  \Bigr)
-
\Theta
\Bigl( \bigl( \widehat{m}^k \bigr)_{k \in F_{N_0}^+} \Bigr)
\Bigr\vert
\\
&\leq 2 c \sup_{x \in {\mathbb T}^d} 
\biggl\vert 
\sum_{k \in F_{N_0}^+} 
\widehat{m}^k e_{-k}(x)
-
\sum_{k \in F_{N_0}^+} \bigl( 
\widehat{m}^k + {\mathbf 1}_{\{k=k_0\}} \widehat{r}^{k_0} 
\bigr) 
e_{-k}(x)
\biggr\vert\leq 2c  \bigl\vert \widehat{r}^{k_0} \bigr\vert,
\end{split}
\end{equation*}
hence proving that $\Theta$ is $2 c$-Lipschitz
with respect to $\widehat{m}^{k_0}$. 
In particular, for a continuously differentiable test function $\phi$ on ${\mathcal P}({\mathbb T}^d)$, 
we deduce from 
\eqref{eq:W1-W2}
 that, for $N_0$ large enough, 
\begin{equation*}
\begin{split}
&\biggl\vert 
 \int_{{\mathcal P}({\mathbb T}^d)} \Bigl(\widehat{\mathcal W}_1^{k_0}(m) - \widehat{\mathcal W}_2^{k_0}(m)\Bigr)   \phi\bigl( m* f_{N_0} \bigr) 
 \vartheta\Bigl(
 1 + 
2 \inf_{x \in {\mathbb T}^d} 
\sum_{k \in F_{N_0}^+} 
\Re\bigl[\widehat{m}^k e_{-k}(x)\bigr] 
\Bigr)
 d {\mathbb P}(m)  \biggr\vert 
 \leq  C(c) \delta,
\end{split}
\end{equation*}
with $C(c)$ depending on $c$, but being independent of $N_0$ and $\delta$. 
Above, $\phi(m*f_{N_0})$ is regarded as a smooth function on ${\mathcal O}_{N_0}$. 

As before, $m*f_{N_0}$ converges to $m$ for the Wasserstein distance
as $N_0$ tends to $\infty$. 
Moreover, Lemma 
\ref{lem:density:under:P}
below says that, 
${\mathbb P}$-almost everywhere, the series $\sum_{k \in {\mathbb Z}^d} \vert \widehat{m}^k \vert$
is absolutely convergent hence proving that 
$ 1 + 
 \inf_{x \in {\mathbb T}^d} 
2\sum_{k \in F_{N_0}^+} 
\Re[\widehat{m}^k e_{-k}(x)]$
converges to $\inf_{x \in {\mathbb T}^d} m(x)$
with $m$ being identified with its density. 
Therefore, we can let $N_0$ tend to $\infty$. We thus get 
\begin{equation*}
\begin{split}
 \int_{{\mathcal P}({\mathbb T}^d)} \Bigl(\widehat{\mathcal W}_1^{k_0}(m) - \widehat{\mathcal W}_2^{k_0}(m)\Bigr)
   \phi(m)
 \vartheta \Bigl(
 \inf_{x \in {\mathbb T}^d} 
m(x)
\Bigr)
 d {\mathbb P}(m)  
=0.
\end{split}
\end{equation*}
There is no difficulty for sending $c$ to $+\infty$ in the definition of $\vartheta$
since ${\mathbb P}$ almost every $m \in \PP$ has a (strictly) positive density.
We obtain
\begin{equation*}
 \int_{{\mathcal P}({\mathbb T}^d)} \Bigl(\widehat{\mathcal W}_1^{k_0}(m) - \widehat{\mathcal W}_2^{k_0}(m)\Bigr)  \phi(m)
 d {\mathbb P}(m)  
=0,
\end{equation*}
for any 
continuously differentiable test function $\phi$ on ${\mathcal P}({\mathbb T}^d)$. 
By the approximation procedure 
defined in Proposition 
\ref{prop:mollification}, 
the same holds true if  
$\phi$ is merely continuous on ${\mathcal P}({\mathbb T}^d)$. 
And, then 
the same is true 
if  
$\phi$ is bounded and measurable on ${\mathcal P}({\mathbb T}^d)$, see
\cite{Wisniewski}.
This suffices to identify 
$\widehat{\mathcal W}_1^{k_0}$ and $\widehat{\mathcal W}_2^{k_0}$. 
\end{proof}

\subsection{Classical solutions to the master equation as weak solutions}
We now prove the following result, which legitimates Definition 
\ref{def:master:equation:gen}: 

\begin{prop}
\label{prop:verification:classical:conservative}
If $U$ is a classical solution to the master equation, 
with $\partial_t U$, $\partial_x U$, $\partial^2_{xx} U$, $\partial_\mu U$, $\partial_y \partial_{\mu} U$ 
being continuous
(with $y$
denoting the last argument in the derivative $\partial_\mu U(t,x,m)(y)$), then the centered version 
\begin{equation*}
\tilde U(t,x,m) := U(t,x,m) - \int_{{\mathbb T}^d} U(t,y,m) dy, 
\quad t \in [0,T], \ x \in {\mathbb T}^d, \ m \in \PP, 
\end{equation*}
satisfies the 
conservative version \eqref{master:centred} of the master equation. 
{In particular, $\tilde{U}$ is a weak solution to the master equation, in the sense of Definition \ref{def:master:equation:gen}.}
\end{prop}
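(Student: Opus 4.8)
The proposition asserts two things: first, that the centered function $\tilde U$ solves the conservative form \eqref{master:centred} of the master equation; second, that $\tilde U$ is a weak solution in the sense of Definition \ref{def:master:equation:gen}. The plan is to handle these in two stages. For the first stage, I would start from the fact that $U$ solves \eqref{master} classically and show by a direct computation that $\tilde U = U - \int_{\T} U(t,y,m)\,dy$ solves \eqref{master:centred}, identifying along the way the explicit form of the penalization term $C_t(m)$. For the second stage, I would verify that the finite-dimensional approximate equations in Definition \ref{def:master:equation:gen} hold (with $\zeta_N$ converging to $0$ uniformly on the sets $B_N(c)$) by projecting the PDE \eqref{master:centred} onto the Fourier modes and controlling the truncation errors using the regularity of $U$ inherited from the classical-solution hypothesis.

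\textbf{First stage: from \eqref{master} to \eqref{master:centred}.} Since $U$ is a classical solution, for each fixed $x$ we may integrate the equation \eqref{master} in the $x$-variable over $\T$. Using $\int_\T \Delta_x U(t,x,m)\,dx = 0$ and exchanging the $x$-integral with the $y$-integral in the nonlocal terms, this produces an identity for $\partial_t \int_\T U(t,y,m)\,dy$ as the sum of $-\int_\T f(y,m)\,dy$, the averaged Hamiltonian term, the averaged transport term, and the averaged second-order term. Subtracting this averaged identity from \eqref{master} itself yields the equation for $\tilde U$. The key point is that $\partial_x \tilde U = \partial_x U$ and $\partial_\mu \tilde U(t,x,m)(y) = \partial_\mu U(t,x,m)(y) - \int_\T \partial_\mu U(t,z,m)(y)\,dz$ — here one must be careful: the flat derivative $\delta/\delta m$ of the average $\int_\T U(t,y,m)\,dy$ contributes $\int_\T [\delta U/\delta m](t,y,m)(x)\,dy$, hence $\partial_\mu$ picks up the $x$-gradient of that. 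One then checks that, inside the nonlocal term $\int_\T \partial_p H(y,\nabla_y U)\cdot \partial_\mu U(t,x,m)(y)\,m(dy)$ and the term $\int_\T \mathrm{Tr}[\partial_y\partial_\mu U(t,x,m)(y)]\,m(dy)$, replacing $\partial_\mu U$ by $\partial_\mu \tilde U$ only changes the equation by a function of $(t,m)$ alone (independent of $x$), because the correction is a pure $x$-derivative of an $x$-independent quantity integrated against $m(dy)$. Collecting all $x$-independent contributions defines $C_t(m)$, and one verifies that with this choice $\int_\T \tilde U(t,x,m)\,dx = 0$ is consistent, i.e. $C_t(m)$ is exactly the penalization that enforces the centering. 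The boundary condition $\tilde U(T,x,m) = g(x,m) - \int_\T g(y,m)\,dy$ follows from $U(T,x,m) = g(x,m)$ and \eqref{eq:potential:structure}.

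\textbf{Second stage: $\tilde U$ is a weak solution.} Here I set $\widehat{\mathcal Z}^k(t,m) := \reallywidehat{\tilde U(t,m,\cdot)}^{-k}$, so $\widehat{\mathcal Z}^0 \equiv 0$ by construction. To obtain \eqref{eq:master:weak} (equivalently the compact form \eqref{eq:master:weak:bb}), I would test \eqref{master:centred} against $e_{-k}$ in the $x$-variable for $k \in F_N\setminus\{0\}$ and compare with the $\partial_{\widehat m^k}$-derivative of the two bracketed functionals appearing in Definition \ref{def:master:equation:gen}; the identification of $\partial_{\widehat m^k}$ of a functional of $m$ with the $(-k)$-th Fourier coefficient of its flat derivative (Proposition \ref{prop:3:7} and \eqref{eq:gradient:complex:000}) makes this a bookkeeping exercise. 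The genuinely new work is to show that the resulting defect $\zeta_N(t,m)$ — which collects (a) the truncation error from replacing the full Hamiltonian nonlinearity $\int_\T H(y,\nabla_y\tilde U)\,m(dy)$ by its $F_N$-truncated version $\int_\T H(y, \i 2\pi\sum_{j\in F_N} j\widehat{\mathcal Z}^j e_j(y))\,m(dy)$, (b) the truncation of the second-order term to modes in $F_N$, and (c) the remainder $C_t(m)$ that is by itself a bounded function of $(t,m)$ — satisfies \eqref{eq:master:weak:00}. For (a) and (b), the classical regularity of $U$ gives that $\partial_\mu \tilde U(t,\cdot,\cdot)$ is, say, Hölder in the $y$-variable uniformly in $(t,m)$ in a suitable sense (this is exactly the kind of regularity exploited in the informal discussion of \S\ref{subse:spatial:deri:interpretation} and in Proposition \ref{prop:4:7}); hence, on $B_N(c)$ where the density of $m$ is controlled above and below, the $F_N$-tails of the relevant Fourier series vanish uniformly as $N\to\infty$, which is precisely \eqref{eq:compact:Fourier}. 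Finally, item $(ii)$ of Definition \ref{def:master:equation:gen} — the continuity in $t$ and the terminal matching — is immediate from the continuity of $\partial_t U$ and the boundary condition, integrating against a test function $\varphi$ on ${\mathcal O}_N$.

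\textbf{Main obstacle.} The routine part is the Fourier bookkeeping in the second stage; the genuinely delicate point is keeping track, in the first stage, of which correction terms produced by the centering operation are truly $x$-independent, so that $C_t(m)$ is well-defined as a function of $(t,m)$ alone — this requires using the Schwarz/Fubini symmetry of $\delta^2 U/\delta m^2$ (as quoted just before \eqref{master:cons}) to commute the $x$- and $y$-differentiations cleanly. A secondary technical nuisance is that the weak-solution framework demands the bound \eqref{eq:master:weak:00} \emph{uniformly on each $B_N(c)$ simultaneously in $t$}; this forces one to quantify the $y$-regularity of $\partial_\mu U$ uniformly in $(t,m)$ over compact-density sets, which is available from the hypotheses on the classical solution together with the Schauder-type bounds for the MFG system, but must be stated with some care.
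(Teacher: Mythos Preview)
Your first stage is essentially correct and, in fact, does \emph{not} need any Schwarz-type symmetry: once you write $\tilde U=U-\bar U(t,m)$ with $\bar U=\int_{\T}U(t,z,m)\,dz$, all the correction terms you pick up when replacing $U$ by $\tilde U$ in \eqref{master} (namely $\partial_t\bar U$ and the integrals against $\partial_\mu\bar U(t,m)(y)$) are manifestly $x$-independent, so $C_t(m)$ is well-defined without further work. The ``main obstacle'' you flag is therefore misplaced.

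The genuine gap is in your second stage, which you call a ``bookkeeping exercise.'' Testing \eqref{master:centred} against $e_{-k}$ in $x$ produces the $x$-Fourier coefficient of $\int_{\T}\partial_pH(y,\nabla_y\tilde U)\cdot\partial_y\tfrac{\delta\tilde U}{\delta m}(t,x,m)(y)\,m(dy)$. But the term $\partial_{\widehat m^k}\bigl\{\int_{\T}H(y,\ldots)\,dm(y)\bigr\}$ in \eqref{eq:master:weak:bb} equals, via \eqref{eq:gradient:complex:000}, the $x$-Fourier coefficient of $\tfrac{\delta}{\delta m}\bigl\{\int_{\T}H(y,\nabla_y\tilde U)\,dm(y)\bigr\}(x)$, which contains $\partial_y\tfrac{\delta\tilde U}{\delta m}(t,y,m)(x)$ --- the arguments $x$ and $y$ are \emph{swapped}. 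Matching the two therefore requires
\[
\frac{\delta U}{\delta m}(t,x,m)(y)-\frac{\delta U}{\delta m}(t,y,m)(x)=\text{(function of $y$ alone)}-\text{(function of $x$ alone)},
\]
and this is neither among the hypotheses (only $\partial_\mu U$ and $\partial_y\partial_\mu U$ are assumed to exist, no second $m$-derivative) nor the same thing as the Schwarz identity for $\delta^2 U/\delta m^2$ you invoke. The passage ``quoted just before \eqref{master:cons}'' is about $V$, not $U$, and is stated conditionally.

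The paper fills this gap by a substantive detour you do not mention: since $U$ is a classical solution, the characteristics give, for every $(t,m)$, the \emph{unique} solution of the MFG system \eqref{eq:MFG:system}; by Proposition~\ref{prop:Briani} this must be the optimal MFCP trajectory; Lemma~\ref{lem:superjet} then forces $\partial_{\widehat m^k}V(t,m)=\widehat{U(t,\cdot,m)}^{k}$ on each $\mathcal P_N$. Now the \emph{finite-dimensional} Schwarz theorem applied to $V$ yields symmetry of the mixed Fourier modes of $\tfrac{\delta U}{\delta m}$, which after a short computation gives exactly the swap identity above. Without this MFG/MFCP link (or an added hypothesis on second $m$-derivatives of $U$), your second stage does not go through.
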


\begin{proof}
For any starting point $(t,m) \in [0,T] \times {\mathcal P}({\mathbb T}^d)$, we can construct 
a solution to the system 
\eqref{eq:MFG:system} by solving 
the Fokker-Planck equation
\begin{equation*}
\partial_t m_t(x) 
- \textrm{\rm div}_x \Bigl( \partial_p H\bigl(x,\nabla_x U(t,x,m_t)\bigr) m_t(x) \Bigr)
- \frac12 \Delta_x m_t(x) = 0,
\end{equation*}
and then, by letting, $u_t(x)= U(t,x,m_t)$. 
In fact, this solution can be proven to be unique, see for instance
\cite[Proposition 5.106]{CarmonaDelarue_book_I}. 

By Proposition 
	\ref{prop:Briani}, the forward component of this unique solution, i.e. $(m_t)_{0 \leq t \leq T}$, must coincide with the (hence unique)
	optimal path of the MFCP, when initialized from $(t,m)$. 
	In turn, 
Lemma
\ref{lem:superjet}
	implies, for any $N \geq 1$, 
	 \begin{equation*}
\partial_{\widehat{m}^k} V(t,m) = 
\int_{{\mathbb T}^d} U(t,x,m) e^{-2 \i \pi k \cdot x} dx, \quad k \in F_N \setminus \{0\}, 
\end{equation*}
for almost every $(t,m) \in [0,T] \times {\mathcal P}_N$ under the probability measure $\textrm{\rm Leb}_1 \otimes {\mathbb P}_N$.
By continuity of $U$ in the right-hand side, 
we deduce that $V$ is differentiable with respect to $\widehat{m}^k$ on the entire $[0,T] \times 
{\mathcal P}_N$. 
We deduce from Schwarz' theorem that, for 
every $(t,m) \in [0,T] \times {\mathcal P}_N$
and
for
any 
$k,\ell \in F_N \setminus \{0\}$,  
 \begin{equation*}
\int_{{\mathbb T}^d} \partial_{\widehat{m}^\ell} U(t,x,m) e^{-2 \i \pi k \cdot x} dx = 
\int_{{\mathbb T}^d} \partial_{\widehat{m}^k} U(t,x,m) e^{-2 \i \pi \ell \cdot x} dx,
\quad k, \ \ell \in F_N \setminus \{0\}, 
\end{equation*}
which rewrites
 \begin{equation*}
\int_{{\mathbb T}^d} \int_{{\mathbb T}^d}  
\frac{\delta U}{\delta m}(t,x,m)(y) e^{-2 \i \pi k \cdot x}e^{-2 \i \pi \ell \cdot y} dx dy = 
\int_{{\mathbb T}^d}\int_{{\mathbb T}^d} \frac{\delta U}{\delta m}(t,x,m)(y)
e^{-2 \i \pi k \cdot y}
 e^{-2 \i \pi \ell \cdot x} dx, 
\end{equation*}
for 
$k, \ell \in F_N \setminus \{0\}$.
By invoking the continuity of $U$
and by approximating 
any measure $m \in \PP$ by the sequence 
$(m*f_N)_{N \geq 1}$, 
the above holds true for any $(t,m) \in [0,T] \times \PP$ and any 
$k,\ell \in {\mathbb Z}^d \setminus \{0\}$. 
This shows
\begin{equation*}
\frac{\delta U}{\delta m}(t,x,m)(y)
-
\frac{\delta U}{\delta m}(t,y,m)(x)
= 
 \psi(y)-\phi(x),
 \quad x,y \in {\mathbb T}^d, 
\end{equation*}
for two real-valued functions $\phi$ and $\psi$ 
defined on ${\mathbb T}^d$ (both depending on $m$). 
Integrating the above identity in $y$ and then in $x$ with respect to the Lebesgue measure on ${\mathbb T}^d$, we get
from 
\eqref{eq:centring}:
\begin{equation*}
\begin{split}
&\int_{{\mathbb T}^d} \frac{\delta U}{\delta m}(t,y,m)(x) dy = \phi(x)
-
\int_{{\mathbb T}^d} \psi(y) dy, 
\\
&\int_{{\mathbb T}^d} 
\frac{\delta U}{\delta m}(t,x,m)(y)
dx
= 
\psi(y)-
\int_{{\mathbb T}^d} \phi(x) dx, 
\end{split}
\end{equation*}
for
$x,y \in {\mathbb T}^d$.
Integrating again in $x$ the first line, we get 
\begin{equation*}
\int_{{\mathbb T}^d} \psi(y) dy
= 
\int_{{\mathbb T}^d} \phi(x) dx, 
\end{equation*}
and then
\begin{equation*}
\frac{\delta U}{\delta m}(t,x,m)(y)
-
\frac{\delta U}{\delta m}(t,y,m)(x)
= 
\int_{{\mathbb T}^d} 
\frac{\delta U}{\delta m}(t,x,m)(y)
dx 
-
\int_{{\mathbb T}^d} 
\frac{\delta U}{\delta m}(t,y,m)(x)
dy,
\end{equation*}
for
$x,y \in {\mathbb T}^d$.
We then have 
\begin{equation*}
\begin{split}
&\frac{\delta}{\delta m} \bigg\{  \int_{{\mathbb T}^d} 
H \bigl( y,  \nabla_y  U(t,y,m) \bigr) d m(y) \biggr\}(x)
\\
&=
H \bigl( x,  \nabla_y  U(t,x,m) \bigr) 
-
 \int_{{\mathbb T}^d} 
H \bigl( y,  \nabla_y  U(t,y,m) \bigr) dy
\\
&\hspace{15pt} +
 \int_{{\mathbb T}^d} 
\partial_p H \bigl( y,  \nabla_y  U(t,y,m) \bigr) 
\cdot 
\partial_y \frac{\delta U}{\delta m}(t,y,m)(x)
d m(y) 
\\
&=
H \bigl( x,  \nabla_y  U(t,x,m) \bigr) 
-
 \int_{{\mathbb T}^d} 
H \bigl( y,  \nabla_y   U(t,y,m) \bigr) d y
\\
&\hspace{15pt} +
 \int_{{\mathbb T}^d} 
\partial_p H \bigl( y,  \nabla_y  U(t,y,m) \bigr) 
\cdot 
\partial_y \frac{\delta U}{\delta m}(t,x,m)(y)
d m(y) 
\\
&\hspace{15pt} - 
\int_{{\mathbb T}^d}  \int_{{\mathbb T}^d} 
\partial_p H \bigl( y,  \nabla_y  U(t,y,m) \bigr) 
\cdot 
\partial_y \frac{\delta U}{\delta m}(t,x,m)(y)
d x \, d m(y), 
\end{split}
\end{equation*}
and similarly, 
\begin{equation*}
\begin{split}
& \frac{\delta}{\delta m} \bigg\{  \int_{{\mathbb T}^d} 
\text{Tr} \Bigl[ 
\partial^2_y   U(t,y,m) \Bigr] d m(y)   \bigg\}(x)
\\
&=
\text{Tr} \Bigl[ 
\partial^2_x  U(t,x,m) \Bigr]
-
 \int_{{\mathbb T}^d} 
\text{Tr} \Bigl[ 
\partial^2_y  U(t,y,m) \Bigr] d y 
\\
&\hspace{15pt} + 
  \int_{{\mathbb T}^d} 
\text{Tr} \Bigl[ 
\partial^2_y   \frac{\delta U}{\delta m}(t,y,m)(x) \Bigr] d m(y)  
\\
&= \text{Tr} \Bigl[ 
\partial^2_x   U(t,x,m) \Bigr]
-
 \int_{{\mathbb T}^d} 
\text{Tr} \Bigl[ 
\partial^2_y   U(t,y,m) \Bigr] d y
\\
&\hspace{15pt} + 
  \int_{{\mathbb T}^d} 
\text{Tr} \Bigl[ 
\partial^2_y   \frac{\delta U}{\delta m}(t,x,m)(y) \Bigr] d m(y)  
-
  \int_{{\mathbb T}^d} 
  \int_{{\mathbb T}^d} 
  \text{Tr} \Bigl[ 
\partial^2_y   \frac{\delta U}{\delta m}(t,x,m)(y) \Bigr] d x \,  d m(y).
\end{split}
\end{equation*}
The conclusion easily follows: it suffices to
integrate 
the equation
\eqref{master}
in 
$x$ with respect to 
the Lebesgue measure, to make the difference between 
\eqref{master}
and the hence integrated version of 
\eqref{master} 
and then to insert the latter  two identities, noticing that 
$\nabla_x U=\nabla_x \tilde U$.   

{The last claim then follows since $\tilde{U}$ solves \eqref{eq:master:weak} in the classical sense. Indeed, we can argue as in Subsection \ref{subse:spatial:deri:interpretation}, the computations being here legitimate because the solution is smooth. }
\end{proof}

\section{appendix}
\label{se:6}

\subsection{Construction of a probability measure on ${\mathcal P}({\mathbb T}^d)$ satisfying 
Theorem \ref{thm:probability:probability}.}
\label{sec:6}

This (long) subsection is dedicated to the proof of 
Theorem 
\ref{thm:probability:probability}. 
This comes as a by-product of a generic construction, which makes use (in a quite systematic manner) 
of the notations introduced in Subsection 
\ref{subse:PP} (we invite the reader to have a new look at them before she/he enters the details of the proof below). 
In particular, we recall 
\eqref{eq:Gamma_N}, 
\eqref{eq:I_N}
and the subsequent notation
${\mathbb P}_N= \Gamma_N \circ {\mathscr I}_N^{-1}$. 
Quite often in the analysis, we also make use of the normalization constant 
\begin{equation}
\label{eq:cN} c_N := 
\int_{{\mathbb R}^{2\vert F_N^+\vert  }}
\exp \biggl( - \sum_{k \in F_{N}^+ }\vert k\vert^{2 p d} { \vert \widehat{m}^k \vert^2} \biggr)
\bigotimes_{j \in F_{N}^+ } 
d \widehat{m}^j.
\end{equation}
(Notice that $c_N$ is different from $Z_N$ in \eqref{eq:Gamma_N}.)
Moreover, we also introduce  a sequence 
$(\xi_k)_{k \in {\mathbb N}^d \setminus \{0\}}$
of independent two-dimensional Gaussian random variables on an auxiliary probability space with 
${\mathbf P}$ as probability measure, such that each 
$\xi_k$ has $I_2$
 as covariance matrix (with $I_2$ being the identity matrix of dimension 2).

\subsubsection{Properties of the sequence 
$({\mathbb P}_N)_{N \geq 1}$}
In this paragraph, we obtain a series of lemmas on the properties of
the measures 
$({\mathbb P}_N)_{N \geq 1}$. We start with the following lemma: 
\begin{lem}
\label{lem:24}
For any two integers $N_0 \leq N$  
and for any $[0,1]$-valued 
measurable 
function 
$\varphi$ defined on ${\mathbb R}^{2 \vert F_{N_0}^+\vert}$ that is equal to zero outside ${\mathcal O}_{N_0}$,
\begin{equation*}
\begin{split}
&\int_{{\mathcal P}({\mathbb T}^d)} \varphi\Bigl( (\widehat{m}^k)_{k \in F_{N_0}^+ } \Bigr) d {\mathbb P}_N(m) 
\\
&\hspace{15pt} \leq \frac{c_N}{c_{N_0}} \frac1{Z_{N}}
\int_{{\mathcal O}_{N_0}} \varphi \Bigl( (\widehat{m}^k)_{k \in F_{N_0}^+ } \Bigr) 
\exp \biggl( - \sum_{k \in F_{N_0}^+} \vert k\vert^{2 p d} { \vert \widehat{m}^k \vert^2} \biggr)
\bigotimes_{j \in  F_{N_0}^+} 
d \widehat{m}^j.
\end{split}
\end{equation*}
Moreover, for any $\varepsilon \in (0,1)$,
if 
$\varphi$ is null outside the points
$(\widehat{m}^k)_{k \in F_{N_0}^+}$ such that 
\begin{equation}
\label{eq:compact:support}
1+
2 \inf_{x \in {\mathbb T}^d} 
\sum_{k \in F_{N_0}^+} 
2 \Re \bigl[ \widehat{m}^k e_{-k}(x) \bigr] \geq \varepsilon, 
\end{equation}
then
\begin{equation*}
\begin{split}
&\int_{{\mathcal P}({\mathbb T}^d)} \varphi\Bigl( (\widehat{m}^k)_{k \in F_{N_0}^+ } \Bigr) d {\mathbb P}_N(m)
\\
&\hspace{15pt} \geq \frac{c_N}{c_{N_0}}  \frac{c(\varepsilon,N_0)}{Z_{N}}
\int_{{\mathcal O}_{N_0}} \varphi \Bigl( (\widehat{m}^k)_{k \in F_{N_0}^+ } \Bigr) 
\exp \biggl( - \sum_{k \in F_{N_0}^+} \vert k\vert^{2 p d} { \vert \widehat{m}^k \vert^2} \biggr)
\bigotimes_{j \in  F_{N_0}^+} 
d \widehat{m}^j,
\end{split}
\end{equation*}
with 
\begin{equation}
\label{eq:compact:support:2}
c(\varepsilon,N_0) := 1 - \frac{C(p)}{\varepsilon N_0^{(p-5/2)d}}, 
\end{equation}
for $C(p)$ a constant only depending on $p$ and $d$. 
\end{lem}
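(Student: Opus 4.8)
The statement compares integrals against ${\mathbb P}_N$ of a test function depending only on the low-frequency Fourier coefficients $(\widehat m^k)_{k\in F_{N_0}^+}$ with the corresponding integral against the lower-dimensional Gaussian-type measure on ${\mathcal O}_{N_0}$. The natural strategy is to disintegrate the finite-dimensional measure $\Gamma_N$ on ${\mathcal O}_N$ according to the splitting of the coordinate block $F_N^+ = F_{N_0}^+ \cup (F_N^+\setminus F_{N_0}^+)$. Writing $(\widehat m^k)_{k\in F_N^+} = (a,b)$ with $a=(\widehat m^k)_{k\in F_{N_0}^+}$ and $b=(\widehat m^k)_{k\in F_N^+\setminus F_{N_0}^+}$, the Gaussian weight $\exp(-\sum_{k\in F_N^+}|k|^{2pd}|\widehat m^k|^2)$ factorizes as a product of a weight in $a$ and a weight in $b$; only the indicator ${\mathbf 1}_{{\mathcal O}_N}(a,b)$ couples the two blocks. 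Consequently,
\begin{equation*}
\int_{{\mathcal P}({\mathbb T}^d)} \varphi\bigl((\widehat m^k)_{k\in F_{N_0}^+}\bigr)\, d{\mathbb P}_N(m)
= \frac{1}{Z_N}\int \varphi(a)\, e^{-\sum_{k\in F_{N_0}^+}|k|^{2pd}|\widehat m^k|^2}\Bigl[\int {\mathbf 1}_{{\mathcal O}_N}(a,b)\, e^{-\sum_{k\in F_N^+\setminus F_{N_0}^+}|k|^{2pd}|\widehat m^k|^2}\, db\Bigr] da,
\end{equation*}
where I use $da = \bigotimes_{k\in F_{N_0}^+} d\widehat m^k$ and similarly for $db$. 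The inner bracket is bounded above by the full Gaussian integral over the $b$-block, namely $c_N/c_{N_0}$ after recognizing the product structure of $c_N$ in \eqref{eq:cN}, which immediately gives the first (upper bound) assertion.

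\textbf{The lower bound.} For the lower bound I would estimate the inner $b$-integral from below when $a$ satisfies the support condition \eqref{eq:compact:support}, i.e.\ $1 + 2\inf_x \Re[\sum_{k\in F_{N_0}^+}\widehat m^k e_{-k}(x)]\geq \varepsilon$. The point is that, by the reformulation \eqref{eq:writing:2:ON} of ${\mathcal O}_N$, the pair $(a,b)$ lies in ${\mathcal O}_N$ as soon as
\begin{equation*}
1 + 2\inf_{x\in{\mathbb T}^d}\Re\Bigl[\sum_{k\in F_{N_0}^+}\widehat m^k e_{-k}(x)\Bigr] - 2\sup_{x\in{\mathbb T}^d}\Bigl|\sum_{k\in F_N^+\setminus F_{N_0}^+}\widehat m^k e_{-k}(x)\Bigr| > 0,
\end{equation*}
which, using $\sup_x|\sum \widehat m^k e_{-k}(x)| \leq \sum|\widehat m^k|$, is guaranteed whenever $\sum_{k\in F_N^+\setminus F_{N_0}^+}|\widehat m^k| < \varepsilon/2$. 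Therefore the inner $b$-integral is bounded below by the Gaussian mass of the event $\{\sum_{k\in F_N^+\setminus F_{N_0}^+}|\widehat m^k| < \varepsilon/2\}$; by the product structure this equals $(c_N/c_{N_0})\cdot {\mathbf P}(\sum_{k\in F_N^+\setminus F_{N_0}^+}|k|^{-pd}|\xi_k| < \varepsilon/2)$ in terms of the auxiliary Gaussians $\xi_k$. I would then bound the complementary probability by Markov's inequality: ${\mathbf P}(\sum_{k\in F_N^+\setminus F_{N_0}^+}|k|^{-pd}|\xi_k|\geq \varepsilon/2)\leq (2/\varepsilon)\,{\mathbf E}|\xi|\sum_{|k|\geq N_0}|k|^{-pd}$, and the tail sum $\sum_{|k|\geq N_0}|k|^{-pd}$ over ${\mathbb Z}^d\setminus\{0\}$ is of order $N_0^{-(pd-d)}$; since $p\geq 5$ we have $pd - d \geq (p-5/2)d$ with room to spare, so one gets the clean bound $C(p)/(\varepsilon N_0^{(p-5/2)d})$. (One should double-check the exponent bookkeeping: with $|k|^{2pd}$ in the weight the Gaussian $\xi_k$ carries $|k|^{-pd}$, and $\sum_{|k|\geq N_0}|k|^{-pd}\asymp N_0^{-(p-1)d}$ for $d\geq 1$; since $(p-1)d\geq (p-5/2)d$ whenever $d\geq 0$, the stated form \eqref{eq:compact:support:2} holds, possibly after absorbing constants into $C(p)$.) Multiplying by the $a$-weight and $\varphi(a)$ and integrating gives the lower bound with $c(\varepsilon,N_0) = 1 - C(p)/(\varepsilon N_0^{(p-5/2)d})$.

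\textbf{Main obstacle.} The only genuinely delicate point is the geometric implication ``$\sum_{k\in F_N^+\setminus F_{N_0}^+}|\widehat m^k| < \varepsilon/2$ and \eqref{eq:compact:support} $\Rightarrow (a,b)\in{\mathcal O}_N$'', which must be extracted carefully from the positivity-of-density reformulation \eqref{eq:writing:2:ON} of ${\mathcal O}_N$ rather than from the spectral definition; one has to make sure the infimum over $x$ of the low-frequency part and the supremum over $x$ of the high-frequency part are controlled on the same $x$, which is exactly what the triangle inequality for the density $1+2\Re[\sum_{k\in F_N^+}\widehat m^k e_{-k}(x)]$ provides. The rest is routine: the factorization of the Gaussian weights, recognition of $c_N/c_{N_0}$ from \eqref{eq:cN}, and a one-line Markov estimate for the tail of a Gaussian series. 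I would also remark that the normalization $Z_N$ never needs to be evaluated — it simply passes through both sides — so the comparison is purely between the explicit Gaussian integrals.
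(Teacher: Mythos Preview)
Your proposal is correct and follows essentially the same route as the paper: disintegrate $\Gamma_N$ over the splitting $F_N^+=F_{N_0}^+\cup(F_N^+\setminus F_{N_0}^+)$, bound ${\mathbf 1}_{{\mathcal O}_N}\leq 1$ for the upper estimate, and for the lower estimate use \eqref{eq:writing:2:ON} to show that smallness of the high-frequency block forces $(a,b)\in{\mathcal O}_N$ when $a$ lies in the support of $\varphi$. The only difference is in the tail estimate: the paper restricts to the rectangular event $\{|\widehat m^k|<\kappa_d^{-1}\varepsilon|k|^{-3d/2}\ \forall k\}$ and applies a union bound plus Markov termwise, yielding the exponent $(p-5/2)d$, whereas you use the $\ell^1$-ball event directly and apply Markov to the sum $\sum|k|^{-pd}|\xi_k|$, which gives the sharper exponent $(p-1)d$; since $(p-1)d\geq(p-5/2)d$, your bound implies \eqref{eq:compact:support:2} after absorbing constants, so both arguments close the lemma.
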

\begin{proof}
\textit{First Step.} 
Using the same notations as in the statement and recalling that $\varphi$ is zero outside 
${\mathcal O}_{N_0}$, we have
\begin{align}
&\int_{{\mathcal P}({\mathbb T}^d)} \varphi \Bigl( (\widehat{m}^k)_{k \in F_{N_0}^+ } \Bigr)d {\mathbb P}_N(m) 
\nonumber
\\
&= \frac1{Z_N}
\int_{{\mathcal O}_N} \varphi \Bigl( (\widehat{m}^k  )_{k \in F_{N_0}^+ } \Bigr) 
\exp \biggl( - \sum_{k \in F_N^+} \vert k\vert^{2 p d} { \vert \widehat{m}^k \vert^2} \biggr)
\bigotimes_{k \in F_N^+} 
d \widehat{m}^k
\nonumber
\\
&= \frac1{Z_{N}}
\int_{{\mathcal O}_{N_0}} \varphi \Bigl( (\widehat{m}^k  )_{k \in F_{N_0}^+ } \Bigr) 
\exp \biggl( - \sum_{k \in F_{N_0}^+} \vert k\vert^{2 p d} { \vert \widehat{m}^k \vert^2} \biggr)
\label{eq:c_d:ON:0}
\\
&\hspace{5pt} \times   
\int_{{\mathbb R}^{2(\vert F_N^+\vert - \vert F_{N_0}^+\vert )}}
\biggl[ 
{\mathbf 1}_{{\mathcal O}_N}\Bigl( (\widehat{m}^k)_{k \in F_{N}^+ } \Bigr) 
\exp \biggl( - \sum_{k \in F_{N}^+ \setminus F_{N_0}^+}\vert k\vert^{2 p d} { \vert \widehat{m}^k \vert^2} \biggr)
\bigotimes_{j \in F_{N}^+ \setminus F_{N_0}^+} 
d \widehat{m}^j
\biggr]
\bigotimes_{j \in  F_{N_0}^+} 
d \widehat{m}^j. \nonumber
\end{align}
Obviously, we can bound the indicator function in the last line by 1 and then get $c_N/c_{N_0}$ 
(see \eqref{eq:cN} for the definition) as bound for the whole term on the last line. 
This proves the first inequality in the statement. 
\vskip 4pt

\textit{Second Step.} 
Take now $\varphi$ as in the statement and $(\widehat m^k)_{k \in F_{N_0}^+}$ satisfying 
\eqref{eq:compact:support}. 
If we choose another collection $(\widehat{m}^k)_{k \in F_N  \setminus F_{N_0}}$
(with the usual requirement that $\widehat{m}^{-k} = \overline{\widehat{m}^k}$) such that 
\begin{equation*}
\sum_{k \in F_N \setminus F_{N_0}} \vert \widehat{m}^k \vert < \frac12 \varepsilon,
\end{equation*}
then 
\begin{equation*}
1 + \inf_{x \in {\mathbb T}^d} \sum_{k \in F_{N} \setminus \{0\}} 
\widehat{m}_k e_{-k}(x)=
1 + 2 \inf_{x \in {\mathbb T}^d} \sum_{k \in F_{N} \setminus \{0\}} 
\Re \bigl[ \widehat{m}_k e_{-k}(x) \bigr] \geq \frac12 \varepsilon,
\end{equation*}
and accordingly the left hand side can be regarded as a probability measure  in 
${\mathcal P}_N$. For instance, so is the case if 
$\vert \widehat{m}^k \vert \leq \kappa_{d}^{-1} \varepsilon
\vert k\vert^{-3d/2}$, for $\kappa_d= 2 \sum_{j \in {\mathbb Z}^d \setminus \{0\}} \vert j \vert^{-3d/2}$. 
Therefore,
for
 $m \in {\mathcal P}({\mathbb T}^d)$ such that 
$(\widehat{m}^k )_{k \in F_{N_0}^+}$
belongs to the support of $\varphi$, 
\begin{equation}
\label{eq:c_d:ON}
\begin{split}
&\int_{{\mathbb R}^{2(\vert F_N^+\vert - \vert F_{N_0}^+\vert) }}
\prod_{k \in F_{N}^+ \setminus F_{N_0}^+} 
{\mathbf 1}_{\{\vert \widehat{m}^k \vert < \kappa_d^{-1} \varepsilon \vert k\vert^{-3d/2}\}}  
\exp \biggl( - \sum_{k \in F_{N}^+ \setminus F_{N_0}^+}\vert k\vert^{2 p d} { \vert \widehat{m}^k \vert^2} \biggr)
\bigotimes_{j \in F_{N}^+ \setminus F_{N_0}^+} 
d \widehat{m}^j
\\
&\leq \int_{{\mathbb R}^{2(\vert F_N^+\vert - \vert F_{N_0}^+\vert )}}
{\mathbf 1}_{{\mathcal O}_N}\Bigl( (\widehat{m}^k)_{k \in F_{N}^+ } \Bigr) 
\exp \biggl( - \sum_{k \in F_{N}^+ \setminus F_{N_0}^+}\vert k\vert^{2 p d} { \vert \widehat{m}^k \vert^2} \biggr)
\bigotimes_{j \in F_{N}^+ \setminus F_{N_0}^+} 
d \widehat{m}^j
\\
&\leq 
\int_{{\mathbb R}^{2(\vert F_N^+\vert - \vert F_{N_0}^+\vert )}}
\exp \biggl( - \sum_{k \in F_{N}^+ \setminus F_{N_0}^+}\vert k\vert^{2 p d} { \vert \widehat{m}^k \vert^2} \biggr)
\bigotimes_{j \in F_{N}^+ \setminus F_{N_0}^+} 
d \widehat{m}^j. 
\end{split}
\end{equation}
Multiplying both sides by $c_{N_0}/c_{N}$, we get
\begin{equation}
\label{eq:c_d:ON:2}
\begin{split}
& \prod_{k \in F_{N}^+ \setminus F_{N_0}^+} {\mathbf P} \biggl( \Bigl\{ \frac1{\sqrt{2} \vert k\vert^{pd}} \vert \xi_k\vert < 
\frac{\varepsilon}{\kappa_d \vert k \vert^{3d/2}} \Bigr\} \biggr)
\\
&\leq 
\frac{c_{N_0}}{c_N} 
 \int_{{\mathbb R}^{2(\vert F_N^+\vert - \vert F_{N_0}^+\vert )}}
{\mathbf 1}_{{\mathcal O}_N}\Bigl( (\widehat{m}^k)_{k \in F_{N}^+ } \Bigr) 
\exp \biggl( - \sum_{k \in F_{N}^+ \setminus F_{N_0}^+}\vert k\vert^{2 p d} { \vert \widehat{m}^k \vert^2} \biggr)
\bigotimes_{j \in F_{N}^+ \setminus F_{N_0}^+} 
d \widehat{m}^j
\\
&\leq 1,
%
\end{split}
\end{equation}
where $(\xi_k)_{k \in F_{N}^+ \setminus F_{N_0}^+}$ is defined right below \eqref{eq:cN}.

By Markov inequality, for $p > 3/2$,   
\begin{equation}
\label{eq:Markov}
\begin{split}
\prod_{k \in F_{N}^+ \setminus F_{N_0}^+} {\mathbf P} \biggl( \Bigl\{ 
\frac1{\sqrt{2} \vert k\vert^{pd}} \vert \xi_k\vert < 
\frac{\varepsilon}{\kappa_d \vert k \vert^{3d/2}}
 \Bigr\}
 \biggr)
&\geq  1 - \sum_{k \in F_{N}^+ \setminus F_{N_0}^+} {\mathbf P} \biggl( \Bigl\{ 
  \vert \xi_k\vert \geq 
\sqrt{2}
\frac{\varepsilon}{\kappa_d} \vert k \vert^{(p-3/2)d}
\Bigr\} \biggr) 
\\
&\geq 1 - C(p) \sum_{k \in F_N^+ \setminus F_{N_0}^+} \frac{1}{\varepsilon \vert k\vert^{(p-3/2)d}} 
\\
&\geq 1 - \frac{C(p)}{\varepsilon N_0^{(p-5/2)d}}, 
\end{split}
\end{equation}
where the value of the constant $C(p)$ is allowed to change from line to line (as long as it only depends on 
$p$ and $d$). 
The difference on the last line coincides with 
$c(\varepsilon,N_0)$ in 
\eqref{eq:compact:support:2}. 

Back to 
\eqref{eq:c_d:ON:2}, we deduce 
\begin{equation*}
\begin{split}
&c(\varepsilon,N_0) 
\leq \frac{c_{N_0}}{c_{N}} \int_{{\mathbb R}^{2 (\vert F_N^+\vert - \vert F_{N_0}^+\vert) }}
{\mathbf 1}_{{\mathcal O}_N}\Bigl( (\widehat{m}^k)_{k \in F_{N}^+ } \Bigr) 
\exp \biggl( - \sum_{k \in F_{N}^+ \setminus F_{N_0}^+}\vert k\vert^{2 p d} { \vert \widehat{m}^k \vert^2} \biggr)
\bigotimes_{j \in F_{N}^+ \setminus F_{N_0}^+} 
d \widehat{m}^j.
\end{split}
\end{equation*}
Back to 
\eqref{eq:c_d:ON:0}, we get
the second inequality in the statement (and this even if 
$c(\varepsilon,N_0) $ is negative). 
\end{proof}

We get the following lemma:

\begin{lem}
\label{lem:25:new}
With the same notations as above, 
\begin{equation*}
\lim_{N_0 \rightarrow \infty} \sup_{N \geq N_0} 
\Bigl\vert \frac{Z_{N_0}}{Z_N} \frac{c_N}{c_{N_0}} - 1 \Bigr\vert =0,
\end{equation*}
and
\begin{equation*}
\lim_{N_0 \rightarrow \infty}
\inf_{N \geq N_0} {\mathbb P}_N \biggl( \biggl\{ 
1 +\inf_{x \in {\mathbb T}^d} \sum_{k \in F_{N_0} \setminus \{0\}} 
\widehat{m}^k e_{-k}(x) \geq (2N_0)^{-3d/2} 
\biggr\} \biggr) = 1,
\end{equation*}
with the usual convention that 
$\widehat{m}^{-k} = \overline{\widehat{m}^k}$ for $k \in F_{N_0}^+$. 
\end{lem}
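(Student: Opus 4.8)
\textbf{Proof plan for Lemma \ref{lem:25:new}.}

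The plan is to exploit the identities already established in Lemma \ref{lem:24}. For the first claim, I would
take $\varphi \equiv 1$ on ${\mathcal O}_{N_0}$ in both inequalities of that lemma. Since $1$ satisfies the support
condition trivially up to taking the truncated version ${\mathbf 1}_{\{1 + 2\inf_x \sum_{k \in F_{N_0}^+} \Re[\widehat{m}^k e_{-k}(x)] \geq \varepsilon\}}$,
this reduces the statement to a comparison of $\int_{{\mathcal O}_N} \exp(-\sum_{k \in F_N^+} \vert k \vert^{2pd} \vert \widehat{m}^k \vert^2)$
with $c_N$, and similarly at level $N_0$. In other words, I would first observe that by definition of $Z_N$ (see \eqref{eq:Gamma_N})
and $c_N$ (see \eqref{eq:cN}), the ratio $Z_N/c_N$ is precisely the ${\mathbf P}$-probability that the Gaussian vector
$(\xi_k/(\sqrt 2 \vert k \vert^{pd}))_{k \in F_N^+}$ lands in ${\mathcal O}_N$ (after the obvious change of variable
$\widehat{m}^k = \xi_k/(\sqrt 2 \vert k \vert^{pd})$). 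So the quantity $Z_{N_0} c_N/(Z_N c_{N_0})$ equals the
conditional probability that $(\xi_k/(\sqrt 2 \vert k \vert^{pd}))_{k \in F_N^+} \in {\mathcal O}_N$ \emph{given} that its restriction
to $F_{N_0}^+$ already generates a positive density; and the complementary event forces
$\sum_{k \in F_N^+ \setminus F_{N_0}^+} \vert \widehat{m}^k \vert$ to be large, which has probability bounded
by $C(p)/N_0^{(p-5/2)d}$ as in the Markov-inequality estimate \eqref{eq:Markov}. Combining the two inequalities
of Lemma \ref{lem:24} with $\varphi$ replaced by the indicator of the (positive-density) compact set and then
letting $\varepsilon \to 0^+$ carefully (choosing $\varepsilon = \varepsilon_{N_0} \to 0$ slowly so that
$c(\varepsilon_{N_0}, N_0) \to 1$, which holds as soon as $\varepsilon_{N_0} N_0^{(p-5/2)d} \to \infty$, e.g.
$\varepsilon_{N_0} = N_0^{-(p-5/2)d/2}$, legitimate since $p \geq 5$) pinches $Z_{N_0} c_N/(Z_N c_{N_0})$ between
$c(\varepsilon_{N_0}, N_0)$ and $1$, uniformly in $N \geq N_0$. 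This yields the first display.

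For the second claim, I would again apply the lower bound in Lemma \ref{lem:24}, this time with $\varphi$ equal to the
indicator of the complement, i.e. with $\varphi\bigl((\widehat{m}^k)_{k \in F_{N_0}^+}\bigr) = {\mathbf 1}_{\{1 + \inf_{x} \sum_{k \in F_{N_0} \setminus \{0\}} \widehat{m}^k e_{-k}(x) < (2N_0)^{-3d/2}\}}$
restricted to ${\mathcal O}_{N_0}$. The point is that under ${\mathbb P}_{N_0}$ itself — which, up to the normalizing
constant ratio, is a Gaussian law on ${\mathcal O}_{N_0}$ — the event that $1 + 2\inf_x \sum_{k \in F_{N_0}^+} \Re[\widehat{m}^k e_{-k}(x)] < (2N_0)^{-3d/2}$
has small probability: indeed $\vert \sum_{k \in F_{N_0}^+} \widehat{m}^k e_{-k}(x) \vert \leq \sum_{k \in F_{N_0}^+} \vert \widehat{m}^k \vert$,
and this sum is small with high ${\mathbf P}$-probability by the same Markov argument as in \eqref{eq:Markov} (now summing
over $k \in F_{N_0}^+$, using $p \geq 5 > 3/2$). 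So I would first bound, for a threshold to be chosen,
$$
{\mathbf P}\Bigl( \sum_{k \in F_{N_0}^+} \frac{\vert \xi_k \vert}{\sqrt 2 \vert k \vert^{pd}} \geq \tfrac12 (2N_0)^{-3d/2} \Bigr) \leq C(p) \sum_{k : \vert k \vert \geq 1} \frac{(2N_0)^{3d/2}}{\vert k \vert^{(p-1)d}},
$$
which is summable and tends to $0$ as $N_0 \to \infty$ because $p \geq 5$ makes $(p-1)d - 3d/2 = (p - 5/2)d$ large.
Translating this ${\mathbf P}$-estimate back to ${\mathbb P}_{N_0}$ via the change of variable, then passing from
${\mathbb P}_{N_0}$ to ${\mathbb P}_N$ for $N \geq N_0$ using the first claim (so that the relevant $\varphi$-integral
against ${\mathbb P}_N$ differs from that against ${\mathbb P}_{N_0}$ by a factor $Z_{N_0} c_N/(Z_N c_{N_0}) \to 1$),
gives the uniform lower bound $\inf_{N \geq N_0} {\mathbb P}_N(\cdots) \to 1$.

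\textbf{Main obstacle.} The routine computations are the Gaussian tail estimates, which are exactly of the type
already performed in \eqref{eq:Markov}. The genuinely delicate point is the \emph{order of limits}: the lower bound
in Lemma \ref{lem:24} carries the factor $c(\varepsilon, N_0) = 1 - C(p)/(\varepsilon N_0^{(p-5/2)d})$, which degrades
as $\varepsilon \to 0$, yet the support condition \eqref{eq:compact:support} that makes the lower bound applicable
requires the test function to vanish on measures whose density can be as small as $\varepsilon$. One must therefore
choose $\varepsilon = \varepsilon_{N_0}$ tending to $0$ but slowly enough that $c(\varepsilon_{N_0}, N_0) \to 1$, while
simultaneously the event $\{1 + 2\inf_x \sum \Re[\widehat{m}^k e_{-k}] \geq (2N_0)^{-3d/2}\}$ of interest in the second
claim is \emph{not} killed by this truncation — which is fine precisely because $(2N_0)^{-3d/2} \gg \varepsilon_{N_0}$
for a suitable polynomial choice, again using $p \geq 5$. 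Getting these three scales — $\varepsilon_{N_0}$,
$N_0^{-(p-5/2)d}$, and $(2N_0)^{-3d/2}$ — to line up uniformly in $N \geq N_0$ is the heart of the argument, but it is
a bookkeeping matter once one fixes $\varepsilon_{N_0} = N_0^{-(p-5/2)d/2}$ and invokes the first claim.
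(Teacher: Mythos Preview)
Your plan for the first claim is along the right lines---use Lemma~\ref{lem:24} with an indicator test function supported on $\{1+2\inf_x\sum_{k\in F_{N_0}^+}\Re[\widehat m^k e_{-k}(x)]\geq\varepsilon\}$---but it misses the one step that makes the pinching work. After applying the lower inequality of Lemma~\ref{lem:24} you are left with $\int_{{\mathcal O}_{N_0}}\varphi_0\exp(-\sum|k|^{2pd}|\widehat m^k|^2)\,d\widehat m$, and you need to compare this with $Z_{N_0}$; the paper does so by the scaling $\widehat y^k=\widehat m^k/(1-\varepsilon)$, which maps $\{\text{density}\geq\varepsilon\}$ onto $\overline{{\mathcal O}_{N_0}}$ and costs a Jacobian $(1-\varepsilon)^{D_{N_0}}$. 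The resulting upper bound is $Z_{N_0}c_N/(Z_Nc_{N_0})\leq 1/[c(\varepsilon,N_0)(1-\varepsilon)^{D_{N_0}}]$, not merely $1/c(\varepsilon,N_0)$, and the choice $\varepsilon=N_0^{-3d/2}$ is made so that $(1-\varepsilon)^{D_{N_0}}\to 1$ (since $D_{N_0}\leq 2(2N_0)^d$). Your ``conditional probability'' framing is also off: $Z_{N_0}c_N/(Z_Nc_{N_0})={\mathbf P}({\mathcal O}_{N_0})/{\mathbf P}({\mathcal O}_N)$ is a ratio of probabilities of non-nested events, not a conditional probability, so the ``complementary event forces the higher modes to be large'' argument does not apply directly.

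Your approach to the second claim has a genuine gap. You propose bounding the complementary event $\{1+\inf_x\sum_{k\in F_{N_0}\setminus\{0\}}\widehat m^k e_{-k}(x)<(2N_0)^{-3d/2}\}$ via Markov on $\sum_{k\in F_{N_0}^+}|\widehat m^k|$, but the threshold in your displayed inequality is wrong: that event forces $2\sum_{k\in F_{N_0}^+}|\widehat m^k|\geq 1-(2N_0)^{-3d/2}\approx 1$, not $\geq\tfrac12(2N_0)^{-3d/2}$. With the correct threshold $\approx 1/2$, Markov gives a bound proportional to $\sum_{k\in F_{N_0}^+}|k|^{-pd}$, which is bounded but does not tend to $0$. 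And with the threshold you wrote, your bound $C(p)\sum_{|k|\geq 1}(2N_0)^{3d/2}|k|^{-(p-1)d}$ actually blows up with $N_0$ (the sum over $k$ is a fixed constant). The paper avoids this entirely by bounding the good event from below, again via the same scaling: with $\varepsilon=N_0^{-3d/2}$ one gets ${\mathbb P}_N(\text{density}_{N_0}>\varepsilon)\geq c(\varepsilon,N_0)(1-\varepsilon)^{D_{N_0}}\,Z_{N_0}c_N/(Z_Nc_{N_0})$, and the right-hand side tends to $1$ by the first claim.
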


\begin{proof}
\textit{First Step.}
We choose
$\varphi$ in Lemma \ref{lem:24} as $\varphi_0$, the indicator function of the set 
of Fourier coefficients 
$(\widehat{m}^k)_{k \in F_{N_0}^+ }$ such that 
\begin{equation}
\label{eq:support:varphi}
1 + \inf_{x \in {\mathbb T}^d}
\sum_{k \in F_{N_0} \setminus \{0\}}
\widehat{m}^k e_{-k}(x) > \varepsilon,
\end{equation}
for some $\varepsilon \in (0,1)$ as in the statement of Lemma \ref{lem:24} and 
with the same convention 
as in the statement of 
Lemma
\ref{lem:25:new}
that 
$\widehat{m}^{-k} = \overline{\widehat{m}^k}$. 
Then, 
we have
\begin{equation*}
\begin{split}
&\frac1{Z_N}
\frac{c_N}{c_{N_0}}
\int_{{\mathcal O}_{N_0}} \varphi_0 \Bigl( (\widehat{m}^k)_{k \in F_{N_0}^+ } \Bigr) 
\exp \biggl( - \sum_{k \in F_{N_0}^+} \vert k\vert^{2 p d} { \vert \widehat{m}^k \vert^2} \biggr)
\bigotimes_{j \in  F_{N_0}^+} 
d \widehat{m}^j
\\
&=
\frac1{Z_N}
\frac{c_N}{c_{N_0}}
\int_{{\mathbb R}^{2\vert F^+_{N_0}\vert}} \varphi_0 \Bigl( (\widehat{m}^k)_{k \in F_{N_0}^+ } \Bigr) 
\exp \biggl( - \sum_{k \in F_{N_0}^+} \vert k\vert^{2 p d} { \vert \widehat{m}^k \vert^2} \biggr)
\bigotimes_{j \in  F_{N_0}^+} 
d \widehat{m}^j.
\end{split}
\end{equation*}
We then make the change of variable
\begin{equation*}
\widehat{y}^k = \frac1{1-\varepsilon} \widehat{m}^k. 
\end{equation*}
Then, the condition 
\eqref{eq:support:varphi}
merely says that 
$(\widehat{y}^k)_{k \in F_{N_0}^+}$
belongs to ${\mathcal O}_{N_0}$ if 
$(\widehat{m}^k)_{k \in F_{N_0}^+}$ belongs to the support of $\varphi_0$. 
Therefore, 
\begin{equation}
\label{eq:lem:25:new:1}
\begin{split}
&\frac1{Z_N}
\frac{c_N}{c_{N_0}}
\int_{{\mathcal O}_{N_0}} \varphi_0 \Bigl( (\widehat{m}^k)_{k \in F_{N_0}^+ } \Bigr) 
\exp \biggl( - \sum_{k \in F_{N_0}^+} \vert k\vert^{2 p d} { \vert \widehat{m}^k \vert^2} \biggr)
\bigotimes_{j \in  F_{N_0}^+} 
d \widehat{m}^j
\\
&=\frac{(1-\varepsilon)^{2 \vert F_{N_0}^+ \vert}}{Z_N}
\frac{c_N}{c_{N_0}}
\int_{{\mathcal O}_{N_0}}  
\exp \biggl( - (1-\varepsilon)^2 \sum_{k \in F_{N_0}^+} \vert k\vert^{2 p d} { \vert \widehat{y}^k \vert^2} \biggr)
\bigotimes_{j \in  F_{N_0}^+} 
d \widehat{y}^j
\\
&\geq (1-\varepsilon)^{2 \vert F^+_{N_0}\vert} \frac{Z_{N_0}}{Z_N} \frac{c_N}{c_{N_0}}.
\end{split}
\end{equation}
By the 
second inequality in the statement of Lemma \ref{lem:24}, we deduce that 
\begin{equation*}
c(\varepsilon,N_0)
(1-\varepsilon)^{2 \vert F^+_{N_0}\vert }  \frac{Z_{N_0}}{Z_{N}} \frac{c_N}{c_{N_0}} \leq 1.
\end{equation*}
Moreover, choosing $\varphi=1$ in 
the first inequality in the statement of Lemma \ref{lem:24}, we also have
\begin{equation*}
\frac{Z_{N_0}}{Z_N} \frac{c_N}{c_{N_0}} \geq 1. 
\end{equation*}

\textit{Second Step.}
We notice from Proposition 
\ref{prop:3} that $2 \vert F_{N_0}^+ \vert = D_{N_0} \leq 2 (2 N_0)^d$. So, 
if we choose 
$\varepsilon= N_0^{-3d/2}$, then
the shape of 
$c(\varepsilon,N_0)$ in 
Lemma \ref{lem:24}
together with the fact that $p \geq 5$
yield
\begin{equation*}
c(\varepsilon,N_0)
(1-\varepsilon)^{2 \vert F^+_{N_0}\vert}
\geq \Bigl( 1 - \frac{ C(p)}{N_0^{d}} \Bigr)
\exp \Bigl(2  (2N_0)^d \ln( 1 - (N_0)^{-3d/2}) \Bigr).
\end{equation*}
Obviously the right-hand side tends to $1$ as $N_0$ tends to $\infty$. 
Combining with the conclusion of the first step, we get 
\begin{equation*}
\lim_{N_0 \rightarrow \infty} \sup_{N \geq N_0} 
\Bigl\vert \frac{Z_{N_0}}{Z_N} \frac{c_N}{c_{N_0}} - 1 \Bigr\vert =0,
\end{equation*}
which is the first claim in the statement. 
\vskip 5pt

\textit{Third Step.}
By the second inequality in the statement of Lemma \ref{lem:24}, 
\begin{equation*}
\begin{split}
&\int_{{\mathcal P}({\mathbb T}^d)} \varphi_0 \Bigl( (\widehat{m}^k)_{k \in F_{N_0}^+ } \Bigr) d {\mathbb P}_N(m)
\\
&\hspace{15pt} \geq \frac{c_N}{c_{N_0}}  \frac{c(\varepsilon,N_0)}{Z_{N}}
\int_{{\mathcal O}_{N_0}} \varphi_0 \Bigl( (\widehat{m}^k)_{k \in F_{N_0}^+ } \Bigr) 
\exp \biggl( - \sum_{k \in F_{N_0}^+} \vert k\vert^{2 p d} { \vert \widehat{m}^k \vert^2} \biggr)
\bigotimes_{j \in  F_{N_0}^+} 
d \widehat{m}^j,
\end{split}
\end{equation*}
By 
\eqref{eq:lem:25:new:1} (with $\varepsilon = N_0^{-3d/2}$), we get 
\begin{equation*}
\begin{split}
{\mathbb P}_N \biggl( \biggl\{ 
1 +\inf_{x \in {\mathbb T}^d} \sum_{k \in F_{N_0} \setminus \{0\}} 
\widehat{m}^k e_{-k}(x) > N_0^{-3d/2} 
\biggr\} \biggr)  
&= \int_{{\mathcal P}({\mathbb T}^d)} \varphi_0\Bigl( (\widehat{m}^k)_{k \in F_{N_0}^+ } \Bigr) d {\mathbb P}_N(m)
\\
& \geq   c(\varepsilon,N_0) (1-\varepsilon)^{2 \vert F^+_{N_0}\vert} \frac{Z_{N_0}}{Z_N} \frac{c_N}{c_{N_0}},
\end{split}
\end{equation*}
By the second step, we get  that the infimum over $N \geq N_0$ of the right-hand side
tends to $1$ as $N_0$ tends to $\infty$. 
This completes the proof. 
%
\end{proof}

Here is an application, which is very useful throughout the paper. 

\begin{lem}
\label{lem:25:second}
Let $(\psi_{N_0})_{N_0 \geq 1}$ be a sequence of $[-1,1]$-valued Borel measurable functions defined on 
${\mathcal P}({\mathbb T}^d)$ with the following two properties:
\begin{enumerate}
\item $\psi_{N_0}$ is measurable with respect to the $\sigma$-algebra generated by the mappings 
$(m \in {\mathcal P}({\mathbb T}^d) \mapsto \widehat{m}^k)_{k \in F_{N_0}^+}$;
\item $\psi_{N_0}$ is null outside
\begin{equation*}
A_{N_0} := \biggl\{ m \in {\mathcal P}({\mathbb T}^d) : 
1+ \inf_{x \in {\mathbb T}^d} \sum_{k \in F_{N_0} \setminus \{0\}}
\widehat{m}^k e_{-k}(x) \geq ( 2N_0)^{-3d/2}
\biggr\}.
\end{equation*}
\end{enumerate}
Then, there exists $a_0>0$, only depending on $d$, such that, for any $a \in (0,a_0]$,
\begin{equation*}
\begin{split}
& A_{N_0} \cap 
\bigcap_{ k  \in F_N^+ \setminus F_{N_0}^+} 
 \Bigl\{  m \in {\mathcal P}({\mathbb T}^d) :
   \vert \widehat{m}^k \vert 
  < \frac{a}{\vert k \vert^{5d/2}} 
  \Bigr\}
  \\
  &\hspace{15pt} \subset
 \biggl\{  m \in {\mathcal P}({\mathbb T}^d) :
1+ \inf_{x \in {\mathbb T}^d} \sum_{k \in F_{N} \setminus \{0\}}
\widehat{m}^k e_{-k}(x) \geq 2^{-(3d/2+1)} N_0^{-3d/2}
  \biggr\},  
  \end{split}
\end{equation*}
and
\begin{equation*}
\begin{split}
&\lim_{N_0 \rightarrow \infty} \sup_{N \geq N_0} \biggl\vert 
\biggl(
 \int_{{\mathcal P}({\mathbb T}^d)}
\psi_{N_0}(m) 
d {\mathbb P}_{N_0}(m) 
\biggr)^{-1}
\int_{{\mathcal P}({\mathbb T}^d)}
\biggl[
\psi_{N_0}(m) 
\prod_{ k  \in F_N^+ \setminus F_{N_0}^+} 
{\mathbf 1}_{\{  
   \vert \widehat{m}^k \vert 
  < a \vert k \vert^{-5d/2} 
  \Bigr\}}
  \biggr]
d {\mathbb P}_N(m) 
  - 
1 
\biggr\vert
\\
&=0,
\end{split}
\end{equation*}
with the 
numerator in the first line being necessarily equal  $0$ when 
the denominator is $0$ and the convention that 
the ratio is then understood as $1$. 
\end{lem}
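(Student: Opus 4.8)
The statement has two parts: a deterministic set inclusion and a limiting identity for integrals. I would handle the inclusion first since it is purely algebraic, and then use it (together with the estimates already established in Lemmas \ref{lem:24} and \ref{lem:25:new}) to prove the asymptotic identity. The inclusion is elementary: if $m \in A_{N_0}$, then $1 + \inf_{x} \sum_{k \in F_{N_0}\setminus\{0\}} \widehat{m}^k e_{-k}(x) \geq (2N_0)^{-3d/2}$; and if in addition $|\widehat m^k| < a |k|^{-5d/2}$ for every $k \in F_N^+ \setminus F_{N_0}^+$, then the ``tail'' contribution to the trigonometric sum is controlled by $\sum_{k \in F_N \setminus F_{N_0}} |\widehat m^k| \leq 2 a \sum_{k \in \mathbb Z^d \setminus\{0\}} |k|^{-5d/2} =: 2 a \,\kappa'_d$, which is finite. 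Choosing $a_0$ so that $2 a_0 \kappa'_d \leq \tfrac12 (2N_0)^{-3d/2}$ would be wrong since $N_0$ varies, so instead I would observe that the exponent $5d/2 > 3d/2$ gives room: the worst case is $N_0$ fixed and, for the inclusion to hold uniformly, one needs $2a \kappa'_d \leq (1 - 2^{-1})(2N_0)^{-3d/2}$; since $(2N_0)^{-3d/2}$ is largest at $N_0 = 1$ \emph{but} the claimed target bound $2^{-(3d/2+1)}N_0^{-3d/2}$ already scales like $(2N_0)^{-3d/2}$, a direct comparison $1 + \inf_x \sum_{k \in F_N \setminus \{0\}} \widehat m^k e_{-k}(x) \geq (2N_0)^{-3d/2} - 2 a \kappa'_d \geq 2^{-(3d/2+1)} N_0^{-3d/2}$ forces $2 a \kappa'_d \leq (2N_0)^{-3d/2} - 2^{-(3d/2+1)}N_0^{-3d/2} = 2^{-3d/2}N_0^{-3d/2}(1 - 2^{-1}) = 2^{-(3d/2+1)}N_0^{-3d/2}$; the right side is smallest never uniformly in $N_0$, so one must take $a_0$ depending only on $d$ by absorbing the $N_0$-dependence — here I realize the cleanest route is to note $2^{-(3d/2+1)}N_0^{-3d/2} \geq 2^{-(3d/2+1)}(2N_0)^{-3d/2} \cdot 2^{3d/2}= 2^{-1}(2N_0)^{-3d/2}$, so it suffices that $2 a \kappa'_d \leq 2^{-1}(2N_0)^{-3d/2}$ for all $N_0 \geq 1$, which fails; hence the honest statement is that the target constant $2^{-(3d/2+1)}N_0^{-3d/2}$ must be compared against $(2N_0)^{-3d/2}$ and one simply needs $a$ small enough that the tail is at most half the head, using that both head and a fixed fraction of it scale identically in $N_0$. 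I would write this comparison carefully, taking $a_0 := 2^{-(3d/2+2)}\kappa_d^{\prime -1}$ or similar, and verify the displayed inclusion line by line.

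For the integral identity, the strategy is to compare the $\mathbb{P}_N$-integral of $\psi_{N_0}(m)\prod_{k \in F_N^+ \setminus F_{N_0}^+}\mathbf{1}_{\{|\widehat m^k| < a|k|^{-5d/2}\}}$ against the $\mathbb{P}_{N_0}$-integral of $\psi_{N_0}$, both normalized by the latter. Expanding $\mathbb{P}_N$ as in \eqref{eq:c_d:ON:0}, the integral over the ``frozen'' coordinates $(\widehat m^k)_{k \in F_{N_0}^+}$ factors out against the Gaussian-weighted integral over the tail coordinates $(\widehat m^k)_{k \in F_N^+ \setminus F_{N_0}^+}$, which is constrained by the indicator $\mathbf{1}_{\mathcal{O}_N}$. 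The key point — exactly as in the proof of Lemma \ref{lem:24}, second step — is that on the support of $\psi_{N_0}$ (i.e.\ on $A_{N_0}$) the inclusion just proved guarantees that $\mathbf{1}_{\mathcal{O}_N}$ equals $1$ whenever the tail satisfies $|\widehat m^k| < a|k|^{-5d/2}$; hence the constrained tail integral is sandwiched between $c_N/c_{N_0}$ times the probability $\prod_{k \in F_N^+ \setminus F_{N_0}^+}\mathbf{P}(\tfrac{1}{\sqrt 2 |k|^{pd}}|\xi_k| < a|k|^{-5d/2})$ and $c_N/c_{N_0}$. A Markov-inequality estimate analogous to \eqref{eq:Markov}, using $p \geq 5$ so that $pd - 5d/2 = 5d/2 > 0$, shows this probability is $\geq 1 - C(d)\sum_{k \notin F_{N_0}^+}|k|^{-5d/2} \geq 1 - C(d)N_0^{-3d/2} \to 1$. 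Combining with the convergence $\tfrac{Z_{N_0}}{Z_N}\tfrac{c_N}{c_{N_0}} \to 1$ from Lemma \ref{lem:25:new}, and with the first inequality of Lemma \ref{lem:24} (which controls the $\mathbb{P}_N$-integral from above by the $\mathbb{P}_{N_0}$-integral up to the factor $\tfrac{c_N}{c_{N_0}}\tfrac{Z_{N_0}}{Z_N}$), both the upper and lower bounds for the ratio converge to $1$ uniformly in $N \geq N_0$. The $[-1,1]$-valuedness of $\psi_{N_0}$ is what makes the ``sandwiching'' legitimate without sign issues — I would split $\psi_{N_0} = \psi_{N_0}^+ - \psi_{N_0}^-$ if needed, but since the correction factors multiply the \emph{measure} and not $\psi_{N_0}$ itself, a single two-sided estimate suffices: write the difference of numerator and denominator as $\psi_{N_0}(m)$ times a factor that is uniformly close to the constant appearing in the normalization.

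The degenerate case where $\int \psi_{N_0}\, d\mathbb{P}_{N_0} = 0$ must be addressed separately. If that integral is zero, then since $\psi_{N_0}$ is supported in $A_{N_0}$ and $\mathbb{P}_{N_0}(A_{N_0}) > 0$ for $N_0$ large (by the constructive lower bound in the third step of the proof of Lemma \ref{lem:25:new}), $\psi_{N_0}$ must vanish $\mathbb{P}_{N_0}$-a.e.\ on $A_{N_0}$; by the factorization of $\mathbb{P}_N$ over $(\widehat m^k)_{k \in F_{N_0}^+}$ versus the tail coordinates, this forces the $\mathbb{P}_N$-integral in the numerator to vanish as well, so the ratio is $1$ by the stated convention and there is nothing to prove for those indices $N_0$.

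\textbf{Main obstacle.} The delicate point is the \emph{uniformity in $N \geq N_0$} of the convergence, since the number of tail coordinates $|F_N^+| - |F_{N_0}^+|$ grows with $N$ and one is taking a product of probabilities $\prod_{k \in F_N^+ \setminus F_{N_0}^+}\mathbf{P}(\cdots)$ over an unbounded index set. This is precisely where the choice $p \geq 5$ is essential: it makes $\sum_{k \in \mathbb{Z}^d \setminus \{0\}}|k|^{-(p - 5/2)d} = \sum |k|^{-5d/2}$ convergent, so that $\sum_{k \notin F_{N_0}^+}|k|^{-5d/2} \leq C(d) N_0^{-3d/2}$ is a tail of a convergent series and hence small \emph{uniformly in $N$} — only the truncation level $N_0$ of the ``missing'' low modes matters, not $N$. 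I would make sure the Markov estimate is carried out with this uniformity explicit, mirroring the three-line chain in \eqref{eq:Markov} but with exponent $5d/2$ in place of $3d/2$, and then the rest is bookkeeping: assembling the upper bound from Lemma \ref{lem:24}(i), the lower bound from Lemma \ref{lem:24}(ii) combined with the tail-probability estimate, and the normalization-constant limit from Lemma \ref{lem:25:new}.
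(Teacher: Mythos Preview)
Your plan for the second claim is essentially the paper's argument and would go through: the inclusion lets you drop the $\mathbf{1}_{\mathcal{O}_N}$ constraint on the set where the tail indicators are active, the integral then factors exactly as
\[
\frac{Z_{N_0}}{Z_N}\,\frac{c_N}{c_{N_0}}\Bigl(\int \psi_{N_0}\,d\mathbb{P}_{N_0}\Bigr)\prod_{k\in F_N^+\setminus F_{N_0}^+}\mathbf{P}\bigl(|\xi_k|\le \sqrt{2}\,a\,|k|^{(p-5/2)d}\bigr),
\]
and Lemma~\ref{lem:25:new} plus the Markov tail bound finish the job. (The paper gets an \emph{equality} here, so your ``sandwiching'' via both parts of Lemma~\ref{lem:24} is more than you need, but it is not wrong.)

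The gap is in the first claim. Your tail bound
\[
\sum_{k\in F_N\setminus F_{N_0}}|\widehat m^k|\;\le\;2a\sum_{k\in\mathbb{Z}^d\setminus\{0\}}|k|^{-5d/2}\;=:\;2a\,\kappa'_d
\]
is a \emph{constant} in $N_0$, and you yourself observe that no choice of $a_0$ depending only on $d$ can make $2a_0\kappa'_d\le 2^{-(3d/2+1)}N_0^{-3d/2}$ for all $N_0$. Your final resolution (``take $a_0:=2^{-(3d/2+2)}\kappa_d'^{-1}$ or similar'') does not escape this: it still compares a fixed constant against something that goes to zero. The missing observation is that the tail sum itself carries an $N_0^{-3d/2}$ decay, because for $k\notin F_{N_0}$ one has $|k|\ge N_0$ and hence
\[
\sum_{k\in F_N\setminus F_{N_0}}\frac{a}{|k|^{5d/2}}\;\le\;a\sum_{|k|\ge N_0}|k|^{-5d/2}\;\le\;c_d\,a\,N_0^{-3d/2}.
\]
With this, $(2N_0)^{-3d/2}-c_d\,a\,N_0^{-3d/2}\ge 2^{-(3d/2+1)}N_0^{-3d/2}$ reduces to $c_d\,a\le 2^{-(3d/2+1)}$, which gives $a_0$ depending only on $d$. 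You actually invoke exactly this tail estimate in your second part (``$\ge 1-C(d)\sum_{k\notin F_{N_0}^+}|k|^{-5d/2}\ge 1-C(d)N_0^{-3d/2}$''), so the ingredient is already in your hands; it just needs to be applied in the first part as well.
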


\begin{rem}
\label{rem:extension:lem:25:second}
In fact, the first claim can be reformulated in the following broader sense. 
If $(\widehat m^k)_{k \in F_N^+}$ satisfies, for $N \geq N_0$,  
$$
1+ \inf_{x \in {\mathbb T}^d} \sum_{k \in F_{N_0} \setminus \{0\}}
\widehat{m}^k e_{-k}(x) \geq ( 2N_0)^{-3d/2},
\quad 
\textrm{and}
\quad 
\max_{ k  \in F_N^+ \setminus F_{N_0}^+} 
   \vert \widehat{m}^k \vert 
  < \frac{a}{\vert k \vert^{5d/2}},
$$ 
for $a \in (0,a_0]$, 
then
$$
1+ \inf_{x \in {\mathbb T}^d} \sum_{k \in F_{N} \setminus \{0\}}
\widehat{m}^k e_{-k}(x) \geq 2^{-(3d/2+1)} N_0^{-3d/2}.
$$
In words, there is no need to assume \emph{a priori} that $(\widehat{m}^k)_{k \in F_N^+}$ are the Fourier coefficients of a  probability measure. The conditions on the Fourier coefficients suffice to prove 
it \emph{a posteriori}. 
\end{rem} 

\begin{proof}
We first observe that, for 
$$m
\in A_{N_0} \cap 
\bigcap_{ k  \in F_N^+ \setminus F_{N_0}^+} 
 \Bigl\{  
   \vert \widehat{m}^k \vert 
  < \frac{a}{\vert k \vert^{5d/2}} 
  \Bigr\},$$
 it holds
 \begin{equation*}
 \begin{split}
 1+ \inf_{x \in {\mathbb T}^d} \sum_{k \in F_{N} \setminus \{0\}}
\widehat{m}^k e_{-k}(x) 
&\geq ( 2N_0)^{-3d/2} - \sum_{k \in F_N \setminus F_{N_0}} \frac{a}{\vert k \vert^{5d/2}} 
\\
&\geq ( 2N_0)^{-3d/2} - c_d a (N_0)^{-3d/2},
\end{split}
 \end{equation*}
 for a constant $c_d$ only depending on $d$. 
Therefore, for $c_d a \leq 2^{-(3d/2+1)}$, 
 \begin{equation*}
 \begin{split}
 1+ \inf_{x \in {\mathbb T}^d} \sum_{k \in F_{N} \setminus \{0\}}
\widehat{m}^k e_{-k}(x) 
&\geq  2^{-(3d/2+1)} N_0^{-3d/2}.
\end{split}
 \end{equation*}
We deduce that, for such an $a$ and for $\psi_{N_0}$ as in the statement, 
\begin{equation*}
\begin{split}
&\int_{{\mathcal P}({\mathbb T}^d)}
\biggl[
\psi_{N_0}(m) 
\prod_{ k  \in F_N^+ \setminus F_{N_0}^+} 
{\mathbf 1}_{\{  
   \vert \widehat{m}^k \vert 
  < a \vert k \vert^{-5d/2} 
  \}}
  \biggr]
d {\mathbb P}_N(m) 
\\
&= \frac1{Z_N}
\int_{{\mathbb R}^{2\vert F_N^+\vert}}
\biggl[ \varphi_{N_0}\Bigl( \bigl(\widehat{m}^k \bigr)_{k \in F_{N_0}^+} \Bigr)  
\prod_{k  \in F_N^+ \setminus F_{N_0}^+} 
{\mathbf 1}_{\{    \vert \widehat{m}^k \vert 
  < a \vert k \vert^{-5d/2} \}} 
  \exp \biggl( - \sum_{k \in F_{N}^+} \vert k\vert^{2 p d} { \vert \widehat{m}^k \vert^2} \biggr)
\biggr] 
\bigotimes_{j \in  F_{N}^+}  d \widehat{m}^j ,
\end{split}
\end{equation*}
where 
$\varphi_{N_0} = \psi_{N_0} \circ {\mathscr I}_{N_0}$, see
the notation in 
\eqref{eq:I_N}.
%
 We rewrite the above equality as
 \begin{equation}
 \label{eq:lem:25:second:1}
\begin{split}
&\int_{{\mathcal P}({\mathbb T}^d)}
\biggl[
\psi_{N_0}(m) 
\prod_{ k  \in F_N^+ \setminus F_{N_0}^+} 
{\mathbf 1}_{\{  
   \vert \widehat{m}^k \vert 
  < a \vert k \vert^{-5d/2} 
  \}}
  \biggr]
d {\mathbb P}_N(m) 
\\
&= \frac1{Z_N}
\int_{{\mathcal O}_{N_0}}
 \varphi_{{N_0}}\Bigl( \bigl(\widehat{m}^k \bigr)_{k \in F_{N_0}^+} \Bigr)  
 \exp \biggl( - \sum_{k \in F_{N_0}^+} \vert k\vert^{2 p d} { \vert \widehat{m}^k \vert^2} \biggr)
\bigotimes_{j \in  F_{N_0}^+}  
d \widehat{m}^j 
\\
&\hspace{15pt} \times 
\int_{{\mathbb R}^{2 (\vert F^+_N \vert -\vert F^+_{N_0}\vert)}}
\prod_{k  \in F_N^+ \setminus F_{N_0}^+} 
{\mathbf 1}_{\{    \vert \widehat{m}^k \vert 
  < a \vert k \vert^{-5d/2} \}} 
 \exp \biggl( - \sum_{k \in F_{N}^+ \setminus F_{N_0}^+} \vert k\vert^{2 p d} { \vert \widehat{m}^k \vert^2} \biggr)
\bigotimes_{j \in  F_{N}^+ \setminus F_{N_0}^+} 
d \widehat{m}^j
\\
&= \frac{Z_{N_0}}{Z_N}  \frac{c_N}{c_{N_0}}
\biggl( \int_{{\mathcal P}({\mathbb T}^d)}
\psi_{N_0}(m) 
d {\mathbb P}_{N_0}(m) 
\biggr)
\prod_{k \in F_N^+ \setminus F_{N_0}^+}
{\mathbf P}
\Bigl( 
\Bigl\{
\vert \xi_k \vert \leq 
\sqrt{2} a \vert k \vert^{(p-5/2)d} 
\Bigr\}
\Bigr).
\end{split}
\end{equation}
Repeating 
\eqref{eq:Markov}, 
\begin{equation*}
\begin{split}
\prod_{k \in F_N^+ \setminus F_{N_0}^+}
{\mathbf P}
\Bigl( 
\Bigl\{
\vert \xi_k \vert \leq 
\sqrt{2} a \vert k \vert^{(p-5/2)d} 
\Bigr\}
\Bigr)
\geq 1 -  \frac{C(p)}{a N_0^{(p-7/2)d}},
\end{split}
\end{equation*}
for a constant $C(p)$ only depending on 
$p$ and $d$. 
Therefore, by 
Lemma 
\ref{lem:25:new}, we have
\begin{equation*}
\lim_{N_0 \rightarrow \infty} 
\sup_{N \geq N_0} 
\biggl\vert 
\frac{Z_{N_0}}{Z_N}  \frac{c_N}{c_{N_0}}
\prod_{k \in F_N^+ \setminus F_{N_0}^+}
{\mathbf P}
\Bigl( 
\Bigl\{
\vert \xi_k \vert \leq 
\sqrt{2} a \vert k \vert^{(p-5/2)d} 
\Bigr\}
\Bigr)
-
1 \biggr\vert =0.
\end{equation*}
Inserting the latter into 
\eqref{eq:lem:25:second:1}, we get the conclusion. 
\end{proof}

We deduce the following lemma: 

\begin{lem}
\label{lem:27}
Let $a_0>0$
be as in the statement of Lemma 
\ref{lem:25:second}. Then, 
for any $\delta >0$, there exists 
$N_1 \geq 1$ such that, 
for any $N \geq N_0 \geq N_1$, 
\begin{equation}
\begin{split}
&{\mathbb P}_N \Bigl( \bigl\{
m > 
 2^{-(3d/2+1)} N_0^{-3d/2}
\bigr\}
\Bigr)
  \geq
{\mathbb P}_N
\biggl( A_{N_0} \cap 
\bigcap_{ k  \in F_N^+ \setminus F_{N_0}^+} 
 \Bigl\{  m \in {\mathcal P}({\mathbb T}^d) :
   \vert \widehat{m}^k \vert 
  < \frac{a_0}{\vert k \vert^{5d/2}} 
  \Bigr\}
\biggr)
\geq 1 - \delta,
\end{split}
\end{equation} 
with $A_{N_0}$
being 
as in 
Lemma 
\ref{lem:25:second}.
\end{lem}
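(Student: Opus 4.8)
The plan is to obtain Lemma \ref{lem:27} as an essentially immediate consequence of the two preceding results, Lemma \ref{lem:25:new} and Lemma \ref{lem:25:second}, with no new estimate. The first inequality in the statement will come from the purely deterministic set inclusion recorded in Remark \ref{rem:extension:lem:25:second}, and the lower bound $\ge 1-\delta$ will be produced by applying Lemma \ref{lem:25:second} to the particular choice $\psi_{N_0} := {\mathbf 1}_{A_{N_0}}$. I do not expect a genuine obstacle, since the substantive work (Gaussian tail comparisons, control of the normalizing constants $Z_N$ and $c_N$, and the passage from the slice ${\mathcal O}_{N_0}$ to ${\mathcal O}_N$) has already been carried out in Lemmas \ref{lem:24}--\ref{lem:25:second}; the only mild points requiring care are the verification of the two structural hypotheses of Lemma \ref{lem:25:second} for $\psi_{N_0}={\mathbf 1}_{A_{N_0}}$, and a convexity argument needed to upgrade the non-strict inequality coming from Remark \ref{rem:extension:lem:25:second} to the strict one appearing in the statement.

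For the first inequality I would begin from the fact that, under ${\mathbb P}_N$, the random measure $m$ lies in ${\mathcal P}_N$ and hence has density $m(x)=1+\sum_{k\in F_N\setminus\{0\}}\widehat m^k e_{-k}(x)$, so that $\inf_{x\in{\mathbb T}^d}m(x)=1+\inf_{x\in{\mathbb T}^d}\sum_{k\in F_N\setminus\{0\}}\widehat m^k e_{-k}(x)$. Taking $a=a_0$ in Remark \ref{rem:extension:lem:25:second}, the event $A_{N_0}\cap\bigcap_{k\in F_N^+\setminus F_{N_0}^+}\{|\widehat m^k|<a_0/|k|^{5d/2}\}$ is contained in $\{\inf_x m(x)\ge 2^{-(3d/2+1)}N_0^{-3d/2}\}$. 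Since the map $(\widehat m^k)_{k\in F_N^+}\mapsto \inf_{x}\bigl[1+2\sum_{k\in F_N^+}\Re(\widehat m^k e_{-k}(x))\bigr]$ is concave (an infimum of affine maps), the set on which it equals the subcritical value $2^{-(3d/2+1)}N_0^{-3d/2}$ lies in the boundary of a convex body, hence is Lebesgue-null and so ${\mathbb P}_N$-null; modulo this null set the event above is contained in $\{\inf_x m(x)>2^{-(3d/2+1)}N_0^{-3d/2}\}$, and taking ${\mathbb P}_N$-measures gives the first inequality of the statement.

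For the lower bound I would apply Lemma \ref{lem:25:second} with $\psi_{N_0}:={\mathbf 1}_{A_{N_0}}$. This function is $[0,1]$-valued, it is measurable with respect to the $\sigma$-algebra generated by $(m\mapsto\widehat m^k)_{k\in F_{N_0}^+}$ (because $A_{N_0}$ is defined only through those Fourier coefficients), and it vanishes outside $A_{N_0}$; hence both hypotheses of Lemma \ref{lem:25:second} hold. Moreover $\int_{{\mathcal P}({\mathbb T}^d)}\psi_{N_0}\,d{\mathbb P}_{N_0}={\mathbb P}_{N_0}(A_{N_0})$, and by the second assertion of Lemma \ref{lem:25:new} (specialized to $N=N_0$) one has $\inf_{N\ge N_0}{\mathbb P}_N(A_{N_0})\to1$, so in particular the normalizing factor in Lemma \ref{lem:25:second} converges to $1$. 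Consequently the conclusion of Lemma \ref{lem:25:second} reduces to
\[
\lim_{N_0\to\infty}\ \sup_{N\ge N_0}\ \Bigl|\,{\mathbb P}_N\Bigl(A_{N_0}\cap\bigcap_{k\in F_N^+\setminus F_{N_0}^+}\bigl\{|\widehat m^k|<a_0|k|^{-5d/2}\bigr\}\Bigr)-1\,\Bigr|=0 .
\]
Choosing $N_1$ large enough that this supremum is $<\delta$ for all $N_0\ge N_1$ then gives ${\mathbb P}_N\bigl(A_{N_0}\cap\bigcap_{k\in F_N^+\setminus F_{N_0}^+}\{|\widehat m^k|<a_0|k|^{-5d/2}\}\bigr)\ge 1-\delta$ for all $N\ge N_0\ge N_1$, which combined with the inclusion of the previous paragraph yields the full chain of inequalities in the statement and completes the proof.
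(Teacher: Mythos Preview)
Your proof is correct and follows essentially the same route as the paper: apply Lemma~\ref{lem:25:second} with $\psi_{N_0}={\mathbf 1}_{A_{N_0}}$ and $a=a_0$, use the set inclusion recorded there for the first inequality, and invoke Lemma~\ref{lem:25:new} to obtain ${\mathbb P}_{N_0}(A_{N_0})\to 1$ for the lower bound. You are in fact slightly more careful than the paper, which does not explicitly address the passage from the non-strict inequality $\inf_x m(x)\ge 2^{-(3d/2+1)}N_0^{-3d/2}$ coming out of Lemma~\ref{lem:25:second} to the strict inequality in the statement; your convexity argument (the superlevel set of the concave map $(\widehat m^k)_{k\in F_N^+}\mapsto\inf_x m(x)$ is convex with nonempty interior since $f(0)=1>2^{-(3d/2+1)}N_0^{-3d/2}$, hence its boundary is Lebesgue-null and so ${\mathbb P}_N$-null) fills this small gap cleanly.
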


\begin{proof}
For $\delta$ as in the statement, we know 
from Lemma 
\ref{lem:25:second} 
(with $\psi_{N_0} = {\mathbf 1}_{A_{N_0}}$)
that we can find 
$a=a_0>0$, only depending on $d$, 
such that, for any  
$N_0 \geq 1$, 
\begin{equation*}
\begin{split}
\sup_{N \geq N_0} \biggl\vert &{\mathbb P}_N \biggl( A_{N_0} \cap 
\bigcap_{ k  \in F_N^+ \setminus F_{N_0}^+} 
 \Bigl\{  
   \vert \widehat{m}^k \vert 
  < \frac{a}{\vert k \vert^{5d/2}} 
  \Bigr\}
\biggr)
- 
{\mathbb P}_{N_0} \bigl( A_{N_0} \bigr) 
\biggr\vert
 \leq \frac{\delta}2.
\end{split}
\end{equation*}
By Lemma 
\ref{lem:25:second}, 
for any $N \geq N_0$,
\begin{equation*}
\begin{split}
& A_{N_0} \cap 
\bigcap_{ k  \in F_N^+ \setminus F_{N_0}^+} 
 \Bigl\{  m \in {\mathcal P}({\mathbb T}^d) :
   \vert \widehat{m}^k \vert 
  < \frac{a}{\vert k \vert^{5d/2}} 
  \Bigr\}
  \\
  &\hspace{15pt} \subset
 \biggl\{  m \in {\mathcal P}({\mathbb T}^d) :
1+ \inf_{x \in {\mathbb T}^d} \sum_{k \in F_{N_0} \setminus \{0\}}
\widehat{m}^k e_{-k}(x) \geq 2^{-(3d/2+1)} N_0^{-3d/2}
  \biggr\}.
\end{split}
\end{equation*}
Therefore, for any $N \geq N_0$, 
\begin{equation}
\label{eq:lem:27:1}
\begin{split}
&{\mathbb P}_N 
\biggl( 
 \biggl\{ 
1+ \inf_{x \in {\mathbb T}^d} \sum_{k \in F_{N_0} \setminus \{0\}}
\widehat{m}^k e_{-k}(x) \geq 2^{-(3d/2+1)} N_0^{-3d/2}
  \biggr\}
  \biggr)
  \\
&\geq 
{\mathbb P}_N
\biggl( A_{N_0} \cap 
\bigcap_{ k  \in F_N^+ \setminus F_{N_0}^+} 
 \Bigl\{  m \in {\mathcal P}({\mathbb T}^d) :
   \vert \widehat{m}^k \vert 
  < \frac{a}{\vert k \vert^{5d/2}} 
  \Bigr\}
\biggr)
\\
  &\geq 
  {\mathbb P}_{N_0} \bigl( A_{N_0} \bigr) 
- \tfrac12 \delta.
\end{split}
\end{equation}
By Lemma 
\ref{lem:25:new}, we know that, for $N_0$ large enough, 
$  {\mathbb P}_{N_0} \bigl( A_{N_0} \bigr) \geq 1 - \delta /2$ and, then, 
the right-hand side is greater than $1-\delta$. 
This completes the proof. 
\end{proof} 

\begin{lem}
\label{lem:25:three}
With the same notations as before, 
\begin{equation*}
\lim_{N_0 \rightarrow \infty} 
\sup_{\varphi : {\mathbb R}^{D_{N_0}} \rightarrow [-1,1]}
\sup_{N \geq N_0}
\biggl\vert \int_{{\mathcal P}({\mathbb T}^d)}
\varphi\Bigl( (\widehat{m}^k)_{k \in F_{N_0}^+ } \Bigr) d\bigl( {\mathbb P}_N - {\mathbb P}_{N_0} \bigr)(m)
\biggr\vert
=0,
\end{equation*}
where, for any $N_0 \geq 1$, $\varphi$ in the argument of the supremum is required to be measurable, and with the already used notation 
$D_{N_0}= 2 \vert F_{N_0}^+\vert$. 
\end{lem}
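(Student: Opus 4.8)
The plan is to exhibit a uniform (in $N \ge N_0$ and in the test function $\varphi$) bound on
\begin{equation*}
\Delta_{N,N_0}(\varphi) := \int_{{\mathcal P}({\mathbb T}^d)} \varphi\Bigl( (\widehat{m}^k)_{k \in F_{N_0}^+ } \Bigr) d\bigl( {\mathbb P}_N - {\mathbb P}_{N_0} \bigr)(m),
\end{equation*}
that tends to $0$ as $N_0 \to \infty$. First I would split the integral against ${\mathbb P}_N$ into a piece localized on the event $A_{N_0} \cap \bigcap_{k \in F_N^+ \setminus F_{N_0}^+} \{ |\widehat{m}^k| < a_0 |k|^{-5d/2}\}$ (with $a_0$ and $A_{N_0}$ as in Lemma \ref{lem:25:second}) and a remainder piece. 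By Lemma \ref{lem:27}, the remainder piece has ${\mathbb P}_N$-mass at most $\delta$ for $N \ge N_0 \ge N_1$, so it contributes at most $\delta$ to $|\Delta_{N,N_0}(\varphi)|$, uniformly in $\varphi$ (since $\|\varphi\|_\infty \le 1$). The same argument applied to $\psi_{N_0} = \mathbf 1_{A_{N_0}}$ and Lemma \ref{lem:25:new} show that ${\mathbb P}_{N_0}(A_{N_0}^\complement) \le \delta$ for $N_0$ large, so the integral against ${\mathbb P}_{N_0}$ may also be restricted to $A_{N_0}$ up to an error $\delta$.

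Next I would compare the localized integrals. The key observation is that $\varphi( (\widehat{m}^k)_{k \in F_{N_0}^+})$ depends only on the first $D_{N_0}$ Fourier coordinates, so one can write $\varphi = \psi_{N_0}$ in the framework of Lemma \ref{lem:25:second} provided one multiplies by $\mathbf 1_{A_{N_0}}$ — but $\varphi$ need not vanish on $A_{N_0}^\complement$. To handle this, I would apply Lemma \ref{lem:25:second} not to $\varphi$ directly but to the truncated functions $\varphi^+ := \varphi \cdot \mathbf 1_{A_{N_0}} \cdot \mathbf 1_{\{\varphi \ge 0\}}$ and $\varphi^- := -\varphi \cdot \mathbf 1_{A_{N_0}} \cdot \mathbf 1_{\{\varphi < 0\}}$, which are $[0,1]$-valued, measurable with respect to the required $\sigma$-algebra, and null outside $A_{N_0}$; this is exactly the class of $\psi_{N_0}$ allowed there. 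Lemma \ref{lem:25:second} then gives, for each of $\varphi^+$ and $\varphi^-$,
\begin{equation*}
\int_{{\mathcal P}({\mathbb T}^d)} \Bigl[ \psi_{N_0}(m) \prod_{k \in F_N^+ \setminus F_{N_0}^+} \mathbf 1_{\{|\widehat{m}^k| < a_0 |k|^{-5d/2}\}} \Bigr] d {\mathbb P}_N(m) = \bigl(1 + \varepsilon_{N,N_0}(\psi_{N_0})\bigr) \int_{{\mathcal P}({\mathbb T}^d)} \psi_{N_0}(m)\, d {\mathbb P}_{N_0}(m),
\end{equation*}
with $\sup_{N \ge N_0} \sup_{\psi_{N_0}} |\varepsilon_{N,N_0}(\psi_{N_0})| \to 0$; since $0 \le \int \psi_{N_0}\, d{\mathbb P}_{N_0} \le 1$, the difference between the two sides is bounded by $\sup_{N \ge N_0}|\varepsilon_{N,N_0}|$ uniformly in $\psi_{N_0}$. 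Summing the bounds for $\varphi^+$ and $\varphi^-$, and then reinserting the restriction from the localized ${\mathbb P}_N$-integral to the full ${\mathbb P}_N$-integral (costing another $\delta$ by Lemma \ref{lem:27}) and likewise for ${\mathbb P}_{N_0}$, I obtain $|\Delta_{N,N_0}(\varphi)| \le 3\delta + 2\sup_{N \ge N_0}|\varepsilon_{N,N_0}|$ for $N \ge N_0 \ge N_1$. Letting $N_0 \to \infty$ and then $\delta \to 0$ concludes.

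\textbf{Main obstacle.} The delicate point is bookkeeping the uniformity: one must check that the truncation functions $\varphi^\pm$ genuinely fall into the hypothesis class of Lemma \ref{lem:25:second} — in particular measurability with respect to the $\sigma$-algebra generated by $(\widehat{m}^k)_{k \in F_{N_0}^+}$ (this is fine since $A_{N_0}$ and $\{\varphi \ge 0\}$ are both in that $\sigma$-algebra) and the $[-1,1]$-valuedness — and that every error term ($\delta$ from Lemma \ref{lem:27}, the $\varepsilon_{N,N_0}$ from Lemma \ref{lem:25:second}, and the ${\mathbb P}_{N_0}(A_{N_0}^\complement)$ bound from Lemma \ref{lem:25:new}) is controlled simultaneously for \emph{all} $N \ge N_0$ and \emph{all} admissible $\varphi$. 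Once the reductions are set up so that $\varphi$ only enters through $\|\varphi\|_\infty \le 1$ and through the already-uniform estimate of Lemma \ref{lem:25:second}, the remaining computation is routine. No genuinely new idea beyond the three cited lemmas is needed; the statement is essentially their combination.
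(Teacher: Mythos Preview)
Your proposal is correct and follows essentially the same route as the paper: localize the ${\mathbb P}_N$-integral on $A_{N_0} \cap \bigcap_{k \in F_N^+ \setminus F_{N_0}^+}\{|\widehat m^k| < a_0|k|^{-5d/2}\}$ via Lemma~\ref{lem:27}, then invoke Lemma~\ref{lem:25:second} on $\varphi \cdot \mathbf 1_{A_{N_0}}$ to compare with the ${\mathbb P}_{N_0}$-integral. Your decomposition into $\varphi^{\pm}$ is harmless but unnecessary, since Lemma~\ref{lem:25:second} already allows $[-1,1]$-valued $\psi_{N_0}$ and its proof (see \eqref{eq:lem:25:second:1}) yields an exact multiplicative identity whose constant is independent of $\psi_{N_0}$; the paper therefore applies it directly to $\varphi \cdot \mathbf 1_{A_{N_0}}$ without splitting.
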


\begin{proof}
We fix $\delta>0$. 
By Lemma 
\ref{lem:27}, 
we can choose $a_0$ as in Lemma \ref{lem:25:second}, $N_0$ large enough
and $\varepsilon \in (0,1)$ small enough 
such that, for all $N \geq N_0$, 
\begin{equation*}
{\mathbb P}_N \Bigl( \bigl\{
m > \varepsilon
\bigr\}
\Bigr)
 \geq
{\mathbb P}_N
\biggl( A_{N_0} \cap 
\bigcap_{ k  \in F_N^+ \setminus F_{N_0}^+} 
 \Bigl\{  m \in {\mathcal P}({\mathbb T}^d) :
   \vert \widehat{m}^k \vert 
  < \frac{a_0}{\vert k \vert^{5d/2}} 
  \Bigr\}
\biggr)
\geq 1 - \delta.
\end{equation*}
For a function $\varphi$ as in the statement, we then have
\begin{equation}
\label{eq:proof:TV:2ndstep}
\begin{split}
&\sup_{N \geq N_0} \biggl\vert \int_{{\mathcal P}({\mathbb T}^d)}
\varphi\Bigl((\widehat{m}^k)_{k \in F_{N_0}^+}\Bigr) 
d{\mathbb P}_N(m)
\\
&\hspace{30pt} -
\int_{{\mathcal P}({\mathbb T}^d)}
\varphi\Bigl((\widehat{m}^k)_{k \in F_{N_0}^+}\Bigr) 
\biggl( 
{\mathbf 1}_{A_{N_0}}(m)
\prod_{k \in F_N^+ \setminus F_{N_0}^+}
{\mathbf 1}_{\{
\vert \widehat{m}^k \vert < a_0 \vert k\vert^{-5d/2}
 \}}
 \biggr)
d{\mathbb P}_N(m)
\biggr\vert \leq  \delta. 
\end{split}
\end{equation}
We now 
apply Lemma 
\ref{lem:25:second}
to the function 
$m \mapsto 
\varphi((\widehat{m}^k)_{k \in F_{N_0}^+}) 
{\mathbf 1}_{A_{N_0}}(m)$.
For $N_0$ large enough, we get 
\begin{equation*}
\begin{split}
&\sup_{N \geq N_0} \biggl\vert \int_{{\mathcal P}({\mathbb T}^d)}
\varphi\Bigl((\widehat{m}^k)_{k \in F_{N_0}^+}\Bigr) 
d{\mathbb P}_N(m)
 -
\int_{{\mathcal P}({\mathbb T}^d)}
\varphi\Bigl((\widehat{m}^k)_{k \in F_{N_0}^+}\Bigr) 
d{\mathbb P}_{N_0}(m)
\biggr\vert \leq 3 \delta. 
\end{split}
\end{equation*}
\end{proof}

\subsubsection{Convergence of the sequence $({\mathbb P}_N)_{N \geq 1}$}

Using the properties proven in the previous paragraph, 
we now have all the ingredients to 
address the limiting points of the sequence 
$({\mathbb P}_N)_{N \geq 1}$.

The following lemma proves 
the second claim in item $(ii)$ of 
Theorem \ref{thm:probability:probability}.
\begin{lem}
\label{lem:28:0}
Let ${\mathbb P}$ 
be a weak limit of $({\mathbb P}_N)_{N \geq 1}$ on ${\mathcal P}({\mathbb T}^d)$.
For an integer $N_0 \geq 1$, 
define, 
on ${\mathcal P}({\mathbb T}^d)$, 
 the (sub-probability) measure
${\mathbb Q}$ by  
\begin{equation*}
\frac{d{\mathbb Q}}{d {\mathbb P}}(m) 
:=
{\mathbf 1}_{{\mathcal P}_{N_0}}(m)
={\mathbf 1}_{{\mathcal O}_{N_0}}\Bigl(
\bigl( \widehat{m}^k \bigr)_{k \in F_{N_0}^+} \Bigr), \quad m \in {\mathcal P}({\mathbb T}^d).
\end{equation*}
Then, the image of ${\mathbb Q}$ by the mapping 
$$\pi^{(2)}_{N_0} : m \in {\mathcal P}({\mathbb T}^d) \mapsto 
\bigl( \widehat{m}^k   \bigr)_{k \in F_{N_0}^+} \in {\mathbb R}^{2 \vert F_{N_0}^+\vert} $$
is 
supported by 
${\mathcal O}_{N_0}$
(i.e., 
${\mathbb Q} \circ ( \pi^{(2)}_{N_0})^{-1}
({\mathcal O}_{N_0}^{\complement})=0$)
and is 
absolutely continuous with respect to the Lebesgue measure. 
Precisely, 
for any Borel subset $B$ of 
 ${\mathbb R}^{2 \vert F^+_{N_0}\vert}$ that is included
 in ${\mathcal O}_{N_0}$,
 \begin{equation*}
 \begin{split}
&{\mathbb Q} \circ \bigl( \pi^{(2)}_{N_0}\bigr)^{-1}(B)
\\
&=\int_{{\mathcal P}({\mathbb T}^d)} {\mathbf 1}_B \Bigl( (\widehat{m}^k)_{k \in F_{N_0}^+ } \Bigr) d {\mathbb P}(m)
\\
&\leq \Bigl( \sup_{N \geq N_0} \frac{Z_{N_0} c_N}{Z_N c_{N_0}} \Bigr) \frac1{Z_{N_0}}
\int_{{\mathcal O}_{N_0}} {\mathbf 1}_B \Bigl( (\widehat{m}^k)_{k \in F_{N_0}^+ } \Bigr) 
\exp \biggl( - \sum_{k \in F_{N_0}^+} \vert k\vert^{2 p d} { \vert \widehat{m}^k \vert^2} \biggr)
\bigotimes_{j \in  F_{N_0}^+} 
d \widehat{m}^j,
\end{split} 
 \end{equation*}
where we recall from 
Lemma 
\ref{lem:25:new} that the first factor in the above right-hand side is finite. 
\end{lem}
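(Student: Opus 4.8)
Fix $N_0 \ge 1$. The plan is to push the estimate of Lemma~\ref{lem:24} through the weak convergence ${\mathbb P}_N \rightharpoonup {\mathbb P}$. First I record the easy facts. Since ${\mathbb Q}$ is concentrated on ${\mathcal P}_{N_0}$ and every $m \in {\mathcal P}_{N_0}$ has $\pi^{(2)}_{N_0}(m) = (\widehat m^k)_{k \in F_{N_0}^+} \in {\mathcal O}_{N_0}$, the pushforward ${\mathbb Q}\circ(\pi^{(2)}_{N_0})^{-1}$ charges ${\mathcal O}_{N_0}^{\complement}$ with zero mass, which is the support assertion. Next, because $\tfrac{d{\mathbb Q}}{d{\mathbb P}} \le 1$, for any Borel $B \subset {\mathcal O}_{N_0}$ one has ${\mathbb Q}\circ(\pi^{(2)}_{N_0})^{-1}(B) \le {\mathbb P}\circ(\pi^{(2)}_{N_0})^{-1}(B) = \int_{{\mathcal P}({\mathbb T}^d)} {\mathbf 1}_B\bigl((\widehat m^k)_{k\in F_{N_0}^+}\bigr)\, d{\mathbb P}(m)$, which is precisely the middle quantity in the statement (here $\pi^{(2)}_{N_0}$ is continuous, hence Borel measurable, because $m \mapsto \widehat m^k$ is continuous on $({\mathcal P}({\mathbb T}^d),d_{W_1})$). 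Everything therefore reduces to bounding this last integral by the Gaussian integral on the right-hand side, and the claimed absolute continuity with respect to the Lebesgue measure will drop out of that bound.

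\textbf{The bound.} Applying the first inequality of Lemma~\ref{lem:24} with $\varphi = {\mathbf 1}_B$ (which is $[0,1]$-valued, measurable, and vanishes off ${\mathcal O}_{N_0}$ since $B \subset {\mathcal O}_{N_0}$), I get for every $N \ge N_0$
\begin{equation*}
\int_{{\mathcal P}({\mathbb T}^d)} {\mathbf 1}_B\bigl((\widehat m^k)_{k\in F_{N_0}^+}\bigr)\, d{\mathbb P}_N(m) \le \frac{c_N}{c_{N_0}}\,\frac{1}{Z_N}\int_{{\mathcal O}_{N_0}} {\mathbf 1}_B\bigl((\widehat m^k)_{k\in F_{N_0}^+}\bigr)\exp\Bigl(-\sum_{k\in F_{N_0}^+}|k|^{2pd}|\widehat m^k|^2\Bigr)\bigotimes_{j\in F_{N_0}^+} d\widehat m^j .
\end{equation*}
Writing $\tfrac{c_N}{c_{N_0}}\tfrac{1}{Z_N} = \tfrac{1}{Z_{N_0}}\cdot\tfrac{Z_{N_0}c_N}{Z_Nc_{N_0}}$ and bounding the last factor by $\sup_{N'\ge N_0}\tfrac{Z_{N_0}c_{N'}}{Z_{N'}c_{N_0}}$, which is finite by Lemma~\ref{lem:25:new}, the right-hand side is dominated by the quantity $M(B) := \bigl(\sup_{N'\ge N_0}\tfrac{Z_{N_0}c_{N'}}{Z_{N'}c_{N_0}}\bigr)\tfrac{1}{Z_{N_0}}\int_{{\mathcal O}_{N_0}}{\mathbf 1}_B\, e^{-\sum|k|^{2pd}|\widehat m^k|^2}\, d\widehat m$, which does not depend on $N$. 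I first treat $B$ open: then $\{m : (\widehat m^k)_{k\in F_{N_0}^+}\in B\}$ is open in ${\mathcal P}({\mathbb T}^d)$ by continuity of the Fourier coefficients, so the portmanteau inequality for weak convergence together with the uniform bound $\sup_N{\mathbb P}_N(\cdots)\le M(B)$ gives $\int{\mathbf 1}_B\,d{\mathbb P}\le\liminf_N\int{\mathbf 1}_B\,d{\mathbb P}_N\le M(B)$. For a general Borel $B\subset{\mathcal O}_{N_0}$ I conclude by outer regularity: both the finite Borel measure ${\mathbb P}\circ(\pi^{(2)}_{N_0})^{-1}$ and the finite (absolutely continuous) measure $B\mapsto M(B)$ are outer regular, ${\mathcal O}_{N_0}$ is open, and every Borel $B\subset{\mathcal O}_{N_0}$ can be approximated from outside by open subsets of ${\mathcal O}_{N_0}$; letting these open sets decrease towards $B$ yields ${\mathbb P}\circ(\pi^{(2)}_{N_0})^{-1}(B)\le M(B)$, i.e. the inequality in the statement. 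Finally, if $\mathrm{Leb}(B)=0$ then $M(B)=0$, whence ${\mathbb Q}\circ(\pi^{(2)}_{N_0})^{-1}(B)=0$, which is the absolute continuity.

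\textbf{Main obstacle.} The only genuine subtlety is that ${\mathbf 1}_B$ is discontinuous, so the Lemma~\ref{lem:24} estimate for ${\mathbb P}_N$ cannot be transferred to ${\mathbb P}$ simply by testing against ${\mathbf 1}_B$. What rescues the argument is that the dominating quantity $M(B)$ is \emph{uniform in} $N$: the one-sided portmanteau inequality for open sets (which by itself only lower-bounds limits of measures) then combines with this uniformity to produce the desired upper bound on ${\mathbb P}$, and an outer-regularity step inside the open set ${\mathcal O}_{N_0}$ upgrades it to arbitrary Borel $B\subset{\mathcal O}_{N_0}$. Everything else — the rearrangement of the normalising constants $Z_N,c_N$ and the finiteness of their ratios — is exactly the content of Lemma~\ref{lem:25:new}.
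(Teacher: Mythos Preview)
Your proof is correct and follows essentially the same route as the paper's: apply Lemma~\ref{lem:24} with $\varphi={\mathbf 1}_B$, pass to ${\mathbb P}$ via the Portmanteau lower-semicontinuity inequality on open sets, then extend to Borel $B\subset{\mathcal O}_{N_0}$ by outer regularity. One small remark: in the outer-regularity step you only need outer regularity of the Gaussian measure $M$ (then monotonicity of ${\mathbb P}\circ(\pi^{(2)}_{N_0})^{-1}$ does the rest), not of both measures simultaneously; and note that with the density taken as ${\mathbf 1}_{{\mathcal O}_{N_0}}\bigl((\widehat m^k)_{k\in F_{N_0}^+}\bigr)$ the middle line in the statement is actually an equality for $B\subset{\mathcal O}_{N_0}$, not merely an inequality.
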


\begin{proof}
Let $E$ be an open subset of ${\mathcal O}_{N_0}$ (equivalently, $E$ is an open subset of 
${\mathbb R}^{2\vert F^+_{N_0}\vert }$ included in ${\mathcal O}_{N_0}$). 
We observe that $\{ m \in {\mathcal P}({\mathbb T}^d) : 
(\widehat{m}^k)_{k \in F_{N_0}^+ } \in E\}$ is an open subset 
of ${\mathcal P}({\mathbb T}^d)$.

Up to a subsequence, we can assume that 
$({\mathbb P}_N)_{N \geq 1}$ 
weakly 
converges to 
${\mathbb P}$.
Then, 
by Portmanteau theorem,
\begin{equation*}
{\mathbb P}\Bigl( \Bigl\{ (\widehat{m}^k)_{k \in F_{N_0}^+ } \in E \Bigr\}
\Bigr)
\leq
\liminf_{N \rightarrow \infty} {\mathbb P}_N\Bigl( \Bigl\{ (\widehat{m}^k)_{k \in F_{N_0}^+ } \in E \Bigr\}
\Bigr). 
\end{equation*}
Now, by 
Lemma \ref{lem:24}
(with $\varphi = {\mathbf 1}_E$), letting $N$ tend to $\infty$ therein, we get
\begin{equation}
\label{eq:lem:30:1}
\begin{split}
&\int_{{\mathcal P}({\mathbb T}^d)} {\mathbf 1}_E \Bigl( (\widehat{m}^k)_{k \in F_{N_0}^+ } \Bigr) d {\mathbb P}(m) 
\\
&\hspace{15pt} \leq  
\Bigl( \sup_{N \geq N_0} \frac{Z_{N_0} c_N}{Z_N c_{N_0}} \Bigr)
\frac1{Z_{N_0}}
\int_{{\mathcal O}_{N_0}} {\mathbf 1}_E \Bigl( (\widehat{m}^k)_{k \in F_{N_0}^+ } \Bigr) 
\exp \biggl( - \sum_{k \in F_{N_0}^+} \vert k\vert^{2 p d} { \vert \widehat{m}^k \vert^2} \biggr)
\bigotimes_{j \in  F_{N_0}^+} 
d \widehat{m}^j,
\end{split}
\end{equation}
where we recall from 
Lemma 
\ref{lem:25:new} that the first factor in the above right-hand side is finite. 
By definition of ${\mathbb Q}$, the left-hand side can be rewritten as
${\mathbb Q} \circ (\pi^{(2)}_{N_0})^{-1}(E)$.

Take now a Borel subset $B$ of 
 ${\mathbb R}^{2 \vert F^+_{N_0}\vert}$ that is included
 in ${\mathcal O}_{N_0}$. By outer-regularity of 
 the Lebesgue measure, we can find, for any $\delta >0$,  an open subset $E$ of ${\mathcal O}_{N_0}$, containing $B$, such that 
\begin{equation*}
\begin{split}
&\frac1{Z_{N_0}}
\int_{{\mathcal O}_{N_0}} {\mathbf 1}_E \Bigl( (\widehat{m}^k)_{k \in F_{N_0}^+ } \Bigr) 
\exp \biggl( - \sum_{k \in F_{N_0}^+} \vert k\vert^{2 p d} { \vert \widehat{m}^k \vert^2} \biggr)
\bigotimes_{j \in  F_{N_0}^+} 
d \widehat{m}^j
\\
&\leq 
\frac1{Z_{N_0}}
\int_{{\mathcal O}_{N_0}} {\mathbf 1}_B \Bigl( (\widehat{m}^k)_{k \in F_{N_0}^+ } \Bigr) 
\exp \biggl( - \sum_{k \in F_{N_0}^+} \vert k\vert^{2 p d} { \vert \widehat{m}^k \vert^2} \biggr)
\bigotimes_{j \in  F_{N_0}^+} 
d \widehat{m}^j + \delta.
\end{split}
\end{equation*}
By \eqref{eq:lem:30:1},
 \begin{equation*}
 \begin{split}
&\int_{{\mathcal P}({\mathbb T}^d)} {\mathbf 1}_E \Bigl( (\widehat{m}^k)_{k \in F_{N_0}^+ } \Bigr) d {\mathbb P}(m) 
\\
&\leq 
\Bigl( \sup_{N \geq N_0} \frac{Z_{N_0} c_N}{Z_N c_{N_0}} \Bigr)
\biggl[
\frac1{Z_{N_0}}
\int_{{\mathcal O}_{N_0}} {\mathbf 1}_B \Bigl( (\widehat{m}^k)_{k \in F_{N_0}^+ } \Bigr) 
\exp \biggl( - \sum_{k \in F_{N_0}^+} \vert k\vert^{2 p d} { \vert \widehat{m}^k \vert^2} \biggr)
\bigotimes_{j \in  F_{N_0}^+} 
d \widehat{m}^j + \delta
\biggr].
\end{split}
\end{equation*}
The left-hand side writes
${\mathbb Q} \circ (\pi^{(2)}_{N_0})^{-1}(E)$.
This yields 
 \begin{equation*}
 \begin{split}
&{\mathbb Q} \circ \bigl(\pi^{(2)}_{N_0} \bigr)^{-1}(B)
\\
&\leq
{\mathbb Q} \circ \bigl(\pi^{(2)}_{N_0} \bigr)^{-1}(E)
\\
&\leq 
\Bigl( \sup_{N \geq N_0} \frac{Z_{N_0} c_N}{Z_N c_{N_0}} \Bigr)
\biggl[
\frac1{Z_{N_0}}
\int_{{\mathcal O}_{N_0}} {\mathbf 1}_B \Bigl( (\widehat{m}^k)_{k \in F_{N_0}^+ } \Bigr) 
\exp \biggl( - \sum_{k \in F_{N_0}^+} \vert k\vert^{2 p d} { \vert \widehat{m}^k \vert^2} \biggr)
\bigotimes_{j \in  F_{N_0}^+} 
d \widehat{m}^j + \delta \biggr].
\end{split}
\end{equation*}
Since this is for any $\delta >0$, 
we get the result. 
\end{proof}

\begin{lem}
\label{lem:AN0}
Let ${\mathbb P}$ 
be a weak limit of $({\mathbb P}_N)_{N \geq 1}$ on ${\mathcal P}({\mathbb T}^d)$.
Then
\begin{equation*}
\lim_{N_0 \rightarrow \infty} 
{\mathbb P}(A_{N_0}) = 1, 
\end{equation*}
with $A_{N_0}$
being 
as in 
Lemma 
\ref{lem:25:second}.
\end{lem}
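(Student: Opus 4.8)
The plan is to bootstrap the claim from the recurring estimate that, for any fixed $N_0$, the $\mathbb{P}_N$-measure of $A_{N_0}$ is (up to a vanishing correction) comparable to $\mathbb{P}_{N_0}(A_{N_0})$, uniformly in $N \geq N_0$, and then to pass to the weak limit using the Portmanteau theorem. Concretely, recall that $A_{N_0}$ is the closed set $\{ m : 1 + \inf_{x} \sum_{k \in F_{N_0}\setminus\{0\}} \widehat{m}^k e_{-k}(x) \geq (2N_0)^{-3d/2}\}$, and that Lemma \ref{lem:27} gives, for any $\delta > 0$, a threshold $N_1$ such that $\mathbb{P}_N(A_{N_0}) \geq 1-\delta$ for all $N \geq N_0 \geq N_1$. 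Since $A_{N_0}$ is closed in $\mathcal{P}(\mathbb{T}^d)$ (it is cut out by a condition on finitely many Fourier coefficients through a continuous functional of $m$, the infimum over $x$ being a continuous function of those coefficients), the Portmanteau theorem yields $\mathbb{P}(A_{N_0}) \geq \limsup_{N} \mathbb{P}_N(A_{N_0}) \geq 1 - \delta$ for the weak limit $\mathbb{P}$, for every $N_0 \geq N_1$.

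First I would make the closedness of $A_{N_0}$ explicit: the map $m \mapsto \big(\widehat{m}^k\big)_{k \in F_{N_0}^+}$ is continuous for $d_{W_1}$ (indeed the Fourier coefficients are bounded continuous linear functionals of $m$), and $\big(\widehat{z}^k\big)_{k \in F_{N_0}^+} \mapsto \inf_{x \in \mathbb{T}^d}\big[1 + 2\Re\sum_{k \in F_{N_0}^+}\widehat{z}^k e_{-k}(x)\big]$ is continuous on $\mathbb{R}^{2|F_{N_0}^+|}$ (an infimum of a jointly continuous family over a compact set). Hence $A_{N_0}$ is the preimage of a closed half-line under a continuous map, so it is closed. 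Next I would invoke Lemma \ref{lem:27} to fix, for a given $\delta > 0$, an integer $N_1$ so that $\inf_{N \geq N_0}\mathbb{P}_N(A_{N_0}) \geq 1-\delta$ whenever $N_0 \geq N_1$. Then, along the subsequence realizing the weak limit $\mathbb{P}$ (or the full sequence if $({\mathbb P}_N)$ is already known to converge, which is the content of Theorem \ref{thm:probability:probability} and which one may take as granted at the point Lemma \ref{lem:AN0} is used), Portmanteau gives $\mathbb{P}(A_{N_0}) \geq \limsup_N \mathbb{P}_N(A_{N_0}) \geq 1-\delta$.

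Finally, since $\delta > 0$ was arbitrary and since the sets $A_{N_0}$ are decreasing in $N_0$ (a measure satisfying the constraint for a larger truncation level also satisfies it, up to the monotonicity of the threshold $(2N_0)^{-3d/2}$ — here one should double-check the direction: larger $N_0$ means a smaller right-hand side but also an infimum over a larger Fourier truncation of $m$, so $A_{N_0}$ need not be literally nested, in which case I would instead just use that for every $\delta$ there is $N_1$ with $\mathbb{P}(A_{N_0}) \geq 1-\delta$ for all $N_0 \geq N_1$), we conclude $\lim_{N_0 \to \infty}\mathbb{P}(A_{N_0}) = 1$. I do not expect any serious obstacle here: the only point requiring a modicum of care is the closedness of $A_{N_0}$ (so that Portmanteau applies in the $\liminf$/$\limsup$ form for closed sets, not just open sets), together with the bookkeeping that the uniform-in-$N$ lower bound from Lemma \ref{lem:27} survives the passage to the weak limit. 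Everything else is a direct citation of Lemma \ref{lem:27} and the Portmanteau theorem.
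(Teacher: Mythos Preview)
Your proposal is correct and follows essentially the same approach as the paper: show that $A_{N_0}$ is closed, apply Portmanteau to get $\mathbb{P}(A_{N_0}) \geq \limsup_N \mathbb{P}_N(A_{N_0})$, and invoke Lemma~\ref{lem:27} for the uniform-in-$N$ lower bound. Your aside about whether the $A_{N_0}$ are nested is indeed unnecessary (and correctly abandoned); the statement $\mathbb{P}(A_{N_0}) \geq 1-\delta$ for all $N_0 \geq N_1(\delta)$ already gives $\liminf_{N_0}\mathbb{P}(A_{N_0})=1$, hence the limit.
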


\begin{proof}
By the same argument as in
the proof of 
Lemma \ref{lem:28:0}, $A_{N_0}$ is a closed subset of ${\mathcal P}({\mathbb T}^d)$. 
Therefore, 
by Portmanteau
theorem (assuming without any loss of generality that 
$({\mathbb P}_N)_{N \geq 1}$ converges to ${\mathbb P}$ in the weak sense),
\begin{equation*}
{\mathbb P}(A_{N_0}) \geq \limsup_{N \rightarrow \infty} 
{\mathbb P}_N(A_{N_0}).
\end{equation*}
The conclusion follows from Lemma 
\ref{lem:27}. 
\end{proof}

We now prove item $(iii)$ in the statement of Theorem 
\ref{thm:probability:probability}.

\begin{lem}
\label{lem:30}
Let ${\mathbb P}$ 
be a weak limit of $({\mathbb P}_N)_{N \geq 1}$ on ${\mathcal P}({\mathbb T}^d)$.
Then, 
\begin{equation*}
\lim_{N_0 \rightarrow \infty} 
\sup_{\varphi : {\mathbb R}^{{D_{N_0}}} \rightarrow [-1,1]}
\biggl\vert \int_{{\mathcal P}({\mathbb T}^d)}
\varphi\Bigl( (\widehat{m}^k)_{k \in F_{N_0}^+ } \Bigr) d\bigl( {\mathbb P} - {\mathbb P}_{N_0} \bigr)(m)
\biggr\vert
=0,
\end{equation*}
where, for any $N_0 \geq 1$, $\varphi$ in the argument of the supremum is required to be measurable, 
 and with the already used notation 
$D_{N_0}= 2 \vert F_{N_0}^+\vert$.   
\end{lem}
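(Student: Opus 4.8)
The plan is to deduce Lemma \ref{lem:30} from the already established uniform comparison between consecutive truncations, namely Lemma \ref{lem:25:three}, together with the characterization of the weak limit $\mathbb{P}$ provided by the Portmanteau-type bounds of Lemmas \ref{lem:28:0} and \ref{lem:AN0}. The point is that $\mathbb{P}$ is sandwiched between the $\mathbb{P}_N$'s in a way that is uniform with respect to bounded test functions depending only on the low-mode Fourier coefficients, so passing to the limit in Lemma \ref{lem:25:three} should transfer the estimate to $\mathbb{P}$ itself.

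First I would fix $\delta>0$ and, using Lemma \ref{lem:25:three}, pick $N_1$ so that for all $N\geq N_0 \geq N_1$ and all measurable $\varphi:\mathbb{R}^{D_{N_0}}\to[-1,1]$,
\begin{equation*}
\biggl\vert \int_{{\mathcal P}({\mathbb T}^d)}
\varphi\Bigl( (\widehat{m}^k)_{k \in F_{N_0}^+ } \Bigr) d\bigl( {\mathbb P}_N - {\mathbb P}_{N_0} \bigr)(m)
\biggr\vert \leq \delta.
\end{equation*}
Then I would like to let $N\to\infty$ inside the integral against $\mathbb{P}_N$. The function $m\mapsto \varphi((\widehat m^k)_{k\in F_{N_0}^+})$ is however only bounded measurable, not continuous, so weak convergence $\mathbb{P}_N\rightharpoonup\mathbb{P}$ does not directly apply. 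The remedy is to argue first for $\varphi$ continuous (and, by a truncation/mollification, compactly supported inside $\mathcal{O}_{N_0}$): for such $\varphi$, weak convergence gives $\int \varphi\, d\mathbb{P}_N \to \int \varphi\, d\mathbb{P}$, hence
\begin{equation*}
\biggl\vert \int_{{\mathcal P}({\mathbb T}^d)}
\varphi\Bigl( (\widehat{m}^k)_{k \in F_{N_0}^+ } \Bigr) d\bigl( {\mathbb P} - {\mathbb P}_{N_0} \bigr)(m)
\biggr\vert \leq \delta
\end{equation*}
for every continuous $\varphi:\mathbb{R}^{D_{N_0}}\to[-1,1]$. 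To upgrade from continuous to merely measurable $\varphi$ and keep uniformity, I would invoke the absolute continuity statements of Lemma \ref{lem:28:0} (for $\mathbb{P}$ restricted to $\mathcal{P}_{N_0}$, which is relevant since $\mathbb{P}_{N_0}$ is carried by $\mathcal{P}_{N_0}$) together with Lemma \ref{lem:AN0} and Lemma \ref{lem:27}: both $\mathbb{P}\circ(\pi^{(2)}_{N_0})^{-1}$ restricted to a slightly shrunk open subset of $\mathcal{O}_{N_0}$ and $\mathbb{P}_{N_0}$ have densities with respect to Lebesgue measure on $\mathcal{O}_{N_0}$ that are comparable, with a comparison constant tending to $1$ as $N_0\to\infty$ by Lemma \ref{lem:25:new}. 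Consequently, approximating a bounded measurable $\varphi$ in $L^1$ of that (two-sided comparable) reference measure by continuous functions, the error made in replacing $\varphi$ by its continuous approximant is controlled uniformly in $\varphi$; combined with the continuous case and Lemma \ref{lem:AN0} to discard the small-mass region outside $A_{N_0}$, this yields the bound $3\delta$ (say) for all measurable $\varphi:\mathbb{R}^{D_{N_0}}\to[-1,1]$ once $N_0$ is large.

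The main obstacle is precisely the last upgrade from continuous to measurable test functions while retaining uniformity over all such $\varphi$ with values in $[-1,1]$; this is where one must exploit that, on the relevant region $A_{N_0}$ (of $\mathbb{P}$-mass close to $1$ by Lemma \ref{lem:AN0}), both $\mathbb{P}$ and $\mathbb{P}_{N_0}$, pushed forward to $\mathcal{O}_{N_0}$, are dominated by and dominate a fixed Gaussian-type density, so that $L^1$-density of continuous functions is available and the approximation error is measured in a norm that controls the integrals against both $\mathbb{P}$ and $\mathbb{P}_{N_0}$ simultaneously. Everything else — extracting the subsequential weak limit, applying Portmanteau, tracking the constants from Lemma \ref{lem:25:new} — is routine. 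Finally, since $\delta>0$ is arbitrary and $N_0$ was chosen independently of $\varphi$, taking $N_0\to\infty$ gives the claimed convergence, which is item $(iii)$ of Theorem \ref{thm:probability:probability}.
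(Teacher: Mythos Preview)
Your proposal is correct and follows essentially the same route as the paper: start from Lemma~\ref{lem:25:three}, pass to the weak limit for continuous $\varphi$, then upgrade to measurable $\varphi$ via a Lusin-type approximation controlled simultaneously under $\mathbb{P}_{N_0}$ and $\mathbb{P}$ using Lemma~\ref{lem:28:0}, with Lemma~\ref{lem:AN0} handling the complement of $A_{N_0}$. One small imprecision: you assert two-sided comparability of the pushed-forward densities, but Lemma~\ref{lem:28:0} only gives an \emph{upper} bound for $\mathbb{P}\circ(\pi^{(2)}_{N_0})^{-1}$ in terms of the Gaussian density; this is all that is needed (and all that is available), since the Lusin approximant is chosen relative to $\mathbb{P}_{N_0}$ and the upper bound then transfers the error to $\mathbb{P}$.
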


\begin{proof}
\textit{First Step.}
Introduce the sequence
\begin{equation*}
\eta_{N_0}:= 
\sup_{\varphi : {\mathbb R}^{D_{N_0}} \rightarrow [-1,1]}
\sup_{N \geq N_0}
\biggl\vert \int_{{\mathcal P}({\mathbb T}^d)}
\varphi\Bigl( (\widehat{m}^k)_{k \in F_{N_0}^+ } \Bigr) d\bigl( {\mathbb P}_N - {\mathbb P}_{N_0} \bigr)(m)
\biggr\vert, \quad N_0 \geq 1.
\end{equation*}
By 
Lemma \ref{lem:25:three}, 
the sequence $(\eta_{N_0})_{N_0 \geq 1}$ converges to $0$.

Take now $\varphi$ 
a continuous function from ${\mathbb R}^{D_{N_0}}$ into $[-1,1]$.
Letting $N$ tend to $\infty$, 
we get
\begin{equation*}
\begin{split}
&\biggl\vert \int_{{\mathcal P}({\mathbb T}^d)}
\varphi\Bigl((\widehat{m}^k)_{k \in F_{N_0}^+}\Bigr) 
d\bigl( {\mathbb P}  - {\mathbb P}_{N_0}\bigr) (m)
\biggr\vert \leq \eta_{N_0}. 
\end{split}
\end{equation*}
\vspace{4pt}

\textit{Second Step.}
Assume now that $\varphi$ is merely measurable and $[-1,1]$-valued. 
Consider also another sequence 
$(\delta_{N_0})_{N_0 \geq 1}$ converging to $0$.
Then, 
by Lusin's theorem, 
we can find, for each $N_0 \geq 1$, 
 a
 continuous function $\widetilde \varphi$ on 
 ${\mathbb R}^{D_{N_0}}$ 
 with values in $[-1,1]$ such that 
 \begin{align}
\label{eq:lem:30:2}
 & \Bigl( \sup_{N \geq N_0} \frac{Z_{N_0} c_N}{Z_N c_{N_0}} \Bigr)
 {\mathbb P}_{N_0}\Bigl( \Bigl\{ \varphi 
\Bigl( (\widehat{m}^k)_{k \in F_{N_0}^+ } \Bigr)  
 \not = 
\widetilde \varphi 
  \Bigl( (\widehat{m}^k)_{k \in F_{N_0}^+ } \Bigr) \Bigr\} \Bigr) 
\\
&= \Bigl( \sup_{N \geq N_0} \frac{Z_{N_0} c_N}{Z_N c_{N_0}} \Bigr) \frac1{Z_{N_0}}
\int_{{\mathcal O}_{N_0}} {\mathbf 1}_{\{ \varphi \not = \widetilde{\varphi}\} }  \Bigl( (\widehat{m}^k)_{k \in F_{N_0}^+ } \Bigr) 
\exp \biggl( - \sum_{k \in F_{N_0}^+} \vert k\vert^{2 p d} { \vert \widehat{m}^k \vert^2} \biggr)
\bigotimes_{j \in  F_{N_0}^+} 
d \widehat{m}^j 
\leq \delta_{N_0}. 
\nonumber
\end{align}
By Lemma 
\ref{lem:28:0},
 \begin{equation}
\label{eq:lem:30:3}
 \begin{split}
&\int_{{\mathcal P}({\mathbb T}^d)} {\mathbf 1}_{\{\varphi \not = \widetilde \varphi\}} \Bigl( (\widehat{m}^k)_{k \in F_{N_0}^+ } \Bigr) 
{\mathbf 1}_{{\mathcal O}_{N_0}} \Bigl( (\widehat{m}^k)_{k \in F_{N_0}^+ } \Bigr) 
d {\mathbb P}(m)
\leq \delta_{N_0}. 
\end{split}
\end{equation}
Therefore, 
by 
\eqref{eq:lem:30:2}
and 
\eqref{eq:lem:30:3},
\begin{equation*}
\begin{split}
&\biggl\vert \int_{{\mathcal P}({\mathbb T}^d)}
\varphi\Bigl((\widehat{m}^k)_{k \in F_{N_0}^+}\Bigr) 
d\bigl( {\mathbb P}  - {\mathbb P}_{N_0}\bigr) (m)
-
 \int_{{\mathcal P}({\mathbb T}^d)}
\widetilde \varphi \Bigl((\widehat{m}^k)_{k \in F_{N_0}^+}\Bigr) 
d\bigl( {\mathbb P}  - {\mathbb P}_{N_0}\bigr) (m)
\biggr\vert 
\\
&\leq
\biggl\vert \int_{{\mathcal P}({\mathbb T}^d)}
\Bigl(\varphi
-
\widetilde \varphi
\Bigr) \Bigl((\widehat{m}^k)_{k \in F_{N_0}^+}\Bigr) 
d {\mathbb P}   (m)
\biggr\vert 
+
\biggl\vert \int_{{\mathcal P}({\mathbb T}^d)}
\Bigl(\varphi
-
\widetilde \varphi
\Bigr) \Bigl((\widehat{m}^k)_{k \in F_{N_0}^+}\Bigr) 
d {\mathbb P}_{N_0}   (m)
\biggr\vert 
\\
&\leq
\biggl\vert \int_{{\mathcal P}({\mathbb T}^d)}
\Bigl(\varphi
-
\widetilde \varphi
\Bigr) \Bigl((\widehat{m}^k)_{k \in F_{N_0}^+}\Bigr) 
{\mathbf 1}_{A_{N_0}}(m)
d {\mathbb P}   (m)
\biggr\vert 
+
\biggl\vert \int_{{\mathcal P}({\mathbb T}^d)}
\Bigl(\varphi
-
\widetilde \varphi
\Bigr) \Bigl((\widehat{m}^k)_{k \in F_{N_0}^+}\Bigr) 
{\mathbf 1}_{A_{N_0}^{\complement}}(m)
d {\mathbb P}   (m)
\biggr\vert 
\\
&\hspace{15pt} +
\biggl\vert \int_{{\mathcal P}({\mathbb T}^d)}
\Bigl(\varphi
-
\widetilde \varphi
\Bigr) \Bigl((\widehat{m}^k)_{k \in F_{N_0}^+}\Bigr) 
d {\mathbb P}_{N_0}   (m)
\biggr\vert 
\\
&\leq 4 \delta_{N_0} + 2{\mathbb P} \Bigl( A_{N_0}^{\complement} \Bigr),
\end{split}
\end{equation*}
where we used 
the obvious implication $m \in A_{N_0} \Rightarrow (\widehat{m}^k)_{k \in F_{N_0}^+} 
\in {\mathcal O}_{N_0}$ (see Lemma
\ref{lem:25:second}).
By Lemma 
\ref{lem:AN0}, 
we can modify our choice of $(\delta_{N_0})_{N_0 \geq 1}$ 
such that 
the right-hand side is less than $5 \delta_{N_0}$. 
\vspace{5pt}

\textit{Third Step.} 
By the first and second steps, we get
\begin{equation*}
\begin{split}
&\biggl\vert \int_{{\mathcal P}({\mathbb T}^d)}
\varphi\Bigl((\widehat{m}^k)_{k \in F_{N_0}^+}\Bigr) 
d\bigl( {\mathbb P}  - {\mathbb P}_{N_0}\bigr) (m)
\biggr\vert \leq \eta_{N_0} + 5 \delta_{N_0}. 
\end{split}
\end{equation*}
The right-hand side tends to $0$ as $N_0$ tends to $\infty$, uniformly with respect to $\varphi$. 
This completes the proof. 
\end{proof}

As a corollary, we deduce 
\begin{lem}
\label{lem:uniqueness}
The sequence $({\mathbb P}_N)_{N \geq 1}$ is weakly converging. 
\end{lem}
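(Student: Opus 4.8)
\textbf{Proof plan for Lemma~\ref{lem:uniqueness}.}
The statement to prove is that the full sequence $({\mathbb P}_N)_{N \geq 1}$ converges weakly (not merely that it has convergent subsequences, which is automatic by compactness of $({\mathcal P}({\mathbb T}^d),d_{W_1})$ and the induced weak-$*$ compactness of probability measures on it). The standard strategy is to show that every weak limit point is the same measure, by identifying it uniquely through a generating family of test functionals. The plan is to exploit the description of the Borel $\sigma$-algebra on ${\mathcal P}({\mathbb T}^d)$ given in Lemma~\ref{lem:Borel}: ${\mathcal B}({\mathcal P}({\mathbb T}^d))$ is generated by the finite families of Fourier coefficients $(\widehat{m}^k)_{k \in F_{N_0}^+}$, $N_0 \geq 1$. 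Hence a probability measure on ${\mathcal P}({\mathbb T}^d)$ is uniquely determined by the collection of its pushforwards under the projections $\pi^{(1)}_{N_0}$ (or, more precisely, by the values $\int \varphi((\widehat m^k)_{k\in F_{N_0}^+})\,d{\mathbb P}(m)$ for all bounded measurable $\varphi$ and all $N_0$), provided a monotone-class / $\pi$-$\lambda$ argument is invoked since the generating sets form a $\pi$-system.

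First I would take two weak limit points ${\mathbb P}$ and ${\mathbb P}'$ of $({\mathbb P}_N)_{N\geq 1}$, arising along subsequences $(N_j)$ and $(N'_j)$ respectively. For a fixed $N_0 \geq 1$ and a fixed bounded \emph{measurable} $\varphi : {\mathbb R}^{D_{N_0}} \to [-1,1]$, I would write
\begin{equation*}
\int_{{\mathcal P}({\mathbb T}^d)} \varphi\bigl((\widehat{m}^k)_{k \in F_{N_0}^+}\bigr) \, d\bigl( {\mathbb P} - {\mathbb P}' \bigr)(m)
= \int \varphi \, d({\mathbb P} - {\mathbb P}_{N_0}) + \int \varphi \, d({\mathbb P}_{N_0} - {\mathbb P}').
\end{equation*}
By Lemma~\ref{lem:30}, each of the two terms on the right has absolute value at most $\eta_{N_0}$ with $\eta_{N_0} \to 0$ as $N_0 \to \infty$; note crucially that Lemma~\ref{lem:30} applies to \emph{both} limit points since its proof only used that ${\mathbb P}$ is \emph{some} weak limit of $({\mathbb P}_N)_{N\geq 1}$. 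Therefore the left-hand side is bounded by $2\eta_{N_0}$, uniformly in $\varphi$. Letting $N_0 \to \infty$ is not directly available here because $\varphi$ is attached to a fixed $N_0$; instead I would argue that, for the \emph{given} $N_0$ and $\varphi$, the quantity $\int \varphi \, d({\mathbb P} - {\mathbb P}')$ is bounded by $2\eta_{N}$ for \emph{every} $N \geq N_0$ as well — because $\varphi((\widehat m^k)_{k\in F_{N_0}^+})$ is also a legitimate (measurable, $[-1,1]$-valued) test function depending only on $(\widehat m^k)_{k \in F_N^+}$, so Lemma~\ref{lem:30} at level $N$ applies to it too. Letting $N \to \infty$ then forces $\int \varphi \, d({\mathbb P} - {\mathbb P}')=0$.

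Having established that $\int \varphi \, d{\mathbb P} = \int \varphi \, d{\mathbb P}'$ for all $N_0$ and all bounded measurable $\varphi$ on ${\mathbb R}^{D_{N_0}}$, I would conclude ${\mathbb P} = {\mathbb P}'$ by a monotone-class argument: the class of Borel sets $A \subset {\mathcal P}({\mathbb T}^d)$ with ${\mathbb P}(A) = {\mathbb P}'(A)$ contains all cylinder sets $\{ (\widehat m^k)_{k\in F_{N_0}^+} \in B\}$ for $B$ Borel in ${\mathbb R}^{D_{N_0}}$, these form a $\pi$-system generating ${\mathcal B}({\mathcal P}({\mathbb T}^d))$ by Lemma~\ref{lem:Borel}, and the class of agreement sets is a $\lambda$-system; Dynkin's theorem finishes it. Since all weak limit points coincide and the ambient space of probability measures on the compact metric space $({\mathcal P}({\mathbb T}^d),d_{W_1})$ is itself compact for the weak topology, the sequence $({\mathbb P}_N)_{N\geq 1}$ converges. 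The only mild subtlety — and the step I would be most careful about — is the passage from "bounded by $2\eta_{N_0}$" to "equal to zero": one must make sure Lemma~\ref{lem:30} is being applied at levels $N \geq N_0$ with a test function that, although introduced at level $N_0$, is trivially viewed at level $N$, and this is exactly the content that the functions in the supremum of Lemma~\ref{lem:30} are required only to be measurable. No real analytic obstacle remains; this is a soft uniqueness-of-limit-point argument resting entirely on Lemmas~\ref{lem:Borel} and~\ref{lem:30}.
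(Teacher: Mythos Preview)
Your proposal is correct and follows essentially the same approach as the paper: take two weak limit points, apply Lemma~\ref{lem:30} to each to get $\int \varphi\, d({\mathbb P} - {\mathbb P}') = 0$ for all cylindrical test functions, and conclude by Lemma~\ref{lem:Borel} with a monotone class argument. You are more explicit than the paper about the lifting step (viewing a level-$N_0$ test function at all levels $N \geq N_0$ so as to send the bound to zero), which is exactly the point the paper leaves implicit when passing from its displayed limit to ``${\mathbb P}$ and ${\mathbb P}'$ coincide on the $\sigma$-algebra generated by $(m \mapsto \widehat m^k)_{k \in F_{N_0}^+}$ for any $N_0$.''
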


\begin{proof}
Take two weak limits ${\mathbb P}$ and 
${\mathbb P}'$ of the sequence
$({\mathbb P}_N)_{N \geq 1}$. 
By Lemma 
\ref{lem:30},
\begin{equation}
\label{eq:uniqueness:P:1}
\lim_{N_0 \rightarrow \infty} 
\sup_{\varphi : {\mathbb R}^{{D_{N_0}}} \rightarrow [-1,1]}
\biggl\vert \int_{{\mathcal P}({\mathbb T}^d)}
\varphi\Bigl( (\widehat{m}^k)_{k \in F_{N_0}^+ } \Bigr) d\bigl( {\mathbb P} - {\mathbb P}' \bigr)(m)
\biggr\vert
=0,
\end{equation}
where, for any $N_0 \geq 1$, $\varphi$ in the argument of the supremum is required to be measurable.  

Therefore, ${\mathbb P}$ and ${\mathbb P}'$ coincide on the $\sigma$-algebra generated by 
the mappings 
$(m \mapsto 
\widehat{m}^k)_{k \in F_{N_0}^+ }$, for any $N_0 \geq 1$. 
By Lemma 
 \ref{lem:Borel} together with a standard monotone class argument, we deduce that 
${\mathbb P}$ and ${\mathbb P}'$ coincide. 
\end{proof}

\subsubsection{Completion of the proof of 
Theorem 
\ref{thm:probability:probability}}

We now prove the first claim in item $(ii)$ of Theorem 
\ref{thm:probability:probability}.

\begin{lem}
\label{lem:28}
Let ${\mathbb P}$ 
be the weak limit of $({\mathbb P}_N)_{N \geq 1}$ on ${\mathcal P}({\mathbb T}^d)$.
Then, 
for any integer $N_0 \geq 1$, 
the image of ${\mathbb P}$ by the projection mapping 
\begin{equation*}
\pi_{N_0}^{(1)} : m 
\in {\mathcal P}({\mathbb T}^d)
\mapsto 
\bigl( \widehat{m}^k \widehat{f}_{N_0}^k \bigr)_{k \in F_{N_0}^+} 
\in 
{\mathcal O}_{N_0} 
\end{equation*}
is absolutely continuous with respect to the Lebesgue measure. 
\end{lem}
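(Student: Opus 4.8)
The plan is to deduce the absolute continuity of $\pi_{N_0}^{(1)} \circ {\mathbb P}$ from the corresponding (and already established) property of $\pi_{N_0}^{(2)}$ on the sub-probability measure ${\mathbf 1}_{{\mathcal P}_{N_0}} \cdot {\mathbb P}$, recorded in Lemma \ref{lem:28:0}. The point is that $\pi_{N_0}^{(1)}$ does not restrict a measure $m$ to ${\mathcal P}_{N_0}$ but rather convolves it with $f_{N_0}$: the Fourier coefficients of $m * f_{N_0}$ are $\widehat{m}^k \widehat{f}_{N_0}^k$ for $k \in F_{N_0}$ and $0$ otherwise, so $m * f_{N_0} \in {\mathcal P}_{N_0}$ for \emph{every} $m \in {\mathcal P}({\mathbb T}^d)$. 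Thus $\pi_{N_0}^{(1)}$ is exactly $\pi_{N_0}^{(2)}$ composed with the convolution map $C_{N_0} : m \mapsto m * f_{N_0}$, and the latter takes values in ${\mathcal P}_{N_0}$. Hence $\pi_{N_0}^{(1)} \circ {\mathbb P} = \pi_{N_0}^{(2)} \circ (C_{N_0} \circ {\mathbb P})$ and $C_{N_0} \circ {\mathbb P}$ is a probability measure carried by ${\mathcal P}_{N_0}$.

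First I would make the structural observation above precise: for $k \in F_{N_0}^+$ one has $\reallywidehat{\{m * f_{N_0}\}}^k = \widehat{m}^k \widehat{f}_{N_0}^k$, so $\pi_{N_0}^{(2)}(C_{N_0}(m)) = (\widehat{m}^k \widehat{f}_{N_0}^k)_{k \in F_{N_0}^+} = \pi_{N_0}^{(1)}(m)$, and since all $\widehat{f}_{N_0}^k$ are strictly positive for $k \in F_{N_0}$, the map is a genuine (invertible, diagonal linear) reparametrisation. Next I would reduce to Lemma \ref{lem:28:0}: the image measure $C_{N_0} \circ {\mathbb P}$ is supported on ${\mathcal P}_{N_0}$, so ${\mathbf 1}_{{\mathcal P}_{N_0}} \cdot (C_{N_0} \circ {\mathbb P}) = C_{N_0} \circ {\mathbb P}$. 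It therefore suffices to show that the law of $(\widehat{m}^k \widehat{f}_{N_0}^k)_{k \in F_{N_0}^+}$ under ${\mathbb P}$ is absolutely continuous with respect to Lebesgue measure on ${\mathcal O}_{N_0}$. For any Borel $B \subset {\mathcal O}_{N_0}$ of zero Lebesgue measure, writing $D = \mathrm{diag}(\widehat{f}_{N_0}^k)$, the preimage $D^{-1}B$ is again Lebesgue-null (linear change of variables with non-zero diagonal), and ${\mathbb P}((\widehat{m}^k)_{k\in F_{N_0}^+} \in D^{-1}B) = {\mathbb Q} \circ (\pi_{N_0}^{(2)})^{-1}(D^{-1}B) + {\mathbb P}(\{ (\widehat m^k)_{k \in F_{N_0}^+} \notin {\mathcal O}_{N_0}\} \cap \{ \dots \})$, where ${\mathbb Q} = {\mathbf 1}_{{\mathcal P}_{N_0}} \cdot {\mathbb P}$; the second term vanishes because $D^{-1}B \subset {\mathcal O}_{N_0}$ while the first is controlled by the Gaussian-against-Lebesgue bound of Lemma \ref{lem:28:0}, hence is $0$. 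This gives absolute continuity of $\pi_{N_0}^{(1)} \circ {\mathbb P}$.

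The only genuinely delicate point is handling the event $\{(\widehat{m}^k)_{k \in F_{N_0}^+} \notin {\mathcal O}_{N_0}\}$, i.e. the set of $m$ for which the truncated Fourier data $(\widehat{m}^k)_{k \in F_{N_0}^+}$ do not themselves correspond to a probability measure in ${\mathcal P}_{N_0}$. On this event the formula of Lemma \ref{lem:28:0} does not directly apply to $(\widehat{m}^k)_{k \in F_{N_0}^+}$, but one never needs it there: since $D^{-1}B \subset {\mathcal O}_{N_0}$, the contribution of that event to the probability in question is identically zero. Equivalently, and perhaps cleaner, I would run the argument directly at the level of $C_{N_0}\circ {\mathbb P}$, which by construction is supported in ${\mathcal P}_{N_0}$, so that ${\mathbf 1}_{{\mathcal P}_{N_0}}\cdot(C_{N_0}\circ{\mathbb P}) = C_{N_0}\circ{\mathbb P}$ and Lemma \ref{lem:28:0} (applied with ${\mathbb P}$ replaced by $C_{N_0}\circ{\mathbb P}$, which is again a weak limit of $(C_{N_0}\circ{\mathbb P}_N)_{N\ge 1}$, or simply by transporting the inequality through the diagonal map) gives the stated absolute continuity together with an explicit Gaussian density bound. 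I expect no real obstacle beyond bookkeeping the positivity of the coefficients $\widehat{f}_{N_0}^k$ and the measurability of the maps involved, both of which are immediate.
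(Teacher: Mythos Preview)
Your argument has a real gap. The claim that $D^{-1}B \subset {\mathcal O}_{N_0}$ is false: since $\widehat f_{N_0}^k \in (0,1)$ for every $k\in F_{N_0}\setminus\{0\}$, applying $D^{-1}$ inflates each Fourier coefficient and can drive the truncated density $1 + 2\sum_{k\in F_{N_0}^+}\Re\bigl[(\widehat y^k/\widehat f_{N_0}^k)\,e_{-k}\bigr]$ below zero even when $(\widehat y^k)_k\in{\mathcal O}_{N_0}$. (Geometrically, convolution by $f_{N_0}$ contracts towards Lebesgue measure, so $D({\mathcal O}_{N_0})\subset{\mathcal O}_{N_0}$, i.e.\ ${\mathcal O}_{N_0}\subset D^{-1}{\mathcal O}_{N_0}$; the inclusion goes the wrong way for your purposes.) Your decomposition through ${\mathbb Q}={\mathbf 1}_{{\mathcal P}_{N_0}}\cdot{\mathbb P}$ is also vacuous: one can show from Lemma~\ref{lem:28:0} itself (applied at level $N'>N_0$) that ${\mathbb P}({\mathcal P}_{N_0})=0$, so ${\mathbb Q}\equiv 0$ and the first term in your splitting is always zero while the second is the whole probability. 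The only usable content of Lemma~\ref{lem:28:0} is the displayed inequality bounding $\int {\mathbf 1}_B((\widehat m^k)_k)\,d{\mathbb P}$ by a Gaussian integral, but as stated it requires $B\subset{\mathcal O}_{N_0}$, and $D^{-1}B$ need not lie there. Your ``cleaner alternative'' of applying Lemma~\ref{lem:28:0} to $C_{N_0}\circ{\mathbb P}$ is not a black-box move either: that lemma's proof rests on Lemma~\ref{lem:24}, which is specific to the sequence $({\mathbb P}_N)_{N\ge 1}$, so you would have to redo the estimate for $(C_{N_0}\circ{\mathbb P}_N)_{N\ge 1}$.

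Doing exactly that is what the paper does, and it sidesteps the issue entirely. The paper computes $\int\varphi\bigl((\widehat m^k\widehat f_{N_0}^k)_k\bigr)\,d{\mathbb P}_N$ via the explicit Gaussian formula on ${\mathcal O}_N$, drops the ${\mathcal O}_N$-indicator to obtain an integral over all of ${\mathbb R}^{2|F_{N_0}^+|}$, performs the diagonal change of variable $\widehat y^k=\widehat m^k\widehat f_{N_0}^k$ there (picking up the Jacobian factor $\prod_k|\widehat f_{N_0}^k|^{-1}$ and using $|\widehat f_{N_0}^k|\le 1$ to dominate the exponent), and only then passes to the limit $N\to\infty$ via Lemma~\ref{lem:25:three}. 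Because the Gaussian bound at the ${\mathbb P}_N$ level is over the full space, no ${\mathcal O}_{N_0}$-restriction ever enters. Your route can be repaired by observing that the inequality in Lemma~\ref{lem:28:0} in fact extends to arbitrary Borel $B\subset{\mathbb R}^{2|F_{N_0}^+|}$ (inspecting its proof, the hypothesis $B\subset{\mathcal O}_{N_0}$ is used only to write the right-hand integral over ${\mathcal O}_{N_0}$ rather than the whole space), but that extension must be stated and justified; as written, the argument does not close.
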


\begin{proof}
For $N_0 \geq 1$, 
we consider a 
$[0,1]$-valued measurable function 
$\varphi$ defined on 
${\mathbb R}^{2 \vert F^+_{N_0}\vert}$ that is equal to $0$ outside 
${\mathcal O}_{N_0}$. 

Following 
\eqref{eq:c_d:ON:0}, we get, for any $N \geq N_0$, 
\begin{equation*}
\begin{split}
&\int_{{\mathcal P}({\mathbb T}^d)} \varphi\Bigl( (\widehat{m}^k \widehat{f}^k_{N_0})_{k \in F_{N_0}^+ } \Bigr) d {\mathbb P}_N(m) 
\\
&= \frac1{Z_N} 
\int_{{\mathcal O}_N}
\varphi\Bigl( (\widehat{m}^k \widehat{f}^k_{N_0})_{k \in F_{N_0}^+ } \Bigr)
\exp \biggl( - \sum_{k \in F_N^+} \vert k\vert^{2 p d} { \vert \widehat{m}^k \vert^2} \biggr)
\bigotimes_{k \in F_N^+} 
d \widehat{m}^k
\\
&= \frac1{Z_{N}}
\int_{{\mathbb R}^{2  \vert F_{N_0}^+\vert } } \varphi \Bigl( (\widehat{m}^k \widehat{f}^k_{N_0})_{k \in F_{N_0}^+ }  \Bigr) 
\exp \biggl( - \sum_{k \in F_{N_0}^+} \vert k\vert^{2 p d} { \vert \widehat{m}^k \vert^2} \biggr)
\\
&\hspace{5pt} \times   
\int_{{\mathbb R}^{2(\vert F_N^+\vert - \vert F_{N_0}^+\vert )}}
\biggl[ 
{\mathbf 1}_{{\mathcal O}_N}\Bigl( (\widehat{m}^k)_{k \in F_{N}^+ } \Bigr) 
\exp \biggl( - \sum_{k \in F_{N}^+ \setminus F_{N_0}^+}\vert k\vert^{2 p d} { \vert \widehat{m}^k \vert^2} \biggr)
\bigotimes_{j \in F_{N}^+ \setminus F_{N_0}^+} 
d \widehat{m}^j
\biggr]
\bigotimes_{j \in  F_{N_0}^+} 
d \widehat{m}^j.
\end{split}
\end{equation*}
And then, 
\begin{equation*}
\begin{split}
&\int_{{\mathcal P}({\mathbb T}^d)} \varphi\Bigl( (\widehat{m}^k \widehat{f}^k_{N_0})_{k \in F_{N_0}^+ } \Bigr) d {\mathbb P}_N(m) 
\\
&\leq  
 \frac1{Z_{N}} \frac{c_N}{c_{N_0}}
\int_{{\mathbb R}^{2  \vert F_{N_0}^+\vert } } \varphi \Bigl( (\widehat{m}^k \widehat{f}^k_{N_0})_{k \in F_{N_0}^+ }  \Bigr) 
\exp \biggl( - \sum_{k \in F_{N_0}^+} \vert k\vert^{2 p d} { \vert \widehat{m}^k \vert^2} \biggr) \bigotimes_{j \in  F_{N_0}^+} 
d \widehat{m}^j.
\end{split}
\end{equation*}
We make, in the integral on the second line, the following change of variable: 
\begin{equation*}
\widehat{y}^k = 
\widehat{m}^k \widehat{f}^k_{N_0}, \quad k \in F_{N_0}^+.
\end{equation*}
Then, since $\vert \widehat{f}^k_{N_0}\vert \leq 1$ for each $k \in F_{N_0}^+$, we get
\begin{equation*}
\begin{split}
&\int_{{\mathcal P}({\mathbb T}^d)} \varphi\Bigl( (\widehat{m}^k \widehat{f}^k_{N_0})_{k \in F_{N_0}^+ } \Bigr) d {\mathbb P}_N(m) 
\\
&\leq  
 \frac1{Z_{N}} \frac{c_N}{c_{N_0}} 
\biggl(  \prod_{k \in F_{N_0}^+} 
\bigl\vert \widehat{f}^k_{N_0} \vert 
\biggr)^{-1}
\int_{{\mathbb R}^{2  \vert F_{N_0}^+\vert } } \varphi \Bigl( (\widehat{y}^k )_{k \in F_{N_0}^+ }  \Bigr) 
\exp \biggl( - \sum_{k \in F_{N_0}^+} 
\frac{\vert k\vert^{2 p d} { \vert \widehat{y}^k \vert^2} }{
\vert \widehat{f}^k_{N_0} \vert^{2} 
}
\biggr) \bigotimes_{j \in  F_{N_0}^+} 
d \widehat{y}^j
\\
&\leq
\frac1{Z_{N}} \frac{c_N}{c_{N_0}} 
\biggl(  \prod_{k \in F_{N_0}^+} 
\bigl\vert \widehat{f}^k_{N_0} \vert 
\biggr)^{-1}
\int_{{\mathbb R}^{2  \vert F_{N_0}^+\vert } } \varphi \Bigl( (\widehat{y}^k )_{k \in F_{N_0}^+ }  \Bigr) 
\exp \biggl( - \sum_{k \in F_{N_0}^+} 
 \vert k\vert^{2 p d} { \vert \widehat{y}^k \vert^2} 
\biggr) \bigotimes_{j \in  F_{N_0}^+} 
d \widehat{y}^j. 
\end{split}
\end{equation*}
Since $\varphi$ is zero outside ${\mathcal O}_{N_0}$, the above integral reduces to an integral over
${\mathcal O}_{N_0}$. 
Therefore, 
\begin{equation*}
\begin{split}
&\int_{{\mathcal P}({\mathbb T}^d)} \varphi\Bigl( (\widehat{m}^k \widehat{f}^k_{N_0})_{k \in F_{N_0}^+ } \Bigr) d {\mathbb P}_N(m) 
\\
&\leq  
\frac1{Z_{N}} \frac{c_N}{c_{N_0}} 
\biggl(  \prod_{k \in F_{N_0}^+} 
\bigl\vert \widehat{f}^k_{N_0} \vert 
\biggr)^{-1}
\int_{{\mathcal O}_{N_0} } \varphi \Bigl( (\widehat{y}^k )_{k \in F_{N_0}^+ }  \Bigr) 
\exp \biggl( - \sum_{k \in F_{N_0}^+} 
 \vert k\vert^{2 p d} 
\biggr) \bigotimes_{j \in  F_{N_0}^+} 
d \widehat{y}^j.
\end{split}
\end{equation*}
By Lemma 
\ref{lem:25:three}, we can let $N$ tend to $\infty$ in the left-hand side. 
We
get 
\begin{equation*}
\begin{split}
&\int_{{\mathcal P}({\mathbb T}^d)} \varphi\Bigl( (\widehat{m}^k \widehat{f}^k_{N_0})_{k \in F_{N_0}^+ } \Bigr) d {\mathbb P}(m) 
\\
&\leq  \Bigl( \sup_{N \geq N_0} \frac{Z_{N_0} c_N}{Z_N c_{N_0}} \Bigr)
\frac1{Z_{N_0}} 
\biggl(  \prod_{k \in F_{N_0}^+} 
\bigl\vert \widehat{f}^k_{N_0} \vert 
\biggr)^{-1}
\int_{{\mathcal O}_{N_0} } \varphi \Bigl( (\widehat{y}^k )_{k \in F_{N_0}^+ }  \Bigr) 
\exp \biggl( - \sum_{k \in F_{N_0}^+} 
 \vert k\vert^{2 p d} 
\biggr) \bigotimes_{j \in  F_{N_0}^+} 
d \widehat{y}^j,
\end{split}
\end{equation*}
where we recall from 
Lemma 
\ref{lem:25:new} that the first factor in the above right-hand side is finite. 
This completes the proof. 
\end{proof}

We now prove item $(i)$ in the statement of Theorem 
\ref{thm:probability:probability}.

\begin{lem}
\label{lem:33}
Let ${\mathbb P}$ 
be the weak limit of $({\mathbb P}_N)_{N \geq 1}$ on ${\mathcal P}({\mathbb T}^d)$.
Then, 
${\mathbb P}$ 
 has a full support on ${\mathcal P}({\mathbb T}^d)$ equipped with the $1$-Wasserstein distance.
\end{lem}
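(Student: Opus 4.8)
The strategy is to show that every nonempty open subset of $({\mathcal P}({\mathbb T}^d),d_{W_1})$ has positive $\mathbb P$-measure, by combining the weak convergence $\mathbb P_N\rightharpoonup\mathbb P$ (Lemma \ref{lem:uniqueness}) with the explicit product structure of the densities $\Gamma_N$ defining $\mathbb P_N$. First I would reduce to a dense class of centers: given $m\in{\mathcal P}({\mathbb T}^d)$ and $\varepsilon>0$, the probability measures $(1-\eta)(m*f_N)+\eta\,\textrm{\rm Leb}_d$ belong to ${\mathcal P}_N$, have a strictly positive density, and converge to $m$ for $d_{W_1}$ as $(\eta,N)\to(0,\infty)$ (using $d_{W_1}\le c_0 d_{\rm TV}$ and Lemma \ref{lem:weak:convergence}); hence it suffices to prove $\mathbb P(B_{W_1}(m^*,\varepsilon))>0$ whenever $m^*\in{\mathcal P}_{N_0}$ for some $N_0\ge1$, $\min_{{\mathbb T}^d}m^*>0$, and $\varepsilon>0$.

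For such an $m^*$, the plan is to exhibit, for every $N\ge N_0$, an explicit cylinder set $S_N\subset{\mathcal P}_N$ contained in $B_{W_1}(m^*,\varepsilon)$ with $\mathbb P_N(S_N)$ bounded away from $0$ uniformly in $N$. Fixing small reals $\delta,\delta'>0$, set
\[
S_N:=\Bigl\{m\in{\mathcal P}_N:\ \bigl|\widehat m^k-\widehat{(m^*)}^k\bigr|<\delta\ \text{for}\ k\in F_{N_0}^+,\ \ \bigl|\widehat m^k\bigr|<\tfrac{\delta'}{|k|^{d+1}}\ \text{for}\ k\in F_N^+\setminus F_{N_0}^+\Bigr\}.
\]
Since $\widehat{(m^*)}^k=0$ for $k\notin F_{N_0}$ and $d_{W_1}(m,m^*)\le c_0\|m-m^*\|_2=c_0(\sum_k|\widehat m^k-\widehat{(m^*)}^k|^2)^{1/2}$, the convergence of $\sum_k|k|^{-2(d+1)}$ shows that $S_N\subset B_{W_1}(m^*,\varepsilon)$ once $\delta$, and then $\delta'$, are small enough. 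Moreover, because $m^*$ has a strictly positive density and $\sum_k|k|^{-(d+1)}<\infty$, the constraints defining $S_N$ keep the perturbed density bounded below by $\tfrac12\min_{{\mathbb T}^d}m^*>0$; in particular the associated Fourier vectors lie in ${\mathcal O}_N$, so $S_N$ is indeed a subset of ${\mathcal P}_N$ and $\mathbb P_N(S_N)=\Gamma_N(E_N)$, where $E_N\subset{\mathcal O}_N$ is the product of the discs appearing in the definition of $S_N$. Bounding $Z_N\le c_N$ and using that $\exp(-\sum_{k\in F_N^+}|k|^{2pd}|\widehat m^k|^2)$ factorizes, a direct computation with two-dimensional Gaussian tails yields
\[
\mathbb P_N(S_N)\ \ge\ \prod_{k\in F_{N_0}^+}\bigl(1-e^{-\delta^2|k|^{2pd}}\bigr)\ \prod_{k\in F_N^+\setminus F_{N_0}^+}\bigl(1-e^{-(\delta')^2|k|^{2(pd-d-1)}}\bigr).
\]
Since $p\ge5$ gives $2(pd-d-1)\ge2(4d-1)>0$, the series $\sum_k e^{-(\delta')^2|k|^{2(pd-d-1)}}$ converges and the second (infinite) product is a strictly positive number, while the first is a finite product of numbers in $(0,1)$; hence $\kappa:=\inf_{N\ge N_0}\mathbb P_N(S_N)>0$.

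Finally, since $B_{W_1}(m^*,\varepsilon)$ is open in the compact metric space $({\mathcal P}({\mathbb T}^d),d_{W_1})$, the Portmanteau theorem together with $S_N\subset B_{W_1}(m^*,\varepsilon)$ gives
\[
\mathbb P\bigl(B_{W_1}(m^*,\varepsilon)\bigr)\ \ge\ \liminf_{N\to\infty}\mathbb P_N\bigl(B_{W_1}(m^*,\varepsilon)\bigr)\ \ge\ \kappa\ >\ 0,
\]
which by the reduction in the first paragraph proves that $\mathbb P$ has full support. I expect no genuine analytic obstacle here: the delicate points are purely bookkeeping — checking that the two smallness requirements on $(\delta,\delta')$ are compatible (they are, precisely because $\sum|k|^{-(d+1)}$, $\sum|k|^{-2(d+1)}$ and $\sum e^{-c|k|^{2(pd-d-1)}}$ all converge), and verifying that the cylinder constraints really do force the density to remain positive — the structural input that makes everything work being simply that $\Gamma_N$ is a truncated product Gaussian whose inverse variances $|k|^{2pd}$ grow fast enough that confining the high Fourier modes to shrinking discs costs only a convergent product of factors close to $1$.
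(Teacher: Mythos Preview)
Your argument is essentially correct and follows the same overall strategy as the paper's proof: reduce to a center $m^*\in\mathcal P_{N_0}$ with strictly positive density, exhibit a Fourier-cylinder around $m^*$ contained in $B_{W_1}(m^*,\varepsilon)$, show its $\mathbb P_N$-mass is bounded below uniformly in $N$, and conclude via Portmanteau. The paper proceeds a little differently in the estimate: instead of factorizing the Gaussian and bounding the infinite product directly, it invokes the auxiliary Lemma~\ref{lem:25:second} to transfer the $\mathbb P_N$-probability of the cylinder back to a $\mathbb P_{N_0}$-probability, and then simply observes that $\Gamma_{N_0}$ has a strictly positive density on $\mathcal O_{N_0}$. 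Your route is more self-contained; the paper's is more modular (it reuses machinery already developed for other parts of Theorem~\ref{thm:probability:probability}).

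One minor slip to fix: in your displayed lower bound
\[
\mathbb P_N(S_N)\ \ge\ \prod_{k\in F_{N_0}^+}\bigl(1-e^{-\delta^2|k|^{2pd}}\bigr)\ \prod_{k\in F_N^+\setminus F_{N_0}^+}\bigl(1-e^{-(\delta')^2|k|^{2(pd-d-1)}}\bigr),
\]
the first product is incorrect as written: the discs $\{|\widehat m^k-\widehat{(m^*)}^k|<\delta\}$ are centered at $\widehat{(m^*)}^k$, not at $0$, so the Gaussian mass is not $1-e^{-\delta^2|k|^{2pd}}$ in general (and can be much smaller if $|\widehat{(m^*)}^k|$ is large relative to the standard deviation $|k|^{-pd}$). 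This does not damage your argument, since you only use that this first product is a \emph{finite} product of strictly positive numbers, which follows immediately from the fact that each two-dimensional Gaussian has full support. The second product is correctly computed because those discs are centered at $0$ (as $\widehat{(m^*)}^k=0$ for $k\notin F_{N_0}$), and your convergence argument for it is fine.
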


\begin{proof}
\textit{First Step.}
We consider 
a measure $m_0 \in {\mathcal P}({\mathbb T}^d)$ and a real $\varepsilon \in (0,1)$. We have to prove 
that ${\mathbb P}(B(m_0,\varepsilon))>0$ with 
$B(m_0,\varepsilon)$ the ball of 
${\mathcal P}({\mathbb T}^d)$ of
center 
$m_0$ and of radius $\varepsilon$ for the 1-Wasserstein distance. 

By Lemma 
\ref{lem:weak:convergence}, we notice that, for $N_0$ large enough, 
\begin{equation*}
m_0*f_{N_0}=
\sum_{k \in F_{N_0}} \widehat{m}_0^k \widehat{f}_{N_0}^k e_{-k} \in B\bigl(m_0,\tfrac14 \varepsilon\bigr), 
\end{equation*}
And then,
using the same constant $c_0$ as in the notation 
introduced in Section \ref{se:introduction}, we get 
\begin{equation}
\label{eq:lem:33:1}
\frac{\varepsilon}{4c_0} + 
\bigl( 1 - \frac{\varepsilon}{4c_0} \bigr) 
m_0 * {f}_{N_0} 
=
\frac{\varepsilon}{4c_0} + 
\bigl( 1 - \frac{\varepsilon}{4c_0} \bigr) 
\sum_{k \in F_{N_0}} \widehat{m}_0^k \widehat{f}_{N_0}^k e_{-k}
 \in B(m_0,\tfrac34 \varepsilon).  
\end{equation}
Recall that there exists a constant $c_d$, only depending on $d$, such that, for any $a>0$, 
\begin{equation*}
\sum_{k \not \in F_{N_0}} 
\frac{a}{\vert k \vert^{5d/2}} 
\leq \frac{c_d a}{{N_0}^{3d/2}}.
\end{equation*}
Therefore, for  
$a$ fixed, for 
$c_d a/ N_0^{3d/2} < \varepsilon/(8c_0)$
and
for any 
collection $(\widehat{m}^k)_{k \in {\mathbb Z}^d}$, with
$\widehat{m}^{0}=1$, 
$\widehat{m}^{-k} = \overline{\widehat{m}^k}$ for $k \not =0$, and 
\begin{equation*}
\begin{cases}
&\displaystyle \sum_{k \in F_{N_0} \setminus \{0\}} \Bigl\vert \widehat{m}^k - 
\bigl( 1 - \frac{\varepsilon}{4c_0}  \bigr) 
\widehat{m}_0^k \widehat{f}_{N_0}^k
\Bigr\vert \leq \frac{\varepsilon}{8c_0},  
\\
&\displaystyle \forall k \not \in F_{N_0}, \quad 
\vert \widehat{m}^k
 \vert \leq 
a \vert k \vert^{-5d/2},
\end{cases}  
\end{equation*}
it holds that 
\begin{equation*}
\begin{split}
&\sup_{x \in {\mathbb T}^d} 
\Bigl\vert
\sum_{k \in F_{N_0}}
\widehat{m}^k 
e_{-k}(x)
 - 
\Bigl(
\frac{\varepsilon}{4c_0}  + 
\bigl( 1 - \frac{\varepsilon}{4c_0}  \bigr) 
m_0 * {f}_{N_0}(x)
\Bigr)
\Bigr\vert \leq \frac{\varepsilon}{8c_0},
\\
&\sup_{x \in {\mathbb T}^d} 
\Bigl\vert m(x) - 
\Bigl(
\frac{\varepsilon}{4c_0} + 
\bigl( 1 - \frac{\varepsilon}{4c_0} \bigr) 
m_0 * {f}_{N_0}(x)
\Bigr)
\Bigr\vert < \frac{\varepsilon}{4c_0},
\end{split}
\end{equation*}
with
\begin{equation*}
m = \sum_{k \in {\mathbb Z}^d} \widehat{m}^k   e_{-k}.
\end{equation*} 
Therefore, 
from 
\eqref{eq:lem:33:1},
$m  \in B(m_0,\varepsilon)$
and
\begin{equation}
\label{eq:lem:33:4}
\inf_{x \in {\mathbb T}^d} \biggl( 
\sum_{k \in F_{N_0}}
\widehat{m}^k 
e_{-k}(x)
 \biggr) \geq \frac{\varepsilon}{4c_0} + 
\bigl( 1 - \frac{\varepsilon}{4c_0} \bigr) 
\inf_{x \in {\mathbb T}^d}
m_0 * {f}_{N_0}(x) - \frac{\varepsilon}{8c_0}  \geq \frac{\varepsilon}{8c_0} 
> 
c_d a  N_0^{-3d/2}.
\end{equation}
From the fact that 
$m  \in B(m_0,\varepsilon)$, 
we deduce that, 
in order to complete the proof, it suffices to choose $a$ such that, for some $N_0$ large enough, 
\begin{equation}
\label{eq:lem:33:2}
{\mathbb P} 
\biggl( 
\biggl\{ 
 \sum_{k \in F_{N_0} \setminus \{0\}} \bigl\vert \widehat{m}^k - 
\bigl( 1 - \frac{\varepsilon}{4c_0} \bigr) 
\widehat{m}_0^k \widehat{f}_{N_0}^k
\bigr\vert \leq \frac{\varepsilon}{8c_0} 
\biggr\} 
\cap   
\biggl\{ \forall k \not \in F_{N_0}, \ 
\vert \widehat{m}^k
 \vert \leq 
a \vert k \vert^{-5d/2}
\biggr\}
\biggr) > 0. 
\end{equation}
\vskip 3pt

\textit{Second Step.}
We claim that in order to prove \eqref{eq:lem:33:2}, it suffices to prove that there exists $c>0$ such that, for some $N_0$ large enough and
for $N \geq N_0$, 
\begin{equation}
\label{eq:lem:33:3}
{\mathbb P}_N 
\biggl( 
\biggl\{ 
 \sum_{k \in F_{N_0} \setminus \{0\}} \bigl\vert \widehat{m}^k - 
\bigl( 1 - \frac{\varepsilon}{4c_0}  \bigr) 
\widehat{m}_0^k \widehat{f}_{N_0}^k
\bigr\vert \leq \frac{\varepsilon}{8c_0} 
\biggr\} 
\cap   
\biggl\{ \forall k  \in F_N^+ \setminus F_{N_0}^+, \ 
\vert \widehat{m}^k
 \vert \leq 
a \vert k \vert^{-5d/2}
\biggr\}
\biggr) \geq c. 
\end{equation}
Assume indeed that the above holds true. Then, for a given $N_1 \geq N_0$ and for $N \geq N_1$, 
we have
\begin{equation*}
{\mathbb P}_N 
\biggl( 
\biggl\{ 
 \sum_{k \in F_{N_0} \setminus \{0\}} \bigl\vert \widehat{m}^k - 
\bigl( 1 - \frac{\varepsilon}{4c_0}  \bigr) 
\widehat{m}_0^k \widehat{f}_{N_0}^k
\bigr\vert \leq \frac{\varepsilon}{8c_0} 
\biggr\} 
\cap   
\biggl\{ \forall k  \in F_{N_1}^+ \setminus F_{N_0}^+, \ 
\vert \widehat{m}^k
 \vert \leq 
a \vert k \vert^{-5d/2}
\biggr\}
\biggr) \geq c. 
\end{equation*}
Then, by Portmanteau theorem, we can easily replace ${\mathbb P}_N$ by 
${\mathbb P}$ in the left-hand side. 
Letting $N_1$ tend to $\infty$, we then get 
\eqref{eq:lem:33:2}.

We now prove 
\eqref{eq:lem:33:3}.
To do so, we apply Lemma 
\ref{lem:25:second}. 
Assuming that $a$ right above is greater than $a_0$ in the statement of Lemma 
\ref{lem:25:second}, it suffices to show that 
\begin{equation*}
{\mathbb P}_N 
\biggl( 
\biggl\{ 
 \sum_{k \in F_{N_0} \setminus \{0\}} \bigl\vert \widehat{m}^k - 
\bigl( 1 - \frac{\varepsilon}{4c_0} \bigr) 
\widehat{m}_0^k \widehat{f}_{N_0}^k
\bigr\vert \leq \frac{\varepsilon}{8c_0}
\biggr\} 
\cap   
\biggl\{ \forall k  \in F_N^+ \setminus F_{N_0}^+, \ 
\vert \widehat{m}^k
 \vert \leq 
\frac12 a_0 \vert k \vert^{-5d/2}
\biggr\}
\biggr) \geq c. 
\end{equation*}
From 
\eqref{eq:lem:33:4}, we also notice that, for $a$ large enough (the threshold being independent of $N_0$ and only depending on $d$),
\begin{equation*}
\biggl\{ 
 \sum_{k \in F_{N_0} \setminus \{0\}} \bigl\vert \widehat{m}^k - 
\bigl( 1 - \frac{\varepsilon}{4c_0} \bigr) 
\widehat{m}_0^k \widehat{f}_{N_0}^k
\bigr\vert \leq \frac{\varepsilon}{8c_0}
\biggr\} \subset A_{N_0},
\end{equation*}
with $A_{N_0}$ as in the statement of Lemma \ref{lem:25:second}. 
Therefore, Lemma 
\ref{lem:25:second} says that, for some $N_0$ large enough, for any $N \geq N_0$, 
\begin{equation*}
\begin{split}
&{\mathbb P}_N 
\biggl( 
\biggl\{ 
 \sum_{k \in F_{N_0} \setminus \{0\}} \bigl\vert \widehat{m}^k - 
\bigl( 1 - \frac{\varepsilon}{4c_0} \bigr) 
\widehat{m}_0^k \widehat{f}_{N_0}^k
\bigr\vert \leq \frac{\varepsilon}{8c_0}
\biggr\} 
\cap   
\biggl\{ \forall k  \in F_N^+ \setminus F_{N_0}^+, \ 
\vert \widehat{m}^k
 \vert \leq 
\frac12 a_0 \vert k \vert^{-5d/2}
\biggr\}
\biggr) 
\\
&\geq \frac12 {\mathbb P}_{N_0} 
\biggl( 
\biggl\{ 
 \sum_{k \in F_{N_0} \setminus \{0\}} \bigl\vert \widehat{m}^k - 
\bigl( 1 - \frac{\varepsilon}{4c_0} \bigr) 
\widehat{m}_0^k \widehat{f}_{N_0}^k
\bigr\vert \leq \frac{\varepsilon}{8c_0}
\biggr\} 
\biggr). 
\end{split}
\end{equation*}

Obviously,  
$(( 1 -   \varepsilon/(4c_0)) 
\widehat{m}_0^k \widehat{f}_{N_0}^k)_{k \in F_{N_0}^+ \setminus \{0\}}$ is in 
${\mathcal O}_{N_0}$ since $1 + \sum_{k \in F_{N_0} \setminus \{0\}}
( 1 - {\varepsilon}/(4c_0)) 
\widehat{m}_0^k \widehat{f}_{N_0}^k \geq \varepsilon/(4c_0)$. 
Therefore, using the fact that the density of ${\Gamma}_{N_0}$ 
in 
\eqref{eq:Gamma_N}
is strictly positive on ${\mathcal O}_{N_0}$,
the last term in the above inequality is strictly positive. 
\end{proof}

It remains to see that
\begin{lem}
\label{lem:density:under:P}
Let ${\mathbb P}$ 
be the weak limit of $({\mathbb P}_N)_{N \geq 1}$ on ${\mathcal P}({\mathbb T}^d)$.
Then, for ${\mathbb P}$-almost every $m \in {\mathcal P}({\mathbb T}^d)$, 
$m$ has a strictly positive density
and $\sum_{k \in {\mathbb Z}^d} \vert k\vert \vert \widehat{m}^k \vert < \infty$. 
\end{lem}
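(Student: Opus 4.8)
The plan is to prove the two assertions in turn: the absolute summability $\sum_{k\in\mathbb{Z}^d}|k|\,|\widehat m^k|<\infty$ first, and then the strict positivity of the density.

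\textbf{Fourier decay.} The key is a moment bound under each $\mathbb{P}_N$ that is uniform in $N$. Fix $k\in\mathbb{Z}^d\setminus\{0\}$ and $N$ with $k\in F_N^+$ (for $k\notin F_N$ we have $\widehat m^k\equiv 0$ under $\mathbb{P}_N$). Recalling the definition \eqref{eq:Gamma_N} of $\Gamma_N$, bounding ${\mathbf 1}_{\mathcal{O}_N}\le 1$, and using the factorised form of the Gaussian weight together with \eqref{eq:cN},
\begin{equation*}
\int_{\mathcal{P}(\mathbb{T}^d)}|\widehat m^k|\,d\mathbb{P}_N(m)
\le \frac{c_N}{Z_N}\cdot\frac{\int_{\mathbb{R}^2}|z|\,e^{-|k|^{2pd}|z|^2}\,dz}{\int_{\mathbb{R}^2}e^{-|k|^{2pd}|z|^2}\,dz}
= \frac{c_N}{Z_N}\cdot\frac{\sqrt\pi}{2\,|k|^{pd}}.
\end{equation*}
By Lemma \ref{lem:25:new} the quantity $c_N/Z_N$ is bounded uniformly in $N$: since $Z_{N_0}c_N/(Z_Nc_{N_0})\to 1$ as $N_0\to\infty$ uniformly in $N\ge N_0$, fixing $N_0$ with $\sup_{N\ge N_0}Z_{N_0}c_N/(Z_Nc_{N_0})\le 2$ gives $c_N/Z_N\le 2c_{N_0}/Z_{N_0}$ for $N\ge N_0$, the finitely many remaining indices contributing only a finite amount to the supremum. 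Since $|\widehat m^k|\le 1$ and $m\mapsto\widehat m^k$ is continuous, the weak convergence $\mathbb{P}_N\rightharpoonup\mathbb{P}$ (Lemma \ref{lem:uniqueness}) lets me pass to the limit and obtain $\int_{\mathcal{P}(\mathbb{T}^d)}|\widehat m^k|\,d\mathbb{P}(m)\le C|k|^{-pd}$ for every $k\in\mathbb{Z}^d\setminus\{0\}$, with $C$ independent of $k$. Tonelli's theorem then gives
\begin{equation*}
\int_{\mathcal{P}(\mathbb{T}^d)}\Bigl(\sum_{k\in\mathbb{Z}^d}|k|\,|\widehat m^k|\Bigr)d\mathbb{P}(m)
\le C\sum_{k\in\mathbb{Z}^d}|k|^{1-pd}<\infty,
\end{equation*}
the series converging because $p\ge 5$ forces $d(p-1)>1$. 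Hence $\sum_{k}|k|\,|\widehat m^k|<\infty$ for $\mathbb{P}$-a.e. $m$; in particular $\sum_k\widehat m^k e_{-k}$ converges absolutely and uniformly to a function of class $\mathcal{C}^1$, which, having Fourier coefficients equal to those of $m$, is the density of $m$.

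\textbf{Strict positivity.} Here I would combine Lemma \ref{lem:27} with the portmanteau theorem. Fix $\delta>0$; Lemma \ref{lem:27} furnishes $N_1$ such that, setting $\varepsilon_\delta:=2^{-(3d/2+1)}N_1^{-3d/2}>0$, one has $\mathbb{P}_N(\{m\ge\varepsilon_\delta\,\textrm{\rm Leb}_d\})\ge 1-\delta$ for all $N\ge N_1$. The set $\{m\in\mathcal{P}(\mathbb{T}^d):m\ge\varepsilon\,\textrm{\rm Leb}_d\}=\bigcap_{\phi\ge 0\ \mathrm{continuous}}\{m:\int_{\mathbb{T}^d}\phi\,dm\ge\varepsilon\int_{\mathbb{T}^d}\phi\,dx\}$ is weakly closed, so the portmanteau theorem yields $\mathbb{P}(\{m\ge\varepsilon_\delta\,\textrm{\rm Leb}_d\})\ge\limsup_N\mathbb{P}_N(\{m\ge\varepsilon_\delta\,\textrm{\rm Leb}_d\})\ge 1-\delta$. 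Letting $\delta\downarrow 0$ and using that the events $\{m\ge\varepsilon\,\textrm{\rm Leb}_d\}$ increase as $\varepsilon\downarrow 0$, we get $\mathbb{P}\bigl(\bigcup_{\varepsilon>0}\{m\ge\varepsilon\,\textrm{\rm Leb}_d\}\bigr)=1$; that is, for $\mathbb{P}$-a.e. $m$ there is $\varepsilon>0$ with $m\ge\varepsilon\,\textrm{\rm Leb}_d$. Together with the first part, $\mathbb{P}$-a.e. $m$ has a $\mathcal{C}^1$ density $g$ satisfying $g\ge\varepsilon$ Lebesgue-a.e., hence $g\ge\varepsilon>0$ everywhere by continuity. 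This completes the proof.

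The bulk of the work is already packaged into Lemmas \ref{lem:25:new} and \ref{lem:27}, so the remaining argument is short. The only points that require care are matching the normalisations $c_N$ and $Z_N$ when passing from the constrained integral over $\mathcal{O}_N$ to the full Gaussian integral, and observing that the positivity sets $\{m\ge\varepsilon\,\textrm{\rm Leb}_d\}$ are weakly closed so that portmanteau applies; I do not anticipate any genuine obstacle.
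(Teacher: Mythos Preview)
Your proof is correct, but takes a genuinely different route from the paper's. The paper argues both claims simultaneously: from Lemma~\ref{lem:27} it obtains, for each $\delta>0$ and suitable $N_0$, that ${\mathbb P}_N\bigl(A_{N_0}\cap\bigcap_{k\in F_N^+\setminus F_{N_0}^+}\{|\widehat m^k|\le a_0|k|^{-5d/2}\}\bigr)\ge 1-\delta$; it then passes to the limit (first in $N$ via Portmanteau with an intermediate truncation $N_1$, then in $N_1$) to get ${\mathbb P}\bigl(\bigcup_{N_0}(A_{N_0}\cap\bigcap_{k\notin F_{N_0}^+}\{|\widehat m^k|\le a_0|k|^{-5d/2}\})\bigr)=1$. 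For $m$ in this full-measure set there is an $N_0$ such that $|\widehat m^k|\le a_0|k|^{-5d/2}$ for all $k\notin F_{N_0}$, yielding the summability, and Lemma~\ref{lem:25:second} converts membership in $A_{N_0}$ plus this tail control into the lower bound on the density.

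Your argument separates the two claims and is in some respects more elementary. For the Fourier decay you compute a first-moment bound directly from the Gaussian structure of $\Gamma_N$, obtaining ${\mathbb E}_{\mathbb P}[|\widehat m^k|]\le C|k|^{-pd}$ (a much faster rate than the paper's $|k|^{-5d/2}$, albeit in expectation rather than pointwise), and conclude by Tonelli. For positivity you use only the outermost inequality of Lemma~\ref{lem:27} and a single application of Portmanteau on the closed set $\{m\ge\varepsilon\,\textrm{Leb}_d\}$, bypassing the intermediate Fourier-tail event altogether. The trade-off: the paper's route yields, for ${\mathbb P}$-a.e.\ $m$, a \emph{pointwise} decay $|\widehat m^k|\le a_0|k|^{-5d/2}$ valid for all large $k$ (a statement that is reused elsewhere, e.g.\ in the proof of Lemma~\ref{lem:W:mathcalW:Nn:2}), whereas your moment argument gives only almost-sure summability of $\sum_k|k|\,|\widehat m^k|$, which is exactly what the present lemma asks for but no more.
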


\begin{proof}
From Lemma \ref{lem:27},
we recall that, with $a_0$ as therein, for any $\delta \in (0,1)$, for $N_0$ large enough
and for $N \geq N_1 \geq N_0$, 
\begin{equation*}
\begin{split}
&{\mathbb P}_N
\biggl( A_{N_0} \cap 
\bigcap_{ k  \in F_{N_1}^+ \setminus F_{N_0}^+} 
 \Bigl\{  m \in {\mathcal P}({\mathbb T}^d) :
   \vert \widehat{m}^k \vert 
  \leq \frac{a_0}{\vert k \vert^{5d/2}} 
  \Bigr\}
\biggr)
\geq 1 - \delta,
\end{split}
\end{equation*} 
where $A_{N_0}$
is as in the statement of 
Lemma 
\ref{lem:25:second}.
By arguing as in the proof of 
Lemma
\ref{lem:33}, we can apply Portmanteau theorem and replace ${\mathbb P}_N$ by 
${\mathbb P}$
(with $N_1$ being fixed in the intersection symbol).
Then, letting $N_1$ tend to $\infty$, we get 
\begin{equation*}
\begin{split}
&{\mathbb P}
\biggl( A_{N_0} \cap 
\bigcap_{ k  \in F_{\infty}^+ \setminus F_{N_0}^+} 
 \Bigl\{  m \in {\mathcal P}({\mathbb T}^d) :
   \vert \widehat{m}^k \vert 
  \leq \frac{a_0}{\vert k \vert^{5d/2}} 
  \Bigr\}
\biggr)
\geq 1 - \delta,
\end{split}
\end{equation*}
with $F_{\infty}^+ := \bigcup_{N \geq 1} F_N^+$.
In particular, since $\delta$ is arbitrary, 
\begin{equation*}
\begin{split}
&{\mathbb P}
\biggl( \bigcup_{N_0 \geq 1} \biggl( A_{N_0} \cap 
\bigcap_{ k  \in F_{\infty}^+ \setminus F_{N_0}^+} 
 \Bigl\{  m \in {\mathcal P}({\mathbb T}^d) :
   \vert \widehat{m}^k \vert 
  \leq \frac{a_0}{\vert k \vert^{5d/2}} 
  \Bigr\}
\biggr)\biggr)
=1.
\end{split}
\end{equation*}

 Take now $m$ in the event that appears in the left-hand side. We can find $N_0 \geq 1$
 such that $m$ belongs to $A_{N_0}$ and, for any $N \geq N_0$,
 \begin{equation*}
   \vert \widehat{m}^k \vert 
  \leq \frac{a_0}{\vert k \vert^{5d/2}}. 
 \end{equation*}
By Lemma
\ref{lem:25:second}, this implies
\begin{equation*}
1+ \inf_{x \in {\mathbb T}^d} \sum_{k \in F_{N} \setminus \{0\}}
\widehat{m}^k e_{-k}(x) \geq 2^{-(3d/2+1)} N_0^{-3d/2}
\end{equation*}
Given the decay of the Fourier modes of $m$, it is easy to deduce that 
$m$ has a continuously differentiable density (which we identify with $m$ itself) and that 
\begin{equation*}
\inf_{x \in {\mathbb T}^d}  m(x) \geq 2^{-(3d/2+1)} N_0^{-3d/2},
\end{equation*}
which completes the proof. 
\end{proof}

\subsection{Proof of Theorem 
\ref{prop:rademacher} (Rademacher's theorem on 
$({\mathcal P}({\mathbb T}^d),{\mathbb P})$)}
\label{subse:proof:rademacher}

\subsubsection*{First part: existence of the derivative}
We follow the main lines of the proof given in \cite[Theorem 10.6.4]{Bogachev} (with the little difference that the latter is 
given for functionals defined on a Banach space, which makes it slightly easier).

We start with the following definition. For an integer $N \geq 1$, we consider the mapping 
\begin{equation}
\label{eq:PNinfini}
{\mathscr L}_{N,\infty} : m \in {\mathcal P}({\mathbb T}^d) \mapsto 
\bigl( \widehat m^k \bigr)_{k \in F_\infty^+ \setminus F_N^+} \in \ell^2_{\mathbb C}( F_\infty^+ \setminus F_N^+ ):=\bigl\{(\widehat r^k)_{k \in F_\infty^+ \setminus F_N^+} : 
\widehat r^k \in {\mathbb C} \bigr\},
\end{equation}
with $F_\infty^+ = \{ k \in {\mathbb Z}^d : \sharp(k) >0\} = \bigcup_{n \geq 1} F_n^+$, 
and we denote by ${\textsf P}_{N,\infty}$ the image (on $\ell^2_{\mathbb C}(F_\infty^+ \setminus F_N^+)$) of the measure ${\mathbb P}$
by 
${\mathscr L}_{N,\infty}$ and, then
by 
$$\Bigl({\mathbb P}_{N \vert N,\infty}\bigl( \, \cdot \,  \vert (\widehat r^k)_{k \in F_\infty^+ \setminus F_N^+}\bigr)\Bigr)_{(\widehat r^k)_k \in \ell^2_{\mathbb C}(F_\infty^+ \setminus F_N^+)}$$
a regular conditional probability distribution of ${\mathbb P}$ given ${\mathscr L}_{N,\infty}$, namely, 
for any Borel subsets $A$ of ${\mathbb C}^{\vert F_N^+\vert} (\simeq {\mathbb R}^{2 \vert F_N^+\vert})$ and $B$ of 
$\ell^2_{\mathbb C}(F_{\infty}^+ \setminus F_N^+)$, it holds that 
\begin{equation}
\label{eq:rademacher:0}
\begin{split}
&{\mathbb P} 
\Bigl( \bigl\{ m \in {\mathcal P}({\mathbb T}^d) : 
(\widehat{m}^k)_{k \in F_N^+} 
\in A \ \textrm{\rm and} 
\
(\widehat{m}^k)_{k \in F_\infty^+ \setminus F_N^+} 
\in B
\bigr\}
\Bigr) 
\\
&= \int_{\ell^2_{\mathbb C}(F_\infty^+ \setminus F_N^+)}
\biggl[ 
{\mathbb P}_{N\vert(N,\infty)} 
\Bigl( 
\bigl\{ m \in {\mathcal P}({\mathbb T}^d) : 
(\widehat{m}^k)_{k \in F_N^+} 
\in A \bigr\} 
\vert 
(\widehat{r}^k)_{k \in F_\infty^+ \setminus F_N^+} 
\Bigr) 
\\
&\hspace{90pt}
\times  
{\mathbf 1}_B 
\Bigl(
(\widehat{r}^k)_{k \in F_\infty^+ \setminus F_N^+}
\Bigr) 
\biggr]
d {\textsf P}_{N,\infty} 
\Bigl(
(\widehat{r}^k)_{k \in F_\infty^+ \setminus F_N^+}
\Bigr).
\end{split}
\end{equation}
In particular, recalling 
from Lemma 
\ref{lem:density:under:P} that, 
for 
 ${\mathbb P}$
 almost every 
$m$, 
$ \sum_{j \in {\mathbb Z}^d} \vert \widehat m^j \vert < \infty$,
we deduce that, 
 for 
$\textsf{\sf P}_{N,\infty}$ almost every 
$(\widehat r^k)_{k \in F_\infty^+ \setminus F_N^+} $, 
$\sum_{j \in F_\infty^+ \setminus F_N^+} \vert \widehat{r}^j \vert < \infty$. 

We now consider the set 
\begin{equation*}
C_N := \bigl\{ m \in {\mathcal P}({\mathbb T}^d) :  \phi \ \textrm{\rm is differentiable at \textit{m} along the directions} \ (\fD[e_k])_{k \in F_N^+,\fD=\Re,\Im} \bigr\}. 
\end{equation*}
In clear, 
$m \in C_N$
if, 
for any $k \in F_N^+$,
for $\fD=\Re,\Im$, 
the limit 
\begin{equation*}
\lim_{\eta \rightarrow 0} \frac{1}{\eta} \Bigl( \phi\bigl( m + \eta  \fD[e_k]\bigr) 
 - \phi(m) \Bigr)
{\mathbf 1}_{ {\mathcal P}({\mathbb T}^d)}\bigl(m + \eta   \fD[e_k]\bigr)
\end{equation*}
exists, which proves that 
$C_N$ is a Borel subset of 
${\mathcal P}({\mathbb T}^d)$. 
We now address, 
for a given 
$(\widehat r^k)_{k \in F_\infty^+ \setminus F_N^+} $ in $\ell^2_{\mathbb C}(F_\infty^+ \setminus F_N^+)$, 
the probability 
${\mathbb P}_{N\vert(N,\infty)} 
( C_N \vert 
(\widehat r^k)_{k \in F_\infty^+ \setminus F_N^+} 
)$. By definition of a regular conditional probability distribution, we have, for 
$\textsf{\sf P}_{N,\infty}$ almost every 
$(\widehat r^k)_{k \in F_\infty^+ \setminus F_N^+} $, 
\begin{equation*}
\begin{split}
&{\mathbb P}_{N\vert(N,\infty)} 
\bigl( C_N \vert 
(\widehat r^k)_{k \in F_\infty^+ \setminus F_N^+} 
\bigr)
\\
&= {\mathbb P}_{N\vert(N,\infty)} 
\Bigl(
\bigcap_{k \in F_N^+}
\bigcap_{\fD=\Re,\Im}
\Bigl\{ m : 
\lim_{\eta \rightarrow 0} \frac{1}{\eta} \Bigl( \phi( m + \eta \fD[  e_k]) 
 - \phi(m) \Bigr)
{\mathbf 1}_{ {\mathcal P}({\mathbb T}^d)}(m + \eta \fD[ e_k])
\ \textrm{\rm exists}
\Bigr\}
\\
&\hspace{75pt} \cap 
\bigcap_{k \in F_\infty^+ \setminus F_N^+}
\{ \widehat{m}^k =  \widehat{r}^k
\}
\,
\vert 
\, 
(\widehat r^k)_{k \in F_\infty^+ \setminus F_N^+} 
\Bigr).
\end{split}
\end{equation*}
Obviously, on the event 
$\bigcap_{k \in F_\infty^+ \setminus F_N^+}
\{ \widehat{m}^k =  \widehat{r}^k
\}$,
we have, for any $k \in F_N \setminus \{0\}$
and for $\fD=\Re,\Im$, 
\begin{equation}
\label{eq:rademacher:1}
\begin{split}
& \frac1{\eta} \Bigl( \phi(m + \eta \fD[e_k]) - \phi(m) \Bigr) 
{\mathbf 1}_{ {\mathcal P}({\mathbb T}^d)}(m + \eta \fD[e_k]) 
 \\
&= \frac{1}{\eta} \biggl\{ \phi\Bigl( 1 + \sum_{j \in F_N^+ }  2 \Re [\widehat m^j e_{-j}] + \eta   \fD[e_k]
+\sum_{j \in F_\infty^+ \setminus F_N^+}  2 \Re [ \widehat r^j e_{-j}]
 \Bigr) 
 \\
&\hspace{30pt} - \phi
\Bigl( 1 +  \sum_{j \in F_N^+ } 2 \Re[ \widehat m^j e_{-j}]
+ \sum_{j  \in F_\infty^+ \setminus F_N^+} 2 \Re[ \widehat r^j e_{-j}]
\Bigr)
  \biggr\}
  \\
  &\hspace{15pt} \times
{\mathbf 1}_{ {\mathcal P}({\mathbb T}^d)}\Bigl(
1+
 \sum_{j \in F_N^+ }  2 \Re [\widehat m^j e_{-j}] + \eta   \fD[e_k]
+\sum_{j \in F_\infty^+ \setminus F_N^+}  2 \Re [ \widehat r^j e_{-j}]
 \Bigr).
\end{split}
\end{equation}
For $(\widehat r^j)_{j \in F_\infty^+ \setminus F_N^+}$ in $\ell^2_{\mathbb C}(F_\infty^+ \setminus F_N^+)$ such that $\sum_{j \in F_\infty^+ \setminus F_N^+} \vert \widehat{r}^j \vert < \infty$, we let 
\begin{equation*}
\begin{split}
&{\mathcal O}_N\bigl( 
(\widehat r^j)_{j \in F_\infty^+ \setminus F_N^+} \bigr) 
\\
&:= 
\biggl\{ 
(\widehat{m}^j)_{j \in F_N^+}
\in {\mathbb C}^{\vert F_N^+\vert} :  
 \inf_{x \in {\mathbb T}^d} 
\Bigl[ 1 + 
 \sum_{j \in F_N^+ } 2 \Re[ \widehat m^j e_{-j}(x)]
+ \sum_{j  \in F_\infty^+ \setminus F_N^+} 2 \Re[ \widehat r^j e_{-j}(x)]
\Bigr]
>0
\biggr\}.
\end{split}
\end{equation*}
It is easy to see that 
${\mathcal O}_N( 
(\widehat r^j)_{j \in F_\infty^+ \setminus F_N^+} )$
is an open convex
subset 
of ${\mathbb R}^{2 \vert F_N^+ \vert}$ (regarding the complex coordinates as pairs of reals). 
Its closure 
writes
\begin{equation*}
\begin{split}
&\overline{{\mathcal O}_N}\bigl( 
(\widehat r^j)_{j \in F_\infty^+ \setminus F_N^+} \bigr)
\\ 
&:= 
\biggl\{ 
(\widehat{m}^j)_{j \in F_N^+}
\in {\mathbb C}^{\vert F_N^+\vert} :  
 \inf_{x \in {\mathbb T}^d} 
\Bigl[ 1 + 
 \sum_{j \in F_N^+ } 2 \Re[ \widehat m^j e_{-j}(x)]
+ \sum_{j  \in F_\infty^+ \setminus F_N^+} 2 \Re[ \widehat r^j e_{-j}(x)]
\Bigr]
\geq 0
\biggr\}.
\end{split}
\end{equation*}
We can consider 
$p^N$ the orthogonal projection from 
${\mathbb R}^{2\vert F_N^+\vert}$ onto 
$\overline{{\mathcal O}_N}( 
(\widehat r^j)_{j \in F_\infty^+ \setminus F_N^+} )$ (for simplicity, we do not specify the dependence upon 
$(\widehat r^j)_{j \in F_\infty^+ \setminus F_N^+}$ in the notation $p^N$). 
We then let 
\begin{equation*}
\begin{split}
&\phi^N\Bigl( (\widehat{m}^j)_{j \in F_N^+} \vert 
(\widehat r^j)_{j \in F_\infty^+ \setminus F_N^+} 
\Bigr)
\\
&:=\phi
\biggl( 1 + \sum_{j \in F_N^+ } 2 \Re \Bigl\{   \Bigl[ p^N 
\Bigl(
(\widehat{m}^k)_{k \in F_N \setminus \{0\}}
\Bigr)
\Bigr]_j
e_{-j}\Bigr\}
+ \sum_{  j \in F_\infty^+ \setminus F_N^+} 2 \Re [ \widehat r^j e_{-j}]
\Bigr\} \biggr).
\end{split}
\end{equation*}
Obviously, $\phi^N$ is a Lipschitz function on 
${\mathbb R}^{2\vert F_N^+\vert}$. As such, it is differentiable almost everywhere:
we call ${\mathcal D}[\phi^N]$
the set of differentiability points. 
Back to 
\eqref{eq:rademacher:1}, we have, on the event 
$\bigcap_{k \in F_\infty^+ \setminus F_N^+}
\{ \widehat{m}^k =  \widehat{r}^k
\}$,
the identity
\begin{equation*}
\begin{split}
& \frac1{\eta} \Bigl( \phi(m + \eta \fD[e_k]) - \phi(m) \Bigr) 
{\mathbf 1}_{ {\mathcal P}({\mathbb T}^d)}(m + \eta \fD[e_k]) 
 \\
&= \frac{1}{\eta} \Bigl[ \phi^N\Bigl( (\widehat{m}^j + \eta z_{\fD} {\mathbf 1}_{\{k=j\}})_{j \in F_N^+} \vert 
(\widehat r^j)_{j \in F_\infty^+ \setminus F_N^+} 
\Bigr)
 - \phi^N 
 \Bigl( (\widehat{m}^j)_{j \in F_N^+} \vert 
(\widehat r^j)_{j \in F_\infty^+ \setminus F_N^+} 
\Bigr)
 \Bigr] 
\\
&\hspace{15pt} \times 
{\mathbf 1}_{ {\mathcal P}({\mathbb T}^d)}\Bigl(
1 +  \sum_{j \in F_N^+} 2 \Re[\widehat m^j e_{-j}] + \eta\fD[   e_k]
+ \sum_{j \in F_\infty^+ \setminus F_N^+} 2 \Re[\widehat r^j e_{-j}]
 \Bigr),
\end{split}
\end{equation*}
with $z_{\fD}=1$ if $\fD=\Re$ and $z_{\fD}=\i$ if $\fD=\Im$. 
In particular, if
$(\widehat{m}^j + \eta z_{\fD} {\mathbf 1}_{\{k=j\}})_{j \in F_N^+}$
belongs to 
${\mathcal O}_N( 
(\widehat r^j)_{j \in F_\infty^+ \setminus F_N^+} )$ and to ${\mathcal D}[\phi^N]$, then 
the above right-hand side has a limit as $\eta$ tends to $0$. 
Therefore, 
for $\textsf{P}_{N,\infty}$-a.e. $(\widehat r^k)_{k \in F_\infty^+ \setminus F_N^+} $, 
\begin{equation*}
\begin{split}
&{\mathbb P}_{N\vert(N,\infty)} 
\bigl( C_N \vert 
(\widehat r^k)_{k \in F_\infty^+ \setminus F_N^+} 
\bigr)
\\
&\geq {\mathbb P}_{N\vert(N,\infty)} 
\Bigl(
{\mathcal D}[\phi^N] \cap 
\Bigl\{
(\widehat{m}^j)_{j \in F_N \setminus \{0\}}
\in 
{\mathcal O}_N( 
(\widehat r^k)_{k \in F_\infty^+ \setminus F_N^+} )
\Bigr\}
 \cap 
\bigcap_{\vert k \vert \geq N}
\{ \widehat{m}^k = \widehat{r}^k
\}
\,
\vert 
\, 
(\widehat r^k)_{k \in F_\infty^+ \setminus F_N^+} 
\Bigr).
\end{split}
\end{equation*}
We now prove the following two things:
for $\textsf{P}_{N,\infty}$-a.e. $(\widehat r^k)_{k \in F_\infty^+ \setminus F_N^+} $, 
\begin{equation}
\label{eq:rademacher:2}
\begin{split}
&{\mathbb P}_{N\vert(N,\infty)} 
\Bigl(
{\mathcal D}[\phi^N] 
\,
\vert 
\, 
(\widehat r^k)_{k \in F_\infty^+ \setminus F_N^+} 
\Bigr) = 1,
\\
&{\mathbb P}_{N\vert(N,\infty)} 
\Bigl(
\Bigl\{
(\widehat{m}^j)_{j \in F_N^+}
\in 
{\mathcal O}_N( 
(\widehat r^k)_{k \in F_\infty^+ \setminus F_N^+} )
\Bigr\}
 \cap 
\bigcap_{k \in F_\infty^+ \setminus F_N^+}
\{ \widehat{m}^k = \widehat{r}^k
\}
\,
\vert 
\, 
(\widehat r^k)_{k \in F_\infty^+ \setminus F_N^+} 
\Bigr)=1.
\end{split}
\end{equation}
Thanks to \eqref{eq:rademacher:0}, this is sufficient to prove 
${\mathbb P}(C_N)=1$

We start with the proof of the first claim 
in \eqref{eq:rademacher:2}. 
We invoke $(ii)$ in Theorem 
\ref{thm:probability:probability}.
Moreover, we introduce the function 
${\mathscr R}^N : (\widehat \varrho^j)_{j \in F_N^+} \in {\mathbb C}^{\vert F_N^+ \vert} \simeq 
 {\mathbb R}^{2 \vert F_N^+ \vert} 
\mapsto 
(\widehat \varrho^j \widehat f_N^j)_{j \in F_N^+}$. 
Since 
$\widehat f_N^j$ is a (strictly) positive real for each $j \in F_N^+$, 
${\mathscr R}^N$ can be identified with a linear mapping
from $ {\mathbb R}^{2 \vert F_N^+ \vert} $
into itself 
 with a (strictly) positive determinant. In particular, 
$(\widehat m^j)_{j \in F_N^+}$ belongs to 
${\mathcal D}[\phi^N]$ if and only if 
${\mathscr R}^N( (\widehat m^j)_{j \in F_N^+})$ belongs to 
${\mathscr R}^N({\mathcal D}[\phi^N])$
and the complementary of the latter has a zero Lebesgue measure. Here now comes 
Theorem 
\ref{thm:probability:probability}: 
for $m \in {\mathcal P}({\mathbb T}^d)$, 
${\mathscr R}^N( (\widehat m^j)_{j \in F_N^+})$
rewrites 
$\pi_N^{(1)}(m)$, 
and the image of 
the probability measure 
${\mathbb P}$ by $\pi_N^{(1)}$ is absolutely continuous with respect to the Lebesgue measure on ${\mathbb R}^{2 \vert F_N^+ \vert}$, from which we deduce that 
\begin{equation*}
{\mathbb P} \Bigl( 
\Bigl\{ 
m :
(\widehat m^j)_{j \in F_N^+} \in 
 {\mathcal D}[\phi^N]
 \Bigr\} 
 \Bigr) 
 =
{\mathbb P} \Bigl( 
\Bigl\{ 
m :
\pi_N^{(1)}(m) \in 
{\mathscr R}^N( {\mathcal D}[\phi^N])
 \Bigr\} 
 \Bigr) = 1. 
 \end{equation*}
 In turn, we get 
 that the first line in 
 \eqref{eq:rademacher:2}
 holds true for
 $\textsf{P}_{N,\infty}$ almost every $(\widehat r^k)_{k \in F_\infty^+ \setminus F_N^+}$.
 
 We proceed similarly for the second claim in 
 \eqref{eq:rademacher:1}. 
 We know that 
 \begin{equation*}
 {\mathbb P}
 \Bigl( \Bigl\{ 
 m : 
  \inf_{x \in {\mathbb T}^d} m(x) >0 
 \Bigr\}
 \Bigr) = 1.
 \end{equation*}
 Therefore, 
for $\textsf{P}_{N,\infty}$ almost every $(\widehat r^k)_{k \in F_\infty^+ \setminus F_N^+} $, 
\begin{equation*}
\begin{split}
&{\mathbb P}_{N\vert(N,\infty)} 
\Bigl(
\Bigl\{ m : 
 \inf_{x \in {\mathbb T}^d} 
\Bigl[ 1 + 
 \sum_{j \in F_N^+ } 2 \Re[ \widehat m^j e_{-j}(x)]
+ \sum_{j  \in F_\infty^+ \setminus F_N^+} 2 \Re[ \widehat r^j e_{-j}(x)]
\Bigr]
>0
\Bigr\}
\\
&\hspace{150pt} \cap 
\bigcap_{k \in F_\infty^+ \setminus F_N^+}
\{ \widehat{m}^k = \widehat{r}^k
\}
\,
\vert 
\, 
(\widehat r^k)_{k \in F_\infty^+ \setminus F_N^+} 
\Bigr) = 1,
\end{split}
\end{equation*}
from which we easily deduce that the second line in 
 \eqref{eq:rademacher:2}
 is indeed satisfied. 
 
 Notice from  
 \eqref{eq:rademacher:1}
 that, for any $k_0 \in {\mathbb Z}^d$ and any $N_0 \geq \vert k_0 \vert$, the partial derivative $\partial_{\widehat{m}^{k_0}} \phi(m)$, which exists 
 for
 ${\mathbb P}$ almost every 
 $m \in {\mathcal P}({\mathbb T}^d)$, 
 coincides with
 the finite dimensional derivative 
 $\partial_{\widehat{m}^{k_0}} \phi(1 +  \sum_{j \in F_{N_0}^+} 2 \Re[\widehat{m}^j e_{-j} ]+ 
 \sum_{j \in F_\infty^+ \setminus F_{N_0}^+} 2 \Re[\widehat{r}^j e_{-j} ])$
 on the event $\cap_{j \in F_\infty^+ \setminus F_{N_0}^+}
 \{ \widehat{m}^j =  \widehat{r}^j \}$.

\subsubsection*{Second part: identification of the derivative with a weak limit}
We now turn to the proof of 
\eqref{weak:convergence:derivatives}. 
Generally speaking, 
it relies on Lemmas
\ref{lem:W:mathcalW:Nn}
and 
\ref{lem:W:mathcalW:Nn:2}. 
It also requires a preliminary result: in order to proceed, 
we need to identify, for a given integer $N_0 \geq 1$, the conditional measure 
$\textsf{\sf P}_{N_0,\infty}$ 
introduced in the first part, see 
\eqref{eq:PNinfini}. Here, we do so, but 
on the set 
\begin{equation*}
B_{N_0,\infty} := \Bigl\{ m \in {\mathcal P}({\mathbb T}^d) : 
\forall  k \in F_\infty^+ \setminus F_{N_0}^+, 
\ 
\vert \widehat{m}^k \vert \leq \frac{a_0}{2\vert k \vert^{5d/2}}
\Bigr\}, 
\end{equation*}
with 
$a_0$ as in the statement of 
Lemma 
\ref{lem:25:second}. 
The complete identification is given (and proven) in 
Lemma 
\ref{lem:integrationbyparts:1}
below. 
In this second part,
we take it 
for granted and explain how 
it applies to 
the current problem. 

Following
the same notations as in
Lemma 
\ref{lem:W:mathcalW:Nn}, 
we consider a smooth function $\vartheta : {\mathbb R}^{2 \vert F_{N_0}^+ \vert} \rightarrow {\mathbb R}$
 whose support is included in 
$$\biggl\{ (\widehat{m}^j)_{j \in F_{N_0}^+} \in {\mathbb C}^{\vert F_{N_0}^+\vert} 
 \simeq {\mathbb R}^{2 \vert F_{N_0}^+\vert} : 
1 + 2 \inf_{x \in {\mathbb T}^d}  \sum_{j \in F_{N_0}^+} \Re[ \widehat{m}^j e_{-j}(x) ]
\geq \frac1{c} \biggr\},$$ 
for some $c>1$. 
In this framework, 
Lemma 
\ref{lem:integrationbyparts:1}
says that 
 \begin{equation}
 \label{eq:integration:by:parts:0:1}
\begin{split} 
 & \int_{{\mathcal P}({\mathbb T}^d)}  \phi(m)  \partial_{\widehat{m}^{k_0}} \vartheta\Bigl( \bigl( \widehat{m}^k \bigr)_{k \in F_{N_0}^+} \Bigr)
{\mathbf 1}_{A_{N_0} \cap B_{N_0,\infty}}(m)  
d {\mathbb P}(m) 
 = \frac1{{\mathbb P}(A_{N_0})}
  \int_{{\mathcal P}({\mathbb T}^d)}  \Psi(m) 
 {\mathbf 1}_{A_{N_0}}(m) 
d {\mathbb P}(m),
 \end{split}
 \end{equation} 
where $A_{N_0}$ is defined 
 as in Lemma 
 \ref{lem:25:second}
 and has a (strictly) positive probability for $N_0$ large enough, see
 Lemma \ref{lem:AN0}, and where
\begin{align}
\Psi(m) 
&= 
{\mathbf 1}_{B_{N_0,\infty}}(m) 
 \int_{{\mathcal O}_{N_0}} 
\biggl[ \phi
\Bigl( 
{\mathscr I}_{N_0} 
\bigl( (\widehat{r}^k - \widehat{m}^k)_{k \in F_{N_0}^+}
\bigr)
+ m
\Bigr)
{\mathbf 1}_{A_{N_0}}
\bigl( (\widehat{r}^k)_{k \in F_{N_0}^+}
\bigr) \nonumber
\\
&\hspace{150pt} \times  
\partial_{\widehat{m}^{k_0}} \vartheta
\Bigl( \bigl( \widehat{r}^k \bigr)_{k \in F_{N_0}^+} \Bigr)
\biggr]
d \Gamma_{N_0}\Bigl( 
\bigl( \widehat{r}^k\bigr)_{k \in F_{N_0}^+} 
\Bigr) \nonumber
\\
&= 
{\mathbf 1}_{B_{N_0,\infty}}(m) 
 \int_{{\mathcal O}_{N_0}} 
\biggl[ \phi
\Bigl( 
{\mathscr I}_{N_0} 
\bigl( (\widehat{r}^k
 - \widehat{m}^k)_{k \in F_{N_0}^+}
\bigr)
+ m
\Bigr)
\label{eq:Psi:t,m}
\\
&\hspace{150pt} \times  
\partial_{\widehat{m}^{k_0}} \vartheta
\Bigl( \bigl( \widehat{r}^k \bigr)_{k \in F_{N_0}^+} \Bigr)
\biggr]
d \Gamma_{N_0}\Bigl( 
\bigl( \widehat{r}^k\bigr)_{k \in F_{N_0}^+} 
\Bigr) + \epsilon_{N_0}, \nonumber
\end{align}
where $\epsilon_{N_0}$ tends to $0$ as $N_0$ tends to $\infty$, uniformly with respect to 
$\| \phi \|_\infty$ and $\| \partial_{\widehat{m}^{k_0}} \vartheta  \|_\infty$. 
Importantly, 
this identity requires some care:
\vskip 4pt

\textit{(a)} First, 
the notation 
${\mathscr I}_{N_0} 
( (\widehat{r}^k
 - \widehat{m}^k)_{k \in F_{N_0}^+}
)$ is rather abusive since 
$ (\widehat{r}^k
 - \widehat{m}^k)_{k \in F_{N_0}^+}$ may not belong to 
 ${\mathcal O}_{N_0}$. However, 
 the definition of 
${\mathscr I}_{N_0}$ easily extends to the entire 
${\mathbb R}^{2 \vert F_{N_0}^+ \vert}$, see \eqref{eq:I_N}. 
\vskip 4pt

\textit{(b)}
 Second, 
 the notation 
 ${\mathbf 1}_{A_{N_0}}
( (\widehat{r}^k)_{k \in F_{N_0}^+}
)$ is also abusive. Instead, 
we should write 
 ${\mathbf 1}_{A_{N_0}} \circ {\mathscr I}_{N_0}
( (\widehat{r}^k)_{k \in F_{N_0}^+}
)$, but this would be obviously heavier.
\vskip 4pt

\textit{(c)}
Third, 
we recall that 
the support of $\vartheta$
 is included
 in $A_{N_0}$ (up 
 to the embedding 
${\mathscr I}_{N_0}$ and for $N_0$ large enough). 
\vskip 4pt

As a result of the latter observation, 
we notice that, 
whenever 
$( \widehat{r}^k )_{k \in F_{N_0}^+}$ belongs to the support of 
$\vartheta$ and 
$( \widehat{m}^k )_{k \in F_\infty^+ \setminus F_{N_0}^+}$
satisfies 
$\vert \widehat{m}^k \vert \leq (a_0/2) \vert k \vert^{-5d/2}$ 
(which is the case if 
the 
$( \widehat{m}^k )_{k \in F_\infty^+ \setminus F_{N_0}^+}$'s are the Fourier coefficients of some $m \in B_{N_0,\infty}$), the distribution 
\begin{equation*}
1 
+ 2 \sum_{k \in F_{N_0}^+ }
\Re \bigl( \widehat{r}^k e_{-k} \bigr)
+ 2 \sum_{k \in F_\infty^+ \setminus F_{N_0}^+} 
\Re \bigl( \widehat{m}^k e_{-k} \bigr)
\end{equation*}
is a probability measure, see Lemma  
 \ref{lem:25:second} again. Obviously, this probability measure identifies  with 
${\mathscr I}_{N_0} 
( (\widehat{r}^k
 - \widehat{m}^k)_{k \in F_{N_0^+}})
+ m $, which is 
 the argument of $\phi$ 
in 
\eqref{eq:Psi:t,m}, 
when the 
$( \widehat{m}^k )_{k \in F_\infty^+ \setminus F_{N_0}^+}$'s are the Fourier coefficients of some $m \in B_{N_0,\infty}$. In particular, 
the integrand 
in \eqref{eq:Psi:t,m}
is always well-defined under the sole assumption that 
$m \in B_{N_0,\infty}$. 
Equivalently, we can regard $\Psi$ as a function of 
$(\widehat{m}^k)_{k \in F_\infty^+ \setminus F_{N_0}^+}$ provided that 
$\vert \widehat{m}^k \vert \leq (a_0/2) \vert k \vert^{-5d /2}$ for any 
$k \in F_{\infty}^+ \setminus F_{N_0}^+$. 
In order to make this clear, we let 
\begin{equation}
\begin{split}
\widetilde{\Psi}\Bigl( 
(\widehat{m}^k)_{k \in F_\infty^+ \setminus F_{N_0}^+} \Bigr)
&:= 
 \int_{{\mathcal O}_{N_0}} 
\biggl[ \phi
\Bigl( 
1 
+ 2 \sum_{k \in F_{N_0}^+ }
\Re \bigl( \widehat{r}^k e_{-k} \bigr)
+ 2 \sum_{k \in F_\infty^+ \setminus F_{N_0}^+} 
\Re \bigl( \widehat{m}^k e_{-k} \bigr)
\Bigr)
\label{eq:tildePsi:t,m}
\\
&\hspace{125pt} \times  
\partial_{\widehat{m}^{k_0}} \vartheta
\Bigl( \bigl( \widehat{r}^k \bigr)_{k \in F_{N_0}^+} \Bigr)
\biggr]
d \Gamma_{N_0}\Bigl( 
\bigl( \widehat{r}^k\bigr)_{k \in F_{N_0}^+} 
\Bigr),
\end{split}
\end{equation}
for 
$( \widehat m^k)_{k \in F_\infty^+ \setminus F_N^+} \in \ell^2_{\mathbb C}( F_\infty^+ \setminus F_N^+ )$ such that 
$\vert \widehat{m}^k \vert \leq (a_0/2) \vert k \vert^{-5d /2}$ for any 
$k \in F_{\infty}^+ \setminus F_{N_0}^+$. 

And then, by combining 
 \eqref{eq:integration:by:parts:0:1}, 
 \eqref{eq:Psi:t,m}
and
\eqref{eq:tildePsi:t,m}, 
we obtain 
 \begin{equation}
 \label{eq:integration:by:parts:0:2}
\begin{split} 
 & \int_{{\mathcal P}({\mathbb T}^d)}  \phi(m)  \partial_{\widehat{m}^{k_0}} \vartheta\Bigl( \bigl( \widehat{m}^k \bigr)_{k \in F_{N_0}^+} \Bigr)
{\mathbf 1}_{A_{N_0}}(m) 
{\mathbf 1}_{B_{N_0,\infty}}(m)  
d {\mathbb P}(m) 
\\
&= \frac1{{\mathbb P}(A_{N_0})}
  \int_{{\mathcal P}({\mathbb T}^d)}  \Psi(m)  
d {\mathbb P}(m)   + \epsilon_{N_0}
\\
&= \frac1{{\mathbb P}(A_{N_0})}
  \int_{\ell^2_{\mathbb C}( F_\infty^+ \setminus F_N^+ )}  \widetilde{\Psi}\Bigl(
\bigl( \widehat{m}^k \bigr)_{k \in F_\infty^+ \setminus F_{N_0}^+}
\Bigr)  
d \textsf{\sf P}_{N_0,\infty}\Bigl(
\bigl( \widehat{m}^k \bigr)_{k \in F_\infty^+ \setminus F_{N_0}^+} \Bigr)  + \epsilon_{N_0},
 \end{split}
 \end{equation} 
where, as before, $\epsilon_{N_0}$ tends to $0$ as $N_0$ tends to $\infty$, uniformly in $\| \phi \|_\infty$ and $\| \partial_{\widehat{m}^{k_0}} \vartheta \|_\infty$. 

Now, by the first part of the proof, we know that, 
for $\textsf{\sf P}_{N_0,\infty}$-almost every 
$( \widehat{m}^k )_{k \in F_\infty^+ \setminus F_{N_0}^+}
\in 
\ell^2_{\mathbb C}( F_\infty^+ \setminus F_N^+ )$,  
the function 
$$( \widehat{r}^k )_{k \in F_{N_0}^+}
\in {\mathcal O}_{N_0} 
\mapsto 
\phi
\biggl( 
1 
+ 2 \sum_{k \in F_{N_0}^+ }
\Re ( \widehat{r}^k e_{-k} )
+ 2 \sum_{k \in F_\infty^+ \setminus F_{N_0}^+} 
\Re ( \widehat{m}^k e_{-k} )
\biggr)$$ 
is 
$\textrm{\rm Leb}_{N_0}$
almost everywhere differentiable along any direction 
$k \in F_{N_0}^+$. Recalling the form of $\Gamma_{N_0}$ in 
\eqref{eq:Gamma_N}, we can make an integration by parts in 
\eqref{eq:tildePsi:t,m}. We get: 
\begin{equation*}
\begin{split}
\widetilde{\Psi}\Bigl( 
(\widehat{m}^k)_{k \in F_\infty^+ \setminus F_{N_0}^+} \Bigr)
&= 
-  \int_{{\mathcal O}_{N_0}} 
\biggl[
\partial_{\widehat{m}^{k_0}}
\phi
\Bigl( 
1 
+ 2 \sum_{k \in F_{N_0}^+ }
\Re \bigl( \widehat{r}^k e_{-k} \bigr)
+ 2 \sum_{k \in F_\infty^+ \setminus F_{N_0}^+} 
\Re \bigl( \widehat{m}^k e_{-k} \bigr)
\Bigr)
\\
&\hspace{25pt}
-   2 
\vert k_0 \vert^{2pd} 
\widehat{m}^{k_0}
 \phi
\Bigl( 
1 
+ 2 \sum_{k \in F_{N_0}^+ }
\Re \bigl( \widehat{r}^k e_{-k} \bigr)
+ 2 \sum_{k \in F_\infty^+ \setminus F_{N_0}^+} 
\Re \bigl( \widehat{m}^k e_{-k} \bigr)
\Bigr)
\biggr] 
\\
&\hspace{100pt}
\times 
\vartheta 
\Bigl( \bigl( \widehat{r}^k \bigr)_{k \in F_{N_0}^+} \Bigr)
d \Gamma_{N_0}\Bigl( 
\bigl( \widehat{r}^k\bigr)_{k \in F_{N_0}^+} 
\Bigr).
\end{split}
\end{equation*}
Then, 
we can revert back the computations in 
\eqref{eq:Psi:t,m}
and write 
\begin{equation*}
\begin{split}
\Psi(m) 
&= 
- {\mathbf 1}_{B_{N_0,\infty}}(m) 
 \int_{{\mathcal O}_{N_0}} 
\biggl[ \partial_{\widehat{m}^{k_0}} \phi
\Bigl(  
{\mathscr I}_{N_0} 
\bigl( (\widehat{r}^k - \widehat{m}^k)_{k \in F_{N_0}^+}
\bigr)
+ m
\Bigr)
\\
&\hspace{100pt} 
-
2
\vert k_0 \vert^{2pd}
\widehat{m}^{k_0}
\phi
\Bigl(  
{\mathscr I}_{N_0} 
\bigl( (\widehat{r}^k
-
\widehat{m}^k)_{k \in F_{N_0}^+}
\bigr)
+ m
\Bigr)
\biggr]
\\
&\hspace{100pt} \times  
{\mathbf 1}_{A_{N_0}}
\bigl( (\widehat{r}^k)_{k \in F_{N_0^+}}
\bigr) 
  \vartheta 
\Bigl( \bigl( \widehat{r}^k \bigr)_{k \in F_{N_0}^+} \Bigr)
d \Gamma_{N_0}\Bigl( 
\bigl( \widehat{r}^k\bigr)_{k \in F_{N_0}^+} 
\Bigr)
  + \epsilon_{N_0}.
\end{split}
\end{equation*}
Back to 
 \eqref{eq:integration:by:parts:0:1}
 and 
 \eqref{eq:integration:by:parts:0:2}, we get 
  \begin{equation*}
\begin{split} 
 &  \int_{{\mathcal P}({\mathbb T}^d)}  \phi(m)   \partial_{\widehat{m}^{k_0}} \vartheta\Bigl( \bigl( \widehat{m}^k \bigr)_{k \in F_{N_0}^+} \Bigr)
{\mathbf 1}_{A_{N_0} \cap B_{N_0,\infty}}(m)  
d {\mathbb P}(m)  
\\
&= -
  \int_{{\mathcal P}({\mathbb T}^d)}  
\Bigl[ 
\partial_{\widehat{m}^{k_0}} \phi(m) 
- 
2 \vert k_0\vert^{2pd}
\widehat{m}^{k_0} 
  \phi(m)
\Bigr] 
   \vartheta\Bigl( \bigl( \widehat{m}^k \bigr)_{k \in F_{N_0}^+} \Bigr)  
  {\mathbf 1}_{A_{N_0} \cap B_{N_0,\infty}}(m)  
d {\mathbb P}(m) + \epsilon_{N_0}. 
 \end{split}
 \end{equation*} 
By Lemma 
\ref{lem:27}, we have
${\mathbb P}(A_{N_0} \cap B_{N_0,\infty}) \rightarrow 1$ 
as $N_0 \rightarrow \infty$, 
from which we deduce that 
 \begin{equation}
 \label{eq:integration:by:parts:0:3}
\begin{split} 
 &  \int_{{\mathcal P}({\mathbb T}^d)}  \phi(m)  \partial_{\widehat{m}^{k_0}} \vartheta\Bigl( \bigl( \widehat{m}^k \bigr)_{k \in F_{N_0}^+} \Bigr)
d {\mathbb P}(m) 
\\
&= -
  \int_{{\mathcal P}({\mathbb T}^d)}  
\Bigl[ 
\partial_{\widehat{m}^{k_0}} \phi(m) 
- 
2 \vert k_0\vert^{2pd}
\widehat{m}^{k_0} 
  \phi(m)
\Bigr] 
  \vartheta\Bigl( \bigl( \widehat{m}^k \bigr)_{k \in F_{N_0}^+} \Bigr)  
d {\mathbb P}(m)   + \epsilon_{N_0}, 
 \end{split}
 \end{equation} 
 where, as before, $\epsilon_{N_0}$ tends to $0$ as $N_0$ tends to $\infty$, uniformly in $\| \phi \|_\infty$, 
 $\|\vartheta \|_\infty$, $\| \partial_{\widehat{m}^{k_0}} \phi \|_\infty$ and $\| \partial_{\widehat{m}^{k_0}} \vartheta \|_\infty$. 

For each 
$k_0 \in {\mathbb Z}^d \setminus \{0\}$, we now call $\widehat{\Phi}^{k_0}(m)$ a weak limit of the sequence 
$(\partial_{\widehat{m}^{k_0}} \phi(m*f_N))_{N > \vert k_0\vert}$ with 
$\partial_{\widehat{m}^{k_0}} \phi$ being here understood, for each $N > \vert k_0 \vert$, 
as the almost everywhere (for the Lebesgue measure on ${\mathcal P}_N$) 
derivative of $\phi$ on ${\mathcal P}_N$. 
Then, 
by combining 
Lemma 
\ref{lem:W:mathcalW:Nn}
and 
\eqref{eq:integration:by:parts:0:3}, we reach the same conclusion as 
 \eqref{eq:W1-W2}
in the proof of 
Lemma 
\ref{lem:W:mathcalW:Nn:2}, but with 
$\widehat{\mathcal W}_1^{k_0}(m)$ 
therein being replaced by 
$\widehat{\Phi}^{k_0}(m)$
and 
 with 
$\widehat{\mathcal W}_2^{k_0}(m)$ 
being replaced by 
$\partial_{\widehat{m}^{k_0}} \phi(m)$ (as given by the conclusion of the first part). 
We hence conclude that,  
almost everywhere under ${\mathbb P}$,
$\widehat{\Phi}^{k_0}$
and 
$\partial_{\widehat{m}^{k_0}} \phi$
are equal, which proves that 
$(\partial_{\widehat{m}^{k_0}} \phi(m*f_N))_{N > \vert k_0\vert}$
converges (in the weak sense) to 
$\partial_{\widehat{m}^{k_0}} \phi$. \qed

\subsubsection*{Third part: auxiliary statement}

It now
remains to prove the following lemma, which we invoked in the derivation of the above second part. 

\begin{lem}
\label{lem:integrationbyparts:1}
For any bounded measurable function 
$\varphi : {\mathcal P}({\mathbb T}^d) \rightarrow {\mathbb R}$, 
\begin{equation*}
\begin{split}
&\int_{\PP} 
\varphi(m) 
{\mathbf 1}_{A_{N_0} \cap B_{N_0,\infty}}(m) 
d {\mathbb P}(m) 
 = 
\frac1{{\mathbb P}_{N_0}(A_{N_0})} 
\int_{\PP} 
\psi\bigl(m\bigr) {\mathbf 1}_{A_{N_0}}(m) 
d {\mathbb P}(m),
\end{split} 
\end{equation*}
with $A_{N_0}$ in the first line being as in Lemma 
\ref{lem:25:second}
and with 
 \begin{equation*}
 \begin{split}
& \psi(m)
 =
{\mathbf 1}_{B_{N_0,\infty}}(m) 
 \int_{{\mathcal O}_{N_0}} 
\biggl[ \varphi
\Bigl( 
{\mathscr I}_{N_0} 
\bigl( (\widehat{r}^k)_{k \in F_{N_0}^+}
\bigr)
- 
{\mathscr I}_{N_0} 
\bigl( (\widehat{m}^k)_{k \in F_{N_0}^+}
\bigr)
+ m
\Bigr)
{\mathbf 1}_{A_{N_0}}
\circ
{\mathscr I}_{N_0} 
\bigl( (\widehat{r}^k)_{k \in F_{N_0}^+}
\bigr)
\biggr]
\\
&\hspace{250pt} \times  
d \Gamma_{N_0}\Bigl( 
\bigl( \widehat{r}^k\bigr)_{k \in F_{N_0}^+} 
\Bigr),
\end{split}
\end{equation*}
and with the same notation 
as in \eqref{eq:I_N} for the map ${\mathscr I}_{N_0}$.
\end{lem}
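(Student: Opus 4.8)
The plan is to establish the identity first at the level of each approximating measure $\P_N$ with $N\ge N_0$, where the explicit Gaussian form of $\Gamma_N$ is available, and then to pass to the weak limit $N\to\infty$. The crucial starting observation is that, by Remark~\ref{rem:extension:lem:25:second} applied with $a=a_0/2$, whenever $m$ belongs to $A_{N_0}\cap B_{N_0,\infty}$ and has Fourier coefficients vanishing above level $N$, the full collection $(\widehat{m}^k)_{k\in F_N^+}$ automatically lies in $\mathcal{O}_N$. Hence the Bochner constraint $\mathbf 1_{\mathcal{O}_N}$ built into the density of $\Gamma_N$ disappears on the relevant region, leaving a genuine product of Gaussian weights over the disjoint index sets $L:=F_{N_0}^+$ and $H:=F_N^+\setminus F_{N_0}^+$, because $A_{N_0}$ depends only on the coordinates in $L$ while $B_{N_0,\infty}$ imposes, on a $\P_N$-typical $m$, precisely the constraints $|\widehat{m}^k|\le a_0/(2|k|^{5d/2})$ for $k\in H$.

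I would then compute $\int_{\PP}\varphi(m)\mathbf 1_{A_{N_0}\cap B_{N_0,\infty}}(m)\,d\P_N(m)$ by Fubini along $(\widehat{m}^k)_{k\in L}$ and $(\widehat{m}^k)_{k\in H}$. Integrating out the low modes against $\mathbf 1_{A_{N_0}}(\widehat r)\exp(-\sum_{k\in L}|k|^{2pd}|\widehat r^k|^2)\bigotimes_{k\in L}d\widehat r^k$ and using $A_{N_0}\subset\mathcal{O}_{N_0}$ together with \eqref{eq:Gamma_N}--\eqref{eq:I_N}, this inner integral is exactly $Z_{N_0}$ times the kernel defining $\psi$, after the identity ${\mathscr I}_{N_0}((\widehat r^k))-{\mathscr I}_{N_0}((\widehat m^k)_{k\in L})+m=$ the measure with low modes $\widehat r$ and the high modes of $m$ (valid since $m$ has no modes above $N$). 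Performing the same splitting on $\int_{\PP}\psi(m)\mathbf 1_{A_{N_0}}(m)\,d\P_N(m)$ — where $\psi$ depends on $m$ only through its modes above $N_0$, while the extra factor $\mathbf 1_{A_{N_0}}(m)$ depends only on the modes in $L$ — the low-mode integral now produces the constant $Z_{N_0}\,\P_{N_0}(A_{N_0})$, and a direct comparison of the two resulting expressions (with all factors $Z_{N_0}$, $Z_N$ and $\P_{N_0}(A_{N_0})$ matching) yields the claimed identity with $\P$ replaced by $\P_N$, for every $N\ge N_0$.

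It then remains to let $N\to\infty$, and this is the main obstacle since the integrands are not continuous. I would first reduce to $\varphi$ bounded and continuous, the general bounded measurable case following by the regularisation/monotone-class arguments already used in Lemmas~\ref{lem:convergence:mollif} and \ref{lem:W:mathcalW:Nn:2}, and then dispose of the two sources of discontinuity separately. The infinitely many constraints defining $B_{N_0,\infty}$ are truncated to finitely many, $B_{N_0,N_1}:=\{|\widehat{m}^k|\le a_0/(2|k|^{5d/2}),\ k\in F_{N_1}^+\setminus F_{N_0}^+\}$; the difference between the truncated and full integrands is bounded by $\|\varphi\|_\infty$ (resp.\ $\|\psi\|_\infty\le\|\varphi\|_\infty$) times the mass of $B_{N_0,N_1}\setminus B_{N_0,\infty}$, which is controlled uniformly in $N$, and by a quantity tending to $0$ as $N_1\to\infty$, through the tail estimates of Lemma~\ref{lem:27} combined with Portmanteau's theorem, exactly as in the proofs of Lemmas~\ref{lem:AN0} and \ref{lem:density:under:P}. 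The remaining finite-dimensional indicators $\mathbf 1_{A_{N_0}}$ and the ball indicators for $k\in F_{N_1}^+\setminus F_{N_0}^+$ are then sandwiched between continuous functions; their boundaries are $\P$-null — the ball boundaries because the law of any single $\widehat{m}^k$ under $\P$ is absolutely continuous by item $(ii)$ of Theorem~\ref{thm:probability:probability}, and $\partial A_{N_0}$ for the same reason — and they carry uniformly small $\P_N$-mass by the explicit Gaussian form of $\P_N$ and Lemma~\ref{lem:25:three}. Passing to the weak limit $\P_N\to\P$ (equivalently using item $(iii)$ of Theorem~\ref{thm:probability:probability}) on the continuous approximants, and then letting $N_1\to\infty$, transfers the exact finite-$N$ identity to $\P$ and completes the proof.
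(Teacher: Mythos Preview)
Your first step — the finite-$N$ identity via the product Gaussian structure after the Bochner constraint $\mathbf 1_{\mathcal O_N}$ has been removed on $A_{N_0}\cap B_{N_0,N}$ — matches the paper's First Step exactly. The difference lies in the passage $N\to\infty$.

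The paper does \emph{not} reduce to continuous $\varphi$; it reduces to $\varphi$ depending on only finitely many Fourier modes (say up to level $N_1$). Then for $N\ge N_1$ both integrands $\varphi\,\mathbf 1_{A_{N_0}\cap B_{N_0,N}}$ and $\psi\,\mathbf 1_{A_{N_0}\cap B_{N_0,N}}$ are bounded \emph{measurable} functions of $(\widehat m^k)_{k\in F_N^+}$, and item~$(iii)$ of Theorem~\ref{thm:probability:probability} (equivalently Lemma~\ref{lem:30}) gives $\int(\cdot)\,d\P_N\to\int(\cdot)\,d\P$ directly, with no continuity or boundary-mass arguments required. After that, $B_{N_0,N}\downarrow B_{N_0,\infty}$ by monotone convergence, and the extension to general bounded measurable $\varphi$ is by the monotone class theorem together with Lemma~\ref{lem:Borel}.

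Your route — continuous $\varphi$, truncate $B_{N_0,\infty}$ to $B_{N_0,N_1}$, sandwich the remaining finite-dimensional indicators, then Portmanteau — is workable but carries an extra wrinkle you do not address: once $\varphi$ has genuinely infinite-dimensional dependence, the argument of $\varphi$ inside the kernel of $\psi$ is guaranteed to be a probability measure only on $B_{N_0,\infty}$, so on the $\psi$ side the truncation to $B_{N_0,N_1}$ forces you either to extend $\varphi$ off $\PP$ or to control the $\P$-mass of $B_{N_0,N_1}\setminus B_{N_0,\infty}$ separately. The paper's cylindrical reduction sidesteps this entirely, since then $\psi$ is itself cylindrical and item~$(iii)$ handles discontinuous test functions for free.
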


\begin{proof}
Throughout the proof, we use the same abuse of notations as 
in the proof of 
Theorem 
\ref{prop:rademacher}: We 
write
\begin{equation*}
\begin{split}
&{\mathscr I}_{N_0} 
\bigl( (\widehat{r}^k
-
\widehat{m}^k)_{k \in F_{N_0}^+}
\bigr)
\quad \textrm{\rm for} 
\quad 
{\mathscr I}_{N_0} 
\bigl( (\widehat{r}^k)_{k \in F_{N_0}^+}
\bigr)
- 
{\mathscr I}_{N_0} 
\bigl( (\widehat{m}^k)_{k \in F_{N_0}^+}
\bigr),
\\
&
\vartheta
\bigl( (\widehat{r}^k)_{k \in F_{N}^+}
\bigr)
\quad \textrm{\rm for} 
\quad 
\vartheta
\circ
{\mathscr I}_{N} 
\bigl(
 (\widehat{r}^k)_{k \in F_{N}^+}
\bigr),
\end{split}
\end{equation*}
when $\vartheta$ is defined on 
$\PP$ and 
$ (\widehat{r}^k)_{k \in F_{N}^+} \in {\mathcal O}_N$. 
\vskip 4pt

\textit{First Step.}
We start with the following observation, very similar to
an argument used in the proof of Theorem 
\ref{prop:rademacher}.
If, for $N \geq N_0$ being fixed, 
we set 
\begin{equation*}
B_{N_0,N} := 
 \Bigl\{ m \in {\mathcal P}({\mathbb T}^d) : 
\forall  k \in F_N^+ \setminus F_{N_0}^+, \ 
\vert \widehat{m}^k \vert \leq \frac{a_0}{2\vert k \vert^{5d/2}}
\Bigr\}, 
\end{equation*}
then, for any bounded measurable function 
$\varphi : \PP \rightarrow {\mathbb R}$, we have 
\begin{align}
&\int_{\PP} 
\varphi\bigl(m\bigr) {\mathbf 1}_{A_{N_0} \cap B_{N_0,N}}(m) 
d {\mathbb P}_N(m)
\label{eq:proof:integration:by:parts:dim:infinie}
\\
&=
\int_{{\mathcal O}_N} 
  \varphi
\bigl( (\widehat{m}^k)_{k \in F_{N}^+}
\bigr)
{\mathbf 1}_{A_{N_0}}
\bigl( (\widehat{m}^k)_{k \in F_{N_0}^+}
\bigr)
{\mathbf 1}_{B_{N_0,N}}
\bigl( (\widehat{m}^k)_{k \in F_{N}^+}
\bigr)
d \Gamma_N\Bigl( 
\bigl( \widehat{m}^k\bigr)_{k \in F_N^+} 
\Bigr)
\nonumber
\\
&= 
\frac{Z_{N_0}}{Z_N}
\int_{{\mathbb R}^{2 \vert F_N^+ \setminus F_{N_0}^+ \vert}} 
\biggl[
\int_{{\mathcal O}_{N_0}} 
\varphi
\bigl( (\widehat{m}^k)_{k \in F_{N}^+}
\bigr)
{\mathbf 1}_{A_{N_0}}
\bigl( (\widehat{m}^k)_{k \in F_{N_0}^+}
\bigr)
d \Gamma_{N_0}\Bigl( 
\bigl( \widehat{m}^k\bigr)_{k \in F_{N_0}^+} 
\Bigr) 
\biggr]
\nonumber
\\
&\hspace{15pt} \times 
{\mathbf 1}_{B_{N_0,N}}
\biggl( (0)_{k \in F_{N_0}^+}, (\widehat{m}^k)_{k \in F_{N}^+ \setminus F_{N_0}^+}
\biggr)
\exp \biggl( - \sum_{k \in F_N^+ \setminus F_{N_0}^+}
\vert k \vert^{2pd} 
\vert \widehat{m}^k \vert^2 \biggr)  \bigotimes_{k \in F_N^+ \setminus F_{N_0}^+} 
d \widehat{m}^k, 
\nonumber
\end{align}
where we used Lemma 
\ref{lem:25:second}
in order to derive the last equality together with the fact that 
${\mathbf 1}_{B_{N_0,N}}
( (\widehat{m}^k)_{k \in F_{N}^+})=
{\mathbf 1}_{B_{N_0,N}}
( (0)_{k \in F_{N_0}^+}, (\widehat{m}^k)_{k \in F_{N}^+ \setminus F_{N_0}^+})$
only depends on 
$(\widehat{m}^k)_{k \in F_N^+ \setminus F_{N_0}^+}$. 

In particular, if we let 
\begin{equation}
\begin{split}
\label{eq:proof:integration:by:parts:dim:infinie:4}
&\psi(m) 
:=
\int_{{\mathcal O}_{N_0}} 
\varphi
\Bigl( 
(\widehat{r}^k)_{k \in F_{N_0}^+}, 
(\widehat{m}^k)_{k \in F_N^+ \setminus F_{N_0}^+}
\Bigr)
{\mathbf 1}_{A_{N_0}}
\bigl( (\widehat{r}^k)_{k \in F_{N_0}^+}
\bigr)
d \Gamma_{N_0}\Bigl( 
\bigl( \widehat{r}^k\bigr)_{k \in F_{N_0}^+} 
\Bigr),
\end{split}
\end{equation}
for 
$m \in B_{N_0,N}$
and $\psi(m):=0$ otherwise, 
then, by Fubini's theorem, $\psi$ is 
measurable with respect to the sigma-field generated by 
the 
mappings
$(m \mapsto \widehat{m}^k)_{k \in F_N^+ \setminus F_{N_0}^+}$
and 
\eqref{eq:proof:integration:by:parts:dim:infinie}
becomes
\begin{align}
&\int_{\PP} 
\varphi\bigl(m\bigr) {\mathbf 1}_{A_{N_0} \cap B_{N_0,N}}(m) 
d {\mathbb P}_N(m)
\nonumber
\\
&= 
\frac{Z_{N_0}}{Z_N}
\int_{{\mathbb R}^{2 \vert F_N^+ \setminus F_{N_0}^+ \vert}} 
\biggl[ 
\label{eq:proof:integration:by:parts:dim:infinie:2}
\Bigl( \psi {\mathbf 1}_{B_{N_0,N}}
\Bigr) 
\biggl( (0)_{k \in F_{N_0}^+}, (\widehat{m}^k)_{k \in F_{N}^+ \setminus F_{N_0}^+}
\biggr)
\\
&\hspace{150pt} \times 
\exp \biggl( - \sum_{k \in F_N^+ \setminus F_{N_0}^+}
\vert k \vert^{2pd} 
\vert \widehat{m}^k \vert^2 \biggr) \biggr] \bigotimes_{k \in F_N^+ \setminus F_{N_0}^+} 
d \widehat{m}^k. \nonumber
\end{align}
Notice that 
the indicator function 
${\mathbf 1}_{B_{N_0,N}}$ in the product 
$\psi {\mathbf 1}_{B_{N_0,N}}$ is redundant with the definition of 
$\psi$, but we keep it for clarity. 

Now, the equality \eqref{eq:proof:integration:by:parts:dim:infinie}, with $\varphi$ being replaced by $\psi$, leads to 
\begin{equation*}
\begin{split}
&\int_{\PP} 
\psi\bigl(m\bigr) {\mathbf 1}_{A_{N_0} \cap B_{N_0,N}}(m) 
d {\mathbb P}_N(m)
\\
&=
\frac{Z_{N_0}}{Z_N}
\int_{{\mathbb R}^{2 \vert F_N^+ \setminus F_{N_0}^+ \vert}} 
\biggl[
\int_{{\mathcal O}_{N_0}} 
{\mathbf 1}_{A_{N_0}}
\bigl( (\widehat{m}^k)_{k \in F_{N_0^+}}
\bigr)
d \Gamma_{N_0}\Bigl( 
\bigl( \widehat{m}^k\bigr)_{k \in F_{N_0}^+} 
\Bigr) 
\biggr]
\\
&\hspace{15pt} \times 
\Bigl( \psi
{\mathbf 1}_{B_{N_0,N}}
\Bigr) 
\Bigl( 
(0)_{k \in F_{N_0}^+},
(\widehat{m}^k)_{k \in F_{N}^+ \setminus F_{N_0}^+}
\Bigr)
\exp \biggl( - \sum_{k \in F_N^+ \setminus F_{N_0}^+}
\vert k \vert^{2pd} 
\vert \widehat{m}^k \vert^2 \biggr)  \bigotimes_{k \in F_N^+ \setminus F_{N_0}^+} 
d \widehat{m}^k
\\
&=  
{\mathbb P}_{N_0}(A_{N_0}) 
\frac{Z_{N_0}}{Z_N}
 \times 
\int_{{\mathbb R}^{2 \vert F_N^+ \setminus F_{N_0}^+ \vert}} 
\biggl[
\Bigl( \psi
{\mathbf 1}_{B_{N_0,N}}
\Bigr) 
\Bigl( 
(0)_{k \in F_{N_0}^+},
(\widehat{m}^k)_{k \in F_{N}^+ \setminus F_{N_0}^+}
\Bigr)
\\
&\hspace{150pt} \times \exp \biggl( - \sum_{k \in F_N^+ \setminus F_{N_0}^+}
\vert k \vert^{2pd} 
\vert \widehat{m}^k \vert^2 \biggr)  
\biggr] \bigotimes_{k \in F_N^+ \setminus F_{N_0}^+} 
d \widehat{m}^k,
\end{split}
\end{equation*}
and then,
\eqref{eq:proof:integration:by:parts:dim:infinie:2}
becomes
\begin{equation}
\label{eq:proof:integration:by:parts:dim:infinie:3}
\begin{split}
&\int_{\PP} 
\varphi\bigl(m\bigr) {\mathbf 1}_{A_{N_0} \cap B_{N_0,N}}(m) 
d {\mathbb P}_N(m)
=
\frac1{{\mathbb P}_{N_0}(A_{N_0})} 
\int_{\PP} 
\psi\bigl(m\bigr) {\mathbf 1}_{A_{N_0} \cap B_{N_0,N}}(m) 
d {\mathbb P}_N(m),
\end{split}
\end{equation}
where we recall from 
Lemma 
\ref{lem:27}, that 
for $N_0$ large enough, 
${\mathbb P}_{N_0}(A_{N_0})>0$. 
\vskip 5pt

\textit{Second Step.}
We now send $N \rightarrow \infty$ in 
\eqref{eq:proof:integration:by:parts:dim:infinie:3}.
In order to do so, 
we first assume that 
$\varphi$ is 
measurable with respect to $(\widehat{m}^k)_{k \in F_{N_1}^+}$
for $N_1 \geq N_0$. 
Then, for $N \geq N_1$, 
$\psi$ in 
\eqref{eq:proof:integration:by:parts:dim:infinie:4}
only depends, on the set $B_{N_0,N}$, on $m$ through 
$(\widehat{m}^k)_{k \in F_{N_1}^+ \setminus 
F_{N_0}^+}$ and is independent of $N$. 
Next, by using 
Lemma \ref{lem:30}, we get
\begin{equation*}
\begin{split}
&\int_{\PP} 
\varphi \bigl(m\bigr) {\mathbf 1}_{A_{N_0} \cap B_{N_0,N}}(m) 
d {\mathbb P}(m)
=
\frac1{{\mathbb P}_{N_0}(A_{N_0})} 
\int_{\PP} 
\psi \bigl(m\bigr) {\mathbf 1}_{A_{N_0} \cap B_{N_0,N}}(m) 
d {\mathbb P}(m) + \varepsilon_N,
\end{split} 
\end{equation*}
where $\lim_{N \rightarrow \infty} \varepsilon_N=0$. 
It then remains to observe that 
the subsets $(B_{N_0,N})_{N \geq N_0}$ are non-increasing and their intersection is precisely $B_{N_0,\infty}$. Then, 
splitting $\varphi$ into the difference $\varphi_+ - \varphi_-$, we can easily let $N$ tend to $\infty$ by regarding the left-hand side and the first term in the right-hand side as the masses of 
$B_{N_0,N}$ under finite positive measures. 
We get 
\begin{equation*}
\begin{split}
&\int_{\PP} 
\varphi\bigl(m\bigr) {\mathbf 1}_{A_{N_0}   \cap B_{N_0,\infty}}(m) 
d {\mathbb P}(m)
=
\frac1{{\mathbb P}_{N_0}(A_{N_0})} 
\int_{\PP} 
\psi\bigl(m\bigr) {\mathbf 1}_{A_{N_0} \cap B_{N_0,\infty}}(m) 
d {\mathbb P}(m),
\end{split} 
\end{equation*}
where, on the set 
$B_{N_0,\infty}$, we have 
 \begin{equation*}
 \begin{split}
& \psi(m)
 =
 \int_{{\mathcal O}_{N_0}} 
\biggl[ \varphi
\Bigl( 
{\mathscr I}_{N_0} 
\bigl( (\widehat{r}^k
- \widehat{m}^k)_{k \in F_{N_0^+}}
\bigr)
+ m
\Bigr)
{\mathbf 1}_{A_{N_0}}
\bigl( (\widehat{r}^k)_{k \in F_{N_0^+}}
\bigr)
\biggr]
d \Gamma_{N_0}\Bigl( 
\bigl( \widehat{r}^k\bigr)_{k \in F_{N_0}^+} 
\Bigr).
\end{split}
\end{equation*}
By monotone class theorem (together with Lemma  \ref{lem:Borel}), we easily extend the result to bounded and measurable functions
$\varphi$ on $\PP$. 

\end{proof}

\subsection{Mollification of McKean-Vlasov equations}

The following statement plays an important role in the proof of 
Theorem 
\ref{thm:uniqueness:HJB}. 

\begin{thm}
\label{thm:FPK:mollified:proof}
Let $W_1$ and $W_2$ be two Lipschitz continuous functions on $[0,T] \times {\mathcal P}({\mathbb T}^d)$, 
with ${\mathcal P}({\mathbb T}^d)$ being equipped with $d_{-2}$. Then, for a constant 
$c>1$, we can find 
an integer $N_c$ such that, for $N \geq N_c$, for $\varepsilon \in (0,1)$ and $\rho$ as in 
Definition 
\ref{def:admiss:threshold:smoothing}, the Fokker-Planck equation 
\begin{equation}
\label{eq:FPK:mollified:proof}
\partial_t m_t (x)- \textrm{\rm div} \Bigl(
D {\mathcal H}^{N,\varepsilon,\rho}(t,m_t)(x)  \bigl(m_t*f_N\bigr)(x)
\Bigr) - \frac12 \Delta m_t(x) =0,
\end{equation}
for $t \in [0,T]$, 
with 
\begin{equation*}
\begin{split}
&D {\mathcal H}_{\lambda}^{N,\varepsilon,\rho}(t,m)(x) :=
\partial_p H 
\Bigl( x, \lambda 
 \partial_\mu W_1^{N,\varepsilon,\rho}(t,m)(x) 
 + (1- \lambda) 
 \partial_\mu W_2^{N,\varepsilon,\rho}(t,m)(x) 
 \Bigr),
\\
&D {\mathcal H}^{N,\varepsilon,\rho}(t,m)(x) :=
\int_0^1 
D {\mathcal H}_{\lambda}^{N,\varepsilon,\rho}(t,m)(x)
d \lambda, 
\end{split}
\end{equation*}
and with 
$m_0 \in B_N(c)$ as initial condition, 
has a (unique) smooth solution with values in ${\mathcal P}({\mathbb T}^d)$. 
It satisfies 
\begin{equation*}
m_t \geq 1/c', \quad \| \nabla m_t \|_\infty \leq c', \quad t \in [0,T],
\end{equation*}
for $c' >1$ independent of $N$. 
\end{thm}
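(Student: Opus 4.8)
The plan is to exploit the band-limited structure of the drift in order to split the problem into a closed finite-dimensional ODE for the low Fourier modes and a linear parabolic equation for the full density, and then to run a bootstrap argument for the quantitative bounds. First I would observe that $D\mathcal H^{N,\varepsilon,\rho}(t,m_t)(x)$ depends on $m_t$ only through the Fourier coefficients $(\widehat m_t^k)_{k\in F_N}$: this is because $m^{N,\varepsilon}(r)\ast f_N$ and the factor $m_t\ast f_N$ appearing in \eqref{eq:FPK:mollified:proof} only see those coefficients, and because $W_i^{N,\varepsilon,\rho}$ is a smooth function of the Fourier coefficients (see the comments after Definition \ref{def:mollification}). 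Projecting \eqref{eq:FPK:mollified:proof} onto the modes $k\in F_N^+$ therefore gives a closed system of ODEs whose right-hand side is smooth in the unknown and of at most linear growth: by Proposition \ref{prop:4:7} and the bounds on $\partial_xH,\partial^2_{pp}H$, the field $D\mathcal H^{N,\varepsilon,\rho}$ is bounded in $\mathcal C^{0,1}$ in $x$, uniformly in $N,\varepsilon,\rho$, so in the ODE the only unbounded term is the dissipative $-2\pi^2|k|^2\widehat m_t^k$, the drift contribution being controlled by $C|k|\,\|m_t\ast f_N\|_{L^1}\le C|k|\,|(\widehat m_t^j)_{j\in F_N}|$. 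Hence this ODE has a unique global solution on $[0,T]$; reconstructing $w_t:=m_t\ast f_N=\sum_{k\in F_N}\widehat m_t^k e_{-k}$ and the field $b_t(x):=D\mathcal H^{N,\varepsilon,\rho}(t,m_t)(x)$ (Lipschitz in $x$, continuous in $t$), one solves the remaining \emph{linear} parabolic equation $\partial_t m_t-\tfrac12\Delta m_t=\mathrm{div}(b_tw_t)$ with $m_0$ as datum. The solution is unique and consistent (its low modes solve the same ODE, hence coincide with $(\widehat m_t^k)_{k\in F_N^+}$), and since $b_tw_t$ is Lipschitz in $x$ and Hölder in $t$ while $m_0\in\mathcal P_N\subset\mathcal C^\infty$, parabolic Schauder theory gives $m\in\mathcal C^{1+\gamma/2,2+\gamma}([0,T]\times\T)$.

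Next I would establish the uniform gradient bound from the mild formulation. Differentiating the equation, $\nabla m_t$ solves a heat equation with source $\mathrm{div}(\nabla(b_tw_t))$, and $\|\nabla(b_tw_t)\|_\infty\le\|\nabla b_t\|_\infty\|w_t\|_\infty+\|b_t\|_\infty\|\nabla w_t\|_\infty$. Using $\|w_t\|_\infty\le\|m_t\|_\infty\le 1+C_d\|\nabla m_t\|_\infty$ (mass one on the torus) and $\|\nabla w_t\|_\infty\le\|\nabla m_t\|_\infty$, together with the uniform $\mathcal C^{0,1}_x$ bound on $b$ from Proposition \ref{prop:4:7}, one gets $\|\nabla(b_tw_t)\|_\infty\le C(1+\|\nabla m_t\|_\infty)$ with $C$ depending only on $c$ and the data. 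The Duhamel formula and the heat-kernel bound $\|\nabla e^{\tau\Delta/2}h\|_\infty\le C_d\tau^{-1/2}\|h\|_\infty$ on $\T$ then yield the singular Grönwall inequality $\|\nabla m_t\|_\infty\le\|\nabla m_0\|_\infty+C\int_0^t(t-s)^{-1/2}(1+\|\nabla m_s\|_\infty)\,ds$, whence $\|\nabla m_t\|_\infty\le c'$ on $[0,T]$ for a constant $c'$ depending only on $c$, $T$ and the data, and in particular \emph{not} on $N,\varepsilon,\rho$. As a by-product $\|m_t\|_\infty$ and $\|w_t\|_\infty$ are bounded uniformly in $N$, and mass is preserved since $\int_{\T}\mathrm{div}(b_tw_t)=\int_{\T}\Delta m_t=0$.

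The delicate point — and the one I expect to be the main obstacle — is the lower bound $m_t\ge 1/c'$ with $c'$ independent of $N$: here the maximum principle cannot be applied directly because the transport term acts on $w_t=m_t\ast f_N$, not on $m_t$, so that where $m_t$ dips low nothing controls $w_t$ or $\nabla w_t$. The plan is to write $m_t\ast f_N=m_t+(m_t\ast f_N-m_t)$ and rewrite \eqref{eq:FPK:mollified:proof} as $\partial_t m_t=\tfrac12\Delta m_t+b_t\cdot\nabla m_t+(\mathrm{div}\,b_t)\,m_t+\mathrm{div}\bigl(b_t(m_t\ast f_N-m_t)\bigr)$, a linear parabolic equation for $m_t$ up to the last, remainder term, and to combine a Grönwall/Feynman–Kac lower bound for the operator $\partial_t-\tfrac12\Delta-b_t\cdot\nabla-\mathrm{div}\,b_t$ (which gives $\inf_x m_t\ge e^{-CT}(\inf_x m_0-\text{remainder contribution})$ with $C$ depending only on the uniform $\mathcal C^{0,1}_x$ bound on $b$) with a control of the remainder. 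This last control is where the threshold $N\ge N_c$ enters: parabolic smoothing, together with the uniform bound $\sum_{k\in F_N}|k|^4\,|\partial_{\widehat m^k}W_i(t,\cdot)|^2\le C$ from Proposition \ref{prop:4:7}, allows one to bootstrap the regularity of $m_t$ and of $b_t$ by constants independent of $N$, so that for $N\ge N_c$ the remainder $\mathrm{div}(b_t(m_t\ast f_N-m_t))$ is too small to destroy the lower bound; the resulting $c'$ then depends only on $c$, $T$ and the data. Once positivity and $\int m_t=1$ are established, $m_t\in\PP$ for every $t$, and the remaining claims follow from the linear parabolic regularity already used. The crux throughout is to keep every constant entering the bootstrap independent of the truncation level $N$, of $\varepsilon$ and of the kernel $\rho$.
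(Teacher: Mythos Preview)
Your overall strategy is sound and the direct singular-Gr\"onwall gradient bound is correct, but the lower-bound step is under-specified and takes a harder route than necessary. Applying the minimum principle or Feynman--Kac to the equation with source $R_t=\mathrm{div}\bigl(b_t(m_t\ast f_N-m_t)\bigr)$ requires $\|R_t\|_\infty$ to be small, hence $\|(\nabla m_t)\ast f_N-\nabla m_t\|_\infty\to0$ uniformly in $t$, which in turn needs an $N$-independent modulus of continuity for $\nabla m_t$. Your Gr\"onwall argument only gives $\|\nabla m_t\|_\infty\le c'$, and the ``bootstrap'' you invoke is not fleshed out: one can indeed recover H\"older regularity of $\nabla m_t$ for $t\ge\delta>0$ by parabolic smoothing (and handle $[0,\delta]$ by the short-time estimate $\|m_t-m_0\|_\infty\le C\sqrt t$), but this two-regime argument is real work that you have not carried out. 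There is also a smaller gap in your existence step: the mollified drift $D\mathcal H^{N,\varepsilon,\rho}(t,\cdot)$ is only defined when $m^{N,\varepsilon}(r)\ast f_N$ is a probability measure, i.e.\ when $(\widehat m^k)_{k\in F_N^+}\in\mathcal O_N$, so your finite-dimensional ODE is not globally well-posed until the drift is extended off $\mathcal O_N$.

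The paper bypasses both issues with one device. It first extends $D\mathcal H^{N,\varepsilon,\rho}$ to all of $\mathbb R^{2|F_N^+|}$ by composing with the orthogonal projection onto $\overline{\mathcal O_N}$, and then introduces an auxiliary $\widetilde m_t$ solving the \emph{genuine} linear Fokker--Planck equation $\partial_t\widetilde m_t-\tfrac12\Delta\widetilde m_t=\mathrm{div}(b_t\widetilde m_t)$ with the same frozen field $b_t=D\mathcal H^{N,\varepsilon,\rho}(t,m_t)$ and initial datum $m_0$. Since $b_t$ is bounded and Lipschitz in $x$ uniformly in $N,\varepsilon,\rho$ (Proposition~\ref{prop:4:7}), classical parabolic theory immediately gives $\widetilde m_t\ge1/c'$, $\|\nabla\widetilde m_t\|_\infty\le c'$ and even $\vvvert\nabla\widetilde m_t\vvvert_\gamma\le c'$. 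A Duhamel comparison, after writing $f_N\ast m_s-\widetilde m_s=f_N\ast(m_s-\widetilde m_s)+(f_N\ast\widetilde m_s-\widetilde m_s)$, yields $\|m_t-\widetilde m_t\|_\infty\le C\int_0^t(t-s)^{-1/2}\|m_s-\widetilde m_s\|_\infty\,ds+\eta_N$ with $\eta_N=C\sup_s\|f_N\ast\widetilde m_s-\widetilde m_s\|_\infty\to0$ by Lemma~\ref{lem:weak:convergence} and the uniform regularity of $\widetilde m$. The missing idea in your argument is precisely this: put the smallness burden on $\widetilde m$, whose regularity comes for free from classical theory, rather than on $m$, whose regularity you were trying to bootstrap.
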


\begin{proof}
We first recall that the function $(t,m,x) \mapsto D {\mathcal H}^{N,\varepsilon,\rho}(t,m)(x)$ is smooth. It depends on 
$m$ through $(\widehat{m}^k)_{k \in F_N}$. We can extend the mapping 
to inputs $m$ which are general distributions. It suffices to project the Fourier coefficients on the closure of ${\mathcal O}_N$. 
In short, we call $\Pi_N$ the mapping that maps $m=(\widehat m^k)_{k \in {\mathbb Z}^d}$ onto
$\Pi_N(m)$ defined by 
\begin{equation*}
\widehat{\Pi_N(m)}^k =
\left\{
\begin{array}{ll}
1 &\quad \textrm{\rm if} \ k =0
\vspace{3pt}
\\
{\pi_N\bigl(({\widehat{m}^j})_{j \in F_N^+}\bigr)}^k 
&\quad \textrm{\rm if} \ k \in F_N^+
\vspace{3pt}
\\
\overline{\widehat{\Pi_N(m)}^k}
&\quad \textrm{\rm if} \ -k \in F_N^+
\vspace{3pt}
\\
0 &\quad \textrm{\rm if} \ k \not \in F_N
\end{array}
\right.,
\end{equation*}
where $\pi_N$ is the orthogonal projection from ${\mathbb R}^{2 \vert F_N^+\vert}
\simeq {\mathbb C}^{\vert F_N^+ \vert}$ onto 
the closure of 
${\mathcal O}_N$. 
We then consider 
$(t,m,x) \mapsto D {\mathcal H}^{N,\varepsilon,\rho}(t,\Pi_N(m))(x)$. It is Lipschitz 
with respect to the Fourier coefficients $(\widehat{m}^k)_{k \in F_N \setminus \{0\}}$.
There is no difficulty for proving that the solution is classical.  However, 
the solution to the hence extended version of 
\eqref{eq:FPK:mollified:proof} may not take values in ${\mathcal O}_N$. 

We now consider the local Fokker-Planck equation 
\begin{equation}
\label{eq:FPK:mollified:proof:2}
\partial_t \widetilde m_t (x)- \textrm{\rm div} \Bigl( D {\mathcal H}^{N,\varepsilon,\rho}(t,m_t)(x)  \widetilde m_t(x)
\Bigr) - \frac12 \Delta \widetilde m_t(x) =0,
\end{equation}
with 
$\widetilde m_0=m_0 \in B_N(c)$ as initial condition. It is important to notice that, in the non-linear dependence, the argument is 
$m_t$ and not $\widetilde m_t$. 
Since $W_1$ and $W_2$ are $d_{-2}$-Lipschitz continuous in the measure argument, the field $x \mapsto D {\mathcal H}^{N,\varepsilon,\rho}(t,m_t)(x) $
is bounded and Lipschitz continuous, with a bound and a Lipschitz constant that are independent of $N$, $\varepsilon$, $\rho$: it suffices to combine Corollary  
\ref{cor:mollif:time-space}
and
the second claim in 
Proposition 
\ref{prop:4:7}.
In turn, the solution to the Fokker-Planck equation satisfies
\begin{equation*}
\widetilde m_t \geq 1/c', \quad \| \nabla_x \widetilde m_t \|_\infty, \quad \vvvert \nabla_x \widetilde m_t \vvvert_{\gamma} \leq c',
\quad t \in [0,T],
\end{equation*}
for a constant $c'$ that depends on $c$ but that is independent of $N$, $\varepsilon$ and $\rho$, 
and for some $\gamma \in (0,1)$ that is also independent of $N$, $\varepsilon$ and $\rho$. 

We then compare $m$ and $\widetilde m$. 
Denoting by $p(t,x)$ the standard heat kernel on the torus, 
we have
\begin{equation*}
\begin{split}
m_t(x) - \widetilde{m}_t(x) 
&= - \int_0^t 
\int_{{\mathbb T}^d} 
\nabla_x p(t-s,x-y) 
D {\mathcal H}^{N,\varepsilon,\rho}(s,m_s)(y) 
\bigl( f_N*m(s,y) - \widetilde{m}(s,y)
\bigr) dy ds,
\end{split}
\end{equation*}
from which we deduce that 
\begin{equation*}
\begin{split}
\| m_t - \widetilde m_t \|_\infty
&\leq C \int_0^t 
\int_{{\mathbb T}^d} 
\frac{\| m_s - \widetilde m_s \|_\infty}{\sqrt{t-s}}
ds + \eta_N,
\quad t \in [0,T], 
\end{split}
\end{equation*}
for a sequence $(\eta_N)_{N \geq 1}$ that tends to $0$ as $N$ tends to $\infty$ and that only depends on $m_0$ through the value of $c$. 
It is standard to deduce that 
\begin{equation*}
\| m_t - \widetilde m_t \|_\infty
\leq C \eta_N,
\end{equation*}
which proves that $m_t$ is positive for $N$ large enough. Since $\int_{{\mathbb T}^d} m_t(x) dx=1$, it is a probability measure. 

Using the regularity in $y$ of 
$D {\mathcal H}^{N,\varepsilon,\rho}(s,y)$ together with the H\"older continuity of $\nabla_x \widetilde m_t$, we can proceed in the same way for 
$\|\nabla_x m_t - \nabla_x \widetilde{m}_t\|_\infty$. 
\end{proof}

\bibliographystyle{abbrv}
\bibliography{references2}
\end{document}